\newtheorem{prop}{Proposition}
\def\blfootnote{\xdef\@thefnmark{}\@footnotetext}
\newcommand{\lcolor}[1]{\textcolor{black}
	{#1}}  
\newcommand{\xcolor}[1]{\textcolor{black}{#1}}
\newcommand{\expect}[1]{\mathbb{E}{\l(#1\r)}}
\newcommand{\mf}[1]{\mathbf{#1}}
\newcommand{\dotp}[2]{\left\langle#1,#2\right\rangle}
\def\r{\right}
\def\l{\left}
 \titleformat{\chapter}[display]
     {\normalfont\Large\bfseries}{\chaptertitlename\ \thechapter}{20pt}{\Large}
\newcommand{\beas}{\begin{eqnarray*}}
\newcommand{\enas}{\end{eqnarray*}}
\newcommand{\bea}{\begin{eqnarray}}
\newcommand{\ena}{\end{eqnarray}}
\newcommand{\bms}{\begin{multline*}}
\newcommand{\ems}{\end{multline*}}
\newcommand{\bels}{\begin{align*}}
\newcommand{\enls}{\end{align*}}
\newcommand{\bel}{\begin{align}}
\newcommand{\enl}{\end{align}}
\newcommand{\ignore}[1]{}
\newtheorem{theorem}{Theorem}[section]
\newtheorem{corollary}{Corollary}[section]
\newtheorem{proposition}{Proposition}[section]
\newtheorem{remark}{Remark}[section]
\newtheorem{lemma}{Lemma}[section]
\newtheorem{definition}{Definition}[section]
\newtheorem{assumption}{Assumption}[section]
\newtheorem{example}{Example}
\newtheorem{Alg}{Algorithm}
\begin{document}

\pagenumbering{roman}   

\begin{center}%
\doublespacing
\null
\vfill\vfill
\MakeUppercase{I. Asynchronous Optimization over weakly Coupled Renewal Systems}\par
 \vskip 0.16 true in
      by%
      \vskip 0.16 true in
      {\begin{tabular}[t]{c}Xiaohan Wei \\*[0.6 true in]%
       \rule{4in}{1.5pt}
       \end{tabular}%
       \par}

      \vskip 1.0 true in
      \singlespacing
      Presented to the     \\
      FACULTY OF THE USC GRADUATE SCHOOL      \\
      UNIVERSITY OF SOUTHERN CALIFORNIA   \\
      In Partial Fulfillment of the       \\
      Requirements for the Degree         \\
      DOCTOR OF PHILOSOPHY                \\
      \MakeUppercase{(Electrical Engineering)}           \\*[1.0 true cm]%
      \vfill
      {\small December \ \ 2019\par}
    \end{center}%
\par
\vfill
\begin{center}%
{\normalsize\ Copyright~ 2019 \ \hfill ~Xiaohan Wei}%
\end{center}%
           
\thispagestyle{empty} 

\newpage
\chapter*{}
\noindent  Approved by\\

\noindent Professor Michael Neely,\\
Committee Chair,\\
Department of Electrical Engineering,\\
\textit{University of Southern California}.\\

\noindent Professor Stanislav Minsker,\\
Committee Chair,\\
Department of Mathematics,\\
\textit{University of Southern California}.\\

\noindent Professor Larry Goldstein,\\
Department of Mathematics,\\
\textit{University of Southern California}.\\

\noindent Professor Mihailo Jovanovic,\\
Department of Electrical Engineering,\\
\textit{University of Southern California}.\\

\noindent Professor Ashutosh Nayyar,\\
Department of Electrical Engineering,\\
\textit{University of Southern California}.\\

\newpage


\chapter*{Dedication}
\addcontentsline{toc}{chapter}{Dedication}

To my parents and my wife, Yuhong, who supported me both mentally and financially over the years.

\newpage

\doublespacing
\chapter*{Acknowledgements}
\addcontentsline{toc}{chapter}{Acknowledgements}

First, I would like to thank my advisor professor Michael J. Neely for guiding me throughout the PhD journey since Summer 2013. He is a man of accuracy and rigorousness, always passionate about discussing concrete research problems, and willing to roll up the sleeves and grind through technical details with me. His way of treating research topics significantly impacts me. Rather than blindly following existing works and doing incremental works when trying to get into a new area, I learned to ask fundamental mathematical questions, making connections to the tools and theories we already familiar with and be not afraid of getting my hands dirty. His blazing new ideas are my morale boost when grasping in the dark.

Next, I would like to thank professor Stanislav Minsker, who is the advisor on my high-dimensional statistics research. I got to know him during the Math-547 statistical learning course Fall 2015. Though not much senior than me, he is already extremely knowledgable on the statistical learning area and has been widely recognized for his works on robust high-dimensional statistics. He is a quick thinker and can always point out meaningful new directions hiding rather deeply which eventually lead to high-quality publications. I would have published no paper on this area should I never met with him. Along the way, he also teaches me how to sell my works and helps me practicing my seminar talks, which lead to impressive presentations and Ming-Hsieh scholarships.    

Also, I would like to thank professor Larry Goldstein, whom I met during a small paper reading group Spring 2016. He is an expert on Stein's method and, as a senior professor, surprisingly accessible to PhD students and active on various research areas. Together with Prof. Minsker, we had quite a few fruitful discussions and made some nice progress on robust statistics.  

I would also like to thank professor Mihailo Jovanovic, Ashutosh Nayyar for discussing research problems with me and siting on my qualifying exam committee. I appreciate them for their valuable comments and suggestions. 

Moreover, I thank my senior lab mates Hao Yu and  Sucha Supittayapornpong who were always accessible to discussing problems with me and came up with new research ideas. Also, Ruda Zhang, Lang Wang, and Jie Ruan studied various math courses and interesting math problems with me and helped me clear up the hurdles on different stages, for which I really appreciate. Special thanks to professor Qing Ling, who was my undergraduate advisor, but continuously influences me on various aspects of my academic career. 

Last but not least, I would like to take the chance to express my gratitude for folks who made contribution on various stages of my research. In particular, I thank Zhuoran Yang, for lighting up new areas and expanding my research horizon, Dongsheng Ding, who brings idea from control perspective and is always passionate to try out research ideas with me, Sheng Chen for sharing with me his perspective on robust LASSO problems, professor Jason D. Lee for working on the geometric median problem with me, and Jianshu Chen from Tencent AI who introduced me to the area of reinforcement learning.

\singlespacing
\renewcommand{\contentsname}{Table of Contents}  
\tableofcontents  


\chapter*{Abstract}
\addcontentsline{toc}{chapter}{Abstract}

A renewal system divides the slotted timeline into back to back time periods called ``renewal frames''. At the beginning of each frame, it chooses a policy from a set of options for that frame. The policy determines the duration of the frame, the penalty incurred during the frame (such as energy expenditure), and a vector of performance metrics (such as instantaneous number of jobs served). The starting points of this line of research are Chapter 7 of the book \cite{neely2010stochastic}, the seminal work \cite{neely2013dynamic}, and Chapter 5 of the PhD thesis of Chih-ping Li \cite{li2011stochastic}, who graduated before I came to USC. These works consider stochastic optimization over a single renewal system. By way of contrast, this thesis considers optimization over multiple parallel renewal systems, which is computationally more challenging and yields much more applications. The goal is to minimize the time average overall penalty subject to time average overall constraints on the corresponding performance metrics. The main difficulty, which is not present in earlier works, is that these systems act asynchronously due to the fact that the renewal frames of different renewal systems are not aligned. The goal of the thesis is to resolve this difficulty head-on via a new asynchronous algorithm and a novel supermartingale stopping time analysis which shows our algorithms not only converge to the optimal solution but also enjoy fast convergence rates.
Based on this general theory, we further develop novel algorithms for data center server provision problems with performance guarantees as well as new heuristics for the multi-user file downloading problems.

We start by reviewing existing works on the optimization over a single renewal system in Chapter 1. Then, in Chapter 2, we propose a new algorithm for the asynchronous renewal optimization so that each system can make its own decision after observing a global multiplier that is updated every slot.  We show that this algorithm satisfies the desired constraints and achieves $O(\epsilon)$ near optimality with $O(1/\epsilon^2)$ convergence time.  Based on the new algorithm, we formulate the data center server provision problem as an asynchronous renewal optimization in Chapter 3 and develop a corresponding algorithm which exceeds the state-of-the-art. In Chapter 4, we look at another application, namely, the multi-user file downloading, which can be formulated as a constrained multi-armed bandit problem. We show that our proposed algorithm leads to a useful heuristic approximately solving the problem with experimentally near optimal performance.

In Chapter 5, we consider the constrained optimization over a renewal system with observed random events at the beginning of each renewal frame. We propose an online algorithm which does not need the knowledge of the distributions of random events. We prove that this proposed algorithm is feasible and achieves $O(\varepsilon)$ near optimality by constructing an exponential supermartingale. Simulation experiments demonstrates the near optimal performance of the proposed algorithm.

Finally, in Chapter 6, we consider online learning over weakly coupled Markov decision processes.
We develop a new distributed online algorithm where each
MDP makes its own decision each slot after observing a multiplier computed from past
information. While the scenario is significantly more challenging than the classical online
learning context, the algorithm is shown to have a tight $\mathcal{O}(\sqrt{T})$ regret and constraint
violations simultaneously over a time horizon $T$.


\clearpage
\pagenumbering{arabic}  

\doublespacing



\chapter{Introduction to Renewal Systems}

\section{Optimization over a single renewal system: A review}

\begin{figure}[htbp]
\centering
   \minipage{0.5\textwidth}
   \includegraphics[width=\linewidth]{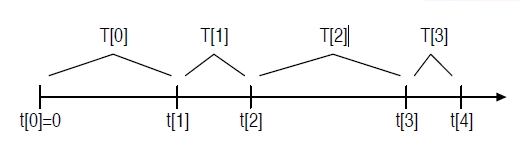} 
   \caption{The sample timeline of a renewal system.}
   \label{fig:renewal0}
   \endminipage
\end{figure}

Renewal systems are generalizations of renewal processes studied in probability and random processes courses. Parallel to Markov decision processes versus Markov chains, renewal systems are controlled renewal processes. Since this is not a widely used term, to set the tone of the thesis, 
we start with a review of optimization over a single renewal system.

Consider a dynamical system operating over a discrete slotted timeline $t \in \{0, 1, 2, \ldots\}$. The timeline is segmented into back-to-back intervals of time slots called \emph{renewal frames}. The start of each renewal frame for a system is called a \emph{renewal time} or simply a \emph{renewal} for that system.
The duration of each renewal frame is a random positive integer whose distribution depends on a control action chosen at the start of the frame. We use $k=0, 1,2,\cdots$ to index the renewals. Let $t_k$ be the time slot corresponding to the $k$-th renewal with the convention that $t_0=0$. Let $\mathcal{T}_{k}$ be the set of all slots from $t_k$ to $t_{k+1}-1$. See Fig. \ref{fig:renewal0} for a graphical illustration.

At time $t_k$, the decision maker chooses a possibly random decision $\alpha_k$ in a set $\mathcal{A}$. This action determines the distributions of the following random variables:
\begin{itemize}
\item The duration of the $k$-th renewal frame $T_k:=t_{k+1}-t_k$, which is a positive integer.
\item A vector of performance metrics at each slot of that frame 
$\mathbf{z}[t]:=\left(z_1[t],~z_2[t],~\cdots,~z_L[t]\right)$,\\
$t\in\mathcal{T}_{k}$, where $L$ is a fixed positive integer.
\item A penalty incurred at each slot of the frame $y[t]$, $t\in\mathcal{T}_{k}$.
\end{itemize}
In the special case where $T_k = 1,~\forall k$, this reduces to the classical slotted stochastic system, which has been relatively well-understood. Let $\mathcal{F}_k$ be the system history up to $t_k-1$, which includes $\{y[t]\}_{j=0}^{t_k-1}$, $\{\mathbf z[t]\}_{j=0}^{t_k-1}$ and $\{T_j\}_{j=0}^{k-1}$. 
The key property we rely on is as follows.  

\begin{definition}[Renewal property]\label{def:renewal}
A system is said to satisfy the renewal property if the random $T_k$, 
$\mathbf{z}[t]$ and $y[t]$,~$t\in\mathcal{T}_k$ are conditionally independent of the history $\mathcal{F}_k$ given $\alpha_k=\alpha\in\mathcal{A}$.
\end{definition}

The goal is to minimize the time average penalty subject to $L$ time average constraints on the performance metrics, i.e. we aim to solve the following optimization problem:
\begin{align}
\min~~&\limsup_{T\rightarrow\infty}\frac{1}{T}\sum_{t=0}^{T-1}\expect{y[t]}\label{problem-1}\\
\textrm{s.t.}~~ &\limsup_{T\rightarrow\infty}\frac{1}{T}\sum_{t=0}^{T-1}\expect{z_l[t]}\leq d_l,~~l\in\{1,2,\cdots,L\},\label{problem-2}
\end{align} 
where $\{d_l\}_{l=1}^L$ are known constants. Let
\[
 y(\alpha_k) := \sum_{t\in \mathcal{T}_{k}}y[t],~~~z_{l}(\alpha_k) = \sum_{t\in \mathcal{T}_{k}}z_l[t],~~~T(\alpha_k) = T_k
\]
be realizations during the $k$-th frame using an action $\alpha_k$. 
Under mild technical conditions (e.g. existence of second moments, see Section \ref{sec:chap-2-pre} for details), the problem \eqref{problem-1}-\eqref{problem-2} can also be written as a fractional program form:
\begin{align}
\min~~&\limsup_{K\rightarrow\infty}\frac{\expect{\sum_{k=0}^{K-1} y(\alpha_k)}}{\expect{\sum_{k=0}^{K-1}T(\alpha_k)}}\label{problem-3}\\
\textrm{s.t.}~~ &\limsup_{T\rightarrow\infty}\frac{\expect{\sum_{k=0}^{K-1}z_l(\alpha_k)}}{\expect{\sum_{k=0}^{K-1}T(\alpha_k)}}\leq d_l,~~l\in\{1,2,\cdots,L\},\label{problem-4}\\
&\alpha_k\in\mathcal{A},~~\forall k \nonumber
\end{align}

\subsection{Optimization over i.i.d. actions}
Suppose the system adopts an i.i.d. sequence of random actions $\{\alpha_k^*\}_{k=0}^{\infty}$, where the decision $\alpha^*_k\in\mathcal A$ made on frame $k$ independent of the past.  
Then, by the renewal property, it is easy to see that 
$\{ y(\alpha_k^*), \mathbf z(\alpha_k^*), T(\alpha_k^*)\}$ are i.i.d. random variables. We have
\begin{align*}
&\limsup_{K\rightarrow\infty}\frac{\expect{\sum_{k=0}^{K-1}y(\alpha_k^*)}}{\expect{\sum_{k=0}^{K-1}T(\alpha_k^*)}} 
= \frac{\lim_{K\rightarrow\infty}\frac1K \expect{\sum_{k=0}^{K-1}y(\alpha_k^*)}}{\lim_{K\rightarrow\infty}\frac1K\expect{\sum_{k=0}^{K-1}T(\alpha_k^*)}} = \frac{\expect{ y(\alpha_k^*)}}{\expect{T(\alpha_k^*)}}\\
&\limsup_{K\rightarrow\infty}\frac{\expect{\sum_{k=0}^{K-1}z_l(\alpha_k^*)}}{\expect{\sum_{k=0}^{K-1}T(\alpha_k^*)}} 
= \frac{\lim_{K\rightarrow\infty}\frac1K \expect{\sum_{k=0}^{K-1}z_l(\alpha_k^*)}}{\lim_{K\rightarrow\infty}\frac1K\expect{\sum_{k=0}^{K-1}T(\alpha_k^*)}} = \frac{\expect{ z_l(\alpha_k^*)}}{\expect{T(\alpha_k^*)}}
\end{align*}
As a consequence, if we consider solving \eqref{problem-3}-\eqref{problem-4} over the set of i.i.d. random actions, then?
\begin{align}
\min~~&\frac{\expect{ y(\alpha_k^*)}}{\expect{T(\alpha_k^*)}}\label{problem-5}\\
\textrm{s.t.}~~ &\frac{\expect{ z_l(\alpha_k^*)}}{\expect{T(\alpha_k^*)}}\leq d_l,~~l\in\{1,2,\cdots,L\},\label{problem-6}
\end{align} 
\begin{assumption}
The problem \eqref{problem-5}-\eqref{problem-6} is feasible, i.e. there exists $\alpha_k^*$ such that \eqref{problem-6} are satisfied. Furthermore, we assume the set of all feasible performance vectors
 $(\frac{\expect{ y(\alpha_k^*)}}{\expect{T(\alpha_k^*)}}, ~\frac{\expect{ \mathbf z(\alpha_k^*)}}{\expect{T(\alpha_k^*)}})$
 over all i.i.d. actions $\alpha_k^*$ is compact.
\end{assumption}
The compactness assumption is adopted so that there exists at least one i.i.d. action which solves \eqref{problem-5}-\eqref{problem-6}. In fact, one can show that under proper technical conditions the minimum achieved by \eqref{problem-3}-\eqref{problem-4} is the same as that of \eqref{problem-5}-\eqref{problem-6} (see, for example, Lemma \ref{stationary-lemma} in the next section).


\subsection{Ergodic Markov decision process (MDP): An example}
As one of the main motivations for this line of research, 
in this section, we show that the well-known MDP is a special case of the renewal system.
Consider a discrete time MDP over an infinite horizon.  It
consists of a finite state space $\mathcal{S}$, and an action space $\mathcal{U}$ at each state $s\in\mathcal{S}.$\footnote{To simplify the notation, we assume each state has the same action space $\mathcal{A}$. All our analysis generalizes trivially to states with different action spaces.} For each state $s\in\mathcal{S}$, we use $P_u(s,s')$ to denote the transition probability from $s\in\mathcal{S}$ to $s'\in\mathcal{S}$ when taking action $u\in\mathcal{U}$, i.e. 
\[
P_u(s,s')  = Pr(s[t+1]=s'~|~s[t] = s,~u[t]=u),
\]
where $s[t]$ and $u[t]$ are state and action at time slot $t$. 

At time slot $t$, after observing the state $s[t]\in\mathcal{S}$ and choosing the action $u[t]\in\mathcal{U}$, the MDP receives a penalty $y(u[t],s[t])$ and $L$ types of resource costs $z_{1}(u[t],s[t]),\cdots,z_{L}(u[t],s[t])$, where these functions are all bounded mappings from $\mathcal{S}\times\mathcal{U}$ to $\mathbb{R}$. For simplicity we write 
$y[t] = y(u[t],s[t])$ and $z_l[t] = z_l(u[t],s[t])$. The goal is to minimize the time average penalty with constraints on time average overall costs. This problem 
can be written in the form \eqref{problem-1}-\eqref{problem-2}.

In order to define the renewal frame, we need one more assumption on the MDP. We assume the MDP is \textit{ergodic}, i.e. there exists a state which is recurrent and the corresponding Markov chain is aperiodic under any randomized stationary policy\footnote{A \textit{randomized stationary policy} $\pi$ is an algorithm which chooses actions at state $s\in\mathcal{S}$ according to a fixed conditional distribution $\pi(u | s),~u\in\mathcal{U}$ and is independent of all other past information, i.e. $Pr(u[t] |\mathcal{F}_t) = \pi(u[t] |s[t])$,~$u[t]\in\mathcal{U}$, $s[t]\in\mathcal{S}$ and $\mathcal{F}_t$ is the past information up to time 
$t$.}, 
with bounded expected recurrence time. Under this assumption, the renewals for the MDP can be defined as successive revisitations to the recurrent state, and the action set $\mathcal{A}$ in such scenario is defined as the set of all randomized stationary policies that can be implemented in one renewal frame. 
Thus, our renewal system formulation includes ergodic MDPs. We refer to \cite{Al99}, \cite{Be01}, and \cite{Ro02} for more details on MDP theory and related topics. We also refer readers to Chapter 5 for more MDP specific algorithms and analysis.

\subsection{The Drift-plus-penalty(DPP) ratio algorithm}
In this section, we introduce the classical DPP ratio algorithm solving \eqref{problem-3}-\eqref{problem-4}
( \cite{neely2010stochastic}, \cite{neely2013dynamic}). It is a frame-based algorithm which updates parameters at the beginning of each frame. 
We start by defining the ``virtual queues'' $Q_l[k]$ for each constraint with $Q_l[0] = 0$ and 
\[
Q_l[k+1] = \max\{Q_l[k] + z_l(\alpha_k) - d_lT(\alpha_k), 0\},
\]
which is updated per frame. 
Let $\mathbf Q[t]$ be the vector of virtual queues. Define the \textit{drift} as follows:
\[
\Delta[k] : = \frac12(\|\mathbf Q[k+1]\|_2^2 - \|\mathbf Q[k]\|_2^2),
\]
Let $\mathcal{F}_k$ be the system history up to $t_k-1$, which includes $\{y(\alpha_j)\}_{j=0}^{t-1}$, $\{\mathbf z(\alpha_j)\}_{j=0}^{t-1}$ 
Then, it is easy to show that
\[
\expect{\Delta[k]|\mathcal{F}_k} \leq B + \sum_{l=1}^LQ_l[k]\expect{z_l(\alpha_k)-d_lT(\alpha_k)|\mathcal{F}_k}.
\]
Assuming that the second moment of $z_l(\alpha_k)-d_lT(\alpha_k)$ exists, then, there exists a constant $B$ such that 
$$
B\geq \frac12\expect{(z_l(\alpha_k)-d_lT(\alpha_k))^2|\mathcal{F}_k}.
$$
We define the DPP expression as $\Delta[k] + Vy(\alpha_k)$, where $V>0$ is a trade-off parameter, which has the following bound:
\begin{align}
&\expect{\Delta[k] + Vy(\alpha_k)~|\mathcal{F}_k}\nonumber\\
\leq& B + \sum_{l=1}^LQ_l[k]\expect{z_l(\alpha_k)-d_lT(\alpha_k)|\mathcal{F}_k} + V\expect{y(\alpha_k)~|\mathcal{F}_k} \label{eq:simple-dpp-ub-pre}\\
=& B + \expect{T(\alpha_k)|\mathcal F_k}
\underbrace{\frac{V\expect{y(\alpha_k)~|\mathcal{F}_k} + \sum_{l=1}^LQ_l[k]\expect{z_l(\alpha_k)-d_lT(\alpha_k)|\mathcal{F}_k}}{\expect{T(\alpha_k)|\mathcal F_k}}}_{\text{minimize this}}.  \label{eq:simple-dpp-ub}
\end{align}
Then, the algorithm (Algorithm \ref{dpp-ratio-algorithm}) aims at minimizing the ratio on the right hand side.
\begin{algorithm}
\begin{Alg}\label{dpp-ratio-algorithm}
DPP ratio algorithm: Fix a trade-off parameter $V>0$.
\begin{itemize}
\item 
At the beginning of each frame, the proposed algorithm takes action $\alpha_k$ in order to minimize the ratio
\begin{equation}\label{eq:simple-ratio}
\frac{V\expect{y(\alpha_k)~|\mathcal{F}_k} + \sum_{l=1}^LQ_l[k]\expect{z_l(\alpha_k)|\mathcal{F}_k}}{\expect{T(\alpha_k)|\mathcal F_k}}.
\end{equation}
\item Update the virtual queue $\mathbf Q[k]$ via 
\begin{equation}\label{eq:Q-update}
Q_l[k+1] = \max\{Q_l[k] + z_l(\alpha_k) - d_lT(\alpha_k), 0\}.
\end{equation}
\end{itemize}
\end{Alg}
\end{algorithm}
Note that due to the renewal property of the system, maximizing the above ratio is the same as maximizing the ratio:
\[
\frac{V\expect{y(\alpha_k)} + \sum_{l=1}^LQ_l[k]\expect{z_l(\alpha_k)}}{\expect{T(\alpha_k)}}.
\]

\subsection{A (somewhat) simple illustrative performance analysis}\label{sec:simple-analysis}
The performance of this algorithm has been shown in a number of works (\cite{neely2010stochastic,neely2013dynamic}). We reproduce the proof here but from a somewhat different perspective compared to previous works since it is more illustrative for our purpose and serves as the foundation of our new analysis later.

The key step, which is repeatedly used throughout the thesis is as follows: Since our proposed algorithm minimizes \eqref{eq:simple-ratio}, it must satisfy:
\begin{equation}\label{eq:key-simple}
\frac{V\expect{y(\alpha_k)~|\mathcal{F}_k} + \sum_{l=1}^LQ_l[k]\expect{z_l(\alpha_k)|\mathcal{F}_k}}{\expect{T(\alpha_k)|\mathcal F_k}}\leq 
\frac{V\expect{y(\alpha_k^*)} + \sum_{l=1}^LQ_l[k]\expect{z_l(\alpha_k^*)}}{\expect{T(\alpha_k^*)}}
\end{equation}
for any i.i.d. decisions $\alpha_k^*$, where we use the fact that the $\alpha_k^*$ is independent of history $\mathcal F_{k}$ and thus the conditioning on the right hand side can be omitted. In particular, we can choose $\alpha_k^*$ to be the solution to \eqref{problem-5}-\eqref{problem-6} and let 
$[f^*,\mf g^*] = [\expect{y(\alpha_k^*)}/\expect{T(\alpha_k^*)}, \expect{\mf z(\alpha_k^*)}/\expect{T(\alpha_k^*)}]$ be the optimal performance vector. 
Rearranging terms in above inequality gives
\[
\expect{\l.V(y(\alpha_k)- f^*T(\alpha_k)) +\sum_{l=1}^LQ_l[k](z_l(\alpha_k) - g_l^*T(\alpha_k)) ~\r|\mathcal{F}_k}\leq 0.
\]
This implies that the expression inside the expectation is a \textit{supermartingale difference sequence}, a fact not necessarily needed here but is the key to our new analysis later. Now, taking expectation from both sides and sum up from $k= 0$ to $K-1$ give
\[
\sum_{k=0}^{K-1}\expect{V(y(\alpha_k)- f^*T(\alpha^k)) +\sum_{l=1}^LQ_l[k](z_l(\pi_k) - g_l^*T(\pi^k))}\leq 0.
\]
Substituting $g_l^*\leq d_l$ gives
\begin{equation}\label{eq:dpp-pre-0}
\sum_{k=0}^{K-1}\expect{V(y(\alpha_k)- f^*T(\alpha_k)) +\sum_{l=1}^LQ_l[k](z_l(\alpha_k) - d_lT(\alpha_k))}\leq 0.
\end{equation}
On the other hand, taking expectation from both sides of the inequality \eqref{eq:simple-dpp-ub-pre} and sum up from $k= 0$ to $K-1$ gives
\[
\frac{\expect{\|\mf Q[k+1]\|_2^2}}{2} +\sum_{k=0}^{K-1}\expect{Vy(\alpha_k)}
\leq \sum_{k=0}^{K-1}\expect{Vy(\alpha_k) +\sum_{l=1}^LQ_l[k](z_l(\alpha_k) - d_l T(\alpha_k))} +BK.
\]
Sum the above inequality and \eqref{eq:dpp-pre-0} gives
\begin{equation}\label{eq:simple-final}
\frac{\expect{\|\mf Q[k+1]\|_2^2}}{2} +\sum_{k=0}^{K-1}\expect{Vy(\alpha_k)}\leq V\sum_{k=0}^{K-1}f^*\expect{T(\alpha_k)} +BK.
\end{equation}
This bound ``kills two birds in the same cage'', allowing us to get objective bound and constraint violations at the same time immediately. On one hand, since $\expect{\|\mf Q[k+1]\|_2^2}\geq0$, we have
\begin{equation}\label{eq:simple-obj}
\frac{\sum_{k=0}^{K-1}\expect{Vy(\alpha_k)}}{\sum_{k=0}^{K-1}\expect{T(\alpha_k)}}\leq f^* + \frac{B}{V},
\end{equation}
On the other hand, Let $C$ be a constant such that $C\geq |\expect{y(\pi)}|,~T_{\max}\geq|\expect{T(\pi^k)}|~\forall \pi\in\Pi$,
\begin{equation}\label{eq:simple-constraint}
\expect{\|\mf Q[k+1]\|_2}\leq \sqrt{2BK + 4VK(C+T_{\max})}~~\Rightarrow
\frac{\sum_{k=0}^{K-1}\expect{z_l(\alpha_k)}}{\sum_{k=0}^{K-1}\expect{T(\alpha_k)}}\leq \sqrt{\frac{ 2B + 4V(C+T_{\max})}{K}},
\end{equation}
which follows from the virtual queue updating rule \eqref{eq:Q-update} that 
$\expect{\|\mf Q[k+1]\|_2}\geq \sum_{k=0}^{K-1}\expect{z_l(\alpha_k)}$ and $\sum_{k=0}^{K-1}\expect{z_l(\alpha_k)}\geq K$.

\begin{remark}
The bounds \eqref{eq:simple-obj}, \eqref{eq:simple-constraint} are not the tightest possible bounds, but (I believe) simple enough to highlight the key steps.
\end{remark}

\section{The coupled renewal systems}

So far readers have gain some understanding on the renewal systems we will talk about throughout the thesis. In this section, we introduce our coupled renewal system model. Many of the notations are the same as those of the last section except we add a superscript $n$ to index the renewal systems.
Consider $N$ renewal systems that operate over a slotted timeline ($t \in \{0, 1, 2, \ldots\}$).  
The timeline for each system $n \in \{1, \ldots, N\}$ is segmented into back-to-back intervals, which are renewal frames. 
The duration of each renewal frame is a random positive integer with distribution that depends on a control action chosen by the system at the start of the frame.  The decision at each renewal frame also determines the penalty and a vector of performance metrics during this frame.  The systems are coupled by time average constraints placed on these metrics over all systems.  The goal is to design a decision strategy for each system so that overall time average penalty is minimized subject to time average constraints. 

We use $k=0, 1,2,\cdots$ to index the renewals. Let $t^n_k$ be the time slot corresponding to the $k$-th renewal of the $n$-th system with the convention that $t^n_0=0$. Let $\mathcal{T}^{n}_{k}$ be the set of all slots from $t^n_k$ to $t^n_{k+1}-1$.
At time $t^n_k$, the $n$-th system chooses a possibly random decision $\alpha^n_k$ in a set $\mathcal{A}^n$. This action determines the distributions of the following random variables:
\begin{itemize}
\item The duration of the $k$-th renewal frame $T^n_k:=t^n_{k+1}-t^n_k$, which is a positive integer.
\item A vector of performance metrics at each slot of that frame 
$\mathbf{z}^n[t]:=\left(z^n_1[t],~z^n_2[t],~\cdots,~z^n_L[t]\right)$,\\
$t\in\mathcal{T}^{n}_{k}$.
\item A penalty incurred at each slot of the frame $y^n[t]$, $t\in\mathcal{T}^{n}_{k}$.
\end{itemize}
We assume each system has the \textit{renewal property} as defined in Definition \ref{def:renewal} that given $\alpha^n_k=\alpha^n\in\mathcal{A}^n$, the random variables $T^n_k$, $\mathbf{z}^n[t]$ and $y^n[t]$,~$t\in\mathcal{T}^n_k$ are independent of the information of all systems from the slots before $t^n_k$ with the following \textit{known} conditional expectations $\expect{\left.T^n_k\right|\alpha^n_k=\alpha^n}$, $\expect{\left.\sum_{t\in\mathcal{T}^n_k}y^n[t]\right|\alpha^n_k=\alpha^n}$ and $\expect{\left.\sum_{t\in\mathcal{T}^n_k}\mathbf{z}^n[t]\right|\alpha^n_k=\alpha^n}$.  Fig. \ref{fig:simple-renewal} plots a sample timeline of three parallel renewal systems.

\begin{figure}[htbp]
\centering
   \minipage{0.6\textwidth}
   \includegraphics[width=\linewidth]{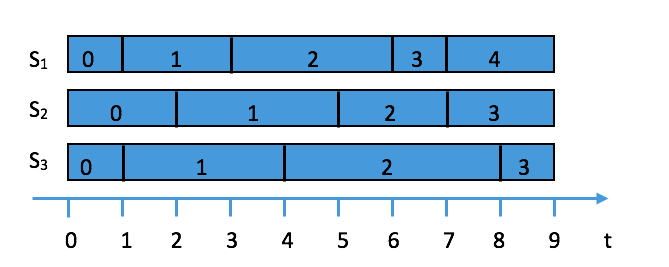} 
   \caption{The sample timelines of three asynchronous parallel renewal systems, where the numbers underneath the figure index time slots and the numbers inside the blocks index the renewals of each system.}
   \label{fig:simple-renewal}
   \endminipage
\end{figure}

To make the framework a little bit more general, we introduce an uncontrollable external i.i.d. random process $\{\mathbf{d}[t]\}_{t=0}^{\infty}\subseteq\mathbb{R}^L$ which can be observed during each time slot. Let $d_l:=\expect{d_l[t]}$. 
The expectation of $\mathbf{d}[t]$ often serves as the constraints of corresponding performance metrics. As we shall see in the example application on an energy-aware scheduling problem, $\mathbf{z}^n[t]$ and $\mathbf{d}[t]$ could represent vectors of job services and arrivals for difference classes, respectively, and the constraints are that the time average service is no less than the time average of arrivals for all classes of jobs.   
 The goal is to minimize the total time average penalty of these $N$ renewal systems subject to $L$ total time average constraints on the performance metrics related to the external i.i.d. process, i.e. we aim to solve the following optimization problem:
\begin{align}
\min~~&\limsup_{T\rightarrow\infty}\frac{1}{T}\sum_{t=0}^{T-1}\sum_{n=1}^N\expect{y^n[t]}\label{prob-1}\\
\textrm{s.t.}~~ &\limsup_{T\rightarrow\infty}\frac{1}{T}\sum_{t=0}^{T-1}\sum_{n=1}^N\expect{z^n_l[t]}\leq d_l,~~l\in\{1,2,\cdots,L\}.\label{prob-2}
\end{align}

\section{Example Applications and previous works}\label{sec:application}

\subsection{Multi-server energy-aware scheduling}\label{sec:server-app}
Consider a slotted time system with $L$ classes of jobs and $N$ servers. Job arrivals are Poisson distributed with rates $\lambda_1,~\cdots,~\lambda_L$, respectively. These jobs are stored in separate queues denoted as $Q_1[t],~\cdots,~Q_L[t]$ in a router waiting to be served. Assume the system is empty at time $t=0$ so that $Q_l[0]=0,~\forall l\in\{1,2,\cdots,L\}$. Let $\lambda_l[t]$ be the precise number of class $l$ job arrivals at slot $t$, then, we have $\expect{\lambda_l[t]}=\lambda_l,~\forall l\in\{1,2,\cdots,L\}$. 
Let $\mu^n_l[t]$ and $e^n[t]$ be the number of class $l$ jobs served and the energy consumption for server $n$ at time slot $t$, respectively. Fig. \ref{fig:multi-server} sketches an example architecture of the system with 3 classes of jobs and 10 servers.

Each server makes decisions over renewal frames and the first frame starts at time slot $t=0$. Successive renewals can happen at different slots for different servers. For the $n$-th server, at the beginning of the $k$-th frame ($k\in\mathbb{N}$), it chooses a processing mode $m^n_k$ within the set of all modes $\mathcal{M}^n$. 
The processing mode $m_k^n$ determines distributions on the number of jobs served, the service time, and the energy expenditure, with conditional expectations: 
\begin{itemize}
\item $\widehat{T}^n(m^n_k):=\expect{\left.T^n_k\right|~m^n_k}$. The expected frame size. 
\item
$\widehat{\mu}^n_l(m^n_k)= \expect{\left.\sum_{t\in\mathcal{T}^n_k}\mu^n_l[t]\right|~m^n_k}$. The expected number of class $l$ jobs served. 
\item $\widehat{e}^n(m^n_k)= \expect{\left.\sum_{t\in\mathcal{T}^n_k}e^n[t]\right|~m^n_k}$. The expected energy consumption.
\end{itemize}

The goal is to minimize the time average energy consumption, subject to the queue stability constraints, i.e.
\begin{align}
\min~~&\limsup_{T\rightarrow\infty}\frac1T\sum_{t=0}^{T-1}\sum_{n=1}^N\expect{e^n[t]}\label{ews-1}\\
s.t.~~&\liminf_{T\rightarrow\infty}\frac1T\sum_{t=0}^{T-1}\sum_{n=1}^N\expect{\mu^n_l[t]}
\geq\lambda_l,~\forall l\in\{1,2,\cdots,L\}.\label{ews-2}
\end{align}
Thus, we have formulated the problem into the form \eqref{prob-1}-\eqref{prob-2}.
Note that the external process in this example is the arrival process of $L$ classes of jobs with potentially unknown arrival rates $\lambda_l$.

\begin{figure}[htbp]
\centering
   \minipage{0.6\textwidth}
   \includegraphics[width=\linewidth]{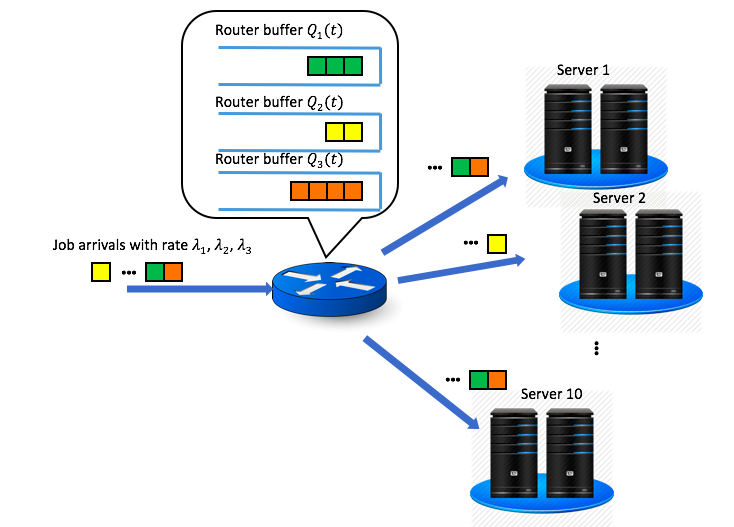} 
   \caption{Illustration of an energy-aware scheduling system with 3 classes of jobs and 10 parallel servers.}
   \label{fig:multi-server}
   \endminipage
\end{figure}

\subsection{Coupled ergodic MDPs}\label{sec:MDP}
Consider $N$ discrete time Markov decision processes (MDPs) over an infinite horizon.  Each MDP
consists of a finite state space $\mathcal{S}^n$, and an action space $\mathcal{U}^n$ at each state $s\in\mathcal{S}^n.$\footnote{To simplify the notation, we assume each state has the same action space $\mathcal{A}^n$. All our analysis generalizes trivially to states with different action spaces.} For each state $s\in\mathcal{S}$, we use $P_u^n(s,s')$ to denote the transition probability from $s\in\mathcal{S}^n$ to $s'\in\mathcal{S}^n$ when taking action $u\in\mathcal{U}^n$, i.e. 
\[
P_u^n(s,s')  = Pr(s[t+1]=s'~|~s[t] = s,~u[t]=u),
\]
where $s[t]$ and $u[t]$ are state and action at time slot $t$. 

At time slot $t$, after observing the state $s[t]\in\mathcal{S}^n$ and choosing the action $u[t]\in\mathcal{U}^n$, the n-th MDP receives a penalty $y^n(u[t],s[t])$ and $L$ types of resource costs $z_{1}^n(u[t],s[t]),\cdots,z_{L}^n(u[t],s[t])$, where these functions are all bounded mappings from $\mathcal{S}^n\times\mathcal{U}^n$ to $\mathbb{R}$. For simplicity we write $y^n[t] = y^n(u[t],s[t])$ and $z_l^n[t] = z_l^n(u[t],s[t])$. The goal is to minimize the time average overall penalty with constraints on time average overall costs, where these MDPs are weakly coupled through the time average constraints. This problem 
can be written in the form \eqref{prob-1}-\eqref{prob-2}.

In order to define the renewal frame, we need one more assumption on the MDPs. We assume each of the MDPs is \textit{ergodic}, i.e. there exists a state which is recurrent and the corresponding Markov chain is aperiodic under any randomized stationary policy, 
with bounded expected recurrence time. Under this assumption, the renewals for each MDP can be defined as successive revisitations to the recurrent state, and the action set $\mathcal{A}^n$ in such scenario is defined as the set of all randomized stationary policies that can be implemented in one renewal frame. 
Thus, our formulation includes coupled ergodic MDPs. We refer to \cite{Al99}, \cite{Be01}, and \cite{Ro02} for more details on MDP theory and related topics.

As a side remark, this multi-MDP problem can be viewed as a single MDP on an enlarged state space. Constrained MDPs are discussed previously in \cite{Al99}.
One can show that under the previous ergodic assumption, the minimum of \eqref{prob-1}-\eqref{prob-2} is achieved by a randomized stationary policy, and furthermore, such a policy can be obtained via solving a linear program reformulated from \eqref{prob-1}-\eqref{prob-2} offline. However, formulating such LP requires the knowledge of all the parameters in the problem, including the statistics of the external process $\{\mathbf{d}[t]\}_{t=0}^\infty$, and the resulting LP is often computationally intractable when the number of MDPs is very large. 

\subsection{Why this problem is difficult}
Compared to \eqref{problem-1}-\eqref{problem-2}, this problem is much more challenging because these $N$ systems are weakly coupled by the time average constraints \eqref{prob-2}, yet each of them operates over its own renewal frames. The renewals of different systems do not have to be synchronized and they do not have to occur at the same rate (e.g. see Fig. \ref{fig:simple-renewal}). Our goal is to develop an algorithm that does not need the knowledge of $d_l=\expect{d_l[t]}$ with a provable performance guarantee.

Note that due to the asynchronicity, the DPP ratio algorithm (Algorithm \ref{dpp-ratio-algorithm}) does not apply. More specifically, in order to cope with the time average constraints, Algorithm \ref{dpp-ratio-algorithm} introduces virtual queues to penalize constraint violations. These virtual queues are then updated frame-wise and the analysis is also on the per frame scale of this particular system. For parallel renewal systems, it is however not clear what is a proper scale to update the virtual queues. 

Naturally, one would think of introducing a virtual queue for each constraint and update the queue whenever at least one of the systems starts its new renewal frame. However, this means for those systems who have yet to reach the renewal, we are updating algorithm parameters in the middle of the renewal of these systems. This creates grave difficulties on how to piece together the analysis from each individual systems. On the other hand, since time is slotted, one could also think of 
``giving up'' the notion of renewals, synchronizing all systems on the slot scale and designing a slot-based algorithm. However, this does not make the problem any simpler since by doing so, the algorithm can still update algorithm parameters in the middle of renewals.

Prior approaches treat this challenge only in special cases. 
The works \cite{Ne12} and \cite{Neely12} consider a special case where all quantities introduced above are deterministic functions of the actions.  The work in \cite{neely2011online} develops a two stage algorithm for stochastic multi-renewal systems, but the first stage must be solved offline.

On the other hand, for the special case where the system is a coupled Markov decision processes (MDPs). Classical methods for MDPs,  such as dynamic programming and linear programming \cite{bertsekas1995dynamic}\cite{puterman2014markov}\cite{Ro02},  can be used to solve this problem. However, it can be
impractical for two reasons:
First, the state space has dimension that depends on the number of renewal systems, making solutions difficult  when the number of renewal systems is large.  Second, some statistics of the system, such as the average $\mf d[t]$ process governing the resource constraints, can be unknown.

\subsection{Other works related to renewal and asynchronous optimization}

The problem considered in the current paper is a generalization of optimization over a single renewal system. It is shown in \cite{Ne09} that for the single renewal system with finite action set, the problem can be solved (offline) via a linear fractional program. Methods for solving linear fractional programs can be found in \cite{BV04} and \cite{Sc83}. The \textit{drift-plus-penalty ratio} approach is also developed in  \cite{Neely2010} and \cite{neely2013dynamic} for the single renewal system.

Note that there are also many other algorithms which consider ``asynchronous optimization'' in a different sense compare to ours. More specifically, the works \cite{BT97}\cite{BGPS06}\cite{SN11} \cite{PXYY15} consider the scenario where the asynchronicity shown in Fig. \ref{fig:simple-renewal} results from uncontrollable delays due to environmental uncertainties. These delays are of fixed distributions independent of the actions or even deterministic. Thus, the delays do not appear in the optimization objectives.

On the other hand, our problem is also related to the multi-server scheduling as is shown in one of the example applications. When assuming proper statistics of the arrivals and/or services, energy optimization problems in multi-server systems can also be treated via queueing theory. Specifically, by assuming both arrivals and services are Poisson distributed, \cite{GDHS13} treats the multi-server system as an M/M/k/setup queue and explicitly computes several performance metrics via the renewal reward theorem. By assuming arrivals are Poisson and only one server, \cite{LN14} and \cite{Yao02} treat the system as a multi-class M/G/1 queue and optimize the average energy consumption via polymatroid optimization.

\section{Outline and our contributions}
The rest of the thesis is organized as follows:
\begin{itemize}
\item \textbf{Chapter 2:}(published in \cite{wei2018asynchronous}) We develop a new algorithm for the general asynchronous renewal optimization, where each system operates on its own renewal frame. It is fully analyzed with convergence as well as convergence time results. 
 As a first technical contribution, we fully characterize the fundamental performance region of the problem \eqref{prob-1}-\eqref{prob-2}.
 We then construct a supermartingale along with a stopping-time to ``synchronize'' all systems on a slot basis, by which we could piece together analysis of each individual system to prove the convergence of the proposed algorithm. Furthermore, encapsulating this new idea into convex analysis tools, we
 prove the $\mathcal{O}(1/\varepsilon^2)$ convergence time of the proposed algorithm to reach $\mathcal{O}(\varepsilon)$ near optimality under a mild assumption on the existence of a Lagrange multiplier. Specifically, we show that for any accuracy $\epsilon>0$ and any time
$T\geq1/\varepsilon^2$, the sequence $\{y^n[t]\}$ and $\{\mathbf{z}^n[t]\}$ produced by our algorithm satisfies,
\begin{align*}
&\frac1T\sum_{t=0}^{T-1}\sum_{n=1}^N\expect{y^n[t]}\leq f_*+\mathcal{O}(\varepsilon),\\
&\frac1T\sum_{t=0}^{T-1}\sum_{n=1}^N\expect{z^n_l[t]}\leq d_l+\mathcal{O}(\varepsilon),
l\in\{1,2,\cdots,L\},
\end{align*}
where $f_*$ denotes the optimal objective value of \eqref{prob-1}-\eqref{prob-2}.
Simulation experiments on the aforementioned multi-server energy-aware scheduling problem also demonstrate the effectiveness of the proposed algorithm.

\item \textbf{Chapter 3 Data center server provision:} (published in \cite{wei2017data})
We consider a cost minimization problem
for data centers with N servers and randomly arriving service
requests. A central router decides which server to use for each
new request.
We formulate this problem as an asynchronous renewal optimization, and develop a
distributed control algorithm so that each server makes its own
decisions, the request queues are bounded and the overall time
average cost is near optimal with probability 1. The algorithm
does not need probability information for the arrival rate or
job sizes. Next, an improved algorithm that uses a single queue
is developed via a ``virtualization'' technique which is shown to
provide the same (near optimal) costs. Simulation experiments
on a real data center traffic trace demonstrate the efficiency of
our algorithm compared to other existing algorithms.

\item \textbf{Chapter 4 Multi-user file downloading:} (published in \cite{wei2015power})
We treat power-aware throughput maximization
in a multi-user file downloading system. Each user can
receive a new file only after its previous file is finished. The
file state processes for each user act as coupled Markov chains
that form a generalized restless bandit system. First, an optimal
algorithm is derived for the case of one user. The algorithm
maximizes throughput subject to an average power constraint.
Next, the one-user algorithm is extended to a low complexity
heuristic for the multi-user problem. The heuristic uses a simple
online index policy. In a special case with no power-constraint,
the multi-user heuristic is shown to be throughput optimal.
Simulations are used to demonstrate effectiveness of the heuristic
in the general case. For simple cases where the optimal solution
can be computed offline, the heuristic is shown to be near-optimal
for a wide range of parameters.

\item \textbf{Chapter 5 Opportunistic Scheduling over Renewal Systems:} (published in \cite{wei2016online})
In this chapter, we consider an opportunistic scheduling problem over a single renewal system. Different from previous chapters, we consider teh scenario where at the beginning of each renewal frame, the  controller observes a random event  and then chooses an action in response to the event, which affects the duration of the frame, the amount of resources used, and a penalty metric. The goal is to make frame-wise decisions so as to minimize the time average penalty subject to time average resource constraints. This problem has applications to task processing and communication in data networks, as well as to certain classes of Markov decision problems. 
We formulate the problem as a dynamic fractional program and propose an adaptive algorithm which uses an empirical accumulation as a feedback parameter. A key feature of the proposed algorithm is that it does not require knowledge of the random event statistics and potentially allows (uncountably) infinite event sets. We prove the algorithm satisfies all desired constraints and achieves $O(\epsilon)$ near optimality with probability 1.

\item \textbf{Chapter 6 Online Learning in Weakly Coupled Markov Decision
Processes:} (published in \cite{wei2018online})
In this chapter, we consider a special case of the multiple parallel renewal systems,
namely, the parallel Markov decision processes coupled by
global constraints, where the time varying objective and constraint functions can only be
observed after the decision is made.
Special attention is given to how well the decision maker can perform in $T$ slots, starting from any state, compared to the best feasible randomized stationary policy in hindsight. We develop a new distributed online algorithm where each MDP makes its own decision each slot after observing a multiplier computed from past information. 
While the scenario is significantly more challenging than the classical online learning context, the algorithm is shown to have a tight $O(\sqrt{T})$ regret and constraint violations simultaneously.  To obtain such a bound, we combine several new ingredients including ergodicity and mixing time bound in weakly coupled MDPs, a new regret analysis for online constrained optimization, a drift analysis for queue processes, and a perturbation analysis based on Farkas' Lemma.

\end{itemize}



\chapter{Asynchronous Optimization over Weakly Coupled Renewal Systems}

In this chapter, we present our asynchronous algorithm along with the new analysis. Along the way, we try to provide some intuitions and high level ideas of the analysis. 

Consider $N$ renewal systems that operate over a slotted timeline ($t \in \{0, 1, 2, \ldots\}$).  
The timeline for each system $n \in \{1, \ldots, N\}$ is segmented into back-to-back intervals, which are renewal frames. 
The duration of each renewal frame is a random positive integer with distribution that depends on a control action chosen by the system at the start of the frame.  The decision at each renewal frame also determines the penalty and a vector of performance metrics during this frame.  The systems are coupled by time average constraints placed on these metrics over all systems.  The goal is to design a decision strategy for each system so that overall time average penalty is minimized subject to time average constraints. 

Recall that we use $k=0, 1,2,\cdots$ to index the renewals. Let $t^n_k$ be the time slot corresponding to the $k$-th renewal of the $n$-th system with the convention that $t^n_0=0$. Let $\mathcal{T}^{n}_{k}$ be the set of all slots from $t^n_k$ to $t^n_{k+1}-1$.
At time $t^n_k$, the $n$-th system chooses a possibly random decision $\alpha^n_k$ in a set $\mathcal{A}^n$. This action determines the distributions of the following random variables:
\begin{itemize}
\item The duration of the $k$-th renewal frame $T^n_k:=t^n_{k+1}-t^n_k$, which is a positive integer.
\item A vector of performance metrics at each slot of that frame 
$\mathbf{z}^n[t]:=\left(z^n_1[t],~z^n_2[t],~\cdots,~z^n_L[t]\right)$,\\
$t\in\mathcal{T}^{n}_{k}$.
\item A penalty incurred at each slot of the frame $y^n[t]$, $t\in\mathcal{T}^{n}_{k}$.
\end{itemize}

We assume each system has the \textit{renewal property} as defined in Definition \ref{def:renewal} that given $\alpha^n_k=\alpha^n\in\mathcal{A}^n$, the random variables $T^n_k$, $\mathbf{z}^n[t]$ and $y^n[t]$,~$t\in\mathcal{T}^n_k$ are independent of the information of all systems from the slots before $t^n_k$ with the following \textit{known} conditional expectations $\expect{\left.T^n_k\right|\alpha^n_k=\alpha^n}$, $\expect{\left.\sum_{t\in\mathcal{T}^n_k}y^n[t]\right|\alpha^n_k=\alpha^n}$ and $\expect{\left.\sum_{t\in\mathcal{T}^n_k}\mathbf{z}^n[t]\right|\alpha^n_k=\alpha^n}$.


\section{Technical preliminaries}\label{sec:chap-2-pre}
Throughout the chapter, we make the following basic assumptions.
\begin{assumption}\label{feasible-assumption}
The problem \eqref{prob-1}-\eqref{prob-2} is feasible, i.e. there are action sequences 
$\{\alpha_k^n\}_{k=0}^\infty$ for all $n\in\{1,2,\cdots,N\}$ so that the corresponding process $\{\mathbf{z}^n[t]\}_{t=0}^\infty$ satisfies the constraints \eqref{prob-2}.
\end{assumption}

Following this assumption, we define $f_*$ as the infimum objective value for \eqref{prob-1}-\eqref{prob-2} over all decision sequences that satisfy the constraints.

\begin{assumption}[Boundedness]\label{bounded-assumption}
For any $k\in\mathbb{N}$ and any $n\in\{1,2,\cdots,N\}$, there exist absolute constants $y_{\max}$, $z_{\max}$ and $d_{\max}$ such that
\begin{align*}
|y^n[t]|\leq y_{\max},~~|z^n_l[t]|\leq z_{\max},~~|d_l[t]|\leq d_{\max},~~\forall t\in\mathcal{T}^n_k,
~\forall l\in\{1,2,\cdots,L\}.
\end{align*}
Furthermore, 
there exists an absolute constant $B\geq1$ such that for every fixed $\alpha^n\in\mathcal{A}^n$ and every $s\in\mathbb{N}$ for which $Pr(T^n_k\geq s|\alpha^n_k=\alpha^n)>0$,
\begin{equation}\label{residual-life-bound}
\expect{\left.(T^n_k-s)^2\right|~\alpha^n_k=\alpha^n,T^n_k\geq s}\leq B.
\end{equation}
\end{assumption}
\begin{remark}
The quantity $T^n_k-s$ is usually referred to as the residual lifetime. In the special case where $s=0$, \eqref{residual-life-bound} gives the uniform second moment bound of the renewal frames as
\[\expect{\left.(T^n_k)^2\right|~\alpha^n_k=\alpha^n}\leq B.\]
Note that \eqref{residual-life-bound} is satisfied for a large class of problems. In particular, it can be shown to hold in the following three cases:
\begin{enumerate}
\item If the inter-renewal $T^n_k$ is deterministically bounded.
\item If the inter-renewal $T^n_k$ is geometrically distributed.
\item If each system is a finite state ergodic MDP with a finite action set.
\end{enumerate}
\end{remark}

\begin{definition}\label{PV-def}
For any $\alpha^n\in\mathcal{A}^n$, let
\[\widehat{y}^n(\alpha^n):=\expect{\left.\sum_{t\in\mathcal{T}^n_k}y^n[t]\right|\alpha^n_k=\alpha^n},~
~\widehat{z}^n_l(\alpha^n):=\expect{\left.\sum_{t\in\mathcal{T}^n_k}z^n_l[t]\right|\alpha^n_k=\alpha^n},\]
and $\widehat{T}^n(\alpha^n):=\expect{T^n_k|\alpha^n_k=\alpha^n}$. Define
\begin{align*}
&\widehat{f}^n(\alpha^n):=\widehat{y}^n(\alpha^n)/\widehat{T}^n(\alpha^n),\\
&\widehat{g}^n_l(\alpha^n):=\widehat{z}^n_l(\alpha^n)/\widehat{T}^n(\alpha^n),~\forall l\in\{1,2,\cdots,L\},
\end{align*}
and let $\left(\widehat{f}^n(\alpha^n),~\widehat{\mathbf{g}}^n(\alpha^n)\right)$ be a performance vector under the action $\alpha^n$.
\end{definition}

Note that by Assumption \ref{bounded-assumption}, $\widehat{y}^n(\alpha^n)$ and $\widehat{\mathbf{z}}^n(\alpha^n)$ in Definition \ref{PV-def} are both bounded, and $T^n_k\geq1,~\forall k\in\mathbb{N}$, thus, the set $\left\{\left(\widehat{f}^n(\alpha^n),~\widehat{\mathbf{g}}^n(\alpha^n)\right),~\alpha^n\in\mathcal{A}^n\right\}$ is also bounded. The following mild assumption 
states that this set is also closed.

\begin{assumption}\label{compact-assumption}
 The set $\left\{\left(\widehat{f}^n(\alpha^n),~\widehat{\mathbf{g}}^n(\alpha^n)\right),~\alpha^n\in\mathcal{A}^n\right\}$ is compact.
\end{assumption}
The motivation of this assumption is to guarantee that there always exists at least one solution to each subproblem in our algorithm. Finally, we define the performance region of each individual system as follows.

\begin{definition}\label{PR-def}
Let $\mathcal{S}^n$ be the convex hull of $\left\{\left(\widehat{y}^n(\alpha^n),~\widehat{\mathbf{z}}^n(\alpha^n),~\widehat{T}^n(\alpha^n)\right):~\alpha^n\in\mathcal{A}^n\right\}\subseteq\mathbb{R}^{L+2}$. Define
\[\mathcal{P}^n:=\left\{\left(y/T,~\mathbf{z}/T\right):~(y,\mathbf{z},T)\in\mathcal{S}^n\right\}\subseteq\mathbb{R}^{L+1}\]
as the performance region of system $n$.
\end{definition}

\section{Algorithm}\label{section:algorithm}

\subsection{Proposed algorithm}
In this section, we propose an algorithm where each system can make its own decision after observing a global vector of multipliers which is updated using the global information from all systems. We start by defining a vector of virtual queues $\mathbf{Q}[t]:=\left(Q_1[t],~Q_2[t],~\cdots,~Q_L[t]\right)$, which are 0 at $t=0$ and updated as follows,
\begin{align}
Q_l[t+1]=\max\left\{Q_l[t]+\sum_{n=1}^Nz_l^n[t]-d_l[t],~0\right\},~
l\in\{1,2,\cdots,L\}.\label{queue-update}
\end{align}
These virtual queues will serve as global multipliers to control the growth of corresponding resource consumptions.
Then, the proposed algorithm is presented in Algorithm \ref{proposed-algorithm}.
\begin{algorithm}
\begin{Alg}\label{proposed-algorithm}
Fix a trade-off parameter $V>0$:
\begin{itemize}
\item At the beginning of $k$-th frame of system $n$, the system observes the vector of virtual queues $\mathbf{Q}[t^n_k]$ and makes a decision $\alpha^n_k\in\mathcal{A}^n$ so as to solve the following subproblem:
\begin{align}\label{DPP-ratio}
D^n_k:=\min_{\alpha^n\in\mathcal{A}^n}
\frac{\expect{\left.\sum_{t\in\mathcal{T}^n_k}\left(Vy^n[t]+\dotp{\mathbf{Q}[t^n_k]}{\mathbf{z}^n[t]}\right)\right|\alpha^n_k=\alpha^n,\mathbf{Q}[t^n_k]}}{\expect{\left.T^n_k\right|\alpha^n_k=\alpha^n,\mathbf{Q}[t^n_k]}}.
\end{align}
\item Update the virtual queue after each slot:
\begin{align}
Q_l[t+1]=\max\left\{Q_l[t]+\sum_{n=1}^Nz_l^n[t]-d_l[t],~0\right\},~
l\in\{1,2,\cdots,L\}. \label{eq:virtual-queue-2}
\end{align}
\end{itemize}
\end{Alg}
\end{algorithm}

Note that using the notation specified in Definition \ref{PV-def}, we can rewrite \eqref{DPP-ratio} in a more concise way as follows:
\begin{align}\label{DPP-ratio-simple}
\min_{\alpha^n\in\mathcal{A}^n} \left\{V\widehat{f}^n(\alpha^n)+\dotp{\mathbf{Q}[t^n_k]}{\widehat{\mathbf{g}}^n(\alpha^n)}\right\},
\end{align}
which is a deterministic optimization problem. Then,
by the compactness assumption (Assumption \ref{compact-assumption}), there always exists a solution to this subproblem. 
\begin{remark}
We would like to compare this algorithm to the DPP ratio algorithm (Algorithm \ref{dpp-ratio-algorithm}). For each renewal system, both algorithms update the decision variable frame-wise based on the virtual queue value at the beginning of each frame. The major difference is that the proposed algorithm updates virtual queue slot-wise while Algorithm \ref{dpp-ratio-algorithm} updates virtual queues per frame. Such a seemingly small change, somewhat surprisingly, requires significant generalizations of the analysis on Algorithm \ref{dpp-ratio-algorithm}.
\end{remark}

This algorithm requires knowledge of the conditional expectations associated with the performance vectors $\left(\widehat{f}^n(\alpha^n),~\widehat{\mathbf{g}}^n(\alpha^n)\right),~\alpha^n\in\mathcal{A}^n$, but only requires individual systems $n$ to know their own $\left(\widehat{f}^n(\alpha^n),~\widehat{\mathbf{g}}^n(\alpha^n)\right),~\alpha^n\in\mathcal{A}^n$, and therefore decouples these systems. Furthermore, the virtual queue update uses observed $d_l[t]$ and does not require knowledge of distribution or mean of $d_l[t]$.

In addition, we introduce $\mathbf{Q}[t]$ as ``virtual queues'' for the following two reasons: First, it can be mapped to real queues in applications (such as the server scheduling problem mentioned in Section \ref{sec:server-app}), where $\mathbf{d}[t]$ stands for the arrival process and $\mathbf{z}[t]$ is the service process.
Second, stabilizing these virtual queues implies the constraints \eqref{prob-2} are satisfied, as is illustrated in the following lemma.

\begin{lemma}\label{lemma:queue-bound}
If $Q_l[0]=0$ and $\lim_{T\rightarrow\infty}\frac{1}{T}{\expect{Q_l[T]}}=0$, then, $\limsup_{T\rightarrow\infty}\frac{1}{T}\sum_{t=0}^{T-1}\sum_{n=1}^N\expect{z^n_l[t]}\leq d_l$.
\end{lemma}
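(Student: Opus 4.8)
The plan is the standard telescoping argument for virtual queues: use the update rule to lower-bound the one-step drift by the ``raw'' constraint slack, sum over a horizon to telescope, take expectations, and then divide by $T$ and send $T\to\infty$.

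First I would drop the projection in the queue update \eqref{eq:virtual-queue-2}. Since $\max\{x,0\}\geq x$ for all real $x$, we have for each $l$ and each $t$
\[
Q_l[t+1]\;\geq\;Q_l[t]+\sum_{n=1}^N z_l^n[t]-d_l[t],
\]
hence $\sum_{n=1}^N z_l^n[t]-d_l[t]\leq Q_l[t+1]-Q_l[t]$. Summing this inequality over $t=0,1,\dots,T-1$, the right-hand side telescopes, and using $Q_l[0]=0$ we obtain
\[
\sum_{t=0}^{T-1}\Bigl(\sum_{n=1}^N z_l^n[t]-d_l[t]\Bigr)\;\leq\;Q_l[T].
\]

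Next I would take expectations of both sides. Using that $\{\mathbf d[t]\}$ is i.i.d.\ with $\mathbb{E}[d_l[t]]=d_l$, this gives
\[
\sum_{t=0}^{T-1}\sum_{n=1}^N\expect{z_l^n[t]}-T d_l\;\leq\;\expect{Q_l[T]}.
\]
Dividing by $T$ yields $\frac1T\sum_{t=0}^{T-1}\sum_{n=1}^N\expect{z_l^n[t]}\leq d_l+\frac1T\expect{Q_l[T]}$, and taking $\limsup$ as $T\to\infty$ together with the hypothesis $\lim_{T\to\infty}\frac1T\expect{Q_l[T]}=0$ gives the claim $\limsup_{T\to\infty}\frac1T\sum_{t=0}^{T-1}\sum_{n=1}^N\expect{z_l^n[t]}\leq d_l$.

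There is no real obstacle here; the only points needing a word of care are that the projection makes the drift inequality one-sided (which is exactly what we want, since we only need an upper bound on the time-average of $\sum_n z_l^n[t]$), that $Q_l[0]=0$ kills the boundary term in the telescoping sum, and that all the sums are finite so the interchange of sum and expectation is trivially valid under the boundedness Assumption~\ref{bounded-assumption}.
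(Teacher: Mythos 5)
Your proposal is correct and follows essentially the same argument as the paper: lower-bound the one-step queue change by $\sum_{n}z_l^n[t]-d_l[t]$ using $\max\{x,0\}\geq x$, telescope with $Q_l[0]=0$, take expectations using $\expect{d_l[t]}=d_l$, divide by $T$, and invoke the hypothesis $\lim_{T\to\infty}\frac{1}{T}\expect{Q_l[T]}=0$. No gaps.
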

\begin{proof}[Proof of Lemma \ref{lemma:queue-bound}]
Fix $l\in\{1,2,\cdots,L\}$. For any fixed $T$, $Q_l[T]=\sum_{t=0}^{T-1}(Q_l[t+1]-Q_l[t])$. For each summand, by queue updating rule \eqref{queue-update},
\begin{align*}
Q_l[t+1]-Q_l[t]=&\max\left\{Q_l[t]+\sum_{n=1}^Nz^n_l[t]-d_l[t],~0\right\}-Q_l[t]\\
\geq&Q_l[t]+\sum_{n=1}^Nz^n_l[t]-d_l[t]-Q_l[t]=\sum_{n=1}^Nz^n_l[t]-d_l[t].
\end{align*}
Thus, by the assumption $Q_l[0]=0$,
$$Q_l[T]\geq\sum_{t=0}^{T-1}\left(\sum_{n=1}^Nz^n_l[t]-d_l[t]\right).$$
Taking expectations of both sides with $\expect{d_l[t]}=d_l,~\forall l$, gives
$$\expect{Q_l[T]}\geq\sum_{t=0}^{T-1}\left(\sum_{n=1}^N\expect{z^n_l[t]}-d_l\right).$$
Dividing both sides by $T$ and passing to the limit gives
\[\limsup_{T\rightarrow\infty}\frac1T\sum_{t=0}^{T-1}\left(\sum_{n=1}^N\expect{z^n_l[t]}-d_l\right)
\leq\lim_{T\rightarrow\infty}\frac{1}{T}{\expect{Q_l[T]}}=0,\]
finishing the proof.
\end{proof}


\subsection{Computing subproblems}

Since a key step in the algorithm is to solve the optimization problem \eqref{DPP-ratio-simple}, we make several comments on the computation of the ratio minimization \eqref{DPP-ratio-simple}.
In general, one can solve the ratio optimization problem \eqref{DPP-ratio} (therefore \eqref{DPP-ratio-simple}) via a bisection search algorithm. For more details, see section 7 of \cite{Neely2010}. However, more often than not, bisection search is not the most efficient one. We will discuss two special cases arising from applications where we can find a simpler way of solving the subproblem.

First of all, when there are only a finite number of actions in the set $\mathcal{A}^n$, one can solve \eqref{DPP-ratio-simple} simply via enumerating. This is a typical scenario in energy-aware scheduling where a finite action set consists of different processing modes that can be chosen by servers.

 Second, when the set $\left\{\left(\widehat{y}^n(\alpha^n),~\widehat{\mathbf{z}}^n(\alpha^n),~\widehat{T}^n(\alpha^n)\right):~\alpha^n\in\mathcal{A}^n\right\}$ specified in Definition \ref{PR-def} is itself a convex hull of a finite sequence 
 $\{(y_j,\mathbf{z}_j,T_j)\}_{j=1}^m$, then, \eqref{DPP-ratio-simple} can be rewritten as a simple enumeration:
 \[
\min_{i\in\{1,2,\cdots,m\}}~~\left\{V\frac{y_i}{T_i}+\dotp{\mathbf{Q}[t^n_k]}{\frac{\mathbf{z}_i}{T_i}}\right\}.
 \]
 To see this, note that by definition of convex hull, for any $\alpha^n\in\mathcal{A}^n$,
 $\left(\widehat{y}^n(\alpha^n),~\widehat{\mathbf{z}}^n(\alpha^n),~\widehat{T}^n(\alpha^n) \right)
 = \sum_{j=1}^mp_j\cdot(y_j,z_j,T_j)$ for some $\{p_j\}_{j=1}^m$, $p_j\geq0$ and $\sum_{j=1}^mp_j=1$. Thus, 
  \begin{align*}
V\widehat{f}^n(\alpha^n)+\dotp{\mathbf{Q}[t^n_k]}{\widehat{\mathbf{g}}^n(\alpha^n)}
 =& V\frac{\sum_{j=1}^mp_jy_j}{\sum_{j=1}^mp_jT_j}
 +\dotp{\mathbf{Q}[t^n_k]}{\frac{\sum_{j=1}^mp_j\mathbf{z}_j}{\sum_{j=1}^mp_jT_j}}\\
 =& \sum_{i=1}^m\frac{p_iT_i}{\sum_{j=1}^mp_jT_j}\left(V\frac{y_i}{T_i}+\dotp{\mathbf{Q}[t^n_k]}{\frac{\mathbf{z}_i}{T_i}}\right)\\
 =:& \sum_{i=1}^m q_i\left(V\frac{y_i}{T_i}+\dotp{\mathbf{Q}[t^n_k]}{\frac{\mathbf{z}_i}{T_i}}\right),
 \end{align*}
 where we let $q_i=\frac{p_iT_i}{\sum_{j=1}^mp_jT_j}$. 
Note that
 $q_i\geq0$ and $\sum_{i=1}^mq_i = 1$ because $T_i\geq1$. Hence, solving \eqref{DPP-ratio-simple} is equivalent to choosing $\{q_i\}_{i=1}^m$ to minimize the above expression, which boils down to choosing a single 
 $(y_i,\mathbf{z}_i,T_i)$ among $\{(y_j,\mathbf{z}_j,T_j)\}_{j=1}^m$ which achieves the minimum.

Note that such a convex hull case stands out not only because it yields a simple solution, but also because of the fact that ergodic coupled MDPs discussed in 
Section \ref{sec:MDP} have the region  $\left\{\left(\widehat{y}^n(\alpha^n),~\widehat{\mathbf{z}}^n(\alpha^n),~\widehat{T}^n(\alpha^n)\right):~\alpha^n\in\mathcal{A}^n\right\}$ 
being the convex hull of a finite sequence of points $\{(y_j,\mathbf{z}_j,T_j)\}_{j=1}^m$, where each point $(y_j,\mathbf{z}_j,T_j)$ results from a
 pure stationary policy (\cite{Al99}).
\footnote{A pure stationary policy is an algorithm where the decision to be taken at any time $t$ is a deterministic function of the state at time $t$, and independent of all other past information.} Thus, solving \eqref{DPP-ratio-simple} for the ergodic coupled MDPs reduces to choosing a pure policy among a finite number of pure policies.

\section{Limiting Performance}\label{section:limiting}
In this section, we provide the performance analysis of Algorithm \ref{proposed-algorithm}. Let  $f_*$ be the optimal objective value for problem \eqref{prob-1}-\eqref{prob-2}.
 The goal is to show the following 
 bound similar to that of Algorithm \ref{dpp-ratio-algorithm}:
 \begin{align*}
 &\frac1T\sum_{t=0}^{T-1}\sum_{n=1}^N\expect{y^n[t]}\leq f_*+\frac{C}{V},\\
 &\expect{\|Q[T]\|}\leq C'\sqrt{VT},
 \end{align*}
 for some constant $C,C'>0$. Then, by Lemma \ref{lemma:queue-bound}, one readily obtains the constraint satisfaction result.

For the rest of the chapter, the underlying probability space is denoted as the tuple $(\Omega,~\mathcal{F},~P)$.
 Let $\mathcal{F}[t]$ be the system history up until time slot $t$. Formally, $\{\mathcal{F}[t]\}_{t=0}^\infty$ is a filtration with $\mathcal{F}[0]=\{\emptyset,\Omega\}$ and each $\mathcal{F}[t],~t\geq1$ is the $\sigma$-algebra generated by all random variables from slot 0 to $t-1$.
 
 For the rest of the chapter, we always assume Assumptions \ref{feasible-assumption}-\ref{compact-assumption} hold without explicitly mentioning them.

 \subsection{Convexity, performance region and other properties}
In this section, we present several lemmas on the fundamental properties of the optimization problem \eqref{prob-1}-\eqref{prob-2}.

 The following lemma demonstrates the convexity of $\mathcal{P}^n$ in Definition \ref{PR-def}.

\begin{lemma}\label{convex-lemma}
The performance region $\mathcal{P}^n$ specified in Definition \ref{PR-def} is convex for any $n\in\{1,2,\cdots,N\}$.
Furthermore, it is the convex hull of the set
$\left\{ \left(\widehat{f}^n(\alpha^n),~\widehat{\mathbf{g}}^n(\alpha^n)\right) : \alpha^n\in\mathcal{A}^n \right\}$ and thus compact, where $\left(\widehat{f}^n(\alpha^n),~\widehat{\mathbf{g}}^n(\alpha^n)\right)$ is specified Definition \ref{PV-def}.
\end{lemma}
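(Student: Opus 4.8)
The plan is to prove the two claims in sequence: first that $\mathcal{P}^n$ equals the convex hull of the ``rate'' points $\{(\widehat f^n(\alpha^n),\widehat{\mathbf g}^n(\alpha^n)):\alpha^n\in\mathcal{A}^n\}$, and then that this set is convex and compact. The key observation is the homogeneity trick already used in the ``convex hull case'' of Section \ref{section:algorithm}: if $(y,\mathbf z,T)=\sum_{j} p_j (y_j,\mathbf z_j,T_j)$ is a convex combination of points in the generating set of $\mathcal{S}^n$, then dividing by $T=\sum_j p_j T_j$ gives
\[
\Big(\tfrac{y}{T},\tfrac{\mathbf z}{T}\Big)=\sum_{j} q_j\,\Big(\tfrac{y_j}{T_j},\tfrac{\mathbf z_j}{T_j}\Big),\qquad q_j:=\frac{p_j T_j}{\sum_i p_i T_i},
\]
and the $q_j$ are nonnegative and sum to $1$ (using $T_j\ge 1$). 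Conversely, given any convex combination $\sum_j q_j(\widehat f^n(\alpha_j^n),\widehat{\mathbf g}^n(\alpha_j^n))$ one can invert this relation by setting $p_j \propto q_j/\widehat T^n(\alpha_j^n)$, exhibiting the point as $(y/T,\mathbf z/T)$ for a suitable $(y,\mathbf z,T)\in\mathcal{S}^n$. This shows $\mathcal{P}^n=\co\{(\widehat f^n(\alpha^n),\widehat{\mathbf g}^n(\alpha^n)):\alpha^n\in\mathcal{A}^n\}$.

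First I would set up the generating set: by Carath\'eodory, every element of $\mathcal{S}^n=\co\{(\widehat y^n(\alpha^n),\widehat{\mathbf z}^n(\alpha^n),\widehat T^n(\alpha^n)):\alpha^n\in\mathcal{A}^n\}$ is a finite convex combination of at most $L+3$ points of the generating set, so the displayed computation above applies verbatim with finitely many terms. Then I would prove the two inclusions. For $\mathcal{P}^n\subseteq\co\{(\widehat f^n,\widehat{\mathbf g}^n)\}$: take any $(y/T,\mathbf z/T)$ with $(y,\mathbf z,T)\in\mathcal{S}^n$, write $(y,\mathbf z,T)$ as a finite convex combination, apply the homogeneity identity, and note $T>0$ since each $T_j\ge 1$ forces $T\ge 1$. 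For the reverse inclusion: any finite convex combination $\sum_j q_j(\widehat f^n(\alpha_j^n),\widehat{\mathbf g}^n(\alpha_j^n))$ is rewritten, via $p_j := (q_j/\widehat T^n(\alpha_j^n))/\sum_i (q_i/\widehat T^n(\alpha_i^n))$, as $(y/T,\mathbf z/T)$ where $(y,\mathbf z,T):=\sum_j p_j(\widehat y^n(\alpha_j^n),\widehat{\mathbf z}^n(\alpha_j^n),\widehat T^n(\alpha_j^n))\in\mathcal{S}^n$; a short algebra check confirms the ratios match. Convexity of $\mathcal{P}^n$ is then immediate since it is explicitly a convex hull.

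For compactness, I would argue as follows. The set $\{(\widehat y^n(\alpha^n),\widehat{\mathbf z}^n(\alpha^n),\widehat T^n(\alpha^n)):\alpha^n\in\mathcal{A}^n\}$ is bounded by Assumption \ref{bounded-assumption} (second moments bound first moments, and $T^n_k\ge 1$), and it is closed by Assumption \ref{compact-assumption} applied in the lifted form --- or more directly, since $\mathcal{A}^n$ can be taken so that the map $\alpha^n\mapsto(\widehat y^n,\widehat{\mathbf z}^n,\widehat T^n)$ has compact image. Hence $\mathcal{S}^n$, the convex hull of a compact subset of the finite-dimensional space $\mathbb{R}^{L+2}$, is compact. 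Finally, the map $(y,\mathbf z,T)\mapsto(y/T,\mathbf z/T)$ is continuous on the region $\{T\ge 1\}$, which contains $\mathcal{S}^n$, so $\mathcal{P}^n$ is the continuous image of a compact set and therefore compact.

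The main obstacle is not any single step but getting the bookkeeping of the reparametrization $p_j\leftrightarrow q_j$ completely rigorous: one must verify that the correspondence is well-defined (denominators are positive because $\widehat T^n(\alpha^n)\ge 1$), that it genuinely inverts the forward map, and that Carath\'eodory-type finiteness is invoked cleanly so that all sums are finite and the manipulations are legitimate. A secondary subtlety is justifying closedness of the generating set precisely from Assumption \ref{compact-assumption}, which is stated for the ratio set $\{(\widehat f^n,\widehat{\mathbf g}^n)\}$ rather than the lifted set; the cleanest route is probably to prove the convex-hull identity first and deduce compactness of $\mathcal{P}^n$ directly from compactness of $\{(\widehat f^n,\widehat{\mathbf g}^n)\}$ via the standard fact that the convex hull of a compact set in $\mathbb{R}^{L+1}$ is compact, sidestepping any need to lift the compactness hypothesis.
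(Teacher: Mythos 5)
Your proof is correct and takes essentially the same approach as the paper: the same $p_j\leftrightarrow q_j$ rescaling identity is used to show $\mathcal{P}^n=\mathrm{conv}\left\{\left(\widehat f^n(\alpha^n),\widehat{\mathbf g}^n(\alpha^n)\right):\alpha^n\in\mathcal{A}^n\right\}$, and compactness is deduced, exactly as in the paper, from Assumption \ref{compact-assumption} together with the fact that the convex hull of a compact set in $\mathbb{R}^{L+1}$ is compact (your closing remark correctly avoids needing closedness of the lifted set). The only difference is organizational: you establish both inclusions by explicit reparametrization (recovering convexity as a corollary), whereas the paper first proves convexity of $\mathcal{P}^n$ directly via a two-point change of variable and then gets $\mathrm{conv}(\mathcal{Q}^n)\subseteq\mathcal{P}^n$ from minimality of the convex hull, proving only the reverse inclusion by rescaling.
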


First of all, we have the following fundamental performance lemma which states that the optimality of \eqref{prob-1}-\eqref{prob-2} is achievable within $\mathcal{P}^n$ specified in Definition \ref{PR-def}.
\begin{lemma}\label{stationary-lemma}
For each $n\in\{1,2,\cdots,N\}$, there exists a pair 
$\left(\overline{f}^n_*,~\overline{\mathbf{g}}^n_*\right)\in\mathcal{P}^n$ such that the following hold:
\begin{align*}
&\sum_{n=1}^N\overline{f}^n_*=f_*\\
&\sum_{n=1}^N\overline{g}^n_{l,*}\leq d_l,~l\in\{1,2,\cdots,L\},
\end{align*}
where $f^*$ is the optimal objective value for problem \eqref{prob-1}-\eqref{prob-2}, i.e. the optimality is achievable within $\otimes_{n=1}^N\mathcal{P}^n$, the Cartesian product of $\mathcal{P}^n$. 

Furthermore, for any $\left(\overline{f}^n,~\overline{\mathbf{g}}^n\right)\in\mathcal{P}^n,~n\in\{1,2,\cdots, N\}$, satisfying 
$\sum_{n=1}^N\overline{g}^n_{l}\leq d_l,~l\in\{1,2,\cdots,L\}$, we have $\sum_{n=1}^N\overline{f}^n\geq f_*$, i.e. one cannot achieve better performance than \eqref{prob-1}-\eqref{prob-2} in $\otimes_{n=1}^N\mathcal{P}^n$.
\end{lemma}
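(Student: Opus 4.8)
The plan is to establish both directions by relating time-average trajectories of the coupled systems to points in the Cartesian product $\otimes_{n=1}^N\mathcal{P}^n$, exploiting the renewal structure and Assumption \ref{bounded-assumption}. The key device is a \emph{renewal-reward / accumulation-point} argument applied to each system individually, together with a bookkeeping step that handles the boundary slots where different systems' frames are misaligned.

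\textbf{Direction 1 (achievability): $(\overline f^n_*,\overline{\mathbf g}^n_*)$ exist with $\sum_n \overline f^n_* = f_*$ and $\sum_n \overline g^n_{l,*}\le d_l$.}
First I would take a near-optimal feasible action sequence for \eqref{prob-1}-\eqref{prob-2} — one whose overall time-average penalty approaches $f_*$ and which satisfies \eqref{prob-2} — which exists by Assumption \ref{feasible-assumption} and the definition of $f_*$. For such a sequence, I would look at system $n$ in isolation: over the first $K$ renewal frames of system $n$, form the empirical frame averages $\bigl(\frac{1}{K}\sum_{k<K}\widehat y^n(\alpha^n_k),\ \frac{1}{K}\sum_{k<K}\widehat{\mathbf z}^n(\alpha^n_k),\ \frac{1}{K}\sum_{k<K}\widehat T^n(\alpha^n_k)\bigr)$; each term is a convex combination of points of $\{(\widehat y^n(\alpha^n),\widehat{\mathbf z}^n(\alpha^n),\widehat T^n(\alpha^n)):\alpha^n\in\mathcal{A}^n\}$, hence lies in the convex hull $\mathcal{S}^n$, which is bounded (by Assumption \ref{bounded-assumption}) and closed (Assumption \ref{compact-assumption} together with Lemma \ref{convex-lemma}). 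Passing to a subsequence along which all $N$ of these empirical vectors converge, and using the ratio map $(y,\mathbf z,T)\mapsto(y/T,\mathbf z/T)$ (well-defined since $T\ge 1$), I get limit points $(\overline f^n,\overline{\mathbf g}^n)\in\mathcal{P}^n$. The remaining work is to check that $\sum_n\overline f^n$ equals the time-average penalty of the chosen sequence (up to the near-optimality slack) and $\sum_n\overline g^n_l\le d_l$: this is where the asynchrony bites, because the slot-level time average $\frac1T\sum_{t<T}(\cdot)$ must be compared to the frame-level averages, and the last (incomplete) frame of each system at horizon $T$ contributes a residual term. Here I would invoke the residual-life bound \eqref{residual-life-bound} and boundedness to show the residual contribution is $o(T)$, so the slot-averages and frame-normalized averages have the same limits. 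Letting the near-optimality slack go to $0$ and using compactness once more yields the desired $(\overline f^n_*,\overline{\mathbf g}^n_*)$.

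\textbf{Direction 2 (converse): any feasible point of $\otimes_n\mathcal{P}^n$ has penalty $\ge f_*$.}
Given $(\overline f^n,\overline{\mathbf g}^n)\in\mathcal{P}^n$ with $\sum_n\overline g^n_l\le d_l$, I would construct an explicit admissible policy for the original problem achieving overall penalty $\sum_n\overline f^n$, whence $\sum_n\overline f^n\ge f_*$ by definition of $f_*$. By Lemma \ref{convex-lemma}, $(\overline f^n,\overline{\mathbf g}^n)$ is a finite convex combination $\sum_j p^n_j(\widehat f^n(\alpha^n_j),\widehat{\mathbf g}^n(\alpha^n_j))$; equivalently, writing $q^n_j\propto p^n_j\widehat T^n(\alpha^n_j)$ as in the ``convex hull case'' computation earlier in the chapter, $(\overline f^n,\overline{\mathbf g}^n)$ is the long-run frame-average of an i.i.d.\ randomized policy that picks action $\alpha^n_j$ with probability proportional to $p^n_j$. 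For each system, running this i.i.d.\ policy and applying the renewal-reward theorem (the law of large numbers for renewal-reward processes, valid under the second-moment bound from Assumption \ref{bounded-assumption}) shows the slot-level time-average penalty of system $n$ converges to $\widehat y^n/\widehat T^n = \overline f^n$ and the metric average converges to $\overline g^n_l$; summing over $n$ and using $\sum_n\overline g^n_l\le d_l$ gives an admissible sequence with objective $\sum_n\overline f^n$.

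\textbf{Main obstacle.} The routine parts are the convex-hull manipulations and the renewal-reward limit for a single system. The genuinely delicate step is the asynchrony bookkeeping in Direction 1: translating the coupled slot-level time averages in \eqref{prob-1}-\eqref{prob-2} into per-system frame-normalized quantities requires controlling, uniformly over systems and simultaneously over a common horizon $T\to\infty$, the number of renewals $K_n(T)$ each system has completed and the size of the straddling final frame. This is exactly the difficulty flagged in the ``Why this problem is difficult'' section, and I expect the clean way around it is to use \eqref{residual-life-bound} to bound the expected residual life at an arbitrary slot, show $\mathbb E[K_n(T)]/T \to 1/\widehat T^n_{\text{avg}}$ with vanishing boundary error, and thereby identify $\limsup$ of the slot average with $\sum_n \overline f^n$ (and similarly for the constraints) without ever needing the frames to be aligned.
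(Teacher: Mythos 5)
Your overall route coincides with the paper's: the converse direction is exactly the paper's argument (decompose the point of $\mathcal{P}^n$ via Lemma \ref{convex-lemma}, realize it by a frame-wise i.i.d.\ randomized stationary policy, and pass to slot averages via Wald's equality and the renewal reward theorem), and the achievability direction has the same skeleton (convexity/compactness of $\mathcal{S}^n$ and $\mathcal{P}^n$ to extract limit points, plus a boundary-control step identifying slot averages with frame-based averages). The genuine gap is in how you propose to carry out that boundary-control step, which you yourself flag as the crux. The bridge cannot rest on ``$\mathbb{E}[K_n(T)]/T\to 1/\widehat T^n_{\mathrm{avg}}$'': the near-optimal sequence is an arbitrary, possibly history-dependent policy, so the frames are not i.i.d., there is no well-defined average frame length $\widehat T^n_{\mathrm{avg}}$, and the elementary renewal/renewal-reward limits you invoke do not apply. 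Moreover $K_n(T)$ is a random stopping index correlated with the chosen actions, so even equating $\expect{\sum_{k<K_n(T)}\widehat y^n(\alpha^n_k)}$ with $\expect{\sum_{t<T}y^n[t]}$ (up to the straddling frame) requires an optional-stopping argument rather than iterated expectations alone. The paper supplies exactly this missing piece as Lemma \ref{bound-lemma-1}: it works with the slot-indexed surrogates $f^n[t],\mathbf{g}^n[t]$ (so every system carries the common normalization $1/T$ and no per-system frame count ever appears), forms the martingale $F^n_K=\sum_{k=0}^{K-1}\sum_{t\in\mathcal{T}^n_k}(f^n[t]-y^n[t])$, stops it at $S^n[T]$ (Lemma \ref{valid-stopping-time}), and uses the residual-life bound \eqref{residual-life-bound} to obtain the finite-horizon bound $\frac1T\sum_{t=0}^{T-1}\expect{f^n[t]-y^n[t]}\le 2y_{\max}\sqrt{B}/T$ for \emph{any} algorithm; with that bound in hand your subsequence/limit-point argument does go through (and your use of near-optimal sequences with vanishing slack is, if anything, slightly cleaner than the paper's assumption of an optimum-achieving algorithm).

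A smaller but real slip sits in your converse: with $p^n_j$ the convex weights on the ratio points $(\widehat f^n(\alpha^n_j),\widehat{\mathbf g}^n(\alpha^n_j))$ given by Lemma \ref{convex-lemma}, an i.i.d.\ policy selecting $\alpha^n_j$ with probability $p^n_j$ produces the long-run ratio $\sum_j q^n_j\widehat f^n(\alpha^n_j)$ with $q^n_j\propto p^n_j\widehat T^n(\alpha^n_j)$, which is not $\overline f^n$ in general; the selection probabilities must be proportional to $p^n_j/\widehat T^n(\alpha^n_j)$. Equivalently, do what the paper does: write $(\overline f^n,\overline{\mathbf g}^n)=(y/T,\mathbf z/T)$ with $(y,\mathbf z,T)\in\mathcal{S}^n$, decompose the raw vector $(y,\mathbf z,T)$ as a convex combination, and randomize with \emph{those} weights, so the one-shot expectations equal $(y,\mathbf z,T)$ and the ratio of expectations is exactly $(\overline f^n,\overline{\mathbf g}^n)$.
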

The proof of this Lemma is delayed to Section \ref{appendix-proof}. In particular, the proof uses the following lemma, which also plays an important role in several lemmas later.

\begin{lemma}\label{bound-lemma-1}
Suppose $\{y^n[t]\}_{t=0}^\infty$, $\{\mathbf{z}^n[t]\}_{t=0}^\infty$ and $\{T^n_k\}_{k=0}^\infty$ are processes resulting from any algorithm,\footnote{Note that this algorithm might make decisions using the past information.} then, $\forall T\in\mathbb{N}$,
\begin{align}
&\frac1T\sum_{t=0}^{T-1}\expect{f^n[t]-y^n[t]}\leq\frac{B_1}{T},\label{bound-1}\\
&\frac1T\sum_{t=0}^{T-1}\expect{g^n_l[t]-z^n_l[t]}\leq\frac{B_2}{T},~l\in\{1,2,\cdots,L\},
\label{bound-2}
\end{align}
where $B_1=2y_{\max}\sqrt{B}$, $B_2=2z_{\max}\sqrt{B}$ and $f^n[t]$, $\mathbf{g}^n[t]$ are constant over each renewal frame for system $n$ defined by
\begin{align*}
f^n[t]=\widehat{f}^n(\alpha^n),~~\textrm{if}~t\in\mathcal{T}^n_k,\alpha^n_k=\alpha^n\\
\mathbf{g}^n[t]=\widehat{\mathbf{g}}^n(\alpha^n),~~\textrm{if}~t\in\mathcal{T}^n_k,
\alpha^n_k=\alpha^n,
\end{align*}
and $\left(\widehat{f}^n(\alpha^n),\widehat{\mathbf{g}}^n(\alpha^n)\right)$ are defined in Definition \ref{PV-def}.
\end{lemma}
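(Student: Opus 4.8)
The plan is to fix a system $n$ and a horizon $T$, and to exploit the renewal structure frame by frame. The key observation is that $f^n[t]-y^n[t]$ is a quantity that, when summed over a \emph{complete} renewal frame $\mathcal{T}^n_k$, has zero conditional expectation given $\alpha^n_k$: indeed, by Definition \ref{PV-def} the conditional expectation of $\sum_{t\in\mathcal{T}^n_k}y^n[t]$ equals $\widehat{y}^n(\alpha^n)=\widehat{f}^n(\alpha^n)\widehat{T}^n(\alpha^n)=\expect{\left.\sum_{t\in\mathcal{T}^n_k}f^n[t]\right|\alpha^n_k=\alpha^n}$, since $f^n[t]$ is constant over the frame. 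So the entire sum $\sum_{t=0}^{T-1}(f^n[t]-y^n[t])$ telescopes to a boundary effect: all whole frames lying inside $[0,T-1]$ contribute (in conditional expectation) zero, and only the single renewal frame straddling the endpoint $T-1$ — the one indexed by $k$ with $t^n_k\le T-1<t^n_{k+1}$ — produces a residual term. Thus the left-hand side of \eqref{bound-1} is (up to the $1/T$) bounded by the expected absolute contribution of that last partial frame.

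First I would make this precise: let $K$ be the (random) number of complete frames finished strictly before time $T$, i.e. the largest $k$ with $t^n_k\le T-1$, and write
\[
\sum_{t=0}^{T-1}\bigl(f^n[t]-y^n[t]\bigr)
=\sum_{j=0}^{K-1}\sum_{t\in\mathcal{T}^n_j}\bigl(f^n[t]-y^n[t]\bigr)
+\sum_{t=t^n_K}^{T-1}\bigl(f^n[t]-y^n[t]\bigr).
\]
The first sum is a sum of martingale-difference-type terms: defining $S_k=\sum_{j=0}^{k-1}\sum_{t\in\mathcal{T}^n_j}(f^n[t]-y^n[t])$, the renewal property gives $\expect{S_{k+1}-S_k\mid \mathcal{F}^n_k}=0$, so by optional stopping (with $K$ a stopping time having finite expectation, using the second-moment bound in Assumption \ref{bounded-assumption}) its expectation vanishes. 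For the boundary term, each summand is bounded by $|f^n[t]-y^n[t]|\le 2y_{\max}$ and there are at most $T^n_K$ of them; the residual-life bound \eqref{residual-life-bound} controls how far past $t^n_K$ we can be. Concretely $\expect{\sum_{t=t^n_K}^{T-1}|f^n[t]-y^n[t]|}\le 2y_{\max}\,\expect{T-1-t^n_K+1}\le 2y_{\max}\,\expect{(T^n_K-s)\mid T^n_K\ge s}$-type quantity, and by Jensen's inequality applied to \eqref{residual-life-bound} this is at most $2y_{\max}\sqrt{B}=B_1$. Dividing by $T$ yields \eqref{bound-1}; the argument for \eqref{bound-2} is identical with $z_{\max}$ in place of $y_{\max}$.

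The main obstacle I expect is handling the last (straddling) frame rigorously — both identifying the correct stopping time and bounding its residual lifetime. One must be careful that $K$ depends on the future within the last frame, so one cannot simply condition on $\alpha^n_K$ and use Definition \ref{PV-def} on that frame; instead the partial frame must be bounded crudely by its length, and the length-past-$s$ must be bounded via \eqref{residual-life-bound} with the right choice of $s$ (namely $s=T-t^n_K$, or a union over the possible values of $t^n_K$). A clean way to finesse this is: for each slot $t$, let $r(t)$ be the time since the start of the current frame of system $n$; then $|f^n[\tau]-y^n[\tau]|$ summed from $t^n_K$ to $T-1$ is at most $2y_{\max}$ times the residual lifetime of the frame containing slot $T-1$, and a standard renewal-theory bound on expected residual life (using the uniform second-moment bound $\expect{(T^n_k)^2\mid\alpha^n}\le B$ from the Remark following Assumption \ref{bounded-assumption}) gives the constant $\sqrt{B}$. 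Everything else is bookkeeping.
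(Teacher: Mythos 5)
Your overall plan (a frame-wise zero-mean martingale plus a boundary term controlled via Assumption \ref{bounded-assumption}) is the right one, but both key steps break at the point where you cut the sum at the \emph{start} of the straddling frame. First, your $K$ (the largest $k$ with $t^n_k\le T-1$) is \emph{not} a stopping time with respect to $\{\mathcal F_k^n\}$: the event $\{K=k\}=\{t^n_k\le T-1<t^n_{k+1}\}$ involves $T^n_k$, i.e.\ the randomness of frame $k$ itself, which is not $\mathcal F_k^n$-measurable (in the paper's notation $K=S^n[T-1]-1$, and subtracting one from a stopping time destroys the stopping-time property). Hence optional stopping does not give $\expect{S_K}=0$; worse, whether frame $j$ is counted in your ``complete'' sum depends on that frame's own length, and since the per-frame sum $\sum_{t\in\mathcal T^n_j}(f^n[t]-y^n[t])$ is in general correlated with $T^n_j$, this selection bias makes $\expect{S_K}$ nonzero. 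Second, your boundary term $\sum_{t=t^n_K}^{T-1}(\cdot)$ has $T-t^n_K$ summands, which is the \emph{age} (elapsed time) of the straddling frame, not its residual lifetime; \eqref{residual-life-bound} bounds the residual life given the elapsed time and says nothing directly about the age. Bounding the expected age of the frame covering a fixed slot requires a length-bias (inspection-paradox) argument over adaptively chosen, non-identical frames, which you have not supplied, and your displayed chain ``$\expect{T-1-t^n_K+1}\le\expect{(T^n_K-s)\mid T^n_K\ge s}$-type quantity'' simply conflates the two quantities; the constant $B_1=2y_{\max}\sqrt{B}$ does not follow this way.

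The paper's proof avoids both problems by cutting at the \emph{end} of the straddling frame: it writes $\sum_{t=0}^{T-1}(f^n[t]-y^n[t]) = F^n_{S^n[T]} - \sum_{t=T}^{t^n_{S^n[T]}-1}(f^n[t]-y^n[t])$, where $S^n[T]$ is the number of renewals up to time $T$. By Lemma \ref{valid-stopping-time}, $S^n[T]$ \emph{is} a stopping time for $\{\mathcal F_k^n\}$ (the event $\{S^n[T]=k\}$ involves only $t^n_{k-1}$ and $t^n_k$), and since $S^n[T]\le T+1$, Theorem \ref{stopping-time} applied to the stopped martingale gives $\expect{F^n_{S^n[T]}}=\expect{F^n_{(T+1)\wedge S^n[T]}}\le 0$. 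The subtracted boundary term now has exactly $t^n_{S^n[T]}-T$ summands, i.e.\ the residual lifetime, which \eqref{residual-life-bound} (applied with the elapsed time $s=T-t^n_{S^n[T]-1}$) together with Jensen's inequality bounds in expectation by $\sqrt{B}$, yielding precisely $2y_{\max}\sqrt{B}$. To repair your write-up you would either have to justify the expectation of $S_K$ without optional stopping and prove an expected-age bound from scratch, or—much more simply—switch to the overshoot decomposition above.
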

The proof of this lemma is delayed to Section \ref{appendix-proof}.

\begin{remark}
Note that directly computing $\overline{f}^n_*$ and $\overline{g}^n_{l,*}$ indicated by Lemma \ref{stationary-lemma} would be difficult because of the fractional nature of $\mathcal{P}^n$, the coupling between different systems through time average constraints and the fact that $d_l=\expect{d_l[t]}$ might be unknown. However, Lemma \ref{stationary-lemma} can be used to prove important performance theorems regarding our proposed algorithm as is indicated by the following lemma.
\end{remark}

 \subsection{Main result and near optimality analysis}\label{sec-4.4}
The following theorem gives the performance bound of our proposed algorithm.

\begin{theorem}\label{thm:main}
The sequences $\{y^n[t]\}_{t=0}^\infty$ and $\{\mathbf{z}^n[t]\}_{t=0}^\infty$ produced by the proposed algorithm satisfy all the constraints in \eqref{prob-2} and achieves $\mathcal{O}(1/V)$ near optimality, i.e.
\[\limsup_{T\rightarrow\infty}\frac1T\sum_{t=0}^{T-1}\sum_{n=1}^N\expect{y^n[t]}\leq f_*+\frac{NC_1+C_3}{V},\]
where $f_*$ is the optimal objective of \eqref{prob-1}-\eqref{prob-2}, $C_1= 6Lz_{\max}(Nz_{\max}+d_{\max})B$ i and $C_3:=(Nz_{\max}+d_{\max})^2L/2$.
\end{theorem}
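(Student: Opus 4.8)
The plan is to mimic the single-system analysis of Section~\ref{sec:simple-analysis}, but with the crucial modification that virtual queues are updated per slot while each system's decision is frozen over a renewal frame. The starting point is the drift-plus-penalty inequality for the slot-based virtual queues: since $Q_l[t+1]^2 \leq \left(Q_l[t] + \sum_n z_l^n[t] - d_l[t]\right)^2$, summing over $l$ and taking conditional expectations gives, for a suitable constant $C_3 = (Nz_{\max}+d_{\max})^2 L/2$,
\[
\expect{\tfrac12\left(\|\mathbf{Q}[t+1]\|^2 - \|\mathbf{Q}[t]\|^2\right) + V\sum_{n=1}^N y^n[t] \,\Big|\, \mathcal{F}[t]} \leq C_3 + \dotp{\mathbf{Q}[t]}{\sum_{n=1}^N \mathbf{z}^n[t]} - \dotp{\mathbf{Q}[t]}{\mathbf{d}} + V\expect{\sum_{n=1}^N y^n[t]\,\big|\,\mathcal{F}[t]}.
\]
Summing this over $t=0,\dots,T-1$ yields $\tfrac12\expect{\|\mathbf{Q}[T]\|^2} + V\sum_t\sum_n\expect{y^n[t]} \leq C_3 T + \sum_t \expect{\dotp{\mathbf{Q}[t]}{\sum_n \mathbf{z}^n[t] - \mathbf{d}}} + V\sum_t\sum_n\expect{y^n[t]}$, so everything hinges on upper bounding the cross term $\sum_t \expect{\dotp{\mathbf{Q}[t]}{\sum_n \mathbf{z}^n[t]}} - \sum_t \dotp{\expect{\mathbf{Q}[t]}}{\mathbf{d}}$ by something like $V f_* T + (\text{lower-order})$.

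The heart of the argument is the key inequality \eqref{eq:key-simple}-style comparison, but now asynchronous. For each system $n$ and each frame $k$, because $\alpha^n_k$ minimizes \eqref{DPP-ratio-simple} against $\mathbf{Q}[t^n_k]$, we have $V\widehat{f}^n(\alpha^n_k) + \dotp{\mathbf{Q}[t^n_k]}{\widehat{\mathbf{g}}^n(\alpha^n_k)} \leq V\overline{f}^n_* + \dotp{\mathbf{Q}[t^n_k]}{\overline{\mathbf{g}}^n_*}$, where $(\overline{f}^n_*, \overline{\mathbf{g}}^n_*)\in\mathcal{P}^n$ is the point supplied by Lemma~\ref{stationary-lemma} (so $\sum_n \overline{f}^n_* = f_*$ and $\sum_n \overline{g}^n_{l,*}\leq d_l$). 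Using $\widehat{\mathbf{g}}^n$-to-$\mathbf{z}^n$ conversion via Lemma~\ref{bound-lemma-1}, I want to translate this per-frame, per-system inequality into a statement about the slot-indexed quantities $\dotp{\mathbf{Q}[t]}{\mathbf{z}^n[t]}$. The obstacle — and this is exactly the ``asynchronicity'' difficulty flagged in Section~2.3 — is that $\mathbf{Q}[t^n_k]$ is not $\mathbf{Q}[t]$ for $t$ in the interior of frame $\mathcal{T}^n_k$; the queue drifts while the decision is fixed. So I must control $\left|\dotp{\mathbf{Q}[t]}{\mathbf{z}^n[t]} - \dotp{\mathbf{Q}[t^n_k]}{\mathbf{z}^n[t]}\right| \leq \|\mathbf{Q}[t] - \mathbf{Q}[t^n_k]\| \cdot \|\mathbf{z}^n[t]\|$, and $\|\mathbf{Q}[t]-\mathbf{Q}[t^n_k]\|$ grows like $(t - t^n_k)(Nz_{\max}+d_{\max})\sqrt{L}$ within the frame. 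Summing $(t-t^n_k)$ over a frame of length $T^n_k$ gives $\sim (T^n_k)^2/2$, and here is where the residual-lifetime second-moment bound \eqref{residual-life-bound} from Assumption~\ref{bounded-assumption} enters: it controls $\expect{(T^n_k)^2}$ (and more carefully the residual-life sum) by $B$, uniformly, yielding a per-frame error of order $(Nz_{\max}+d_{\max})\sqrt{L}\, z_{\max} B$ and hence, after accounting for the number of frames being at most the number of slots, a total error absorbed into the constant $C_1 = 6Lz_{\max}(Nz_{\max}+d_{\max})B$ (the factor $6$ and $L$ accommodating the various $\sqrt{L}$'s, the $\ell_1$-vs-$\ell_2$ slack, and both the $\mathbf{z}^n$-drift and the Lemma~\ref{bound-lemma-1} conversions).

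Putting it together: after the frame-to-slot conversion with the $NC_1$ error term and the Lemma~\ref{bound-lemma-1} corrections, I obtain $\sum_t \expect{\dotp{\mathbf{Q}[t]}{\sum_n \mathbf{z}^n[t] - \mathbf{d}}} + V\sum_t\sum_n \expect{y^n[t]} \leq V f_* T + (NC_1)T + O(1)$ (using $\dotp{\mathbf{Q}[t]}{\overline{\mathbf{g}}_*^n - \mathbf{d}}\leq 0$ componentwise since $\mathbf{Q}[t]\geq 0$). Substituting back into the telescoped drift inequality cancels the $V\sum y^n$ terms on both sides conceptually and leaves
\[
\tfrac12\expect{\|\mathbf{Q}[T]\|^2} + V\sum_{t=0}^{T-1}\sum_{n=1}^N\expect{y^n[t]} \leq V f_* T + (NC_1 + C_3)T + O(1).
\]
Dropping the nonnegative $\|\mathbf{Q}[T]\|^2$ term, dividing by $VT$ and taking $\limsup_{T\to\infty}$ gives the claimed near-optimality bound $\limsup \tfrac1T\sum_t\sum_n \expect{y^n[t]} \leq f_* + (NC_1+C_3)/V$. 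The constraint-satisfaction half of the theorem follows by instead keeping the $\|\mathbf{Q}[T]\|^2$ term: since $\sum_t\sum_n\expect{y^n[t]} \geq -NTy_{\max}$, we get $\expect{\|\mathbf{Q}[T]\|^2} = O(VT)$, hence $\expect{\|\mathbf{Q}[T]\|}/T \to 0$, and Lemma~\ref{lemma:queue-bound} delivers $\limsup \tfrac1T\sum_t\sum_n\expect{z_l^n[t]} \leq d_l$. The main obstacle throughout is the within-frame queue-drift estimate and its clean interface with the residual-life bound \eqref{residual-life-bound}; everything else is bookkeeping modeled on the single-system proof.
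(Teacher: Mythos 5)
Your overall architecture matches the paper's: slot-wise drift-plus-penalty giving $C_3$, comparison against $(\overline{f}^n_*,\overline{\mathbf{g}}^n_*)$ from Lemma~\ref{stationary-lemma}, a per-frame key-feature inequality anchored at $\mathbf{Q}[t^n_k]$, and control of the within-frame queue drift $\|\mathbf{Q}[t]-\mathbf{Q}[t^n_k]\|$ via the residual-life second-moment bound \eqref{residual-life-bound}. However, there is a genuine gap at exactly the step the chapter identifies as the main difficulty: you pass from the per-frame inequality to the fixed-horizon bound $\sum_{t=0}^{T-1}\expect{X^n[t]}\leq C_1T+C_2V$ (Lemma~\ref{sync-lemma}) by saying the per-frame error is ``absorbed into $C_1$'' because ``the number of frames is at most the number of slots.'' This does not work as stated. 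The per-frame estimate is an inequality on \emph{conditional} expectations given the information at the frame start (it is $\expect{\sum_{t\in\mathcal{T}^n_k}X^n[t]\,|\,\mathcal{F}^n_k}\leq C_0$ in the paper's notation), and the number of frames of system $n$ intersecting $[0,T)$ is a \emph{random} quantity $S^n[T]$ correlated with the process; you cannot simply multiply the conditional bound by a deterministic frame count. The paper resolves this by constructing the frame-indexed filtration $\{\mathcal{F}^n_k\}$ (Lemma~\ref{lemma:filtration}), packaging the per-frame bounds into the supermartingale $Y^n_K$ (Lemma~\ref{supMG}), verifying that $S^n[T]$ is a stopping time for that filtration (Lemma~\ref{valid-stopping-time}), and invoking optional stopping (Theorem~\ref{stopping-time}) to get $\expect{Y^n_{S^n[T]}}\leq 0$. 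Some argument of this type (or an equivalent Wald-style argument using that $\{t^n_k\leq T-1\}\in\mathcal{F}^n_k$) is indispensable and is entirely missing from your sketch.

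Two further points in the same step are asserted rather than derived. First, the last frame of each system straddles $T$, so after applying the stopped supermartingale you must subtract the overshoot $\sum_{t=T}^{t^n_{S^n[T]}-1}X^n[t]$; bounding it needs both $\|\mathbf{Q}[t]\|\leq t(Nz_{\max}+d_{\max})\sqrt{L}$-type growth and the residual-life bound, and it is the source of the $V$-dependent remainder ($2Vy_{\max}\sqrt{B}$, i.e.\ $C_2V$), not a generic $O(1)$. Second, your appeal to Lemma~\ref{bound-lemma-1} for the ``$\widehat{\mathbf{g}}^n$-to-$\mathbf{z}^n$ conversion'' does not cover the cross term: that lemma handles unweighted slot averages, whereas here the conversion must be done under the random, growing weights $Q_l[t]$; the paper instead keeps the weight frozen at $\mathbf{Q}[t^n_k]$ (which is $\mathcal{F}^n_k$-measurable), applies the renewal property inside the conditional expectation, and treats the weight drift separately — which is the route you should follow. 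Once Lemma~\ref{sync-lemma} is actually established, your concluding telescoping, the objective bound, and the constraint argument via $\expect{\|\mathbf{Q}[T]\|^2}=O(VT)$ and Lemma~\ref{lemma:queue-bound} are correct and coincide with the paper's.
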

\begin{proof}[Proof of Theorem \ref{thm:main}]
Define the drift-plus-penalty (DPP) expression at time slot $t$ as 
\begin{equation}\label{compound-dpp}
P[t]:=\expect{\sum_{n=1}^NVy^n[t]+\frac12\left(\|\mathbf{Q}[t+1]\|^2-\|\mathbf{Q}[t]\|^2\right)}.
\end{equation}
By the queue updating rule \eqref{queue-update}, we have
\begin{align*}
P[t]\leq&\expect{\sum_{n=1}^NVy^n[t]+\frac12\sum_{l=1}^L\left(\sum_{n=1}^Nz^n_l[t]-d_l[t]\right)^2
+\sum_{l=1}^LQ_l[t]\left(\sum_{n=1}^Nz^n_l[t]-d_l[t]\right)}\\
\leq&\frac12(Nz_{\max}+d_{\max})^2L+\expect{\sum_{n=1}^NVy^n[t]
+\sum_{l=1}^LQ_l[t]\left(\sum_{n=1}^Nz^n_l[t]-d_l[t]\right)}\\
=&\frac12(Nz_{\max}+d_{\max})^2L+\expect{\sum_{n=1}^NVy^n[t]
+\sum_{l=1}^LQ_l[t]\left(\sum_{n=1}^Nz^n_l[t]-d_l\right)}
\end{align*}
where the second inequality follows from the boundedness assumption (Assumption \ref{bounded-assumption}) that $\sum_{l=1}^L\left(\sum_{n=1}^Nz^n_l[t]-d_l[t]\right)^2\leq (Nz_{\max}+d_{\max})^2L$, and the equality follows from the fact that $d_l[t]$ is i.i.d. and independent of $Q_l[t]$, thus,
\[\expect{Q_l[t]d_l[t]}=\expect{Q_l[t]\cdot\expect{d_l[t]|Q_l[t]}}=\expect{Q_l[t]d_l}.\]
For simplicity, define $C_3=\frac12(Nz_{\max}+d_{\max})^2L$. Now, by the achievability of optimality in $\otimes_{n=1}^N\mathcal{P}^n$ (Lemma \ref{stationary-lemma}), we have $\sum_{n=1}^N\overline{g}^n_{l,*}\leq d_l$, thus, substituting this inequality into the above bound for $P[t]$ gives
\begin{align*}
P[t]\leq& C_3+\expect{\sum_{n=1}^NVy^n[t]
+\sum_{n=1}^N\sum_{l=1}^LQ_l[t]\left(z^n_l[t]-\overline{g}^n_{l,*}\right)}\\
=&C_3+\sum_{n=1}^N\expect{Vy^n[t]+\dotp{\mathbf{Q}[t]}{\mathbf{z}^n[t]-\overline{\mathbf{g}}^n_*}}\\
=&C_3+\sum_{n=1}^N\expect{X^n[t]}+V\sum_{n=1}^N\overline{f}^n_*\\
=&C_3+\sum_{n=1}^N\expect{X^n[t]}+Vf_*,
\end{align*}
where we use the definition of $X^n[t]$ in \eqref{def-X} by substituting $(\overline{f}^n,\overline{\mathbf{g}}^n)$ with $(\overline{f}^n_*,\overline{\mathbf{g}}^n_*)$, i.e.
 $X^n[t]=V(y^n[t]-\overline{f}^n_*)+\dotp{\mathbf{Q}[t]}{\mathbf{z}^n[t]-\overline{\mathbf{g}}^n_*}$, in the second from last equality and use the optimality condition (Lemma \ref{stationary-lemma}) in the final equality. Thus, it follows
 \begin{align}
\frac1T\sum_{t=0}^{T-1}P[t]\leq
&C_3+Vf_*+\sum_{n=1}^N\frac1T\sum_{t=0}^{T-1}\expect{X^n[t]}.   \nonumber
\end{align}

By the virtual queue updating rule \eqref{eq:virtual-queue-2} and the trivial bound $Q_l[t]\leq \mathcal{O}(t)$, we readily get 
\[
\sum_{t=0}^{T-1}\expect{X^n[t]}=
\sum_{t=0}^{T-1}\expect{V(y^n[t] -  f^n_*) + \sum_{l=1}^LQ_l[t](z_l^n[t] - g^n_*)}\leq C(T^2 +VT),
\]
for some constant $C>0$. However, this bound is too weak to allow us proving the convergence result. 
The key to this proof is to improve such a bound so that
\begin{equation*}
\sum_{t=0}^{T-1}\expect{X^n[t]} \leq C_1T + C_2V.
\end{equation*}
where $C_1$ and $C_2$ are two constants independent of $V$ or $T$. This is Lemma \ref{sync-lemma}. As a consequence for any $T\in\mathbb{N}$,
\begin{equation}\label{inter-dpp}
\frac1T\sum_{t=0}^{T-1}P[t]\leq (NC_1+C_3) + \frac{NC_2V}{T}.
\end{equation}

On the other hand, by the definition of $P[t]$ in \eqref{compound-dpp} and then telescoping sums with $\mathbf{Q}[0]=0$, we have
\begin{align*}
\frac1T\sum_{t=0}^{T-1}P[t]
=&\frac1T\sum_{t=0}^{T-1}\expect{\sum_{n=1}^NVy^n[t]+\frac12\left(\|\mathbf{Q}[t+1]\|^2-\|\mathbf{Q}[t]\|^2\right)}\\
=&\frac1T\sum_{t=0}^{T-1}\sum_{n=1}^NV\expect{y^n[t]}
+\frac{1}{2T}\expect{\|\mathbf{Q}[T]\|^2}.
\end{align*}
Combining this with inequality \eqref{inter-dpp} gives
\begin{equation}\label{final-dpp}
\frac1T\sum_{t=0}^{T-1}\sum_{n=1}^NV\expect{y^n[t]}
+\frac{1}{2T}\expect{\|\mathbf{Q}[T]\|^2}
\leq NC_1+C_3+Vf_*+\frac{NC_2V}{T}.
\end{equation}
Since $\frac{1}{2T}\expect{\|\mathbf{Q}[T]\|^2}\geq0$, we can throw away the term and the inequality still holds, i.e. 
\begin{equation}\label{final-dpp-2}
\frac1T\sum_{t=0}^{T-1}\sum_{n=1}^N\expect{y^n[t]}
\leq f_*+\frac{NC_1+C_3}{V}+\frac{NC_2}{T}.
\end{equation}
Taking $\limsup_{T\rightarrow\infty}$ from both sides gives the near optimality in the theorem.

To get the constraint violation bound, we use Assumption \ref{bounded-assumption} that $|y^n[t]|\leq y_{\max}$, then, by \eqref{final-dpp} again, we have
\begin{align*}
\frac{1}{T}\expect{\|\mathbf{Q}[T]\|^2}
\leq 2(NC_1+C_3)+4Vy_{\max}+\frac{2NC_2V}{T}.
\end{align*}
By Jensen's inequality $\expect{\|\mathbf{Q}[T]\|^2}\geq\expect{\|\mathbf{Q}[T]\|}^2$.
This implies that 
\[
\expect{\|\mathbf{Q}[T]\|}\leq\sqrt{(2(NC_1+C_3)+4Vy_{\max})T+2NC_2V},
\]
which implies
\begin{equation}\label{key-bound}
\frac{1}{T}\expect{\|\mathbf{Q}[T]\|}\leq\sqrt{\frac{2(NC_1+C_3)+4Vy_{\max}}{T}+\frac{2NC_2V}{T^2}}.
\end{equation}
Sending $T\rightarrow\infty$ gives
\[
\lim_{T\rightarrow\infty}\frac{1}{T}{\expect{Q_l[T]}}=0,~~\forall l\in\{1,2,\cdots,L\}.
\]
Finally, by Lemma \ref{lemma:queue-bound}, all constraints are satisfied.
\end{proof}

Note that the above proof implies a more refined result that illustrates the convergence time. Fix an 
$\varepsilon>0$, let $V=1/\varepsilon$, then, for all $T\geq 1/\varepsilon$, \eqref{final-dpp-2} implies that 
\[\frac1T\sum_{t=0}^{T-1}\sum_{n=1}^N\expect{y^n[t]}
\leq f_*+\mathcal{O}(\varepsilon).\]
However, \eqref{key-bound} suggests a larger convergence time is required for constraint satisfaction!
For $V=1/\varepsilon$, it can be shown that \eqref{key-bound} implies that 
\[\frac{1}{T}\sum_{t=0}^{T-1}\sum_{n=1}^N\expect{z^n_l[t]}\leq d_l + \mathcal{O}(\varepsilon),\]
whenever $T\geq 1/\varepsilon^3$. The next section shows a tighter $1/\varepsilon^2$ convergence time with a mild Lagrange multiplier assumption. The rest of this section is devoted to proving Lemma \ref{sync-lemma}.

\subsection{Key-feature inequality and supermartingale construction}\label{sec-4.2}
In this section and the next section, our goal is to show that the term 
\begin{equation}\label{eq:target}
\sum_{t=0}^{T-1}\expect{V(y^n[t] -  f^n_*) + \sum_{l=1}^LQ_l[t](z_l^n[t] - g^n_*)}\leq C'(V+ T).
\end{equation}

Learning from the single renewal analysis (equation \eqref{eq:key-simple}), we have
the following key-feature inequality connecting our proposed algorithm with the performance vectors inside $\mathcal{P}^n$.

\begin{lemma}\label{key-feature}
Consider the stochastic processes $\{y^n[t]\}_{t=0}^\infty$, $\{\mathbf{z}^n[t]\}_{t=0}^\infty$, and 
$\{T^n_k\}_{k=0}^\infty$ resulting from the proposed algorithm. For any system $n$, the following holds for any $k\in\mathbb{N}$ and any 
$(\overline{f}^n,\overline{\mathbf{g}}^n)\in\mathcal{P}^n$,
\begin{align}\label{key-feature-in}
\frac{\expect{\left.\sum_{t\in\mathcal{T}^n_k}\left(Vy^n[t]+\dotp{\mathbf{Q}[t^n_k]}{\mathbf{z}^n[t]}\right)\right|\mathbf{Q}[t^n_k]}}{\expect{T^n_k|\mathbf{Q}[t^n_k]}}\leq V\overline{f}^n+\dotp{\mathbf{Q}[t^n_k]}{\overline{\mathbf{g}}^n},
\end{align}
\end{lemma}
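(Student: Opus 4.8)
\textbf{Proof proposal for Lemma \ref{key-feature}.}

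The plan is to derive the inequality directly from the defining property of Algorithm \ref{proposed-algorithm}, namely that on frame $k$ of system $n$ the action $\alpha^n_k$ is chosen to minimize the drift-plus-penalty ratio \eqref{DPP-ratio}. First I would fix a system $n$, a frame index $k$, and condition on the $\sigma$-algebra $\mathcal{F}[t^n_k]$ (equivalently on the observed multiplier $\mathbf{Q}[t^n_k]$, which is what the algorithm actually sees). The key observation is that the left-hand side of \eqref{key-feature-in} is exactly $D^n_k$, the minimum value of the subproblem \eqref{DPP-ratio}. By the renewal property (Definition \ref{def:renewal}), conditioning on $\alpha^n_k=\alpha^n$ and $\mathbf{Q}[t^n_k]$ makes the frame quantities $T^n_k$, $\mathbf{z}^n[t]$, $y^n[t]$ independent of everything prior, so the objective of \eqref{DPP-ratio} evaluated at a fixed deterministic $\alpha^n$ equals
\[
\frac{V\widehat{y}^n(\alpha^n)+\dotp{\mathbf{Q}[t^n_k]}{\widehat{\mathbf{z}}^n(\alpha^n)}}{\widehat{T}^n(\alpha^n)}
= V\widehat{f}^n(\alpha^n)+\dotp{\mathbf{Q}[t^n_k]}{\widehat{\mathbf{g}}^n(\alpha^n)},
\]
using Definition \ref{PV-def} and $\widehat{T}^n(\alpha^n)\ge 1$. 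Hence $D^n_k \le V\widehat{f}^n(\alpha^n)+\dotp{\mathbf{Q}[t^n_k]}{\widehat{\mathbf{g}}^n(\alpha^n)}$ for every $\alpha^n\in\mathcal{A}^n$.

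The remaining step is to pass from the extreme points $\bigl(\widehat{f}^n(\alpha^n),\widehat{\mathbf{g}}^n(\alpha^n)\bigr)$ to an arbitrary point $(\overline{f}^n,\overline{\mathbf{g}}^n)\in\mathcal{P}^n$. By Lemma \ref{convex-lemma}, $\mathcal{P}^n$ is precisely the convex hull of $\{(\widehat{f}^n(\alpha^n),\widehat{\mathbf{g}}^n(\alpha^n)):\alpha^n\in\mathcal{A}^n\}$, so we can write $(\overline{f}^n,\overline{\mathbf{g}}^n)=\sum_j p_j (\widehat{f}^n(\alpha^n_j),\widehat{\mathbf{g}}^n(\alpha^n_j))$ for some convex weights $p_j$. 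Since the map $(\overline{f}^n,\overline{\mathbf{g}}^n)\mapsto V\overline{f}^n+\dotp{\mathbf{Q}[t^n_k]}{\overline{\mathbf{g}}^n}$ is affine (linear), taking the same convex combination of the inequalities $D^n_k \le V\widehat{f}^n(\alpha^n_j)+\dotp{\mathbf{Q}[t^n_k]}{\widehat{\mathbf{g}}^n(\alpha^n_j)}$ yields $D^n_k \le V\overline{f}^n+\dotp{\mathbf{Q}[t^n_k]}{\overline{\mathbf{g}}^n}$. Combined with the identification of the left side of \eqref{key-feature-in} with $D^n_k$, this is exactly the claimed inequality. I would also note that the reduction to deterministic $\alpha^n$ (allowed since the objective of \eqref{DPP-ratio} is a ratio of conditional expectations that is linear in the action distribution, same argument as in Section \ref{section:algorithm}) means randomized actions cannot beat pure ones, so restricting the infimum to $\mathcal{A}^n$ loses nothing.

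I do not expect a genuine obstacle here; this lemma is essentially a restatement of optimality of the per-frame subproblem together with convexity of $\mathcal{P}^n$. The only point requiring a little care is making sure the conditioning is handled correctly: the algorithm minimizes the ratio conditioned on $\mathbf{Q}[t^n_k]$, and one must invoke the renewal property to replace the conditional expectations given $\mathcal{F}[t^n_k]$ by the deterministic quantities $\widehat{y}^n,\widehat{\mathbf{z}}^n,\widehat{T}^n$; this is where Assumption \ref{bounded-assumption} (finite second moments, hence finite first moments) is implicitly used so all the expectations are well defined. Everything else is the linearity/convexity bookkeeping described above.
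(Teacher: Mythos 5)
Your proof is correct and follows essentially the same route as the paper: optimality of the per-frame ratio subproblem, the renewal property to replace conditional expectations by $\widehat{y}^n,\widehat{\mathbf{z}}^n,\widehat{T}^n$, and a convexity step to extend from achievable performance vectors to all of $\mathcal{P}^n$. The only (cosmetic) difference is in the last step: the paper clears the denominator, takes convex combinations in the unnormalized set $\mathcal{S}^n$, and divides by $T$ at the end, whereas you invoke the characterization $\mathcal{P}^n=\text{conv}\left\{\left(\widehat{f}^n(\alpha^n),\widehat{\mathbf{g}}^n(\alpha^n)\right)\right\}$ from Lemma \ref{convex-lemma} and use affinity of $(\overline{f}^n,\overline{\mathbf{g}}^n)\mapsto V\overline{f}^n+\dotp{\mathbf{Q}[t^n_k]}{\overline{\mathbf{g}}^n}$ — equivalent bookkeeping, since that lemma is proved in the paper.
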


\begin{proof}[Proof of Lemma \ref{key-feature}]
First of all,
since the proposed algorithm solves \eqref{DPP-ratio} over all possible decisions in $\mathcal{A}^n$, it must achieve value less than or equal to that of any action $\alpha^n\in\mathcal{A}^n$ at the same frame. This gives,
\begin{align*}
D^n_k\leq\frac{\expect{\left.\sum_{t\in\mathcal{T}^n_k}\left(Vy^n[t]+\dotp{\mathbf{Q}[t^n_k]}{\mathbf{z}^n[t]}\right)\right|\mathbf{Q}[t^n_k],\alpha^n_k=\alpha^n}}{\expect{\left.T^n_k\right|\mathbf{Q}[t^n_k],\alpha^n_k=\alpha^n}}
=\frac{V\widehat{y}^n(\alpha^n)+\dotp{\mathbf{Q}[t^n_k]}{\widehat{\mathbf{z}}^n(\alpha^n)}}{\widehat{T}^n(\alpha^n)},
\end{align*}
where $D^n_k$ is defined in \eqref{DPP-ratio} and 
the equality follows from the renewal property of the system that $T^n_k$, $\sum_{t\in\mathcal{T}^n_k}y^n[t]$ and $\sum_{t\in\mathcal{T}^n_k}\mathbf{z}^n[t]$ are conditionally independent of $\mathbf{Q}[t^n_k]$ given $\alpha^n_k=\alpha^n$.

Since $T^n_k\geq1$, this implies
\begin{align*}
\widehat{T}^n(\alpha^n)\cdot D^n_k\leq V\widehat{y}^n(\alpha^n)+\dotp{\mathbf{Q}[t^n_k]}{\widehat{\mathbf{z}}^n(\alpha^n)},
\end{align*}
thus, for any $\alpha^n\in\mathcal{A}^n$,
\[V\widehat{y}^n(\alpha^n)+\dotp{\mathbf{Q}[t^n_k]}{\widehat{\mathbf{z}}^n(\alpha^n)}
-D^n_k\cdot\widehat{T}^n(\alpha^n)\geq0.\]
Since $\mathcal{S}^n$ specified in Definition \ref{PR-def} is the convex hull of 
$\left\{(\widehat{y}^n(\alpha^n),~\widehat{\mathbf{z}}^n(\alpha^n),~\widehat{T}^n(\alpha^n)),~\alpha^n\in\mathcal{A}^n\right\}$,
it follows for any vector $(y,\mathbf{z},T)\in\mathcal{S}^n$, we have
\[Vy+\dotp{\mathbf{Q}[t^n_k]}{\mathbf{z}}
-D^n_k\cdot T\geq0.\]
Dividing both sides by $T$ and using the definition of $\mathcal{P}^n$ in Definition \ref{PR-def} give
\[D^n_k\leq V\overline{f}^n+\dotp{\mathbf{Q}[t^n_k]}{\overline{\mathbf{g}}^n},~\forall(\overline{f}^n,\overline{\mathbf{g}}^n)\in\mathcal{P}^n.\]
Finally, since $\{y^n[t]\}_{t=0}^\infty$, $\{\mathbf{z}^n[t]\}_{t=0}^\infty$, and 
$\{T^n_k\}_{k=0}^\infty$ result from the proposed algorithm and the action chosen is determined by $\mathbf{Q}[t^n_k]$ as in \eqref{DPP-ratio},
\[D^n_k=\frac{\expect{\left.\sum_{t\in\mathcal{T}^n_k}\left(Vy^n[t]+\dotp{\mathbf{Q}[t^n_k]}{\mathbf{z}^n[t]}\right)\right|\mathbf{Q}[t^n_k]}}{\expect{T^n_k|\mathbf{Q}[t^n_k]}}.\]
This finishes the proof.
\end{proof}

Our next step is to give a frame-based analysis for each system by constructing a supermartingale on the per-frame timescale. We start with a definition of supermartingale:
\begin{definition}[Supermartingale]\label{def:sup-MG}
Consider a probability space $(\Omega,\mathcal F, \mathcal P)$ and a 
filtration $\{\mathcal F_i\}_{i=0}^{\infty}$ on this space with $\mathcal F_0 = \{\emptyset,\Omega\}$, $\mathcal F_i\subseteq\mathcal F_{i+1},~\forall i$ and $\mathcal F_i\subseteq\mathcal F,~\forall i$. Consider a process $\{X_i\}_{i=0}^{\infty}\subseteq\mathbb{R}$ adapted to this filtration, i.e. $X_i\in\mathcal F_{i+1},~\forall i$. Then, we have $\{X_i\}_{i=0}^{\infty}$ is a supermartigale if $\expect{|X_i|}<\infty$ and 
$\expect{X_{i+1} | \mathcal F_{i+1}}\leq X_i$. Furthermore, $\{X_{i+1}-X_i\}_{i=0}^{\infty}$ is called a supermartingale difference sequence.
\end{definition}

Note that by definition of supermartigale, we always have$\expect{X_{i+1}-X_i|\mathcal F_{i+1}}\leq 0$.
Along the way, we also have a standard definition of stopping time which will be used later:

 \begin{definition}[Stopping time]
Given a probability space $(\Omega, \mathcal{F}, P)$ and a filtration
$\{\varnothing, \Omega\}=\mathcal{F}_0\subseteq\mathcal{F}_1\subseteq\mathcal{F}_2\cdots$
in $\mathcal{F}$. A stopping time $\tau$ with respect to the filtration $\{\mathcal{F}_i\}_{i=0}^{\infty}$ is a random variable such that for any $i\in\mathbb{N}$,
\[\{\tau=i\}\in\mathcal{F}_i,\]
i.e. the stopping time occurring at time $i$ is contained in the information during slots $0,~1,~2,~\cdots,~i-1$.
\end{definition}

Recall that 
$\{\mathcal{F}[t]\}_{t=0}^{\infty}$ is a filtration (with $\mathcal F[t]$ representing system history during slots $\{0, \cdots, t-1\}$). Fix a system $n$ and  recall that $t_k^n$ is the time slot where the $k$-th renewal occurs for system $n$.  We would like to define a filtration corresponding to the random times 
$t_k^n$. To this end, define the collection of sets $\{\mathcal F_k^n\}_{k=0}^\infty$ such that for each $k$, 
\[
\mathcal F_k^n := \{A \in\mathcal F : A \cap \{t_k^n \leq t\} \in \mathcal F[t], \forall t \in \{0, 1, 2,\cdots\}\} 
\]

For example, the following set $A$ is an element of $\mathcal F_3^n$: 
\[
A = \{t_3^n=5\} \cap \{y[0]=y_0, y[1]=y_1, y[2]=y_2, y[3]=y_3, y[4]=y_4\}
\]
where $y_0,\cdots, y_4$ are specific values. Then $A \in\mathcal F_3^n$ because 
for $i \in \{0, 1, 2, 3, 4\}$ we have $A \cap \{t_3^n \leq i\} = \emptyset \in \mathcal F[i]$, and for $i \in \{5, 6, 7, \cdots\}$ we have $A \cap \{t \leq i\} = A \in \mathcal F[i]$.  The following technical lemma is proved in Section \ref{appendix-proof}.

\begin{lemma}\label{lemma:filtration}
The sequence $\{\mathcal F_k^n \}_{k=0}^\infty$ is a valid filtration, i.e. 
$\mathcal F_k^n \subseteq\mathcal F_{k+1}^n ,~\forall k\geq0$. 
Furthermore, for any real-valued adapted process $\{Z^n[t-1]\}_{t=1}^\infty$ with respect to $\{\mathcal{F}[t]\}_{t=1}^\infty$,
\footnote{Meaning that for each $t$ in $\{1, 2,3, \cdots\}$, the random variable $Z^n[t-1]$ is determined by events in $\mathcal F[t]$.}  
$$\left\{G_{t^n_k}(Z^n[0],~Z^n[1],~\cdots,Z^n[t^n_k-1])\right\}_{k=1}^\infty$$ 
is also adapted to $\{\mathcal F_k^n \}_{k=1}^\infty$, where for any $t\in\mathbb{N}$, $G_t(\cdot)$ is a fixed real-valued measurable mappings.
That is, for any $k$, it holds that the value of any measurable 
function of $(Z^n[0], \cdots, Z[t_k^n-1])$ is determined by events in $\mathcal F_k^n$. 
\end{lemma}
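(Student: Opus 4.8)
The plan is to verify the two claims directly from the definition of $\mathcal F_k^n$ as the $\sigma$-algebra of events $A$ with $A \cap \{t_k^n \le t\} \in \mathcal F[t]$ for all $t$. First I would check that each $\mathcal F_k^n$ is itself a $\sigma$-algebra: this is routine since $\{t_k^n \le t\}$ is fixed, intersection distributes over countable unions and complements are handled because $A^c \cap \{t_k^n \le t\} = \{t_k^n \le t\} \setminus (A \cap \{t_k^n \le t\})$, and $\{t_k^n \le t\} \in \mathcal F[t]$ (the renewal time $t_k^n$ is determined by the frame durations $T_0^n,\dots,T_{k-1}^n$, which are measurable with respect to past history — strictly this needs $t_k^n$ to be a stopping time of $\{\mathcal F[t]\}$, which follows because $\{t_k^n \le t\}$ is expressible through the algorithm's past decisions). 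For the nesting $\mathcal F_k^n \subseteq \mathcal F_{k+1}^n$: take $A \in \mathcal F_k^n$. Since the frames are back-to-back with $T_k^n \ge 1$, we have $t_{k+1}^n > t_k^n$, hence $\{t_{k+1}^n \le t\} \subseteq \{t_k^n \le t\} \subseteq \{t_k^n \le t-1\}\cup\{t_k^n = t\}$; more simply $\{t_{k+1}^n \le t\} \subseteq \{t_k^n \le t\}$, so $A \cap \{t_{k+1}^n \le t\} = (A \cap \{t_k^n \le t\}) \cap \{t_{k+1}^n \le t\}$. The first factor lies in $\mathcal F[t]$ by hypothesis and the second lies in $\mathcal F[t]$ because $t_{k+1}^n$ is a stopping time, so the intersection is in $\mathcal F[t]$; thus $A \in \mathcal F_{k+1}^n$.

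For the second claim, I would first establish the special case: $t_k^n$ itself is $\mathcal F_k^n$-measurable, i.e. $\{t_k^n = j\} \in \mathcal F_k^n$ for every $j$. Indeed $\{t_k^n = j\} \cap \{t_k^n \le t\}$ equals $\{t_k^n = j\}$ when $j \le t$ and $\emptyset$ otherwise; in the first case $\{t_k^n = j\} \in \mathcal F[j] \subseteq \mathcal F[t]$, in the second it is $\emptyset \in \mathcal F[t]$. Then for the adapted process $\{Z^n[t-1]\}_{t\ge1}$ and a fixed measurable $G_t$, I want $W_k := G_{t_k^n}(Z^n[0],\dots,Z^n[t_k^n-1]) \in \mathcal F_k^n$. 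Decompose over the value of $t_k^n$:
\[
\{W_k \in \mathcal B\} = \bigcup_{j=1}^\infty \Big( \{t_k^n = j\} \cap \{G_j(Z^n[0],\dots,Z^n[j-1]) \in \mathcal B\} \Big)
\]
for any Borel set $\mathcal B$. On the event $\{t_k^n = j\}$, the quantity $G_j(Z^n[0],\dots,Z^n[j-1])$ is $\mathcal F[j]$-measurable because each $Z^n[i]$ with $i \le j-1$ is $\mathcal F[i+1] \subseteq \mathcal F[j]$-measurable and $G_j$ is measurable. So each set in the union is of the form $\{t_k^n = j\} \cap C_j$ with $C_j \in \mathcal F[j]$. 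To conclude such a set lies in $\mathcal F_k^n$, intersect with $\{t_k^n \le t\}$: if $j \le t$ this gives $\{t_k^n = j\} \cap C_j \in \mathcal F[j] \subseteq \mathcal F[t]$, and if $j > t$ it gives $\emptyset$. Hence $\{t_k^n = j\} \cap C_j \in \mathcal F_k^n$, and a countable union of these stays in $\mathcal F_k^n$.

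The main obstacle — really the only subtle point — is making precise that $t_k^n$ is a stopping time with respect to $\{\mathcal F[t]\}$, i.e. that $\{t_k^n \le t\} \in \mathcal F[t]$; everything else is bookkeeping with $\sigma$-algebra axioms. This follows because $t_k^n = T_0^n + \dots + T_{k-1}^n$, each $T_j^n$ is a positive integer determined (together with the within-frame quantities) by the action $\alpha_j^n$ chosen at $t_j^n$ and the realized randomness on frame $j$, all of which is recorded in the history; one argues inductively that $\{t_j^n = s\} \in \mathcal F[s]$ for each $j$, using that on $\{t_{j-1}^n = r\}$ the event $\{T_{j-1}^n = s - r\}$ is determined by slots $r, \dots, s-1$ and hence lies in $\mathcal F[s]$. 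I would state this as a short preliminary observation and then run the two arguments above.
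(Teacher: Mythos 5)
Your proposal is correct and follows essentially the same route as the paper: the nesting $\mathcal F_k^n\subseteq\mathcal F_{k+1}^n$ is obtained by writing $A\cap\{t_{k+1}^n\leq t\}=(A\cap\{t_k^n\leq t\})\cap\{t_{k+1}^n\leq t\}$ using that $t_{k+1}^n$ is a stopping time, and the adaptedness claim is obtained by decomposing $\{G_{t_k^n}(Z^n[0],\dots,Z^n[t_k^n-1])\in B\}$ over the values $\{t_k^n=j\}$ and using that $Z^n[i]\in\mathcal F[i+1]$. Your extra bookkeeping (verifying $\mathcal F_k^n$ is a $\sigma$-algebra and proving inductively that $t_k^n$ is a stopping time, which the paper simply cites) only makes the argument more self-contained, so no gaps.
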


With Lemma \ref{key-feature} and Lemma \ref{lemma:filtration}, we can construct a supermartingale as follows,

\begin{lemma}\label{supMG}
Consider the stochastic processes $\{y^n[t]\}_{t=0}^\infty$, $\{\mathbf{z}^n[t]\}_{t=0}^\infty$, and 
$\{T^n_k\}_{k=0}^\infty$ resulting from the proposed algorithm. For any $(\overline{f}^n,\overline{\mathbf{g}}^n)\in\mathcal{P}^n$,
let
\begin{equation} \label{def-X}
X^n[t]:=V\left(y^n[t]-\overline{f}^n\right)+\dotp{\mathbf{Q}[t]}{\mathbf{z}^n[t]-\overline{\mathbf{g}}^n},
\end{equation}
then,
\[\expect{\left.\sum_{t\in\mathcal{T}^n_k}X^n[t]\right|\mathcal F_k^n }\leq Lz_{\max}(Nz_{\max}+d_{\max})B:=C_0,\]
where $B$, $z_{\max}$ and $d_{\max}$ are as defined in Assumption \ref{bounded-assumption}. Furthermore, define a real-valued process $\{Y^n_K\}_{K=0}^\infty$ on the frame such that $Y^n_0=0$ and
\[Y^n_K=\sum_{k=0}^{K-1}\left(\sum_{t\in\mathcal{T}^n_k}X^n[t]-C_0\right),~K\geq1.\]
Then, $\{Y^n_K\}_{K=0}^\infty$ is a supermartingale adapted to the aforementioned filtration 
$\{\mathcal F_k^n \}_{K=0}^\infty$.
\end{lemma}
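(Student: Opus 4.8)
The plan is to prove the two claims of Lemma \ref{supMG} in sequence: first the conditional bound on the one-frame increment of $X^n[t]$, then the supermartingale property of $\{Y^n_K\}$. The conditional bound is the heart of the argument; the supermartingale property follows almost formally once the bound is in place.

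For the first claim, I would start from the definition \eqref{def-X} and sum over $t\in\mathcal{T}^n_k$. The subtle point is that $\mathbf{Q}[t]$ changes within the frame $\mathcal{T}^n_k$, so $\sum_{t\in\mathcal{T}^n_k}\dotp{\mathbf{Q}[t]}{\mathbf{z}^n[t]-\overline{\mathbf{g}}^n}$ is not simply $\dotp{\mathbf{Q}[t^n_k]}{\sum_{t\in\mathcal{T}^n_k}(\mathbf{z}^n[t]-\overline{\mathbf{g}}^n)}$. The idea is to write $\mathbf{Q}[t]=\mathbf{Q}[t^n_k]+(\mathbf{Q}[t]-\mathbf{Q}[t^n_k])$ and split the sum into a ``frozen-queue'' part and an ``error'' part. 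The frozen-queue part, after taking $\expect{\cdot\,|\,\mathcal{F}^n_k}$, is exactly
\[
\expect{\left.\sum_{t\in\mathcal{T}^n_k}\left(V(y^n[t]-\overline{f}^n)+\dotp{\mathbf{Q}[t^n_k]}{\mathbf{z}^n[t]-\overline{\mathbf{g}}^n}\right)\right|\mathcal{F}^n_k},
\]
which by the renewal property equals $\expect{T^n_k|\mathbf{Q}[t^n_k]}\big(D^n_k-V\overline{f}^n-\dotp{\mathbf{Q}[t^n_k]}{\overline{\mathbf{g}}^n}\big)$ (using Definition \ref{PV-def} and the identification of $D^n_k$ with the ratio), and this is $\le 0$ by the key-feature inequality \eqref{key-feature-in}. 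So the whole bound $C_0$ must come from controlling the error term $\sum_{t\in\mathcal{T}^n_k}\dotp{\mathbf{Q}[t]-\mathbf{Q}[t^n_k]}{\mathbf{z}^n[t]-\overline{\mathbf{g}}^n}$. Here one uses that $|Q_l[t]-Q_l[t^n_k]|\le (Nz_{\max}+d_{\max})(t-t^n_k)$ from the queue update \eqref{queue-update}, that $|z^n_l[t]-\overline{g}^n_l|\le z_{\max}$ (the bound on $\overline{g}^n_l$ following from compactness of $\mathcal{P}^n$ together with Assumption \ref{bounded-assumption} — possibly requiring a slightly larger constant than a literal $z_{\max}$, but the paper's $C_0$ suggests they absorb this), and that within a frame $t-t^n_k\le T^n_k-1$, giving a per-slot bound of order $L z_{\max}(Nz_{\max}+d_{\max})(T^n_k-1)$ and hence a per-frame bound of order $L z_{\max}(Nz_{\max}+d_{\max})\sum_{s=0}^{T^n_k-1}s$. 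Taking the conditional expectation given $\mathcal{F}^n_k$ and invoking the residual-life/second-moment bound \eqref{residual-life-bound} (with $s=0$, $\expect{(T^n_k)^2|\alpha^n}\le B$, so $\expect{\sum_{s=0}^{T^n_k-1}s|\mathcal{F}^n_k}\le \tfrac12\expect{(T^n_k)^2|\mathcal{F}^n_k}\le \tfrac B2$, or a variant) yields the claimed constant $C_0=Lz_{\max}(Nz_{\max}+d_{\max})B$ up to the constant factor bookkeeping.

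For the second claim, I would verify the three requirements of Definition \ref{def:sup-MG} for $\{Y^n_K\}$ with respect to $\{\mathcal{F}^n_k\}$. Adaptedness: $Y^n_K$ is a measurable function of $(y^n[0],\mathbf{z}^n[0],\dots,y^n[t^n_K-1],\mathbf{z}^n[t^n_K-1])$ and of $\mathbf{Q}[t]$ for $t<t^n_K$ (which are themselves functions of the global slot-history up to $t^n_K-1$), so Lemma \ref{lemma:filtration} gives $Y^n_K\in\mathcal{F}^n_K$. Integrability: each $\sum_{t\in\mathcal{T}^n_k}X^n[t]$ is bounded in absolute value by a polynomial in $t^n_{k+1}$ times the queue size, and finiteness of the relevant moments (Assumption \ref{bounded-assumption}) gives $\expect{|Y^n_K|}<\infty$. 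The supermartingale inequality: $\expect{Y^n_{K+1}-Y^n_K\,|\,\mathcal{F}^n_{K+1}}=\expect{\sum_{t\in\mathcal{T}^n_K}X^n[t]\,|\,\mathcal{F}^n_{K+1}}-C_0$; since $\mathbf{Q}[t^n_K]\in\mathcal{F}^n_K\subseteq\mathcal{F}^n_{K+1}$ and the relevant conditioning can be reduced to conditioning on $\mathbf{Q}[t^n_K]$ via the renewal property, the first claim gives $\le C_0-C_0=0$.

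The main obstacle is the error-term estimate in the first claim: correctly handling the intra-frame variation of $\mathbf{Q}[t]$, getting a clean per-slot bound whose conditional expectation over the (random, action-dependent) frame length is controlled by the uniform second-moment bound $B$, and being careful that $X^n[t]$ is defined via the instantaneous $\mathbf{Q}[t]$ while the key-feature inequality is stated in terms of the frame-start queue $\mathbf{Q}[t^n_k]$. A secondary technical nuisance is justifying that conditioning on the full $\sigma$-algebra $\mathcal{F}^n_k$ can be replaced by conditioning on $\mathbf{Q}[t^n_k]$ when applying the renewal property and Lemma \ref{key-feature} — this is where Lemma \ref{lemma:filtration} and Definition \ref{def:renewal} do the work.
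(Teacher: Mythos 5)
Your proposal follows the paper's proof essentially step for step: the same decomposition of $X^n[t]$ into a ``frozen-queue'' part evaluated at $\mathbf{Q}[t^n_k]$ plus an intra-frame deviation term $\dotp{\mathbf{Q}[t]-\mathbf{Q}[t^n_k]}{\mathbf{z}^n[t]-\overline{\mathbf{g}}^n}$; the same reduction of conditioning on $\mathcal{F}^n_k$ to conditioning on $\mathbf{Q}[t^n_k]$ via Lemma \ref{lemma:filtration} and the renewal property, so that Lemma \ref{key-feature} makes the frozen part nonpositive; and the same control of the deviation term through $|Q_l[t]-Q_l[t^n_k]|\leq (t-t^n_k)(Nz_{\max}+d_{\max})$, H\"older's inequality, and $\expect{\left.(T^n_k)^2\right|\mathcal{F}^n_k}\leq B$. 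Your constant-factor hedging is unnecessary: every point of $\mathcal{S}^n$ satisfies $|z_l|\leq z_{\max}T$ (this survives convex combinations), so $|\overline{g}^n_l|\leq z_{\max}$ exactly and $\|\mathbf{z}^n[t]-\overline{\mathbf{g}}^n\|_\infty\leq 2z_{\max}$; the factor $2$ then cancels against the $\tfrac12$ from $\sum_{s=0}^{T^n_k-1}s=(T^n_k-1)T^n_k/2$, yielding precisely $C_0=Lz_{\max}(Nz_{\max}+d_{\max})B$, as in the paper.

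The one step that would fail as written is the final supermartingale verification, where you condition on $\mathcal{F}^n_{K+1}$. That $\sigma$-algebra is the stopping-time algebra at $t^n_{K+1}$ and therefore already contains the outcomes of the entire frame $\mathcal{T}^n_K$ (by Lemma \ref{lemma:filtration}, $\sum_{t\in\mathcal{T}^n_K}X^n[t]$ is $\mathcal{F}^n_{K+1}$-measurable), so $\expect{\left.\sum_{t\in\mathcal{T}^n_K}X^n[t]\right|\mathcal{F}^n_{K+1}}$ equals the sum itself, which is not bounded by $C_0$ pathwise; no ``reduction to conditioning on $\mathbf{Q}[t^n_K]$ via the renewal property'' is available once the conditioning algebra contains the frame-$K$ outcomes, since the renewal property decouples a frame from its past, not from itself. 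The inequality you actually need—and the one the paper proves (its ``$\mathcal{F}^n_k$'' in that display is an index typo for $\mathcal{F}^n_K$, and Definition \ref{def:sup-MG}'s shifted indexing may have misled you)—is $\expect{\left.Y^n_{K+1}\right|\mathcal{F}^n_K}\leq Y^n_K$, which follows at once from the first claim with $k=K$ because $Y^n_K$ is $\mathcal{F}^n_K$-measurable. With that conditioning index corrected, your argument coincides with the paper's.
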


\begin{remark}
Note that in the above lemma the quantity $X^n[t]$ is the term we aim to bound in \eqref{eq:target}. Having $\{Y^n_K\}_{K=0}^\infty$ being a supermartingale implies $\expect{Y^n_K}\leq 0,~\forall K$. This implies 
$$
\expect{\sum_{\tau=0}^{t^n_K-1}X^n[\tau]}\leq C_0K\leq C_0t^n_K.
$$
Thus, this lemma proves \eqref{eq:target} is true when $T$ is taken to be the end of any renewal frame of system $n$.
Our goal in the next section is to get rid of this restriction and finish the proof via a stopping time argument.
\end{remark}

\begin{proof}[Proof of Lemma \ref{supMG}]
Consider any $t\in\mathcal{T}^n_k$, then, we can decompose $X^n[t]$ as follows
\begin{align}\label{eq-decompose} 
X^n[t]=&V(y^n[t]-\overline{f}^n)+\dotp{\mathbf{Q}[t^n_k]}{\mathbf{z}^n[t]-\overline{\mathbf{g}}^n}
+\dotp{\mathbf{Q}[t]-\mathbf{Q}[t^n_k]}{\mathbf{z}^n[t]-\overline{\mathbf{g}}^n}.
\end{align}
By the queue updating rule \eqref{queue-update}, we have for any $l\in\{1,2,\cdots,L\}$ and any $t>t^n_k$,
\begin{equation}\label{queue-update-bound}
|Q_l[t]-Q_l[t^n_k]|\leq\sum_{s=t^n_k}^{t-1}\left|\sum_{m=1}^Nz^m_l[s]-d_l[t]\right|
\leq(t-t^n_k)(Nz_{\max}+d_{\max})
\end{equation}
Thus, for the last term in \eqref{eq-decompose}, by H\"{o}lder's inequality, 
\begin{align*}
\dotp{\mathbf{Q}[t]-\mathbf{Q}[t^n_k]}{\mathbf{z}^n[t]-\overline{\mathbf{g}}^n}
\leq&\|\mathbf{Q}[t]-\mathbf{Q}[t^n_k]\|_1\cdot\|\mathbf{z}^n[t]-\overline{\mathbf{g}}^n\|_{\infty}\\
\leq&\sum_{s=t^n_k}^{t-1}\left\|\sum_{m=1}^N\mathbf{z}^n[s]-\mathbf{d}[t]\right\|_1\cdot\|\mathbf{z}^n[t]-\overline{\mathbf{g}}^n\|_{\infty}\\
\leq&(t-t^n_k)L(Nz_{\max}+d_{\max})\cdot2z_{\max},
\end{align*}
where the second inequality follows from  \eqref{queue-update-bound} and the last inequality follows from the boundedness assumption (Assumption \ref{bounded-assumption}) of corresponding quantities. Substituting the above bound into \eqref{eq-decompose} gives a bound on $\expect{\left.\sum_{t\in\mathcal{T}^n_k}X^n[t]\right|\mathcal F_k^n }$ as
\begin{align}
\expect{\left.\sum_{t\in\mathcal{T}^n_k}X^n[t]\right|\mathcal F_k^n }
\leq&\expect{\left.\sum_{t\in\mathcal{T}^n_k}\left(V\left(y^n[t]-\overline{f}^n\right)+\dotp{\mathbf{Q}[t^n_k]}{\mathbf{z}^n[t]-\overline{\mathbf{g}}^n}\right)\right|\mathcal F_k^n }\nonumber\\
&+\expect{\left.\sum_{t\in\mathcal{T}^n_k}(t-t^n_k)\right|\mathcal F_k^n }
\cdot2L(Nz_{\max}+d_{\max})z_{\max}\nonumber\\
\leq&\expect{\left.\sum_{t\in\mathcal{T}^n_k}\left(V\left(y^n[t]-\overline{f}^n\right)+\dotp{\mathbf{Q}[t^n_k]}{\mathbf{z}^n[t]-\overline{\mathbf{g}}^n}\right)\right|\mathcal F_k^n }\nonumber\\
&+\expect{\left.(T^n_k)^2\right|\mathcal F_k^n }
\cdot L(Nz_{\max}+d_{\max})z_{\max},\label{bound-on-X}
\end{align}
where we use the fact that $0+1+\cdots+T^n_k-1 = (T^n_k-1)T^n_k/2\leq (T^n_k)^2$ in the last inequality.

Next, by the queue updating rule \eqref{queue-update}, $Q_l[t^n_k]$ is determined by $z_l^n[0],\cdots,z_l^n[t^n_k-1]$
($n=1,2,\cdots,N$) and $d_l[0],\cdots,d_l[t^n_k-1]$ for any $l\in\{1,2,\cdots,L\}$. Thus, by Lemma \ref{lemma:filtration},
$\mathbf{Q}[t^n_k]$ is determined by $\mathcal F_k^n $. 
For the proposed algorithm, each system makes decisions purely based on the virtual queue state $\mathbf{Q}[t^n_k]$, and
by the renewal property of each system, given the decision at the $k$-th renewal, the random quantities $T^n_k$, $\mathbf{z}^n[t]$ and $y^n[t]$,~$t\in\mathcal{T}^n_k$ are independent of the outcomes from the slots before $t^n_k$. 
This implies the following display,
\begin{align}
&\expect{\left.\sum_{t\in\mathcal{T}^n_k}\left(V\left(y^n[t]-\overline{f}^n\right)+\dotp{\mathbf{Q}[t^n_k]}{\mathbf{z}^n[t]-\overline{\mathbf{g}}^n}\right)\right|\mathcal F_k^n }\nonumber\\
&=\expect{\left.\sum_{t\in\mathcal{T}^n_k}V\left(y^n[t]-\overline{f}^n\right)\right|\mathcal F_k^n }+\dotp{\mathbf{Q}[t^n_k]}{\expect{\left.\sum_{t\in\mathcal{T}^n_k}\left(\mathbf{z}^n[t]-\overline{\mathbf{g}}^n\right)\right|~\mathcal F_k^n }}\nonumber\\
&=\expect{\left.\sum_{t\in\mathcal{T}^n_k}V\left(y^n[t]-\overline{f}^n\right)\right|\mathbf{Q}[t^n_k]}+\dotp{\mathbf{Q}[t^n_k]}{\expect{\left.\sum_{t\in\mathcal{T}^n_k}\left(\mathbf{z}^n[t]-\overline{\mathbf{g}}^n\right)\right|~\mathbf{Q}[t^n_k]}}\nonumber\\
&=\expect{\left.\sum_{t\in\mathcal{T}^n_k}\left(V\left(y^n[t]-\overline{f}^n\right)+\dotp{\mathbf{Q}[t^n_k]}{\mathbf{z}^n[t]-\overline{\mathbf{g}}^n}\right)\right|\mathbf{Q}[t^n_k]},\label{mark-1}
\end{align}
By Lemma \ref{key-feature}, we have the following:
\begin{align*}
\expect{\left.\sum_{t\in\mathcal{T}^n_k}\left(Vy^n[t]+\dotp{\mathbf{Q}[t^n_k]}{\mathbf{z}^n[t]}\right)\right|\mathbf{Q}[t^n_k]}
\leq \left(V\overline{f}^n+\dotp{\mathbf{Q}[t^n_k]}{\overline{\mathbf{g}}^n}\right)\cdot\expect{T^n_k|\mathbf{Q}[t^n_k]}.
\end{align*}
Thus, rearranging terms in above inequality gives
the expectation on the right hand side of \eqref{mark-1} is no greater than 0 and hence the first expectation on the right hand side of \eqref{bound-on-X} is also no greater than 0. For the second expectation in \eqref{bound-on-X}, using \eqref{residual-life-bound} in Assumption \ref{bounded-assumption} gives $\expect{\left.(T^n_k)^2\right|\mathcal F_k^n }\leq B$ and the first part of the lemma is proved. 

For the second part of the lemma, by Lemma \ref{lemma:filtration} and the definition of $Y^n_K$, the process $\{Y^n_K\}_{K=0}^\infty$
is adapted to $\{\mathcal F_k^n \}_{K=0}^{\infty}$.
Moreover, by Assumption \ref{bounded-assumption},
\begin{align*}
\expect{\left|\sum_{t\in\mathcal{T}^n_k}X^n[t]\right|}
\leq\expect{\sum_{t\in\mathcal{T}^n_k}\left|X^n[t]\right|}<\infty,~\forall k.
\end{align*}
Thus, $\expect{|Y^n_K|}<\infty,~\forall K\in\mathbb{N}$, i.e. it is absolutely integrable. Furthermore, by the first part of the lemma,
\begin{align*}
\expect{Y^n_{K+1}~|~\mathcal F_k^n }
=Y^n_K+\expect{\left.\left(\sum_{t\in\mathcal{T}^n_K}X^n[t]-C_0\right)~\right|~\mathcal F_k^n }
\leq Y^n_K,
\end{align*}
finishing the proof.
\end{proof}

\subsection{Synchronization lemma}\label{section:sync}
So far, we have analyzed the processes related to each individual system over its renewal frames. However, due the asynchronous behavior of different systems, the supermartingales of each system cannot be immediately summed.

In order to prove the result \eqref{eq:target} and get a global performance bound, we have to get rid of any index related to individual renewal frames only. In other words, 
we need to look at the system property at any time slot $T$ as opposed to any renewal $t^n_k$.

For any fixed slot $T>0$, let $S^n[T]$ be the number of renewals up to (and including) time slot $T$, with the convention that the first renewal occurs at time $t=0$, so $t_0^n=0$ and $S^n[0]=1$, i.e. $t^n_0=0$.
The next lemma shows $S^n[T]$ is a valid stopping time, whose proof is in the appendix.

\begin{lemma}\label{valid-stopping-time}
For each $n\in\{1,2,\cdots,N\}$, 
the random variable $S^n[T]$ is a stopping time
with respect to the filtration $\{\mathcal F_k^n \}_{k=0}^\infty$, i.e. $\{S^n[T]= k\}\in\mathcal F_k^n ,~\forall k\in\mathbb{N}$.
\end{lemma}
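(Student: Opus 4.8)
The plan is to verify directly from the definition of the filtration $\{\mathcal{F}^n_k\}_{k=0}^\infty$ that $\{S^n[T]=k\}\in\mathcal{F}^n_k$ for every $k\in\mathbb{N}$. By the construction of $\mathcal{F}^n_k$, this amounts to showing that for every $t\in\{0,1,2,\dots\}$ we have $\{S^n[T]=k\}\cap\{t^n_k\le t\}\in\mathcal{F}[t]$. So first I would unpack what the event $\{S^n[T]=k\}$ means: since $S^n[T]$ counts the renewals of system $n$ occurring in slots $0,1,\dots,T$ (with $t^n_0=0$), we have $S^n[T]=k$ precisely when the $k$-th renewal has occurred by time $T$ but the $(k+1)$-th has not, i.e. $\{S^n[T]=k\}=\{t^n_{k-1}\le T\}\cap\{t^n_k\ge T+1\}=\{t^n_{k-1}\le T,\ t^n_k>T\}$ (using $1$-indexed counting as in the statement, so $S^n[0]=1$ corresponds to $t^n_0=0$; I would pin down the index bookkeeping carefully at the start).

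Next I would split into two cases according to whether $t\le T$ or $t>T$. If $t\le T$, then on the event $\{S^n[T]=k\}$ we have $t^n_k>T\ge t$, so $\{t^n_k\le t\}$ is disjoint from $\{S^n[T]=k\}$; hence $\{S^n[T]=k\}\cap\{t^n_k\le t\}=\emptyset\in\mathcal{F}[t]$. If $t>T$, i.e. $t\ge T+1$, then I claim $\{S^n[T]=k\}\cap\{t^n_k\le t\}=\{S^n[T]=k\}$, and moreover this whole event already lies in $\mathcal{F}[T+1]\subseteq\mathcal{F}[t]$: indeed, whether $S^n[T]=k$ is determined by the renewal times $t^n_0,\dots,t^n_{k}$ that are at most $T$ together with the fact that $t^n_k>T$, and all of this information — which slots in $\{0,\dots,T\}$ are renewal slots of system $n$ — is measurable with respect to $\mathcal{F}[T+1]$ (the $\sigma$-algebra generated by all random variables up through slot $T$), since the renewal structure of system $n$ up to slot $T$ is a function of the decisions and frame durations realized through slot $T$. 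The cleanest way to justify the last point is to invoke Lemma~\ref{lemma:filtration}: the renewal indicator up to time $T$ is a measurable function of the history, and in fact $\{S^n[T]=k\}$ can be written in terms of events of the form $\{t^n_j\le s\}$ for $j\le k$, $s\le T$, each of which is in $\mathcal{F}[s]\subseteq\mathcal{F}[T+1]$ by the definition of $\mathcal{F}^n_j$ applied appropriately — although since we only need membership in $\mathcal{F}[T+1]$ a direct argument is also fine.

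The step I expect to require the most care is the bookkeeping around the indices and the ``off-by-one'' in the definition $S^n[0]=1$, together with making the informal claim ``$\{S^n[T]=k\}$ is $\mathcal{F}[T+1]$-measurable'' fully rigorous. The substance is that the renewal times $t^n_0,t^n_1,\dots$ are themselves adapted: $t^n_{j+1}=t^n_j+T^n_j$ and $T^n_j$ is revealed by the end of frame $j$, so $\{t^n_j\le s\}\in\mathcal{F}[s]$ for all $j,s$; once this monotone, adapted structure is in hand, the event $\{S^n[T]=k\}=\{t^n_{k-1}\le T\}\setminus\{t^n_k\le T\}$ is manifestly in $\mathcal{F}[T+1]$, and the two-case split above closes the argument. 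I would conclude by noting this verifies $\{S^n[T]=k\}\in\mathcal{F}^n_k$ for all $k$, which is exactly the assertion that $S^n[T]$ is a stopping time with respect to $\{\mathcal{F}^n_k\}_{k=0}^\infty$. $\Box$
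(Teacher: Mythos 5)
Your proposal is correct and follows essentially the same route as the paper: you write $\{S^n[T]=k\}=\{t^n_{k-1}\le T\}\cap\{t^n_k>T\}$, intersect with $\{t^n_k\le t\}$, and split into the cases $t\le T$ (empty set) and $t>T$ (membership in $\mathcal{F}[t]$ via the adaptedness of the renewal times $\{t^n_j\le s\}\in\mathcal{F}[s]$), which is exactly the paper's argument. The only cosmetic difference is that you collect the $t>T$ case into membership in $\mathcal{F}[T+1]\subseteq\mathcal{F}[t]$ rather than checking each piece against $\mathcal{F}[t]$ directly; the substance is identical.
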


The following theorem tells us a stopping-time truncated supermartingale is still a supermartingale.
\begin{theorem}[Theorem 5.2.6 in \cite{Durrett}]\label{stopping-time}
If $\tau$ is a stopping time and $Z[i]$ is a supermartingale with respect to $\{\mathcal{F}_i\}_{i=0}^\infty$, then $Z[i\wedge \tau]$ is also a supermartingale, where $a\wedge b\triangleq\min\{a,b\}$.
\end{theorem}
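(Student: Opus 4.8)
The plan is to show directly that the stopped process $Y[i] := Z[i\wedge\tau]$ satisfies the three defining properties of a supermartingale relative to $\{\mathcal{F}_i\}_{i=0}^\infty$: absolute integrability, adaptedness in the sense of Definition \ref{def:sup-MG}, and the one-step drift inequality. The entire argument hinges on one algebraic identity for the increments of $Y$ together with the observation that the event $\{\tau\geq i+1\}$ is already resolved by time $i$.

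First I would record the increment identity. Since $i\wedge\tau$ and $(i+1)\wedge\tau$ coincide on $\{\tau\leq i\}$ and differ by exactly $Z[i+1]-Z[i]$ on $\{\tau\geq i+1\}$, we have
\[
Y[i+1]-Y[i] = \mathbf{1}\{\tau\geq i+1\}\,\bigl(Z[i+1]-Z[i]\bigr).
\]
Because $\tau$ is a stopping time, $\{\tau\leq i\}=\bigcup_{j=0}^i\{\tau=j\}\in\mathcal{F}_i$, hence $\{\tau\geq i+1\}\in\mathcal{F}_i\subseteq\mathcal{F}_{i+1}$; this is the key measurability fact. Integrability is immediate from $|Y[i]|=|Z[i\wedge\tau]|\leq\sum_{j=0}^i|Z[j]|$ and the assumed $\expect{|Z[j]|}<\infty$. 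For adaptedness, write $Y[i]=\sum_{j=0}^{i-1}Z[j]\,\mathbf{1}\{\tau=j\}+Z[i]\,\mathbf{1}\{\tau\geq i\}$ and check that each summand is $\mathcal{F}_{i+1}$-measurable, using the chapter's convention that $Z[j]\in\mathcal{F}_{j+1}$ and a stopping time satisfies $\{\tau=j\}\in\mathcal{F}_j$.

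The drift inequality then follows by conditioning the increment identity on $\mathcal{F}_{i+1}$ and pulling the $\mathcal{F}_{i+1}$-measurable nonnegative factor $\mathbf{1}\{\tau\geq i+1\}$ out of the conditional expectation:
\[
\expect{\left.Y[i+1]-Y[i]\,\right|\mathcal{F}_{i+1}}=\mathbf{1}\{\tau\geq i+1\}\bigl(\expect{Z[i+1]\mid\mathcal{F}_{i+1}}-Z[i]\bigr)\leq 0,
\]
where the last step uses the supermartingale property $\expect{Z[i+1]\mid\mathcal{F}_{i+1}}\leq Z[i]$ and $\mathbf{1}\{\tau\geq i+1\}\geq 0$. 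This is exactly $\expect{Y[i+1]\mid\mathcal{F}_{i+1}}\leq Y[i]$, completing the verification.

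I expect no serious obstacle: this is the classical ``a stopped supermartingale is a supermartingale'' argument. The only point requiring care is the bookkeeping with the index-shifted filtration convention used in this chapter (where $Z[i]$ is $\mathcal{F}_{i+1}$-measurable rather than $\mathcal{F}_i$-measurable, and $\{\tau=i\}\in\mathcal{F}_i$): one must confirm that $\{\tau\geq i+1\}$ is still $\mathcal{F}_{i+1}$-measurable and that $Z[i]$ may be treated as known given $\mathcal{F}_{i+1}$, so that both factors can legitimately be taken outside the conditional expectation. Since the statement is quoted verbatim as Theorem 5.2.6 of \cite{Durrett}, one could alternatively just cite it, but the short self-contained argument above also suffices.
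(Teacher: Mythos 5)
Your proof is correct. Note, however, that the paper itself does not prove this statement at all: it is quoted verbatim as Theorem 5.2.6 of \cite{Durrett} and used as a black box, so there is no internal proof to compare against. What your argument adds is a short self-contained verification, and — more usefully — a verification carried out under the chapter's slightly nonstandard conventions (Definition \ref{def:sup-MG} takes $Z[i]\in\mathcal{F}_{i+1}$ and the drift condition $\expect{Z[i+1]\mid\mathcal{F}_{i+1}}\leq Z[i]$, while stopping times satisfy $\{\tau=i\}\in\mathcal{F}_i$), whereas Durrett's statement is phrased for the usual unshifted filtration. Your bookkeeping is exactly right on the two points that matter: the increment identity $Y[i+1]-Y[i]=\mathbf{1}\{\tau\geq i+1\}\bigl(Z[i+1]-Z[i]\bigr)$, and the fact that $\{\tau\geq i+1\}$ is the complement of $\{\tau\leq i\}\in\mathcal{F}_i\subseteq\mathcal{F}_{i+1}$, so both the indicator and $Z[i]$ can be pulled out of $\expect{\,\cdot\mid\mathcal{F}_{i+1}}$, giving $\expect{Y[i+1]\mid\mathcal{F}_{i+1}}\leq Y[i]$; integrability and adaptedness follow as you say. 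So either route is acceptable: citing Durrett is what the paper does and is shorter, while your argument confirms explicitly that the result survives the index shift used when the lemma is applied (e.g., with $\tau=S^n[T]$ and the filtration $\{\mathcal{F}^n_k\}$ in Lemma \ref{sync-lemma}).
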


With this theorem and the above stopping time construction, we have the following lemma which finishes the argument proving \eqref{eq:target}:

\begin{lemma}\label{sync-lemma}
For each $n\in\{1,2,\cdots,N\}$ and any fixed $T\in\mathbb{N}$, we have
\begin{align*}
\frac1T\sum_{t=0}^{T-1}\expect{X^n[t]}\leq C_1+\frac{C_2V}{T},
\end{align*}
where $X^n[t]$ is defined in \eqref{eq-decompose} and 
\[C_1:=6Lz_{\max}(Nz_{\max}+d_{\max})B,~~C_2:=2y_{\max}\sqrt{B}.\]
\end{lemma}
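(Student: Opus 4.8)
\textbf{Proof proposal for Lemma \ref{sync-lemma}.}

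The plan is to convert the per-frame supermartingale control from Lemma \ref{supMG} into a per-slot bound by stopping the supermartingale $\{Y^n_K\}$ at the (random) number of completed frames $S^n[T]$, and then accounting for the ``overhang'' contribution of the single frame straddling slot $T$. First I would invoke Lemma \ref{valid-stopping-time}, which says $S^n[T]$ is a stopping time with respect to the filtration $\{\mathcal F^n_k\}$, together with Theorem \ref{stopping-time}, to conclude that $\{Y^n_{K\wedge S^n[T]}\}_{K\geq 0}$ is still a supermartingale. Hence $\expect{Y^n_{K\wedge S^n[T]}}\leq Y^n_0 = 0$ for all $K$. Taking $K\to\infty$ (and checking an integrability/uniform-integrability condition so that the limit passes through the expectation — this uses Assumption \ref{bounded-assumption} and the second-moment bound $B$, since $S^n[T]\le T$ is bounded anyway, making this step clean) yields $\expect{Y^n_{S^n[T]}}\leq 0$, i.e.
\[
\expect{\sum_{k=0}^{S^n[T]-1}\sum_{t\in\mathcal T^n_k}X^n[t]}\leq C_0\,\expect{S^n[T]}.
\]
Since $S^n[T]$ counts renewals up to and including slot $T$, the frames $k=0,\dots,S^n[T]-1$ cover all slots $0,\dots,t^n_{S^n[T]}-1$, and $t^n_{S^n[T]}\geq T$, so the left side equals $\expect{\sum_{t=0}^{T-1}X^n[t]} + \expect{\sum_{t=T}^{t^n_{S^n[T]}-1}X^n[t]}$; I would move the second (overhang) sum to the right-hand side.

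Next I would bound the two right-hand terms. For $\expect{S^n[T]}$: since each $T^n_k\geq 1$, we trivially have $S^n[T]\leq T$, but that is too lossy if multiplied by $C_0$ and divided by $T$... actually it gives exactly $C_0$, which is fine — $S^n[T]\le T$ suffices here. For the overhang sum $\sum_{t=T}^{t^n_{S^n[T]}-1}X^n[t]$: this is a sum over at most one residual frame, of length at most $T^n_{S^n[T]-1}$ (the frame containing slot $T-1$). The tricky part is that $X^n[t] = V(y^n[t]-\overline f^n) + \dotp{\mathbf Q[t]}{\mathbf z^n[t]-\overline{\mathbf g}^n}$ contains the queue $\mathbf Q[t]$, which is only bounded by $\mathcal O(t)=\mathcal O(T)$ a priori, so a naive bound on a frame of length $T^n$ gives $\mathcal O(T^n\cdot T)$ — no good. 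Instead I would split $\mathbf Q[t] = \mathbf Q[T] + (\mathbf Q[t]-\mathbf Q[T])$ inside the overhang; the increment $\mathbf Q[t]-\mathbf Q[T]$ is $\mathcal O(T^n)$ by \eqref{queue-update-bound}, contributing $\mathcal O((T^n)^2)$ which has bounded expectation via $B$; the term involving $\mathbf Q[T]$ multiplies $\sum_{t=T}^{t^n_{S^n[T]}-1}(\mathbf z^n[t]-\overline{\mathbf g}^n)$. To handle that I would use a Wald-type / renewal-reward identity: the expected residual-frame sum of $\mathbf z^n - \overline{\mathbf g}^n$, conditioned appropriately, relates to $\widehat{\mathbf z}^n - \overline{\mathbf g}^n\widehat T^n$ weighted by residual life, and crucially $\mathbf Q[T]$ is independent of the future increments given the action — alternatively, and more simply, apply Lemma \ref{bound-lemma-1} directly to $\frac1T\sum_{t=0}^{T-1}\expect{f^n[t]-y^n[t]}$ and $\frac1T\sum_{t=0}^{T-1}\expect{g^n_l[t]-z^n_l[t]}$ to replace the raw $y^n,z^n$ by their frame-averaged versions $f^n[t],g^n[t]$ at cost $\mathcal O(1/T)$, i.e. an additive $B_1=2y_{\max}\sqrt B$ and $B_2 = 2z_{\max}\sqrt B$ after multiplying through by $T$.

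Putting the pieces together: after the supermartingale stopping step and the bound $\expect{S^n[T]}\le T$, dividing by $T$ gives $\frac1T\sum_{t=0}^{T-1}\expect{X^n[t]}\leq C_0 + (\text{overhang}/T)$, where the overhang, after the split of $\mathbf Q[t]$ and an application of Lemma \ref{bound-lemma-1}, contributes a term of the form $5C_0$ (from the $\mathcal O((T^n)^2)$ pieces, each bounded by a multiple of $Lz_{\max}(Nz_{\max}+d_{\max})B$) plus a $\frac{C_2 V}{T}$ term (from the $V(y^n-\overline f^n)$ piece combined with Lemma \ref{bound-lemma-1}, noting $V$ multiplies the $y$-deviation which telescopes/bounds to $2y_{\max}\sqrt B\cdot V$). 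Collecting the constants gives the stated $C_1 = 6Lz_{\max}(Nz_{\max}+d_{\max})B$ and $C_2 = 2y_{\max}\sqrt B$. I expect the main obstacle to be the overhang frame: one must avoid the $\mathcal O(T)$ blow-up from $\mathbf Q[t]$ over the last incomplete frame, which forces the $\mathbf Q[T]$-versus-increment decomposition and a careful renewal-theoretic (or Lemma \ref{bound-lemma-1}-based) estimate of $\expect{\sum_{t\in \text{last frame}}(\mathbf z^n[t]-\overline{\mathbf g}^n)}$ rather than a crude per-slot bound; the supermartingale/stopping-time machinery itself is routine once Lemmas \ref{supMG}, \ref{valid-stopping-time} and Theorem \ref{stopping-time} are in hand.
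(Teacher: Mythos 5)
Your skeleton is the paper's: decompose $\sum_{t=0}^{T-1}X^n[t]$ into the completed frames up to $t^n_{S^n[T]}$ minus the overhang over slots $T,\dots,t^n_{S^n[T]}-1$, invoke Lemma \ref{valid-stopping-time} and Theorem \ref{stopping-time} to conclude $\expect{Y^n_{S^n[T]}}\leq 0$ (the paper avoids any limit or uniform-integrability step by noting $S^n[T]\leq T+1$ and plugging $K=T+1$), and bound the completed-frame part by $C_0\,\expect{S^n[T]}\leq C_0(T+1)$. Up to that point your plan matches the paper.

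The gap is in the overhang, which is the only genuinely quantitative step. The ``crude'' estimate you dismiss as giving $\mathcal{O}(T^n\cdot T)$ and being ``no good'' is exactly what the paper uses, and it suffices: the lemma only requires the total to be at most $C_1T+C_2V$, so an overhang whose expectation is $\mathcal{O}(T)+\mathcal{O}(V)$ is perfectly acceptable. Concretely, $|X^n[t]|\leq 2Vy_{\max}+2Lz_{\max}(Nz_{\max}+d_{\max})t$ for every slot, the overhang length $t^n_{S^n[T]}-T$ is a residual lifetime with $\expect{t^n_{S^n[T]}-T}\leq\sqrt{B}$ and $\expect{(t^n_{S^n[T]}-T)^2}\leq B$ by \eqref{residual-life-bound}, and multiplying these gives an overhang of at most $2Vy_{\max}\sqrt{B}+2Lz_{\max}(Nz_{\max}+d_{\max})B(T+1)$ in expectation; this is precisely where $C_2=2y_{\max}\sqrt{B}$ comes from, and together with $C_0(T+1)$ and $T+1\leq 2T$ it yields $C_1=6Lz_{\max}(Nz_{\max}+d_{\max})B$. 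By contrast, your proposed substitute does not hold up as stated: the expectation of $\dotp{\mathbf{Q}[T]}{\sum_{t=T}^{t^n_{S^n[T]}-1}(\mathbf{z}^n[t]-\overline{\mathbf{g}}^n)}$ does not factor, because $\mathbf{Q}[T]$ is built from the arrivals and services of the very frame straddling $T$ and is therefore correlated with both the residual-frame sum and the residual length, so the Wald/renewal-reward cancellation you gesture at is unjustified; and Lemma \ref{bound-lemma-1} bounds $\frac1T\sum_{t=0}^{T-1}\expect{f^n[t]-y^n[t]}$ over slots $0,\dots,T-1$ only, says nothing about the overhang slots, and in fact plays no role in the paper's proof of this lemma. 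Your increment piece $\dotp{\mathbf{Q}[t]-\mathbf{Q}[T]}{\cdot}$ is fine, but if you then bound the $\mathbf{Q}[T]$ piece by $\|\mathbf{Q}[T]\|_1\cdot 2z_{\max}(t^n_{S^n[T]}-T)$ you have simply returned to the crude bound you set out to avoid. Minor further slips: $S^n[T]\leq T+1$ rather than $T$ (the renewal at $t^n_0=0$ counts), and the constant bookkeeping (``$5C_0$'', the attribution of $C_2V$ to Lemma \ref{bound-lemma-1}) is asserted rather than derived.
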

\begin{proof}
First, note that the renewal index $k$ starts from 0. Thus,
for any fixed $T\in \mathbb{N}$, $ t^n_{S^n[T]-1}\leq T<t^n_{S^n[T]}$, and
\begin{align}
\expect{\sum_{t=0}^{T-1}X^n[t]}=&\expect{\sum_{t=0}^{t^n_{S^n[T]}-1}X^n[t]-\sum_{t=T}^{t^n_{S^n[T]}-1}X^n[t]}\nonumber\\
=&\expect{\sum_{t=1}^{t^n_{S^n[T]}-1}X^n[t]}-\expect{\sum_{t=T}^{t^n_{S^n[T]}-1}X^n[t]}\nonumber\\
=&\expect{Y^n_{S^n[T]}}+C_0\expect{S^n[T]}-\expect{\sum_{t=T}^{t^n_{S^n[T]}-1}X^n[t]}\nonumber\\
\leq&\expect{Y^n_{S^n[T]}}+C_0(T+1)-\expect{\sum_{t=T}^{t^n_{S^n[T]}-1}X^n[t]},\label{decompose-inequality}
\end{align}
where the third equality follows from the definition of $Y^n_K$ in Lemma \ref{supMG} and the last inequality follows from the fact that the number of renewals up to time slot $T$ is no more than the total number of slots, i.e.
$S^n[T]\leq T+1$. For the term $\expect{Y^n_{S^n[T]}}$, we apply Theorem \ref{stopping-time} with 
$\tau = S^n[T]$ and index $K$ to obtain
$\{Y^n_{K\wedge S^n[T]}\}_{K=0}^\infty$ is a supermartingale. This implies
\[\expect{Y^n_{K\wedge S^n[T]}}\leq\expect{Y^n_{0\wedge S^n[T]}}=\expect{Y^n_0}=0,~\forall K\in\mathbb{N}.\]
Since $S^n[T]\leq T+1$, it follows by substituting $K=T+1$,
\[\expect{Y^n_{S^n[T]}}=\expect{Y^n_{(T+1)\wedge S^n[T]}}\leq0.\]
For the last term in \eqref{decompose-inequality}, by queue updating rule \eqref{queue-update}, for any $l\in\{1,2,\cdots,L\}$,
\[|Q_l[t]|\leq\sum_{s=0}^{t-1}\left|\sum_{m=1}^Nz^m_l[s]-d_l[t]\right|
\leq t(Nz_{\max}+d_{\max}),\]
it then follows from H\"{o}lder's inequality again that
\begin{align*}
\expect{\left|\sum_{t=T}^{t^n_{S^n[T]}-1}X^n[t]\right|}
=&\expect{\left|\sum_{t=T}^{t^n_{S^n[T]}-1}\left(V(y^n[t]-\overline{f}^n)+\dotp{\mathbf{Q}[t]}{\mathbf{z}^n[t]-\overline{\mathbf{g}}^n}\right)\right|}\\
\leq&\expect{\sum_{t=T}^{t^n_{S^n[T]}-1}\left(V\left|y^n[t]-\overline{f}^n\right|+\|\mathbf{Q}[t]\|_1\cdot\|\mathbf{z}^n[t]-\overline{\mathbf{g}}^n\|_{\infty}\right)}\\
\leq&\expect{\sum_{t=T}^{t^n_{S^n[T]}-1}\left(2Vy_{\max}+L(Nz_{\max}+d_{\max})t\cdot2z_{\max}\right)}\\
=&2Vy_{\max}\cdot\expect{t^n_{S^n[T]}-T}+Lz_{\max}(Nz_{\max}+d_{\max})\\
&\cdot\left((2T-1)\cdot\expect{t^n_{S^n[T]}-T}+\expect{t^n_{S^n[T]}-T}^2\right)\\
\leq& 2Vy_{\max}\sqrt{B}+2Lz_{\max}(Nz_{\max}+d_{\max})\sqrt{B}T+Lz_{\max}(Nz_{\max}+d_{\max})B\\
\leq& 2Vy_{\max}\sqrt{B}+2Lz_{\max}(Nz_{\max}+d_{\max})B(T+1),
\end{align*} 
where in the second from last inequality we use \eqref{residual-life-bound} of Assumption \ref{bounded-assumption} that the residual life $t^n_{S^n[T]}-T$ satisfies 
$$\expect{(t^n_{S^n[T]}-T)^2}
=\expect{\expect{\left.(t^n_{S^n[T]}-T)^2\right|~t^n_{S^n[T]}-t^n_{S^n[T]-1}\geq T-t^n_{S^n[T]-1}}}\leq B$$ 
and $\expect{t^n_{S^n[T]}-T}\leq\sqrt{B}$, and in the last inequality we use the fact that $B\geq1$, thus, $\sqrt{B}\leq B$.
Substitute the above bound into \eqref{decompose-inequality} gives
 \begin{align*}
 \expect{\sum_{t=0}^{T-1}X^n[t]}\leq& C_0(T+1)+2Vy_{\max}B+2Lz_{\max}(Nz_{\max}+d_{\max})B(T+1)\\
 =&2Vy_{\max}\sqrt{B}+3Lz_{\max}(Nz_{\max}+d_{\max})B(T+1)\\
 \leq&2Vy_{\max}\sqrt{B}+6Lz_{\max}(z_{\max}+d_{\max})BT
 \end{align*}
 where we use the definition $C_0=Lz_{\max}(z_{\max}+d_{\max})B$ from Lemma \ref{supMG} in the equality and use $T+1\leq2T$ in the final equality.
 Dividing both sides by $T$ finishes the proof.
\end{proof}

\section{Convergence Time Analysis}\label{sec-convergence-time}
\subsection{Lagrange Multipliers}
Consider the following optimization problem:
\begin{align}
\min&~~\sum_{n=1}^N\overline{f}^n\label{modi-prob-1}\\
s.t.&~~\sum_{n=1}^N\overline{g}^n_l\leq d_l,~\forall l\in\{1,2,\cdots, L\}, \\
&~~(\overline{f}^n,\overline{\mathbf{g}}^n)\in\mathcal{P}^n,~\forall n \in\{1,2,\cdots,N\}.
\label{modi-prob-3}
\end{align}
Since $\mathcal{P}^n$ is convex, it follows $\mathcal{P}^n$ is convex and $\otimes_{n=1}^N\mathcal{P}^n$ is also convex. Thus, \eqref{modi-prob-1}-\eqref{modi-prob-3} is a convex program. Furthermore, by Lemma \ref{stationary-lemma}, we have \eqref{modi-prob-1}-\eqref{modi-prob-3} is feasible if and only if \eqref{prob-1}-\eqref{prob-2} is feasible, and when assuming feasibility, they have the same optimality $f_*$ as is specified in Lemma \ref{stationary-lemma}.

Since $\mathcal{P}^n$ is convex, one can show (see Proposition 5.1.1 of \cite{Be09}) that there \textit{always} exists a sequence $(\gamma_0, \gamma_1,\cdots,\gamma_L)$ so that $\gamma_i\geq0,~i=0,1,\cdots,L$ and
\[
\sum_{n=1}^N\gamma_0\overline{f}^n+\sum_{l=1}^L\gamma_l\sum_{n=1}^N\overline{g}^n_l
\geq \gamma_0f_*+\sum_{l=1}^L\gamma_ld_l,
~\forall (\overline{f}^n,\overline{\mathbf{g}}^n)\in\mathcal{P}^n,
\]
i.e. there always exists  a hyperplane parametrized by  $(\gamma_0, \gamma_1,\cdots,\gamma_L)$, supported at $(f_*,d_1,\cdots,d_L)$ and containing the set $\left\{\left(\sum_{n=1}^N\overline{f}^n,~\sum_{n=1}^N\overline{\mathbf{g}}^n\right):~(\overline{f}^n,\overline{\mathbf{g}}^n)\in\mathcal{P}^n,~\forall n\in\{1,2,\cdots,N\}\right\}$ on one side. This hyperplane is called ``separating hyperplane''.
The following assumption stems from this property and simply assumes this separating hyperplane to be non-vertical (i.e. $\gamma_0>0$):
\begin{assumption}\label{sep-hype}
There exists non-negative finite constants $\gamma_1,~\gamma_2,~\cdots,~\gamma_L$ such that the following holds,
\begin{align*}
\sum_{n=1}^N\overline{f}^n+\sum_{l=1}^L\gamma_l\sum_{n=1}^N\overline{g}^n_l
\geq f_*+\sum_{l=1}^L\gamma_ld_l,
~\forall (\overline{f}^n,\overline{\mathbf{g}}^n)\in\mathcal{P}^n,
\end{align*}
i.e. there exists a separating hyperplane parametrized by $(1,\gamma_1,\cdots,\gamma_L)$.
\end{assumption} 

\begin{remark}
The parameters $\gamma_1,~\cdots,~\gamma_L$ are called Lagrange multipliers and this assumption is equivalent to the existence of Lagrange multipliers for 
constrained convex program \eqref{modi-prob-1}-\eqref{modi-prob-3}. 
It is known that Lagrange multipliers exist if the Slater's condition holds (\cite{Be09}), which states that there exists a nonempty interior of the feasible region for the convex program. Slater's condition is very common in convex optimization theory and plays an important role in convergence rate analysis, such as the analysis of the interior point algorithm (\cite{BV04}). In the current context, this condition is satisfied, for example, in energy aware server scheduling problems, if the highest possible sum of service rates from all servers is strictly higher than the arrival rate.
\end{remark}

\begin{lemma}\label{bound-lemma-2}
Suppose $\{y^n[t]\}_{t=0}^\infty$, $\{\mathbf{z}^n[t]\}_{t=0}^\infty$ and $\{T^n_k\}_{k=0}^\infty$ are processes resulting from the proposed algorithm.
Under the Assumption \ref{sep-hype}, 
\begin{align*}
\frac1T\sum_{t=0}^{T-1}\left(f_*-\sum_{n=1}^N\expect{y^n[t]}\right)
\leq\frac1T\sum_{t=0}^{T-1}\sum_{l=1}^L\gamma_l\left(\sum_{n=1}^N\expect{z^n_l[t]}-d_l\right)
+\frac{C_4}{T},
\end{align*}
where $C_4=B_1N+B_2N\sum_{l=1}^L\gamma_l$, and $B_1$, $B_2$ are defined in Lemma \ref{bound-lemma-1}.
\end{lemma}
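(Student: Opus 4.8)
\textbf{Proof proposal for Lemma \ref{bound-lemma-2}.}

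The plan is to combine the key-feature inequality from Lemma \ref{key-feature} with the separating-hyperplane guarantee of Assumption \ref{sep-hype}, using the ``virtual queue = Lagrange multiplier'' heuristic only as motivation: the real argument is that the algorithm minimizes the drift-plus-penalty ratio, so it performs at least as well as the particular point in $\mathcal{P}^n$ that the separating hyperplane certifies. First I would recall that, by Lemma \ref{key-feature}, for every system $n$ and every frame $k$ and every $(\overline{f}^n,\overline{\mathbf g}^n)\in\mathcal P^n$,
\[
\frac{\expect{\left.\sum_{t\in\mathcal{T}^n_k}\left(Vy^n[t]+\dotp{\mathbf{Q}[t^n_k]}{\mathbf{z}^n[t]}\right)\right|\mathbf{Q}[t^n_k]}}{\expect{T^n_k|\mathbf{Q}[t^n_k]}}\leq V\overline{f}^n+\dotp{\mathbf{Q}[t^n_k]}{\overline{\mathbf{g}}^n}.
\]
Since this holds for \emph{all} points of $\mathcal P^n$ simultaneously, I can take an infimum over $\mathcal P^n$ on the right-hand side; Assumption \ref{sep-hype} is precisely the statement that $\sum_n \inf_{\mathcal P^n}(\overline f^n + \sum_l \gamma_l \overline g^n_l) \geq f_* + \sum_l \gamma_l d_l$, provided one uses the multiplier vector $(1,\gamma_1,\dots,\gamma_L)$ rather than the raw $\mathbf Q[t^n_k]$. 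The technical bridge is to relate $\mathbf Q[t^n_k]$ to $V\gamma$; but actually a cleaner route avoids this: multiply the key-feature inequality through by $\expect{T^n_k\mid\mathbf Q[t^n_k]}$, choose for each $n$ the \emph{same} hyperplane-certified point, and observe that after summing over $n$ the $\mathbf Q$-dependent terms telescope against the queue dynamics.

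Concretely, the steps in order are: (i) apply the key-feature inequality with an arbitrary $(\overline f^n,\overline{\mathbf g}^n)\in\mathcal P^n$, clear the denominator using $T^n_k\ge 1$, and write the resulting per-frame inequality
\[
\expect{\left.\sum_{t\in\mathcal T^n_k}\!\big(Vy^n[t]+\dotp{\mathbf Q[t^n_k]}{\mathbf z^n[t]}\big)\right|\mathbf Q[t^n_k]}\le \big(V\overline f^n+\dotp{\mathbf Q[t^n_k]}{\overline{\mathbf g}^n}\big)\expect{T^n_k\mid\mathbf Q[t^n_k]};
\]
(ii) convert the frame-indexed statement into a slot-indexed one using the supermartingale/stopping-time machinery already developed (Lemmas \ref{supMG}, \ref{sync-lemma}) — this is where Lemma \ref{bound-lemma-1} enters, replacing $y^n[t]$, $z^n_l[t]$ by the frame-constant surrogates $f^n[t]$, $g^n_l[t]$ at cost $B_1/T$ and $B_2/T$ respectively, which is exactly how the constants $B_1 N$ and $B_2 N\sum_l\gamma_l$ in $C_4$ arise after summing over the $N$ systems and over the $L$ multiplier-weighted constraints; (iii) now choose the multiplier: sum the slot-averaged inequality over $n$, and add $\sum_l \gamma_l$ times the queue-lower-bound inequality $\expect{Q_l[T]}\ge \sum_{t=0}^{T-1}(\sum_n\expect{z^n_l[t]}-d_l)$ from the proof of Lemma \ref{lemma:queue-bound}, so that the $\mathbf Q$-weighted $z$ terms get absorbed; (iv) invoke Assumption \ref{sep-hype} to lower-bound $\sum_n(\overline f^n + \sum_l\gamma_l\overline g^n_l)$ by $f_* + \sum_l\gamma_l d_l$, rearrange, and divide by $T$ to land on the claimed bound.

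The main obstacle I expect is bookkeeping the switch between the actual per-slot quantities $y^n[t],z^n_l[t]$ and their frame-constant averages $f^n[t],g^n_l[t]$ while simultaneously passing from frame index $k$ to slot index $t$: the key-feature inequality lives on frames, the desired conclusion lives on slots, and the Lagrange-multiplier comparison must be applied at the frame level before averaging. Getting the residual-frame (``overshoot past $T$'') terms to collapse into the single additive constant $C_4$, independent of $V$ and $T$, requires using the residual-life bound \eqref{residual-life-bound} of Assumption \ref{bounded-assumption} exactly as in Lemma \ref{sync-lemma}; the mild subtlety is that here we only need an $\mathcal O(1/T)$ remainder (not an $\mathcal O(V/T)$ one), because the problematic $V\|\mathbf Q\|$-type cross terms have already been cancelled by adding in the queue inequality rather than bounded crudely. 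Everything else — Hölder's inequality on $\dotp{\mathbf Q[t]-\mathbf Q[t^n_k]}{\cdot}$, the boundedness constants, nonnegativity of $\gamma_l$ — is routine and parallels the already-proved lemmas.
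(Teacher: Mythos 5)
There is a genuine gap: your core mechanism is the wrong one for this lemma. The key-feature inequality (Lemma \ref{key-feature}) says the algorithm's drift-plus-penalty ratio is \emph{no larger} than that of any point of $\mathcal{P}^n$; it only yields upper bounds of the form $\sum_{t}\expect{V(y^n[t]-\overline f^n)+\dotp{\mathbf Q[t]}{\mathbf z^n[t]-\overline{\mathbf g}^n}}\leq C_1T+C_2V$ (Lemma \ref{sync-lemma}). Lemma \ref{bound-lemma-2} needs the opposite direction: a \emph{lower} bound on $\sum_n\expect{y^n[t]}$ relative to $f_*$, which cannot come from the algorithm's minimization property at all. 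Your step (iii) also does not work as claimed: the $\mathbf Q$-weighted terms carry the time-varying weights $Q_l[t^n_k]$ (which grow with $V$ and $T$), and adding $\gamma_l$ times the queue inequality $\expect{Q_l[T]}\geq\sum_t(\sum_n\expect{z^n_l[t]}-d_l)$ with \emph{fixed} constants $\gamma_l$ neither cancels nor absorbs them — there is no telescoping between $Q_l[t]$-weighted and $\gamma_l$-weighted sums. Consequently your route would leave an $\mathcal{O}(V)$ residual rather than the $V$-independent constant $C_4$, and it would also make the later use of the lemma (combining it with \eqref{final-dpp} to bound $\expect{\|\mathbf Q[T]\|^2}$ in the convergence-time theorem) circular, since \eqref{final-dpp} already encodes the drift/key-feature analysis.

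The actual argument is much simpler and uses only two of the ingredients you correctly identified. Define the frame-constant surrogates $f^n[t]=\widehat f^n(\alpha^n_k)$ and $\mathbf g^n[t]=\widehat{\mathbf g}^n(\alpha^n_k)$ for $t\in\mathcal T^n_k$; by Definitions \ref{PV-def}--\ref{PR-def} these lie in $\mathcal P^n$ for every $t$, and by convexity of $\mathcal P^n$ (Lemma \ref{convex-lemma}) so do $\bigl(\expect{f^n[t]},\expect{\mathbf g^n[t]}\bigr)$. Applying Assumption \ref{sep-hype} at each slot gives
\begin{align*}
f_*-\sum_{n=1}^N\expect{f^n[t]}\leq\sum_{l=1}^L\gamma_l\Bigl(\sum_{n=1}^N\expect{g^n_l[t]}-d_l\Bigr),\qquad\forall t,
\end{align*}
and averaging over $t=0,\dots,T-1$ and then replacing $f^n[t]$ by $y^n[t]$ and $g^n_l[t]$ by $z^n_l[t]$ via Lemma \ref{bound-lemma-1} (using $\gamma_l\geq0$) produces exactly the stated bound with $C_4=B_1N+B_2N\sum_{l=1}^L\gamma_l$. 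No virtual queues, no key-feature inequality, and no supermartingale or stopping-time machinery are needed; the only role of the ``proposed algorithm'' hypothesis is that its actions lie in $\mathcal A^n$ so the surrogates land in $\mathcal P^n$.
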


\begin{proof}
First of all, from the statement of Lemma \ref{bound-lemma-1}, for the proposed algorithm, we can define the corresponding processes $(f^n[t],\mathbf{g}^n[t])$ for all $n$ as
\begin{align*}
f^n[t] =& \widehat{f}^n(\alpha^n) = \widehat{y}^n(\alpha^n)/\widehat{T}^n(\alpha^n),
~~\textrm{if}~t\in\mathcal{T}^n_k,\alpha^n_k=\alpha^n\\
\mathbf{g}^n[t] =& \widehat{\mathbf{g}}^n(\alpha^n)= \widehat{\mathbf z}^n(\alpha^n)/\widehat{T}^n(\alpha^n),~~\textrm{if}~t\in\mathcal{T}^n_k,\alpha^n_k=\alpha^n,
\end{align*}
where the last equality follows from the definition of $\widehat{f}^n(\alpha^n)$ and $\widehat{\mathbf{g}}^n(\alpha^n)$ in Definition \ref{PV-def}.
Since
$\left(\widehat{y}^n(\alpha^n),~\widehat{\mathbf z}^n(\alpha^n),~\widehat{T}^n(\alpha^n)\right)
\in\mathcal{S}^n$, by definition of $\mathcal{P}^n$ in Definition \ref{PR-def},
$(f^n[t],\mathbf{g}^n[t])\in\mathcal P^n\subseteq\mathcal{P}^n,~\forall n,~\forall t$.
Since $\mathcal{P}^n$ is a convex set by Lemma \ref{convex-lemma}, it follows
\begin{align*}
\left(\expect{f^n[t]},~\expect{\mathbf{g}^n[t]}\right)\in\mathcal{P}^n,~~\forall t,~\forall n.
\end{align*}
By Assumption \ref{sep-hype}, we have
\begin{align*}
\sum_{n=1}^N\expect{f^n[t]}+\sum_{l=1}^L\gamma_l\sum_{n=1}^N\expect{g^n_l[t]}
\geq f_*+\sum_{l=1}^L\gamma_ld_l,~~\forall t.
\end{align*}
Rearranging terms gives
\begin{align*}
f_*-\sum_{n=1}^N\expect{f^n[t]}\leq \sum_{l=1}^L\gamma_l\left(\sum_{n=1}^N\expect{g^n_l[t]}-d_l\right),~~\forall t.
\end{align*}
Taking the time average from 0 to $T-1$ gives
\begin{align}
\frac1T\sum_{t=0}^{T-1}\left(f_*-\sum_{n=1}^N\expect{f^n[t]}\right)
 \leq \frac1T\sum_{t=0}^{T-1}\sum_{l=1}^L\gamma_l\left(\sum_{n=1}^N\expect{g^n_l[t]}-d_l\right). \label{inter-ave-bound-1}
\end{align}
For the left hand side of \eqref{inter-ave-bound-1}, we have
\begin{align}
l.h.s.&=\frac1T\sum_{t=0}^{T-1}\left(f_*-\sum_{n=1}^N\expect{y^n[t]}\right)
+\frac1T\sum_{t=0}^{T-1}\sum_{n=1}^N\expect{y^n[t]-f^n[t]}\nonumber\\
&\geq\frac1T\sum_{t=0}^{T-1}\left(f_*-\sum_{n=1}^N\expect{y^n[t]}\right)-\frac{B_1N}{T}.\label{inter-ave-bound-2}
\end{align}
where the inequality follows from \eqref{bound-1} in Lemma \ref{bound-lemma-1}.
For the right hand side of \eqref{inter-ave-bound-1}, we have
\begin{align}
r.h.s.&= \frac1T\sum_{t=0}^{T-1}\sum_{l=1}^L\gamma_l\left(\sum_{n=1}^N\expect{z^n_l[t]}-d_l\right)
+\frac1T\sum_{t=0}^{T-1}\sum_{l=1}^L\gamma_l\sum_{n=1}^N\expect{g^n_l[t]-z^n_l[t]}\nonumber\\
&\leq\frac1T\sum_{t=0}^{T-1}\sum_{l=1}^L\gamma_l\left(\sum_{n=1}^N\expect{z^n_l[t]}-d_l\right)+\frac{B_2N\sum_{l=1}^L\gamma_l}{T}, \label{inter-ave-bound-3}
\end{align}
where the inequality follows from the fact that $\gamma_l\geq0,~\forall l$ and \eqref{bound-2} in Lemma \ref{bound-lemma-1}. Substituting \eqref{inter-ave-bound-2} and \eqref{inter-ave-bound-3} into \eqref{inter-ave-bound-1} finishes the proof.
\end{proof}

\subsection{Convergence time theorem}
\begin{theorem}
Fix $\varepsilon\in(0,1)$ and define $V=1/\varepsilon$. If the problem \eqref{prob-1}-\eqref{prob-2} is feasible and the Assumption \ref{sep-hype} holds, then, for all $T\geq1/\varepsilon^2$, 
\begin{align}
&\frac1T\sum_{t=0}^{T-1}\sum_{n=1}^N\expect{y^n[t]}\leq f_*+\mathcal{O}(\varepsilon),\label{ctime-1}\\
&\frac1T\sum_{t=0}^{T-1}\sum_{n=1}^N\expect{z^n_l[t]}\leq d_l+\mathcal{O}(\varepsilon),
l\in\{1,2,\cdots,L\}.\label{ctime-2}
\end{align}
Thus, the algorithm provides $\mathcal{O}(\varepsilon)$ approximation with the convergence time $\mathcal{O}(1/\varepsilon^2)$.
\end{theorem}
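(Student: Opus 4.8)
\noindent The objective estimate \eqref{ctime-1} requires no new work: it is precisely inequality \eqref{final-dpp-2} from the proof of Theorem \ref{thm:main}, which states $\frac1T\sum_{t=0}^{T-1}\sum_{n=1}^N\expect{y^n[t]}\le f_*+\frac{NC_1+C_3}{V}+\frac{NC_2}{T}$; substituting $V=1/\varepsilon$ and using $T\ge1/\varepsilon^2$ makes both correction terms $\mathcal{O}(\varepsilon)$. The real content of the theorem, and the only place Assumption \ref{sep-hype} is used, is the constraint estimate \eqref{ctime-2}. For that, the crude queue bound $\expect{\|\mathbf{Q}[T]\|}=\mathcal{O}(\sqrt{VT})$ used inside Theorem \ref{thm:main} is too weak — it yields only the $1/\varepsilon^3$ convergence time noted before this section — and must be replaced by an $\mathcal{O}(V+\sqrt{T})$ bound.

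The plan is to combine the drift-plus-penalty inequality with the Lagrange-multiplier inequality. Write $\Phi:=\sum_{t=0}^{T-1}\sum_{n=1}^N\expect{y^n[t]}$, $q:=\expect{\|\mathbf{Q}[T]\|}$ and $\Gamma:=\sum_{l=1}^L\gamma_l$. Multiplying \eqref{final-dpp} by $T$ gives $\tfrac12\expect{\|\mathbf{Q}[T]\|^2}\le T(NC_1+C_3)+NC_2V+V(Tf_*-\Phi)$. Instead of bounding $Tf_*-\Phi$ by the trivial $\Phi\ge -NTy_{\max}$ (which only produces $\mathcal{O}(T)$, hence an $\mathcal{O}(VT)$ right-hand side), I would apply Lemma \ref{bound-lemma-2} (multiplied by $T$) to get $Tf_*-\Phi\le\sum_{t=0}^{T-1}\sum_{l=1}^L\gamma_l\big(\sum_{n=1}^N\expect{z_l^n[t]}-d_l\big)+C_4$, then use the telescoping queue inequality already derived in the proof of Lemma \ref{lemma:queue-bound}, namely $\sum_{t=0}^{T-1}\big(\sum_{n=1}^N\expect{z_l^n[t]}-d_l\big)\le\expect{Q_l[T]}$, together with $\sum_{l=1}^L\gamma_l\expect{Q_l[T]}\le\Gamma\,\expect{\|\mathbf{Q}[T]\|}$, to reach $Tf_*-\Phi\le\Gamma q+C_4$.

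Substituting this back and applying Jensen's inequality $\expect{\|\mathbf{Q}[T]\|^2}\ge q^2$ collapses the chain into a single quadratic inequality $\tfrac12 q^2-V\Gamma q-\big(T(NC_1+C_3)+NC_2V+VC_4\big)\le0$, whose solution is $q\le V\Gamma+\sqrt{V^2\Gamma^2+2T(NC_1+C_3)+2NC_2V+2VC_4}=\mathcal{O}(V+\sqrt{T})$ (using $V\ge1$ since $\varepsilon<1$). Finally, the violation of constraint $l$ at time $T$ is at most $\frac1T\expect{Q_l[T]}\le q/T=\mathcal{O}(V/T)+\mathcal{O}(1/\sqrt{T})$, and with $V=1/\varepsilon$ and $T\ge1/\varepsilon^2$ both terms are $\mathcal{O}(\varepsilon)$, which is \eqref{ctime-2}; together with the $1/\varepsilon^2$ threshold this gives the claimed $\mathcal{O}(1/\varepsilon^2)$ convergence time.

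The key step — and the reason this is more than a corollary of Theorem \ref{thm:main} — is routing the lower bound on $\Phi$ through Lemma \ref{bound-lemma-2} rather than the trivial boundedness estimate: this is exactly what turns a $VT$ term into $V\Gamma q+\mathcal{O}(T+V)$, so that the quadratic in $q$ has a coefficient only linear in $V$. A minor point to handle carefully is that the $\sqrt{T}$ contribution to $q$ can exceed $V$ when $T\gg1/\varepsilon^2$, but after dividing by $T$ it still yields $1/\sqrt{T}\le\varepsilon$, so \eqref{ctime-2} holds uniformly for all $T\ge1/\varepsilon^2$; the remaining manipulations merely re-use constants already fixed in Lemmas \ref{bound-lemma-1}, \ref{bound-lemma-2} and Theorem \ref{thm:main}.
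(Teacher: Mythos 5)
Your proposal is correct and follows essentially the same route as the paper's proof: the objective bound is read off from \eqref{final-dpp-2}, and the constraint bound comes from feeding Lemma \ref{bound-lemma-2} and the telescoped queue inequality into \eqref{final-dpp}, producing a quadratic in the expected queue norm whose linear coefficient is $\mathcal{O}(V)$, then dividing by $T$ with $V=1/\varepsilon$, $T\geq 1/\varepsilon^2$. The only differences are cosmetic: you work with $q=\expect{\|\mathbf{Q}[T]\|}$ and the bound $\sum_l\gamma_l\expect{Q_l[T]}\leq\big(\sum_l\gamma_l\big)q$, whereas the paper uses Cauchy--Schwarz with $\|\gamma\|_2$ and Jensen applied to $\|\expect{\mathbf{Q}[T]}\|$; both yield the same $\mathcal{O}(V+\sqrt{T})$ queue bound and the same conclusion.
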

\begin{proof}
First of all, by queue updating rule \eqref{queue-update}, 
\begin{equation}\label{queue-bound}
\sum_{t=0}^{T-1}\left(\sum_{n=1}^N\expect{z^n_l[t]}-d_l\right)\leq\expect{Q_l[T]}.
\end{equation}
By Lemma \ref{bound-lemma-2}, we have
\begin{align}
\frac1T\sum_{t=0}^{T-1}\left(f_*-\sum_{n=1}^N\expect{y^n[t]}\right)
\leq&\frac1T\sum_{t=0}^{T-1}\sum_{l=1}^L\gamma_l\left(\sum_{n=1}^N\expect{z^n_l[t]}-d_l\right)
+\frac{C_4}{T},\nonumber\\
\leq&\sum_{l=1}^L\frac{\gamma_l}{T}\expect{Q_l[T]}+\frac{C_4}{T}.\label{inter-ctime-1}
\end{align}
Combining this with \eqref{final-dpp} gives
\begin{align}
\frac{1}{2T}\expect{\|\mathbf{Q}[T]\|^2}
&\leq NC_1+C_3+\frac{V}{T}\sum_{t=0}^{T-1}\left(f_*-\sum_{n=1}^N\expect{y^n[t]}\right)+\frac{NC_2V}{T}\nonumber\\
&\leq NC_1+C_3 +\frac{(NC_2+C_4)V}{T}+V\sum_{l=1}^L\frac{\gamma_l}{T}\expect{Q_l[T]}
\nonumber\\
&\leq NC_1+C_3 +\frac{(NC_2+C_4)V}{T}+\frac{V}{T}\|\gamma\|\cdot\|\expect{\mathbf{Q}[T]}\|,
\label{inter-ctime-2}
\end{align}
where $\gamma:=(\gamma_1,~\cdots,~\gamma_L)$,
the second inequality follows from \eqref{inter-ctime-1} and the final inequality follows from Cauchy-Schwarz. Then, by Jensen's inequality, we have
\[\|\expect{\mathbf{Q}[T]}\|^2\leq\expect{\|\mathbf{Q}[T]\|^2}.\]
Thus, it follows by \eqref{inter-ctime-2} that
\begin{align*}
\|\expect{\mathbf{Q}[T]}\|^2- 2V\|\gamma\|\cdot\|\expect{\mathbf{Q}[T]}\| - 2(NC_1+C_3)T
-2(NC_2+C_4)V\leq 0.
\end{align*}
The left hand side is a quadratic form on $\|\expect{\mathbf{Q}[T]}\|$, and the inequality implies that 
$\|\expect{\mathbf{Q}[T]}\|$ is deterministically upper bounded by the largest root of the equation 
$x^2-bx-c=0$ with $b=2V\|\gamma\|$ and $c=2(NC_1+C_3)T+2(NC_2+C_4)V$. Thus,
\begin{align*}
\|\expect{\mathbf{Q}[T]}\|\leq&\frac{b+\sqrt{b^2+4c}}{2}\\
=& V\|\gamma\|+\sqrt{V^2\|\gamma\|^2+2(NC_1+C_3)T+2(NC_2+C_4)V}\\
\leq& 2V\|\gamma\|+ \sqrt{2(NC_1+C_3)T} + \sqrt{2(NC_2+C_4)V}.
\end{align*}
Thus, for any $l\in\{1,2,\cdots,L\}$,
\begin{align*}
\frac1T\expect{Q_l[T]}\leq \frac{2V\|\gamma\|}{T} 
 + \sqrt{\frac{2(NC_1+C_3)}{T}} + \frac{\sqrt{2(NC_2+C_4)V}}{T}.
\end{align*}
By \eqref{queue-bound} again,
\begin{align*}
\frac1T\sum_{t=0}^{T-1}\sum_{n=1}^N\expect{z^n_l[t]}\leq d_l+\frac{2V\|\gamma\|}{T} 
 + \sqrt{\frac{2(NC_1+C_3)}{T}} + \frac{\sqrt{2(NC_2+C_4)V}}{T}.
\end{align*}
Substituting $V=1/\varepsilon$ and $T\geq1/\varepsilon^2$ into the above inequality gives 
$\forall l\in\{1,2,\cdots,L\}$,
\begin{align*}
\frac1T\sum_{t=0}^{T-1}\sum_{n=1}^N\expect{z^n_l[t]}\leq&
d_l + \left(2\|\gamma\|+\sqrt{2(NC_1+C_3)}\right)\varepsilon + \sqrt{2(NC_2+C4)}\varepsilon^{3/2}\\
=&d_l+\mathcal{O}(\varepsilon).
\end{align*}
Finally, substituting $V=1/\varepsilon$ and $T\geq1/\varepsilon^2$ into \eqref{final-dpp-2} gives
\[\frac1T\sum_{t=0}^{T-1}\sum_{n=1}^N\expect{y^n[t]}\leq f_*+\mathcal{O}(\varepsilon),\]
finishing the proof.
\end{proof}

\section{Simulation Study in Energy-aware Scheduling}\label{section-application}
 Here, we apply the algorithm introduced in Section \ref{section:algorithm} to deal with the energy-aware scheduling problem described in Section \ref{sec:application}. To be specific, we consider a scenario with 5 homogeneous servers and 3 different classes of jobs, i.e. $N=5$ and $L=3$.
We assume that each server can only choose one class of jobs to serve during each frame. So the mode set $\mathcal{M}^n$ contains three actions $\{1,2,3\}$ and the action $i$ stands for serving the $i$-th class of jobs and we count the number of serviced jobs at the end of each service duration. The action $m^n_k$ determines the following quantities:
 \begin{itemize}
\item The uniformly distributed total number of class $l$ jobs that can be served with expectation
$\expect{\left.\sum_{t\in\mathcal{T}^n_k}\mu^n_l[t]\right|~m^n_k}:=\widehat{\mu}^n_l(m^n_k)$.
\item The geometrically distributed service duration $H^n_k$ slots with expectation $\expect{\left.H^n_k\right|~m^n_k}:=\widehat{H}^n(m^n_k)$.
\item The energy consumption $\widehat{e}^n(m^n_k)$ for serving all these jobs.
\item The geometrically distributed idle/setup time $I^n_k$ slots with constant energy consumption $p^n$ per slot and zero job service. The expectation $\expect{\left.I^n_k\right|~m^n_k}:=\widehat{I}^n(m^n_k)$.
\end{itemize}
The idle/setup cost is $p^n=3$ units per slot and the rest of the parameters are listed in Table 1.

Following the algorithm description in Section \ref{section:algorithm}, the proposed algorithm has the queue updating rule 
\[Q_l[t+1]=\max\left\{Q_l[t]+\lambda_l[t]-\sum_{n=1}^N\mu^n_l[t],~0\right\},\]
and each system minimizes
\eqref{DPP-ratio} each frame, which can be written as 
\begin{align*}
\min_{m^n_k\in\mathcal{M}^n}\frac{V\left(\widehat{e}^n_l(m^n_k)+p^n\widehat{I}^n(m^n_k)\right)
-\dotp{\mathbf{Q}[t^n_k]}{\widehat{\mu}^n(m^n_k)}}{\widehat{H}^n(m^n_k)+\widehat{I}^n(m^n_k)}.
\end{align*}

\begin{table}
\begin{center}
\caption{Problem parameters}
\begin{tabular}{c|c|c|c|c|c}
  \hline
   & $\lambda_i$ & $\widehat{H}^n(i)$ & $\widehat{\mu}^n(i)$ & $\widehat{e}^n(i)$ & $\widehat{I}^n(i)$\\
   \hline
Class 1   & 2 & 5.5 & 15 (Uniform $[9,21]\cap\mathbb{N}$) & 16 & 2.5 \\
  \hline
 Class 2  & 3 & 4.6 & 21 (Uniform $[15,27]\cap\mathbb{N}$) & 20 & 4.3\\
 \hline
 Class 3  & 4 & 3.8 & 17 (Uniform $[11,23]\cap\mathbb{N}$) & 13 & 3.7\\
 \hline
\end{tabular}
\end{center}
\end{table}

Each plot for the proposed algorithm is the result of running 1 million slots and taking the time average as the performance of the proposed algorithm. The benchmark is the optimal stationary performance obtained by performing a change of variable and solving a linear program, knowing the arrival rates (see also \cite{Neely12} for details). 

Fig. \ref{fig:Stupendous2} shows as the trade-off parameter $V$ gets larger, the time average energy consumptions under the proposed algorithm approaches the optimal energy consumption. Fig. \ref{fig:Stupendous3} shows as $V$ gets large, the time average number of services also approaches the optimal service rate for each class of jobs. In Fig. \ref{fig:Stupendous4}, we plot the time average queue backlog for each class of jobs verses $V$ parameter. We see that the queue backlog for the first class is always low whereas the rest queue backlogs scale up linearly with $V$. This is because the service rate for the first class is always strictly larger than the arrival rate whereas for the rest classes, as $V$ gets larger, the service rates approach the arrival rates. This plot, together with Fig. \ref{fig:Stupendous2}, also demonstrate that $V$ 
is indeed a trade-off parameter which trades queue backlog for near optimality. 
\begin{figure}[htbp]
   \centering
   \includegraphics[height=3in]{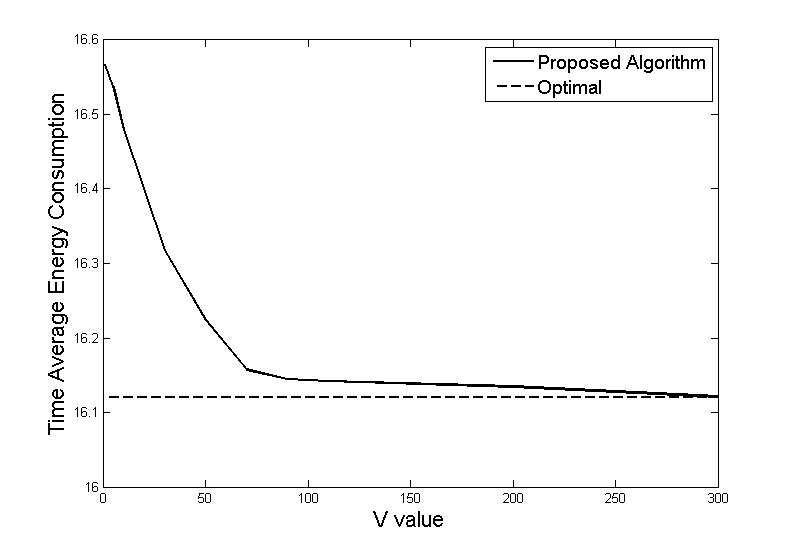} 
   \caption{Time average energy consumption verses $V$ parameter over 1 millon slots.}
   \label{fig:Stupendous2}
\end{figure}

\begin{figure}[htbp]
   \centering
   \includegraphics[height=3in]{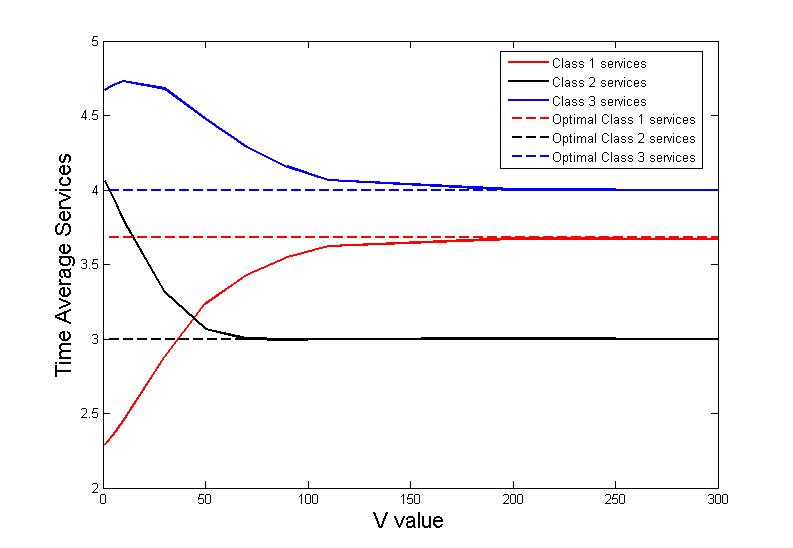} 
   \caption{Time average services verses $V$ parameter over 1 millon slots.}
   \label{fig:Stupendous3}
\end{figure}

\begin{figure}[htbp]
   \centering
   \includegraphics[height=3in]{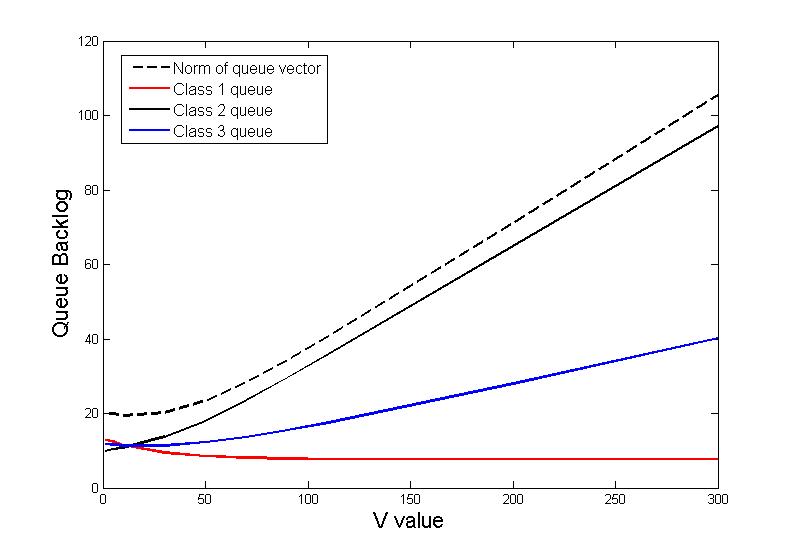} 
   \caption{Time average queue size verses $V$ parameter over 1 million slots.}
   \label{fig:Stupendous4}
\end{figure}

\section{Additional lemmas and proofs.}\label{appendix-proof}

\subsection{Proof of Lemma \ref{convex-lemma}}
\begin{proof}
We first prove the convexity of $\mathcal{P}^n$.
Consider any two points $(f_1,\mathbf{g}_1),~(f_2,\mathbf{g}_2)\in\mathcal{P}^n$. We aim to show that for any $q\in(0,1)$, $(qf_1+(1-q)f_2,q\mathbf{g}_1+(1-q)\mathbf{g}_2)\in\mathcal{P}^n$. Notice that by definition of $\mathcal{P}^n$, there exists $(y_1,\mathbf{z}_1,T_1),~(y_2,\mathbf{z}_2,T_2)\in\mathcal{S}^n$ such that $f_1=y_1/T_1$, $\mathbf{g}_1=\mathbf{z}_1/T_1$, $f_2=y_2/T_2$, and $\mathbf{g}_2=\mathbf{z}_2/T_2$. Thus, it is enough to show 
\begin{equation}\label{convex-combo}
\left(q\frac{y_1}{T_1}+(1-q)\frac{y_2}{T_2},q\frac{\mathbf{z}_1}{T_1}+(1-q)\frac{\mathbf{z}_2}{T_2}\right)\in\mathcal{P}^n.
\end{equation}
To show this, we make a change of variable by letting $p=\frac{qT_2}{(1-q)T_1+qT_2}$. It is obvious that $p\in(0,1)$. Furthermore, $q=\frac{pT_1}{pT_1+(1-p)T_2}$ and 
\begin{align*}
&q\frac{y_1}{T_1}+(1-q)\frac{y_2}{T_2}=\frac{py_1+(1-p)y_2}{pT_1+(1-p)T_2},\\
&q\frac{\mathbf{z}_1}{T_1}+(1-q)\frac{\mathbf{z}_2}{T_2}
=\frac{p\mathbf{z}_1+(1-p)\mathbf{z}_2}{pT_1+(1-p)T_2}.
\end{align*}
Since $\mathcal{S}^n$ is convex, 
$$(py_1+(1-p)y_2,~p\mathbf{z}_1+(1-p)\mathbf{z}_2
,~pT_1+(1-p)T_2)\in\mathcal{S}^n.$$
Thus, by definition of $\mathcal{P}^n$ again, \eqref{convex-combo} holds and the first part of the proof is finished.

To show the second part of the claim, let 
$$\mathcal{Q}^n: = \left\{ \left(\widehat{f}^n(\alpha^n),~\widehat{\mathbf{g}}^n(\alpha^n)\right) : \alpha^n\in\mathcal{A}^n \right\}
= \left\{ \left(\widehat{y}^n(\alpha^n)\left/\widehat{T}^n(\alpha^n)\right.,~\widehat{\mathbf{z}}^n(\alpha^n)\left/\widehat{T}^n(\alpha^n)\right)\right. : \alpha^n\in\mathcal{A}^n \right\}$$ 
and let 
$\text{conv}(\mathcal{Q}^n)$ be the convex hull of $\mathcal{Q}^n$. First of all, By Definition \ref{PR-def},
\[\mathcal{P}^n=\left\{\left(y/T,~\mathbf{z}/T\right):~(y,\mathbf{z},T)\in\mathcal{S}^n\right\}\subseteq\mathbb{R}^{L+1},\]
for $\mathcal{S}^n$ being the convex hull of $\left\{\left(\widehat{y}^n(\alpha^n),~\widehat{\mathbf{z}}^n(\alpha^n),~\widehat{T}^n(\alpha^n)\right):~\alpha^n\in\mathcal{A}^n\right\}$, thus, in view of the definition of $\mathcal{Q}^n$, we have $\mathcal{Q}^n\subseteq\mathcal{P}^n$.
Since both 
$\mathcal{P}^n$ and $\text{conv}(\mathcal{Q}^n)$ are convex, by definition of convex hull (\cite{rockafellar2015convex}) that $\text{conv}(\mathcal{Q}^n)$ is the smallest convex set containing $\mathcal{Q}^n$,
 we have 
$\text{conv}(\mathcal{Q}^n)\subseteq\mathcal{P}^n$.

To show the reverse inclusion $\mathcal{P}^n\subseteq\text{conv}(\mathcal{Q}^n)$, note that any point in 
$\mathcal{P}^n$ can be written in the form $\left( \frac{y}{T},\frac{\mathbf{z}}{T} \right)$, where
$(y,\mathbf{z},T)\in\mathcal{S}^n$. Since $\mathcal{S}^n$ by definition is the convex hull of 
$$\left\{\left(\widehat{y}^n(\alpha^n),~\widehat{\mathbf{z}}^n(\alpha^n),~\widehat{T}^n(\alpha^n)\right):~\alpha^n\in\mathcal{A}^n\right\}\subseteq\mathbb{R}^{L+2},$$
by the definition of convex hull, $(y,\mathbf{z},T)$ can be written as a convex combination of
$m$ points in the above set. Let 
$\left\{\left(\widehat{y}^n(\alpha^n_i),~\widehat{\mathbf{z}}^n(\alpha^n_i),~\widehat{T}^n(\alpha^n_i)\right)\right\}_{i=1}^m$ be these points, so that
\begin{align*}
&(y,\mathbf{z},T) = \sum_{i=1}^m p_i\cdot\left(\widehat{y}^n(\alpha^n_i),~\widehat{\mathbf{z}}^n(\alpha^n_i),~\widehat{T}^n(\alpha^n_i)\right),\\
&p_i\geq0,~~\sum_{i=1}^mp_i = 1.
\end{align*}
As a result, we have
\[
\left( \frac{y}{T},\frac{\mathbf{z}}{T} \right)
=\left( \frac{\sum_{i=1}^m p_iy^n(\alpha^n_i)}{\sum_{i=1}^m p_iT^n(\alpha^n_i)},\frac{\sum_{i=1}^m p_i\mathbf{z}^n(\alpha^n_i)}{\sum_{i=1}^m p_iT^n(\alpha^n_i)} \right).
\]
We make a change of variable by letting $q_j = \frac{p_jT^n(\alpha^n_j)}{\sum_{i=1}^m p_iT^n(\alpha^n_i)},~\forall j=1,2,\cdots,m$, then, 
$$p_j = \frac{q_j}{T^n(\alpha^n_j)}\cdot\sum_{i=1}^m p_iT^n(\alpha^n_i),$$
it follows,
\[
\left( \frac{y}{T},\frac{\mathbf{z}}{T} \right)
=\sum_{i=1}^m q_i\cdot\left( \frac{ y^n(\alpha^n_i)}{T^n(\alpha^n_i)},\frac{\mathbf{z}^n(\alpha^n_i)}{T^n(\alpha^n_i)} \right) = \sum_{i=1}^m q_i\cdot\left(\widehat{f}^n(\alpha^n_i),~\widehat{\mathbf{g}}^n(\alpha^n_i)\right).
\]
Since $\sum_{i=1}^mq_i = 1$ and $q_i\geq0$, it follows any point in $\mathcal{P}^n$ can be written as a convex combination of finite number of points in $ \mathcal{Q}^n$, which implies 
$\mathcal{P}^n\subseteq\text{conv}(\mathcal{Q}^n)$. Overall, we have $\mathcal{P}^n=\text{conv}(\mathcal{Q}^n)$. 

Finally, by Assumption \ref{compact-assumption}, we have $\mathcal{Q}^n=\left\{ \left(\widehat{f}^n(\alpha^n),~\widehat{\mathbf{g}}^n(\alpha^n)\right) : \alpha^n\in\mathcal{A}^n \right\}$ is compact. Thus, $\mathcal{P}^n$, being a convex hull of a compact set, is also compact.
\end{proof}

\subsection{Proof of Lemma \ref{bound-lemma-1}}
\begin{proof}
We prove bound \eqref{bound-1} (\eqref{bound-2} is proved similarly). By definition of $\widehat{f}^n(\alpha^n)$ in Definition \ref{PV-def}, we have for any $\alpha^n\in\mathcal{A}^n$,
\[\widehat{f}^n(\alpha^n)=\frac{\expect{\left.\sum_{t\in\mathcal{T}^n_k}y^n[t]\right|~\alpha^n_k=\alpha^n}}{\expect{T^n_k|~\alpha^n_k=\alpha^n}},\]
thus,
\[\expect{\left.\sum_{t\in\mathcal{T}^n_k}\left(\widehat{f}^n(\alpha^n_k)-y^n[t]\right)\right|~\alpha^n_k=\alpha^n}=0.\]
By the renewal property of the system, given $\alpha^n_k=\alpha^n$, $T^n_k$ and $\sum_{t\in\mathcal{T}^n_k}y^n[t]$ are independent of the past information before $t^n_k$. Thus, the same equality holds if conditioning also on $\mathcal F_k^n $, i.e.
\[\expect{\left.\sum_{t\in\mathcal{T}^n_k}\left(\widehat{f}^n(\alpha^n_k)-y^n[t]\right)\right|~\alpha^n_k=\alpha^n,~\mathcal F_k^n }=0.\]
Hence,
\[\expect{\left.\sum_{t\in\mathcal{T}^n_k}\left(\widehat{f}^n(\alpha^n_k)-y^n[t]\right)\right|~\mathcal F_k^n }=0.\]

By the definition of $f^n[t]$, this further implies that
\[\expect{\left.\sum_{t\in\mathcal{T}^n_k}\left(f^n[t]-y^n[t]\right)\right|~\mathcal F_k^n }=0.\]
Since $|y^n[t]|\leq y_{\max}$ and $\expect{T^n_k}\leq\sqrt{B}$, it follows $\expect{\left|\sum_{t\in\mathcal{T}^n_k}\left(f^n[t]-y^n[t]\right)\right|}<\infty$ and 
the process $\{F^n_K\}_{K=0}^\infty$ defined as
\[F^n_K=\sum_{k=0}^{K-1}\sum_{t\in\mathcal{T}^n_k}\left(f^n[t]-y^n[t]\right),~K\geq1,\]
$F^n_0=0$ is a \textit{martingale}. 

Consider any fixed $T\in\mathbb{N}$ and define $S^n[T]$ as the number of renewals up to $T$. Lemma \ref{valid-stopping-time} shows $S^n[T]$ is a valid stopping time with respect to the filtration $\{\mathcal F_k^n \}_{k=0}^\infty$.
Furthermore, $\{F^n_{K\wedge S^n[T]}\}_{K=0}^\infty$ is a supermartingale by Theorem \ref{stopping-time}, where $a\wedge b:=\min\{a,b\}$.

For this fixed $T$, we have
\begin{align*}
\expect{\sum_{t=0}^{T-1}\left(f^n[t]-y^n[t]\right)}
=&\expect{\sum_{t=0}^{t^n_{S^n[T]}-1}\left(f^n[t]-y^n[t]\right)}
-\expect{\sum_{t=T}^{t^n_{S^n[T]}-1}\left(f^n[t]-y^n[t]\right)}\\
=&\expect{F^n_{S^n[T]}}-\expect{\sum_{t=T}^{t^n_{S^n[T]}-1}\left(f^n[t]-y^n[t]\right)}.
\end{align*}
Since the number of renewals is always bounded by the number of slots at any time, i.e. $S^n[T]\leq T+1$, it follows
\[\expect{F^n_{S^n[T]}}=\expect{F^n_{(T+1)\wedge S^n[T]}}\leq 0.\]
On the other hand, 
\begin{align*}
\left|\expect{\sum_{t=T}^{t^n_{S^n[T]}-1}\left(f^n[t]-y^n[t]\right)}\right|
\leq\expect{t^n_{S^n[T]}-T}\cdot2y_{\max}\leq2y_{\max}\sqrt{B}.
\end{align*}
where the last inequality follows from Assumption \ref{bounded-assumption} for the residual life time. Thus, 
\[\expect{\sum_{t=0}^{T-1}\left(f^n[t]-y^n[t]\right)}\leq2y_{\max}\sqrt{B}.\]
Dividing both sides by $T$ finishes the proof.
\end{proof}

\subsection{Proof of Lemma \ref{lemma:filtration}}
\begin{proof}
Recall that $t^n_k$ is the time slot where the $k$-th renewal occurs ($k=0,1,2,\cdots$), then, it follows from the definition of stopping time (\cite{Durrett}) that $\{t^n_k\}_{k=0}^\infty$ is a sequence of stopping times with respect to $\{\mathcal{F}[t]\}_{t=0}^{\infty}$ satisfying $t^n_k<t^n_{k+1},~\forall k$. Thus, by definition of 
$\mathcal F_k^n $, for any set $A\in\mathcal F_k^n $,
\[A\cap\{t^n_{k+1}\leq t\}=A\cap\{t^n_k\leq t\}\cap\{t^n_{k+1}\leq t\}\in\mathcal{F}[t].\]
Thus, $A\in\mathcal F_{k+1}^n $, which implies $\mathcal F_k^n \subseteq\mathcal F_{k+1}^n ,~\forall k$,
and $\{\mathcal F_k^n \}_{k=0}^\infty$ is indeed a filtration. This finishes the first part of the proof.

Next,
we would like to show that $G_{t^n_k}(Z^n_0,\cdots,Z^n[t^n_k-1])$ is measurable with respect to $\mathcal F_k^n ,~\forall k\geq1$,
i.e. $\left\{G_{t^n_k}(Z^n_0,\cdots,Z^n[t^n_k-1])\in B\right\}\in\mathcal F_k^n $, for any Borel set $B\subseteq\mathbb{R}$. By definition of 
$\mathcal F_k^n $, this is equivalent to showing $\{G_{t^n_k}(Z^n_0,\cdots,Z^n[t^n_k-1])\in B\} \cap \{t^n_k\leq s\}\in\mathcal{F}[s]$ for any slot $s\geq0$. For $s=0$, this is obvious because $ \{t^n_k\leq 0\}=\emptyset,~\forall k\geq1$. Consider any $s\geq1$,
\begin{align*}
&\left\{G_{t^n_k}(Z^n_0,\cdots,Z^n[t^n_k-1])\in B\right\} \cap \{t^n_k\leq s\}\\
&= \bigcup_{i=1}^{s}\left(\left\{G_{i}(Z^n_0,\cdots,Z^n[i-1])  \in B\right\}\bigcap\{t^n_k = i\}\right)\\
&= \bigcup_{i=1}^{s}\left(\left\{(Z^n_0,\cdots,Z^n[i-1])  \in G^{-1}_i(B)\right\}\bigcap\{t^n_k = i\}\right)
 \in\mathcal{F}[s], ~\forall k\geq1,
\end{align*}
where the last step follows from the assumption that
the random variable $Z^n[t-1]$ is measurable with respect to $\mathcal{F}[t]$ for any $t>0$ and $t^n_k$ is a stopping time with respect to $\{\mathcal{F}[t]\}_{t=0}^{\infty}$ for all $k\geq1$. This gives the second part of the claim.
\end{proof}

\subsection{Proof of Lemma \ref{valid-stopping-time}}
\begin{proof}
We aim to prove $\{S^n[T]= k\}\in\mathcal F_k^n ,~\forall k\in\mathbb{N}$.
First of all, recall that the index of the renewal starts from $k=0$ and $t^n_0=0$, thus, 
for any $k\in\mathbb{N}$, 
$\{S^n[T]=k\} = \{t^n_k> T\}\cap\{t^n_{k-1}\leq T\}$,
and any $t\in\mathbb{N}$,
\begin{align}
\{S^n[T]=k\}\cap\{t^n_k\leq t\}
=&\{t^n_k> T\}\cap\{t^n_{k-1}\leq T\}\cap\{t^n_k\leq t\}. \label{the-set}
\end{align}

Consider two cases as follows:
\begin{enumerate}
\item $t\leq T$. In this case, the set \eqref{the-set} is empty and obviously belongs to $\mathcal{F}[t]$.
\item $t>T$. In this case, we have $\{t^n_k> T\}\cap\{t^n_k\leq t\}=\{T<t^n_k\leq t\}\in\mathcal{F}[t]$ as well as $\{t^n_{k-1}\leq T\}\in\mathcal{F}[T]\subseteq\mathcal{F}[t]$. Thus, the set \eqref{the-set} belongs to $\mathcal{F}[t]$.
\end{enumerate}
Overall, we have $\{S^n[T]=k\}\cap\{t^n_k\leq t\}\in \mathcal{F}[t],~\forall t\in\mathbb{N}$. Thus,
$\{S^n[T]=k\}\in\mathcal F_k^n $ and $S^n[T]$ is indeed a valid stopping time with respect to the filtration $\{\mathcal F_k^n \}_{k=0}^\infty$.
\end{proof}

\subsection{Proof of Lemma \ref{stationary-lemma}}

\begin{proof}
To prove the first part of the claim, we define the following notation:
\[\bigoplus_{n=1}^N\mathcal{P}^n:=\left\{\sum_{n=1}^N\mathbf{p}_n,~\mathbf{p}_n\in\mathcal{P}^n,~\forall n
\right\}\]
is the Minkowski sum of sets $\mathcal{P}_n,~n\in\{1,2,\cdots,N\}$, and for any sequence $\{\mathbf{x}[t]\}_{t=0}^\infty$ taking values in 
$\mathbb{R}^d$, define
$$\limsup_{T\rightarrow\infty}\mathbf{x}[T]:=
\left(\limsup_{T\rightarrow\infty}x_1[T],~\cdots,\limsup_{T\rightarrow\infty}x_d[T]\right)$$ 
is a vector of $\limsup$s. By definition, any vector in $\oplus_{n=1}^N\mathcal{P}^n$ can be constructed from $\otimes_{n=1}^N\mathcal{P}^n$, thus, it is enough to show that there exists a vector $\mathbf{r}^*\in\oplus_{n=1}^N\mathcal{P}^n$ such that $r_0^*=f^*$ and the rest of the entries $r^*_l\leq d_l,~l=1,2,\cdots,L$.

By the feasibility assumption for \eqref{prob-1}-\eqref{prob-2}, we can
consider \textit{any algorithm that achieves the optimality} of \eqref{prob-1}-\eqref{prob-2} and the corresponding process $\{(f^n[t],\mathbf{g}^n[t])\}_{t=0}^\infty$ defined in Lemma \ref{bound-lemma-1} for any system $n$. Notice that $(f^n[t],\mathbf{g}^n[t])\in\mathcal{P}^n,~\forall n,~\forall t$. This follows from the definition of $\widehat{f}^n(\alpha^n)$ and $\widehat{\mathbf{g}}^n(\alpha^n)$ in Definition \ref{PV-def} that
\begin{align*}
f^n[t] =& \widehat{f}^n(\alpha^n) = \widehat{y}^n(\alpha^n)/\widehat{T}^n(\alpha^n),
~~\textrm{if}~t\in\mathcal{T}^n_k,\alpha^n_k=\alpha^n\\
\mathbf{g}^n[t] =& \widehat{\mathbf{g}}^n(\alpha^n)= \widehat{\mathbf z}^n(\alpha^n)/\widehat{T}^n(\alpha^n),~~\textrm{if}~t\in\mathcal{T}^n_k,\alpha^n_k=\alpha^n,
\end{align*}
and 
$\left(\widehat{y}^n(\alpha^n),~\widehat{\mathbf z}^n(\alpha^n),~\widehat{T}^n(\alpha^n)\right)
\in\mathcal{S}^n$. By definition of $\mathcal{P}^n$ in Definition \ref{PR-def},
$(f^n[t],\mathbf{g}^n[t])\in\mathcal P^n,~\forall n,~\forall t$.

Since $\mathcal{P}^n$ is convex by Lemma \ref{convex-lemma}, it follows that 
$\left(\expect{f^n[t]},\expect{\mathbf{g}^n[t]}\right)\in\mathcal{P}^n,~\forall n,~\forall t$. Hence,
\[\left(\frac1T\sum_{t=1}^{T-1}\expect{f^n[t]},~\frac1T\sum_{t=1}^{T-1}\expect{\mathbf{g}^n[t]}\right)\in\mathcal{P}^n,~\forall T, \forall n.\]
This further implies that
\[
\mathbf{r}(T):=\left(\frac1T\sum_{t=1}^{T-1}\sum_{n=1}^N\expect{f^n[t]},~\frac1T\sum_{t=1}^{T-1}\sum_{n=1}^N\expect{\mathbf{g}^n[t]}\right)\in\bigoplus_{n=1}^N\mathcal{P}^n.
\]
By Lemma \ref{convex-lemma}, $\mathcal P^n$ is compact in $\mathbb{R}^{L+1}$. Thus,
$\oplus_{n=1}^N\mathcal{P}^n$ is also compact.
This implies that the sequence $\{\mathbf{r}(T)\}_{T=1}^\infty$ has at least one limit point, and any such limit point is contained in $\oplus_{n=1}^N\mathcal{P}^n$. 

We consider a specific limit point of $\{\mathbf{r}(T)\}_{T=1}^\infty$ denoted as $\mathbf{r}^*\in\oplus_{n=1}^N\mathcal{P}^n$, with the first entry denoted as $r_0^*$ satisfying 
$$r_0^* = \limsup_{T\rightarrow\infty}\frac1T\sum_{t=0}^{T-1}\sum_{n=1}^N\expect{f^n[t]}.$$
Then, we have the rest of the entries of $\mathbf{r}^*$ must satisfy
\[r_l^*\leq\limsup_{T\rightarrow\infty}\frac1T\sum_{t=0}^{T-1}\sum_{n=1}^N\expect{\mathbf{g}^n[t]},
~\forall l\in\{1,2,\cdots,L\}.\]
Now, 
by Lemma \ref{bound-lemma-1}, we can connect the $\limsup$ with respect to $f^n[t]$ and 
$\mathbf{g}^n[t]$ to that of $y^n[t]$ and $\mathbf{z}^n[t]$ as follows:
\begin{align*}
&\limsup_{T\rightarrow\infty}\frac1T\sum_{t=0}^{T-1}\sum_{n=1}^N\expect{y^n[t]}\\
=&\limsup_{T\rightarrow\infty}\frac1T\sum_{t=0}^{T-1}\sum_{n=1}^N\left(\expect{y^n[t]-f^n[t]}+\expect{f^n[t]}\right)\\
=&\lim_{T\rightarrow\infty}\frac1T\sum_{t=0}^{T-1}\sum_{n=1}^N\expect{y^n[t]-f^n[t]}
+\limsup_{T\rightarrow\infty}\frac1T\sum_{t=0}^{T-1}\sum_{n=1}^N\expect{f^n[t]}\\
=&\limsup_{T\rightarrow\infty}\frac1T\sum_{t=0}^{T-1}\sum_{n=1}^N\expect{f^n[t]}.
\end{align*}
Similarly, we can show that
\[
\limsup_{T\rightarrow\infty}\frac1T\sum_{t=0}^{T-1}\sum_{n=1}^N\expect{\mathbf{z}^n[t]}
=\limsup_{T\rightarrow\infty}\frac1T\sum_{t=0}^{T-1}\sum_{n=1}^N\expect{\mathbf{g}^n[t]}.\]
Thus, by our preceeding assumption that the algorithm under consideration achieves the optimality of \eqref{prob-1}-\eqref{prob-2}, we have 
\begin{align*}
&r_0^*=\limsup_{T\rightarrow\infty}\frac1T\sum_{t=0}^{T-1}\sum_{n=1}^N\expect{y^n[t]}=f^*\\
&r_l^*\leq\limsup_{T\rightarrow\infty}\frac1T\sum_{t=0}^{T-1}\sum_{n=1}^N\expect{z_l^n[t]}
\leq d_l,~\forall i\in\{1,2,\cdots,L\}.
\end{align*}
Overall, we have shown that $\mathbf{r}^*\in\oplus_{n=1}^N\mathcal{P}^n$ achieves the optimality of \eqref{prob-1}-\eqref{prob-2}, and the first part of the lemma is proved.

To prove the second part of the lemma, we show that any point in $\otimes_{n=1}^N\mathcal{P}^n$ is achievable by the corresponding time averages of some algorithm. Specifically, consider the following class of \textit{randomized stationary algorithms}: For each system $n$, at the beginning of $k$-th frame, the controller independently chooses an action $\alpha^n_k$ from the set $\mathcal{A}^n$ with a fixed probability distribution. 

Thus, the actions $\{\alpha^n_k\}_{k=0}^{\infty}$ result from any randomized stationary algorithm is i.i.d.. By the renewal property of each system, we have 
$$\left\{\left(\sum_{t\in\mathcal{T}^n_k}y^n[t],~\sum_{t\in\mathcal{T}^n_k}\mathbf{z}^n[t],~T^n_k\right)\right\}_{k=0}^\infty,$$
is also an i.i.d. process for each system $n$.

Next, we would like to show that any point in $\mathcal{S}^n$ can be achieved by the corresponding expectations of some randomized stationary algorithm.
Recall that $\mathcal{S}^n$ defined in Definition \ref{PR-def} is the convex hull of 
$$\mathcal{G}^n:=\left\{\left(\widehat{y}^n(\alpha^n),~\widehat{\mathbf z}^n(\alpha^n),~\widehat{T}^n(\alpha^n)\right),~\alpha^n\in\mathcal{A}^n\right\} \subseteq\mathbb{R}^{L+2},$$ 
By definition of convex hull, 
 any point $(y,\mathbf{z},T)\in\mathcal{S}^n$, can be written as a convex combination of a finite number of points from the set $\mathcal{G}^n$. Let $\left\{\left(\widehat{y}^n(\alpha^n_i),~\widehat{\mathbf z}^n(\alpha^n_i),~\widehat{T}^n(\alpha^n_i)\right)\right\}_{i=1}^m$ be these points,
then, we have there exists a finite sequence $\{p_i\}_{i=1}^m$, such that
\begin{align*}
&(y,\mathbf{z},T) = \sum_{i=1}^m p_i\cdot\left(\widehat{y}^n(\alpha^n_i),~\widehat{\mathbf z}^n(\alpha^n_i),~\widehat{T}^n(\alpha^n_i)\right),\\
&p_i\geq0,~\sum_{i=1}^mp_i=1.
\end{align*}
We can then use $\{p_i\}_{i=1}^m$ to construct the following randomized stationary algorithm: At the start of each frame $k$, the controller independently chooses action $\alpha_i\in\mathcal{A}^n$ with probability $p_i$ defined above for $i=1,2,\cdots,m$. Then, the one-shot expectation of this particular randomized stationary algorithm on system $n$ satisfies
\[
\left(\expect{\sum_{t\in\mathcal{T}^n_k}y^n[t]},~\expect{\sum_{t\in\mathcal{T}^n_k}\mathbf{z}^n[t]},~\expect{T^n_k}\right)=
 \sum_{i=1}^m p_i\cdot\left(\widehat{y}^n(\alpha^n_i),~\widehat{\mathbf z}^n(\alpha^n_i),~\widehat{T}^n(\alpha^n_i)\right)=(y,\mathbf{z},T),
\]
which implies any point in $\mathcal{S}^n$ can be achieved by the corresponding expectations of a randomized stationary algorithm.

Next, by definition of $\mathcal P^n$ in Definition \ref{PR-def}, any $(\overline{f}^n,\overline{\mathbf{g}}^n)\in\mathcal P^n$ can be written as $(\overline{f}^n,\overline{\mathbf{g}}^n)=(y/T,\mathbf{z}/T)$, where $(y,\mathbf{z},T)\in\mathcal{S}^n$. Thus, it is achievable by the ratio of one-shot expectations from a randomized stationary algorithm, i.e.
\[
\frac{\expect{\sum_{t\in\mathcal{T}^n_k}y^n[t]}}{\expect{T^n_k}}=\frac{y}{T}=\overline{f}^n,~~
\frac{\expect{\sum_{t\in\mathcal{T}^n_k}\mathbf{z}^n[t]}}{\expect{T^n_k}}=\frac{\mathbf{z}}{T}
=\overline{\mathbf{g}}^n.
\]
Now we claim that for $y^n[t]$, $\mathbf{z}^n[t]$ and $T^n_k$ result from the randomized stationary algorithm,
\begin{align}
&\lim_{T\rightarrow\infty}\frac1T\sum_{t=0}^{T-1}\expect{y^n[t]}=\frac{\expect{\sum_{t\in\mathcal{T}^n_k}y^n[t]}}{\expect{T^n_k}},\label{lln-1}\\
&\lim_{T\rightarrow\infty}\frac1T\sum_{t=0}^{T-1}\expect{\mathbf{z}^n[t]}=\frac{\expect{\sum_{t\in\mathcal{T}^n_k}\mathbf{z}^n[t]}}{\expect{T^n_k}}.\label{lln-2}
\end{align}
We prove \eqref{lln-1} and \eqref{lln-2} is shown in a similar way. Consider any fixed $T$, and let $S^n[T]$ be the number of renewals up to (and including) time $T$. Then, from Lemma \ref{valid-stopping-time} in Section \ref{section:limiting}, $S^n[T]$ is a valid stopping time with respect to the filtration 
$\{\mathcal F_k^n \}_{k=0}^\infty$. We write
\begin{equation}\label{split-stop}
\frac1T\sum_{t=0}^{T-1}\expect{y^n[t]}=\frac{1}{T}
\expect{\sum_{k=0}^{S^n[T]}\sum_{t\in\mathcal{T}^n_k}y^n[t]}-\frac1T\expect{\sum_{t=T}^{t^n_{S^n[T]}-1}y^n[t]}.
\end{equation}
For the first part on the right hand side of \eqref{split-stop}, since $\left\{\sum_{t\in\mathcal{T}^n_k}y^n[t]\right\}_{k=0}^{\infty}$ is an i.i.d. process, by Wald's equality (Theorem 4.1.5 of \cite{Durrett}),
\[
\frac{1}{T}
\expect{\sum_{k=0}^{S^n[T]}\sum_{t\in\mathcal{T}^n_k}y^n[t]}=\expect{\sum_{t\in\mathcal{T}^n_k}y^n[t]}
\cdot\frac{\expect{S^n[T]}}{T}.
\] 
By renewal reward theorem (Theorem 4.4.2 of \cite{Durrett}),
\[
\lim_{T\rightarrow\infty}\frac{\expect{S^n[T]}}{T}=\frac{1}{\expect{T^n_k}}.
\]
Thus,
\[
\lim_{T\rightarrow\infty}\frac{1}{T}
\expect{\sum_{k=0}^{S^n[T]}\sum_{t\in\mathcal{T}^n_k}y^n[t]}
=\frac{\expect{\sum_{t\in\mathcal{T}^n_k}y^n[t]}}{\expect{T^n_k}}.
\]
For the second part on the right hand side of \eqref{split-stop}, by Assumption \ref{bounded-assumption},
\[
\left|\expect{\sum_{t=T}^{t^n_{S^n[T]}-1}y^n[t]}\right|\leq y_{\max}\cdot\expect{t^n_{S^n[T]}-T}
\leq\sqrt{B}y_{\max},
\]
which implies $\lim_{T\rightarrow\infty}\frac1T\expect{\sum_{t=T}^{t^n_{S^n[T]}-1}y^n[t]}=0$. Overall, we have \eqref{lln-1} holds.

To this point, we have shown that for any $(\overline{f}^n,\overline{\mathbf{g}}^n)\in\mathcal P^n$, $n\in\{1,2,\cdots,N\}$, there exists a randomized stationary algorithm so that 
\begin{align*}
\lim_{T\rightarrow\infty}\frac1T\sum_{t=0}^{T-1}\expect{y^n[t]}=\overline{f}^n,~~
\lim_{T\rightarrow\infty}\frac1T\sum_{t=0}^{T-1}\expect{\mathbf{z}^n[t]}=\overline{\mathbf{g}}^n,
\end{align*}
for any $n\in\{1,2,\cdots,N\}$. Since $f^*$ is the optimal solution to \eqref{prob-1}-\eqref{prob-2} over all algorithms, it follows for any $(\overline{f}^n,\overline{\mathbf{g}}^n)\in\mathcal P^n$, $n\in\{1,2,\cdots,N\}$ satisfying $\sum_{n=1}^N\overline{g}^n_l\leq d_l,~\forall l\in\{1,2,\cdots,L\}$, we have
$\sum_{n=1}^N\overline{f}^n\geq f^*$, and the second part of the lemma is proved.
\end{proof}



\chapter{Data Center Server Provision via Theory of Coupled Renewal Systems}

The previous chapter introduces a new algorithm and analysis framework for coupled parallel renewal systems. In this chapter, we show that the previous algorithm can be applied (extended) to solve a data center power minimization problem 
consisting of a central controller who makes load balancing decisions per slot and parallel servers
having multiple states making decisions per renewal frame. In particular, the analysis in this chapter, which is customized to the data center application, is stronger than that of previous general algorithm in the sense that we obtain a probability 1 convergence of the algorithm rather than an expected convergence.

\section{System model and problem formulation}

Consider a data center that consists of a central controller and $N$ servers that serve randomly arriving requests. The system operates in slotted time with time
slots $t \in \{0, 1, 2, \ldots\}$.  Each server $n \in \{1, \ldots, N\}$ has three basic states:
\begin{itemize}
  \item Active: The server is available to serve requests. Server $n$ incurs a cost of
  $e_n \geq 0$ on every active slot, regardless of whether or not requests are available to serve. In data center applications, such cost often represents the power consumption of each individual server.

  \item Idle: A low cost sleep state where no requests can be served. The idle state is actually
  comprised of a choice of multiple sleep modes with different per-slot costs. The specific sleep mode also affects the setup time required to transition from the idle state to the active state. For the rest of the paper, we use ``idle'' and ``sleep'' exchangeably.
  \item Setup: A transition period from idle to active during which no requests can be served.
  The setup cost and duration depend on the preceding sleep mode.   The setup duration is typically more than one slot, and can be a random variable that depends on the server $n$ and on the
  preceding sleep mode.
 \end{itemize}

 An active server can choose to transition to the idle state at any time.  When it does
so, it chooses the specific sleep mode to use and the amount of time to sleep.
For example, deeper sleep modes can shut down more electronics and thereby save
on per-slot idling costs.  However, a deeper sleep incurs a longer setup time when
transitioning back to the active state.  Each server makes separate
decisions about when to transition and what sleep mode to use.  The resulting
transition times for each server are asynchronous.   On top of this, a central controller
makes slot-wise decisions for routing requests to servers.  It can also reject requests
(with a certain amount of cost) if it decides they cannot be
supported. The goal is to minimize the overall time average cost. 

This  problem is challenging mainly for two reasons: First, since each setup state generates cost but serves no request, it is not clear whether or not transitioning to idle from the
active state indeed saves power.
It is also not clear which sleep mode the server should switch to. Second, if one server is currently in a setup state, it cannot make another decision until it reaches the active state (which typically takes more than one slot), whereas other active servers can make decisions during this time.  Thus, this problem  can be viewed as a system with coupled Markov decision processes (MDPs) making decisions asynchronously.

\subsection{Related works}
Experimental work on power and delay minimization in data centers is treated in  \cite{gandhi2013dynamic}, which
proposes to turn each server ON and OFF according to the rule of an $M/M/k/setup$ queue. The work in \cite{urgaonkar2010dynamic} applies Lyapunov optimization to optimize power in
virtualized data centers.  However, it assumes each server has negligible setup time and that
ON/OFF decisions are made synchronously at each server.
The works  \cite{yao2012data}, \cite{lin2013dynamic}
focus on power-aware provisioning over a time scale large enough so that the whole data center can adjust its service capacity. Specifically,
\cite{yao2012data} considers load balancing across geographically distributed data centers,
and \cite{lin2013dynamic} considers provisioning over a finite time interval and introduces an online 3-approximation algorithm.

Prior works \cite{horvath2008multi, meisner2009powernap, meisner2011power}
consider servers with multiple hypothetical sleep states with different levels of power consumption and setup times. Although empirical evaluations in these works show significant power saving by introducing sleep states, they are restricted to the scenario where the setup time from sleep to active is on the order of milliseconds, which is not realistic for today's data center. 
Realistic sleep states with setup time on the order of seconds are considered in \cite{gandhi2012sleep}, where effective heuristic algorithms are proposed and evaluated via extensive testbed simulations.
However, little is known about the theoretical performance bound regarding these algorithms.

\subsection{Front-end load balancing}
At each time slot $t \in \{0, 1, 2, \ldots\}$, $\lambda(t)$ new requests arrive at the system (see Fig. \ref{fig:Stupendous0}). We assume $\lambda(t)$ takes values in a finite set $\Lambda$.
Let  $R_n(t),~n\in\mathcal{N}$ denote the number of requests routed into server $n$ at time $t$.
In addition, the system is allowed to reject requests. Let $r(t)$ be the number of requests that are
rejected on slot $t$, and let $c(t)$ be the corresponding per-request cost for such rejection.
Assume $c(t)$ takes values in a finite state space $\mathcal{C}$. The $R_n(t)$ and $r(t)$ decision variables on slot $t$ must be nonnegative integers that satisfy:
 \begin{align*}
 &\sum_{n=1}^NR_n(t)+r(t)=\lambda(t)\\
 &\sum_{n=1}^N R_n(t)\leq R_{\max}
  \end{align*}
 for a given  integer $R_{max}>0$.  The vector process $(\lambda(t), c(t))$ takes values in $\Lambda \times \mathcal{C}$ and is assumed to be an independent and identically distributed (i.i.d.) vector over slots $t \in \{0, 1, 2, \ldots\}$ with an unknown probability mass function.

 \begin{figure}[htbp]
   \centering
   \includegraphics[height=2in]{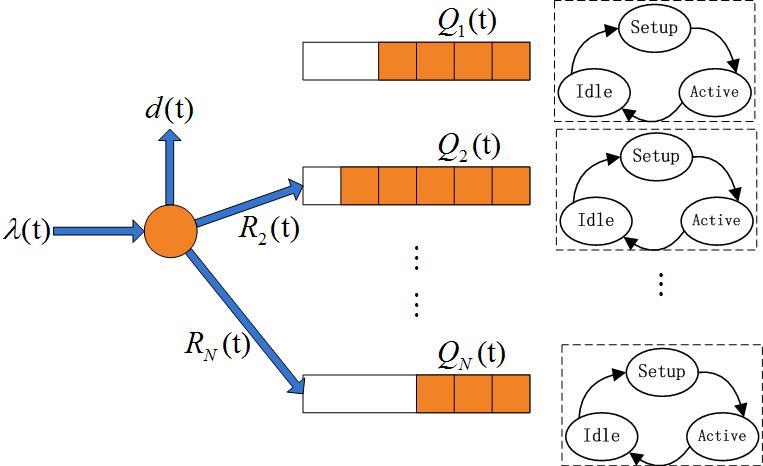} 
   \caption{Illustration of a data center structure which contains a front-end load balancer, $N$ application servers with $N$ request queues and a backend database (omitted here for brevity).}
   \label{fig:Stupendous0}
\end{figure}

 Each server $n$ maintains a request queue $Q_n(t)$ that stores the requests that are
 routed to it.  Requests are served in  a FIFO manner with queueing dynamics as follows:
\begin{equation}\label{queue_update}
Q_n(t+1)=\max\left\{Q_n(t)+R_n(t)-\mu_n(t)H_n(t),~0\right\}.
\end{equation}
where $H_n(t)$ is an indicator variable that is 1 if server $n$ is active on slot $t$, and $0$ else, and $\mu_n(t)$ is a random variable that represents the number of requests can be
served on slot $t$.  Each queue is initialized to $Q_n(0)=0$.
Assume that, every slot in which server $n$ is active, $\mu_n(t)$ is independent and identically distributed with a known mean $\mu_n$. This randomness can model variation in job sizes.

\begin{assumption}\label{observability}
The process $\{(\lambda(t), c(t))\}_{t=0}^{\infty}$ is observable, i.e. the router
can observe the $(\lambda(t),c(t))$  realization each time slot $t$ before making decisions. In contrast, the process $\{\mu_n(t)\}_{t=0}^{\infty}$ is not observable, i.e. given that $H_n(t)=1$, the server $n$ cannot observe the
realization of $\mu_n(t)$ until the end of slot $t$.  {Moreover, $\lambda(t),~c(t)$ and $\mu_n(t)$ are all bounded by 
$\lambda_{\max}$, $c_{\max}$ and $\mu_{\max}$ respectively.}
\end{assumption}

\subsection{Server model}
Each server $n\in \mathcal{N}$ has three types of states: active, idle, and setup (see Fig. \ref{fig:single-system}).
The idle state of each server $n$ is further decomposed into a collection of distinct sleep modes.  Each server $n \in \mathcal{N}$ makes decisions over its own \emph{renewal frames}. Define the renewal frame for server $n$ as the time period between successive visits to active state (with each renewal period ending in an active state). Let $T_n[f]$ denote the frame size of the $f$-th renewal frame for server $n$, for  $f \in \{0, 1, 2, \ldots\}$. Let $t^{n}_f$ denote the start of frame $f$, so that
 $T_n[f]=t^{n}_{f+1}-t^{n}_f$.  Assume that $t^{n}_0=0$ for all $n \in \mathcal{N}$, so that time slot $0$ is the start of the first renewal frame (labeled frame $f=0$) for all servers.  For simplicity, assume all servers are ``active'' on slot $t=-1$. Thus, the slot just before each renewal frame is an active slot.

\begin{figure}[htbp]
   \centering
   \includegraphics[height=1in]{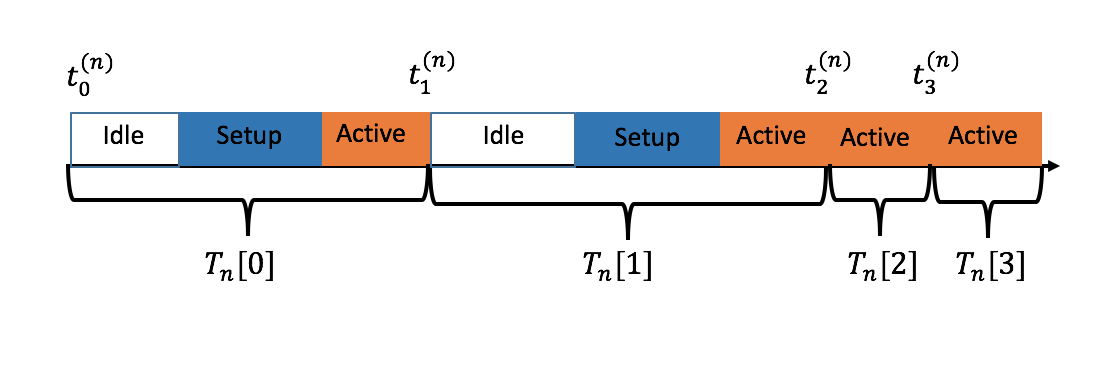} 
   \caption{Illustration of a typical renewal frame construction, where $T_n[i]$ is the length of frame $i$ and $t^{(n)}_i$ is the start slot of frame $i$.}
   \label{fig:single-system}
\end{figure}

 Fix a server $n \in \mathcal{N}$ and a frame index $f \in \{0, 1,2, \ldots\}$.  Time $t_f^{n}$ marks the start of renewal frame $f$. At this time, server $n$  must decide whether to remain active or to
go idle. If it remains active then the renewal frame lasts for one slot, so that $T_n[f]=1$.   If it goes idle, it chooses an idle mode from a finite set $\mathcal{L}_n$, representing the set of idle mode options. Let $\alpha_n[f]$ represent this initial decision for server $n$ at the start of frame $f$, so that:
\[ \alpha_n[f] \in \{active\}\cup\mathcal{L}_n \]
where $\alpha_n[f] =active$ means the server chooses to remain active.  If the server chooses to go idle, so that $\alpha_n[f] \in \mathcal{L}_n$, it then chooses a variable $I_n[f]$ that represents \emph{how much time it remains idle}. The
decision variable $I_n[f]$ is chosen as an integer
in the set $\{1, \ldots, I_{max}\}$ for some given integer $I_{max}>0$.  The consequences of these decisions are described below.

\begin{itemize}
\item Case $\alpha_n[f]=active$.  The frame starts at time $t_f^{n}$ and
has size $T_n[f]=1$. The active variable becomes $H_n(t_f^{n})=1$ and an activation cost of $e_n$ is incurred on this slot $t_f^{n}$. A random service variable $\mu_n(t_f^{n})$ is generated and requests are served according to the queue update \eqref{queue_update}. Recall that, under Assumption \ref{observability}, the  value of $\mu_n(t)$ is not known until the end of the slot.

\item Case $\alpha_n[f] \in \mathcal{L}_n$.  In this case, the server chooses to go idle and $\alpha_n[f]$ represents the specific sleep mode chosen.  The idle duration $I_n[f]$ is also chosen as an integer in the set $[1, I_{max}]$.  After the idle duration completes, the setup duration starts and has an independent and  random
duration $\tau_n[f] = \hat{\tau}(\alpha_n[f])$, where $\hat{\tau}(\alpha_n[f])$ is an integer random variable with a  known mean and variance that depends on the sleep mode $\alpha_n[f]$.
At the end of the setup time the system goes active and serves with a random $\mu_n(t)$ as before.
The active variable is $H_n(t)=0$ for all slots $t$ in the idle and setup times, and is $1$ at the very last slot of the frame.
Further:
\begin{itemize}
\item Idle cost:  Every slot $t$ of the idle time of frame $f$, an idle cost of $g_n(t) = \hat{g}_n(\alpha_n[f])$ is incurred (so that the idle cost depends on the sleep mode).  We have $g_n(t)=0$ if server $n$ is not idle on slot $t$.  The idle cost can be zero, but can also be a small but positive value if some electronics are still running in the sleep mode chosen.

\item Setup cost: Every slot $t$ of the setup time of frame $f$,
a cost of $W_n(t)=\hat{W}_n(\alpha_n[f])$ is incurred. We have $W_n(t)=0$ if server $n$ is not in a setup duration on slot $t$.
\end{itemize}
\end{itemize}

Thus, the length of frame $f$ for server $n$ is:
\begin{equation}\label{frame_length}
T_n[f]=\left\{
         \begin{array}{ll}
           1, & \hbox{if $\alpha_n[f]=active$;}\\
           I_n[f]+\tau_n[f]+1, & \hbox{if $\alpha_n[f]\in\mathcal{L}_n$.}
         \end{array}
       \right.
\end{equation}
In summary, the costs $\hat{g}_n(\alpha_n)$, $\hat{W}_n(\alpha_n)$ and the setup time $\hat{\tau}_n(\alpha_n)$ are functions of $\alpha_n\in\mathcal{L}_n$. We further make the following assumption regarding $\hat{\tau}_n(\alpha_n)$:

\begin{assumption}\label{bounded_moment_assumption}
For any $\alpha_n\in\mathcal{L}_n$, the function $\hat{\tau}_n(\alpha_n)$ is an integer random variable with known mean and variance, as well as bounded first four moments. Denote $\expect{\tau_n(\alpha_n)}=m_{\alpha_n}$ and $\textrm{Var}[\tau_n(\alpha_n)]=\sigma_{\alpha_n}^2$.
\end{assumption}
 {Note that this is a very mild assumption in view of the fact that the setup time of a real server is always bounded. The motivation behind emphasizing the fourth moment here instead of simply proceeding with boundedness assumption is more of theoretical interest than practical importance.
}

 {
Table I summarizes the parameters introduced in this section. The data center architecture is shown is Fig. \ref{fig:Stupendous0}. Since different servers might make different decisions, the renewal frames are not necessarily aligned. }

\begin{table}
\begin{center}
\caption{Parameters}
\begin{tabular}{|l|l|}
  \hline
  Control parameters & Control objectives \\
  \hline
  $R_n(t)$ & Requests routed to server $n$ at slot $t$ \\ 
  $r(t)$ & Requests rejected at slot $t$ \\
  $\alpha_n[f]$ & The option (active/idle) server $n$ takes in frame $f$ \\
  $I_n[f]$ & Number of slots server $n$ stays idle in frame $f$ \\
  \hline
  Other parameters & Meaning\\
  \hline
  $\lambda(t)$ & Number of arrivals at time $t$\\
  $c(t)$ & Per request rejection cost at time $t$\\
  $e_n$ & Per slot active service cost for server $n$\\
  $T_n[f]$ & The length of frame $f$ for server $n$\\
  $t^{(n)}[f]$ & Starting slot of frame $f$ for server $n$\\
  $\tau_n[f]$ & Setup duration in frame $f$\\
  $\mu_n(t)$ & Number of requests served on server $n$ at time $t$\\
  $H_n(t)$ & Server active indicator (equal to 1 if active, 0 if not)\\
  $g_n(t)$ & Idle cost of server $n$ at time $t$\\
  $W_n(t)$ & Setup cost of server $n$ at time $t$\\
  \hline
\end{tabular}
\end{center}
\end{table}
%


\subsection{Performance Objective}

For each $n \in \mathcal{N}$,
let $\overline{C}$, $\overline{W}_n$, $\overline{E}_n$, $\overline{G}_n$ be the time average costs resulting from rejection, setup, service and idle, respectively. They are defined as follows:
$\overline{C}=\lim_{T\rightarrow\infty}\frac{1}{T}\sum_{t=0}^{T-1}\expect{r(t)c(t)}$,
$\overline{W}_n=\lim_{T\rightarrow\infty}\frac{1}{T}\sum_{t=0}^{T-1}\expect{W_n(t)}$,
$\overline{E}_n=\lim_{T\rightarrow\infty}\frac{1}{T}\sum_{t=0}^{T-1}\expect{e_nH_n(t)}$,
$\overline{G}_n=\lim_{T\rightarrow\infty}\frac{1}{T}\sum_{t=0}^{T-1}\expect{g_n(t)}$.

The goal is to design a joint routing and service policy so that the time average overall cost is minimized and all queues are stable, i.e.
\begin{align}\label{obj_1}
\min~\overline{C}+\sum_{n=1}^N\left(\overline{W}_n+\overline{E}_n+\overline{G}_n\right),~
\textrm{s.t.}~Q_n(t)~\textrm{stable } \forall n.
\end{align}
Notice that the constraint in \eqref{obj_1} is not easy to work with. In order to get an optimization problem one can deal with, we further define the time average request rate, rejection rate, routing rate and service rate as $\overline{\lambda}$, $\overline{d}$, $\overline{R}_n$, and $\overline{\mu}_n$ respectively:
$\overline{\lambda}=\lim_{T\rightarrow\infty}\frac{1}{T}\sum_{t=0}^{T-1}\lambda(t) = \expect{\lambda(t)}$,
$\overline{r}=\lim_{T\rightarrow\infty}\frac{1}{T}\sum_{t=0}^{T-1}\expect{r(t)}$,
$\overline{R}_n=\lim_{T\rightarrow\infty}\frac{1}{T}\sum_{t=0}^{T-1}\expect{R_n(t)}$,
$\overline{\mu}_n=\lim_{T\rightarrow\infty}\frac{1}{T}\sum_{t=0}^{T-1}\expect{\mu_n(t)H_n(t)}$.

Then, rewrite the problem \eqref{obj_1} as follows
\begin{align}
\min~~&\overline{C}+\sum_{n=1}^N\left(\overline{W}_n+\overline{E}_n+\overline{G}_n\right) \label{obj_3}\\
\textrm{s.t.}~~&\overline{R}_n\leq\overline{\mu}_n,~\forall n\in\mathcal{N}\label{obj_4}\\
&\sum_{n=1}^NR_n(t)\leq R_{\max},~\sum_{n=1}^NR_n(t)+r(t)=\lambda(t)~\forall t \label{obj_5}
\end{align}
Constraint \eqref{obj_4} requires the time average arrival rate to server $n$ to be less than the time average service rate.
We aim to develop an algorithm so that each server can make its own decision (without looking at the workload or service decision of any other server) and prove its near optimality.

\section{Coupled renewal optimization}
In this section, we show one can apply the algorithm introduced in the previous section to solve \eqref{obj_3}-\eqref{obj_5}. But before jumping into details, we would like to discuss some intuitions behind solving this problem. As a side remark, this data center work is written and published before the general algorithm introduced in the last section, so this intuition is the origin of thesis.

\subsection{Prelude: The original intuition}
First of all, from the queueing model described in the last section and Fig. \ref{fig:Stupendous0}, it is intuitive that an efficient algorithm would have
each server make decisions regarding its own queue state $Q_n(t)$, whereas the front-end load-balancer make routing and rejection decisions slot-wise based on the global information $(\lambda(t), c(t), \mathbf{Q}(t))$.

Next, to get an idea on what exactly the decision should be, 
by virtue of Lyapunov optimization, one would introduce a trade-off parameter $V>0$ and penalize the time average constraint \eqref{obj_4} via $\mathbf{Q}(t)$ to solve the following slotwise optimization problem
\begin{align}
\min~~&V\left(c(t)r(t)+\sum_{n=1}^N\left(W_n(t)+e_nH_n(t)+g_n(t)\right)\right) \label{obj-temp}\\
&+\sum_{n=1}^NQ_n(t)(R_n(t)-\mu_n(t))
~~\textrm{s.t.}~~\textrm{constraint}~\eqref{obj_5},\nonumber
\end{align}
which is naturally separable regarding the load-balancing decision ($r(t)$, $R_n(t)$), and the service decision ($W_n(t),~ H_n(t),~ g_n(t),~ \mu_n(t)$).
However, because of the existence of a setup state (on which no decision could be made), the server does not have an identical decision set every slot and furthermore, the decision set itself depends on previous decisions. This poses a significant difficulty analyzing the above optimization \eqref{obj-temp}.

In order to resolve this difficulty, we try to find the smallest ``identical time unit'' for each individual server in lieu of slots. This motivates the notion of renewal frame in the previous section (see Fig. \ref{fig:single-system}). Specifically, from Fig. \ref{fig:single-system} and the related renewal frame construction, at the starting slot of each renewal, the server faces the identical decision set (remain active or go to idle with certain slots) regardless of previous decisions. Following this idea, we modify \eqref{obj-temp} as follows:
\begin{itemize}
\item For the front-end load balancer, we observe $(\lambda(t), c(t), \mathbf{Q}(t))$ and solve $min~Vc(t)r(t)+\sum_{n=1}^NQ_n(t)R_n(t),~s.t. ~\eqref{obj_5}$, which is detailed in Section \ref{front-section}.
\item For each server, instead of per slot optimization $\min~V(W_n(t)+e_nH_n(t)+g_n(t))-Q_n(t)\mu_n(t)$, we propose to minimize the time average of this quantity per renewal frame $T_n[f]$.
\end{itemize}

\subsection{Coupled renewal optimization}
In order to apply Algorithm \ref{proposed-algorithm} to this scenario, 
we can view the admission control (which chooses $r(t)$ and $R_n(t)$) as one another system besides $N$ servers. Thus, this problem is equivalent to an asynchronous optimization over $N+1$ parallel renewal systems where one of them is just a slotted system. This falls into the form of \eqref{prob-1}-\eqref{prob-2} when setting $l=N$,
\begin{align*}
y^n[t] =& r(t)c(t) + W_n(t) + e_nH_n(t) + g_n(t),\\
z^l[t] =& R_l(t)-\mu_l(t) ,~l\in\{1,2,\cdots,N\}\\
d^l[t] =& 0,
\end{align*}
 and the control variable $r(t)$, $R_n(t)$ are non-negative, and must satisfy the following instant constraints:
 \[
 \sum_{n=1}^NR_n(t)\leq R_{\max},~\sum_{n=1}^NR_n(t)+r(t)=\lambda(t).
 \]
 The only difference compared to \eqref{obj_3}-\eqref{obj_5} is that here the decision variables $r(t)$ and $R_n(t)$ must take values from time-varying ranges per slot and they must be chosen after observing the random variable $c(t)$. However, since $r(t)$ and $R_n(t)$ are updated slot-wise, this minor difference is easy to handle via our renewal optimization framework and we have the following Algorithm \ref{alg:temp-data-center}.

 \begin{algorithm}
  \begin{Alg}\label{alg:temp-data-center}
 Fix a trade-off parameter $V>0$, and at each time slot $t$:
 \begin{itemize}
 \item The admission controller chooses $r(t)$ and $R_n(t)$ according to 
 \begin{equation}\label{eq:lb}
 \min Vc(t)r(t) + \sum_{n=1}^NQ_n(t)R_n(t)~~\text{s.t.} ~~\sum_{n=1}^NR_n(t)\leq R_{\max},~\sum_{n=1}^NR_n(t)+r(t)=\lambda(t). 
 \end{equation}
 \item Each server chooses service options $\alpha_n[f]$ and $I_n[f]$ via the following:
 \begin{equation}\label{eq:para-ratio-1}
\min
\frac{\mathbb{E}\left[\left.\sum_{t=t_f^{n}}^{t=t_{f+1}^{n}-1}\left(VW_n(t)+Ve_nH_n(t)+Vg_n(t)-Q_n(t_f^{n})\mu_n(t)H_n(t)\right)\right|~Q_n(t_f^{n})\right]}{\expect{T_n[f]~\left|~Q_n(t_f^{n})\right.}}
 \end{equation}
 \item Update $Q_n(t)$:
 \[
Q_n(t+1)=\max\left\{Q_n(t)+R_n(t)-\mu_n(t)H_n(t),~0\right\}.
 \]
 \end{itemize}
 \end{Alg}
 \end{algorithm}

 \subsection{Solving \eqref{eq:lb} and \eqref{eq:para-ratio-1}}\label{front-section}
 Note first that in Algorithm \ref{alg:temp-data-center}, the solution to problem \eqref{eq:lb} admits a simple thresholding rule (with shortest queue ties broken arbitrarily):
\begin{equation}\label{reject_decision}
r(t)=\left\{
       \begin{array}{lll}
         \max\{\lambda(t)-R_{\max},~0\},&\hbox{if $\exists n\in\mathcal{N}$ s.t.} \\
         &\hbox{ $Q_n(t)\leq Vc(t)$;}\\
         \lambda(t),&\hbox{otherwise.}
       \end{array}
     \right.
\end{equation}
\begin{equation}\label{routing_decision}
R_n(t)=\left\{
         \begin{array}{lll}
           \min\{\lambda(t), R_{\max}\}, & \hbox{if $Q_n(t)$ is the shortest } \\
                                         &\hbox{queue and $Q_n(t)\leq Vc(t)$;}\\
           0, & \hbox{otherwise.}
         \end{array}
       \right.
\end{equation}

Next, for the problem \eqref{eq:para-ratio-1}, 
recall the definition of $T_n[f]$ and $\alpha_n[f]\in\{active\}\cup\mathcal{L}_n$. If the server chooses to remain active, then the frame length is exactly 1, otherwise, the server is allowed to choose how long it stays in idle with $\expect{T_n[f] | Q(t_f^{n})} = I_{n}[f]+ m_{\alpha_n[f]}+1$, where $I_n[f]\in\left\{1,\cdots,I_{\max}\right\}$.
It can be easily shown that over all randomized decisions between staying active and going to different idle states, it is optimal to make a pure decision which either stays active or goes to one of the idle states with probability 1.

More specifically, let 
\begin{equation}\label{eq:overline-Dn}
\overline D_n[f] = 
\frac{\mathbb{E}\left[\left.\sum_{t=t_f^{n}}^{t=t_{f+1}^{n}-1}\left(VW_n(t)+Ve_nH_n(t)+Vg_n(t)-Q_n(t_f^{n})\mu_n(t)H_n(t)\right)\right|~Q_n(t_f^{n})\right]}{\expect{T_n[f]~\left|~Q_n(t_f^{n})\right.}}.
\end{equation}
We have when the server $n$ chooses to be active, then
\begin{equation}\label{eq:temp-active-alg}
\overline D_n[f]  = Ve_n-Q_n(t_f^{n})\mu_n.
\end{equation}
Otherwise, choosing a specific idle option $\alpha_n[f]\in\mathcal{L}_n$ gives
\begin{equation}\label{eq:temp-idel-alg}
\overline D_n[f] = 
\frac{V\hat{W}_n(\alpha_n[f])m_{\alpha_n[f]}+Ve_n-Q_n(t_f^{n})\mu_n+\frac{B_0}{2}\sigma_{\alpha_n[f]}^2+V\hat{g}(\alpha_n[f])I_n[f]}
{I_n[f]+m_{\alpha_n[f]}+1},
\end{equation}
which follows from the fact that if the server goes idle, then, $H_n(t)$ are all zero during the frame except for the last slot. 
Then, solving \eqref{eq:para-ratio-1} is equivalent to choosing one option which achieves a smaller value of 
$\overline D_n[f] $ between \eqref{eq:temp-active-alg} and \eqref{eq:temp-idel-alg}.

A closer look at the optimization problem \eqref{eq:temp-idel-alg} indicates that the best idle period $I_n[f]$ solving \eqref{eq:temp-idel-alg} is either 1 or $I_{\max}$. This is unfortunately problematic for the application of data center since it means the server is either not idle at all or going to idle for a very long time. When the arrival task stream is of high volatility, this could cause significant delay. In the next section, we will introduce our proposed algorithm for the servers which makes relatively ``smooth'' decisions.

\subsection{The proposed online control algorithm}\label{section_proposed_algorithm}


Our main idea pushing the server away from the binary decision is to add a term in the ratio \eqref{eq:overline-Dn} which is quadratic on the renewal frame length. 
Specifically, for server $n$, at the beginning of its $f$-th renewal frame $t_f^{n}$, it observes its current queue state $Q(t_f^{n})$ and makes decisions on $\alpha_n[f]\in\{active\}\cup\mathcal{L}_n$ and $I_n[f]$ so as to solve the minimization of ratio of expectations in \eqref{server_decision} as follows:
\begin{multline}\label{server_decision}
D_n[f]\triangleq \\
\frac{\mathbb{E}\left[\left.\sum_{t=t_f^{n}}^{t=t_{f+1}^{n}-1}\left(VW_n(t)+Ve_nH_n(t)+Vg_n(t)-Q_n(t_f^{n})\mu_n(t)H_n(t)\right)
+\left(t-t_f^{n}\right)B_0~\right|~Q_n(t_f^{n})\right]}{\expect{T_n[f]~\left|~Q_n(t_f^{n})\right.}}.
\end{multline}
where $B_0=\frac{1}{2}(R_{\max}+\mu_{\max})\mu_{\max}$.
Compared to the objective \eqref{eq:overline-Dn}, the quantity $D_n[f]$ has an extra term
$\sum_{t=t_f^{n}}^{t=t_{f+1}^{n}-1}\left(t-t_f^{n}\right)B_0 = \frac{T_n[f](T_n[f]-1)}{2}B_0$  on the numerate that is quadratic in $T_n[f]$.

Similar to the last section, we are then able to simplify the problem by computing $D_n[f]$ for active and idle options separately.
\begin{itemize}
  \item If the server chooses to go active, i.e. $\alpha_n[f]=active$, then,
\begin{equation}\label{DPP_active}
D_n[f]=Ve_n-Q_n(t_f^{n})\mu_n.
\end{equation}
  \item If the server chooses to go idle, i.e. $\alpha_n[f]\in\mathcal{L}_n$, then, 
 \begin{multline}\label{DPP_idle_origin}
D_n[f]=\\
\frac{V\hat{W}_n(\alpha_n[f])m_{\alpha_n[f]}+Ve_n-Q_n(t_f^{n})\mu_n+\expect{V\hat{g}(\alpha_n[f])I_n[f]+
  \frac{B_0}{2}T_n[f](T_n[f]-1)\left|~Q_n(t_f^{n})\right.}}
{\expect{T_n[f]~\left|~Q_n(t_f^{n})\right.}}
\end{multline}

which follows from the fact that if the server goes idle, then, $H_n(t)$ are all zero during the frame except for the last slot. Now we try to compute the optimal idle option $\alpha_n[f]\in\mathcal{L}_n$ and idle time length $I_n[f]$ given the server chooses to go idle. The following lemma illustrates that the decision on $I_n[f]$ can also be reduced to pure decision.
\begin{lemma}\label{compute_idle}
The best decision minimizing \eqref{DPP_idle_origin} is a pure decision which takes one $\alpha_n[f]\in\mathcal{L}_n$ and one integer value $I_n[f]\in\left\{1,\cdots,I_{\max}\right\}$ minimizing the deterministic function:
\begin{multline}\label{DPP_idle}
D_n[f] = \frac{V\hat{W}_n(\alpha_n[f])m_{\alpha_n[f]}+Ve_n-Q_n(t_f^{n})\mu_n+\frac{B_0}{2}\sigma_{\alpha_n[f]}^2+V\hat{g}(\alpha_n[f])I_n[f]}
{I_n[f]+m_{\alpha_n[f]}+1}\\
+\frac{B_0}{2}(I_n[f]+m_{\alpha_n[f]}+1).
\end{multline}
\end{lemma}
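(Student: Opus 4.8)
The plan is to take the expression \eqref{DPP_idle_origin}, which still contains an expectation over the random setup time $\tau_n[f] = \hat\tau_n(\alpha_n[f])$, and reduce it to the deterministic form \eqref{DPP_idle} by (i) expanding the quadratic term $\expect{T_n[f](T_n[f]-1)}$ using $T_n[f] = I_n[f] + \tau_n[f] + 1$ together with the known mean $m_{\alpha_n}$ and variance $\sigma_{\alpha_n}^2$, and (ii) arguing that once $\alpha_n[f]$ is fixed, the optimization over the (possibly randomized) choice of $I_n[f]$ is achieved at a pure integer value. First I would substitute $T_n[f] = I_n[f] + \tau_n[f] + 1$ and compute $\expect{T_n[f]\mid Q_n(t_f^n)} = I_n[f] + m_{\alpha_n[f]} + 1$ and $\expect{T_n[f](T_n[f]-1)\mid Q_n(t_f^n)}$; the latter expands to $\expect{T_n[f]^2} - \expect{T_n[f]}$, and since $\expect{\tau_n^2} = \sigma_{\alpha_n}^2 + m_{\alpha_n}^2$ this yields, after cancellation, a numerator of the form $(\text{deterministic constant}) + \tfrac{B_0}{2}\sigma_{\alpha_n}^2$ plus a piece proportional to $\expect{T_n[f]}$. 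Dividing the $\expect{T_n[f]}$-proportional piece by the denominator $\expect{T_n[f]}$ produces exactly the additive term $\tfrac{B_0}{2}(I_n[f] + m_{\alpha_n[f]} + 1)$ appearing in \eqref{DPP_idle}, while the remaining constant-plus-$\sigma^2$ part divided by $I_n[f] + m_{\alpha_n[f]} + 1$ gives the first fraction. This establishes that, for any fixed deterministic $\alpha_n[f]$ and $I_n[f]$, the objective \eqref{DPP_idle_origin} equals \eqref{DPP_idle}.

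Next I would handle the reduction to a pure decision. A randomized idle policy chooses $\alpha_n[f] \in \mathcal{L}_n$ and $I_n[f] \in \{1,\dots,I_{\max}\}$ according to some distribution; by the renewal property the conditional expectations factor so that $D_n[f]$ (the ratio of expectations in \eqref{DPP_idle_origin}) becomes a ratio $\expect{A}/\expect{T}$ where $A$ is the numerator random variable and $T = T_n[f]$, both functions of the randomized pair $(\alpha_n[f], I_n[f])$. Writing this ratio as a weighted combination over the finitely many deterministic choices $(\alpha, i)$ with weights $q_{\alpha,i} \propto p_{\alpha,i}\,\expect{T \mid \alpha, i}$ — the same change-of-variable trick used in Section \ref{section:algorithm} and in the proof of Lemma \ref{convex-lemma} — the ratio of expectations is a convex combination of the per-choice values $D_n[f](\alpha,i)$ given by the right-hand side of \eqref{DPP_idle}. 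Hence the minimum over randomized policies is attained at a single deterministic $(\alpha,i)$, namely one minimizing \eqref{DPP_idle} over $\alpha_n[f] \in \mathcal{L}_n$ and $I_n[f] \in \{1,\dots,I_{\max}\}$; since both sets are finite, the minimizer exists.

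The main obstacle I anticipate is the bookkeeping in step (i): one must carefully track which constant $B_0$ multiplies which moment, verify that the cross term $\expect{\tau_n[f]^2}$ contributes the variance $\sigma_{\alpha_n}^2$ (not the raw second moment) after the $-\expect{T_n[f]}$ cancellation, and confirm that the portion that cleanly splits off as $\tfrac{B_0}{2}(I_n[f]+m_{\alpha_n[f]}+1)$ is exactly right and nothing is double-counted. A secondary subtlety is justifying that the denominator $\expect{T_n[f]\mid Q_n(t_f^n)}$ is deterministic and strictly positive (it is at least $1 + 1 = 2 > 0$ since $I_n[f]\ge 1$), which is needed for the convex-combination argument in step (ii) to go through; this follows from $T_n[f]\ge 1$ and Assumption \ref{bounded_moment_assumption}.
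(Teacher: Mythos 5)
Your proposal is correct, and its first step is the same computation the paper performs in its Appendix A: substitute $T_n[f]=I_n[f]+\tau_n[f]+1$, use iterated expectations conditioning on $(\alpha_n[f],I_n[f])$, and expand $\tfrac{B_0}{2}\mathbb{E}\left[T_n[f](T_n[f]-1)\right]$ so that the second moment of the setup time contributes $\tfrac{B_0}{2}\sigma_{\alpha_n[f]}^2$ plus the quadratic term $\tfrac{B_0}{2}(I_n[f]+m_{\alpha_n[f]}+1)^2$, whose division by the frame length yields the additive term in \eqref{DPP_idle} (compare \eqref{appendix_A_interim}). One cosmetic caveat: for a pure decision, \eqref{DPP_idle_origin} equals \eqref{DPP_idle} \emph{minus} the constant $\tfrac{B_0}{2}$ (coming from the $-\tfrac{B_0}{2}\mathbb{E}[T_n[f]]$ piece), not \eqref{DPP_idle} itself; the paper carries this same constant shift between \eqref{appendix_A_interim} and the lemma statement, and since it is independent of $(\alpha_n[f],I_n[f])$ it does not change the minimizer, so this is not a gap — just state it as an offset rather than an equality. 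Where you genuinely diverge is the pure-decision reduction: the paper defines $F(\alpha_n[f],I_n[f])$ and $G(\alpha_n[f],I_n[f])$, lets $m$ be the minimum of $F/G$ over pure choices as in \eqref{det_solution}, notes that every realization of a randomized decision satisfies $F\geq mG$ pointwise, and takes conditional expectations to get $\mathbb{E}[F]/\mathbb{E}[G]\geq m$; you instead rewrite the ratio of expectations as a convex combination of the pure-decision values with weights proportional to $p_{\alpha,i}\,\mathbb{E}[T_n[f]\mid\alpha,i]$, the same change-of-variable device used for Lemma \ref{convex-lemma} and the subproblem discussion in Chapter 2. Both arguments are valid; the paper's pointwise bound is slightly more economical (no need to verify the weights form a probability vector), while your convex-combination form makes explicit that the randomized objective is a weighted average of pure-decision objectives, and your observation that $\mathbb{E}[T_n[f]\mid\alpha,i]\geq 2>0$ correctly disposes of the only degeneracy concern.
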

The proof of above lemma is given in appendix A.
\end{itemize}
Then, the server computes the minimum of \eqref{DPP_idle}, which is nothing but a deterministic optimization problem. It
 goes in the following two steps:
\begin{enumerate}
  \item For each $\alpha_n\in\mathcal{L}_n$, first differentiating \eqref{DPP_idle} with respect to $I[f]$ to get a real minimizer. Then, choosing $I[f]$ as one of the two integer values bracketing the real minimizer which achieves a smaller value on \eqref{DPP_idle}.

%

\item Compare \eqref{DPP_idle} for different $\alpha_n\in\mathcal{L}_n$ and choose the one achieving the minimum.
\end{enumerate}
Thus, the server compares \eqref{DPP_active} with the minimum of \eqref{DPP_idle}. If \eqref{DPP_active} is less than the minimum of \eqref{DPP_idle}, then, the server chooses to go active. Otherwise, the server chooses to go idle and stay idle for $I_n[f]$ time slots.

Overall, our final algorithm is summarized in Algorithm \ref{alg:data-center-alg}.
\begin{algorithm}
\begin{Alg}~\label{alg:data-center-alg}
\begin{itemize}
  \item At each time slot $t$, the data center observes $\lambda(t)$, $c(t)$, and $\mathbf{Q}(t)$ chooses rejection decision $r(t)$ according to \eqref{reject_decision} and chooses routing decision $R_n(t)$ according to \eqref{routing_decision}.
  \item For each server $n\in\mathcal{N}$, at the beginning of its $f$-th frame $t_f^{n}$, observe its queue state $Q_n(t_f^{n})$ and compute \eqref{DPP_active} and the minimum of \eqref{DPP_idle}. If \eqref{DPP_active} is less than the minimum of \eqref{DPP_idle}, then the server still stays active. Otherwise, the server switches to the idle state minimizing \eqref{DPP_idle} and stays idle for $I_n[f]$ achieving the minimum of \eqref{DPP_idle}.
  \item Update $Q_n(t),~\forall n\in\mathcal{N}$ according to
   \[
Q_n(t+1)=\max\left\{Q_n(t)+R_n(t)-\mu_n(t)H_n(t),~0\right\}.
 \]
\end{itemize}
\end{Alg}
\end{algorithm}

\section{Probability 1 Performance Analysis of Algorithm \ref{alg:data-center-alg}}\label{section_performance_analysis}
In this section, we prove a probability 1 convergence result for the proposed algorithm (Algorithm \ref{alg:data-center-alg}).
More specifically, we prove the online algorithm introduced in the last section makes all request queues $Q_n(t)$ bounded (on the order of $V$) and achieves the near optimality with sub-optimality gap on the order of $1/V$ with probability 1.

\subsection{Bounded request queues}
In this section, we show that the request queues are deterministically bounded due to the special thresholding nature of the admission control. Such a result is stronger (yet simpler) than the expected virtual queue analysis presented in the last section.
\begin{lemma}\label{bounded_delay}
If $Q_n(0)=0,~\forall n\in\mathcal{N}$, then, each request queue $Q_n(t)$ is deterministically bounded with bound:
$Q_n(t)\leq Vc_{\max}+R_{\max},~\forall t,~\forall n\in\mathcal{N}$,
where $c_{\max}\triangleq \max_{c\in\mathcal{C}}c$.
\end{lemma}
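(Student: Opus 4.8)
The plan is to show the bound $Q_n(t)\leq Vc_{\max}+R_{\max}$ by induction on $t$. The base case $t=0$ is immediate since $Q_n(0)=0$ and both $Vc_{\max}$ and $R_{\max}$ are nonnegative. For the inductive step, suppose $Q_n(t)\leq Vc_{\max}+R_{\max}$ and consider the queue update $Q_n(t+1)=\max\{Q_n(t)+R_n(t)-\mu_n(t)H_n(t),~0\}$. I would split into two cases according to the size of $Q_n(t)$.

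First, if $Q_n(t)\leq Vc_{\max}$: since the router never sends more than $R_{\max}$ requests to any single server in one slot (indeed $\sum_n R_n(t)\le R_{\max}$, hence $R_n(t)\le R_{\max}$), and since services only decrease the queue, we get $Q_n(t+1)\leq Q_n(t)+R_n(t)\leq Vc_{\max}+R_{\max}$, which is the desired bound. Second, if $Q_n(t)> Vc_{\max}$: then $Q_n(t)>Vc(t)$ because $c(t)\leq c_{\max}$, so by the routing rule \eqref{routing_decision}, server $n$ cannot be the recipient of any routed requests this slot (the condition $Q_n(t)\leq Vc(t)$ fails), hence $R_n(t)=0$. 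Therefore $Q_n(t+1)=\max\{Q_n(t)-\mu_n(t)H_n(t),~0\}\leq Q_n(t)\leq Vc_{\max}+R_{\max}$ by the inductive hypothesis. In both cases the bound propagates, completing the induction.

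The argument is essentially a ``drift is negative when the queue is large'' observation made deterministic by the thresholding structure: whenever $Q_n(t)$ exceeds the threshold $Vc(t)$, the routing rule \eqref{routing_decision} shuts off all arrivals to server $n$, so the queue can only decrease; and the queue can exceed the threshold by at most one slot's worth of arrivals, namely $R_{\max}$. I do not expect any genuine obstacle here — the only point requiring a little care is to make sure the routing rule \eqref{routing_decision} is invoked with the right inequality direction (strict vs.\ non-strict) and that the per-server bound $R_n(t)\le R_{\max}$ is correctly extracted from the aggregate constraint $\sum_n R_n(t)\le R_{\max}$ together with nonnegativity of the $R_n(t)$'s. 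Since $c_{\max}=\max_{c\in\mathcal{C}}c$ and $\mathcal C$ is a finite set (Assumption on the $(\lambda(t),c(t))$ process), $c_{\max}<\infty$, so the bound is finite and of order $V$ as claimed.
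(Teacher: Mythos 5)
Your proof is correct and follows essentially the same argument as the paper: induction on $t$ with the same two-case split ($Q_n(t)\leq Vc_{\max}$ allowing growth by at most $R_{\max}$, versus $Q_n(t)>Vc_{\max}$ forcing $R_n(t)=0$ via the thresholding in \eqref{routing_decision}). The extra remarks on extracting $R_n(t)\leq R_{\max}$ and on the inequality direction in the routing rule are fine but not points of difficulty.
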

\begin{proof}
We use induction to prove the claim. Base case is trivial since $Q_n(0)=0\leq Vc_{\max}+R_{\max}$. Suppose the claim holds at the beginning of $t=i$ for $i>0$, so that
$Q_n(i)\leq Vc_{\max}+R_{\max}.$
Then,
\begin{enumerate}
  \item If $Q_n(i)\leq Vc_{\max}$, then, it is possible for the queue to increase during slot $i$. However, the increase of the queue within one slot is bounded by $R_{\max}$. which implies at the beginning of slot $i+1$,
      $Q_n(i+1)\leq Vc_{\max}+R_{\max}.$
  \item If $Vc_{\max} < Q_n(i)\leq Vc_{\max}+R_{\max}$, then, according to \eqref{routing_decision}, it is impossible to route any request to server $n$ during slot $i$, and $R_n(i)=0$ which results in
      $Q_n(i+1)\leq Vc_{\max}+R_{\max}.$
\end{enumerate}
Above all, we finished the proof of lemma.
\end{proof}

\begin{lemma}\label{constraint_satisfy}
The proposed algorithm meets the constraint \eqref{obj_4} with probability 1.
\end{lemma}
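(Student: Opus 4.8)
## Proof Proposal for Lemma \ref{constraint_satisfy}

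The plan is to combine the deterministic queue bound from Lemma \ref{bounded_delay} with the queueing dynamics \eqref{queue_update} to show that each constraint $\overline{R}_n \leq \overline{\mu}_n$ holds almost surely. The key observation is that the constraint \eqref{obj_4} is exactly the statement that queue $Q_n(t)$ is rate-stable (mean-rate stable) in a pathwise sense, and Lemma \ref{bounded_delay} gives us something much stronger than rate stability: a \emph{uniform} deterministic bound $Q_n(t) \leq Vc_{\max} + R_{\max}$ for all $t$ and all sample paths.

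First I would fix $n \in \mathcal{N}$ and iterate the queue update \eqref{queue_update}. Since $Q_n(t+1) = \max\{Q_n(t) + R_n(t) - \mu_n(t)H_n(t),\, 0\} \geq Q_n(t) + R_n(t) - \mu_n(t)H_n(t)$, telescoping from $t=0$ to $T-1$ with $Q_n(0) = 0$ yields
\begin{equation*}
Q_n(T) \geq \sum_{t=0}^{T-1}\bigl(R_n(t) - \mu_n(t)H_n(t)\bigr),
\end{equation*}
so that
\begin{equation*}
\frac{1}{T}\sum_{t=0}^{T-1} R_n(t) \;\leq\; \frac{1}{T}\sum_{t=0}^{T-1}\mu_n(t)H_n(t) \;+\; \frac{Q_n(T)}{T}.
\end{equation*}
By Lemma \ref{bounded_delay}, $Q_n(T)/T \leq (Vc_{\max} + R_{\max})/T \to 0$ as $T \to \infty$, on every sample path. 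Hence $\limsup_{T\to\infty} \frac{1}{T}\sum_{t=0}^{T-1}(R_n(t) - \mu_n(t)H_n(t)) \leq 0$ almost surely (in fact surely). Taking expectations and passing to the limit — or invoking the strong law of large numbers for the i.i.d.\ service variables together with dominated convergence using the $\mu_{\max}$ bound from Assumption \ref{observability} — converts this into the statement $\overline{R}_n \leq \overline{\mu}_n$, which is precisely \eqref{obj_4}.

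I expect the only mild subtlety is handling the interface between the sample-path statement and the expectation-based definitions of $\overline{R}_n$ and $\overline{\mu}_n$ given in the Performance Objective section; this is routine given boundedness of all the relevant quantities ($R_n(t) \leq R_{\max}$, $\mu_n(t) \leq \mu_{\max}$), which justifies exchanging limits and expectations. No martingale or renewal-reward machinery is needed here because the admission control's thresholding rule \eqref{routing_decision} already forced $Q_n(t)$ to stay deterministically bounded — this is exactly the point of the remark that the analysis in this chapter is ``stronger (yet simpler) than the expected virtual queue analysis'' of Chapter 2. So there is no real obstacle; the lemma is essentially an immediate corollary of Lemma \ref{bounded_delay} plus the elementary telescoping of \eqref{queue_update}.
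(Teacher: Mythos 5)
Your proof is correct and follows essentially the same route as the paper's own argument: telescope the queue update \eqref{queue_update} to get $Q_n(T)\geq\sum_{t=0}^{T-1}(R_n(t)-\mu_n(t)H_n(t))$, substitute the deterministic bound from Lemma \ref{bounded_delay}, divide by $T$, and let $T\to\infty$. Your added remark about passing from the sample-path inequality to the expectation-based quantities $\overline{R}_n$ and $\overline{\mu}_n$ (via boundedness and dominated convergence) is a detail the paper's proof glosses over, but it is the same approach.
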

\begin{proof}
From the queue update rule \eqref{queue_update}, it follows,
$Q_n(t+1)\geq Q_n(t)+R_n(t)-\mu_nH_n(t)$.
Taking telescoping sums from 0 to $T-1$ gives
$Q_n(T)\geq Q_n(0)+\sum_{t=0}^{T-1}R_n(t)-\sum_{t=0}^{T-1}\mu_nH_n(t)$.
Since $Q_n(0)=0$, dividing both sides by $T$ gives
$\frac{Q_n(T)}{T}\geq \frac{1}{T}\sum_{t=0}^{T-1}R_n(t)-\frac{1}{T}\sum_{t=0}^{T-1}\mu_nH_n(t)$.
Substitute the bound $Q_n(T)\leq Vc_{\max}+R_{\max}$ from lemma \ref{bounded_delay} into above inequality and take limit as $T\rightarrow\infty$ give the desired result.
\end{proof}
\subsection{Optimal randomized stationary policy}\label{op_stat_algorithm}
In this section, we introduce a class of algorithms which are theoretically helpful for doing analysis, but practically impossible to implement.

Since servers are coupled only through time average constraint \eqref{obj_4}, each server $n$ can be viewed as a separate renewal system, thus, it can be shown that any possible time average service rate $\overline{\mu}_n$ can be achieved through a frame based stationary randomized service decision, meaning that the decisions are i.i.d. over frames.
Furthermore, it can be shown that the optimality of \eqref{obj_3}-\eqref{obj_5} can be achieved over the following randomized stationary algorithms: At the beginning of each time slot $t$, the data center observes the incoming requests $\lambda(t)$ and rejecting cost $c(t)$, then routes $R_n^*(t)$ incoming requests to server $n$ and rejects $d^*(t)$ requests, both of which are random functions of $(\lambda(t),c(t))$. They satisfy the same instantaneous relation as \eqref{obj_5}.
Meanwhile, server $n$ chooses a frame based stationary randomized service decision $(\alpha_n^*[f],I_n^*[f])$, so that the optimal service rate is achieved.

If one knows the stationary distribution for $(\lambda(t),c(t))$, then, this optimal control algorithm can be computed using dynamic programming or linear programming. Moreover, the optimal setup cost $W_n^*(t)$, idle cost $g_n^*(t)$, and the active state indicator $H^*(t)$ can also be deduced.
Since the algorithm is stationary, these three cost processes are all ergodic Markov processes. Let $T_n^*[f]$ be the frame length process under this algorithm. Thus, it follows from the renewal reward theorem that
$\left\{\sum_{t=t^{n}_f}^{t^{n}_{f+1}-1}W_n^*(t)\right\}_{f=0}^{+\infty}$,
$\left\{\sum_{t=t^{n}_f}^{t^{n}_{f+1}-1}g_n^*(t)\right\}_{f=0}^{+\infty}$,
$\left\{\sum_{t=t^{n}_f}^{t^{n}_{f+1}-1}e_nH_n^*(t)\right\}_{f=0}^{+\infty}$, $\left\{\sum_{t=t^{n}_f}^{t^{n}_{f+1}-1}\mu_n(t)H_n^*(t)\right\}_{f=0}^{+\infty}$
and
$\left\{T_n^*[f]\right\}_{f=0}^{+\infty}$
are all \emph{i.i.d. random variables over frames}. Let $\overline{C}^*$, $\overline{W}_n^*$, $\overline{G}_n^*$ and $\overline{E}_n^*$ be the optimal time average costs. Let $\overline{R}_n^*$, $\overline{\mu}_n^*$ and $\overline{d}^*$ be the optimal time average routing rate, service rate and rejection rate respectively. Then, by the strong law of large numbers,
\begin{equation}
\overline{W}_n^*
=\frac{\expect{\sum_{t=t^{n}_f}^{t^{(n)}_{f}+T_n^*[f]-1}W_n^*(t)}}{\expect{T_n^*[f]}} \label{iid_W}
\end{equation}
\begin{equation}
\overline{E}_n^*
=\frac{\expect{\sum_{t=t^{n}_f}^{t^{(n)}_{f}+T_n^*[f]-1}e_nH_n^*(t)}}{\expect{T_n^*[f]}} \label{iid_E}
\end{equation}
\begin{equation}
\overline{G}_n^*
=\frac{\expect{\sum_{t=t^{n}_f}^{t^{(n)}_{f}+T_n^*[f]-1}g_n^*(t)}}{\expect{T_n^*[f]}} \label{iid_G}
\end{equation}
\begin{align}
\overline{\mu}_n^*
=\frac{\expect{\sum_{t=t^{n}_f}^{t^{(n)}_{f}+T_n^*[f]-1}\mu_n(t)H_n^*(t)}}{\expect{T_n^*[f]}}, \label{iid_mu}
\end{align}
Also, notice that $R_n^*(t)$ and $d^*(t)$ depend only on the random variables $\lambda(t)$ and $c(t)$, which is i.i.d. over slots. Thus, $R_n^*(t)$ and $d^*(t)$ are also \emph{i.i.d. random variables over slots}. By the law of large numbers,
\begin{align}
\overline{R}_n^*=&\expect{R_n^*(t)}, \label{iid_R}\\
\overline{C}^*=&\expect{c(t)d^*(t)}. \label{iid_d}
\end{align}

\begin{remark}
Since the idle time $I_n^*[f]\in[1,I_{\max}]$ and the first two moments of the setup time are bounded, it follows the first two moments of $T_n^*[f]$ are bounded.
\end{remark}


\subsection{Key features of thresholding algorithm}
In this part, we compare the algorithm deduced from the two optimization problems \eqref{eq:lb} and \eqref{server_decision} to that of the best stationary algorithm in section \ref{op_stat_algorithm}, illustrating the key features of the proposed online algorithm.
Define $\mathcal{F}(t)$ as the system history up till slot $t$, which includes all the decisions taken and all the random events before slot $t$. We first consider \eqref{eq:lb}. For simplicity of notations, define two random processes $\{X_n[f]\}_{f=0}^{\infty}$ and $\{Z[t]\}_{t=0}^{\infty}$ as follows
\begin{align*}
X_n[f]=&\sum_{t=t_f^{n}}^{t=t_{f+1}^{n}-1}\left(V\left(W_n(t)-\overline{W}_n^*\right)+V\left(e_nH_n(t)-\overline{E}_n^*\right)\right.\\
       &\left.+V\left(g_n(t)-\overline{G}_n^*\right)-Q_n(t_f^{n})\left(\mu_nH_n(t)-\overline{\mu}^*\right)+\left(t-t_f^{n}\right)B_0-\Psi_n\right),\\
Z[t]=&V\left(c(t)r(t)-\overline{C}^*\right)+\sum_{n=1}^NQ_n(t)\left(R_n(t)-\overline{R}_n^*\right),
\end{align*}
where $\Psi_n=\frac{B_0}{2}\frac{\expect{T^*_n[f](T^*_n[f]-1)}}{\expect{T^*_n[f]}}$ and $B_0=\frac{1}{2}(R_{\max}+\mu_{\max})\mu_{\max}$.

Given the system information $\mathcal{F}(t)$, the random events $c(t)$ and $\lambda(t)$, the solutions \eqref{reject_decision} and \eqref{routing_decision} take rejecting and routing decisions so as to minimize \eqref{eq:lb} over all possible routing and rejecting decisions at time slot $t$. Thus, the proposed algorithm achieves smaller value on \eqref{eq:lb} compared to that of the best stationary algorithm in section \ref{op_stat_algorithm}. Formally, this idea can be stated as the following inequality:
$\expect{\left.Vc(t)r(t)+\sum_{n=1}^NQ_n(t)R_n(t)~\right|~c(t),\lambda(t),\mathcal{F}(t)}$
$\leq \expect{\left.Vc(t)d^*(t)+\sum_{n=1}^NQ_n(t)R_n^*(t)~\right|~c(t),\lambda(t),\mathcal{F}(t)}$.
Taking expectation regarding $c(t)$ and $\lambda(t)$
using the fact that the best stationary algorithm on $R^*_n(t)$ and $d^*(t)$ are i.i.d. over slots (independent of $\mathcal{F}(t)$), together with \eqref{iid_R} and \eqref{iid_d}, we get
\begin{equation}\label{front_end_feature}
\expect{\left.Z(t)~\right|~\mathcal{F}(t)}\leq 0.
\end{equation}
Similarly, for \eqref{server_decision}, the proposed service decisions within frame $f$ minimize $D_n[f]$ in \eqref{server_decision}, thus, compared to the best stationary policy, the inequality \eqref{op_stat_interim} holds.
\begin{figure*}
\begin{align}\label{op_stat_interim}
&\frac{\mathbb{E}\left[\left.\sum_{t=t_f^{n}}^{t=t_{f+1}^{n}-1}\left(V(W_n(t)+e_nH_n(t)+g_n(t))
        -Q_n(t_f^{n})\mu_n(t)H_n(t)\right)+\left(t-t_f^{n}\right)B_0~\right|~\mathcal{F}(t_f^{n})\right]}
{\expect{T_n[f]~\left|~\mathcal{F}(t_f^{n})\right.}}\nonumber\\
\leq&
\frac{\expect{\left.\sum_{t=t_f^{n}}^{t=t_{f}^{(n)}+T_n^*[f]-1}\left(V\left(W_n^*(t)+e_nH_n^*(t)+g_n^*(t)\right)-Q_n(t_f^{n})\mu_nH_n^*(t)\right)
+\frac{B_0}{2}T^*_n[f](T^*_n[f]-1)~\right|~\mathcal{F}(t_f^{n})}}
{\expect{T^*_n[f]~\left|~\mathcal{F}(t_f^{n})\right.}}
\end{align}
\end{figure*}
Again, using the fact that the optimal stationary algorithm gives i.i.d. $W_n^*(t)$, $g_n^*(t)$, $H_n^*(t)$ and $T_n^*[f]$ over frames (independent of $\mathcal{F}(t_f^{n})$),  as well as \eqref{iid_W}, \eqref{iid_E} and \eqref{iid_mu}, we get
\begin{align}
\expect{X_n[f]~\left|~\mathcal{F}(t_f^{n})\right.}
\left/\expect{T_n[f]~\left|~\mathcal{F}(t_f^{n})\right.}\right.\leq0 \label{server_feature}
\end{align}

\subsection{Bounded average of supermartingale difference sequeces}
The key feature inequalities \eqref{front_end_feature} and \eqref{server_feature} provide us with bounds on the expectations. The following lemma serves as a stepping stone passing from expectation bounds to probability 1 bounds.  Recall the basic definition of supermartingale in Definition \ref{def:sup-MG}.
We have the following strong law of large numbers for supermartingale difference sequences:
\begin{lemma}[Corollary 4.2 of \cite{neely2012stability}]\label{prob-1-converge}
Let $\{X_t\}_{t=0}^\infty$ be a supermartingale difference sequence. If 
$$\sum_{t=1}^\infty \left.\expect{X_t^2}\right/t^2<\infty,$$
then,
\[\limsup_{T\rightarrow\infty}\frac1T\sum_{t=0}^{T-1}X_t\leq 0,\]
with probability 1.
\end{lemma}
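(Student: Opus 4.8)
The statement to prove is Lemma \ref{prob-1-converge}: if $\{X_t\}_{t=0}^\infty$ is a supermartingale difference sequence (so $\mathbb{E}\{X_t \mid \mathcal{F}_t\} \le 0$ with respect to the relevant filtration) and $\sum_{t=1}^\infty \mathbb{E}\{X_t^2\}/t^2 < \infty$, then $\limsup_{T\to\infty}\frac1T\sum_{t=0}^{T-1}X_t \le 0$ almost surely. This is a martingale-version of Kolmogorov's strong law of large numbers, and the plan is to follow the classical Kolmogorov argument with the inequality-direction bookkeeping that a supermartingale (rather than martingale) difference sequence requires.

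The first step is to split each term as $X_t = (X_t - \mathbb{E}\{X_t\mid\mathcal{F}_t\}) + \mathbb{E}\{X_t\mid\mathcal{F}_t\}$. The second piece is $\le 0$ by the supermartingale difference property, so its time-average contribution is $\le 0$ and can simply be discarded from the upper bound; it therefore suffices to handle the centered sequence $\widetilde X_t := X_t - \mathbb{E}\{X_t\mid\mathcal{F}_t\}$, which is a genuine martingale difference sequence with $\mathbb{E}\{\widetilde X_t^2\} \le \mathbb{E}\{X_t^2\}$ (since conditional expectation is an $L^2$ contraction), so the summability hypothesis is inherited. Thus the problem reduces to: a martingale difference sequence $\widetilde X_t$ with $\sum_t \mathbb{E}\{\widetilde X_t^2\}/t^2 < \infty$ satisfies $\frac1T\sum_{t=0}^{T-1}\widetilde X_t \to 0$ a.s.

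The second step is the standard two-part Kolmogorov SLLN machinery applied to $\widetilde X_t$. Form the weighted partial sums $M_T := \sum_{t=1}^{T}\widetilde X_t / t$. Because the $\widetilde X_t$ are martingale differences and $\mathbb{E}\{(\widetilde X_t/t)^2\}$ is summable, $M_T$ is an $L^2$-bounded martingale, hence converges almost surely (and in $L^2$) by the martingale convergence theorem. Then one invokes Kronecker's lemma: if $\sum_{t\ge 1} a_t/t$ converges and $b_T := T \uparrow \infty$, then $\frac1T\sum_{t=1}^T a_t \to 0$. Applying this pathwise with $a_t = \widetilde X_t$ on the almost-sure convergence event gives $\frac1T\sum_{t=1}^T \widetilde X_t \to 0$ a.s., and adding back the nonpositive conditional-mean terms yields $\limsup_{T\to\infty}\frac1T\sum_{t=0}^{T-1} X_t \le 0$ a.s., which is the claim.

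I do not expect a serious obstacle here — this is a known result (cited as Corollary 4.2 of \cite{neely2012stability}) and the proof is essentially bookkeeping on top of the $L^2$ martingale convergence theorem plus Kronecker's lemma. The one point requiring mild care is the direction of the inequality: one must make sure that dropping the conditional-mean terms only weakens the bound in the correct direction (it does, since they are $\le 0$ and they appear with a $+$ sign in the sum), and that the $L^2$-contraction bound $\mathbb{E}\{\widetilde X_t^2\}\le \mathbb{E}\{X_t^2\}$ is enough to preserve the summability hypothesis (it is). A secondary technical point is confirming that $M_T$ is a legitimate martingale adapted to the same filtration and that each $\widetilde X_t/t$ is integrable, which follows from $\mathbb{E}\{X_t^2\}<\infty$ for each $t$ (a consequence of the summability assumption, since each individual term of a convergent nonnegative series is finite). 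Once these are noted, the argument closes immediately.
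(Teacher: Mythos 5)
Your proof is correct: centering by the conditional means (which are nonpositive and can be dropped from the upper bound), noting $\expect{\widetilde X_t^2}\leq\expect{X_t^2}$, and then applying $L^2$-bounded martingale convergence to $\sum_t \widetilde X_t/t$ followed by Kronecker's lemma is a valid argument, with only trivial bookkeeping left implicit (the $t=0$ term and the index shift between $\sum_{t=0}^{T-1}$ and $\sum_{t=1}^{T}$). The paper itself gives no proof — it quotes the lemma as Corollary 4.2 of the cited reference — and your argument is essentially the standard martingale-SLLN derivation underlying that cited result, so there is nothing to reconcile.
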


With this lemma, we are ready to prove the following result:
\begin{lemma}\label{bounded_supMG}
Under the proposed algorithm, the following hold with probability 1,
\begin{align}
&\limsup_{F\rightarrow\infty}\frac{1}{F}\sum_{f=0}^{F-1}X_n[f]\leq0, \label{prob_1_server}\\
&\limsup_{T\rightarrow\infty}\frac{1}{T}\sum_{t=0}^{T-1}Z[t]\leq0. \label{prob_1_front_end}
\end{align}
\end{lemma}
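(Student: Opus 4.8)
The plan is to verify that each of the two sequences $\{X_n[f]\}_{f=0}^\infty$ (on the frame timescale) and $\{Z[t]\}_{t=0}^\infty$ (on the slot timescale) is a supermartingale difference sequence with respect to a suitable filtration, and then invoke Lemma \ref{prob-1-converge}. For the slot-based process, I would use the filtration $\{\mathcal{F}(t)\}$; the key-feature inequality \eqref{front_end_feature} says exactly that $\expect{Z[t]\mid\mathcal{F}(t)}\le 0$, which is the supermartingale-difference property. For the frame-based process, I would use the per-frame filtration $\{\mathcal{F}(t_f^n)\}$ (equivalently the $\{\mathcal F_k^n\}$ filtration from Lemma \ref{lemma:filtration}); combining \eqref{server_feature} with $\expect{T_n[f]\mid\mathcal{F}(t_f^n)}\ge 1$ gives $\expect{X_n[f]\mid\mathcal{F}(t_f^n)}\le 0$, which is again the supermartingale-difference property. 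One technical point: I must check that $X_n[f]$ and $Z[t]$ are genuinely measurable with respect to the ``next'' $\sigma$-algebra in their filtration; for $X_n[f]$ this follows from Lemma \ref{lemma:filtration} since $Q_n(t_f^n)$ is determined by the past and the per-frame quantities accumulate only within frame $f$, while for $Z[t]$ it is immediate since $r(t),R_n(t),c(t)$ and $Q_n(t)$ are all $\mathcal{F}(t+1)$-measurable.

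The second ingredient needed is the second-moment summability condition $\sum_{t\ge1}\expect{X_t^2}/t^2<\infty$. For $Z[t]$ this is straightforward: $c(t),r(t),\lambda(t),R_n(t)$ are bounded by $c_{\max},\lambda_{\max},R_{\max}$ and, crucially, $Q_n(t)\le Vc_{\max}+R_{\max}$ deterministically by Lemma \ref{bounded_delay}. Hence $|Z[t]|\le C$ for an absolute constant $C=C(V)$, so $\expect{Z[t]^2}\le C^2$ and $\sum_t C^2/t^2<\infty$. For $X_n[f]$ the bound is less immediate because the frame length $T_n[f]$ is random and unbounded (the setup time $\hat\tau_n(\alpha_n)$ is only assumed to have bounded fourth moment, Assumption \ref{bounded_moment_assumption}). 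Inside frame $f$, the per-slot contributions to $X_n[f]$ are each bounded by a constant times $(1+Q_n(t_f^n))\le C'(V)$, and there is also the $(t-t_f^n)B_0$ term which sums to $\tfrac{1}{2}T_n[f](T_n[f]-1)B_0$. Thus $|X_n[f]|\le C''(V)\,T_n[f]^2$ for some $V$-dependent constant, and $\expect{X_n[f]^2}\le C''(V)^2\expect{T_n[f]^4}$. Since $T_n[f]\le 1+I_{\max}+\hat\tau_n(\alpha_n[f])$ and the setup time has a uniformly bounded fourth moment, $\expect{T_n[f]^4}$ is bounded by an absolute constant, so $\sum_{f\ge1}\expect{X_n[f]^2}/f^2<\infty$. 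Applying Lemma \ref{prob-1-converge} to each sequence then yields \eqref{prob_1_server} and \eqref{prob_1_front_end}.

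The main obstacle I anticipate is the fourth-moment control of $X_n[f]$: one must be careful that the ``$-\Psi_n$'' centering term and the stationary-policy expectations $\overline{W}_n^*,\overline{E}_n^*,\overline{G}_n^*,\overline{\mu}_n^*$ do not introduce any unbounded contribution — they are finite constants by the Remark following \eqref{iid_d} — and that the coefficient multiplying $\mu_n H_n(t)$, namely $Q_n(t_f^n)$, is bounded (which is precisely where Lemma \ref{bounded_delay} is used). Everything else is bookkeeping: expand $X_n[f]$, bound each group of terms by $C(V)\cdot T_n[f]$ or $C(V)\cdot T_n[f]^2$, take expectations, and invoke $\sup_f \expect{T_n[f]^4}<\infty$. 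A secondary point worth stating explicitly is that $\{\mathcal{F}(t_f^n)\}_{f\ge0}$ is a valid filtration and $S^n$-type measurability holds; this is already handled by Lemmas \ref{lemma:filtration} and \ref{valid-stopping-time}, so I would simply cite them rather than reprove anything.
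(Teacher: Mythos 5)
Your proposal is correct and follows essentially the same route as the paper: show $\sum_f X_n[f]$ and $\sum_t Z[t]$ are supermartingales via the key-feature inequalities \eqref{server_feature} and \eqref{front_end_feature}, bound $\expect{X_n[f]^2}$ by a constant using the deterministic queue bound of Lemma \ref{bounded_delay} together with $|X_n[f]|\leq \mathcal{O}(T_n[f]^2)$ and the bounded fourth moment of the setup time (this is exactly the paper's Appendix B argument), and then apply Lemma \ref{prob-1-converge}. No gaps.
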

\begin{proof}
The key to the proof is treating these two sequences as supermartingale difference sequences and applying law of large numbers for supermartingale difference sequences (theorem 4.1 and corollary 4.2 in \eqref{prob_1_server}).

We first look at the sequence $\{X_n[f]\}_{f=0}^{\infty}$. Let
$Y_n[F]=\sum_{f=0}^{F-1}X_n[f]$.
We first prove that $Y_n[F]$ is a supermartingale. Notice that $Y_n[F]\in\mathcal{F}\left(t_n^{(F)}\right)$, i.e. it is measurable given all the information before frame $t_n^{(F)}$, and $|Y_n[F]|<\infty,~\forall F<\infty$. Furthermore,
$\expect{Y_n[F+1]-Y_n[F]~\left|~\mathcal{F}\left(t_f^{n}\right)\right.}$
$=\expect{X_n[F]~\left|~\mathcal{F}\left(t_f^{n}\right)\right.}$
$\leq 0\cdot\expect{T_n[F]~\left|~\mathcal{F}\left(t_f^{n}\right)\right.}=0$,
where the only inequality follows from \eqref{server_feature}. Thus, it follows $Y_n[F]$ is a supermartingale. Next, we show that the second moment of supermartingale differences, i.e. $\expect{X_n[f]^2}$, is deterministically bounded by a fixed constant for any $f$. This part of proof is given in Appendix B. Thus, the following holds:
$\sum_{f=1}^{\infty}\expect{X_n[f]^2}\left/f^2\right.<\infty$.
Now, applying Lemma \ref{prob-1-converge} immediately gives \eqref{prob_1_server}.

Similarly, we can prove \eqref{prob_1_front_end} by proving $M[t]=\sum_{t=0}^{T-1}Z[t]$ is a supermartingale with bounded second moment on differences using \eqref{iid_R}, \eqref{iid_d} and \eqref{front_end_feature}. The procedure is almost the same as above and we omitted the details here for brevity.
\end{proof}

\begin{corollary}\label{corollary_ratio_time_average}
The following ratio of time averages is upper bounded with probability 1, \\
$\limsup_{F\rightarrow\infty}\left.\sum_{f=0}^{F-1}X_n[f]\right/\sum_{f=0}^{F-1}T_n[f]\leq 0$.
\end{corollary}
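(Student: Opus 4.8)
\textbf{Proof proposal for Corollary \ref{corollary_ratio_time_average}.}

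The plan is to deduce the ratio bound from the numerator bound \eqref{prob_1_server} already established in Lemma \ref{bounded_supMG}, together with a lower bound on the growth of the denominator $\sum_{f=0}^{F-1}T_n[f]$. The key elementary observation is that $T_n[f] \geq 1$ for every frame $f$, since each renewal frame lasts at least one slot. Hence $\sum_{f=0}^{F-1} T_n[f] \geq F$ for all $F \in \mathbb{N}$, so the denominator grows at least linearly in the number of frames.

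First I would split into two cases according to the sign of $\sum_{f=0}^{F-1} X_n[f]$ along the subsequence achieving the $\limsup$. If along some subsequence the partial sums $\sum_{f=0}^{F-1} X_n[f]$ are eventually nonpositive, then the corresponding ratios are nonpositive and the $\limsup$ over that subsequence is $\leq 0$ trivially. Otherwise, where $\sum_{f=0}^{F-1} X_n[f] > 0$, I would write
\[
\frac{\sum_{f=0}^{F-1} X_n[f]}{\sum_{f=0}^{F-1} T_n[f]} \leq \frac{\sum_{f=0}^{F-1} X_n[f]}{F} = \frac{1}{F}\sum_{f=0}^{F-1} X_n[f],
\]
using $\sum_{f=0}^{F-1} T_n[f] \geq F$ and positivity of the numerator. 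Taking $\limsup_{F\to\infty}$ of both sides and invoking \eqref{prob_1_server} from Lemma \ref{bounded_supMG} gives that this quantity is $\leq 0$ with probability $1$. Combining the two cases yields $\limsup_{F\to\infty} \left.\sum_{f=0}^{F-1} X_n[f]\right/\sum_{f=0}^{F-1} T_n[f] \leq 0$ almost surely.

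I expect this corollary to be essentially immediate once Lemma \ref{bounded_supMG} is in hand; there is no real obstacle beyond being slightly careful about the sign of the numerator when dividing by a quantity no smaller than $F$ versus a quantity that could be much larger. One subtlety worth a sentence in the write-up is that the a.s. event on which \eqref{prob_1_server} holds is the same event on which the ratio bound holds (no new null set is introduced), since the argument is purely deterministic on each sample path where the numerator $\limsup$ is controlled. A cleaner alternative, avoiding casework, is to note $\frac{a_F}{b_F} \leq \frac{\max\{a_F,0\}}{F} \leq \frac{|a_F|}{F}$ is false in general, so the case split (or equivalently bounding $\frac{a_F}{b_F} \leq \max\{\frac{a_F}{F},0\}$ when $a_F>0$ and $\leq 0$ otherwise, hence $\frac{a_F}{b_F}\leq \max\{\frac{a_F}{F},0\}$ always) is the safe route; then $\limsup_F \max\{\frac1F\sum X_n[f],0\} = \max\{\limsup_F \frac1F\sum X_n[f],0\} \leq 0$.
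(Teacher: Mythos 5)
Your proposal is correct and takes essentially the same route as the paper: both deduce the corollary from the frame-average bound \eqref{prob_1_server} of Lemma \ref{bounded_supMG} together with $T_n[f]\geq 1$, hence $\sum_{f=0}^{F-1}T_n[f]\geq F$. The paper simply avoids your sign casework by fixing $\epsilon>0$, using \eqref{prob_1_server} to get $\sum_{f=0}^{F-1}X_n[f]\leq \epsilon F$ for all sufficiently large $F$, and dividing by the positive denominator to bound the ratio by $\epsilon F\left/\sum_{f=0}^{F-1}T_n[f]\right.\leq\epsilon$, then letting $\epsilon\rightarrow 0$.
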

\begin{proof}
From \eqref{prob_1_server}, it follows for any $\epsilon>0$, there exists an $F_0(\epsilon)$ such that $F\geq F_0(\epsilon)$ implies
$\sum_{f=0}^{F-1}X_n[f]\left/\sum_{f=0}^{F-1}T_n[f]\right.\leq\epsilon\left/\frac{1}{F}\sum_{f=0}^{F-1}T_n[f]\right.\leq\epsilon$.
Thus,
$\limsup_{F\rightarrow\infty}\sum_{f=0}^{F-1}X_n[f]\left/\sum_{f=0}^{F-1}T_n[f]\right.\leq\epsilon$.
Since $\epsilon$ is arbitrary, take $\epsilon\rightarrow0$ gives the result.
\end{proof}

\subsection{Near optimal time average cost}
The ratio of time averages in corollary \ref{corollary_ratio_time_average} and the true time average share the same bound, which is proved by the following lemma:
\begin{lemma}\label{true_time_average}
The following time average is bounded with probability 1,
\begin{multline}
\limsup_{T\rightarrow\infty}\frac{1}{T}\sum_{t=0}^{T-1}\left(V\left(W_n(t)+e_nH_n(t)+g_n(t)\right)-Q_n(t_f^{n})(\mu_nH_n(t)-\overline{\mu}_n^*)+\left(t-t_f^{n}\right)B_0\right) \\
\leq V(\overline{W}_n^*+\overline{E}_n^*+\overline{G}_n^*)+\Psi_n,  \label{true_time_average_equation}
\end{multline}
where $\Psi_n=\frac{B_0}{2}\frac{\expect{T^*_n[f](T^*_n[f]-1)}}{\expect{T^*_n[f]}}$ and $B_0=\frac{1}{2}(R_{\max}+\mu_{\max})\mu_{\max}$.
\end{lemma}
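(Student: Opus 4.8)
The statement to prove relates a genuine time-average (as $T\to\infty$) of the per-slot quantity appearing in the server's drift-plus-penalty objective \eqref{server_decision} to the ratio-of-expectations bound we already obtained for the best stationary policy. The natural route is a renewal-reward / stopping-time argument: we have already established in Corollary \ref{corollary_ratio_time_average} that the ratio $\sum_{f=0}^{F-1}X_n[f]\big/\sum_{f=0}^{F-1}T_n[f]\le 0$ with probability 1, where $X_n[f]$ absorbs exactly the per-slot terms in \eqref{true_time_average_equation} shifted by the optimal stationary averages and by $\Psi_n$. So the content of Lemma \ref{true_time_average} is precisely that the ``ratio of frame sums'' and the ``true time average over slots'' converge to the same limit superior. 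I would make this passage rigorous exactly as in the proof of Lemma \ref{bound-lemma-1} in Chapter 2: for a fixed horizon $T$, let $F_n[T]$ be the number of completed renewal frames of server $n$ up to slot $T$, so that $t_n^{(F_n[T])}\le T<t_n^{(F_n[T]+1)}$, and write
\[
\frac1T\sum_{t=0}^{T-1}(\cdots)
=\frac1T\sum_{f=0}^{F_n[T]-1}X_n[f]+\frac1T\Big(\text{boundary terms from the last, incomplete frame}\Big)+V(\overline W_n^*+\overline E_n^*+\overline G_n^*)+\Psi_n\cdot\frac{F_n[T]}{T}.
\]

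\textbf{Key steps, in order.} First I would unfold the definition of $X_n[f]$ and check the algebraic identity that $\frac1T\sum_{t=0}^{T-1}\big(V(W_n(t)+e_nH_n(t)+g_n(t))-Q_n(t_f^{n})(\mu_nH_n(t)-\overline\mu_n^*)+(t-t_f^{n})B_0\big)$ equals $\frac1T\sum_{f=0}^{F_n[T]-1}X_n[f]$ plus the sum over slots in $[t_n^{(F_n[T])},T)$ of the same per-slot integrand, plus $\big(V(\overline W_n^*+\overline E_n^*+\overline G_n^*)+\Psi_n\big)\cdot\frac{t_n^{(F_n[T])}}{T}$. Second, I would bound the incomplete-frame correction: each per-slot term is bounded by a constant times $(1+Q_n(t))$, and $Q_n(t)\le Vc_{\max}+R_{\max}$ by Lemma \ref{bounded_delay}, while the length of that incomplete frame is at most $I_{\max}+\hat\tau_n(\alpha_n[F_n[T]])+1$, whose expected residual (given it is not yet complete) is finite by Assumption \ref{bounded_moment_assumption}. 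Divided by $T$, this correction vanishes almost surely — here I would invoke that $t_n^{(F_n[T]+1)}/T\to1$ and hence $(t_n^{(F_n[T]+1)}-T)/T\to0$ a.s., a standard renewal-theory fact following from $F_n[T]\to\infty$ and the strong law on the i.i.d.\ frame lengths (or simply from $T_n[f]$ having a finite mean). Third, I would note $t_n^{(F_n[T])}/T\to1$ a.s. for the same reason, so the $\big(V(\overline W_n^*+\overline E_n^*+\overline G_n^*)+\Psi_n\big)\cdot t_n^{(F_n[T])}/T$ term contributes exactly $V(\overline W_n^*+\overline E_n^*+\overline G_n^*)+\Psi_n$ in the limit. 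Fourth, for the frame-sum term $\frac1T\sum_{f=0}^{F_n[T]-1}X_n[f]=\frac{\sum_{f=0}^{F_n[T]-1}X_n[f]}{\sum_{f=0}^{F_n[T]-1}T_n[f]}\cdot\frac{t_n^{(F_n[T])}}{T}$, I apply Corollary \ref{corollary_ratio_time_average} to the first factor (limsup $\le0$) and the convergence $t_n^{(F_n[T])}/T\to1$ to the second, and conclude $\limsup_{T\to\infty}\frac1T\sum_{f=0}^{F_n[T]-1}X_n[f]\le0$ a.s. Combining the four pieces gives \eqref{true_time_average_equation}.

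\textbf{Main obstacle.} The delicate point is the almost-sure control of the \emph{last incomplete frame}: we need that both its slot-count and the queue-weighted integrand over it, once divided by $T$, go to zero with probability one — not merely in expectation. The queue bound from Lemma \ref{bounded_delay} makes the per-slot integrand uniformly $O(V)$, so the incomplete-frame sum is $O(V)\cdot(I_{\max}+\hat\tau_n(\alpha_n[F_n[T]])+1)$; the subtlety is that $\hat\tau_n$ on the \emph{current} frame is a size-biased-type random variable and we must argue its $1/T$-scaled version vanishes a.s. I would handle this by a Borel–Cantelli argument using the finite (in fact fourth) moments of $\hat\tau_n$ from Assumption \ref{bounded_moment_assumption}, or more cleanly by sandwiching: $\big|\frac1T\sum_{t=t_n^{(F_n[T])}}^{T-1}(\cdots)\big|\le \frac1T\big|\sum_{t=t_n^{(F_n[T])}}^{t_n^{(F_n[T]+1)}-1}(\cdots)\big|$, and the right side is $\frac1T$ times the $(F_n[T])$-th term of a sequence with finite mean, hence $o(1)$ a.s.\ because $a_F/F\to0$ a.s.\ whenever $\sum_F \mathbb E[a_F]/F^2<\infty$ (the same lemma, Lemma \ref{prob-1-converge} specialized to nonnegative terms, or a direct Kronecker/Borel–Cantelli estimate). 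Everything else is bookkeeping that mirrors the Chapter 2 stopping-time proof verbatim.
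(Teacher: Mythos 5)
Your plan is essentially the paper's own proof (Appendix C): decompose the horizon-$T$ sum at the last renewal before $T$, apply Corollary \ref{corollary_ratio_time_average} to the completed-frame part (which, after unfolding $X_n[f]$, is exactly the time average of $Y(t)$ up to $t_F^{(n)}$ shifted by $V(\overline{W}_n^*+\overline{E}_n^*+\overline{G}_n^*)+\Psi_n$), and kill the residual-frame contribution almost surely via Markov's inequality on the fourth moment of the frame length plus Borel--Cantelli. The only cosmetic difference is that the paper never needs $t_n^{(F_n[T])}/T\to1$: it sandwiches $\frac1T\sum_{t<T}Y(t)\le\max\{a[F],b[F]\}$, where $a[F]$ and $b[F]$ normalize by $t_F^{(n)}$ and $t_{F+1}^{(n)}$ respectively, and bounds each limsup separately.

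Two of your supporting claims need repair, though neither is fatal. First, the per-slot integrand on the incomplete frame is \emph{not} uniformly $O(V)$: the term $(t-t_f^{n})B_0$ grows up to $T_n[F]B_0$ within the frame, so the incomplete-frame sum is $O(V)\,T_n[F]+\frac{B_0}{2}T_n[F]^2$ (this is exactly the paper's $B_2T_n[F]+\frac{B_0}{2}T_n[F]^2$), and the quadratic piece is precisely why fourth moments of $\hat{\tau}_n$ — not second — are needed; your bound ``$O(V)\cdot(I_{\max}+\hat\tau_n+1)$'' drops this piece even though you correctly invoke fourth moments later. Second, the frame lengths under the proposed algorithm are not i.i.d.\ (decisions depend on $Q_n(t_f^{n})$), so the ``strong law on the i.i.d.\ frame lengths'' cannot be invoked for $t_n^{(F_n[T])}/T\to1$; moreover your shortcut lemma ``$a_F/F\to0$ a.s.\ whenever $\sum_F\expect{a_F}/F^2<\infty$'' is false as stated (take deterministic $a_F=F$ when $F$ is a perfect square and $a_F=0$ otherwise: the series converges but $a_F/F=1$ infinitely often). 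The correct route — and the one the paper uses — is $Pr[T_n[F]^2>\epsilon F]\le\expect{T_n[F]^4}/(\epsilon F)^2$ with the uniform fourth-moment bound from Assumption \ref{bounded_moment_assumption}, then Borel--Cantelli, giving $T_n[F]/F\to0$ and $T_n[F]^2/F\to0$ a.s.; since $t_F^{(n)}\ge F$, this also delivers $t_n^{(F_n[T])}/T\to1$ if you prefer your formulation.
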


The idea of the proof is similar to that of basic renewal theory, which derives upper and lower bounds for each $T$ within any frame $F$ using corollary \ref{corollary_ratio_time_average}, thereby showing that as $T\rightarrow\infty$, the upper and lower bounds meet. See appendix C for details. With the help of this lemma, we are able to prove the following near optimal performance theorem:

\begin{theorem}\label{theorem_near_optimal_perform}
If $Q_n(0)=0,\forall n\in\mathcal{N}$, then the time average total cost under the algorithm is near optimal on the order of $\mathcal{O}(1/V)$, i.e. with probability 1,
\begin{multline}\label{near_optimal_perform}
\limsup_{T\rightarrow\infty}\frac{1}{T}\sum_{t=0}^{T-1}\left(c(t)r(t)+\sum_{n=1}^{N}\left(W_n(t)+e_nH_n(t)+g_n(t)\right)\right)
\\
\leq\underbrace{\overline{C}^*+\sum_{n=1}^N\left(\overline{W}_n^*+\overline{E}_n^*+\overline{G}_n^*\right)}_\text{Optimal cost}+\frac{\sum_{n=1}^N\Psi_n+B_3}{V},
\end{multline}
where $B_3\triangleq\frac{1}{2}\sum_{n=1}^N(R_{\max}+\mu_n)^2$, $\Psi_n=\frac{B_0}{2}\frac{\expect{T^*_n[f](T^*_n[f]-1)}}{\expect{T^*_n[f]}}$ and $B_0=\frac{1}{2}(R_{\max}+\mu_{\max})\mu_{\max}$.
\end{theorem}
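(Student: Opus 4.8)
The plan is to run a Lyapunov drift‑plus‑penalty argument on top of the two facts already in hand: Lemma~\ref{true_time_average} (the genuine time‑average version of the per‑server supermartingale bound), the load‑balancer bound $\limsup_{T\to\infty}\frac1T\sum_{t=0}^{T-1}Z[t]\le 0$ from Lemma~\ref{bounded_supMG}, and the deterministic queue bound $Q_n(t)\le Vc_{\max}+R_{\max}$ of Lemma~\ref{bounded_delay}. First I would introduce the quadratic Lyapunov function $L(t)=\tfrac12\sum_{n=1}^N Q_n(t)^2$ with per‑slot drift $\Delta(t)=L(t+1)-L(t)$. Since all queues are deterministically $O(V)$‑bounded (Lemma~\ref{bounded_delay}) and $L(0)=0$, the telescoped drift satisfies $\frac1T\sum_{t=0}^{T-1}\Delta(t)=\frac{L(T)}{T}\to 0$. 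From the queue update \eqref{queue_update} and $(\max\{x,0\})^2\le x^2$ one gets the routine bound $\Delta(t)\le B_3+\sum_{n=1}^N Q_n(t)\bigl(R_n(t)-\mu_n(t)H_n(t)\bigr)$ with $B_3$ the constant in the statement.

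Next I would form the full drift‑plus‑penalty $\Delta(t)+V\bigl(c(t)r(t)+\sum_n(W_n(t)+e_nH_n(t)+g_n(t))\bigr)$ and split it into the load‑balancer piece $Vc(t)r(t)+\sum_n Q_n(t)R_n(t)$ and the service piece $V\sum_n(W_n(t)+e_nH_n(t)+g_n(t))-\sum_n Q_n(t)\mu_n(t)H_n(t)$. By the very definition of $Z[t]$ the load‑balancer piece equals $Z[t]+V\overline{C}^{*}+\sum_n Q_n(t)\overline{R}_n^{*}$, whose time average is handled by Lemma~\ref{bounded_supMG}. For the service piece, each renewal frame of server $n$ contains exactly one active slot (its last slot), so summing over a frame replaces $\sum_{t\in\mathcal T_f^n}\mu_n(t)H_n(t)$ by a single draw with conditional mean $\mu_n$, and replacing the running queue $Q_n(t)$ by its frame‑start value $Q_n(t_f^n)$ costs an error linear in $t-t_f^n$; these two operations are exactly what the injected correction $\sum_{t\in\mathcal T_f^n}(t-t_f^n)B_0=\tfrac12T_n[f](T_n[f]-1)B_0$ and the constant $\Psi_n$ on the optimal‑policy side are designed to absorb. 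This bookkeeping is precisely the content of Lemma~\ref{true_time_average}, which gives, with probability $1$ and for each $n$,
\[
\limsup_{T\to\infty}\frac1T\sum_{t=0}^{T-1}\!\Bigl(V\bigl(W_n(t)+e_nH_n(t)+g_n(t)\bigr)-Q_n(t_f^n)\bigl(\mu_nH_n(t)-\overline{\mu}_n^{*}\bigr)+(t-t_f^n)B_0\Bigr)\le V\bigl(\overline{W}_n^{*}+\overline{E}_n^{*}+\overline{G}_n^{*}\bigr)+\Psi_n .
\]

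Then I would add the $N$ per‑server inequalities to the load‑balancer inequality, using $\limsup(A+B)\le\limsup A+\limsup B$. The $Vc(t)r(t)$ and $V(W_n(t)+e_nH_n(t)+g_n(t))$ terms assemble into $V$ times the total instantaneous cost, the starred quantities assemble into $V$ times the optimal cost $\overline{C}^{*}+\sum_n(\overline{W}_n^{*}+\overline{E}_n^{*}+\overline{G}_n^{*})$, and what remains is a bundle of queue‑weighted residuals of the form $\sum_n\bigl(Q_n(t)R_n(t)-Q_n(t_f^n)\mu_nH_n(t)\bigr)+\sum_n(\overline{\mu}_n^{*}-\overline{R}_n^{*})Q_n(t_f^n)$. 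Here I would invoke: (i) the drift bound from the first step, which turns $\sum_nQ_n(t)R_n(t)-\sum_nQ_n(t_f^n)\mu_nH_n(t)$ into $\Delta(t)-B_3$ plus a within‑frame remainder (a martingale‑difference piece vanishing in time average by the strong law for supermartingale differences, Lemma~\ref{prob-1-converge}, and a deterministic piece again absorbed by the $B_0$ term); (ii) feasibility of the optimal randomized stationary policy, which satisfies \eqref{obj_4}, i.e.\ $\overline{R}_n^{*}\le\overline{\mu}_n^{*}$, so the terms $(\overline{\mu}_n^{*}-\overline{R}_n^{*})Q_n(t_f^n)$ are nonnegative and may be dropped; and (iii) the telescoping $\frac1T\sum_{t=0}^{T-1}\Delta(t)=L(T)/T\to 0$. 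Collecting everything and dividing by $V$ yields $\limsup_{T\to\infty}\frac1T\sum_{t=0}^{T-1}\bigl(c(t)r(t)+\sum_n(W_n(t)+e_nH_n(t)+g_n(t))\bigr)\le \overline{C}^{*}+\sum_n(\overline{W}_n^{*}+\overline{E}_n^{*}+\overline{G}_n^{*})+\frac{\sum_n\Psi_n+B_3}{V}$, which is the claim.

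\textbf{Main obstacle.} The delicate point is the per‑frame accounting hidden inside Lemma~\ref{true_time_average}: one must show that the quadratic correction $\tfrac12T_n[f](T_n[f]-1)B_0$ on the algorithm side, matched against $\Psi_n$ on the optimal side, is exactly large enough to swallow simultaneously (a) freezing the random service $\mu_n(t)$ on the single active slot at its mean $\mu_n$, (b) freezing the running queue $Q_n(t)$ at $Q_n(t_f^n)$ over the whole frame, and (c) the second‑moment‑of‑frame‑length error in the Lyapunov drift, and then to push all of this from a ratio‑of‑expectations (supermartingale) statement to an honest time‑average statement via a renewal‑reward sandwich. Because Lemmas~\ref{true_time_average}, \ref{bounded_supMG} and \ref{bounded_delay} already package exactly these difficulties, the residual work inside the theorem itself is only the (routine) drift assembly plus the use of $\overline{R}_n^{*}\le\overline{\mu}_n^{*}$.
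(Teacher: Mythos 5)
Your proposal is correct and follows essentially the same route as the paper's Appendix D: the same quadratic Lyapunov drift-plus-penalty with constant $B_3$, the same split into the load-balancer piece handled through $Z[t]$ (Lemma \ref{bounded_supMG}) and the per-server piece handled through Lemma \ref{true_time_average} after freezing $Q_n(t)$ at $Q_n(t_f^{n})$ with the $(t-t_f^{n})B_0$ correction, the same use of feasibility $\overline{R}_n^{*}\leq\overline{\mu}_n^{*}$, and the same final telescoping of the drift (the paper merely drops $L(T)\geq 0$ rather than invoking $L(T)/T\to 0$, an immaterial difference). No gaps beyond the level of detail the paper itself leaves implicit.
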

See appendix D for details of proof.

\section{Delay improvement via virtualization}\label{section_improve}
\subsection{Delay improvement}
The algorithm in previous sections optimizes time average cost. However, it can route requests to idle queues, which increases system delay. This section considers an improvement in the algorithm that maintains the same average cost guarantees, but reduces delay. This is done by a ``virtualization'' technique that reduces from $N$ server request queues to only one request queue $Q(t)$. Specifically, the same Algorithm 1 is run, with queue updates \eqref{queue_update} for each of the $N$ queues $Q_n(t)$. However, the $Q_n(t)$ processes are now virtual queues rather than actual queues: Their values are only kept in software. Every slot $t$, the data center observes the incoming requests $\lambda(t)$, rejection cost $c(t)$ and virtual queue values, making rejection decision according to \eqref{reject_decision} as before. The admitted requests are queued in $Q(t)$. Meanwhile, each server $n$ makes active/idle decisions observing its own virtual queue $Q_n(t)$ same as before. Whenever a server is active, it grabs the requests from request queue $Q(t)$ and serves them.
This results in an actual queue updating for the system:
\begin{equation}\label{actual_queue_update}
Q(t+1)=\max\left\{Q(t)+\lambda(t)-r(t)-\sum_{n=1}^N\mu_n(t)H_n(t),~0\right\}.
\end{equation}

Fig. \ref{fig:Stupendous1} shows this data center architecture.
\begin{figure}[htbp]
   \centering
   \includegraphics[height=2in]{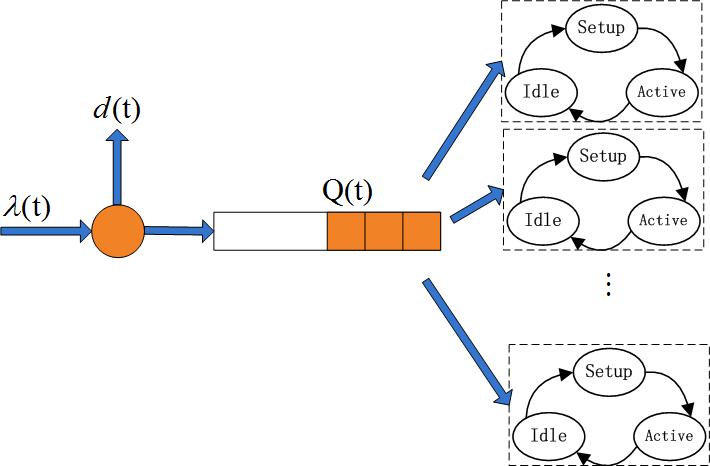} 
   \caption{Illustration of basic data center architecture.}
   \label{fig:Stupendous1}
\end{figure}


\subsection{Performance guarantee}
Since this algorithm does not look at the actual queue $Q(t)$, it is not clear whether or not the actual request queue would be stabilized under the proposed algorithm. The following lemma answers the question. For simplicity, we call the system with $N$ queues, where our algorithm applies, the virtual system, and call the system with only one queue the actual system.
\begin{lemma}\label{actual_queue_bound}
If $Q(0)=0$ and $Q_n(0)=0,~\forall n\in\mathcal{N}$, then the virtualization technique stabilizes the queue $Q(t)$ with the bound:
$Q(t)\leq N(Vc_{\max}+R_{\max})$.
\end{lemma}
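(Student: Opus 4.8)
The plan is to establish the stronger pointwise domination $Q(t)\le\sum_{n=1}^N Q_n(t)$ for every $t$, and then to bound each virtual queue $Q_n(t)$ by $Vc_{\max}+R_{\max}$ via Lemma~\ref{bounded_delay}, which immediately yields $Q(t)\le N(Vc_{\max}+R_{\max})$. So the real content is the domination inequality, and everything else is bookkeeping.

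First I would simplify the actual queue recursion~\eqref{actual_queue_update}. The routing and rejection decisions produced by \eqref{reject_decision}--\eqref{routing_decision} always obey the instantaneous flow constraint $\sum_{n=1}^N R_n(t)+r(t)=\lambda(t)$ in~\eqref{obj_5}, hence $\lambda(t)-r(t)=\sum_{n=1}^N R_n(t)$ and
$$Q(t+1)=\max\Big\{Q(t)+\sum_{n=1}^N R_n(t)-\sum_{n=1}^N \mu_n(t)H_n(t),\,0\Big\}.$$
Next I would prove $Q(t)\le\sum_{n=1}^N Q_n(t)$ by induction on $t$. The base case holds since $Q(0)=0=\sum_n Q_n(0)$. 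For the inductive step, assume $Q(t)\le\sum_n Q_n(t)$. If $Q(t)+\sum_n R_n(t)-\sum_n\mu_n(t)H_n(t)\le 0$, then $Q(t+1)=0$ and the claim is trivial; otherwise, discarding the outer $\max$ and applying the inductive hypothesis,
$$Q(t+1)\le \sum_{n=1}^N Q_n(t)+\sum_{n=1}^N R_n(t)-\sum_{n=1}^N \mu_n(t)H_n(t)=\sum_{n=1}^N\big(Q_n(t)+R_n(t)-\mu_n(t)H_n(t)\big).$$
Since the virtual queue update~\eqref{queue_update} gives $Q_n(t+1)=\max\{Q_n(t)+R_n(t)-\mu_n(t)H_n(t),0\}\ge Q_n(t)+R_n(t)-\mu_n(t)H_n(t)$ for each $n$, summing over $n$ shows the right-hand side above is at most $\sum_{n=1}^N Q_n(t+1)$, which completes the induction.

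Finally, combining with Lemma~\ref{bounded_delay}, which applies verbatim because the $Q_n(t)$ here are exactly the virtual-queue processes analyzed there, gives $Q(t)\le\sum_{n=1}^N Q_n(t)\le N(Vc_{\max}+R_{\max})$. I do not expect a genuine obstacle: the only delicate points are invoking the admission-control identity to eliminate $\lambda(t)-r(t)$, and handling the clipping to $0$ in both recursions carefully in the induction step — in particular noting that domination is preserved because the virtual update can only discard service, never manufacture backlog, so aggregating the $N$ virtual queues always over-counts the true backlog in $Q(t)$.
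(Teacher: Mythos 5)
Your proposal is correct and follows essentially the same route as the paper: an induction establishing the domination $Q(t)\le\sum_{n=1}^N Q_n(t)$, with the same two-case split on whether the clipped actual-queue update hits zero (the paper phrases this case as "not all active servers have requests to serve," which is equivalent to your condition that the argument of the outer $\max$ is nonpositive), followed by summing the per-queue bound from Lemma~\ref{bounded_delay}. No gaps.
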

\begin{proof}
Notice that this bound is $N$ times the individual queue bound in lemma \ref{bounded_delay}, we prove the lemma by showing that the sum-up weights $\sum_{n=1}^NQ_n(t)$ in the virtual system always dominates the queue length $Q(t)$. We prove this by induction. The base case is obvious since $Q(0)=\sum_{n=1}^NQ_n(0)=0$. Suppose at the beginning of time $t$,
$Q(t)\leq\sum_{n=1}^NQ_n(t)$,
then, during time $t$, we distinguish between the following two cases:
\begin{enumerate}
  \item Not all active servers in actual system have requests to serve. This case happens if and only if there are not enough requests in $Q(t)$ to be served, i.e.
      $\lambda(t)-r(t)+Q(t) < \sum_{n=1}^N\mu_n(t)H_n(t)$.
      Thus, according to queue updating rule \eqref{actual_queue_update}, at the beginning of time slot $t+1$, there will be no request sitting in the actual queue, i.e. $Q(t+1)=0$. Hence, it is guaranteed that
      $Q(t+1)\leq\sum_{n=1}^NQ_n(t+1)$.
  \item All active servers in actual system have requests to serve. Notice that the virtual system and the actual system have exactly the same arrivals, rejections and server active/idle states. Thus, the following holds,
      $Q(t+1)=Q(t)+\lambda(t)-r(t)-\sum_{n=1}^N\mu_n(t)H_n(t)$
            $\leq \sum_{n=1}^NQ_n(t)+\sum_{n=1}^NR_n(t)-\sum_{n=1}^N\mu_n(t)H_n(t)$
            $\leq \sum_{n=1}^N\max\{Q_n(t)+R_n(t)-\mu_n(t)H_n(t),~0\}$
            $= \sum_{n=1}^NQ_n(t+1)$,
      where the first inequality follows from induction hypothesis as well as the fact that $\sum_{n=1}^NR_n(t)=\lambda(t)-r(t)$.
\end{enumerate}
Above all, we proved $Q(t)\leq\sum_{n=1}^NQ_n(t),~\forall t$. Since each $Q_n(t)\leq Vc_{\max}+R_{\max},~\forall t$, the lemma follows.
\end{proof}

Since the virtual system and the actual system have exactly the same cost, and it can be shown that the optimal cost in one queue system is lower bounded by the optimal cost in $N$ queue system, thus, the near optimal performance is still guaranteed.

\section{Simulation}\label{section_simulation}
{
In this section, we demonstrate the performance of our proposed algorithm via extensive simulations. The first simulation runs over i.i.d. traffic. We show that our algorithm indeed achieves $\mathcal{O}(1/V)$ near optimality with $\mathcal{O}(V)$ delay ($[\mathcal{O}(1/V), \mathcal{O}(V)]$ trade-off), which is predicted by Lemma \ref{bounded_delay} and Theorem \ref{theorem_near_optimal_perform}. We then apply our algorithm to a real data center traffic trace with realistic scale, setup time and cost being the power consumption. We compare the performance of the proposed algorithm with several other heuristic algorithms and show that our algorithm indeed delivers lower delay and saves power.
}

\subsection{Near optimality in $N$ queues system}
In the first simulation, we consider a relative small scale problem with i.i.d. generated traffic.
We set the number of servers $N=5$. The incoming requests $\lambda(t)$ are integers following a uniform distribution in $[10,30]$. The request rejecting cost $c(t)$ are also integers following a uniform distribution in $[1,6]$. The maximum admission amount $R_{\max}=40$ and the maximum idle time $I_{\max}=1000$.
There is only one idle option $\alpha_n$ for each server where the idle cost $\hat{g}(\alpha_n)=0$.
The setup time follows a geometric distribution with mean $\expect{\hat{\tau}(\alpha_n)}$, setup cost $\hat{W}_n(\alpha_n)$ per slot, service cost $e_n$ per slot, and the service amount $\mu_n$ follows a uniform distribution over integers. The values $1/\expect{\hat{\tau}(\alpha_n)}$ are generated uniform at random within $[0,1]$ and specified in table II.

The algorithm is run for 1 million slots in each trial and each plot takes the average of these 1 million slots. We compare our algorithm to the optimal stationary algorithm. The optimal stationary algorithm is computed using linear program \cite{fox1966markov} with the full knowledge of the statistics of requests and rejecting costs.

\begin{table}
\begin{center}
\caption{Problem parameters}
\begin{tabular}{|l|l|l|l|l|}
  \hline
   Server & $\mu_n$ & $e_n$ & $\hat{W}_n(\alpha_n)$ & $\expect{\hat{\tau}(\alpha_n)}$ \\
  1 & $\{2,3,4,5,6\}$ & 4 & 2 & 5.893 \\
  2 & $\{2,3,4\}$ & 2 & 3 & 4.342 \\
  3 & $\{2,3,4\}$ & 3 & 3 & 27.397 \\
  4 & $\{1,2,3\}$ & 4 & 2 & 5.817 \\
  5 & $\{2,3,4\}$ & 2 & 4 & 6.211 \\
  \hline
\end{tabular}
\end{center}
\end{table}

In Fig. \ref{fig:Stupendous2}, we show that as our tradeoff parameter $V$ gets
larger, the average cost approaches the optimal value and
achieves a near optimal performance. {Furthermore, the cost curve drops rapidly when $V$ is small and becomes relatively flat when $V$ gets large, thereby demonstrating our $\mathcal{O}(1/V)$ optimality gap in Theorem \ref{theorem_near_optimal_perform}.} 
Fig. \ref{fig:Stupendous3} plots the average sum-up queue size $\sum_{n=1}^5Q_n(t)$
and shows as $V$ gets larger, the average sum-up queue size becomes larger. We also plot the sum of individual queue bound
from Lemma \ref{bounded_delay} for comparison.  {We can see that the real queue size grows linearly with $V$ (although the constant in Lemma \ref{bounded_delay} is not tight due to the much better delay we obtain here), which demonstrates the $\mathcal{O}(V)$ delay bound.}

\begin{figure}[htbp]
   \centering
   \includegraphics[height=2in]{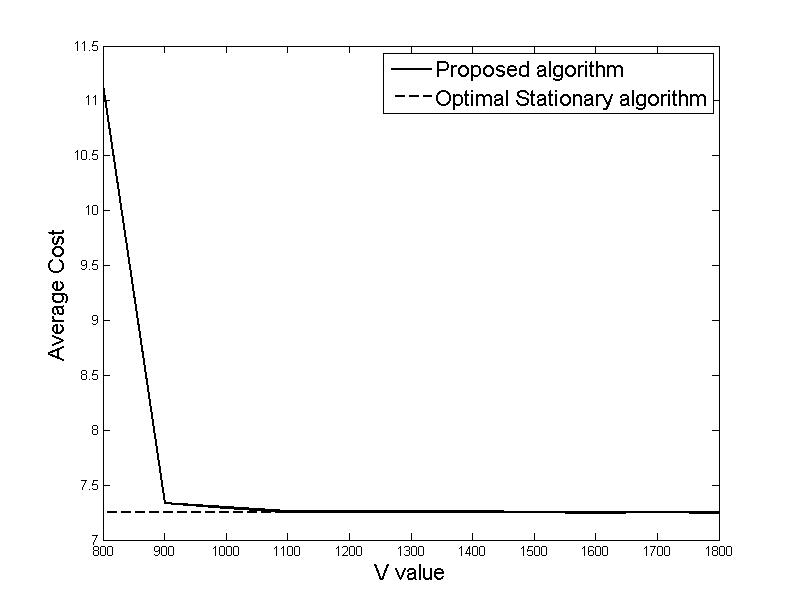} 
   \caption{Time average cost verses $V$ parameter over 1 millon slots.}
   \label{fig:Stupendous2}
\end{figure}

\begin{figure}[htbp]
   \centering
   \includegraphics[height=2in]{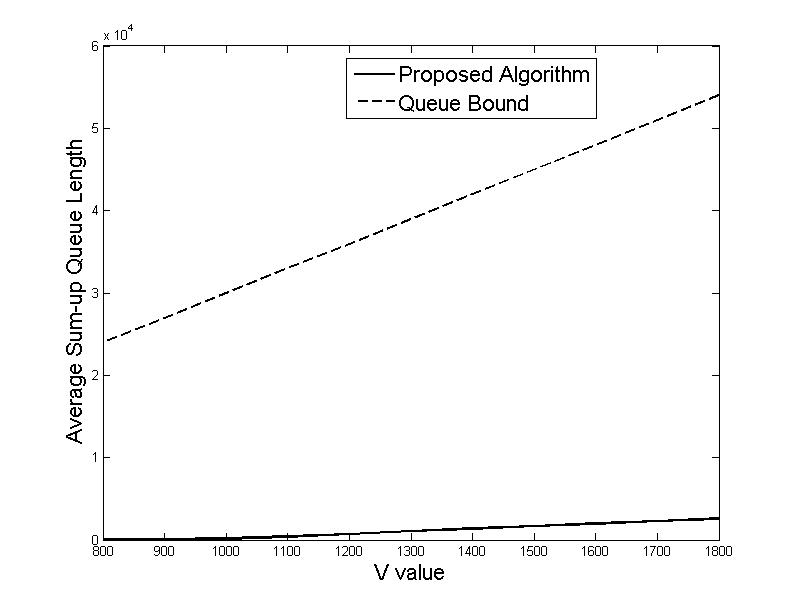} 
   \caption{Time average sum-up request queue length verses $V$ parameter over 1 millon slots.}
   \label{fig:Stupendous3}
\end{figure}

We then tune the requests $\lambda(t)$ to be uniform in $[20,40]$ and keep other parameters unchanged. In Fig. \ref{fig:Stupendous4}, we see that since the request rate gets larger, we need $V$ to be larger in order to obtain the near optimality, but still, the near optimality gap scales roughly $\mathcal{O}(1/V)$. Fig. \ref{fig:Stupendous5} gives the sum-up average queue length in this case. The average queue length is larger than that of Fig. \ref{fig:Stupendous3} with linear growth with respect to $V$.

\begin{figure}[htbp]
   \centering
   \includegraphics[height=2in]{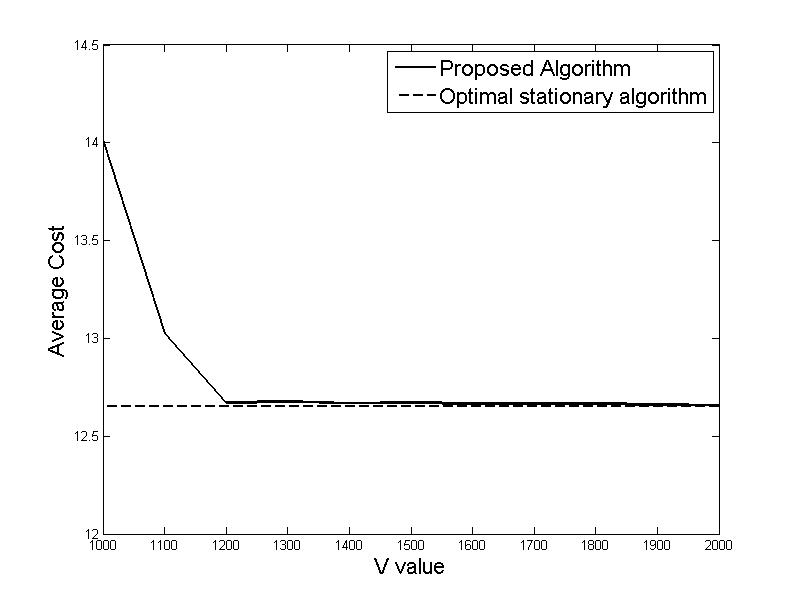} 
   \caption{Time average cost verses $V$ parameter over 1 millon slots.}
   \label{fig:Stupendous4}
\end{figure}

\begin{figure}[htbp]
   \centering
   \includegraphics[height=2in]{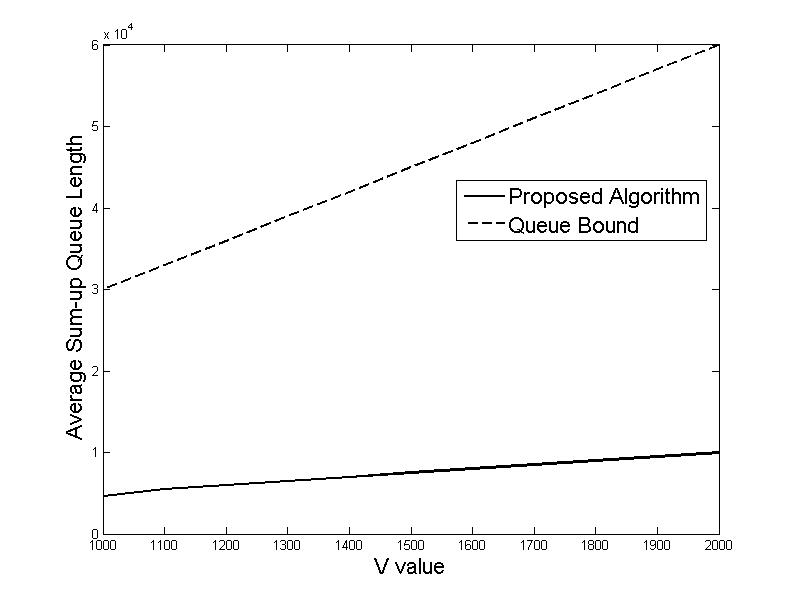} 
   \caption{Time average cost verses $V$ parameter over 1 millon slots.}
   \label{fig:Stupendous5}
\end{figure}

\subsection{Real data center traffic trace and performance evaluation}
This second considers a simulation on a real data center traffic obtained from the open source data sets of the paper \cite{benson2010network}. The trace is plotted in Fig. \ref{fig:trace}. We synthesize different data chunks from the source so that the trace contains both the steady phase and increasing phase. The total time duration is 2800 seconds with each slot equal to 20ms. The peak traffic is 2120 requests per 20 ms, and the time average traffic over this whole time interval is 654 requests per 20 ms.

We consider a data center consisting of 1060 homogeneous servers. We assume each server has only one sleep state and the service quantity of each server at each slot follows a Zipf's law\footnote{The pdf of Zipf's law with parameter $K,p$ is defined as:
$f(n; K,p)=\frac{1/n^p}{\sum_{i=1}^K1/i^p},~n=1,2,\cdots, K$. Thus, the mean of the distribution is $\frac{\sum_{i=1}^K1/i^{p-1}}{\sum_{i=1}^K1/i^p}$.} with parameter $K=10$ and $p=1.9$. This gives the service rate of each server equal to $1.9933\approx2$ requests per 20ms. So the full capacity of the data center is able to support the peak traffic.
Zipf's law is previously introduced to model a wide scope of physics, biology, computer science and social science phenomenon (\cite{newman2005power}), and is adopted in various literatures to simulate the empirical data center service rate (\cite{gandhi2013dynamic, gandhi2012sleep}). 
The setup time of each server is geometrically distributed with success probability equal to $0.001$. This gives the mean setup time 1000 slots (20 seconds). This setup time is previously shown in \cite{gandhi2012sleep} to be a typical time duration for a desktop to recover from the suspend or hibernate state. 

Furthermore, to make a fair comparison with several existing algorithms, we enforce the front end balancer to accept all requests at each time slot (so the rejection rate is always 0). The only cost in the system is then the power consumption. We assume that a server consumes 10 W each slot when active and 0 W each slot when idle. The setup cost is also 10 W per slot. Moreover, we apply the one queue model described in Section \ref{section_improve} for all the rest of the simulations. Following the problem formulation, the maximum idle time of a server for the proposed algorithm is $I_{\max}=5000$, while no such limit is imposed for any other benchmark algorithms.

We first run our proposed algorithm over the trace with virtualization (in Section \ref{section_improve}) for different $V$ values. We set the initial virtual queue backlog $Q_n(0)=2000~\forall n$, and keep 20 servers always on.
 Fig. \ref{fig:costV} and Fig. \ref{fig:queueV} plots the running average power consumption and corresponding queue length for $V=400,~600,~800$ and 1200, respectively. It can be seen that as $V$ gets large, the average power consumption does not improve too much but the queue length changes drastically. This phenomenon results from the $[\mathcal{O}(1/V), \mathcal{O}(V)]$ trade-off of our proposed algorithm. In view of this fact, we choose $V=600$ which gives a reasonable delay performance in Fig. \ref{fig:queueV}.

\begin{figure}[htbp]
   \centering
   \includegraphics[height=2in]{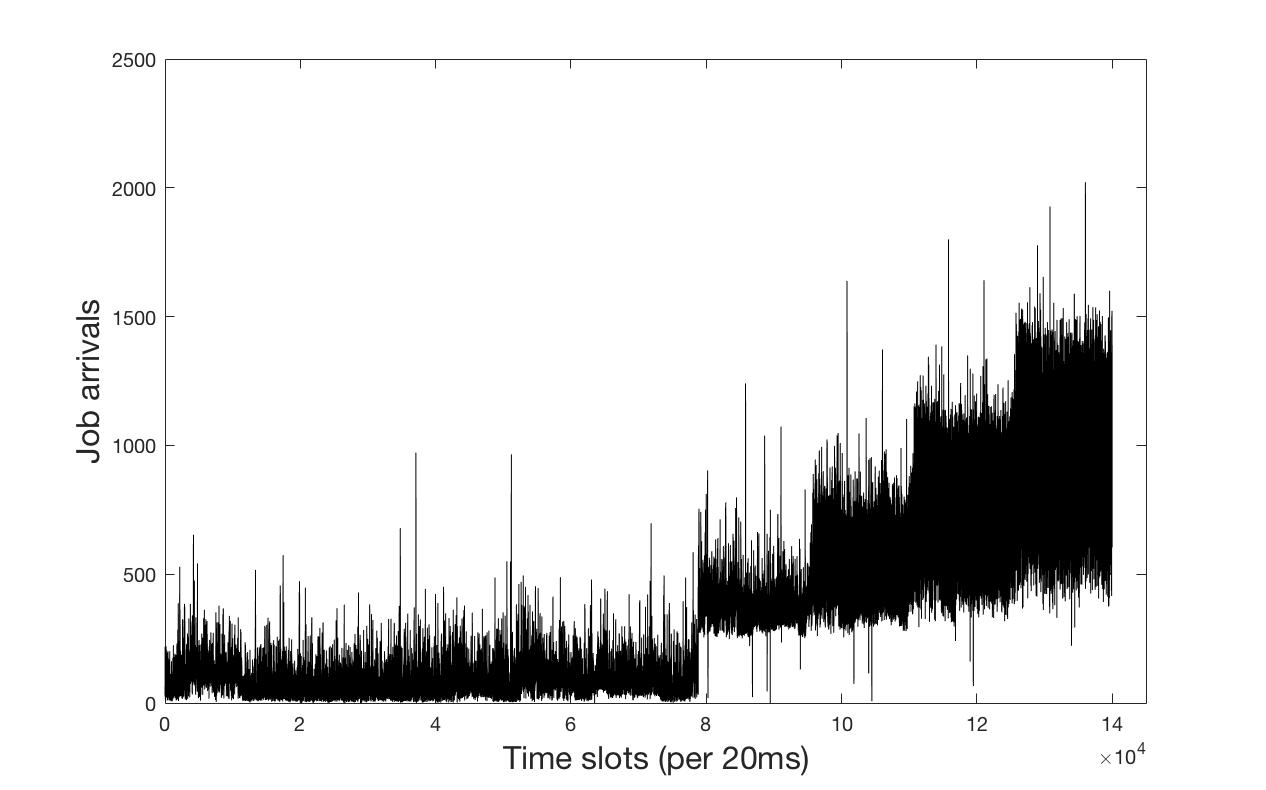} 
   \caption{Synthesized traffic trace from \cite{benson2010network}.}
   \label{fig:trace}
\end{figure}

\begin{figure}[htbp]
   \centering
   \includegraphics[height=2in]{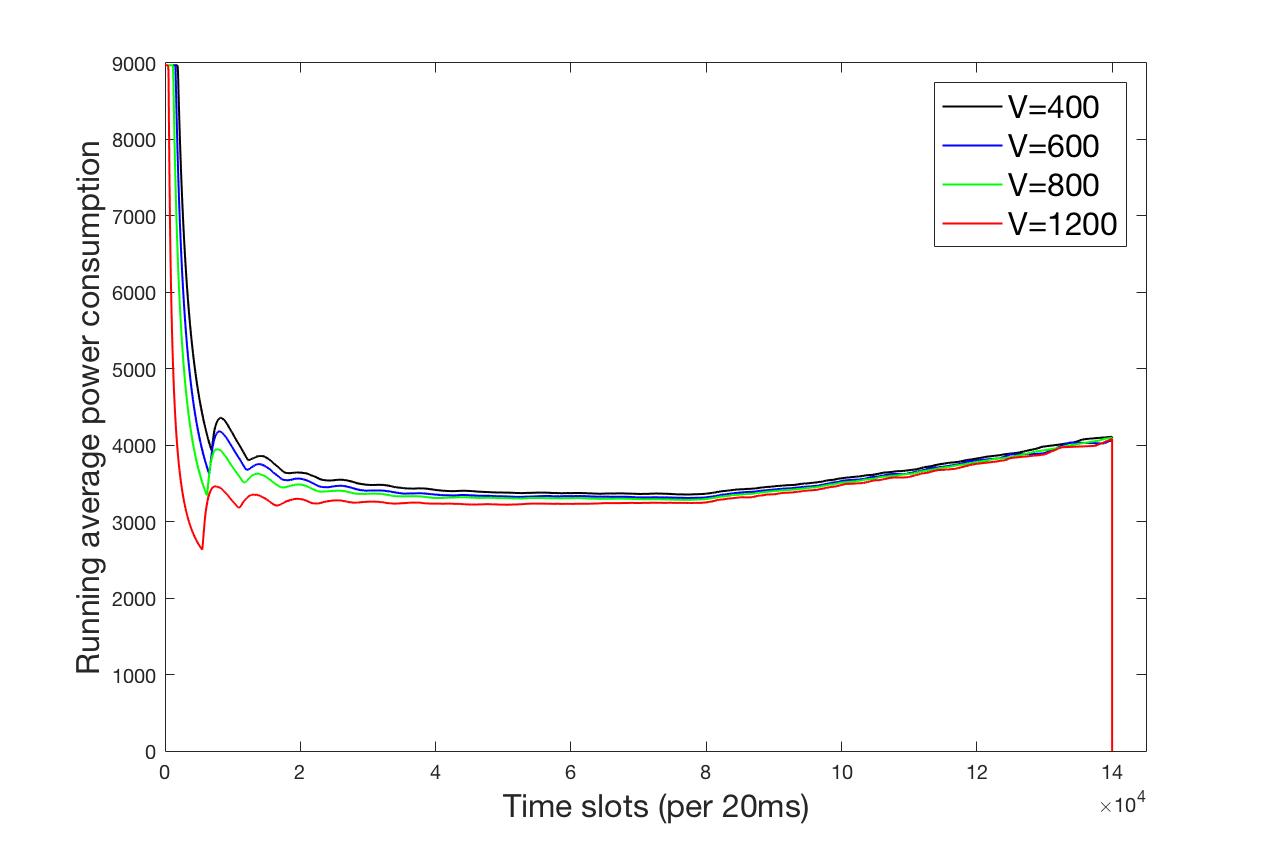} 
   \caption{Running average power consumption from slot 1 to the current slot for different $V$ value.}
   \label{fig:costV}
\end{figure}

\begin{figure}[htbp]
   \centering
   \includegraphics[height=2in]{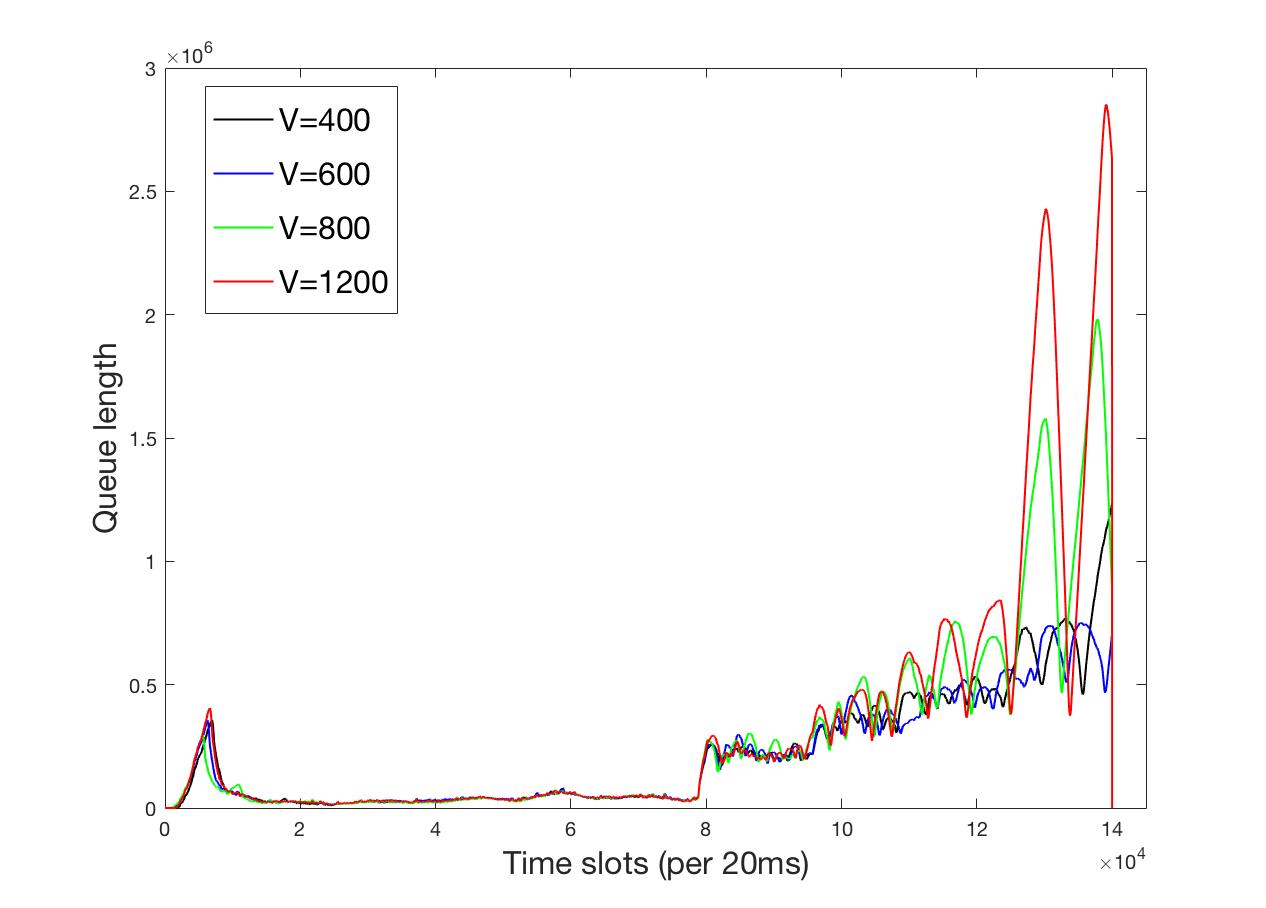} 
   \caption{Instantaneous queue length for different $V$ value.}
   \label{fig:queueV}
\end{figure}

Next, we compare our proposed algorithm with the same initial setup and $V=600$ to the following algorithms:
\begin{itemize}
\item Always-on with $N=327$ active servers and the rest servers staying on the sleep mode. Note that 327 servers can support the average traffic over the whole interval which is 654 requests per 20 ms. 
\item Always-on with full capacity. This corresponds to keeping all 1060 servers on at every slot.
\item Reactive. This algorithm is developed in \cite{gandhi2012sleep} which reacts to the current traffic     $\overline{\lambda}(t)$ and maintains $k_{react}(t)=\left\lceil\overline{\lambda}(t)/2\right\rceil$ servers on. In the simulation, we choose $\overline{\lambda}(t)$ to be the average of the traffic from the latest 10 slots. If the current active server $k(t)>k_{react}(t)$, then, we turn $k(t)-k_{react}(t)$ servers off, otherwise, we turn $k_{react}(t)-k(t)$ servers to the setup state.
\item Reactive with extra capacity. This algorithm is similar to Reactive except that we introduce a virtual traffic flow of $p$ jobs per slot. So during each time slot $t$, the algorithm maintains $k_{react}(t)=\left\lceil(\overline{\lambda}(t)+p)/2\right\rceil$ servers on. 
\end{itemize}
Fig. \ref{fig:Stupendous7}-\ref{fig:Stupendous9} plots the average power consumption, queue length and the number of active servers, respectively. It can be seen that all algorithms perform pretty well during first half of the trace. For the second half of the trace, the traffic load is increasing. The Always-on algorithm with mean capacity does not adapt to the traffic so the queue length blows up quickly.
Because of the long setup time, the number of active servers in the Reactive algorithm fails to catch up with the increasing traffic so the queue length also blows up. Our proposed algorithm minimizes the power consumption while stabilizing the queues, thereby outperforming both the Always-on and the Reactive algorithm. Note that the Reactive with extra 200 job capacity is able to achieve a similar delay performance as our proposed algorithm, but with significant extra power consumption.

\begin{figure}[htbp]
   \centering
   \includegraphics[height=2.3in]{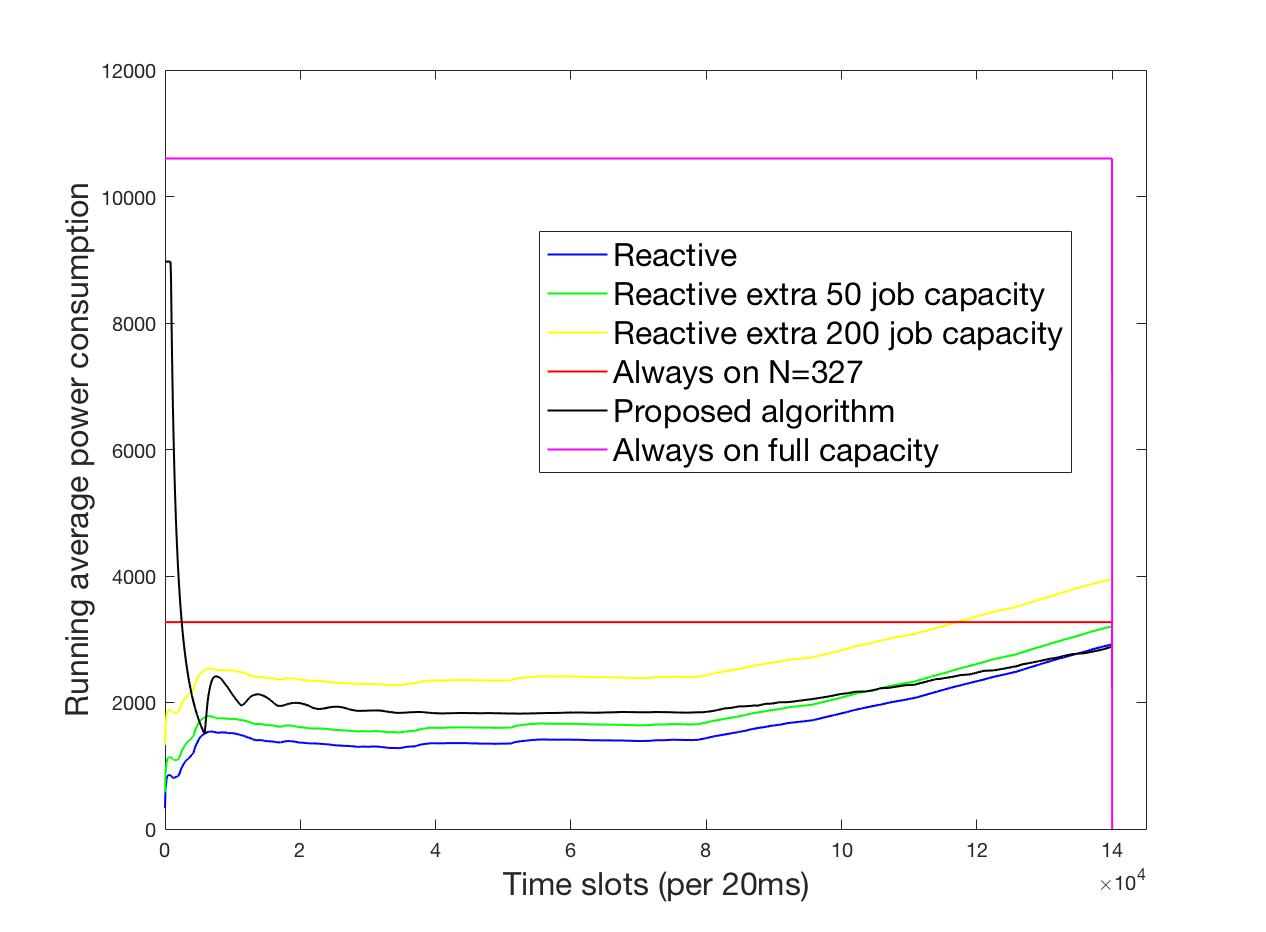} 
   \caption{Running average power consumption from slot 1 to the current slot for different algorithms.}
   \label{fig:Stupendous7}
\end{figure}

\begin{figure}[htbp]
   \centering
   \includegraphics[height=2in]{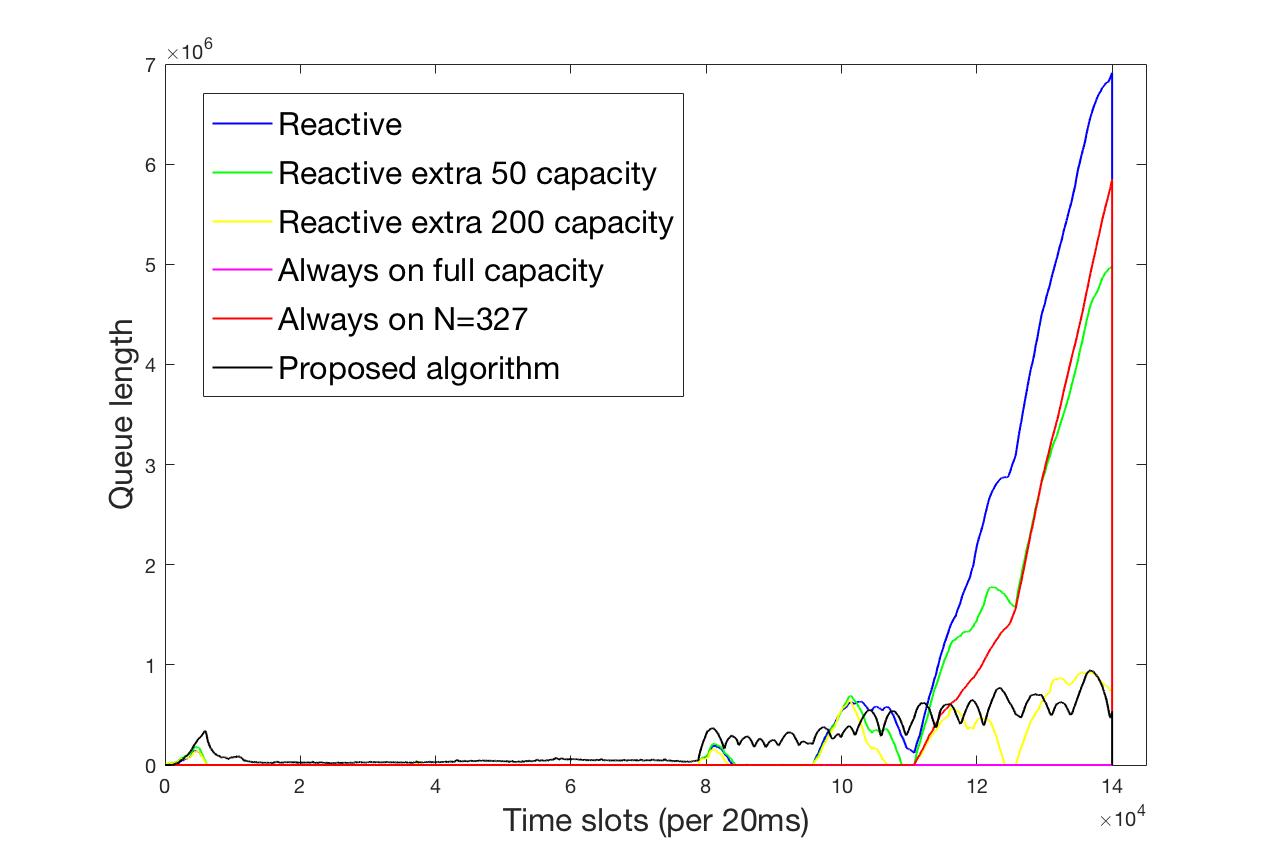} 
   \caption{Instantaneous queue length for different algorithms.}
   \label{fig:Stupendous8}
\end{figure}

\begin{figure}[htbp]
   \centering
   \includegraphics[height=2in]{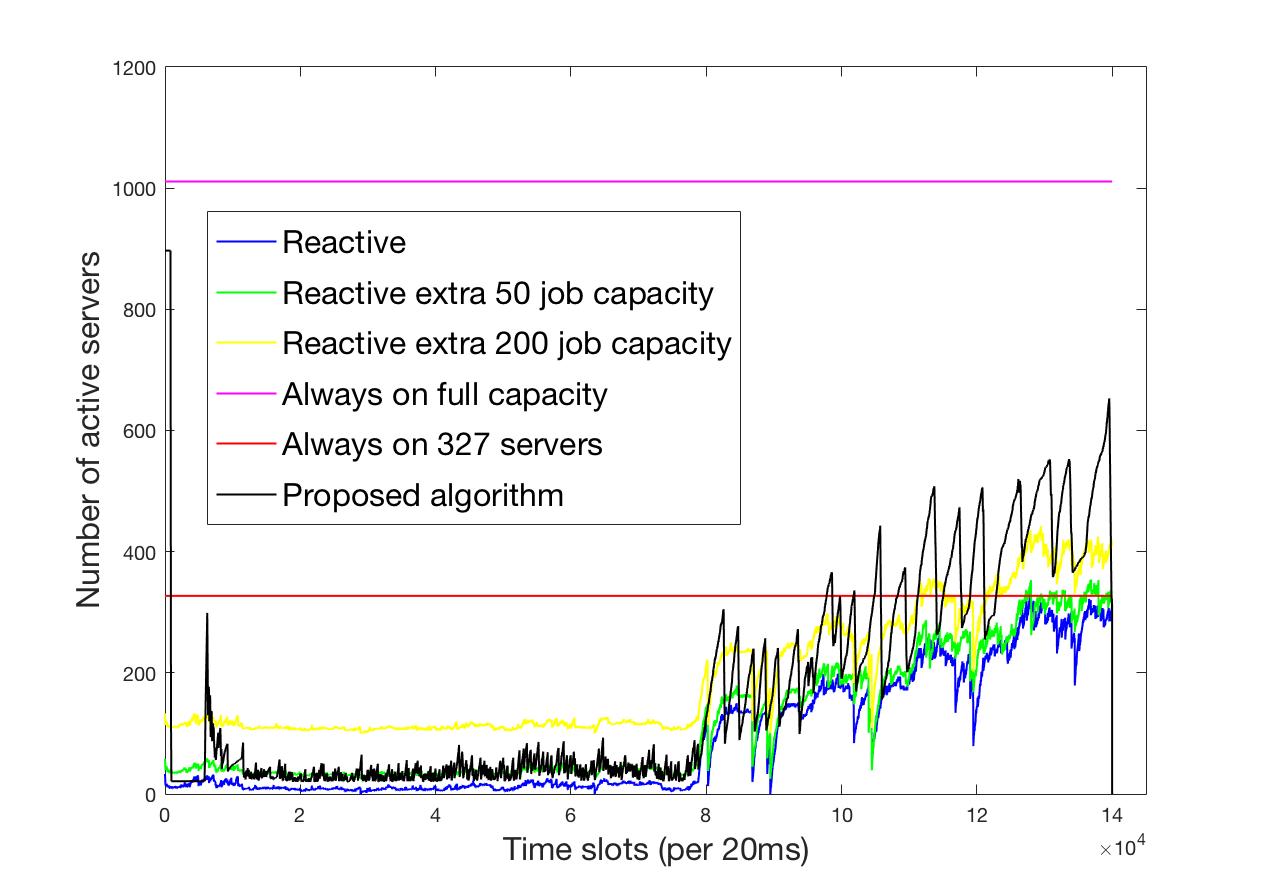} 
   \caption{Number of active servers over time.}
   \label{fig:Stupendous9}
\end{figure}

Finally, we evaluate the influence of different sleep modes on the performance. We keep all the setups the same as before and consider the sleep modes with sleep power consumption  equal to 2 W and 4 W per slot, respectively. Since the Always-on and the Reactive algorithm do not look at the sleep power consumption, their decisions remain the same as before, thus, we superpose the queue length of our proposed algorithm onto the previous Fig. \ref{fig:Stupendous8} and get the queue length comparison in Fig. \ref{fig:queue-length-S}. We see from the plot that increasing the power consumption during the sleep mode only slightly increases the queue length of our proposed algorithm. Fig. \ref{fig:sleep-mode} plots the running average power consumption under different sleep modes. Despite spending more power on the sleep mode, the proposed algorithm can still save considerable amount of power compared to other algorithms while keeping the request queue stable. This shows that our algorithm is empirically robust to the change of sleep mode.
\begin{figure}[htbp]
   \centering
   \includegraphics[height=2in]{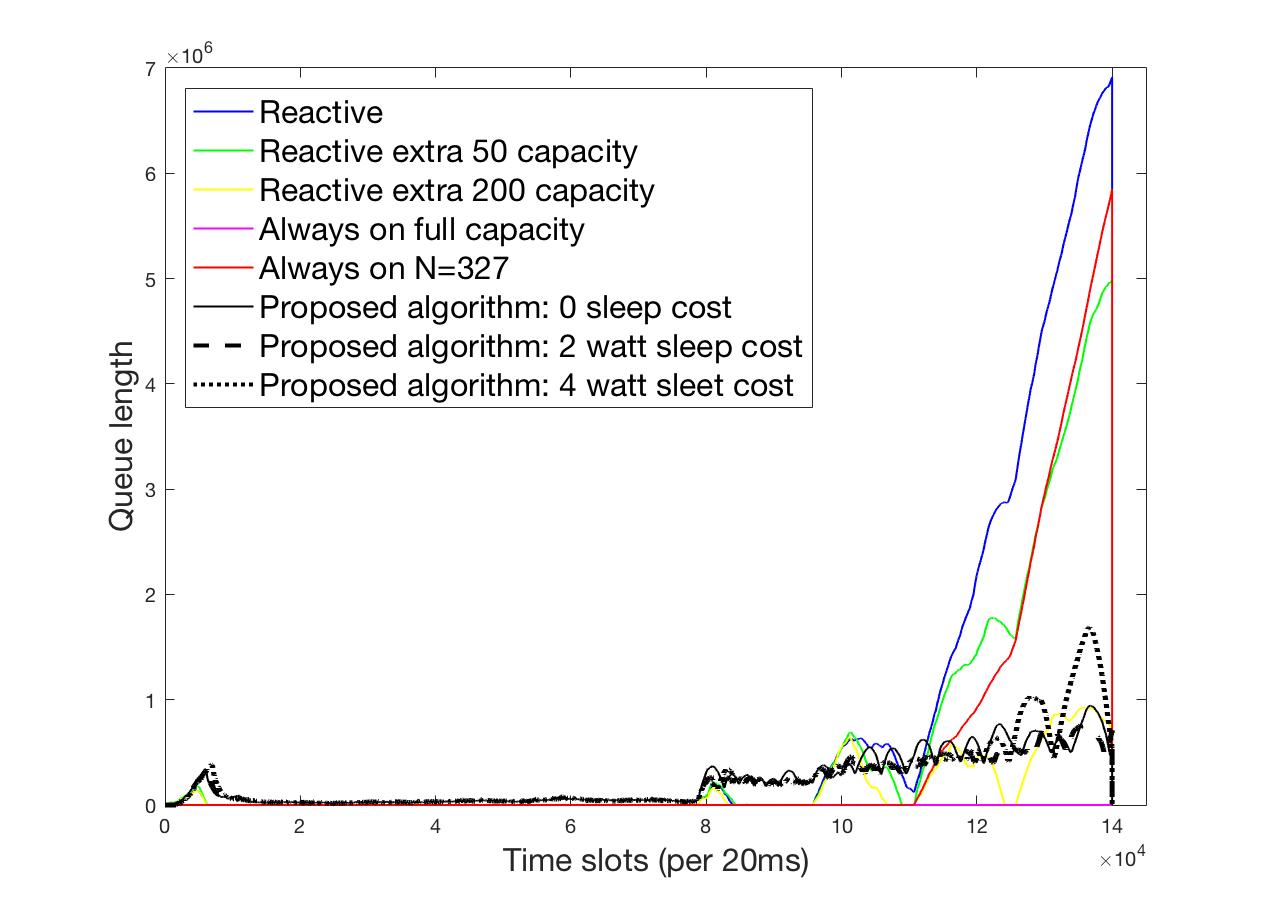} 
   \caption{Instantaneous queue length for different algorithms.}
   \label{fig:queue-length-S}
\end{figure}

\setcounter{figure}{15}
\begin{figure*}[ht]
\centering
 \begin{minipage}{5.6cm}
   \includegraphics[height=4cm] {chapter3/average_cost}
 \end{minipage}
 \begin{minipage}{5.6cm}
   \includegraphics[height=4cm] {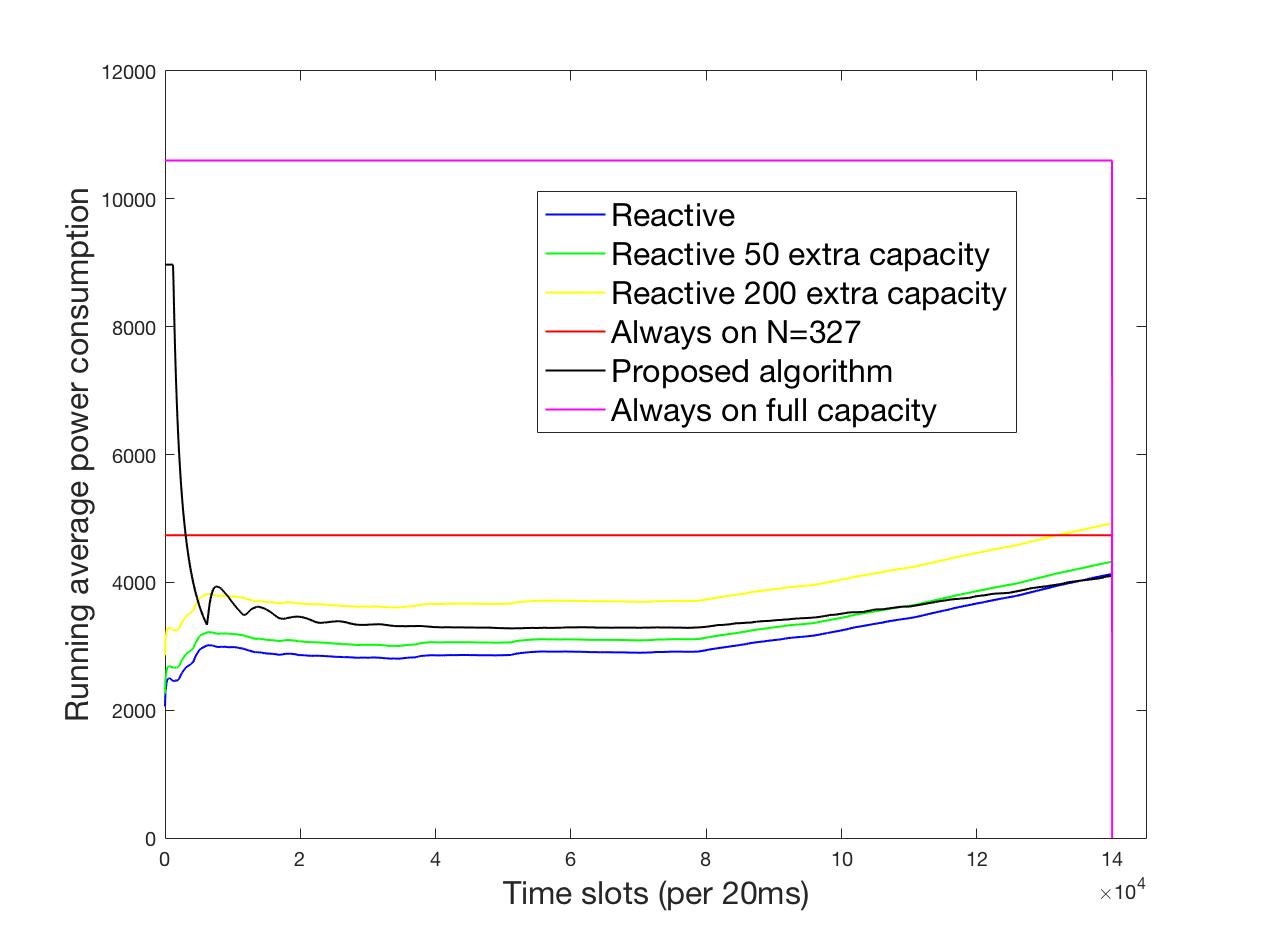}
 \end{minipage}
 \begin{minipage}{5.6cm}
   \includegraphics[height=4cm] {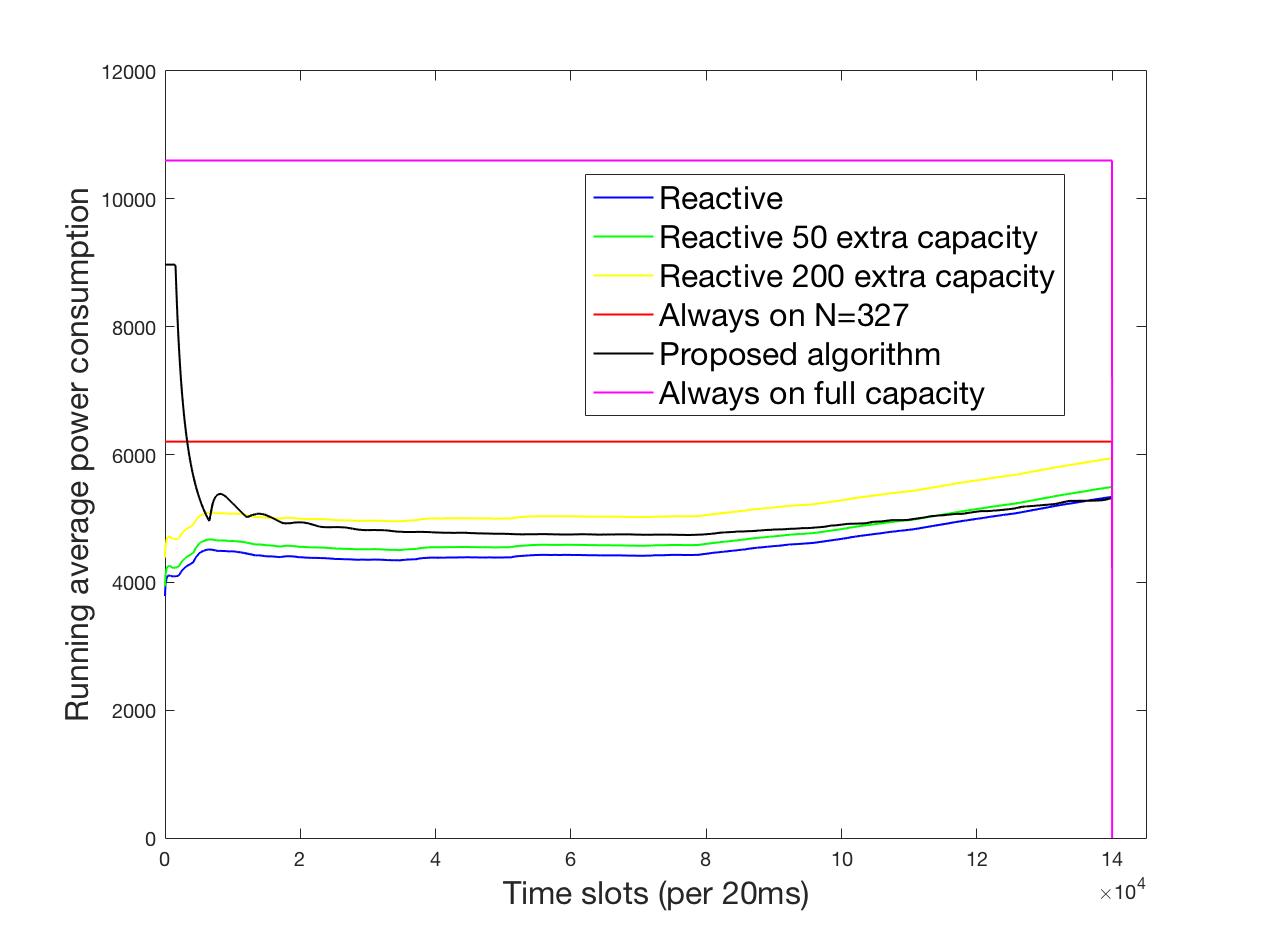}
 \end{minipage}
\caption{Running average power consumption for 0 W sleep cost(left), 2 W sleep cost(middle) and 4 W sleep cost(right)}\label{fig:sleep-mode}
\end{figure*}

\section{Additional lemmas and proofs}
\subsection*{Appendix A--- Proof of Lemma \ref{compute_idle}}
We have \eqref{appendix_A_interim}, as shown at the bottom of this page, holds,
\begin{figure*}
\normalsize
\begin{align}
&D_n[f]=\frac{V\hat{W}_n(\alpha_n[f])m_{\alpha_n[f]}+Ve_n-Q_n(t_f^{n})\mu_n
        +\expect{\frac{B_0}{2}(I_n[f]+\tau_n[f]+1)^2+V\hat{g}(\alpha_n[f])I_n[f]\left|~Q_n(t_f^{n})\right.}}
        {\expect{I_n[f]+\tau_n[f]+1~\left|~Q_n(t_f^{n})\right.}}-\frac{B_0}{2}\nonumber\\
&=\frac{V\hat{W}_n(\alpha_n[f])m_{\alpha_n[f]}+Ve_n-Q_n(t_f^{n})\mu_n+\expect{\frac{B_0}{2}(I_n[f]+m_{\alpha_n}+1)^2
  +\frac{B_0}{2}\sigma_{\alpha_n[f]}^2+V\hat{g}(\alpha_n[f])I_n[f]~\left|~Q_n(t_f^{n})\right.}}\nonumber\\
  &{\expect{I_n[f]+m_{\alpha_n}+1~\left|~Q_n(t_f^{n})\right.}}
  -\frac{B_0}{2}\label{appendix_A_interim}
\end{align}
\end{figure*}
where the first equality follows from the definition $T_n[f]=I_n[f]+\tau_n[f]+1$ and the second equality follows from iterated expectations conditioning on $I_n[f]$ and $\alpha_n[f]$.
For simplicity of notations, let
\begin{align*}
F(\alpha_n[f],I_n[f]) =& V\hat{W}_n(\alpha_n[f])m_{\alpha_n[f]}+Ve_n-Q_n(t_f^{n})\mu_n\\
        &+\frac{B_0}{2}(I_n[f]+m_{\alpha_n[f]}+1)^2+V\hat{g}(\alpha_n[f])I_n[f]\\
        &+\frac{B_0}{2}\sigma_{\alpha_n[f]}^2\\
G(\alpha_n[f],I_n[f]) =& I_n[f]+m_{\alpha_n[f]}+1,
\end{align*}
then
\[D_n[f]=\frac{\expect{F(\alpha_n[f],I_n[f])~|~Q_n(t_f^{n})}}{\expect{G(\alpha_n[f],I_n[f])~|~Q_n(t_f^{n})}}-\frac{B_0}{2}.\]
Meanwhile, given the queue length $Q_n(t_f^{n})$ at frame $f$, denote the benchmark solution over pure decisions as
\begin{equation}\label{det_solution}
m \triangleq \min_{I_n[f]\in\mathbb{N},~I_n[f]\in[1,I_{\max}],\alpha_n[f]\in\mathcal{L}_n}\frac{F(\alpha_n[f],I_n[f])}{G(\alpha_n[f],I_n[f])}.
\end{equation}
Then, for any randomized decision on $\alpha_n[f]$ and $I_n[f]$, its realization within frame $f$ satisfies the following
\[\frac{F(\alpha_n[f],I_n[f])}{G(\alpha_n[f],I_n[f])}\geq m,\]
which implies
\[F(\alpha_n[f],I_n[f])\geq m G(\alpha_n[f],I_n[f]).\]
Taking conditional expectation from both sides gives
\begin{align*}
&\expect{F(\alpha_n[f],I_n[f])~|~Q_n(t_f^{n})}\\
&\geq m\expect{G(\alpha_n[f],I_n[f])~|~Q_n(t_f^{n})}\\
&\Rightarrow~\frac{\expect{F(\alpha_n[f],I_n[f])~|~Q_n(t_f^{n})}}{\expect{G(\alpha_n[f],I_n[f])~|~Q_n(t_f^{n})}}\geq m.
\end{align*}
Thus, it is enough to consider pure decisions only, which boils down to computing \eqref{det_solution}. This proves the lemma.

\subsection*{Appendix B--- Proof of Lemma \ref{bounded_supMG}}
This section is dedicated to prove that $\expect{X_n[f]^2}$ is bounded. First of all, since the idle option set $\mathcal{L}_n$ is finite, denote
\begin{align*}
W_{\max}=\max_{\alpha_n\in\mathcal{L}_n}W_n(\alpha_n)\\
g_{\max}=\max_{\alpha_n\in\mathcal{L}_n}g_n(\alpha_n)
\end{align*}
It is obvious that $|W_n(t)-\overline{W}_n^*|\leq W_{\max}$, $|g_n(t)-\overline{G}_n^*|\leq g_{\max}$, $|e_nH_n(t)-\overline{E}_n^*|\leq e_n$, and $|\mu_nH_n(t)-\overline{\mu}^*|\leq\mu_n$. Combining with the boundedness of queues in lemma \ref{bounded_delay}, it follows
\begin{align*}
|X_n[f]|\leq&\sum_{t=t_f^{n}}^{t=t_{f+1}^{n}-1}\left(V\left(W_{\max}+e_n+g_{\max}\right)
       +\left(Vc_{\max}\right.\right.\\
       &\left.\left.+R_{\max}\right)\mu_n+\left(t-t_f^{n}\right)B_0+\Psi_n\right)\\
      \leq&\left(V(W_{\max}+e_n+g_{\max})+(Vc_{\max}+R_{\max})\mu_n\right.\\
       &\left.+\Psi_n\right)T_n[f]+\frac{T_n[f](T_n[f]-1)B_0}{2}
\end{align*}
Let $B_1\triangleq V(W_{\max}+e_n+g_{\max})+(Vc_{\max}+R_{\max})\mu_n+\Psi_n+B_0/2$, it follows
\[|X_n[f]|\leq B_1T_n[f]+\frac{B_0}{2}T_n[f]^2.\]
Thus,
\[\expect{X_n[f]^2}\leq B_1^2\expect{T_n[f]^2}+B_1B_0\expect{T_n[f]^3}+\frac{B_0^2}{4}\expect{T_n[f]^4}.\]
Notice that $T_n[f]\leq I_n[f]+\tau_n[f]+1$ by \eqref{frame_length}, where $I_n[f]$ is upper bonded by $I_{\max}$ and $\tau_n[f]$ has first four moments bounded by assumption \ref{bounded_moment_assumption}. Thus, $\expect{X_n[f]^2}$ is bounded by a fixed constant.

\subsection*{Appendix C--- Proof of Lemma \ref{true_time_average}}
\begin{proof}
Let's first abbreviate the notation by defining
\begin{align*}
Y(t)=&V(W_n(t)+e_nH_n(t)+g_n(t))-Q_n(t_f^{n})
(\mu_nH_n(t)-\overline{\mu}_n^*)\\
&+\left(t-t_f^{n}\right)B_0.
\end{align*}
For any $T\in[t_f^{n},~t_F^{(n+1)})$, we can bound the partial sums from above by the following
\[\sum_{t=0}^{T-1}Y(t)\leq\sum_{t=0}^{t_f^{n}-1}Y(t)+B_2T_n[F]+\frac{B_0}{2}T_n[F]^2,\]
where $B_0=\frac{1}{2}(R_{\max}+\mu_{\max})\mu_{\max}$ is defined in \eqref{server_decision}, and $B_2\triangleq VW_n+V\mu_ne_n+(Vc_{\max}+R_{\max})\mu_n+B_0/2$. Thus,
\begin{align*}
\frac{1}{T}\sum_{t=0}^{T-1}Y(t)&\leq\frac{1}{T}\sum_{t=0}^{t_f^{n}-1}Y(t)+\frac{1}{T}\left(B_2T_n[F]+\frac{B_0}{2}T_n[F]^2\right)\\
&\leq\max\{a[F],~b[F]\},
\end{align*}
where
\begin{align*}
a[F]\triangleq&\frac{1}{t_f^{n}}\sum_{t=0}^{t_f^{n}-1}Y(t)+\frac{1}{t_f^{n}}\left(B_2T_n[F]+\frac{B_0}{2}T_n[F]^2\right),\\
b[F]\triangleq&\frac{1}{t_{f+1}^{n}}\sum_{t=0}^{t_f^{n}-1}Y(t)+\frac{1}{t_{f+1}^{n}}\left(B_2T_n[F]+\frac{B_0}{2}T_n[F]^2\right).
\end{align*}
Thus, this implies that
\begin{align*}
\limsup_{T\rightarrow\infty}\frac{1}{T}\sum_{t=0}^{T-1}Y(t)
\leq&\limsup_{F\rightarrow\infty}\max\{a[F],~b[F]\}\\
=&\max\left\{\limsup_{F\rightarrow\infty}a[F],~\limsup_{F\rightarrow\infty}b[F]\right\}.
\end{align*}
We then try to work out an upper bound for $\limsup_{F\rightarrow\infty}a[F]$ and $\limsup_{F\rightarrow\infty}b[F]$ respectively.

\begin{enumerate}
  \item Bound for $\limsup_{F\rightarrow\infty}a[F]$:
\begin{align*}
\limsup_{F\rightarrow\infty}a[F]\leq&\limsup_{F\rightarrow\infty}\frac{1}{t_f^{n}}\sum_{t=0}^{t_f^{n}-1}Y(t)
                                    +\limsup_{F\rightarrow\infty}\frac{1}{t_f^{n}}\left(B_2T_n[F]+\frac{B_0}{2}T_n[F]^2\right)\\
                                \leq& V(\overline{W}_n^*+\overline{E}_n^*+\overline{G}_n^*)+\Psi_n
                                    +\limsup_{F\rightarrow\infty}\frac{1}{t_f^{n}}\left(B_2T_n[F]+\frac{B_0}{2}T_n[F]^2\right).
\end{align*}
where the second inequality follows from corollary \ref{corollary_ratio_time_average}.
It remains to show that
\begin{equation}\label{interim_a[F]}
\limsup_{F\rightarrow\infty}\frac{1}{t_f^{n}}\left(B_2T_n[F]+\frac{B_0}{2}T_n[F]^2\right)\leq0.
\end{equation}
Since $t_f^{n}\geq F$, it is enough to show that
\begin{align}
&\limsup_{F\rightarrow\infty}\frac{T_n[F]}{F}=0,   \label{a_1}\\
&\limsup_{F\rightarrow\infty}\frac{T_n[F]^2}{F}=0. \label{a_2}
\end{align}
We prove \eqref{a_2}, and \eqref{a_1} is similar. Since each $T_n[F]=I_n[F]+\tau_n[F]+1$, where $I_n[F]\leq I_{\max}$ and $\tau_n[F]$ has bounded first four moments, the first four moments of $T_n[F]$ must also be bounded and there exists a constant $C>0$ such that
\[\expect{T_n[F]^4}\leq C.\]
For any $\epsilon>0$, define a sequence of events
\[A_F^\epsilon\triangleq\left\{T_n[F]^2>\epsilon F\right\}.\]
According to Markov inequality,
\[Pr\left[A_F^\epsilon\right]\leq\frac{\expect{T_n[F]^4}}{\epsilon^2F^2}\leq\frac{C}{\epsilon^2F^2}.\]
Thus,
\[\sum_{F=1}^\infty Pr\left[A_F^\epsilon\right]\leq\frac{C}{\epsilon^2}\sum_{F=1}^\infty\frac{1}{F^2}\leq\frac{2C}{\epsilon^2}<\infty. \]
By Borel-Cantelli lemma (lemma 1.6.1 in \cite{Durrett}),
\[Pr\left[A_F^\epsilon~\textrm{occurs infinitely often}\right]=0,\]
which implies
\[Pr\left[\limsup_{F\rightarrow\infty}\frac{T_n[F]^2}{F}>\epsilon\right]=0.\]
Since $\epsilon$ is arbitrary, this implies \eqref{a_2}. Similarly, \eqref{a_1} can be proved. Thus, \eqref{interim_a[F]} holds and
\[\limsup_{F\rightarrow\infty}a[F]\leq V(\overline{W}_n^*+\overline{E}_n^*+\overline{G}_n^*)+\Psi_n.\]

\item Bound for $\limsup_{F\rightarrow\infty}b[F]$:
\begin{align*}
\limsup_{F\rightarrow\infty}b[F]\leq&\limsup_{F\rightarrow\infty}\frac{1}{t_f^{n}}\sum_{t=0}^{t_f^{n}-1}Y(t)\cdot\frac{t_f^{n}}{t_{f+1}^{n}}+\limsup_{F\rightarrow\infty}\frac{1}{t_{f+1}^{n}}\left(B_2T_n[F]+\frac{B_0}{2}T_n[F]^2\right).\\
                                \leq&\limsup_{F\rightarrow\infty}\left(\frac{1}{t_f^{n}}\sum_{t=0}^{t_f^{n}-1}Y(t)\right)\cdot\limsup_{F\rightarrow\infty}\frac{t_f^{n}}{t_{f+1}^{n}}\\
                                \leq&\left(V\left(\overline{W}_n^*+\overline{E}_n^*+\overline{G}_n^*\right)+\Psi_n\right)\cdot\limsup_{F\rightarrow\infty}\frac{t_f^{n}}{t_{f+1}^{n}}\\
                                \leq&V\left(\overline{W}_n^*+\overline{E}_n^*+\overline{G}_n^*\right)+\Psi_n,
\end{align*}
where the second inequality follows from \eqref{interim_a[F]}, the third inequality follows from corollary \ref{corollary_ratio_time_average} and the last inequality follows from the fact that $V\left(\overline{W}_n^*+\overline{E}_n^*+\overline{G}_n^*\right)+\Psi_n>0$.
\end{enumerate}
Above all, we proved the lemma.
\end{proof}

\subsection*{Appendix D--- Proof of Theorem \ref{theorem_near_optimal_perform}}
\begin{proof}
Define the drift-plus-penalty(DPP) expression $P(t)$ as follows
\begin{align*}
P(t)=&V\left(c(t)r(t)+\sum_{n=1}^N\left(W_n(t)+e_nH_n(t)+g_n(t)\right)\right)\\
&+\frac{1}{2}\sum_{n=1}^N\left(Q_n(t+1)^2-Q_n(t)^2\right).
\end{align*}
By simple algebra using the queue updating rule \eqref{queue_update}, we can work out the upper bound for $P(t)$ as follows,
\begin{align*}
P(t)\leq& \frac{1}{2}\sum_{n=1}^N(R_n(t)+\mu_n)^2+V\left(c(t)r(t)+\sum_{n=1}^N\left(W_n(t)+e_nH_n(t)+g_n(t)\right)\right)\\
        &  +\sum_{n=1}^NQ_n(t)(R_n(t)-\mu_nH_n(t))\\
    \leq& B_3+V\left(c(t)r(t)+\sum_{n=1}^N\left(W_n(t)+e_nH_n(t)+g_n(t)\right)\right) +\sum_{n=1}^NQ_n(t)(R_n(t)-\mu_nH_n(t))\\
    \leq& B_3+Vc(t)r(t)+\sum_{n=1}^NQ_n(t)\left(R_n(t)-\overline{R}^*_n\right)+V\sum_{n=1}^N\left(W_n(t)+e_nH_n(t)+g_n(t)\right)\\
          &+\sum_{n=1}^NQ_n(t)\left(\overline{\mu}^*_n-\mu_nH_n(t)\right)
\end{align*}
where $B_3=\frac{1}{2}\sum_{n=1}^N(R_{\max}+\mu_n)^2$, the last inequality follows from adding $\sum_{n=1}^NQ_n(t)\overline{\mu}^*_n$ and subtracting $\sum_{n=1}^NQ_n(t)\overline{R}^*$ with the fact that the best randomized stationary algorithm should also satisfy the constraint \eqref{obj_4}, i.e. $\overline{\mu}^*\geq\overline{R}_n^*$.

Now we take the partial average of $P(t)$ from 0 to $T-1$ and take $\limsup_{T\rightarrow\infty}$,
\begin{align}
\limsup_{T\rightarrow\infty}\frac{1}{T}\sum_{n=1}^{T-1}P(t)
\leq& B_3 +
\limsup_{T\rightarrow\infty}\frac{1}{T}\sum_{t=0}^{T-1}\left(Vc(t)r(t)+\sum_{n=1}^NQ_n(t)\left(R_n(t)-\overline{R}^*_n\right)\right)\nonumber\\
&+\sum_{n=1}^N\limsup_{T\rightarrow\infty}\frac{1}{T}\sum_{t=0}^{T-1}\left(V\left(W_n(t)+e_nH_n(t)+g_n(t)\right)\right.\nonumber\\
&\left.+Q_n(t)\left(\overline{\mu}^*_n-\mu_nH_n(t)\right)\right).
\label{DPP_bound}
\end{align}
According to \eqref{prob_1_front_end},
\begin{equation}\label{DPP_sub_bound_1}
\limsup_{T\rightarrow\infty}\frac{1}{T}\sum_{t=0}^{T-1}\left(Vc(t)r(t)+\sum_{n=1}^NQ_n(t)\left(R_n(t)-\overline{R}^*_n\right)\right)
\leq V\overline{C}^*.
\end{equation}
On the other hand,
\begin{align}
&\limsup_{T\rightarrow\infty}\frac{1}{T}\sum_{t=0}^{T-1}\left(V\left(W_n(t)+e_nH_n(t)+g_n(t)\right)+Q_n(t)\left(\overline{\mu}^*_n-\mu_nH_n(t)\right)\right)\nonumber\\
\leq&\limsup_{T\rightarrow\infty}\frac{1}{T}\sum_{t=0}^{T-1}\left(V\left(W_n(t)+e_nH_n(t)+g_n(t)\right)+Q_n(t_f^{n})\left(\overline{\mu}^*_n-\mu_nH_n(t)\right)
       +(t-t_f^{n})B_0\right)\nonumber\\
\leq&V\left(\overline{W}_n^*+\overline{E}_n^*+\overline{G}_n^*\right)+\Psi_n,\label{DPP_sub_bound_2}
\end{align}
where $B_0=\frac{1}{2}(R_{\max}+\mu_{\max})\mu_{\max}$ as defined below \eqref{server_decision}, the first inequality follows from the fact that for any $t\in\left(t_f^{n},~t_{f+1}^{n}\right)$,
\begin{align*}
&Q_n(t)\left(\overline{\mu}^*_n-\mu_nH_n(t)\right)\nonumber\\
\leq& Q_n(t_f^{n})\left(\overline{\mu}^*_n-\mu_nH_n(t)\right)
      +(Q_n(t)-Q_n(t_f^{n}))\left(\overline{\mu}^*_n-\mu_nH_n(t)\right)\\
\leq& Q_n(t_f^{n})\left(\overline{\mu}^*_n-\mu_nH_n(t)\right)
      +\sum_{t=t_f^{n}}^{t_{f+1}^{n}-1}(R_n(t)-\mu_nH_n(t))\left(\overline{\mu}^*_n-\mu_nH_n(t)\right)\\
\leq& Q_n(t_f^{n})\left(\overline{\mu}^*_n-\mu_nH_n(t)\right)+(t-t_f^{n})B_0,
\end{align*}
and the second inequality follows from lemma \ref{true_time_average}. Substitute \eqref{DPP_sub_bound_1} and \eqref{DPP_sub_bound_2} into \eqref{DPP_bound} gives
\begin{align}\label{penultimate_step}
\limsup_{T\rightarrow\infty}\frac{1}{T}\sum_{t=0}^{T-1}P(t)\leq& V\left(\overline{C}^*+\sum_{n=1}^N\left(\overline{W}_n^*+\overline{E}_n^*+\overline{G}_n^*\right)\right)+B_3+\sum_{n=1}^N\Psi_n.
\end{align}
Finally, notice that by telescoping sums,
\begin{align*}
&\limsup_{T\rightarrow\infty}\frac{1}{T}\sum_{t=0}^{T-1}P(t)\\
=&\limsup_{T\rightarrow\infty}\left(\frac{V}{T}\sum_{t=0}^{T-1}(c(t)r(t)
+\sum_{n=1}^N\left(W_n(t)+e_nH_n(t)+g_n(t)\right))+\frac{1}{2}\sum_{n=1}^NQ_n(T)^2\right)\\
\geq&V\cdot\limsup_{T\rightarrow\infty}\frac{1}{T}\sum_{t=0}^{T-1}(c(t)r(t)
+\sum_{n=1}^N\left(W_n(t)+e_nH_n(t)+g_n(t)\right))
\end{align*}
Substitute above inequality into \eqref{penultimate_step} and divide $V$ from both sides give the desired result.
\end{proof}


\chapter{Power Aware Wireless File Downloading and Restless Bandit via Renewal Optimization}
In this chapter, we look at another application of the renewal optimization, namely, the wireless file downloading. We start with a simple single-user file downloading problem and show that this problem can be characterized by a 2 state Markov decision process (MDP) with constraints, for which the drift-plus-penalty (DPP) ratio algorithm (Algorithm \ref{dpp-ratio-algorithm}) applies.
We then consider a more realistic multi-user file downloading and show that this problem is a constrained version of the well-known restless bandit problem, for which we develop a \textit{DPP ratio indexing} heuristic based on the coupled renewal optimization.

\section{System model and problem formulation}

Consider a wireless access point, such as a base station or  femto node, that delivers files to
$N$ different wireless users.  The system operates in slotted time with time slots $t \in \{0, 1, 2, \ldots\}$.
Each user can download at most one file at a time.   File sizes are random and complete delivery of a file requires a random number of time slots. A new file request is made by each user at a random
time after it finishes its previous download.
Let $F_n(t) \in \{0,1\}$ represent the binary \emph{file state process} for user $n \in \{1, \ldots, N\}$.  The state $F_n(t)=1$ means that user $n$ is currently active downloading a file, while the state $F_n(t)=0$ means that user $n$ is currently idle.

Idle times are assumed to be independent and geometrically distributed with parameter $\lambda_n$ for each user $n$, so that the average idle time is $1/\lambda_n$.  Active times depend on the random file size and the transmission decisions that are made.  Every slot $t$, the access point observes which users are active and
decides to serve a subset of at most $M$ users, where $M$ is the maximum number of simultaneous transmissions allowed in the system ($M < N$ is assumed throughout).
The goal is to maximize a weighted sum of throughput subject to a total average power constraint.

The file state processes $F_n(t)$ are coupled controlled Markov chains that form a total state
$(F_1(t), \ldots, F_N(t))$ that can be viewed as a \emph{restless multi-armed
bandit system}.
Such problems are complex due to the inherent curse of dimensionality.

We first compute an online optimal algorithm for 1-user systems, i.e., the case $N=1$.  This simple case avoids the curse of dimensionality and provides valuable intuition.  The optimal policy here is computed via the drift-plus-penalty (DPP) ratio algorithm.
 The resulting algorithm makes a greedy transmission decision that affects success probability and power usage. Next, the algorithm is extended as a low complexity online heuristic for the $N$-user problem, which we call the ``DPP ratio indexing''.   The heuristic has the following desirable properties:
\begin{itemize}
\item  Implementation of the $N$-user heuristic is as simple as comparing indices for $N$ different 1-user problems.

\item The $N$-user heuristic is analytically shown to meet the desired average power constraint.

\item  The $N$-user heuristic is shown in simulation to perform well over a wide range of parameters.  Specifically, it is very close to optimal for example cases where an offline optimal can be computed.

\item The $N$-user heuristic is shown to be optimal in a special case with no power constraint and with certain additional assumptions.  The optimality proof uses a theory of \emph{stochastic coupling} for queueing systems \cite{tassiulas1993dynamic}.
\end{itemize}

Prior work on wireless optimization uses Lyapunov functions to maximize throughput in
cases where the users are assumed to have an infinite amount of data to
send \cite{neely2008fairness,eryilmaz2007fair,georgiadis2006resource,stolyar2005maximizing,tassiulas1993dynamic}, or
when data arrives according to a fixed rate process that does not depend on delays in the
network  (which necessitates \emph{dropping} data if the arrival rate vector is outside of the capacity
region, e.g. \cite{neely2008fairness}).  These models do not consider the interplay between arrivals at the transport layer and file delivery at the network layer.    For example, a web user in a coffee shop may want to evaluate the file she downloaded before initiating another download.  The current work captures this interplay through the binary file state processes $F_n(t)$. This
creates a complex problem of
coupled Markov chains.    This problem is fundamental to
file downloading systems.
The modeling and analysis of these systems is a  significant contribution of the current thesis.

To understand this issue, suppose the data arrival rate is fixed and does not adapt to the service received
over the network.   If this arrival rate exceeds network capacity by a factor of two, then at least half of all data must
be dropped.  This can result in an unusable data stream, possibly one that
contains every odd-numbered packet.  A more practical model assumes that full files must be downloaded and that new downloads are only initiated when previous ones are completed.
A general model in this direction would
allow each user to download up to $K$ files simultaneously.
This thesis considers the case $K=1$, so that each user is either actively downloading a file, or is idle.\footnote{One way to allow a user $n$ to download up to $K$ files simultaneously is as follows:  Define $K$   \emph{virtual users}  with separate binary file state processes.  The transition probability from idle to active in each of these virtual users is $\lambda_n/K$. The conditional rate of total new arrivals for user $n$ (given that $m$ files are currently in progress) is then $\lambda_n(1-m/K)$ for $m \in \{0, 1, \ldots, M\}$.}
 The resulting
system for $N$ users has a nontrivial Markov structure with $2^N$ states.

Since the current problem includes both time-average constraints (on average power expenditure) and instantaneous constraints which restrict the number of users that can be served on one slot, it is more complicated than the weakly coupled systems discussed in previous chapters. More specifically, The latter service restriction is similar to a traditional restless multi-armed bandit (RMAB) system \cite{whittle1988restless}.

 RMAB problem considers a population of $N$ parallel MDPs that continue evolving whether in operation or not (although in different rules). The goal is to choose the MDPs in operation during each time slot so as to maximize the expected reward subject to a constraint on the number of MDPs in operation. The problem is in general complex (see P-SPACE hardness results in \cite{papadimitriou1999complexity}).  A standard low-complexity heuristic for such problems is the \emph{Whittle's index} technique \cite{whittle1988restless}.
 However, the Whittle's index framework applies only when there are two options on each state (active and passive). Further, it does not consider the 
additional time average cost constraints.  The \emph{DPP ratio indexing} algorithm developed in the current work can
be viewed as an alternative indexing scheme that can always be implemented and that incorporates
additional time average constraints.
It is likely that the techniques of the current work can be extended to other constrained
RMAB problems.
Prior work in \cite{tassiulas1993dynamic} develops a Lyapunov drift method for queue stability, and work in
 \cite{Neely2010} develops a drift-plus-penalty (DPP) ratio method for optimization over renewal systems. The current work is the first to use these techniques as a low complexity heuristic for multidimensional Markov problems.

Work in \cite{tassiulas1993dynamic} uses the theory of stochastic coupling to show that a \emph{longest connected queue} algorithm is delay optimal in a multi-dimensional queueing system with \emph{special symmetric assumptions}.   The problem in \cite{tassiulas1993dynamic} is different from that of the current work.  However,
a similar coupling approach is used below to show that, for a special case with no power constraint, the DPP ratio indexing
algorithm  is throughput optimal in certain \emph{asymmetric} cases.
 As a consequence, the proof shows the policy is also optimal for a different setting with $M$ servers, $N$ single-buffer queues, and arbitrary packet arrival rates $(\lambda_1, \ldots, \lambda_N)$.

\section{Single user scenario}

Consider a file downloading system that consists of only one user that repeatedly downloads files.
Let $F(t) \in \{0,1\}$ be the file state process of the user.  State ``1'' means there is a file in the system that has not completed its download,
and  ``0'' means no file is waiting.
The length of each file is independent and is either exponentially distributed or geometrically distributed (described in more detail below).  Let $\overline{B}$ denote the expected file size in bits. Time is slotted. At each slot in which there is an active file for downloading, the user makes a service decision that affects both the downloading success probability and the power expenditure. After a file is downloaded, the system goes idle (state $0$) and remains in the idle state for a random amount of time that is independent and geometrically distributed with parameter $\lambda>0$.

A transmission decision is made on each slot $t$ in which $F(t)=1$.  The decision affects the number of bits that are sent, the probability these bits are successfully received, and the power usage.
Let $\alpha(t)$ denote the decision variable at slot $t$ and let $\mathcal{A}$ represent an abstract action set.  The set $\mathcal{A}$ can represent a collection of modulation and coding
options for each transmission. Assume also that $\mathcal{A}$ contains an idle action denoted as ``0.'' The
decision  $\alpha(t)$ determines the following two values:
\begin{itemize}
  \item The probability of successfully downloading a file $\phi(\alpha(t))$, where $\phi(\cdot)\in[0,1]$ with $\phi(0)=0$.
  \item The power expenditure $p(\alpha(t))$, where $p(\cdot)$ is a nonnegative function with $p(0)=0$.
\end{itemize}
The user chooses $\alpha(t) = 0$ whenever $F(t)=0$.
The user chooses $\alpha(t) \in \mathcal{A}$ for each slot $t$ in which $F(t) = 1$, with the goal of maximizing throughput subject to a time average power constraint. The example where the decision set $\mathcal A$ is finite can be found in the simulation experiment section. 
Here is a simple example where the decision can be continuous:
\begin{example}
Let $\mathcal{A}$ be the set of all possible power allocation options, i.e. $\mathcal A := [p_{\min}, p_{\max}]\cup \{0\}$ where $p_{\min},p_{\max}>0$ are constants. Then, $\alpha(t)\in[p_{\min}, p_{\max}]\cup\{0\}$, $p(\alpha(t)) = \alpha(t)$ and the success probability of downloading a file can be 
$\phi(\alpha(t)) = 1-\exp(-\alpha(t))$. 
\end{example}

The problem can be described by a two state Markov decision process with binary state $F(t)$. Given $F(t)=1$, a file is currently in
the system. This file will finish its download at the end of the slot with probability $\phi(\alpha(t))$. Hence, the transition probabilities out of state $1$ are:
\begin{eqnarray}
Pr[F(t+1) = 0 | F(t)=1] &=& \phi(\alpha(t))  \label{eq:trans1} \\
Pr[F(t+1) = 1 | F(t) = 1] &=& 1-\phi(\alpha(t)) \label{eq:trans2}
\end{eqnarray}
Given $F(t)=0$, the system is idle and will transition to the active state in the next slot with probability $\lambda$,
so that:
\begin{eqnarray}
Pr[F(t+1) = 1 | F(t) = 0] &=& \lambda  \label{eq:trans3} \\
Pr[F(t+1) =0 | F(t) = 0] &=& 1- \lambda \label{eq:trans4}
\end{eqnarray}

Define the throughput, measured by bits per slot, as:
\begin{equation}
    \liminf_{T\rightarrow\infty}\frac{1}{T}\sum_{t=0}^{T-1}\overline{B}\phi(\alpha(t)) \nonumber
\end{equation}
The file downloading problem reduces to the following:
\begin{align}
    \mbox{Maximize:} &~\liminf_{T\rightarrow\infty}\frac{1}{T}\sum_{t=0}^{T-1}\overline{B}\phi(\alpha(t)) \label{eq:prob-1}\\
    \mbox{Subject to:} &~\limsup_{T\rightarrow\infty}\frac{1}{T}\sum_{t=0}^{T-1}p(\alpha(t))\leq \beta \label{eq:prob-2}\\
    & \alpha(t) \in \mathcal{A} \mbox{ $\forall t \in \{0, 1,2, \ldots\}$ such that $F(t) =1$} \label{eq:prob-3} \\
    & \mbox{Transition probabilities satisfy \eqref{eq:trans1}-\eqref{eq:trans4}} \label{eq:prob-4}
\end{align}
where $\beta$ is a positive constant that determines the desired average power constraint. 

\subsection{The memoryless file size assumption}

The above model assumes that file completion success on slot $t$ depends only on the
transmission decision $\alpha(t)$, independent of history.   This implicitly assumes that file
length distributions have a \emph{memoryless property} where the residual file length is independent of the amount already delivered. Further, it is assumed that if the controller selects a transmission rate  that is larger than the residual bits in the file, the remaining portion of the transmission is padded with \emph{fill bits}.  This ensures error events provide no information about the residual file length beyond the already known 0/1 binary file state.  Of course, error probability might be improved by removing padded bits. However,  this affects only the last transmission of a file and has negligible impact when expected file size is large in comparison to the amount that can be transmitted in one slot.  Note that padding
is not needed in the special case when all transmissions send one fixed length packet.

The memoryless property holds when each file $i$ has independent length $B_i$ that is  \emph{exponentially distributed} with mean length $\overline{B}$ bits, so that:
\[ Pr[B_i > x] = e^{-x/\overline{B}} \mbox{ for $x >0$} \]
For example, suppose the \emph{transmission rate} $r(t)$ (in units of bits/slot) and the \emph{transmission success probability} $q(t)$ are given by general functions of $\alpha(t)$:
\begin{eqnarray*}
 r(t) &=& \hat{r}(\alpha(t)) \\
 q(t) &=& \hat{q}(\alpha(t))
 \end{eqnarray*}
 Then the file completion probability $\phi(\alpha(t))$ is the probability that the \emph{residual} amount of bits in the file is less than or equal to $r(t)$, \emph{and} that the transmission of these residual bits is a success.  By the memoryless property of the exponential distribution, the residual file length is distributed the same as the original
 file length.  Thus:
 \begin{eqnarray}
\phi(\alpha(t))
  &=& \hat{q}(\alpha(t))Pr[B_i \leq \hat{r}(\alpha(t))] \nonumber \\
  &=& \hat{q}(\alpha(t)) \int_{0}^{\hat{r}(\alpha(t))} \frac{1}{\overline{B}} e^{-x/\overline{B}} dx \label{eq:approx1}
  \end{eqnarray}

Alternatively, history independence holds when each file $i$ consists of a random number $Z_i$ of fixed length packets, where $Z_i$ is geometrically distributed with mean $\overline{Z} = 1/\mu$.   Assume each transmission
sends exactly one packet, but different power levels affect the transmission success probability $q(t) = \hat{q}(\alpha(t))$.   Then:
\begin{equation} \label{eq:approx2}
\phi(\alpha(t)) = \mu\hat{q}(\alpha(t))
\end{equation}

The memoryless file length assumption allows the file state  to be modeled by a simple
binary-valued process $F(t) \in \{0, 1\}$. However, actual file sizes may not have an exponential or geometric distribution.
 One way to treat general distributions is to \emph{approximate} the file sizes as being memoryless by
using a $\phi(\alpha(t))$ function defined by either \eqref{eq:approx1} or \eqref{eq:approx2}, formed by matching the average file size $\overline{B}$ or average number of packets $\overline{Z}$. The \emph{decisions} $\alpha(t)$ are made according to the algorithm below, but the actual event outcomes that arise from these decisions are not memoryless.   A simulation comparison of this approximation is provided in Section \ref{section:sims}, where it is shown to be remarkably accurate (see Fig. \ref{fig:Stupendous6}).

 The algorithm in this section optimizes over the class of all algorithms that do not use residual file length information.
This maintains low complexity by ensuring a user has a binary-valued
Markov state $F(t) \in \{0,1\}$. While a system controller might know the
residual file length, incorporating this knowledge creates a Markov decision problem with an infinite number of states (one for each possible value of residual length) which significantly complicates the scenario.

\subsection{DPP ratio optimization}

This subsection develops an online algorithm for problem \eqref{eq:prob-1}-\eqref{eq:prob-4}. This algorithm follows from Algorithm \ref{dpp-ratio-algorithm} in Chapter 1 with some customizations towards this application.
First, notice that file state ``$1$'' is recurrent under any decisions for $\alpha(t)$. Denote $t_k$ as the $k$-th time when the system returns to state ``1.''
Define the renewal frame as the time period between $t_k$ and $t_{k+1}$.  Define the \emph{frame size}:
\[ T[k] = t_{k+1} - t_k \]
Notice that $T[k]=1$ for any frame $k$ in which the file does not complete its download.  If the file is completed on frame $k$, then $T[k] = 1 + G_k$, where $G_k$ is a geometric random variable with mean
$\expect{G_k} = 1/\lambda$.   Each frame $k$ involves only a single decision $\alpha(t_k)$ that is made at the beginning of the frame.  Thus, the total power used over the duration of frame $k$ is:
\begin{equation}\label{extra1}
\sum_{t=t_{k}}^{t_{k+1}-1}p(\alpha(t)) = p(\alpha(t_k))
\end{equation}
We treat the time average constraint in \eqref{eq:prob-2} using a virtual queue $Q[k]$ that is updated every frame $k$ by:
\begin{equation}
    Q[k+1]=\max\left\{Q[k]
     + p(\alpha(t_k)) - \beta T[k],~0\right\} \label{eq:q-update}
\end{equation}
with initial condition $Q[0]=0$.
The algorithm is then parameterized by a constant $V\geq0$ which affects a performance tradeoff.  At the beginning of the $k$-th renewal frame, the user observes virtual queue $Q[k]$ and chooses $\alpha(t_k)$ to maximize the following drift-plus-penalty (DPP) ratio:
\begin{equation}\label{e4}
    \max_{\alpha(t_k)\in\mathcal{A}} ~~\frac{V\overline{B}\phi(\alpha(t_k))- Q[k]p(\alpha(t_k))}
        {\mathbb{E}[T[k]|\alpha(t_k)]}
\end{equation}
The numerator of the above ratio adds a ``queue drift term''  $-Q[k]p(\alpha(t_k))$ to the
``current reward term'' $V\overline{B}\phi(\alpha(t_k))$. The intuition
is that  it is desirable to have a large value of current reward, but it is also desirable  to have a large drift (since this tends to decrease queue size).   Creating a weighted sum of these two terms and dividing by the expected frame size
gives a simple index.  The next subsections show that, for the context of the current work, this index leads to an algorithm that pushes throughput arbitrarily close to optimal (depending on the chosen $V$ parameter) with a
strong sample path guarantee on average power expenditure.

The denominator in \eqref{e4} can  easily be computed via the transition model \eqref{eq:trans1}-\eqref{eq:trans4}:
\begin{eqnarray}\label{eq:frame-size}
\mathbb{E}[T[k]|\alpha(t_k)] = 1-\phi(\alpha(t_k)) + \phi(\alpha(t_k))\cdot\left(1+\frac{1}{\lambda}\right) =  1+\frac{\phi(\alpha(t_k))}{\lambda} 
\end{eqnarray}
Thus, \eqref{e4} is equivalent to
\begin{equation}\label{e5}
    \max_{\alpha(t_k)\in\mathcal{A}} ~~\frac{V\overline{B}\phi(\alpha(t_k))- Q[k]p(\alpha(t_k))}
        {1+\phi(\alpha(t_k))/\lambda}
\end{equation}
This gives the following Algorithm \ref{alg:file} for the single-user case:
\begin{algorithm}
\begin{Alg}\label{alg:file}
\begin{itemize}
  \item At each time $t_{k}$, the user observes virtual queue $Q[k]$ and chooses $\alpha(t_k)$ as the solution to (\ref{e5}) (where ties are broken arbitrarily).
  \item The value $Q[k+1]$ is computed according to \eqref{eq:q-update} at the end of the $k$-th frame.
\end{itemize}
\end{Alg}
\end{algorithm}
The expected performance analysis of this algorithm follows from that of Section \ref{sec:simple-analysis} and we omit the details for brevity. In the following, we give a stronger probability 1 performance analysis taking into account the special property of the algorithm in this customized setting.

\subsection{Average power constraints via queue bounds}
In this section, we show that the proposed algorithm makes the virtual queue deterministically bounded.

\begin{lemma} \label{lem:1}
If there is a constant $C\geq 0$ such that $Q[k] \leq C$ for all $k \in \{0, 1, 2, \ldots\}$, then:
\[ \limsup_{T\rightarrow\infty}\frac{1}{T}\sum_{t=0}^{T-1}p(\alpha(t))\leq \beta \]
\end{lemma}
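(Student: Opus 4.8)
The plan is to exploit two structural facts visible in the construction: power is expended only on the first slot of each renewal frame (equation \eqref{extra1}), and the virtual queue $Q[k]$ exactly accumulates the surplus $p(\alpha(t_j))-\beta T[j]$ over frames $j<k$. Fix a horizon $T\ge 1$ and let $K=K(T)$ be the number of renewals occurring in $\{0,1,\dots,T-1\}$, so that $0=t_0<t_1<\cdots<t_{K-1}\le T-1<t_K$. Since the only slot of a frame carrying positive power is its first slot, \eqref{extra1} (applied to the complete frames $0,\dots,K-2$ and to the partial frame $K-1$, whose extra slots all have $\alpha(t)=0$) gives the pathwise identity
\[
\sum_{t=0}^{T-1} p(\alpha(t)) \;=\; \sum_{k=0}^{K-1} p(\alpha(t_k)).
\]
So it suffices to bound this frame-start power sum, and the tool for that is the queue recursion \eqref{eq:q-update}.

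Next I would drop the outer $\max\{\cdot,0\}$ in \eqref{eq:q-update} to get $Q[k+1]\ge Q[k]+p(\alpha(t_k))-\beta T[k]$ for every $k$, and telescope this over the \emph{completed} frames $k=0,1,\dots,K-2$. Using $Q[0]=0$ and the hypothesis $Q[K-1]\le C$ yields
\[
\sum_{k=0}^{K-2}\bigl(p(\alpha(t_k))-\beta T[k]\bigr)\;\le\; Q[K-1]-Q[0]\;\le\; C .
\]
Since $\sum_{k=0}^{K-2}T[k]=t_{K-1}-t_0=t_{K-1}\le T-1$, this rearranges to $\sum_{k=0}^{K-2}p(\alpha(t_k))\le \beta(T-1)+C$. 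Adding back the single in-progress frame's power, bounded by $p_{\max}:=\sup_{\alpha\in\mathcal A}p(\alpha)$ (finite for the action sets considered here), and combining with the identity above,
\[
\frac{1}{T}\sum_{t=0}^{T-1}p(\alpha(t))\;=\;\frac{1}{T}\sum_{k=0}^{K-1}p(\alpha(t_k))\;\le\;\beta+\frac{C+p_{\max}}{T}.
\]
Letting $T\to\infty$ gives $\limsup_{T\to\infty}\frac1T\sum_{t=0}^{T-1}p(\alpha(t))\le\beta$, which is the claim. (The degenerate case where $T$ lies in the first frame, so $K=1$ and the telescoped sum is empty, is covered directly, since then the left side is merely $p(\alpha(t_0))\le p_{\max}$.)

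The one genuinely delicate point — and the main, if mild, obstacle — is the end-of-horizon effect: slot $T-1$ typically falls strictly inside frame $K-1$, so one cannot telescope all the way to $Q[K]$ and still control the partial-frame length $t_K-T$ by something that vanishes relative to $T$. The remedy is exactly the split used above: telescope only across the frames that have finished by time $T$ (whose lengths provably sum to at most $T-1$), and absorb the lone in-progress frame's power into the $O(1/T)$ remainder. This needs nothing beyond $\sup_{\alpha\in\mathcal A}p(\alpha)<\infty$; should one wish to avoid even that, telescoping to $Q[K]$ instead gives $\sum_{k=0}^{K-1}p(\alpha(t_k))\le\beta t_K+C$, and $t_K/T\to 1$ almost surely follows from a Borel–Cantelli argument using the uniformly bounded second moments of the frame lengths $T[k]=1+G_k$ with $G_k$ geometric, delivering the same conclusion along almost every sample path.
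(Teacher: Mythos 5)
Your proof is correct and follows essentially the same route as the paper: drop the $\max\{\cdot,0\}$ in the virtual-queue update \eqref{eq:q-update}, telescope over frames using $Q[0]=0$ and the bound $Q[k]\leq C$, and exploit the fact that power is expended only at the first slot of each frame. The only difference is in handling the horizon boundary: the paper telescopes through $Q[K]$, divides by $t_K$, and then compares the $\frac{1}{T}$ average to the average taken along renewal instants, whereas you telescope only over completed frames and absorb the in-progress frame's power into an $O\bigl((C+p_{\max})/T\bigr)$ remainder, which is harmless since Assumption \ref{as:power-assumption} guarantees $p^{max}<\infty$ (and your Borel--Cantelli alternative covers the case without that bound).
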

\begin{proof}
From \eqref{eq:q-update}, we know that for each frame $k$:
\begin{equation}
    Q[k+1] \geq Q[k]
     +p(\alpha(t_{k}))-T[k]\beta \nonumber
\end{equation}
Rearranging terms and using $T[k] = t_{k+1} -t_k$ gives:
\[ p(\alpha(t_k)) \leq (t_{k+1} - t_k)\beta + Q[k+1]-Q[k] \]
Fix $K>0$. Summing over $k\in\{0,1,\cdots,K-1\}$  gives:
\begin{eqnarray*}
    \sum_{k=0}^{K-1}p(\alpha(t_{k})) &\leq& (t_{K}-t_0) \beta + Q[K] - Q[0] \\
    &\leq&  t_K\beta + C
\end{eqnarray*}
The sum power over the first $K$ frames is the same as the sum up to time $t_{K}-1$, and so:
\[ \sum_{t=0}^{t_K-1} p(\alpha(t)) \leq t_K \beta + C \]
Dividing by $t_K$ gives:
\[ \frac{1}{t_K}\sum_{t=0}^{t_K-1} p(\alpha(t)) \leq \beta + C/t_K. \]
Taking $K\rightarrow\infty$, then,
\begin{equation} \label{eq:sub-here}
\limsup_{K\rightarrow\infty}\frac{1}{t_K}\sum_{t=0}^{t_K-1} p(\alpha(t)) \leq \beta
\end{equation}
Now for each positive integer $T$, let $K(T)$ be the integer such that $t_{K(T)} \leq T < t_{K(T)+1}$.
Since power
is only used at the first slot of a  frame, one has:
\[ \frac{1}{T}\sum_{t=0}^{T-1} p(\alpha(t))\leq \frac{1}{t_{K(T)}}\sum_{t=0}^{t_{K(T)}-1}p(\alpha(t)) \]
Taking a $\limsup$ as $T\rightarrow\infty$ and using \eqref{eq:sub-here} yields the result.
\end{proof}

In order to show that the queue process under our proposed algorithm is deterministically bounded, we need the following assumption:
\begin{assumption}\label{as:power-assumption}
The following quantities are finite and strictly positive:
\begin{eqnarray*}
p^{min} &=& \min_{\alpha\in\mathcal{A}\setminus\{0\}}p(\alpha)\\
p^{max}&=&\max_{\alpha\in\mathcal{A}\setminus\{0\}}p(\alpha).
\end{eqnarray*}
\end{assumption}

\begin{lemma} \label{lem:2}
Suppose Assumption \ref{as:power-assumption} holds. If $Q[0]=0$, then under our algorithm we have for all $k>0$:
\[ Q[k]\leq \max\left\{\frac{V\overline{B}}{p^{min}}+p^{max}-\beta,0\right\}\]
\end{lemma}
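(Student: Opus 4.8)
The plan is to prove the bound by induction on $k$, following the same style as the proof of Lemma \ref{bounded_delay}: show that whenever $Q[k]$ exceeds the claimed threshold, the algorithm is forced to pick the idle action $\alpha(t_k)=0$, so that $Q[k+1]\leq Q[k]$, and whenever $Q[k]$ is below the threshold it can increase by at most $p^{max}$ over the single slot on which power is spent. The base case is immediate since $Q[0]=0\leq\max\{V\overline{B}/p^{min}+p^{max}-\beta,\,0\}$.

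For the inductive step, I would first establish the key claim: \emph{if $Q[k] > V\overline{B}/p^{min}$, then the maximizer $\alpha(t_k)$ of the index in \eqref{e5} is the idle action $\alpha(t_k)=0$.} To see this, note that the idle action $\alpha=0$ has $\phi(0)=0$ and $p(0)=0$, so it achieves index value $0$ in \eqref{e5}. Any non-idle action $\alpha\in\mathcal{A}\setminus\{0\}$ has $p(\alpha)\geq p^{min}>0$ and $\phi(\alpha)\leq 1$, so its numerator $V\overline{B}\phi(\alpha)-Q[k]p(\alpha)\leq V\overline{B}-Q[k]p^{min}<0$ whenever $Q[k]>V\overline{B}/p^{min}$; since the denominator $1+\phi(\alpha)/\lambda$ is positive, the index value of any non-idle action is strictly negative, hence strictly worse than the idle action. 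So the algorithm chooses $\alpha(t_k)=0$, giving $p(\alpha(t_k))=0$ and therefore, from \eqref{eq:q-update}, $Q[k+1]=\max\{Q[k]-\beta T[k],0\}\leq Q[k]$.

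Now the induction: assume $Q[k]\leq \max\{V\overline{B}/p^{min}+p^{max}-\beta,0\}$. Split into two cases. If $Q[k]\leq V\overline{B}/p^{min}$, then from \eqref{eq:q-update} and $p(\alpha(t_k))\leq p^{max}$ (with $\beta\geq0$, $T[k]\geq1$), we get $Q[k+1]\leq Q[k]+p(\alpha(t_k))-\beta T[k]\leq V\overline{B}/p^{min}+p^{max}-\beta\leq\max\{V\overline{B}/p^{min}+p^{max}-\beta,0\}$. If instead $Q[k]> V\overline{B}/p^{min}$, then by the key claim the algorithm idles and $Q[k+1]\leq Q[k]\leq\max\{V\overline{B}/p^{min}+p^{max}-\beta,0\}$ by the inductive hypothesis. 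In both cases the bound is preserved, completing the induction.

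The only subtlety — and the place I would be careful — is the handling of the $\max\{\cdot,0\}$ and the sign of $V\overline{B}/p^{min}+p^{max}-\beta$: if this quantity is negative the claimed bound is simply $0$, and one must check that $Q[k+1]\geq0$ always (true by the $\max$ in the queue update) so that $Q[k+1]=0$ in that degenerate regime; the case analysis above already covers this since $Q[k+1]\leq\max\{V\overline{B}/p^{min}+p^{max}-\beta,0\}$ holds verbatim. Everything else is routine arithmetic, so I do not anticipate a genuine obstacle; the argument is essentially a one-slot drift bound combined with the thresholding structure of the index rule.
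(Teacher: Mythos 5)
Your proposal is correct and follows essentially the same route as the paper's proof: induction on $k$, with the key observation that once $Q[k]>V\overline{B}/p^{min}$ every non-idle action has a strictly negative index in \eqref{e5} (numerator at most $V\overline{B}-Q[k]p^{min}<0$) while the idle action attains $0$, forcing zero power so the queue cannot grow; the paper merely phrases this step as a contradiction and splits first on whether $p^{max}\leq\beta$ instead of carrying the $\max\{\cdot,0\}$ through. One cosmetic fix: in the first case write $Q[k+1]\leq\max\{Q[k]+p(\alpha(t_k))-\beta T[k],0\}$ rather than $Q[k+1]\leq Q[k]+p(\alpha(t_k))-\beta T[k]$, since the queue update's projection onto $[0,\infty)$ can only be dropped in the direction you do not need.
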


\begin{proof}
First, consider the case when $p^{max} \leq \beta$. From \eqref{eq:q-update} and the fact that $T[k] \geq 1$ for all $k$,
 it is clear  the queue can never increase, and so $Q[k] \leq Q[0]=0$ for all $k>0$.

Next, consider the case when $p^{max} > \beta$. We prove the assertion by induction on $k$.
The result trivially holds for $k=0$.  Suppose it holds at $k=l$ for $l>0$, so that:
\[ Q[l]\leq \frac{V\overline{B}}{p^{min}}+p^{max}-\beta \]
We are going to prove that the same holds for $k=l+1$.  There are two cases:
\begin{enumerate}
  \item $Q[l]\leq\frac{V\overline{B}}{p^{min}}$. In this case we have by \eqref{eq:q-update}:
  \begin{eqnarray*}
   Q[l+1] &\leq& Q[l] + p^{max} - \beta \\
   &\leq& \frac{V\overline{B}}{p^{min}} + p^{max} - \beta
   \end{eqnarray*}

    \item $\frac{V\overline{B}}{p^{min}}<Q[l]\leq\frac{V\overline{B}}{p^{min}}+p^{max}-\beta$.     In this case,
    we use proof by contradiction.
    If $p(\alpha(t_l))=0$ then the queue cannot increase, so:
    \[ Q[l+1] \leq Q[l] \leq   \frac{V\overline{B}}{p^{min}} + p^{max} - \beta  \]
    On the other hand, if $p(\alpha(t_l))>0$ then $p(\alpha(t_l)) \geq p^{min}$ and so the numerator in
    \eqref{e5} satisfies:
    \begin{eqnarray*}
    V\overline{B}\phi(\alpha(t_l)) - Q[l]p(\alpha(t_l))
    &\leq& V\overline{B} - Q[l]p^{min} < 0
    \end{eqnarray*}
    and so the maximizing ratio in \eqref{e5} is negative.  However, the maximizing ratio in \eqref{e5} \emph{cannot} be negative
    because the alternative  choice $\alpha(t_l)=0$ increases the ratio to 0.  This contradiction implies that
    we cannot have $p(\alpha(t_l))>0$.
\end{enumerate}
\end{proof}

The above is a \emph{sample path result} that only assumes parameters satisfy $\lambda >0$, $\overline{B}>0$, and $0 \leq \phi(\cdot) \leq 1$.  Thus, the algorithm meets the average power constraint even if it uses incorrect values for these parameters. The next subsection provides a throughput optimality result when these parameters match the true system values.

\subsection{Optimality over randomized algorithms}

Consider the following class of \emph{i.i.d. randomized algorithms}:  Let $\theta(\alpha)$ be non-negative numbers defined for each $\alpha \in \mathcal{A}$, and suppose they satisfy $\sum_{\alpha \in \mathcal{A}} \theta(\alpha) = 1$.  Let $\alpha^*(t)$ represent a policy that, every slot $t$ for which $F(t)=1$, chooses $\alpha^*(t) \in \mathcal{A}$ by independently selecting strategy $\alpha$ with probability $\theta(\alpha)$.    Then $(p(\alpha^*(t_k)), \phi(\alpha^*(t_k)))$ are independent and identically distributed (i.i.d.) over frames $k$. Under this algorithm, it follows by the law of large numbers that the throughput and power expenditure satisfy (with probability 1):
\begin{eqnarray*}
\lim_{t\rightarrow\infty} \frac{1}{T}\sum_{t=0}^{T-1} \overline{B}\phi(\alpha^*(t)) &=& \frac{\overline{B}\expect{\phi(\alpha^*(t_k))}}{1 + \expect{\phi(\alpha^*(t_k))}/\lambda} \\
\lim_{t\rightarrow\infty} \frac{1}{T}\sum_{t=0}^{T-1} p(\alpha^*(t)) &=& \frac{\expect{p(\alpha^*(t_k))}}{1 + \expect{\phi(\alpha^*(t_k))}/\lambda}
\end{eqnarray*}
It can be shown that optimality of problem \eqref{eq:prob-1}-\eqref{eq:prob-4} can be achieved over this class. Thus, there exists an i.i.d. randomized algorithm $\alpha^*(t)$ that satisfies:
\begin{eqnarray}
\frac{\overline{B}\expect{\phi(\alpha^*(t_k))}}{1 + \expect{\phi(\alpha^*(t_k))}/\lambda} &=& \mu^* \label{eq:iid1} \\
 \frac{\expect{p(\alpha^*(t_k))}}{1 + \expect{\phi(\alpha^*(t_k))}/\lambda} &\leq& \beta \label{eq:iid2}
\end{eqnarray}
where $\mu^*$ is the optimal throughput for the problem \eqref{eq:prob-1}-\eqref{eq:prob-4}.

\subsection{Key feature of the drift-plus-penalty ratio}

Define $\mathcal{F}(t_k)$ as the \emph{system history} up to frame $k$, which includes which includes the actions taken $\alpha(t_0),\cdots,\alpha(t_{k-1}),$ frame lengths $T[0],\cdots,T[k-1]$, the busy period in each frame, the idle period in each frame, and the queue value $Q[k]$ (since
this is determined by the random events before frame $k$).
Consider the algorithm
that, on frame $k$, observes $Q[k]$ and
chooses $\alpha(t_k)$ according to \eqref{e5}.  The following key feature of this algorithm can be
shown (see \cite{Neely2010} for related results):
\begin{align*}
\frac{\expect{-V\overline{B}\phi(\alpha(t_k)) + Q[k]p(\alpha(t_k))|\mathcal{F}(t_k)}}{\expect{1 + \phi(\alpha(t_k))/\lambda|\mathcal{F}(t_k)}}
\leq
\frac{\expect{-V\overline{B}\phi(\alpha^*(t_k)) + Q[k]p(\alpha^*(t_k))|\mathcal{F}(t_k)}}{\expect{1 + \phi(\alpha^*(t_k))/\lambda|\mathcal{F}(t_k)}}
\end{align*}
where $\alpha^*(t_k)$ is any (possibly randomized) alternative decision that is based only on $\mathcal{F}(t_k)$.
 This is an intuitive property:  By design, the algorithm in \eqref{e5} observes $\mathcal{F}(t_k)$ and then chooses a particular action $\alpha(t_k)$ to minimize the ratio over all deterministic actions.  Thus, as can be shown, it also minimizes the ratio over all potentially randomized actions.
Using the (randomized) i.i.d. decision $\alpha^*(t_k)$ from \eqref{eq:iid1}-\eqref{eq:iid2} in the above and noting that this alternative decision
is independent of $\mathcal{F}(t_k)$ gives:
\begin{align}
&\frac{\expect{-V\overline{B}\phi(\alpha(t_k)) + Q[k]p(\alpha(t_k))|\mathcal{F}(t_k)}}{\expect{1 + \phi(\alpha(t_k))/\lambda|\mathcal{F}(t_k)}}
\leq -V\mu^* + Q[k]\beta \label{eq:key-feature}
\end{align}

\subsection{Performance theorem}

\begin{theorem}
Suppose Assumption \ref{as:power-assumption} holds.
The proposed algorithm achieves the constraint $\limsup_{T\rightarrow\infty}\frac{1}{T}\sum_{t=0}^{T-1}p(\alpha(t))\leq \beta$ and yields throughput satisfying (with probability 1):
\begin{equation}\label{e6}
    \liminf_{T\rightarrow\infty}\frac{1}{T}\sum_{t=0}^{T-1}\overline{B}\phi(\alpha(t))\geq \mu^{*}-\frac{C_0}{V}
\end{equation}
where $C_0$ is a constant.\footnote{The constant $C_0$ is independent of $V$ and is given in the proof.}
\end{theorem}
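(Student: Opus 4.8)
The argument follows the now-familiar ``drift-plus-penalty'' template used for Algorithm \ref{dpp-ratio-algorithm}, but carried out per renewal frame and then converted to a sample-path (probability 1) statement. The plan is to introduce the frame-based Lyapunov drift $\Delta[k] := \frac12(Q[k+1]^2 - Q[k]^2)$, bound the drift-plus-penalty quantity $\Delta[k] - V\overline B\,\mathbb{E}[\phi(\alpha(t_k))\mid\mathcal F(t_k)]$ from above, and divide by $\mathbb{E}[T[k]\mid\mathcal F(t_k)]$ so that the algorithm's defining property \eqref{e5} can be invoked. First I would use the queue update \eqref{eq:q-update} together with the deterministic bounds $p^{min}\le p(\alpha(t_k))\le p^{max}$ (or $p=0$) and $1 \le T[k]$, and the fact that $T[k]$ has a bounded second moment (it is $1$ or $1+G_k$ with $G_k$ geometric of mean $1/\lambda$), to obtain a constant $C_0'$ with
\[
\Delta[k] \le C_0' - V\overline B\phi(\alpha(t_k)) + \Big(V\overline B\phi(\alpha(t_k)) - Q[k]p(\alpha(t_k))\Big) + Q[k]\big(p(\alpha(t_k)) - \beta T[k]\big) \cdot (\text{sign bookkeeping}),
\]
i.e. after taking conditional expectations and dividing by $\mathbb{E}[T[k]\mid\mathcal F(t_k)]$, the key-feature inequality \eqref{eq:key-feature} yields
\[
\frac{\mathbb{E}[\Delta[k]\mid\mathcal F(t_k)] - V\overline B\,\mathbb{E}[\phi(\alpha(t_k))\mid\mathcal F(t_k)] \cdot(\cdots)}{\mathbb{E}[T[k]\mid\mathcal F(t_k)]} \le C_0'' - V\mu^* + Q[k]\beta - Q[k]\beta,
\]
so the $Q[k]\beta$ terms cancel and one is left with a clean bound of the form $\mathbb{E}[\Delta[k] - V\overline B\phi(\alpha(t_k))\mid\mathcal F(t_k)] \le (C_0'' - V\mu^*)\,\mathbb{E}[T[k]\mid\mathcal F(t_k)]$.

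The second step is to pass from this per-frame expectation bound to the almost-sure throughput bound \eqref{e6}. Here I would define the supermartingale difference sequence $X_k := \Delta[k] - V\overline B\phi(\alpha(t_k)) - (C_0'' - V\mu^*)T[k]$, verify $\mathbb{E}[X_k\mid\mathcal F(t_k)] \le 0$ and that $\mathbb{E}[X_k^2]$ is uniformly bounded (using the bounded moments of $T[k]$ and the deterministic queue bound from Lemma \ref{lem:2}, which makes $Q[k]$ — hence $\Delta[k]$ — bounded), and then apply the strong law of large numbers for supermartingale differences, Lemma \ref{prob-1-converge}, to conclude $\limsup_{K\to\infty}\frac1K\sum_{k=0}^{K-1}X_k \le 0$ with probability 1. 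Telescoping $\sum_k\Delta[k] = \frac12(Q[K]^2 - Q[0]^2) \ge 0$ and using $Q[0]=0$, this gives $\liminf_{K\to\infty}\frac{\sum_{k=0}^{K-1}\overline B\phi(\alpha(t_k))}{\sum_{k=0}^{K-1}T[k]} \ge \mu^* - C_0''/V$ almost surely. Finally, since power and file-completion rewards accrue only at the first slot of each frame and $T[k]$ has bounded moments, a renewal-reward / sandwiching argument (exactly as in Lemma \ref{true_time_average} of Chapter 3) upgrades the ratio-of-sums statement to the genuine time average $\liminf_{T\to\infty}\frac1T\sum_{t=0}^{T-1}\overline B\phi(\alpha(t)) \ge \mu^* - C_0/V$. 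The constraint $\limsup_{T\to\infty}\frac1T\sum_{t=0}^{T-1}p(\alpha(t))\le\beta$ is immediate from Lemma \ref{lem:1} combined with the deterministic queue bound of Lemma \ref{lem:2}.

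\textbf{Main obstacle.} The routine parts are the drift bound and the $Q[k]\beta$ cancellation; the delicate point is the passage from the frame index $k$ to the slot index $t$. One must handle the partial last frame (the slots between $t_{K(T)}$ and $T$) and show its contribution is negligible after dividing by $T$ — this is where the bounded second (or fourth) moment of $T[k]$, hence of the residual frame length, is essential, and where a Borel–Cantelli argument (as in the proof of Lemma \ref{true_time_average}) is needed to control $T[K]/K \to 0$ almost surely. I would also need to be slightly careful that the alternative i.i.d.\ policy $\alpha^*(t_k)$ from \eqref{eq:iid1}-\eqref{eq:iid2} is independent of $\mathcal F(t_k)$ so that the conditioning drops as claimed in \eqref{eq:key-feature}; this is true by construction but should be stated. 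Everything else is bookkeeping of constants, and $C_0$ can be read off as an explicit function of $\overline B$, $\lambda$, $p^{max}$, $\beta$, and the second moment of the geometric idle time.
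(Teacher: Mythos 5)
Your proposal is correct and follows essentially the same route as the paper: the same drift-plus-penalty bound per frame, the same cancellation of the $Q[k]\beta$ terms via the key-feature inequality \eqref{eq:key-feature}, the same use of the deterministic queue bound (Lemma \ref{lem:2}) to verify bounded second moments, and the same appeal to Lemma \ref{prob-1-converge} to get the almost-sure frame-average bound, with the constraint handled by Lemmas \ref{lem:1} and \ref{lem:2} exactly as in the paper. The only difference is cosmetic: where you invoke a residual-frame/Borel--Cantelli sandwiching argument to pass from frames to slots, the paper dispatches this in one line by noting that $\phi(\alpha(t))=0$ at every slot that is not a frame start, so the slot sum of $\overline{B}\phi(\alpha(t))$ up to $t_K$ equals the frame sum while the denominator $t_K=\sum_{k<K}T[k]$, making the ratio-of-sums statement already the time average along the subsequence $t_K$ (your extra care is harmless but not needed for the bound as stated).
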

\begin{proof}
First, for any fixed $V$, Lemma \ref{lem:2} implies that the queue is deterministically bounded.  Thus, according to Lemma \ref{lem:1}, the proposed algorithm achieves the constraint 
$$\limsup_{T\rightarrow\infty}\frac{1}{T}\sum_{t=0}^{T-1}p(\alpha(t))\leq \beta.$$ 
The rest is devoted to proving the throughput guarantee \eqref{e6}.

Define:
\begin{equation}
    L(Q[k])=\frac{1}{2}Q[k]^2. \nonumber
\end{equation}
We call this a \emph{Lyapunov function}. Define a frame-based Lyapunov Drift as:
\[ \Delta[k] = L(Q[k+1]) - L(Q[k]) \]
According to \eqref{eq:q-update} we get
\begin{equation}
    Q[k+1]^2\leq \left(Q[k]+p(\alpha(t_{k}))-T[k]\beta\right)^2. \nonumber
\end{equation}
Thus:
\begin{eqnarray*}
\Delta[k]  \leq \frac{(p(\alpha(t_k)) - T[k]\beta)^2}{2} + Q[k](p(\alpha(t_k)) - T[k]\beta)
\end{eqnarray*}
Taking a conditional expectation of the above given $\mathcal{F}(t_k)$ and recalling that $\mathcal{F}(t_k)$ includes the information
$Q[k]$ gives:
\begin{equation} \label{eq:drift}
 \expect{\Delta[k] |\mathcal{F}(t_k)} \leq C_0 + Q[k]\expect{p(\alpha(t_k)) - \beta T[k]| \mathcal{F}(t_k)}
 \end{equation}
where $C_0$ is a constant that satisfies the following for all possible histories $\mathcal{F}(t_k)$:
\[ \expect{\left.\frac{(p(\alpha(t_k)) - T[k]\beta)^2}{2}  \right| \mathcal{F}(t_k)} \leq C_0 \]
Such a constant $C_0$ exists because the power $p(\alpha(t_k))$ is deterministically bounded due to Assumption \ref{as:power-assumption}, and the frame sizes $T[k]$ are bounded in second moment regardless of history according to \eqref{eq:frame-size}.

Adding the ``penalty'' $-\expect{V\overline{B}\phi(\alpha(t_k))|\mathcal{F}(t_k)}$ to both sides of \eqref{eq:drift} gives:
\begin{align*}
&\expect{\Delta[k] - V\overline{B}\phi(\alpha(t_k))|\mathcal{F}(t_k)}  \\
&\leq C_0 +  \expect{-V\overline{B}\phi(\alpha(t_k)) + Q[k](p(\alpha(t_k)) - \beta T[k])| \mathcal{F}(t_k)} \\
&= C_0 - Q[k]\beta\expect{T[k]|\mathcal{F}(t_k)}   +  \frac{\expect{T[k]|\mathcal{F}(t_k)}\expect{-V\overline{B}\phi(\alpha(t_k)) + Q[k]p(\alpha(t_k))|\mathcal{F}(t_k)}}{\expect{T[k]|\mathcal{F}(t_k)}}
\end{align*}
Expanding $T[k]$ in the denominator of the last term  gives:
\begin{align*}
&\expect{\Delta[k] - V\overline{B}\phi(\alpha(t_k))|\mathcal{F}(t_k)}  \nonumber\\
&\leq C_0 - Q[k]\beta\expect{T[k]|\mathcal{F}(t_k)}  + \expect{T[k]|\mathcal{F}(t_k)} 
\frac{\expect{-V\overline{B}\phi(\alpha(t_k)) + Q[k]p(\alpha(t_k))|\mathcal{F}(t_k)}}{\expect{1 + \phi(\alpha(t_k))/\lambda|\mathcal{F}(t_k)}}
\end{align*}
Substituting \eqref{eq:key-feature} into the above expression gives:
\begin{align}
&\expect{\Delta[k] - V\overline{B}\phi(\alpha(t_k))|\mathcal{F}(t_k)} \nonumber  \\
&\leq  C_0 - Q[k]\beta\expect{T[k]|\mathcal{F}(t_k)}+ \expect{T[k]|\mathcal{F}(t_k)} (-V\mu^* + \beta Q[k])\nonumber \\
&= C_0 -V\mu^* \expect{T[k] | \mathcal{F}(t_k)}  \label{eq:dude}
\end{align}
Rearranging gives:
\begin{equation} \label{eq:dpp}
\expect{\Delta[k] + V(\mu^*T[k] - \overline{B}\phi(\alpha(t_k)))|\mathcal{F}(t_k)} \leq C_0 
\end{equation}
This implies that $\Delta[k] + V(\mu^*T[k] - \overline{B}\phi(\alpha(t_k))) - C_0$ is a supermartingale difference sequence.
Furthermore, we already know the queue $Q[k]$ is deterministically bounded, it follows that:
\[ \sum_{k=1}^{\infty} \frac{\expect{\Delta[k]^2}}{k^2} < \infty \]
This, together with \eqref{eq:dpp}, implies by Lemma \ref{prob-1-converge} that (with probability 1):
\[  \limsup_{K\rightarrow\infty} \frac{1}{K}\sum_{k=0}^{K-1} \left[\mu^* T[k] - \overline{B}\phi(\alpha(t_k))\right] \leq \frac{C_0}{V} \]
Thus, for any $\epsilon>0$ one has for all sufficiently large $K$:
\[ \frac{1}{K}\sum_{k=0}^{K-1}[\mu^* T[k] - \overline{B} \phi(\alpha(t_k))] \leq \frac{C_0}{V} + \epsilon \]
Rearranging implies that for all sufficiently large $K$:
\begin{eqnarray*}
\frac{\sum_{k=0}^{K-1} \overline{B}\phi(\alpha(t_k))}{\sum_{k=0}^{K-1} T[k]} &\geq& \mu^*  - \frac{(C_0/V + \epsilon)}{\frac{1}{K}\sum_{k=0}^{K-1}T[k]}\\
&\geq& \mu^* - (C_0/V + \epsilon)
\end{eqnarray*}
where the final inequality holds because $T[k] \geq 1$ for all $k$. Thus:
\[ \liminf_{K\rightarrow\infty} \frac{\sum_{k=0}^{K-1} \overline{B}\phi(\alpha(t_k))}{\sum_{k=0}^{K-1} T[k]} \geq \mu^* - (C_0/V + \epsilon) \]
The above holds for all $\epsilon>0$.  Taking a limit as $\epsilon\rightarrow 0$ implies:
\[ \liminf_{K\rightarrow\infty} \frac{\sum_{k=0}^{K-1} \overline{B}\phi(\alpha(t_k))}{\sum_{k=0}^{K-1} T[k]} \geq \mu^* - C_0/V. \]
Notice that $\phi(\alpha(t))$ only changes at the boundary of each frame and remains 0 within the frame. Thus, we can replace the sum over frames $k$ by a sum over slots $t$. The desired result follows.
\end{proof}

The theorem shows that throughput can be pushed within $O(1/V)$ of the optimal value $\mu^*$, where $V$ can be chosen as large as desired to ensure throughput is arbitrarily close to optimal.  The tradeoff is a queue bound that grows linearly with $V$ according to Lemma \ref{lem:2}, which affects the convergence time required for the constraints to be close to the desired time averages (as described in the proof of Lemma \ref{lem:1}).

\section{Multi-user file downloading}

\begin{figure}[htbp]
   \centering
   \includegraphics[height=2.5in]{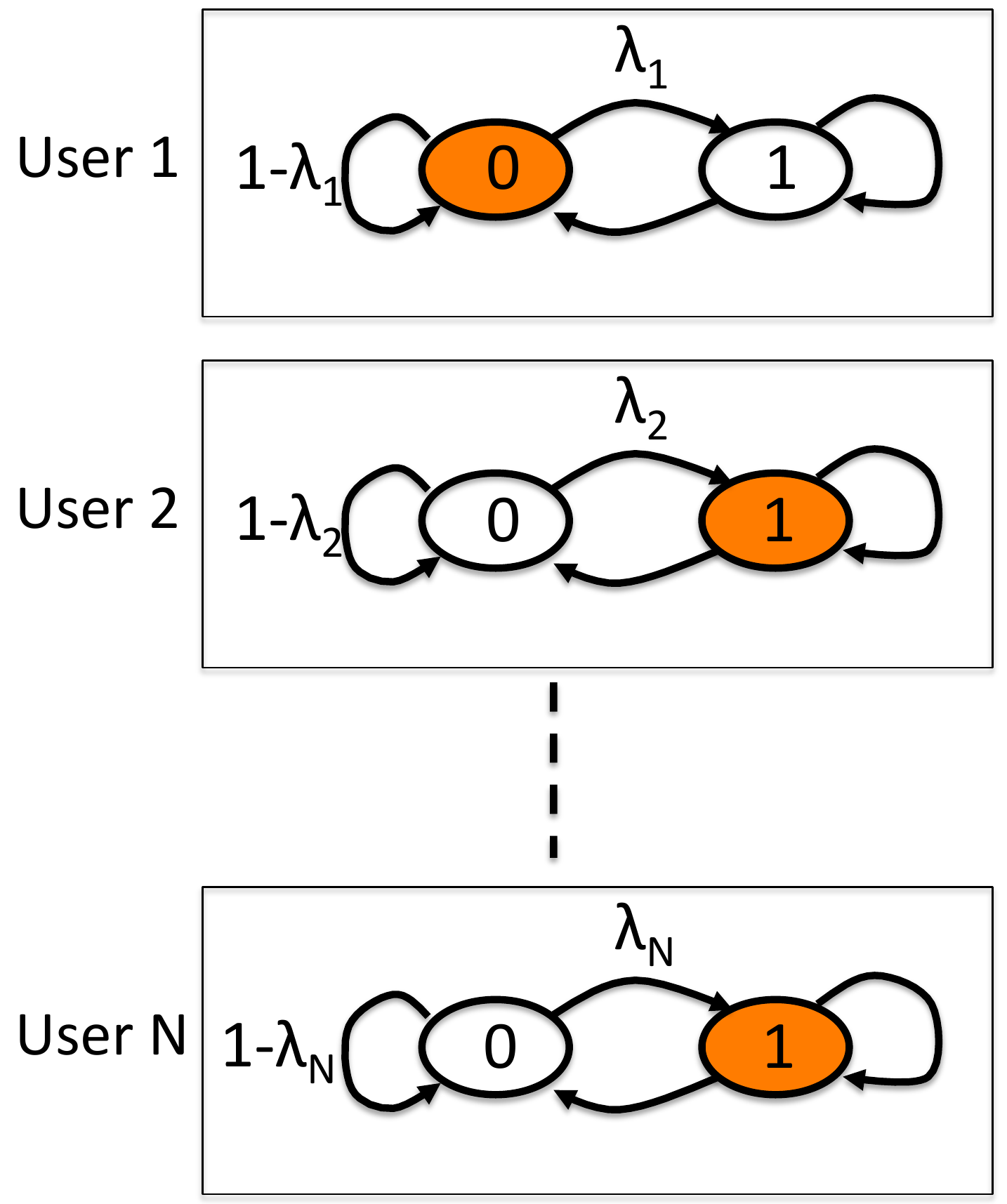} 
   \caption{A system with $N$ users.  The shaded node for each user $n$ indicates the current file state $F_n(t)$  of that user.  There are $2^N$ different state vectors.}
   \label{fig:multi-user-picture}
\end{figure}

This section considers a multi-user file downloading system that
consists of $N$ single-user subsystems.  Each subsystem is similar to the single-user system described in the previous section.
Specifically, for  the $n$-th user (where $n \in \{1, \ldots, N\}$):
\begin{itemize}
\item The file state process is $F_n(t) \in \{0,1\}$.
\item The transmission decision is $\alpha_n(t) \in \mathcal{A}_n$, where $\mathcal{A}_n$ is an abstract set of transmission options for user $n$.
\item The power expenditure on slot $t$ is $p_n(\alpha_n(t))$.
\item The success probability on a slot $t$ for which $F_n(t)=1$ is $\phi_n(\alpha_n(t))$, where $\phi_n(\cdot)$ is the function that describes file completion probability for user $n$.
\item The idle period parameter is $\lambda_n>0$.
\item The average file size is $\overline{B}_n$ bits.
\end{itemize}
Assume that the random variables associated with different subsystems are mutually independent.  The resulting Markov decision problem has $2^N$ states, as shown in Fig. \ref{fig:multi-user-picture}. The transition probabilities for each active user depends on which users are selected for transmission and on the corresponding transmission modes.
 This is a \emph{restless bandit system} because there can also be transitions for non-selected users (specifically, it is possible to transition from inactive to active).

To control the downloading process, there is a
central server with only $M$ threads ($M<N$), meaning that \emph{at most $M$ jobs can be processed simultaneously}. So at each time slot, the server has to make decisions selecting at most $M$ out of $N$ users to transmit a portion of their files. These decisions are further restricted by a global time average power constraint. The goal is to maximize the aggregate throughput, which is defined as
\begin{equation}
    \liminf_{T\rightarrow\infty}\frac{1}{T}\sum_{t=0}^{T-1}\sum_{n=1}^Nc_{n}\overline{B}_n\phi(\alpha_{n}(t)) \nonumber
\end{equation}
where $c_1, c_2, \ldots, c_N$ are a collection of positive weights that can be used to prioritize users.
Thus, this multi-user file downloading problem reduces  to the following:
\begin{align}
   \mbox{Max:} &~\liminf_{T\rightarrow\infty}\frac{1}{T}\sum_{t=0}^{T-1}\sum_{n=1}^N
    c_{n}\overline{B}_n\phi_n(\alpha_{n}(t)) \label{eq:multi-1} \\
    \mbox{S.t.:}&~\limsup_{T\rightarrow\infty}\frac{1}{T}\sum_{t=0}^{T-1}\sum_{n=1}^Np_n(\alpha_{n}(t))\leq \beta \label{eq:multi-2}\\
    &~\sum_{n=1}^NI(\alpha_{n}(t))\leq M~~\forall t\in\{0,1,2,\cdots\} \label{eq:mutli-3} \\
    &~Pr[F_n(t+1)=1~|~F_n(t)=0]=\lambda_n\label{eq:multi-4}\\
    &~Pr[F_n(t+1)=0~|~F_n(t)=1]=\phi_n(\alpha_n(t))\label{eq:multi-5}
\end{align}
where the constraints \eqref{eq:multi-4}-\eqref{eq:multi-5} hold for all $n \in \{1, \ldots, N\}$ and $t \in \{0, 1, 2,\ldots\}$, and
where $I(\cdot)$ is the indicator function defined as:
\begin{equation}
    I(x)=\left\{
           \begin{array}{ll}
             0, & \hbox{if $x=0$;} \\
             1, & \hbox{otherwise.}
           \end{array}
         \right.\nonumber
\end{equation}

\subsection{DPP ratio indexing algorithm} \label{multi:indexing}
This section develops our indexing algorithm for the multi-user case using the single-user case as a stepping stone. The major difficulty is  the instantaneous constraint $\sum_{n=1}^NI(\alpha_{n}(t))\leq M$.  Temporarily neglecting this constraint,
we use Lyapunov optimization to deal with the time average power constraint first.

We introduce a virtual queue $Q(t)$, which is again 0 at $t=0$. Instead of updating it on a frame basis, the server updates this queue every slot as follows:
\begin{equation}\label{m2}
    Q(t+1)=\max\left\{Q(t)+\sum_{n=1}^Np_n(\alpha_n(t))-\beta,0\right\}.
\end{equation}
Define $\mathcal{N}(t)$ as the set of users beginning their renewal frames at time $t$, so that $F_n(t)=1$ for all such users.
 In general, $\mathcal{N}(t)$ is a subset of $\mathcal{N}=\{1,2,\cdots,N\}$. Define $|\mathcal{N}(t)|$ as the number of users
 in the set $\mathcal{N}(t)$.

At each time slot $t$, the server observes the queue state $Q(t)$ and chooses $(\alpha_1(t), \ldots, \alpha_N(t))$
in a manner similar to the single-user case. Specifically, for each user $n \in \mathcal{N}(t)$ define:
\begin{equation}
    g_n(\alpha_n(t))\triangleq\frac{Vc_n\overline{B}_n\phi_n(\alpha_{n}(t))- Q(t)p_n(\alpha_{n}(t))}
        {1+\phi_n(\alpha_n(t))/\lambda_n} \label{eq:multi-alg1}
\end{equation}
This
is similar to the expression \eqref{e5} used in the single-user optimization.   Call $g_n(\alpha_n(t))$ a \emph{reward}.
Now define an index for each subsystem $n$ by:
\begin{equation}\label{m5}
    \gamma_n(t)\triangleq\max_{\alpha_n(t)\in\mathcal{A}_n}g_n(\alpha_n(t))
\end{equation}
which is the maximum possible reward one can get from the $n$-th subsystem at time slot $t$. Thus, it is natural to define the following myopic algorithm:  Find the (at most) $M$ subsystems in $\mathcal{N}(t)$ with the greatest rewards, and serve these with their corresponding
optimal $\alpha_n(t)$ options in $\mathcal{A}_n$ that maximize $g_n(\alpha_n(t))$.
\begin{algorithm}
\begin{Alg}\label{alg:ratio-index}~
\begin{itemize}
  \item At each time slot $t$, the server observes virtual queue state $Q(t)$ and computes the indices using (\ref{m5}) for all $n\in\mathcal{N}(t)$.
  \item Activate the $\min[M, |\mathcal{N}(t)|]$ subsystems with greatest indices, using their corresponding actions $\alpha_n(t) \in \mathcal{A}_n$ that
  maximize $g_n(\alpha_n(t))$.
  \item Update $Q(t)$ according to \eqref{m2}  at the end of each slot $t$.
\end{itemize}
\end{Alg}
\end{algorithm}

\subsection{Theoretical performance analysis}

In this subsection, we show that the above algorithm always satisfies the desired time average power constraint. We adopt the following assumption:
\begin{assumption}\label{as:constants}
The following quantities are finite and strictly positive.
\begin{eqnarray*}
p^{min}_n &=& \min_{\alpha_n\in\mathcal{A}_n\setminus\{0\}}p_n(\alpha_n) \\
p^{min} &=& \min_np^{min}_n \\
p^{max}_n &=& \max_{\alpha_n\in\mathcal{A}_n}p_n(\alpha_n) \\
c^{max} &=& \max_{n}c_n \\
\overline{B}^{max} &=& \max_n \overline{B}_n
\end{eqnarray*}
\end{assumption}

\begin{lemma} \label{lem:3}
Suppose Assumption \ref{as:constants} holds. Then, the
queue $\{Q(t)\}_{t=0}^{\infty}$  is deterministically bounded under Algorithm \ref{alg:ratio-index}. Specifically, we have for all $t \in \{0, 1, 2, \ldots\}$:
\[ Q(t)\leq \max\left\{\frac{Vc^{max}\overline{B}^{max}}{p^{min}}+\sum_{n=1}^Np^{max}_n-\beta,0\right\}\]
\end{lemma}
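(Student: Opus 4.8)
The plan is to mimic the single-user queue bound (Lemma \ref{lem:2}) but accounting for the fact that now up to $M$ subsystems are served on the same slot and the virtual queue $Q(t)$ is updated per slot via \eqref{m2}. First I would handle the trivial case: if $\sum_{n=1}^N p_n^{max} \leq \beta$, then by \eqref{m2} the queue can never increase (since the total power added is at most $\sum_{n=1}^N p_n^{max} \le \beta$), so $Q(t) \le Q(0) = 0$ for all $t$, and the claimed bound holds. The remaining case is $\sum_{n=1}^N p_n^{max} > \beta$, which I would prove by induction on $t$, with the induction hypothesis $Q(t) \le \frac{Vc^{max}\overline B^{max}}{p^{min}} + \sum_{n=1}^N p_n^{max} - \beta =: C$.

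For the inductive step, I would split into two subcases analogous to the single-user proof. Subcase (a): $Q(t) \le \frac{Vc^{max}\overline B^{max}}{p^{min}}$. Then since at most all $N$ users transmit, $\sum_{n=1}^N p_n(\alpha_n(t)) \le \sum_{n=1}^N p_n^{max}$, and from \eqref{m2} we get $Q(t+1) \le Q(t) + \sum_{n=1}^N p_n^{max} - \beta \le \frac{Vc^{max}\overline B^{max}}{p^{min}} + \sum_{n=1}^N p_n^{max} - \beta = C$. Subcase (b): $\frac{Vc^{max}\overline B^{max}}{p^{min}} < Q(t) \le C$. Here the key observation is that when $Q(t)$ is this large, the algorithm will \emph{not} serve any user with positive power. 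Indeed, for any user $n \in \mathcal N(t)$ and any action $\alpha_n(t)$ with $p_n(\alpha_n(t)) > 0$, we have $p_n(\alpha_n(t)) \ge p_n^{min} \ge p^{min}$, so the numerator in \eqref{eq:multi-alg1} satisfies $Vc_n\overline B_n \phi_n(\alpha_n(t)) - Q(t)p_n(\alpha_n(t)) \le Vc^{max}\overline B^{max} - Q(t)p^{min} < 0$, hence $g_n(\alpha_n(t)) < 0$. But the passive action $\alpha_n(t) = 0$ yields $g_n(0) = 0$, so the maximizing action in \eqref{m5} has index $\gamma_n(t) \ge 0$ and is achieved by an action with zero power. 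Since Algorithm \ref{alg:ratio-index} activates subsystems using exactly the $g_n$-maximizing actions, every served user in fact uses zero power on slot $t$. Therefore $\sum_{n=1}^N p_n(\alpha_n(t)) = 0$ and by \eqref{m2}, $Q(t+1) \le \max\{Q(t) - \beta, 0\} \le Q(t) \le C$. This closes the induction.

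I should be slightly careful about one subtlety: the index $\gamma_n(t)$ is the maximum of $g_n$ over $\mathcal A_n$, and even if this maximum is $0$ (attained only at the passive action), the user could still be among the $\min[M,|\mathcal N(t)|]$ chosen; but in that case the chosen action is the one attaining the max, which has zero power, so no power is consumed. The only place a positive-power action gets selected is when its $g_n$-value strictly exceeds $0 = g_n(0)$, which subcase (b) rules out. I would state this explicitly to avoid ambiguity about tie-breaking. I expect this "no positive-power user is served" argument to be the heart of the proof — everything else is bookkeeping identical to the single-user case — but it is not really an obstacle, just the point requiring care. The conclusion combines both cases: $Q(t) \le \max\{\frac{Vc^{max}\overline B^{max}}{p^{min}} + \sum_{n=1}^N p_n^{max} - \beta, 0\}$ for all $t$, as claimed.
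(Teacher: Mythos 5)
Your proof is correct and follows essentially the same route as the paper's: the trivial case $\sum_{n=1}^N p_n^{max}\leq\beta$, then induction on $t$ with the same two subcases, where the key point is that once $Q(t)>\frac{Vc^{max}\overline{B}^{max}}{p^{min}}$ every positive-power action has a strictly negative index numerator, so only zero-power actions are taken and the queue cannot grow. Your explicit remark about tie-breaking (a selected user with index $0$ still uses a zero-power action) is a small clarification of what the paper states more tersely, not a different argument.
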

\begin{proof}
First, consider the case when $\sum_{n=1}^Np^{max}_n \leq \beta$. Since $Q(0)=0$, it is clear from the updating
rule \eqref{m2} that $Q(t)$ will remain 0 for all $t$.

Next, consider the case when $\sum_{n=1}^Np^{max}_n > \beta$. We prove the assertion by induction on $t$. The result
trivially holds for $t=0$. Suppose at $t=t'$, we have:
\[ Q(t')\leq\frac{Vc^{max}\overline{B}^{max}}{p^{min}}+\sum_{n=1}^Np^{max}_n-\beta \]
We are going to prove that the same statement holds for $t=t'+1$. We further divide it into two cases:
\begin{enumerate}
  \item $Q(t')\leq\frac{Vc^{max}\overline{B}^{max}}{p^{min}}$. In this case, since the queue increases by at most
  $\sum_{n=1}^Np^{max}_n - \beta$ on one slot, we have:
  \[ Q(t'+1) \leq \frac{Vc^{max}\overline{B}^{max}}{p^{min}} + \sum_{n=1}^Np^{max}_n - \beta\]

  \item $\frac{Vc^{max}\overline{B}^{max}}{p^{min}}<Q(t')\leq\frac{Vc^{max}\overline{B}^{max}}{p^{min}}+\sum_{n=1}^Np^{max}_n-\beta$. In this case, since $\phi_n(\alpha_n(t'))\leq 1$, there is no possibility that $Vc_n\overline{B}_n\phi_n(\alpha_n(t'))\geq Q(t')p_n(\alpha_n(t'))$ unless $\alpha_n(t')=0$. Thus, the DPP ratio indexing algorithm of minimizing
  \eqref{eq:multi-alg1} chooses  $\alpha_n(t')=0$ for all $n$. Thus, all indices are 0. This implies that $Q(t'+1)$ cannot increase, and we get $Q(t'+1)\leq\frac{Vc^{max}\overline{B}^{max}}{p^{min}}+\sum_{n=1}^Np^{max}_n-\beta$.
\end{enumerate}
\end{proof}

\begin{theorem}
The proposed DPP ratio indexing algorithm achieves the constraint:
\[ \limsup_{T\rightarrow\infty}\frac{1}{T}\sum_{t=0}^{T-1}\sum_{n=1}^Np_n(\alpha_{n}(t))\leq \beta \]
\end{theorem}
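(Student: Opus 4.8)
The plan is to mimic the argument used for the single-user case in Lemma \ref{lem:1}, now with the slot-based virtual queue \eqref{m2} in place of the frame-based one. The key observation is that Lemma \ref{lem:3} already gives a \emph{deterministic} uniform bound $Q(t) \le C$ for all $t$, where $C = \max\{Vc^{max}\overline{B}^{max}/p^{min} + \sum_{n=1}^N p_n^{max} - \beta, 0\}$. So the entire argument reduces to converting this queue bound into the desired time-average power constraint, which is a purely deterministic telescoping computation.

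First I would write the one-step drop inequality from \eqref{m2}: since $Q(t+1) = \max\{Q(t) + \sum_n p_n(\alpha_n(t)) - \beta, 0\} \ge Q(t) + \sum_n p_n(\alpha_n(t)) - \beta$, rearranging gives $\sum_{n=1}^N p_n(\alpha_n(t)) \le \beta + Q(t+1) - Q(t)$. Then I would sum this over $t \in \{0, 1, \ldots, T-1\}$; the queue terms telescope to $Q(T) - Q(0) = Q(T)$ (using $Q(0)=0$), yielding $\sum_{t=0}^{T-1}\sum_{n=1}^N p_n(\alpha_n(t)) \le \beta T + Q(T)$. Dividing by $T$ and invoking the uniform bound $Q(T) \le C$ from Lemma \ref{lem:3} gives $\frac{1}{T}\sum_{t=0}^{T-1}\sum_{n=1}^N p_n(\alpha_n(t)) \le \beta + C/T$. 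Taking $\limsup_{T\rightarrow\infty}$ kills the $C/T$ term and produces exactly the claimed bound $\limsup_{T\rightarrow\infty}\frac{1}{T}\sum_{t=0}^{T-1}\sum_{n=1}^N p_n(\alpha_n(t)) \le \beta$.

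There is essentially no obstacle here: unlike the single-user proof, no conversion between frame indices and slot indices is needed because the queue \eqref{m2} already updates every slot, so the sum of powers over the first $T$ slots is literally the quantity appearing in the telescoped inequality. The only thing to be careful about is the direction of the inequality in the $\max$ (the queue update is a lower bound on the unprojected recursion, which is what we want), and that we have genuinely established $Q(0)=0$ and the uniform boundedness — both of which are in hand from the algorithm definition and Lemma \ref{lem:3}. I expect the whole proof to be three or four lines of display math with no technical subtlety; the real work was already done in proving Lemma \ref{lem:3}.
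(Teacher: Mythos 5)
Your proposal is correct and follows exactly the route the paper takes: the paper's proof is the one-line remark that an argument ``similar to Lemma \ref{lem:1}'' converts the deterministic queue bound of Lemma \ref{lem:3} into the time-average power constraint, and your telescoping computation is precisely that argument, correctly simplified to account for the slot-based (rather than frame-based) queue update.
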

\begin{proof}
First of all, similar to Lemma \ref{lem:1}, one can show that if $Q(t)\leq C$ for some constant $C>0$ and any $t\in\{0,1,2,\cdots\}$, then, $\limsup_{T\rightarrow\infty}\frac{1}{T}\sum_{t=0}^{T-1}\sum_{n=1}^Np_n(\alpha_{n}(t))\leq \beta$. Using Lemma \ref{lem:3} we finish the proof.
\end{proof}


\section{Multi-user optimality in a special case}\label{section:coupling}
 In general, it is very difficult to prove optimality of the above multi-user algorithm.
There are mainly two reasons.
The first reason is that multiple users
might renew themselves asynchronously, making it
difficult to define a ``renewal frame'' for the whole system.
Thus, the proof technique in Theorem 1 is infeasible.
The second reason is that, even without the time average constraint, the problem
degenerates into a standard restless bandit problem where the optimality of
 indexing is not guaranteed.

This section considers
a special case of the multi-user file downloading problem where
the DPP ratio indexing algorithm is provably optimal.
The special case has no time average power constraint.  Further,
for each user $n \in \{1, \ldots, N\}$:
\begin{itemize}
  \item Each file consists of a random number of fixed length packets with mean $\overline{B}_n=1/\mu_n$.
  \item The decision set $\mathcal{A}_n= \{0,1\}$, where 0 stands for ``idle'' and 1 stands for ``download.'' If $\alpha_n(t)=1$, then user $n$ successfully downloads a single packet.
  \item $\phi_n(\alpha_n(t)) = \mu_n\alpha_n(t)$.
  \item Idle time is geometrically distributed with mean $1/\lambda_n$.
  \item The special case \emph{$\mu_n = 1-\lambda_n$ is assumed}.
\end{itemize}
 The assumption that the file length and idle time parameters $\mu_n$ and $\lambda_n$
satisfy $\mu_n = 1-\lambda_n$ is restrictive. However, there exists certain queueing system which admits \emph{exactly the same markov dynamics} as the system considered here when the assumption holds
(described in Section \ref{subsection:single_buffer} below). More importantly, it allows us to implement the stochastic coupling idea to prove the optimality.

The goal is to maximize the sum throughput (in units of packets/slot), which is defined as:
\begin{equation}\label{thput}
    \liminf_{T\rightarrow\infty}\frac{1}{T}\sum_{t=0}^{T-1}\sum_{n=1}^N \overline{B}_n\phi(\alpha_{n}(t)).
\end{equation}
In this special case, the multi-user file downloading problem reduces to the following:
\begin{align}
   \mbox{Max:} &~\liminf_{T\rightarrow\infty}\frac{1}{T}\sum_{t=0}^{T-1}\sum_{n=1}^N\alpha_{n}(t) \label{sp:multi-1} \\
    \mbox{S.t.:}&~\sum_{n=1}^N\alpha_{n}(t)\leq M~~\forall t\in\{0,1,2,\cdots\} \label{sp:mutli-2} \\
        &~\alpha_n(t) \in \{0, F_n(t)\} \label{sp:multi-2b}\\
    &~Pr[F_n(t+1)=1~|~F_n(t)=0]=\lambda_n\label{sp:multi-3}\\
    &~Pr[F_n(t+1)=0~|~F_n(t)=1]=\alpha_{n}(t)(1-\lambda_n)\label{sp:multi-4}
\end{align}
where the equality \eqref{sp:multi-4} uses the fact that $\mu_n = 1-\lambda_n$.
A picture that illustrates the Markov structure
of constraints \eqref{sp:multi-2b}-\eqref{sp:multi-4} is given in Fig. \ref{fig:WTF1}

\subsection{A system with $N$ single-buffer queues}\label{subsection:single_buffer}

The above model, with the assumption $\mu_n = 1-\lambda_n$, is structurally equivalent to the following:
Consider a system of $N$ single-buffer queues, $M$ servers, and independent Bernoulli packet arrivals with rates $\lambda_n$
to each queue $n \in \{1, \ldots, N\}$.  This considers \emph{packet arrivals} rather than \emph{file arrivals}, so there are no file length variables and no parameters $\mu_n$ in this interpretation.  Let $\mathbf{A}(t) = (A_1(t), \ldots, A_N(t))$ be the binary-valued vector of packet arrivals on slot $t$, assumed to be i.i.d. over slots and independent in each coordinate.
 Assume all packets have the same size and each queue has a single buffer that can store just one packet.
 Let $F_n(t)$ be 1 if queue $n$ has a packet at the beginning of slot $t$, and $0$ else.  Each server can transmit at most 1 packet per slot.  Let $\alpha_n(t)$ be 1 if queue $n$ is served on slot $t$, and $0$ else.
 An arrival $A_n(t)$ occurs at the end of slot $t$ and is accepted only if queue $n$ is empty at the end of the slot (such as when it was served on that slot).  Packets that are not accepted are dropped.
 The Markov dynamics are described by the same figure as before, namely, Fig. \ref{fig:WTF1}.  Further,
the problem of maximizing throughput is given by the \emph{same} equations \eqref{sp:multi-1}-\eqref{sp:multi-4}. Thus, although the variables of the two problems have different interpretations, the problems are structurally equivalent.
For simplicity of exposition, the remainder of this section uses
this single-buffer queue interpretation.

\begin{figure}
   \centering
   \includegraphics[height=2.5in]{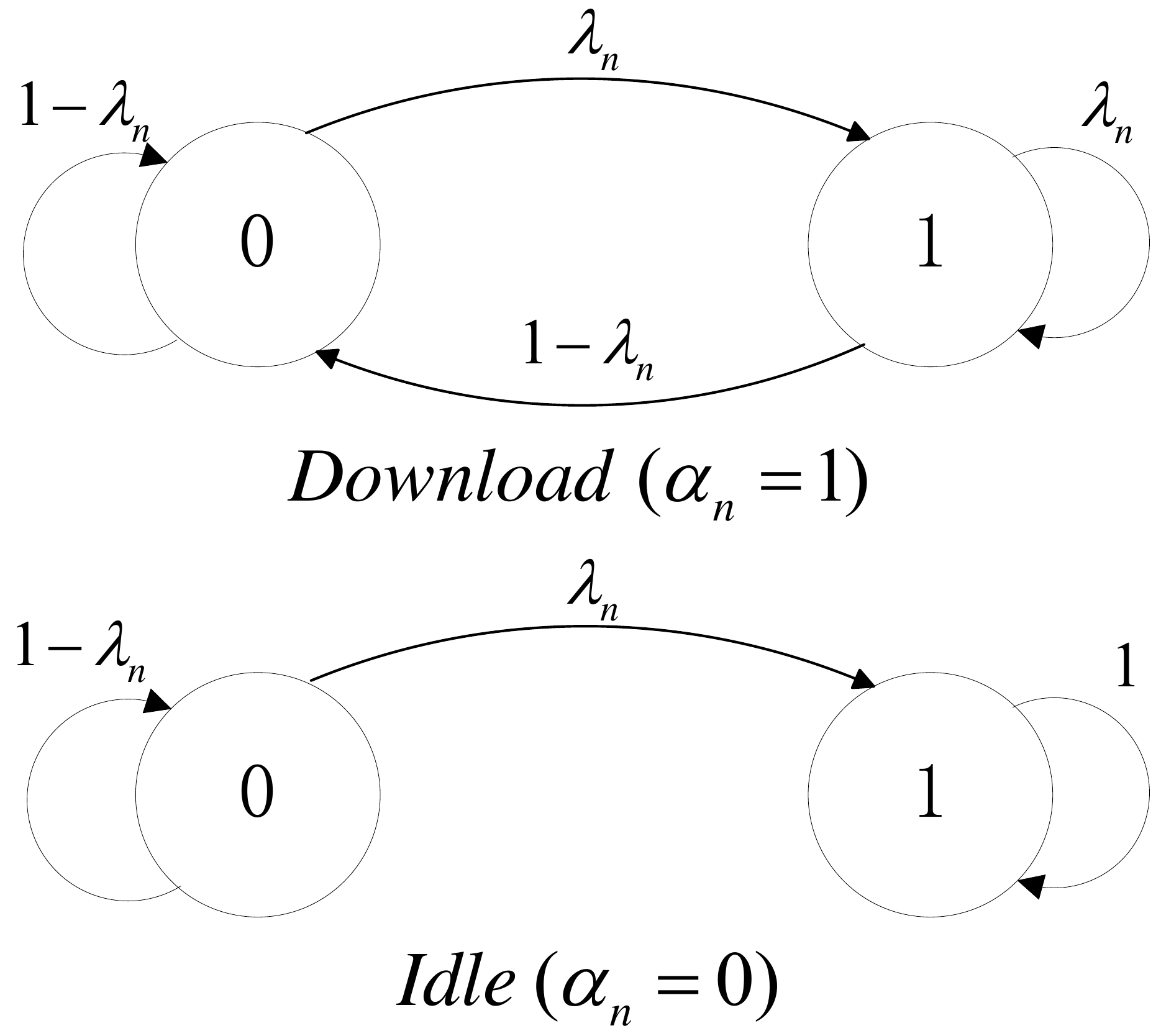} 
   \caption{Markovian dynamics of the $n$-th system.}
   \label{fig:WTF1}
\end{figure}

\subsection{Optimality of the indexing algorithm}
Since there is no power constraint, for any $V>0$ the
DPP ratio indexing policy \eqref{m5} in Section \ref{multi:indexing} reduces
to the following (using $c_n=1$, $Q(t)\equiv0$):   If there are fewer than $M$ non-empty queues, serve all of them. Else, serve the $M$ non-empty queues with the largest values of $\gamma_n$, where:
\begin{equation}
    \gamma_n=\frac{1}{1+(1-\lambda_n)/\lambda_n}=\lambda_n. \nonumber
\end{equation}
Thus, the DPP ratio indexing algorithm in this context reduces to serving the (at most $M$) non-empty queues with the  largest $\lambda_n$ values each time slot.  For the remainder of this section, this is called the Max-$\lambda$ policy.
The following theorem shows that Max-$\lambda$ is optimal in this context.

\begin{theorem} \label{thm:max-lambda}
The Max-$\lambda$ policy is optimal for the problem  \eqref{sp:multi-1}-\eqref{sp:multi-4}.  In particular, under the single-buffer queue interpretation, it maximizes throughput over all policies that transmit on each slot $t$
without knowledge of the arrival vector
$\mathbf{A}(t)$.
\end{theorem}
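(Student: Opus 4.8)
The plan is to prove optimality by a sample-path \emph{stochastic coupling} argument in the style of \cite{tassiulas1993dynamic}, carried out entirely in the single-buffer-queue interpretation of Section~\ref{subsection:single_buffer}. Since throughput equals the long-run rate of accepted arrivals (each buffer holds one packet, so the system is trivially stable and every accepted packet is eventually served), maximizing throughput in \eqref{sp:multi-1} is equivalent to \emph{minimizing the long-run rate of dropped arrivals}, where a drop at queue $n$ on slot $t$ occurs exactly when $A_n(t)=1$, $F_n(t)=1$, and $\alpha_n(t)=0$. Under the standing assumption $\mu_n=1-\lambda_n$ the Max-$\lambda$ rule (serve the at most $M$ non-empty queues with the largest $\lambda_n$) is precisely the myopic minimizer of the expected number of drops on the current slot; the content of Theorem~\ref{thm:max-lambda} is that this greedy choice is globally optimal.

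First I would reduce the comparison class to \emph{work-conserving} policies, i.e. those that serve $\min\{M,|\mathcal N(t)|\}$ non-empty queues on every slot. This follows from a short coupling: if a policy ever leaves a server idle while some queue $i$ is full, the policy that instead serves $i$ gains one unit of throughput immediately and, with the arrival streams coupled to be identical, can thereafter replicate the original policy's service decisions while keeping the set of full queues in a position that is never worse. Second, among work-conserving policies I would run an \emph{interchange argument}: given a policy $\pi$, locate the first slot $t$ at which it disagrees with Max-$\lambda$, namely it serves a queue $i$ but skips a non-empty queue $j$ with $\lambda_j>\lambda_i$ when $j$ could have replaced $i$ among $\pi$'s served queues. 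Build $\pi'$ that agrees with $\pi$ through slot $t-1$, swaps the roles of $i$ and $j$ on slot $t$, and is defined on subsequent slots by an explicit coupling rule. After the swap the two systems' states differ only in that $\pi'$ has $j$ empty and $i$ full whereas $\pi$ has $i$ empty and $j$ full; I would couple the future arrival vectors of the two systems to agree on every coordinate except $i$ and $j$, and on those two coordinates use a \emph{monotone (thinning) coupling} exploiting $\lambda_j\ge\lambda_i$ so that queue $j$ of $\pi'$ refills at least as eagerly as queue $j$ of $\pi$. The invariant to propagate by induction on slots is that the symmetric difference of the two full-queue sets stays contained in $\{i,j\}$ (or becomes empty), that $\pi'$ can always match $\pi$'s throughput-producing decisions, and that $\pi'$'s cumulative throughput is never below $\pi$'s.

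Iterating the interchange over successive points of disagreement — or, to avoid appealing to infinitely many interchanges, fixing a horizon $T$, performing the finitely many interchanges needed to match Max-$\lambda$ on $\{0,\dots,T-1\}$, and then letting $T\to\infty$ — shows that every policy is dominated in throughput by Max-$\lambda$; since any tie-breaking only ever concerns indices with equal $\lambda_n$, which the argument does not distinguish, this yields the theorem. The main obstacle, and where the real work concentrates, is sustaining the coupling invariant through the slots on which the two coupled systems are \emph{forced to choose different queues to serve}: in particular, when $\pi$ later serves the queue $j$ that $\pi'$ has already emptied, $\pi'$ has a spare server that must be re-assigned so as to preserve the invariant, and one must verify that the monotone arrival coupling on coordinates $i,j$ dovetails with that re-assignment. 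A secondary point to check carefully is that it is exactly the clean form of the "served" transition in Fig.~\ref{fig:WTF1} — a consequence of $\mu_n=1-\lambda_n$ — that makes the monotone coupling valid, so the hypothesis is genuinely used rather than merely convenient.
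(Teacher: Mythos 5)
Your overall strategy—reduce to work-conserving policies, then use a monotone (thinning) coupling that exploits the $\lambda$-ordering and the clean transition structure coming from $\mu_n=1-\lambda_n$—is in the same spirit as the paper, but the paper organizes it differently: rather than iterating pairwise interchanges $\pi\to\pi'\to\cdots$, Lemma \ref{sto_file_stat} compares an arbitrary work-conserving $\pi$ \emph{directly} with the actual Max-$\lambda$ policy, keeps both policies fixed, and puts all of the coupling freedom into constructing a modified arrival process $\mathbf{A}^\lambda(t)$ with the same law (\eqref{coupling1}--\eqref{coupling2}), propagating the prefix-sum invariant \eqref{partial_order} for every prefix $j$ (your ``symmetric difference $\subseteq\{i,j\}$'' invariant is a local analogue of this). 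Stochastic dominance of the total-packet process plus monotonicity of $\min[U(t),M]$ (Theorem \ref{sto_theorem}, Corollary \ref{corollary:1}) then gives optimality among work-conserving policies, and the non-work-conserving case is handled last.

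The genuine gap in your plan is the status of the intermediate object $\pi'$. You define its tail by ``replicate $\pi$'s service decisions under an explicit coupling rule,'' but $\pi$'s decisions after the swap are functions of $\pi$'s own trajectory, which is driven by arrival realizations that differ from $\pi'$'s on coordinates $i,j$ and are not observable to $\pi'$ (dropped arrivals at full buffers are never seen, and in any case a policy must act without seeing $\mathbf{A}(t)$). So ``$\pi'$ mirrors $\pi$'' is a coupling construction on an enlarged probability space, not an admissible policy, and your iteration toward Max-$\lambda$ (finitely many interchanges per horizon, then $T\to\infty$) requires each intermediate object to be a bona fide policy whose law-level throughput you can compare; as written this is not established. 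The paper avoids exactly this issue by never introducing intermediate policies: the only two policies in play are $\pi$ and Max-$\lambda$, both well defined, and the surgery is performed on the arrival process while preserving its distribution (the verification that $\mathbf{A}^\lambda$ has the correct i.i.d.\ law is done explicitly). Relatedly, the part you defer—``sustaining the coupling invariant through slots where the two systems are forced to serve different queues''—is precisely where the paper's work lies (the dynamic pairing of empty buffers in \eqref{empty_order}--\eqref{lambda_order} and the prefix-sum argument of Lemma \ref{lem:new}); acknowledging the difficulty is not the same as resolving it, and together with the admissibility issue above it leaves the proof incomplete unless you either recast the interchange as a finite-horizon dynamic-programming/one-step-deviation argument or adopt the paper's direct-comparison coupling.
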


For the $N$ single-buffer queue interpretation,
the total throughput is equal to the raw arrival rate $\sum_{i=1}^N \lambda_i$ minus the packet drop rate.
Intuitively, the reason Max-$\lambda$ is optimal is that it chooses to leave packets in the queues that are least likely to induce packet drops. An example comparison of the throughput gap between Max-$\lambda$ and Min-$\lambda$ policies is given in Section \ref{sec:additional-proof}.

The proof of Theorem \ref{thm:max-lambda} is divided into two parts. The first part uses stochastic coupling techniques to prove that Max-$\lambda$ dominates all alternative \emph{work-conserving} policies.  A policy is \emph{work-conserving} if it does not allow any server to be idle when it could be used to serve a non-empty queue.
 The second part of the proof
 shows that throughput cannot be increased by considering non-work-conserving policies.

\subsection{Preliminaries on stochastic coupling}\label{prelim}

Consider two discrete time processes $\mathcal{X}\triangleq\{X(t)\}_{t=0}^{\infty}$ and $\mathcal{Y}\triangleq\{Y(t)\}_{t=0}^{\infty}$.  The notation $\mathcal{X} =_{st} \mathcal{Y}$ means that $\mathcal{X}$ and $\mathcal{Y}$ are \emph{stochastically equivalent}, in that they are described by the same probability law.  Formally, this means that
their joint distributions are the same, so for all $t \in \{0, 1, 2, \ldots\}$ and all $(z_0, \ldots, z_t) \in \mathcal{R}^{t+1}$:
\begin{eqnarray*}
&&Pr[X(0)\leq z_0, \ldots, X(t)\leq z_t] \\
&&= Pr[Y(0)\leq z_0, \ldots, Y(t) \leq z_t]
\end{eqnarray*}
The notation $\mathcal{X}\leq_{st}\mathcal{Y}$ means that $\mathcal{X}$ is \emph{stochastically less than or equal to} $\mathcal{Y}$, as defined by the following theorem.
\begin{theorem}  \label{sto_theorem}
(\cite{tassiulas1993dynamic}) The following three statements are equivalent:
\begin{enumerate}
  \item $\mathcal{X}\leq_{st}\mathcal{Y}$.
  \item $Pr[g(X(0), X(1), \cdots, X(t))>z]\leq Pr[g(Y(0),$ $Y(1), \cdots, Y(t))>z]$ for all $t\in \mathbb{Z}^{+}$, all $z$, and for all functions $g:\mathcal{R}^{n}\rightarrow\mathcal{R}$ that are measurable and nondecreasing in all coordinates.
  \item There exist two stochastic processes $\mathcal{X}'$ and $\mathcal{Y}'$ on a common  probability space that satisfy $\mathcal{X}=_{st}\mathcal{X}'$, $\mathcal{Y}=_{st}\mathcal{Y}'$, and $X'(t)\leq Y'(t)$ for every $t\in \mathbb{Z}^{+}$.
\end{enumerate}
\end{theorem}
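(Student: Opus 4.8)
The three conditions are best handled as a cycle (1) $\Rightarrow$ (2) $\Rightarrow$ (3) $\Rightarrow$ (1), where statement (1) is simply the name attached to the relation $\leq_{st}$ and condition (2) is taken as its working characterization; the real content is then the implication (2) $\Rightarrow$ (3) (a Strassen-type coupling construction) together with the easy converse (3) $\Rightarrow$ (2). So the plan reduces to proving (2) $\Leftrightarrow$ (3).

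\textbf{The easy direction (3) $\Rightarrow$ (2).} Suppose $\mathcal{X}'$ and $\mathcal{Y}'$ live on a common probability space with $X'(t)\leq Y'(t)$ for all $t$ almost surely, and $\mathcal{X}=_{st}\mathcal{X}'$, $\mathcal{Y}=_{st}\mathcal{Y}'$. For any measurable $g:\mathbb{R}^{t+1}\to\mathbb{R}$ nondecreasing in each coordinate, pathwise monotonicity gives $g(X'(0),\dots,X'(t))\leq g(Y'(0),\dots,Y'(t))$ a.s., so $\{g(X'(0),\dots,X'(t))>z\}\subseteq\{g(Y'(0),\dots,Y'(t))>z\}$ and hence the probabilities are ordered. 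Since $(X'(0),\dots,X'(t))=_{st}(X(0),\dots,X(t))$ and likewise for $Y$, this inequality transfers verbatim to $\mathcal{X},\mathcal{Y}$, giving (2). This direction is routine.

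\textbf{The hard direction (2) $\Rightarrow$ (3).} I expect this to be the main obstacle, and I would carry it out in three steps. First, reduce (2) to an upper-set statement: for every $n$ and every Borel increasing (upper) set $U\subseteq\mathbb{R}^{n+1}$, one has $Pr[(X(0),\dots,X(n))\in U]\leq Pr[(Y(0),\dots,Y(n))\in U]$; one inclusion comes from taking $g=\mathbf{1}_U$, the converse from the distribution-function (layer-cake) representation of a monotone $g$ as a signed mixture of indicators of its super-level sets $\{g>z\}$, each of which is an upper set, plus the usual truncation/measurability bookkeeping to handle unbounded non-continuous $g$. Second, for each fixed $n$ these upper-set inequalities say precisely that the law of $(X(0),\dots,X(n))$ is $\leq_{st}$ that of $(Y(0),\dots,Y(n))$ on the Polish space $\mathbb{R}^{n+1}$ with the (closed) coordinatewise partial order, so the multivariate Strassen theorem (Kamae, Krengel and O'Brien) yields a coupling of these two finite-dimensional laws that is coordinatewise ordered almost surely. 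Third, upgrade from finite dimensions to the full path space: apply the Polish-space version of Strassen directly on $\mathbb{R}^{\mathbb{N}}$ with the product topology and coordinatewise order, using that cylindrical upper sets generate the relevant $\sigma$-algebra and that a monotone-class/approximation argument extends the finite-dimensional upper-set inequalities of the first step to all closed upper sets of $\mathbb{R}^{\mathbb{N}}$. Alternatively, one avoids the infinite-dimensional theorem by building the coupling sequentially — inductively producing a \emph{consistent} family of monotone couplings of the $n$-dimensional marginals, the key point being that the relevant conditional laws stay stochastically ordered, and then invoking Kolmogorov's extension theorem. Either route produces processes $\mathcal{X}'=_{st}\mathcal{X}$ and $\mathcal{Y}'=_{st}\mathcal{Y}$ on one space with $X'(t)\leq Y'(t)$ for all $t$ a.s., which is exactly (3). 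Finally, (3) $\Rightarrow$ (1) is immediate since (1) is by definition the relation characterized by (2), and (2) $\Rightarrow$ (1) closes the cycle.

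\textbf{Where the difficulty lies.} The delicate part is the third step of (2) $\Rightarrow$ (3): passing from a coupling of every finite-dimensional marginal to a single coupling of the infinite sequences. This needs either a careful consistency construction so that Kolmogorov extension applies, or a clean invocation of Strassen's theorem on $\mathbb{R}^{\mathbb{N}}$ with attention to closedness of the product order and to the sufficiency of cylinder-generated increasing sets. The finite-dimensional Strassen theorem is used as a black box; the work is in verifying its hypotheses, which is where the reduction to upper sets in the first step (together with the measurability handling of non-continuous monotone $g$) earns its keep.
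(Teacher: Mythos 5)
The paper never proves this statement: it is quoted verbatim from the cited reference \cite{tassiulas1993dynamic} and used purely as a black box (e.g., in Corollary 4.4.1 with $g(U(0),\ldots,U(t))=\min[U(t),M]$), so there is no in-paper argument to compare yours against. Your sketch is the standard literature proof and is correct in outline: (3)$\Rightarrow$(2) is the routine pathwise-monotonicity transfer; (2)$\Rightarrow$(3) reduces to upper-set inequalities for every finite-dimensional marginal (note this reduction is even easier than you make it --- for nondecreasing $g$ the set $\{g>z\}$ is itself an upper set, so no layer-cake decomposition is needed) and then invokes the Kamae--Krengel--O'Brien/Strassen coupling theorem, preferably directly on the Polish space $\mathbb{R}^{\mathbb{N}}$ with the (closed) coordinatewise order, with a monotone-class step to pass from cylinder upper sets to the increasing sets required by that theorem; and (1) is just the name the paper attaches to the relation so characterized. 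One caution on your ``alternative'' route: the claim that ``the relevant conditional laws stay stochastically ordered'' is \emph{not} automatic from ordering of the joint laws (stochastic dominance of $(X(0),\ldots,X(n))$ by $(Y(0),\ldots,Y(n))$ does not in general order the conditional laws of the last coordinate given ordered pasts), so the sequential-plus-Kolmogorov construction needs a more careful formulation (e.g., building consistent monotone couplings by a compactness/projective-limit argument rather than by conditioning); since your primary route via Strassen on $\mathbb{R}^{\mathbb{N}}$ does not rely on this, the overall proof stands.
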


The following additional notation is used in the proof of Theorem \ref{thm:max-lambda}.

\begin{itemize}
  \item Arrival vector $\{\mathbf{A}(t)\}_{t=0}^{\infty}$, where $\mathbf{A}(t)\triangleq[A_1(t)$ $A_2(t)~\cdots~A_N(t)]$. Each $A_n(t)$ is an independent binary random variable that takes $1$ w.p. $\lambda_n$ and $0$ w.p. $1-\lambda_n$.
  \item Buffer state vector $\{\mathbf{F}(t)\}_{t=0}^{\infty}$, where $\mathbf{F}(t)\triangleq[F_1(t)$ $F_2(t)~\cdots~F_N(t)]$. So $F_n(t)=1$ if queue $n$ has a packet at the beginning of slot $t$, and $F_n(t)=0$ else.
  \item Total packet process $\mathcal{U}\triangleq\{U(t)\}_{t=0}^{\infty}$, where $U(t)\triangleq\sum_{n=1}^NF_n(t)$  represents the total number of packets in the system on slot $t$.  Since each queue can hold at most one packet, we have $0 \leq U(t) \leq N$ for all slots $t$.
\end{itemize}

\subsection{Stochastic ordering of buffer state process}

The next lemma is the key to proving Theorem \ref{thm:max-lambda}.  The lemma considers the multi-queue system with a fixed but arbitrary initial buffer state $\mathbf{F}(0)$. The arrival process $\mathbf{A}(t)$ is as defined above.
Let $\mathcal{U}^{\mbox{\tiny Max-$\lambda$}}$ be the total packet process under the Max-$\lambda$ policy.
Let $\mathcal{U}^{\pi}$ be the corresponding process starting from the same initial state $\mathbf{F}(0)$ and having the same arrivals $\mathbf{A}(t)$, but with an arbitrary work-conserving policy $\pi$.

\begin{lemma} \label{sto_file_stat}
The total packet processes $\mathcal{U}^\pi$ and $\mathcal{U}^{\mbox{\tiny Max-$\lambda$}}$ satisfy:
\begin{equation}\label{st_order}
   \mathcal{U}^{\pi}\leq_{st}\mathcal{U}^{\mbox{\tiny Max-$\lambda$}}
\end{equation}
\end{lemma}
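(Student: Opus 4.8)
The plan is to use the stochastic coupling characterization in Theorem \ref{sto_theorem}, specifically the equivalence between $\leq_{st}$ and the existence of a coupled pair of processes on a common probability space, one pathwise dominating the other. So first I would set up an explicit coupling: run the Max-$\lambda$ policy and the arbitrary work-conserving policy $\pi$ on the \emph{same} sample space, driven by the \emph{same} arrival vectors $\mathbf{A}(t)$ and starting from the \emph{same} initial buffer state $\mathbf{F}(0)$. Under this coupling it suffices to prove the pathwise statement $U^{\pi}(t) \le U^{\mbox{\tiny Max-$\lambda$}}(t)$ for all $t$, because then the third statement of Theorem \ref{sto_theorem} gives $\mathcal{U}^{\pi} \le_{st} \mathcal{U}^{\mbox{\tiny Max-$\lambda$}}$.

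The pathwise domination itself should be proved by induction on $t$, but the naive induction hypothesis ``$U^{\pi}(t) \le U^{\mbox{\tiny Max-$\lambda$}}(t)$'' is too weak to propagate — a stronger, more structural hypothesis on the buffer \emph{configurations} is needed. The natural strengthening is: at every slot $t$, the set of occupied queues under $\pi$ is ``dominated'' by that under Max-$\lambda$ in the sense that one can injectively match each queue occupied under $\pi$ to a queue occupied under Max-$\lambda$ with a \emph{larger or equal} $\lambda$ index (equivalently, for every threshold $\theta$, the number of occupied queues with $\lambda_n \ge \theta$ under $\pi$ is at most that under Max-$\lambda$). I would carry the induction with this refined hypothesis: show it holds at $t=0$ trivially (identical states), then show that if it holds at slot $t$, it is preserved after (i) the service decisions of both policies and (ii) the common arrivals at the end of the slot. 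Step (i) is where the Max-$\lambda$ service rule is used: since both policies are work-conserving and serve $\min\{M,\#\text{nonempty}\}$ queues, and Max-$\lambda$ specifically clears the \emph{highest}-$\lambda$ occupied queues, the matching is maintained — intuitively Max-$\lambda$ ``leaves behind'' exactly the low-$\lambda$ packets, which are the least likely to block future arrivals. Step (ii) is a monotonicity check: adding the same arrival vector $\mathbf{A}(t)$ to two configurations, where a packet is accepted at queue $n$ iff that queue is empty after service, preserves the threshold-domination because an empty slot under Max-$\lambda$ in a high-$\lambda$ queue can only help.

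The main obstacle I anticipate is exactly the bookkeeping in step (i): verifying that the injective $\lambda$-respecting matching between the two occupied-queue sets survives a round of service, in the case where $\pi$ does \emph{not} serve the same queues as Max-$\lambda$. One has to argue carefully that any ``extra'' high-$\lambda$ queue that $\pi$ leaves occupied (because it chose to serve a different queue) is still matched, using the fact that Max-$\lambda$ served $M$ of the highest-index occupied queues and the induction hypothesis controls how many high-index queues $\pi$ could have occupied in the first place. A clean way to handle this is to phrase everything in terms of the counting functions $N^{\pi}_{\ge\theta}(t) := \#\{n : F^{\pi}_n(t)=1,\ \lambda_n\ge\theta\}$ and prove $N^{\pi}_{\ge\theta}(t) \le N^{\mbox{\tiny Max-$\lambda$}}_{\ge\theta}(t)$ for every $\theta$ and every $t$ by induction, since the threshold formulation makes both the service step and the arrival step reduce to elementary inequalities on integers; the pathwise bound $U^{\pi}(t)\le U^{\mbox{\tiny Max-$\lambda$}}(t)$ is then the special case $\theta = \min_n \lambda_n$. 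Finally, once the pathwise bound is in hand under the coupling, applying Theorem \ref{sto_theorem} (specifically $(3)\Rightarrow(1)$) yields \eqref{st_order}, completing the proof.
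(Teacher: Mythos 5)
There is a genuine gap, and it is in the central coupling step. You propose to drive both systems with the \emph{same} arrival vectors $\mathbf{A}(t)$ from the same initial state and then prove the pathwise bound $U^{\pi}(t)\le U^{\mbox{\tiny Max-$\lambda$}}(t)$. That pathwise statement is false under the identical-arrivals coupling. Counterexample: $N=2$, $M=1$, $\lambda_1<\lambda_2$, both systems start at $\mathbf{F}(0)=(1,1)$. Max-$\lambda$ serves queue 2, leaving temporary state $(1,0)$; a work-conserving $\pi$ may serve queue 1, leaving $(0,1)$. If the common arrival vector is $\mathbf{A}=(1,0)$, the arrival is accepted under $\pi$ (queue 1 is empty) but blocked under Max-$\lambda$ (queue 1 is occupied), giving $U^{\pi}=2>1=U^{\mbox{\tiny Max-$\lambda$}}$. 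So the reduction ``same arrivals $+$ induction $\Rightarrow$ pathwise dominance'' cannot work, and no strengthening of the induction hypothesis can rescue it, since any such invariant would imply the (false) total-count dominance. This is precisely why the paper does \emph{not} use the naive coupling: it builds a third process $\mathcal{U}^{\lambda}$, run under Max-$\lambda$ but driven by a \emph{modified} arrival process $\mathbf{A}^{\lambda}(t)$ that merely has the same probability law as $\mathbf{A}(t)$. The arrivals are coupled by matching the $l$-th empty (post-service) buffer of the $\pi$ system to the $l$-th empty buffer of the Max-$\lambda$ system in increasing-$\lambda$ order, and since $\lambda_{j^{\pi}(l)}\le\lambda_{j^{\lambda}(l)}$ (see \eqref{empty_order}--\eqref{lambda_order}) one can let every arrival seen by $\pi$'s $l$-th empty buffer force an arrival at Max-$\lambda$'s $l$-th empty buffer, adding an independent extra arrival with the residual probability as in \eqref{coupling1}--\eqref{coupling2}. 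Pathwise dominance is then proved for $\mathcal{U}^{\pi}$ versus $\mathcal{U}^{\lambda}$, and Theorem \ref{sto_theorem} transfers it to $\mathcal{U}^{\mbox{\tiny Max-$\lambda$}}$ via equality in law. The construction of this distribution-preserving, order-respecting arrival coupling is the key idea your proposal is missing.

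A second, related problem is that your strengthened invariant points the wrong way. You require an injective matching of each queue occupied under $\pi$ to an occupied queue under Max-$\lambda$ with \emph{larger or equal} $\lambda$ (equivalently $N^{\pi}_{\ge\theta}(t)\le N^{\mbox{\tiny Max-$\lambda$}}_{\ge\theta}(t)$ for all $\theta$). This fails after a single service step even from identical states: in the example above, after service $\pi$ occupies only the high-$\lambda$ queue while Max-$\lambda$ occupies only the low-$\lambda$ queue. Because Max-$\lambda$ clears the highest-$\lambda$ queues, the correct structural invariant is the opposite prefix-sum dominance in increasing-$\lambda$ order, $\sum_{n=1}^{j}F^{\pi}_{n}(t)\le\sum_{n=1}^{j}F^{\lambda}_{n}(t)$ for all $j$ (the paper's \eqref{partial_order}, preserved through service via Lemma \ref{lem:new} and through arrivals via the coupling), whose $j=N$ case gives the total-packet comparison.
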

\begin{proof}
Without loss of generality, assume the queues are sorted so that $\lambda_n\leq\lambda_{n+1},~n=1,2,\cdots,N-1$. Define $\{\mathbf{F}^\pi(t)\}_{t=0}^{\infty}$ as the buffer state vector under policy $\pi$.  Define
$\{\mathbf{F}^{\mbox{\tiny Max-$\lambda$}}(t)\}_{t=0}^{\infty}$
as the corresponding buffer states under the Max-$\lambda$ policy.
By assumption the initial states satisfy $\mathbf{F}^\pi(0)=\mathbf{F}^{\mbox{\tiny Max-$\lambda$}}(0)$.  Next,  we construct a \emph{third process} $\mathcal{U}^\lambda$
with a \emph{modified} arrival vector process $\{\mathbf{A}^\lambda(t)\}_{t=0}^{\infty}$ and a corresponding
buffer state vector $\{\mathbf{F}^\lambda(t)\}_{t=0}^{\infty}$ (with the same initial state
$\mathbf{F}^\lambda(0)=\mathbf{F}^\pi(0)$), which satisfies:
\begin{enumerate}
  \item $\mathcal{U}^\lambda$ is also generated from the Max-$\lambda$ policy.
  \item $\mathcal{U}^\lambda =_{st} \mathcal{U}^{\mbox{\tiny Max-$\lambda$}}$. Since the total packet process is completely determined by the initial state, the scheduling policy, and the arrival process, it is enough to construct $\{\mathbf{A}^\lambda(t)\}_{t=0}^{\infty}$ so that it is of the same probability law as $\{\mathbf{A}(t)\}_{t=0}^{\infty}$.
  \item $U^\pi(t)\leq U^\lambda(t)~\forall t\geq0$.
\end{enumerate}

Since the arrival process $\mathbf{A}(t)$ is i.i.d. over slots, in order to guarantee 2) and 3), it is sufficient to construct $\mathbf{A}^\lambda(t)$ coupled with $\mathbf{A}(t)$ for each $t$ so that the following two properties hold for all $t\geq0$:
\begin{itemize}
  \item  The random variables $\mathbf{A}(t)$ and $\mathbf{A}^\lambda(t)$ have the same probability law. Specifically, both  produce arrivals according to Bernoulli processes that are independent over queues and over time, with $Pr[A_n(t)=1]=Pr[A_n^\lambda(t)=1] = \lambda_n$ for all $n \in \{1, \ldots, N\}$.
  \item For all $j\in\{1,2,\cdots,N\}$,
  \begin{equation}\label{partial_order}
            \sum_{n=1}^jF^\pi_{n}(t)\leq\sum_{n=1}^jF^\lambda_{n}(t),
  \end{equation}
\end{itemize}

The construction is based on an induction.

At $t=0$ we have $\mathbf{F}^\pi(0)=\mathbf{F}^\lambda(0)$. Thus, \eqref{partial_order} naturally holds for $t=0$.
Now fix $\tau \geq 0$ and assume \eqref{partial_order} holds for all slots up to time $t=\tau$.  If $\tau\geq 1$, further assume the arrivals $\{\mathbf{A}^\lambda(t)\}_{t=0}^{\tau-1}$ have been constructed to have the same probability law as $\{\mathbf{A}(t)\}_{\tau=0}^{\tau-1}$. Since arrivals on slot $\tau$ occur at the \emph{end} of slot $\tau$, the arrivals $\mathbf{A}^\lambda(\tau)$ must be constructed.
We are going to show  there exists an $\mathbf{A}^\lambda(\tau)$ that is \emph{coupled}  with $\mathbf{A}(\tau)$ so that it has the same probability law and it also ensures \eqref{partial_order} holds for $t=\tau+1$.

Since arrivals occur after the transmitting action, we divide the analysis into two parts. First, we analyze the temporary buffer states after the transmitting action \emph{but before arrivals occur}. Then, we define arrivals $\mathbf{A}^\lambda(\tau)$ at the end of slot $\tau$ to achieve the desired coupling.

Define $\tilde{\mathbf{F}}^\pi(\tau)$ and $\tilde{\mathbf{F}}^{\lambda}(\tau)$ as the \emph{temporary buffer states} right after the transmitting action at slot $\tau$ but before arrivals occur under policy $\pi$ and policy Max-$\lambda$, respectively.  Thus, for each queue $n \in \{1, \ldots, N\}$:
\begin{eqnarray}
\tilde{F}_n^\pi(\tau) &=& F_n^\pi(\tau) - \alpha_n^\pi(\tau) \label{eq:f-pi-dude} \\
\tilde{F}_n^\lambda(\tau) &=& F_n^\lambda(\tau) - \alpha_n^\lambda(\tau) \label{eq:f-lambda-dude}
\end{eqnarray}
where $\alpha_n^\pi(\tau)$ and $\alpha_n^\lambda(\tau)$ are the slot $\tau$ decisions under policy
$\pi$ and Max-$\lambda$, respectively. Since
 \eqref{partial_order} holds for $j=N$ on slot $\tau$, the total number of packets at the start of slot $\tau$ under policy $\pi$ is less than or equal to that of using Max-$\lambda$.   Since both policies $\pi$ and Max-$\lambda$ are work-conserving,
 it is impossible for policy $\pi$ to transmit more packets than Max-$\lambda$ during slot $\tau$.
 This implies:
  \begin{equation}\label{FpileqFlambda}
\sum_{n=1}^N\tilde{F}_n^\pi(\tau)\leq\sum_{n=1}^N\tilde{F}^\lambda_{n}(\tau).
\end{equation}
 Indeed, if $\pi$ transmits the same number of packets as Max-$\lambda$ on slot $\tau$, then \eqref{FpileqFlambda} clearly holds.  On the other hand, if $\pi$ transmits \emph{fewer} packets than Max-$\lambda$, it must transmit fewer than $M$ packets (since $M$ is the number of servers).  In this case, the work-conserving nature of $\pi$ implies that all non-empty queues were served, so that $\tilde{F}_n^\pi(\tau)=0$ for all $n$ and \eqref{FpileqFlambda} again holds.
We now claim the following holds:
\begin{lemma}  \label{lem:new}
\begin{equation}\label{temp_order}
\sum_{n=1}^j\tilde{F}_n^\pi(\tau)\leq\sum_{n=1}^j\tilde{F}^\lambda_{n}(\tau)~~\forall j\in\{1,2,\cdots,N\}.
\end{equation}
\end{lemma}
\begin{proof}
See Section \ref{sec:additional-proof}.
\end{proof}

Now
let $j^\pi(l)$ and $j^\lambda(l)$ be the subscript of $l$-th \emph{empty} temporary buffer (with order starting from the first queue) corresponding to $\tilde{\mathbf{F}}^\pi(\tau)$ and $\tilde{\mathbf{F}}^{\lambda}(\tau)$, respectively.
It  follows from \eqref{temp_order} that the $\pi$ system on slot $\tau$ has at  least as many empty temporary buffer states as the Max-$\lambda$ policy, and:
\begin{equation}\label{empty_order}
    j^\pi(l)\leq j^\lambda(l)~~\forall l\in\{1,2,\cdots,K(\tau)\}
\end{equation}
where $K(\tau)\leq N$ is the the number of empty temporary buffer states under Max-$\lambda$ at time slot $\tau$.
Since $\lambda_i\leq\lambda_j$ if and only if $i\leq j$, \eqref{empty_order} further implies that
\begin{equation}\label{lambda_order}
    \lambda_{j^\pi(l)}\leq\lambda_{j^\lambda(l)}~~\forall l\in\{1,2,\cdots,K(\tau)\}.
\end{equation}

Now construct the arrival vector $\mathbf{A}^\lambda(\tau)$ for the system with the Max-$\lambda$ policy  in the following way:
\begin{align}
    A_{j^\pi(l)}(\tau)=1&\Rightarrow A^\lambda_{j^\lambda(l)}(\tau)=1 ~~~w.p.~1\label{coupling1}\\
    A_{j^\pi(l)}(\tau)=0&\Rightarrow\left\{
                                  \begin{array}{ll}
                                    A^\lambda_{j^\lambda(l)}(\tau)=0 , & \hbox{w.p.~$\frac{1-\lambda_{j^\lambda(l)}}{1-\lambda_{j^\pi(l)}}$;} \\
                                    A^\lambda_{j^\lambda(l)}(\tau)=1,  & \hbox{w.p.~$\frac{\lambda_{j^\lambda(l)}-\lambda_{j^\pi(l)}}{1-\lambda_{j^\pi(l)}}$.}
                                  \end{array}
                                \right.\label{coupling2}
\end{align}
Notice that \eqref{coupling2} uses valid probability distributions because of \eqref{lambda_order}.
This establishes the slot $\tau$ arrivals for the Max-$\lambda$ policy for all of its $K(\tau)$ queues with empty temporary buffer states.  The slot $\tau$ arrivals for its queues with non-empty temporary buffers
will be dropped and hence do not affect the queue states on slot $\tau+1$.  Thus, we define arrivals $A_j^\lambda(\tau)$ to be independent of all other quantities and to be Bernoulli with $Pr[A_j^\lambda(\tau)=1]=\lambda_j$ for all $j$ in the set:
\[ j\in\{1,2,\cdots,N\}\setminus\{j^\lambda(1),\cdots,j^\lambda(K(\tau))\} \]
Now we verify that $\mathbf{A}(\tau)$ and $\mathbf{A}^\lambda(\tau)$ have the same probability law.
First condition on knowledge of $K(\tau)$ and the particular $j^\pi(l)$ and $j^\lambda(l)$ values for $l \in \{1, \ldots, K(\tau)\}$.
All queues $j$ with non-empty temporary buffer states on slot $\tau$ under Max-$\lambda$ were defined to have arrivals $A_{j}^\lambda(\tau)$ as independent Bernoulli variables with $Pr[A_j^\lambda(\tau)=1]=\lambda_j$.
It remains to  verify those queues within $\{j^\lambda(1),\cdots,j^\lambda(K(\tau))\}$.
According to \eqref{coupling2}, for any queue $j^\lambda(l)$ in set  $\{j^\lambda(1),\cdots,j^\lambda(K(\tau))\}$, it follows
\begin{eqnarray*}
Pr\left[A^\lambda_{j^\lambda(l)}(\tau)=0\right]&=&(1-\lambda_{j^\pi(l)})\frac{1-\lambda_{j^\lambda(l)}}{1-\lambda_{j^\pi(l)}}\nonumber\\
&=&1-\lambda_{j^\lambda(l)}
\end{eqnarray*}
and so $Pr[A_j^\lambda(\tau)=1]=\lambda_j$ for all $j \in \{j^\lambda(l)\}_{l=1}^{K(\tau)}$.
Further, mutual independence of $\{A_{j^\pi(l)}(\tau)\}_{l=1}^{K(\tau)}$ implies mutual independence of
$\{A_{j^\lambda(l)}(\tau)\}_{l=1}^{K(\tau)}$.   Finally, these quantities are conditionally
independent of events before slot $\tau$, given knowledge of $K(\tau)$ and the particular $j^\pi(l)$ and $j^\lambda(l)$ values for $l \in \{1, \ldots, K(\tau)\}$.
Thus, conditioned on this knowledge,
 $\mathbf{A}(\tau)$ and $\mathbf{A}^\lambda(\tau)$ have the same probability law.  This holds for all possible values of the conditional knowledge $K(\tau)$  and $j^\pi(l)$ and $j^\lambda(l)$.  It follows that $\mathbf{A}(\tau)$ and $\mathbf{A}^\lambda(\tau)$ have the same (unconditioned) probability law.

Finally, we show that the coupling relations \eqref{coupling1} and \eqref{coupling2} produce such $\mathbf{F}^\lambda(\tau+1)$ satisfying
\begin{equation}\label{conclusion}
    \sum_{n=1}^jF^\pi_{n}(\tau+1)\leq\sum_{n=1}^jF^\lambda_{n}(\tau+1),~\forall~j\in\{1,2,\cdots,N\}.
\end{equation}
According to \eqref{coupling1} and \eqref{coupling2},
\[A_{j^\pi(l)}(\tau)\leq A^\lambda_{j^\lambda(l)}(\tau),~~\forall l\in\{1,\cdots,K(\tau)\},\]
thus,
\begin{equation}\label{arrival_ineq}
\sum_{i=1}^{l}A_{j^\pi(i)}(\tau)\leq \sum_{i=1}^{l}A^\lambda_{j^\lambda(i)}(\tau),~~\forall l\in\{1,\cdots,K(\tau)\}.
\end{equation}
Pick any $j\in\{1,2,\cdots,N\}$. Let $l^\pi$ be the number of empty temporary buffers within the first $j$ queues
under policy $\pi$, i.e.
\[l^\pi=\max_{j^\pi(l)\leq j}l\]
Similarly define:
\[l^\lambda=\max_{j^\lambda(l)\leq j}l.\]
Then, it follows:
\begin{eqnarray}
\sum_{n=1}^jF^\pi_{n}(\tau+1)&=&\sum_{n=1}^j\tilde{F}^\pi_{n}(\tau)+\sum_{i=1}^{l^\pi}A_{j^\pi(i)}(\tau) \label{F(t+1)}\\
\sum_{n=1}^jF^\lambda_{n}(\tau+1)&=&\sum_{n=1}^j\tilde{F}^\lambda_{n}(\tau)+\sum_{i=1}^{l^\lambda}A^\lambda_{j^\lambda(i)}(\tau)\label{barF(t+1)}
\end{eqnarray}
We know that $l^\pi \geq l^\lambda$.  So there are two cases:
\begin{itemize}
  \item If $l^\pi=l^\lambda$, then from \eqref{F(t+1)}:
  \begin{eqnarray*}
  \sum_{n=1}^jF_n^\pi(\tau+1) &=& \sum_{n=1}^j\tilde{F}_n^\pi(\tau) + \sum_{i=1}^{l^{\lambda}}A_{j^\pi(i)}(\tau)\\
  &\leq& \sum_{n=1}^j\tilde{F}_n^\lambda(\tau) + \sum_{i=1}^{l^{\lambda}}A_{j^{\lambda}(i)}(\tau) \\
  &=& \sum_{n=1}^j F_n^\lambda(\tau+1)
  \end{eqnarray*}
where the inequality follows from \eqref{temp_order} and from \eqref{arrival_ineq} with
$l=l^{\lambda}$.
Thus, \eqref{conclusion} holds.
  \item If $l^\pi>l^\lambda$, then from \eqref{F(t+1)}:
\begin{align}
\sum_{n=1}^jF^\pi_{n}(\tau+1)=&\sum_{n=1}^j\tilde{F}^\pi_{n}(\tau)+\sum_{i=1}^{l^\lambda}A_{j^\pi(i)}(\tau)\nonumber\\
&+\sum_{i=l^\lambda+1}^{l^\pi}A_{j^\pi(i)}(\tau)\nonumber\\
\leq&\sum_{n=1}^j\tilde{F}^\lambda_{n}(\tau)+\sum_{i=1}^{l^\lambda}A_{j^\pi(i)}(\tau)\nonumber\\
\leq&\sum_{n=1}^j\tilde{F}^\lambda_{n}(\tau)+\sum_{i=1}^{l^\lambda}A^\lambda_{j^\lambda(i)}(\tau) \nonumber\\
=&\sum_{n=1}^jF^\lambda_{n}(\tau+1).\nonumber
\end{align}
where the first inequality follows from the fact that
\begin{eqnarray*}
\sum_{i=l^\lambda+1}^{l^\pi}A_{j^\pi(i)}(\tau)&\leq& l^\pi-l^\lambda\\
&=&(j-l^\lambda)-(j-l^\pi)  \nonumber\\
&=&\sum_{n=1}^j\tilde{F}^\lambda_{n}(\tau)-\sum_{n=1}^j\tilde{F}^\pi_{n}(\tau),\nonumber
\end{eqnarray*}
and the second inequality follows from \eqref{arrival_ineq}.
\end{itemize}
Thus, \eqref{partial_order} holds for $t=\tau+1$ and the induction step is done.
\end{proof}

\begin{corollary} \label{corollary:1}
The Max-$\lambda$ policy maximizes throughput within the class of work-conserving policies.
\end{corollary}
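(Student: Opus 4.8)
The plan is to obtain Corollary~\ref{corollary:1} directly from the stochastic dominance proved in Lemma~\ref{sto_file_stat}, by expressing the throughput of a work-conserving policy as a coordinatewise nondecreasing functional of the total-packet trajectory $\mathcal{U}=\{U(t)\}_{t=0}^{\infty}$ and then invoking the characterizations of stochastic ordering in Theorem~\ref{sto_theorem}. Essentially all of the real work has already been done in Lemma~\ref{sto_file_stat}; what remains is to translate that ordering of trajectories into an ordering of throughput values.

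First I would record the elementary but crucial fact that, for \emph{any} work-conserving policy, the number of packets transmitted on slot $t$ equals $\min\{U(t),M\}$: if $U(t)\geq M$ then each of the $M$ servers is matched to a non-empty queue, while if $U(t)<M$ then work-conservation forces every one of the $U(t)$ non-empty queues to be served. Consequently the objective~\eqref{sp:multi-1} evaluated at a work-conserving policy $\pi$ can be rewritten as $\liminf_{T\to\infty}\frac{1}{T}\sum_{t=0}^{T-1}\min\{U^{\pi}(t),M\}$, and since the Max-$\lambda$ policy is itself work-conserving the same identity holds for it with $U^{\mbox{\tiny Max-$\lambda$}}(t)$ in place of $U^{\pi}(t)$. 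This reduction is legitimate precisely \emph{within} the work-conserving class, since it is work-conservation that makes the per-slot transmission count a function of $U(t)$ alone rather than of the full buffer configuration.

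Next, for each fixed $T$ consider the functional $g_T(u_0,\dots,u_{T-1})=\frac{1}{T}\sum_{t=0}^{T-1}\min\{u_t,M\}$, which is measurable and nondecreasing in every coordinate (as $x\mapsto\min\{x,M\}$ is nondecreasing). Lemma~\ref{sto_file_stat} gives $\mathcal{U}^{\pi}\leq_{st}\mathcal{U}^{\mbox{\tiny Max-$\lambda$}}$, so applying Theorem~\ref{sto_theorem} --- either through the tail-probability characterization with $g=g_T$, or through the pathwise-coupling characterization, which supplies copies $\mathcal{U}'$ and $\mathcal{U}''$ on a common space with $\mathcal{U}'=_{st}\mathcal{U}^{\pi}$, $\mathcal{U}''=_{st}\mathcal{U}^{\mbox{\tiny Max-$\lambda$}}$ and $U'(t)\leq U''(t)$ for all $t$ --- yields $\mathbb{E}\big[\tfrac{1}{T}\sum_{t=0}^{T-1}\min\{U^{\pi}(t),M\}\big]\leq\mathbb{E}\big[\tfrac{1}{T}\sum_{t=0}^{T-1}\min\{U^{\mbox{\tiny Max-$\lambda$}}(t),M\}\big]$ for every $T$. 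Since this inequality holds termwise in $T$ it passes to the $\liminf$, so the throughput of $\pi$ is no larger than that of Max-$\lambda$, and as $\pi$ was an arbitrary work-conserving policy this is exactly Corollary~\ref{corollary:1}. (Theorem~\ref{thm:max-lambda} then follows by combining this with the separate ``second part'' argument that non-work-conserving policies cannot do better.)

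Because the coupling construction behind $\mathcal{U}^{\pi}\leq_{st}\mathcal{U}^{\mbox{\tiny Max-$\lambda$}}$ is already in hand, there is no substantive obstacle left here. The only points requiring a little care are: (i) fixing the precise sense in which ``throughput'' is being compared --- I would state the result for the expected time-average, and recover the sample-path $\liminf$ version, if wanted, from the pathwise-coupling form of Theorem~\ref{sto_theorem}, which gives an almost sure pathwise comparison on the common probability space; and (ii) verifying that $g_T$ really is nondecreasing in all coordinates so Theorem~\ref{sto_theorem} applies, which is immediate. If I had to name a "hard part'', it is conceptual rather than technical: recognizing that the scalar process $\mathcal{U}$ is the right sufficient statistic for throughput of work-conserving policies, which is what lets the one-dimensional stochastic ordering of Lemma~\ref{sto_file_stat} settle a genuinely multidimensional scheduling question.
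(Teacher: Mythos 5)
Your proposal is correct and follows essentially the same route as the paper: the paper likewise observes that any work-conserving policy transmits $\min[U(t),M]$ packets per slot and applies Theorem \ref{sto_theorem} to the nondecreasing function $g(U(0),\ldots,U(t))=\min[U(t),M]$ together with Lemma \ref{sto_file_stat} to compare expected transmissions slot by slot. Your version merely makes the time-averaging and passage to the $\liminf$ explicit, which the paper leaves implicit.
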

\begin{proof}
Let $S^\pi(t)$ be the number of packets transmitted under any work-conserving policy $\pi$ on slot $t$,
and let $S^{\mbox{\tiny Max-$\lambda$}}(t)$ be the corresponding process under policy Max-$\lambda$.  Lemma
\ref{sto_file_stat} implies $\mathcal{U}^\pi(t) \leq_{st} \mathcal{U}^{\mbox{\tiny Max-$\lambda$}}$. Then:
\begin{eqnarray*}
 \expect{S^\pi(t)}&=&\expect{\min[U^\pi(t), M]}   \\
  &\leq&\expect{\min[U^{\mbox{\tiny Max-$\lambda$}}(t),M]} \\
  &=& \expect{S^{\mbox{\tiny Max-$\lambda$}}(t)}
\end{eqnarray*}
where the inequality follows from Theorem \ref{sto_theorem}, with the understanding that $g(U(0),\ldots, U(t))\triangleq\min[U(t),M]$ is a function that is nondecreasing in all coordinates.
\end{proof}

\subsection{Extending to non-work-conserving policies}

Corollary \ref{corollary:1} establishes optimality of Max-$\lambda$ over the class of all work-conserving policies.
To complete the proof of Theorem \ref{thm:max-lambda}, it remains to show that throughput cannot be increased by allowing for non-work-conserving policies.  It suffices to show that for any non-work-conserving policy, there exists a work-conserving policy that gets the same or better throughput.  The proof is straightforward and we give only a
proof sketch for brevity.
Consider any non-work-conserving policy $\pi$, and let $F_n^\pi(t)$ be its buffer state process on slot $t$ for each queue $n$.  For the same initial buffer state and arrival process, define the work-conserving policy $\pi'$ as follows:  Every slot $t$, policy $\pi'$ initially allocates the $M$ servers to exactly the same queues as policy $\pi$.  However, if some of these queues are empty under policy $\pi'$, it reallocates those servers to any non-empty queues that are not yet allocated servers (in keeping with the work-conserving property).  Let $F_n^{\pi'}(t)$ be the buffer state process for queue $n$ under policy $\pi'$.
 It is not difficult to show that $F_n^\pi(t) \geq F_n^{\pi'}(t)$ for all queues $n$ and all slots $t$.
 Therefore, on every slot $t$, the amount of \emph{blocked arrivals} under policy $\pi$ is always greater than or equal to that under policy $\pi'$.  This implies the throughput under policy $\pi$ is less than or equal to that of policy $\pi'$.

\section{Simulation experiments} \label{section:sims}

In this section, we demonstrate near optimality of the multi-user DPP ratio indexing algorithm by extensive simulations. In the first part, we simulate the case in which the file length distribution is geometric, and show that the suboptimality gap is extremely small. In the second part, we test the robustness of our algorithm for more general scenarios in which the file length distribution is not geometric.
For simplicity, it is assumed throughout that all transmissions send a fixed sized packet, all files are an integer number of these packets, and that decisions $\alpha_n(t) \in \mathcal{A}_n$ affect the success probability of the transmission as well as the power expenditure.

\subsection{DPP ratio indexing with geometric file length} \label{subsection:geometric-sim}
In the first simulation we use $N=8$, $M=4$ with action set $\mathcal{A}_n=\{0,1\}~\forall n$; The settings are generated randomly and specified in Table I, and the constraint $\beta=5$.

\begin{table}
\begin{center}
\caption{Problem parameters}
\begin{tabular}{|l|l|l|l|l|l|l|}
  \hline
   User & $\lambda_n$ & $\mu_n$ & $\phi_n(1)$ & $c_n$ & $p_n(1)$  \\ \hline
  1 & 0.0028 & 0.5380 & 0.4842 & 4.7527 & 3.9504  \\
  2 & 0.4176 & 0.5453 & 0.4908 & 2.0681 & 3.7391  \\
  3 & 0.0888 & 0.5044 & 0.4540 & 2.8656 & 3.5753  \\
  4 & 0.3181 & 0.6103 & 0.5493 & 2.4605 & 2.1828  \\
  5 & 0.4151 & 0.9839 & 0.8855 & 4.5554 & 3.1982  \\
  6 & 0.2546 & 0.5975 & 0.5377 & 3.9647 & 3.5290  \\
  7 & 0.1705 & 0.5517 & 0.4966 & 1.5159 & 2.5226  \\
  8 & 0.2109 & 0.7597 & 0.6837 & 3.6364 & 2.5376  \\
  \hline
\end{tabular}
\end{center}
\end{table}

The algorithm is run for 1 million slots in each trial and each point is the average of 100 trials. We compare the performance of our algorithm with the optimal randomized policy. The optimal policy is computed by constructing composite states (i.e. if there are three users where user 1 is at state 0, user 2 is at state 1 and user 3 is at state 1, we view 011 as a composite state), and then reformulating this MDP into a linear program (see \cite{fox1966markov}) with $\mathbf{5985}$ variables and $\mathbf{258}$ constraints.

In Fig. \ref{fig:Stupendous2}, we show that as our tradeoff parameter $V$ gets larger, the objective value approaches the optimal value and achieves a near optimal performance. Fig. \ref{fig:Stupendous3} and Fig. \ref{fig:Stupendous4} show that $V$ also affects the virtual queue size and the constraint gap. As $V$ gets larger, the average virtual queue size becomes larger and the gap becomes smaller. We also plot the upper bound of queue size we derived from Lemma \ref{lem:3} in Fig. \ref{fig:Stupendous3}, demonstrating that the queue is bounded. In order to show that $V$ is indeed a trade-off parameter affecting the convergence time, we plotted Fig. \ref{fig:Stupendous5}. It can be seen from the figure that as $V$ gets larger, the number of time slots needed for the running average to roughly converge to the optimal power expenditure becomes larger.

\begin{figure}[htbp]
   \centering
   \includegraphics[height=2.5in]{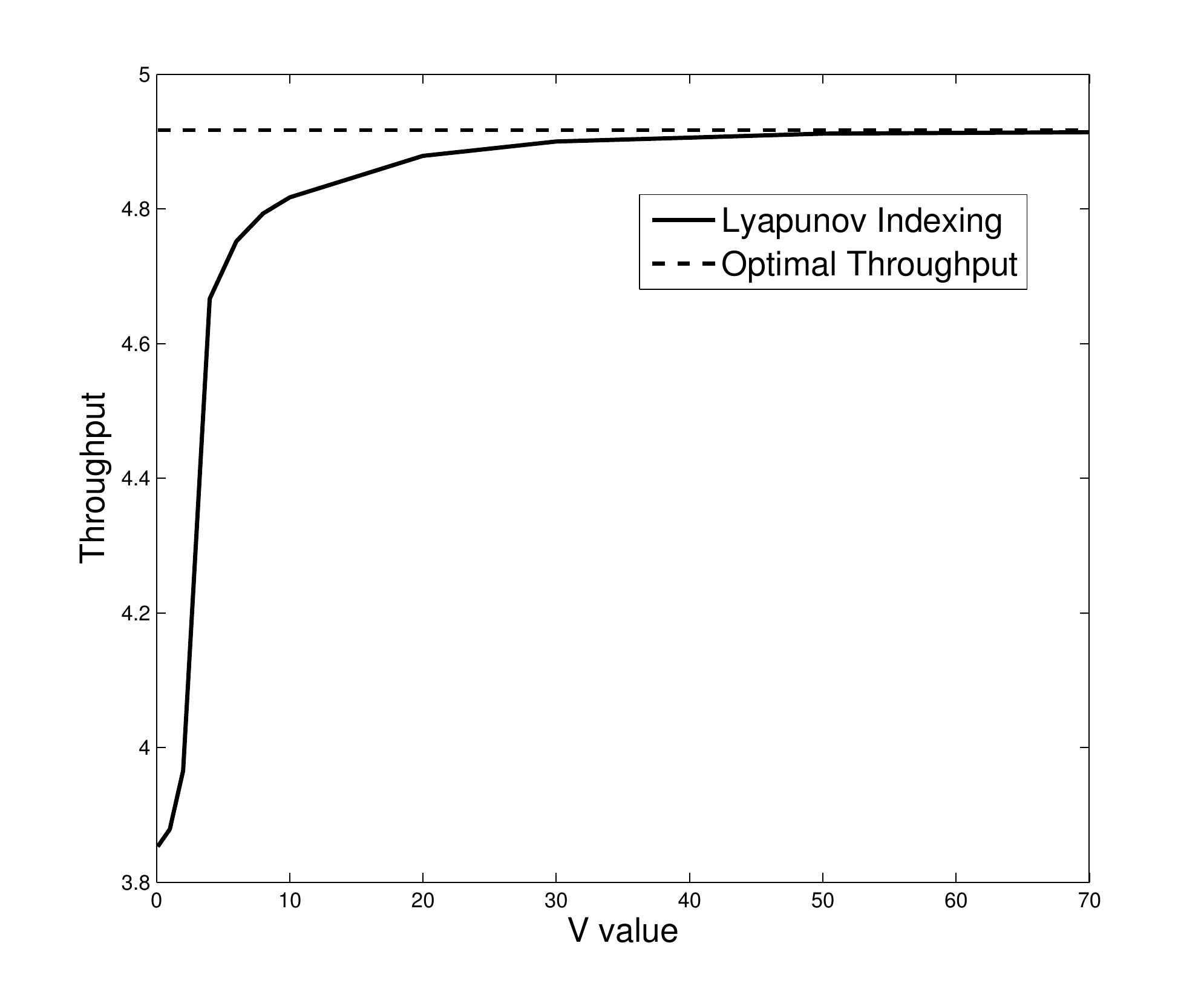} 
   \caption{Throughput versus tradeoff parameter V}
   \label{fig:Stupendous2}
\end{figure}

\begin{figure}[htbp]
   \centering
   \includegraphics[height=2.5in]{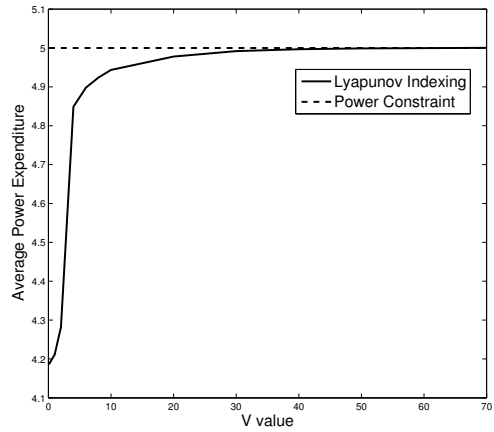} 
   \caption{The time average power consumption versus tradeoff parameter $V$.}
   \label{fig:Stupendous3}
\end{figure}

\begin{figure}[htbp]
   \centering
   \includegraphics[height=2.5in]{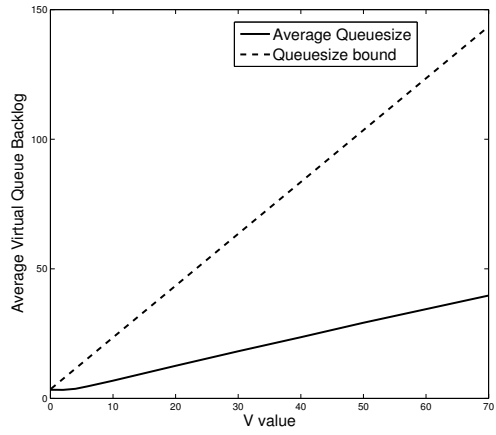} 
   \caption{Average virtual queue backlog versus tradeoff parameter $V$.}
   \label{fig:Stupendous4}
\end{figure}

\begin{figure}[htbp]
   \centering
   \includegraphics[height=2.5in]{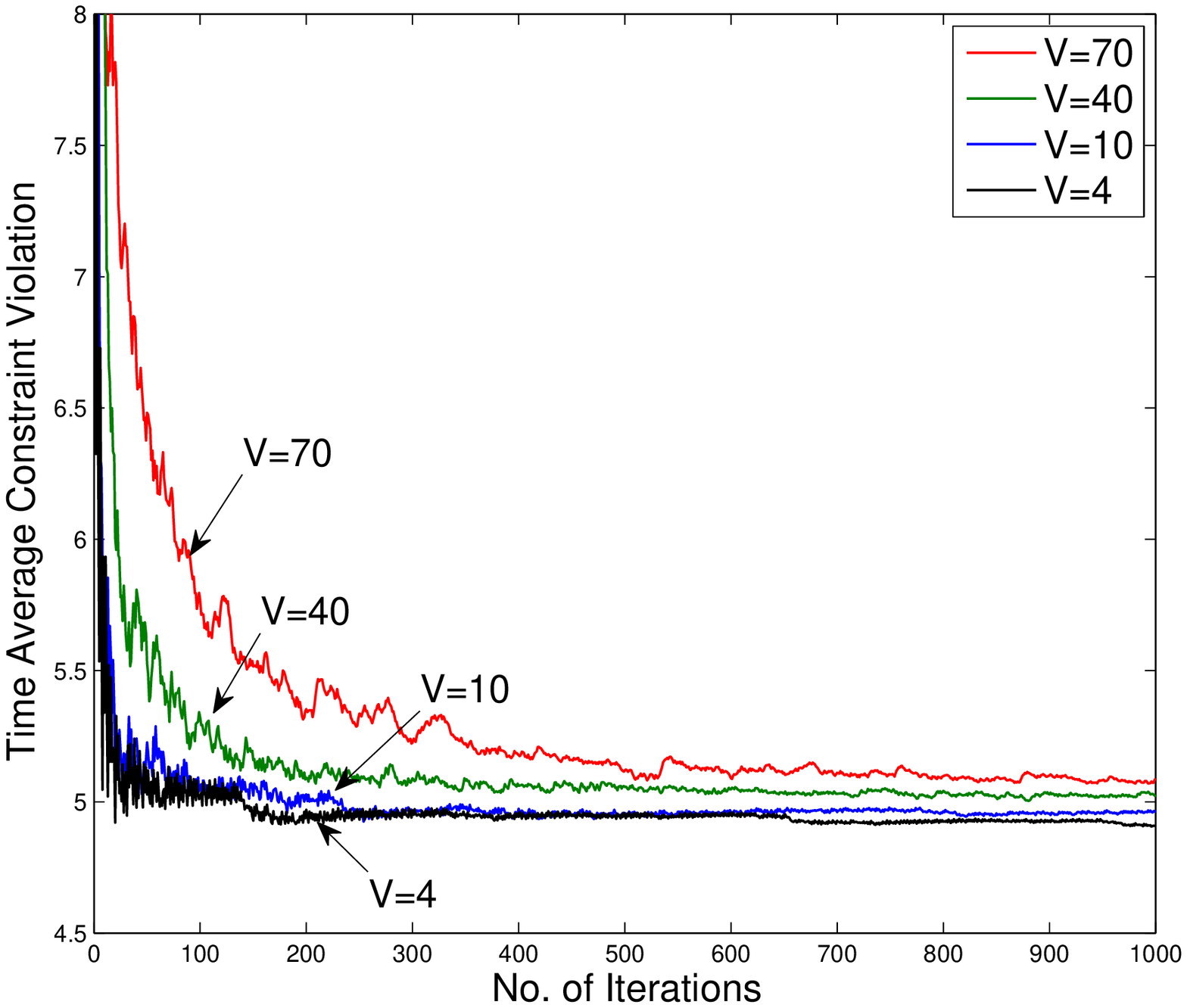} 
   \caption{Running average power consumption versus tradeoff parameter $V$.}
   \label{fig:Stupendous5}
\end{figure}

In the second simulation, we explore the parameter space and demonstrate that in general the suboptimality gap of our algorithm is negligible. First, we define the relative error as the following:
\begin{equation}\label{e20}
    \textrm{relative error}=\frac{|OBJ-OPT|}{OPT}
\end{equation}
where $OBJ$ is the objective value after running 1 million slots of our algorithm and $OPT$ is the optimal value. We first explore the system parameters by letting $\lambda_{n}$'s and $\mu_n$'s take random numbers within 0 and 1, letting $c_n$ take random number within 1 and 5, choosing $V=70$ and fixing the remaining
parameters the same as the last experiment. We conduct 1000 Monte-Carlo experiments and calculate the average relative error, which is \textbf{0.00083}.

Next, we explore the control parameters by letting the $p_{n}(1)$ take random number within 2 and 4, and letting $\phi_n(1)/\mu_n$ values random numbers between 0 and 1, choosing $V=70$ and fixing the remaining parameters the same as the first simulation. The relative error is \textbf{0.00057}. Both experiments show that the suboptimality gap is extremely small.

\subsection{DPP ratio indexing with non-memoryless file lengths}
In this part, we test the sensitivity of the algorithm to different file length distributions. In particular, the uniform distribution and the Poisson distribution are implemented respectively, while our algorithm still treats them as a geometric distribution with same mean. We then compare their throughputs with the geometric case.

We use $N=9$, $M=4$ with action set $\mathcal{A}_n=\{0,1\}~\forall n$. The settings are specified in Table II with constraint $\beta=5$. Notice that for geometric and uniform distribution, the file lengths are taken to be integer values. The algorithm is run for 1 million slots in each trial and each point is the average of 100 trials.
\begin{table}
\begin{center}
\caption{Problem parameters under geometric, uniform and poisson distribution}
\begin{tabular}{|l|l|l|l|l|l|l|l|}
  \hline
  User & $\mu_n$ & Unif. & Poiss. & $\lambda_n$ & $\phi_n(1)$ & $c_n$ & $p_n(1)$ \\
   &~ & interval & mean &~&~&~& \\ \hline
   1 & 1/3 & [1,5] & 3 & 0.4955 & 0.1832 & 4.3261 & 2.8763 \\
   2 & 1/2 & [1,3] & 2 & 0.1181 & 0.4187 & 1.6827 & 2.0549 \\
  3 & 1/2 & [1,3] & 2 & 0.1298 & 0.4491 & 1.9483 & 2.1469 \\
  4 & 1/7 & [1,13] & 7 & 0.4660 & 0.0984 & 2.7495 & 3.4472 \\
  5  & 1/4 & [1,7] & 4 & 0.1661 & 0.1742 & 1.5535 & 3.2801 \\
  6 & 1/3 & [1,5] & 3 & 0.2124 & 0.3101 & 4.3151 & 3.5648 \\
  7 & 1/2 & [1,3] & 2 & 0.5295 & 0.4980 & 3.6701 & 2.4680 \\
  8 & 1/5 & [1,9] & 5 & 0.2228 & 0.1971 & 4.0185 & 2.2984 \\
  9 & 1/4 & [1,7] & 4 & 0.0332 & 0.1986 & 3.0411 & 2.5747 \\
  \hline
\end{tabular}
\end{center}
\end{table}

While the decisions are made using these values, the affect of these decisions incorporates the actual (non-memoryless) file sizes.
Fig. \ref{fig:Stupendous6}  shows the throughput-versus-$V$ relation for the two non-memoryless cases and the memoryless case with matched means. The performance of all three is similar.  This illustrates that
the indexing algorithm is robust under different file length distributions.

\begin{figure}[htbp]
   \centering
   \includegraphics[height=2.5in]{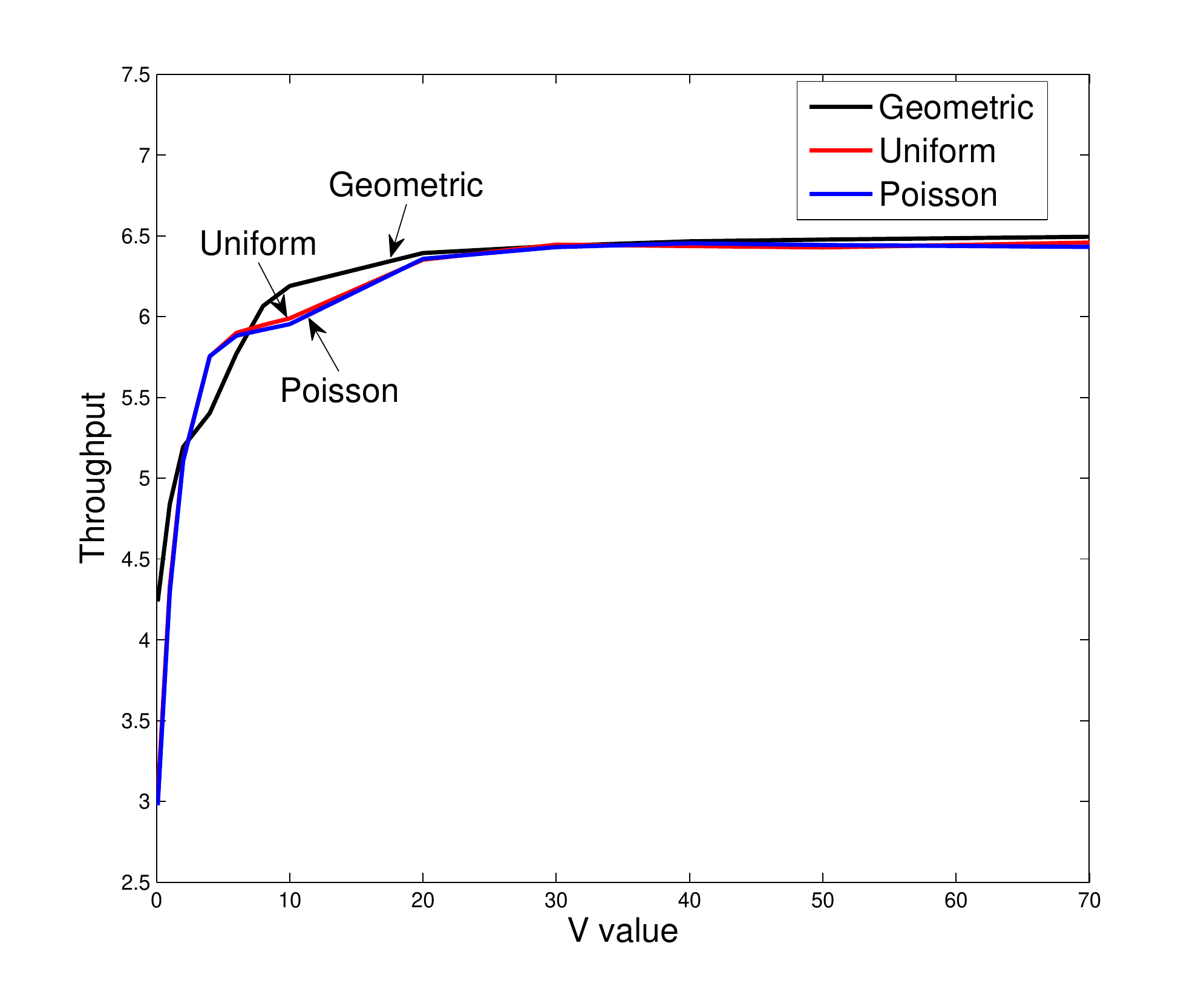} 
   \caption{Throughput versus tradeoff parameter $V$ under different file length distributions.}
   \label{fig:Stupendous6}
\end{figure}

\section{Additional lemmas and proofs}\label{sec:additional-proof}
\subsection{Comparison of Max-$\lambda$ and Min-$\lambda$}

This section shows that different work conserving policies can give different throughput for
the $N$ single-buffer queue problem of Section \ref{subsection:single_buffer}.
Suppose we have two single-buffer queues and one server.  Let $\lambda_1, \lambda_2$ be the arrival rates of the i.i.d. Bernoulli arrival processes for queues 1 and 2. Assume $\lambda_1 \neq \lambda_2$.
There are 4 system states: $(0,0),~(0,1),~(1,0),~(1,1)$, where state $(i,j)$ means queue 1 has $i$ packets and queue 2 has $j$ packets.  Consider the (work conserving) policy of giving queue 1 strict priority over queue 2.  This is equivalent to the Max-$\lambda$ policy when $\lambda_1>\lambda_2$, and is equivalent to the Min-$\lambda$ policy when $\lambda_1 < \lambda_2$.  Let $\theta(\lambda_1, \lambda_2)$ be the steady state throughput.
Then:
\[ \theta(\lambda_1, \lambda_2) = p_{1,0} + p_{0,1} + p_{1,1}  \]
 where $p_{i,j}$ is the steady state probability of the resulting discrete time Markov chain.  One can solve the global balance equations to show that $\theta(1/2, 1/4) > \theta(1/4, 1/2)$, so that the Max-$\lambda$ policy has a higher throughput than the Min-$\lambda$ policy. In particular, it can be shown that:
 \begin{itemize}
 \item Max-$\lambda$ throughput: $\theta(1/2,1/4) = 0.7$
 \item Min-$\lambda$ throughput: $\theta(1/4, 1/2) \approx 0.6786$
 \end{itemize}


\subsection{Proof of Lemma \ref{lem:new}}
This section proves that:
\begin{equation}\label{temp_order-appendix}
\sum_{n=1}^j\tilde{F}_n^\pi(\tau)\leq\sum_{n=1}^j\tilde{F}^\lambda_{n}(\tau)~~\forall j\in\{1,2,\cdots,N\}.
\end{equation}

The case $j=N$ is already established from \eqref{FpileqFlambda}.
Fix $j \in \{1, 2, \ldots, N-1\}$.  Since $\pi$ cannot transmit more packets than Max-$\lambda$ during slot $\tau$,
inequality \eqref{temp_order-appendix} is proved by considering two cases:
\begin{enumerate}
  \item Policy $\pi$ transmits less packets than policy Max-$\lambda$. Then $\pi$ transmits less than $M$ packets during slot $\tau$.  The work-conserving nature of $\pi$  implies all non-empty queues were served, so $\tilde{F}^\pi_n(\tau)=0$ for all $n$ and \eqref{temp_order-appendix} holds.
  \item Policy $\pi$ transmits the same number of packets as policy Max-$\lambda$. In this case, consider the temporary buffer states of the last $N-j$ queues under policy Max-$\lambda$. If  $\sum_{n=j+1}^N\tilde{F}^\lambda_{n}(\tau)=0$, then clearly the following holds
      \begin{equation}\label{reverse_order}
      \sum_{n=j+1}^{N}\tilde{F}^\pi_{n}(\tau)\geq\sum_{n=j+1}^N\tilde{F}^\lambda_{n}(\tau).
      \end{equation}
      Subtracting \eqref{reverse_order} from \eqref{FpileqFlambda} immediately gives \eqref{temp_order-appendix}.
      If  $\sum_{n=j+1}^N\tilde{F}^\lambda_{n}(\tau)>0$, then all $M$ servers of the Max-$\lambda$ system were devoted to serving the largest $\lambda_n$ queues.  So only packets in the last $N-j$ queues could be transmitted by Max-$\lambda$
      during the slot $\tau$.  In particular, $\alpha_n^\lambda(\tau)=0$ for all $n \in \{1, \ldots, j\}$, and so (by \eqref{eq:f-lambda-dude}):
      \begin{equation} \label{eq:last-dude}
      \sum_{n=1}^j\tilde{F}^\lambda_{n}(\tau)=\sum_{n=1}^jF^\lambda_{n}(\tau)
      \end{equation}
      Thus:
      \begin{align}
      \sum_{n=1}^j\tilde{F}^\pi_{n}(\tau)&\leq\sum_{n=1}^jF_n^\pi(\tau) \label{eq:thus-dude1} \\
      &\leq\sum_{n=1}^jF^\lambda_{n}(\tau)\label{eq:thus-dude2} \\
      &=\sum_{n=1}^{j}\tilde{F}^\lambda_{n}(\tau), \label{eq:thus-dude3}
      \end{align}
      where \eqref{eq:thus-dude1} holds by \eqref{eq:f-pi-dude}, \eqref{eq:thus-dude2} holds because
      \eqref{partial_order} is true on slot $t=\tau$, and the last equality holds by \eqref{eq:last-dude}. This proves \eqref{temp_order-appendix}.
\end{enumerate}


\chapter{Opportunistic Scheduling over Renewal Systems}
This chapter considers an opportunistic scheduling problem over a single renewal system. Different from previous chapters, we consider teh scenario where at the beginning of each renewal frame, the  controller observes a random event  and then chooses an action in response to the event, which affects the duration of the frame, the amount of resources used, and a penalty metric. The goal is to make frame-wise decisions so as to minimize the time average penalty subject to time average resource constraints. This problem has applications to task processing and communication in data networks, as well as to certain classes of Markov decision problems. 
We formulate the problem as a dynamic fractional program and propose an adaptive algorithm which uses an empirical accumulation as a feedback parameter. A key feature of the proposed algorithm is that it does not require knowledge of the random event statistics and potentially allows (uncountably) infinite event sets. We prove the algorithm satisfies all desired constraints and achieves $O(\epsilon)$ near optimality with probability 1.

\section{Introduction}
Consider a system that operates over the timeline of real numbers $t \geq 0$.  The timeline is divided into back-to-back periods called \emph{renewal frames} and the start of each frame is called a \emph{renewal} (see Fig. \ref{fig:renewal}).   The system state is refreshed at each renewal.    At the start of each renewal frame $n \in \{0, 1, 2, \dots\}$ the controller observes a random event $\omega[n]\in\Omega$ and then takes an action $\alpha[n]$ from an action set $\mathcal{A}$ in response to $\omega[n]$.   The pair $(\omega[n], \alpha[n])$ affects: (i)  the duration of that renewal frame; (ii)  a vector of resource expenditures for that frame;  (iii) a penalty incurred on that frame.   The goal is to choose actions over time to minimize time average penalty subject to time average constraints on the resources without knowing any statistic of $\omega[n]$. We call such a problem \textit{opportunistic scheduling over renewal systems}. 
\begin{figure}[htbp]
   \centering
   \includegraphics[height=1in]{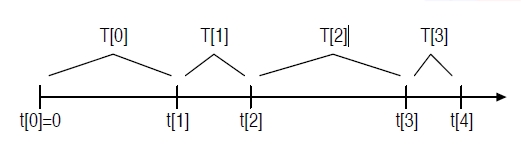} 
   \caption{An illustration of a sequence of renewal frames.}
   \label{fig:renewal}
\end{figure}

\subsection{Example applications} 

This problem has applications to task processing in computer networks, 
and certain generalizations of Markov decision problems.  

\begin{itemize} 
\item Task processing networks:  Consider a device that processes tasks back-to-back.  Each renewal period corresponds to the time required to complete a single task.  The random event $\omega[n]$ observed corresponds to a vector of task parameters, including the type, size, and resource requirements for that particular task.  The action
set $\mathcal{A}$ consists of different processing mode options, and the specific action  $\alpha[n]$ determines the processing time, energy expenditure, and task quality.  In this case, task quality can be defined as a negative penalty, and the goal is to maximize time average quality subject to power constraints and task completion rate constraints. A specific example of this sort is the following file downloading problem: Consider a wireless device that repeatedly downloads files.  The device has two states: \emph{active} (wants to download a file) and \emph{idle} (does not want to download a file). Renewals occur at the start of each new active state.
Here, $\omega[n]$ denotes the observed wireless channel state, which affects the success probability of downloading a file (and thereby affects the transition probability from active to idle).  This example is discussed further in the simulation section (Section \ref{simulation}). 

\item Hierarchical Markov decision problems:  Consider a slotted \emph{two-timescale Markov decision processes (MDP)}
over an infinite horizon and with constraints on average cost per slot.  
An MDP is run on the lower level, with a special state that is recurrent under any sequence of actions. The
renewals are defined as revisitation times to that state.  On a higher level, 
a random event $\omega$ is observed upon each revisitation to the renewal state on the lower level. Then, a decision is made on the higher level in response to $\omega$, which in turn affects the transition probability and penalty/cost received per slot on the lower level until the next renewal.
Such a problem is a generalization of classical MDP problem (e.g. \cite{Ro02}, \cite{Be01}) and has been considered previously in \cite{wernz2013multi}, \cite{chang2003multitime} with discrete finite state and full information on both levels. A heuristic method is also proposed in  \cite{wernz2013multi} when some of the information is unknown. The algorithm of the current chapter does not require 
knowledge of the statistics of $\omega$ and allows the event set $\Omega$ to be potentially (uncountably) infinite.
\end{itemize} 

\subsection{Previous approaches on renewal systems} 
Most works on optimization over renewal systems consider the simpler scenario of knowing the probability distribution of $\omega[n]$. In such a case, one can show via the renewal-reward theory that the problem can be solved (offline) by finding the solution to a linear fractional program. This idea has been applied to solve MDPs in the seminal work \cite{Fo66}. 
Methods for solving linear fractional programs can also be found, for example, in \cite{Sc83, BV04}. 
However, the practical limitations of such an offline algorithm are twofold: First, if the event set 
$\Omega$ is large, then, there are too many probabilities $Pr(\omega[n] = \omega),~\omega\in\Omega$ to estimate and the corresponding offline optimization problem may be difficult to solve even if all probabilities are estimated accurately. Second, generic offline 
optimization solvers may not take advantage of the special renewal 
structure of the system.  One notable example is the treatment of 
power and delay minimization for a multi-class M/G/1 queue in \cite{Yao02, LN14}, 
where the renewal structure allows a well known $c$-$\mu$ rule 
for delay minimization to be extended to treat both power and 
delay constraints.


The work in \cite{Neely2010,Ne09} presents a new \emph{drift-plus-penalty (DPP) ratio} algorithm solving renewal optimizations knowing the distribution of $\omega[n]$. The algorithm treats the constraints via \emph{virtual queues} so that one only requires to minimize an unconstrained ratio during every renewal frame. The algorithm provably meets all constraints and achieves asymptotic
near-optimality. The works  \cite{wang2015dynamic, urgaonkar2015dynamic} show that the edge cloud server migration problem can be formulated as a specific renewal optimization. Using a variant of the DPP ratio algorithm, they show that solving a simple stochastic shortest path problem during every renewal frame gives near-optimal performance. The work \cite{wei2018asynchronous} solves a more general asynchronous optimization over parallel renewal systems, though the knowledge of the random event statistics is still required. It is worth noting that the work \cite{Ne09} also proposes a heuristic algorithm when the distribution of 
$\omega[n]$ is not known. That algorithm is partially analyzed:  It is shown that if a certain process converges, then the algorithm converges to a near-optimal point.  However, whether or not such a process converges is unknown.


\subsection{Other related works}
The renewal optimization problem considered in this chapter is a generalization of stochastic optimization
over fixed time slots. Such problems are categorized based on whether or not the random event is observed
before the decision is made.  Cases where the random event is observed before taking actions are often referred to as \textit{opportunistic scheduling problems}. Over the past decades, many algorithms have been proposed including max-weight (\cite{tassiulas1990stability, tassiulas1993dynamic}), Lyapunov optimization (\cite{eryilmaz2006joint, eryilmaz2007fair, Neely2010, georgiadis2006resource}), fluid model methods (\cite{stolyar2005maximizing, eryilmaz2007fair}), and dual subgradient methods (\cite{lin2004joint, ribeiro2010ergodic}) are often used.  

Cases where the random events are not observed are referred to as  \textit{online learning problems}. Various algorithms are developed for unconstrained learning including the weighted majority algorithm (\cite{littlestone1994weighted}), multiplicative weighting algorithm (\cite{freund1999adaptive}), following the perturbed leader (\cite{hutter2005adaptive}) and online gradient descent (\cite{zinkevich2003online, hazan2014beyond}). The resource constrained learning problem is studied in \cite{mahdavi2012trading} and \cite{wu2015algorithms}.
Online learning with an underlying MDP structure is also treated using modified multiplicative weighting (\cite{even2005experts}) and improved following the perturbed leader (\cite{yu2009markov}).



\subsection{Our contributions} 
In this work, we focus on opportunistic scheduling over renewal systems and propose a new algorithm that runs online (i.e. takes actions in response to each observed 
$\omega[n]$). Unlike prior works, the proposed algorithm requires neither the statistics of $\omega[n]$ nor explicit estimation of them, and is fully analyzed with convergence properties that 
hold with probability 1. From a technical perspective, we prove near-optimality of the algorithm by showing  
asymptotic stability of a customized process, relying on a novel construction of exponential supermartingales which could be of independent interest.
We complement our theoretical results with
simulation experiments on a time varying constrained MDP.

\section{Problem Formulation and Preliminaries}\label{formulation}
Consider a system where the time line is divided into back-to-back time periods called frames. At the beginning of frame $n$ ($n\in\{0,1,2,\cdots\}$), a controller observes the realization of a random variable $\omega[n]$, which is an i.i.d. copy of a random variable taking values in a compact set $\Omega\in\mathbb{R}^q$ with distribution function unknown to the controller.
Then, after observing the random event $\omega[n]$, the controller chooses an action vector $\alpha[n]\in\mathcal{A}$. Then, the tuple $(\omega[n],~\alpha[n])$ induces the following random variables:
\begin{itemize}
\item The penalty received during frame $n$: $y[n]$.
\item The length of frame $n$: $T[n]$.
\item A vector of resource consumptions during frame $n$:
$\mathbf{z}[n]=[z_1[n],~z_2[n],~\cdots,~z_L[n]]$.
\end{itemize}
We assume that \textit{given $\alpha[n]=\alpha$ and $\omega[n]=\omega$ at frame $n$, $(y[n],T[n],\mathbf{z}[n])$ is a random vector independent of the outcomes of previous frames}, with \emph{known} expectations. We then denote these conditional expectations as 
\begin{align*}
\hat{y}(\omega,\alpha)=&\expect{y[n]~|~\omega,\alpha},\\
\hat{T}(\omega,\alpha)=&\expect{T[n]~|~\omega,\alpha},\\
\hat{\mathbf{z}}(\omega,\alpha)=&\expect{\hat{\mathbf{z}}[n]~|~\omega,\alpha},
\end{align*}
which are all deterministic functions of $\omega$ and $\alpha$. This notation is useful when we want to highlight the action $\alpha$ we choose. 
The analysis assumes a single action in response to the observed $\omega[n]$ at each frame. Nevertheless, an ergodic MDP can fit into this model by defining the action as a selection of a policy to implement over that frame so that the corresponding $\hat{y}(\omega,\alpha)$, $\hat{T}(\omega,\alpha)$ and $\hat{\mathbf{z}}(\omega,\alpha)$ are expectations over the frame under the chosen policy.

Let
\begin{align*}
\overline{y}[N]&=\frac1N\sum_{n=0}^{N-1}y[n],\\
\overline{T}[N]&=\frac1N\sum_{n=0}^{N-1}T[n],\\
\overline{z}_l[N]&=\frac1N\sum_{n=0}^{N-1}z_l[n]~~~l\in\{1,2,\cdots,L\}.
\end{align*}
The goal is to minimize the time average penalty subject to $L$ constraints on resource consumptions. Specifically, we aim to solve the following fractional programming problem:
\begin{align}
\min~~&\limsup_{N\rightarrow\infty}\frac{\overline{y}[N]}{\overline{T}[N]}\label{prob-1}\\
\textrm{s.t.}~~& \limsup_{N\rightarrow\infty}\frac{\overline{z}_l[N]}{\overline{T}[N]}\leq c_l,~~\forall l\in\{1,2,\cdots,L\},\\
&\alpha[n]\in\mathcal{A},~\forall n\in\{0,1,2,\cdots\} \label{prob-3},
\end{align}
where $c_l,~l\in\{1,2,\cdots,L\}$ are nonnegative constants, and both the minimum and constraint are taken in an almost sure sense. Finally, we use $\theta^*$ to denote the minimum that can be achieved by solving above optimization problem. For simplicity of notation, let
\begin{equation}\label{def-K}
K[n]=\sqrt{\sum_{l=1}^L(z_l[n]-c_lT[n])^2}.
\end{equation}

\subsection{Assumptions}
Our main result requires the following assumptions, their importance will become clear as we proceed. We begin with the following boundedness assumption:
\begin{assumption}[Exponential type]\label{bounded-assumption}
Given $\omega[n]=\omega\in\Omega$ and $\alpha[n]=\alpha\in\mathcal{A}$ for a fixed $n$, it holds that 
$T[n]\geq1$ with probability 1 and $y[n],~K[n],~T[n]$ are of exponential type, i.e. there exists a constant $\eta>0$ s.t.
\begin{align*}
&\expect{\left.\exp\left(\eta \big|y[n]\big|\right)~\right|\omega,\alpha}\leq B+1,\\
&\expect{\left.\exp\left(\eta \big|K[n]\big|\right)~\right|\omega,\alpha}\leq B+1,\\
&\expect{\left.\exp\left(\eta \big|T[n]\big|\right)~\right|\omega,\alpha}\leq B+1,
\end{align*}
where $B$ is a positive constant.
\end{assumption}

The following proposition is a simple consequence of the above assumption:
\begin{prop}\label{prop-1}
~~Suppose Assumption \ref{bounded-assumption} holds.
Let $X[n]$ be any of the three random variables $y[n]$, $K[n]$ and $T[n]$ for a fixed $n$. Then, given 
any $\omega[n]=\omega\in\Omega$ and $\alpha[n]=\alpha\in\mathcal{A}$,
\begin{align*}
\expect{\left.\big|X[n]\big|~\right|\omega,\alpha}\leq B/\eta,~~
\expect{\left.X[n]^2~\right|\omega,\alpha}\leq 2B/\eta^2.
\end{align*}
\end{prop}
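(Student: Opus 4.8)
\textbf{Proof proposal for Proposition \ref{prop-1}.}

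The plan is to derive both moment bounds from Assumption \ref{bounded-assumption} by exploiting the elementary inequalities $|x|\le \exp(|x|)$ and $x^2 \le 2\exp(|x|)$ (or, sharper, $|x|\le \tfrac{1}{\eta}\exp(\eta|x|)$ and $x^2 \le \tfrac{2}{\eta^2}\exp(\eta|x|)$, both valid for all real $x$ when $\eta>0$). Fix $n$ and condition on $\omega[n]=\omega$, $\alpha[n]=\alpha$; let $X[n]$ be any of $y[n]$, $K[n]$, $T[n]$. Since $\exp(\eta|X[n]|)\ge 1$ always, Assumption \ref{bounded-assumption} gives $\expect{\exp(\eta|X[n]|)\mid\omega,\alpha}\le B+1$, hence $\expect{\exp(\eta|X[n]|)-1\mid\omega,\alpha}\le B$.

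First I would establish the pointwise inequality $\eta|x| \le \exp(\eta|x|)-1$ for all $x\in\mathbb{R}$: writing $u=\eta|x|\ge 0$, this is just $u\le e^u-1$, which follows from the Taylor expansion $e^u = 1+u+u^2/2+\cdots \ge 1+u$ for $u\ge 0$. Taking conditional expectations and using monotonicity,
\[
\eta\,\expect{\big|X[n]\big|\ \middle|\ \omega,\alpha} \le \expect{\exp(\eta|X[n]|)-1\ \middle|\ \omega,\alpha} \le B,
\]
which gives $\expect{|X[n]|\mid\omega,\alpha}\le B/\eta$. For the second-moment bound I would similarly use $\tfrac{\eta^2}{2}x^2 = \tfrac{1}{2}(\eta|x|)^2 \le e^{\eta|x|}-1-\eta|x| \le e^{\eta|x|}-1$ for all $x$ (again from $e^u \ge 1+u+u^2/2$ for $u\ge 0$, so $\tfrac{u^2}{2}\le e^u-1-u\le e^u-1$), so that
\[
\frac{\eta^2}{2}\,\expect{X[n]^2\ \middle|\ \omega,\alpha} \le \expect{\exp(\eta|X[n]|)-1\ \middle|\ \omega,\alpha} \le B,
\]
yielding $\expect{X[n]^2\mid\omega,\alpha}\le 2B/\eta^2$. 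The same two displays applied with $X[n]=y[n]$, $X[n]=K[n]$, and $X[n]=T[n]$ in turn complete the proof.

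There is no real obstacle here; the only thing to be slightly careful about is that the bounds in Assumption \ref{bounded-assumption} are $B+1$ rather than $B$, so one must subtract the constant $1 = \expect{\exp(0)}$ before applying the pointwise comparisons — this is precisely why the ``$+1$'' appears in the assumption and why the proposition states the clean constants $B/\eta$ and $2B/\eta^2$. Everything else is a routine application of $e^u\ge 1+u$ and $e^u\ge 1+u+u^2/2$ on $[0,\infty)$ together with the monotonicity and linearity of conditional expectation.
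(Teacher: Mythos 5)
Your proof is correct and follows essentially the same route as the paper: the paper's one-line argument uses $e^{\eta|x|}\ge 1+\eta|x|+\tfrac{\eta^2}{2}x^2$ and subtracts the constant $1$ from the bound $B+1$, exactly the Taylor-expansion comparison you carry out (you merely apply the linear and quadratic lower bounds in two separate displays rather than keeping both terms in a single inequality). No gaps.
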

The proof follows from the inequality: 
$$
B+1\geq\expect{\left.e^{\eta \big|X[n]\big|}~\right|\omega,\alpha}
\geq 1+\eta\cdot \expect{\left.\big|X[n]\big|~\right|\omega,\alpha}
+ \frac{\eta^2}{2}\cdot\expect{\left.X[n]^2~\right|\omega,\alpha}.
$$ 

\begin{assumption}\label{optimal-assumption}
~~There exists a positive constant $\theta_{\max}$ large enough so that the optimal objective of $\eqref{prob-1}-\eqref{prob-3}$, denoted as $\theta^*$, falls into $[0,\theta_{\max})$ with probability 1.
\end{assumption}

\begin{remark}
~~If $\theta^*<0$, then, we shall find a constant $c$ large enough so that $\theta^*+c\geq0$. Then, define a new penalty $y'[n]=y[n]+cT[n]$. It is easy to see that minimizing $\limsup_{N\rightarrow\infty}\overline{y}[N]/\overline{T}[N]$ is equivalent to minimizing $\limsup_{N\rightarrow\infty}\overline{y'}[N]/\overline{T}[N]$ and the optimal objective of the new problem is $\theta^*+c$, which is nonnegative. 
\end{remark}

\begin{assumption}\label{assumption-for-algorithm}
~~Let $\left(\hat y(\omega,\alpha),~\hat T(\omega,\alpha),~\hat{\mathbf{z}}(\omega,\alpha)\right)$ be the performance vector under a certain $(\omega,\alpha)$ pair. Then, for any fixed $\omega\in\Omega$, the set of achievable performance vectors over all $\alpha\in\mathcal{A}$ is compact.
\end{assumption}

In order to state the next assumption, we need the notion of  \textit{randomized stationary policy}. We start with the definition:
\begin{definition}[Randomized stationary policy]\label{RSP}
A randomized stationary policy is an algorithm that at the beginning of each frame $n$, after observing the random event $\omega[n]$, the controller chooses $\alpha^*[n]$ with a conditional probability that is the same for all $n$. 
\end{definition}

\begin{assumption}[Bounded achievable region]\label{compact}
Let
\[(\overline{y},~\overline{T},~\overline{\mathbf{z}})\triangleq\expect{(\hat y(\omega[0],\alpha^*[0]),~\hat T(\omega[0],\alpha^*[0]),~\hat{\mathbf{z}}(\omega[0],\alpha^*[0]))}\]
be the one-shot average of one randomized stationary policy. Let $\mathcal{R}\subseteq\mathbb{R}^{L+2}$ be the set of all achievable one-shot averages $(\overline{y},~\overline{T},~\overline{\mathbf{z}})$. Then, $\mathcal{R}$ is bounded.
\end{assumption}

\begin{assumption}[$\xi$-slackness]\label{slack}
 There exists a randomized stationary policy $\alpha^{(\xi)}[n]$ such that the following holds,
\[\frac{\expect{\hat{z}_l\left(\omega[n],\alpha^{(\xi)}[n]\right)}}
{\expect{\hat{T}(\omega[n],\alpha^{(\xi)}[n])}}=c_l-\xi,~~\forall l\in\{1,2,\cdots,L\},\]
where $\xi>0$ is a constant.
\end{assumption}

\begin{remark}[Measurability issue]
~~We implicitly assume the policies for choosing $\alpha$ in reaction to $\omega$ result in a measurable $\alpha$, so that $T[n]$, $y[n]$, $\mathbf{z}[n]$ are valid random variables and the expectations in Assumption \ref{compact} and \ref{slack} are well defined. This assumption is mild. For example, when the sets $\Omega$ and $\mathcal{A}$ are finite, it holds for any randomized stationary policy. More generally, if $\Omega$ and $\mathcal{A}$ are measurable subsets of some separable metric spaces, this holds whenever the conditional probability in Definition \ref{RSP} is ``regular'' (see \cite{Durrett} for discussions on regular conditional probability), and $T[n]$, $y[n]$, $\mathbf{z}[n]$ are continuous functions on $\Omega\times\mathcal{A}$. 
\end{remark}

\section{An Online Algorithm}\label{online-section}
We define a vector of virtual queues $\mathbf{Q}[n]=[Q_1[n]~Q_2[n]~\cdots~Q_L[n]]$ which are 0 at $n=0$ and updated as follows:
\begin{equation}\label{queue-update}
Q_l[n+1]=\max\{Q_l[n]+z_l[n]-c_lT[n],0\}.
\end{equation}
The intuition behind this virtual queue idea is that if the algorithm can stabilize $Q_l[n]$, then the ``arrival rate'' $\overline{z}_l[N]/\overline{T}[N]$ is below ``service rate'' $c_l$ and the constraint is satisfied. The proposed algorithm then proceeds as in Algorithm \ref{online-algorithm} via two fixed parameters $V>0$, $\delta>0$, and an additional process $\theta[n]$ that is initialized to be $\theta[0]=0$.
For any real number $x$, the notation $[x]_0^{\theta_{\max}}$ stands for ceil and floor function:
\[[x]_0^{\theta_{\max}}=\left\{
                     \begin{array}{ll}
                       \theta_{\max}, & \hbox{if $x\in(\theta_{\max},+\infty)$;} \\
                       x, & \hbox{if $x\in[0,\theta_{\max}]$;} \\
                       0, & \hbox{if $x\in(-\infty, 0)$.}
                     \end{array}
                   \right.
\]
Note that we can rewrite \eqref{DPP} as the following deterministic form:
$$
 V\left(\hat{y}(\omega[n],\alpha[n])-\theta[n]\hat{T}(\omega[n],\alpha[n])\right)
 +\sum_{l=1}^LQ_l[n]\left(\hat{z}_l(\omega[n],\alpha[n])-c_l\hat{T}(\omega[n],\alpha[n])\right),
$$
Thus, Algorithm \ref{online-algorithm} proceeds by observing $\omega[n]$ on each frame $n$ and then choosing $\alpha[n]$ in $\mathcal{A}$ to minimize the above deterministic function.
We can now see that we only use knowledge of current realization $\omega[n]$, not statistics of $\omega[n]$. Also, the compactness assumption (Assumption \ref{assumption-for-algorithm}) guarantees that the minimum of \eqref{DPP} is always achievable.

\begin{algorithm}
\begin{itemize}
  \item At the beginning of each frame $n$, the controller observes $Q_l[n]$, $\theta[n]$, $\omega[n]$ and chooses action $\alpha[n]\in\mathcal{A}$ to minimize the following function:
\begin{equation}\label{DPP}
  \expect{\left. V(y[n]-\theta[n]T[n])+\sum_{l=1}^LQ_l[n](z_l[n]-c_lT[n])\right|Q_l[n],\theta[n],\omega[n]}.
\end{equation}
  \item Update $\theta[n]$:
  \[\theta[n+1]=\left[\frac{1}{(n+1)^{\delta}}\sum_{i=0}^{n}\left(y[i]-\theta[i]T[i]
  +\frac{1}{V}\sum_{l=1}^LQ_l[i](z_l[i]-c_lT[i])\right)\right]_{0}^{\theta_{\max}}.\]
  \item Update virtual queues $Q_l[n]$:
  \[Q_l[n+1]=\max\{Q_l[n]+z_l[n]-c_lT[n],0\},~l=1,2,\cdots,L.\]
\end{itemize}
\caption{Online renewal optimization:}
\label{online-algorithm}
\end{algorithm}

\section{Feasibility Analysis}
In this section, we prove that the proposed algorithm gives a sequence of actions $\{\alpha[n]\}_{n=0}^{\infty}$ which satisfies all desired constraints with probability 1. Specifically, we show that all virtual queues are stable with probability 1, in which we leverage an important lemma from \cite{hajek1982hitting} to obtain a exponential bound for the norm of $\mathbf{Q}[n]$.

\subsection{The drift-plus-penalty bound}
The start of our proof uses the drift-plus-penalty methodology. For a general introduction on this topic, see \cite{neely2012stability} for more details. 
We define the 2-norm function of the virtual queue vector as:
\[\|\mathbf{Q}[n]\|^2=\sum_{l=1}^LQ_l[n]^2.\]
Define the \textit{Lyapunov drift} $\Delta(\mathbf{Q}[n])$ as
\[\Delta(\mathbf{Q}[n])=\frac12\left(\|\mathbf{Q}[n+1]\|^2-\|\mathbf{Q}[n]\|^2\right).\]
Next, define the penalty function at frame $n$ as $V(y[n]-\theta[n]T[n])$, where $V>0$ is a fixed trade-off parameter. Then, the drift-plus-penalty methodology suggests that we can stabilize the virtual queues by choosing an action $\alpha[n]\in\mathcal{A}$ to greedily minimize the following drift-plus-penalty expression, with the observed $\mathbf{Q}[n]$, $\omega[n]$ and $\theta[n]$:
\[\expect{\left.V(y[n]-\theta[n]T[n])+\Delta(\mathbf{Q}[n])\right|Q_l[n],\theta[n],\omega[n]}.\]
The penalty term $V(y[n]-\theta[n]T[n])$ uses the $\theta[n]$ variable, which 
depends on events from all previous frames. This penalty does not
fit the rubric of \cite{neely2012stability} and convergence of the algorithm does
not follow from prior work. A significant thrust of the current chapter is convergence
analysis under such a penalty function.

In order to obtain an upper bound on $\Delta(\mathbf{Q}[n])$, we square both sides of \eqref{queue-update} and use the fact that $\max\{x,0\}^2\leq x^2$,
\begin{align}\label{dpp-relation}
Q_l[n+1]^2\leq Q_l[n]^2+(z_l[n]-c_lT[n])^2+2Q_l[n](z_l[n]-c_lT[n]).
\end{align}
Summing the above over all $l \in \{1, \ldots, L\}$ and dividing by $2$ gives
$$ \Delta(\mathbf{Q}[n]) \leq \frac{1}{2}\sum_{l=1}^L (z_l[n]-c_lT[n])^2 + \sum_{l=1}^LQ_l[n](z_l[n]-c_lT[n])$$
Adding $V(y[n]-\theta[n]T[n])$ to both sides and taking conditional expectations gives
\begin{align}
&\expect{\left.V(y[n]-\theta[n]T[n])+\Delta(\mathbf{Q}[n])\right|Q_l[n],\theta[n],\omega[n]}\nonumber\\
\leq& \expect{\left.V(y[n]-\theta[n]T[n])+\sum_{l=1}^LQ_l[n](z_l[n]-c_lT[n])\right|Q_l[n],\theta[n],\omega[n]}
+\frac12\sum_{l=1}^L\expect{(z_l[n]-c_lT[n])^2}\nonumber\\
\leq& \expect{\left.V(y[n]-\theta[n]T[n])+\sum_{l=1}^LQ_l[n](z_l[n]-c_lT[n])\right|Q_l[n],\theta[n],\omega[n]}
+\frac{B^2}{\eta^2}.
\label{dpp-upperbound}
\end{align}
where the last inequality follows from Proposition \ref{prop-1}.
Thus, as we have already seen in Algorithm \ref{online-algorithm},
the proposed algorithm observes the vector $\mathbf{Q}[n]$, the random event $\omega[n]$ and $\theta[n]$ at frame $n$, and minimizes the right hand side of \eqref{dpp-upperbound}. 

\subsection{Bounds on the virtual queue process and feasibility}
In this section, we show how the bound \eqref{dpp-upperbound} leads to the feasibility of the proposed algorithm.
Define $\mathcal{H}_n$ as the system history information up until frame $n$. Formally, $\{\mathcal{H}_n\}_{n=0}^{\infty}$ is a filtration where each $\mathcal{H}_n$ is the $\sigma$-algebra generated by all the random variables before frame $n$. Notice that since $\mathbf{Q}[n]$ and $\theta[n]$ depend only on the events before frame $n$, $\mathcal{H}_n$ contains both $\mathbf{Q}[n]$ and $\theta[n]$.
 The following important lemma gives a stability criterion for any given real random process with certain negative drift property:

\begin{lemma}[Theorem 2.3 of \cite{hajek1982hitting}]\label{master-bound}
Let $R[n]$ be a real random process over $n\in \{0,1,2,\cdots\}$ satisfying the following two conditions for a fixed $r>0$:
\begin{enumerate}
\item For any $n$, $\expect{\left.e^{r(R[n+1]-R[n])}\right| \mathcal{H}_n}\leq \Gamma$, for some $\Gamma>0$.
\item Given $R[n]\geq\sigma$, $\expect{\left.e^{r(R[n+1]-R[n])}\right| \mathcal{H}_n}\leq \rho$, with some $\rho\in(0,1)$.
\end{enumerate}
Suppose further that $R[0]\in\mathbb{R}$ is given and finite, then, at every $n\in\{0,1,2,\cdots\}$, the following bound holds:
\[\expect{e^{rR[n]}}\leq \rho^ne^{rR[0]}+\frac{1-\rho^n}{1-\rho}\Gamma e^{r\sigma}.\]
\end{lemma}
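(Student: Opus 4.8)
The plan is to prove the exponential bound by induction on $n$, using the two hypotheses to control the conditional expectation $\expect{e^{rR[n+1]}\,|\,\mathcal{H}_n}$ in terms of $e^{rR[n]}$. The key observation is that the two conditions can be merged into a single one-step inequality. First I would split according to whether $R[n]<\sigma$ or $R[n]\geq\sigma$. On the event $\{R[n]\geq\sigma\}$, condition (2) gives $\expect{e^{r(R[n+1]-R[n])}\,|\,\mathcal{H}_n}\leq \rho$, hence $\expect{e^{rR[n+1]}\,|\,\mathcal{H}_n}\leq \rho\, e^{rR[n]}$. On the event $\{R[n]<\sigma\}$, I would write $e^{rR[n+1]} = e^{r(R[n+1]-R[n])}e^{rR[n]}$, apply condition (1) to bound the conditional expectation of the first factor by $\Gamma$, and bound $e^{rR[n]}\leq e^{r\sigma}$ crudely since $R[n]<\sigma$; this yields $\expect{e^{rR[n+1]}\,|\,\mathcal{H}_n}\leq \Gamma e^{r\sigma}$ on that event. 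Combining the two cases, for every $n$,
\[
\expect{e^{rR[n+1]}\,|\,\mathcal{H}_n}\leq \rho\, e^{rR[n]} + \Gamma e^{r\sigma}.
\]

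Taking total expectations gives the scalar recursion $a_{n+1}\leq \rho\, a_n + \Gamma e^{r\sigma}$ where $a_n := \expect{e^{rR[n]}}$, with $a_0 = e^{rR[0]}$ (finite by hypothesis). I would then solve this linear recursion by induction: unrolling gives
\[
a_n \leq \rho^n a_0 + \Gamma e^{r\sigma}\sum_{j=0}^{n-1}\rho^j = \rho^n e^{rR[0]} + \Gamma e^{r\sigma}\,\frac{1-\rho^n}{1-\rho},
\]
where the geometric sum is valid and finite because $\rho\in(0,1)$. This is exactly the claimed bound, so the proof concludes.

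The main technical point to be careful about — rather than a genuine obstacle — is the case split and the measurability bookkeeping: the events $\{R[n]\geq\sigma\}$ and $\{R[n]<\sigma\}$ are $\mathcal{H}_n$-measurable (since $R[n]$ is), so I can legitimately apply the conditional bounds on each piece and recombine. One should also note that the crude bound $e^{rR[n]}\mathbf{1}\{R[n]<\sigma\}\leq e^{r\sigma}$ requires $r>0$, which is given. Everything else is a routine induction, so I would not belabor it. The lemma is quoted verbatim from Hajek and used below only as a black box to derive an exponential tail bound on $\|\mathbf{Q}[n]\|$; the role of this proof sketch is merely to record why the stated inequality holds.
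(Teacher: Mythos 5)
Your proof is correct. The case split on the $\mathcal{H}_n$-measurable events $\{R[n]\geq\sigma\}$ and $\{R[n]<\sigma\}$ gives the one-step bound $\expect{e^{rR[n+1]}\,|\,\mathcal{H}_n}\leq \rho\,e^{rR[n]}+\Gamma e^{r\sigma}$, and unrolling the resulting scalar recursion yields exactly the stated inequality; note that the paper itself does not prove this lemma but imports it as Theorem 2.3 of \cite{hajek1982hitting}, and your argument is precisely the standard drift-recursion proof of that (simplified) result, so it correctly fills in the black box.
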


Thus, in order to show the stability of the virtual queue process, it is enough to test the above two conditions with
$R[n]=\|\mathbf{Q}[n]\|$. The following lemma shows that $\|\mathbf{Q}[n]\|$ satisfies these two conditions:
\begin{lemma}[Drift condition]\label{geometric-bound}
Let $R[n]=\|\mathbf{Q}[n]\|$, then, it satisfies the two conditions in Lemma \ref{master-bound} with the following constants:
\begin{align*}
\Gamma&=B,\\
r&=\min\left\{\eta,\frac{\xi\eta^2}{4B}\right\},\\
\sigma&=C_0V,\\
\rho&=1-\frac{r\xi}{2}+\frac{2B}{\eta^2}r^2<1.
\end{align*}
where
$C_0=\frac{2B^2}{V\xi\eta^2}+\frac{2(\theta_{\max}+1)B}{\xi\eta}-\frac{\xi}{4V}$.
\end{lemma}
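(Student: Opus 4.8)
The plan is to verify the two hypotheses of Lemma \ref{master-bound} with $R[n] = \|\mathbf{Q}[n]\|$, exploiting the queue update \eqref{queue-update} together with the exponential-moment Assumption \ref{bounded-assumption} and the $\xi$-slackness Assumption \ref{slack}. The starting point is the elementary one-step bound on the change in norm: from \eqref{queue-update} one has $\|\mathbf{Q}[n+1]\| \le \|\mathbf{Q}[n]\| + \|\mathbf{z}[n] - \mathbf{c}T[n]\| = \|\mathbf{Q}[n]\| + K[n]$, where $K[n]$ is as in \eqref{def-K}, and similarly $\big|\|\mathbf{Q}[n+1]\| - \|\mathbf{Q}[n]\|\big| \le K[n]$. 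This immediately handles condition (1): by Assumption \ref{bounded-assumption}, $\expect{e^{\eta K[n]} \mid \mathcal{H}_n} \le B+1$ (note $K[n]\ge 0$ so $|K[n]|=K[n]$), hence for any $r \le \eta$ we have $\expect{e^{r(R[n+1]-R[n])} \mid \mathcal{H}_n} \le \expect{e^{rK[n]}\mid \mathcal{H}_n} \le B+1$; actually to match the stated $\Gamma = B$ one has to be slightly more careful, using the second-order expansion $e^{rx} \le 1 + rx + \tfrac{r^2}{2}x^2 e^{rx}$ together with Proposition \ref{prop-1}, but this is routine.

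The substance is condition (2): when $R[n] = \|\mathbf{Q}[n]\| \ge \sigma = C_0 V$, we must show a contractive bound $\expect{e^{r(R[n+1]-R[n])}\mid \mathcal{H}_n} \le \rho < 1$. First I would establish a \emph{negative drift in expectation}: I would show $\expect{R[n+1] - R[n] \mid \mathcal{H}_n} \le -\xi/2$ whenever $\|\mathbf{Q}[n]\|$ is large (of order $V$). This is where Assumption \ref{slack} and the optimality of the algorithm's per-frame decision \eqref{DPP} enter. The algorithm minimizes \eqref{DPP} over $\mathcal{A}$, so its value is no larger than that obtained by the $\xi$-slack randomized stationary policy $\alpha^{(\xi)}$; comparing, and using that $\expect{\hat z_l(\omega,\alpha^{(\xi)})} = (c_l-\xi)\expect{\hat T(\omega,\alpha^{(\xi)})}$ while $\hat T \ge 1$, one extracts a term $-\xi \sum_l Q_l[n] \expect{\hat T(\cdots)} \le -\xi \|\mathbf{Q}[n]\|_1 \le -\xi\|\mathbf{Q}[n]\|$ (up to the $V(y-\theta T)$ contribution, which is $O(V)$ and bounded since $\theta[n]\in[0,\theta_{\max}]$ and $y$ has bounded mean by Proposition \ref{prop-1}). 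Dividing the drift-plus-penalty inequality through and using $\|\mathbf{Q}[n]\|$ large to dominate the $O(V)$ terms yields the desired $-\xi/2$ drift; tracking the constants gives exactly the threshold $C_0 V$ with $C_0 = \frac{2B^2}{V\xi\eta^2} + \frac{2(\theta_{\max}+1)B}{\xi\eta} - \frac{\xi}{4V}$.

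Finally I would convert the expected negative drift plus the uniform exponential-moment control of the increment $|R[n+1]-R[n]| \le K[n]$ into the exponential contraction. Writing $D = R[n+1]-R[n]$, use $e^{rD} \le 1 + rD + \tfrac{r^2}{2}D^2 e^{r|D|} \le 1 + rD + \tfrac{r^2}{2}K[n]^2 e^{rK[n]}$, take conditional expectations, bound $\expect{D\mid\mathcal{H}_n} \le -\xi/2$, and bound $\expect{K[n]^2 e^{rK[n]}\mid\mathcal H_n}$ by a constant times $B/\eta^2$ for $r \le \eta/2$ (say) via Proposition \ref{prop-1} and Assumption \ref{bounded-assumption}. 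This gives $\expect{e^{rD}\mid\mathcal H_n} \le 1 - \tfrac{r\xi}{2} + \tfrac{2B}{\eta^2} r^2 =: \rho$, and choosing $r = \min\{\eta, \tfrac{\xi\eta^2}{4B}\}$ makes $\rho < 1$ (for this $r$, $\tfrac{2B}{\eta^2}r^2 \le \tfrac{r\xi}{2}\cdot\tfrac{2B}{\eta^2}\cdot\tfrac{2r}{\xi} \le \tfrac{r\xi}{2}\cdot\tfrac{\xi\eta^2}{4B}\cdot\tfrac{2\cdot 2B}{\xi\eta^2}\cdot\tfrac{1}{2}$, i.e.\ the quadratic term is at most half the linear term). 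The main obstacle is the second step: carefully bookkeeping the $O(V)$ penalty terms and the second-moment constants so that the threshold comes out precisely as $C_0 V$ and the drift is exactly $-\xi/2$; the exponential-moment manipulations in the first and third steps are standard once Proposition \ref{prop-1} is in hand.
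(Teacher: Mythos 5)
Your proposal follows essentially the same route as the paper's proof: bound the one-step increment of $\|\mathbf{Q}[n]\|$ by $K[n]$ and invoke the exponential-type assumption for condition (1); obtain a negative drift of the norm by comparing the algorithm's per-frame minimization against the $\xi$-slackness policy inside the drift-plus-penalty bound (using $0\leq\theta[n]\leq\theta_{\max}$ and Proposition \ref{prop-1} to control the $O(V)$ penalty terms); and then convert the $-\xi/2$ drift together with exponential moments of the increment into the contraction $\rho<1$, exactly the Hajek-type argument the paper uses.

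Two bookkeeping points are where the paper is sharper than your sketch, and they matter only because the lemma pins down exact constants. First, the paper passes from the quadratic drift bound $\expect{\|\mathbf{Q}[n+1]\|^2-\|\mathbf{Q}[n]\|^2\mid\mathcal{H}_n}\leq 2B^2/\eta^2+2V(1+\theta_{\max})B/\eta-2\xi\|\mathbf{Q}[n]\|$ to the norm drift by completing the square, obtaining $\expect{\|\mathbf{Q}[n+1]\|^2\mid\mathcal{H}_n}\leq(\|\mathbf{Q}[n]\|-\xi/2)^2$, and then applying Jensen's inequality to the square root; this square-completion is precisely what produces the $-\xi/(4V)$ term in $C_0$. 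Your ``dividing through'' (i.e., $\|\mathbf{Q}[n+1]\|-\|\mathbf{Q}[n]\|\leq(\|\mathbf{Q}[n+1]\|^2-\|\mathbf{Q}[n]\|^2)/(2\|\mathbf{Q}[n]\|)$) works, but it yields the threshold $C_0V+\xi/4$, i.e., $C_0$ without that correction. Second, to get exactly $\rho=1-\frac{r\xi}{2}+\frac{2B}{\eta^2}r^2$ with $r\leq\eta$, the paper expands the full Taylor series and uses the identity $\sum_{k\geq2}\eta^{k-2}D^k/k!=(e^{\eta D}-\eta D-1)/\eta^2$ for $D=\|\mathbf{Q}[n+1]\|-\|\mathbf{Q}[n]\|$, bounding it by $(B+\eta\cdot B/\eta)/\eta^2=2B/\eta^2$; your cruder bound $e^{rD}\leq1+rD+\frac{r^2}{2}K[n]^2e^{rK[n]}$ with $r\leq\eta/2$ inflates the quadratic coefficient (roughly $4(B+1)/\eta^2$ via $x^2\leq 2e^{(\eta-r)x}/(\eta-r)^2$) and also conflicts with the stated $r=\min\{\eta,\xi\eta^2/4B\}$, which may equal $\eta$. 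Neither issue is conceptual---your argument proves the drift condition with equivalent constants---but to land on the constants exactly as stated you should adopt the square-completion/Jensen step and the exact series bound.
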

%
%

The central idea of the proof is to plug the $\xi$-slackness policy specified in Assumption \ref{slack} into the right hand side of \eqref{dpp-upperbound}. A similar idea has been presented in the
Lemma 6 of \cite{wei2015probabilistic} under the bounded increment of the virtual queue process. Here, we generalize the idea to the case where the increment of the virtual queues contains exponential type random variables $z_l[n]$ and $T[n]$. Note that
the boundedness of $\theta[n]$ is crucial for the argument to hold, which justifies the truncation of pseudo average in the algorithm. Lemma \ref{master-bound} is proved in the Appendix \ref{sec:proof}.

Combining the above two lemmas, we immediately have the following corollary:
\begin{corollary}[Exponential decay]\label{exponential-queue-bound}
Given $\mathbf{Q}[0]=0$, the following holds for any $n\in\{0,1,2,\cdots\}$ under the proposed algorithm,\\
\begin{equation}\label{eq:exp-queue-bound}
\expect{e^{r\|\mathbf{Q}[n]\|}}\leq D,
\end{equation}
where
\[D=1+\frac{B}{1-\rho} e^{rC_0V},\]
and $r,~\rho,~C_0$ are as defined in Lemma \ref{geometric-bound}. Furthermore, we have
$\expect{\|Q[n]\|}\leq \frac{1}{r}\log(1+ \frac{B}{1-\rho} e^{rC_0V})$, i.e. the queue size is $\mathcal{O}(V)$.
\end{corollary}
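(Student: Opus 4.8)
\textbf{Proof proposal for Corollary \ref{exponential-queue-bound}.}

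The plan is to combine the two preceding lemmas directly. First I would invoke Lemma \ref{geometric-bound}, which establishes that the real random process $R[n] = \|\mathbf{Q}[n]\|$ satisfies the two hypotheses of Lemma \ref{master-bound} with the explicit constants $\Gamma = B$, $r = \min\{\eta, \xi\eta^2/(4B)\}$, $\sigma = C_0 V$, and $\rho = 1 - r\xi/2 + 2Br^2/\eta^2 \in (0,1)$. Then I would apply Lemma \ref{master-bound} with these constants and with the initial condition $R[0] = \|\mathbf{Q}[0]\| = 0$, which is given since all virtual queues start at zero. Plugging $R[0]=0$ into the conclusion of Lemma \ref{master-bound} gives
\[
\expect{e^{r\|\mathbf{Q}[n]\|}} \leq \rho^n + \frac{1-\rho^n}{1-\rho} B e^{r C_0 V}.
\]
Since $\rho \in (0,1)$ we have $\rho^n \leq 1$ and $1 - \rho^n \leq 1$, so the right-hand side is bounded above by $1 + \frac{B}{1-\rho} e^{r C_0 V} =: D$, which is exactly \eqref{eq:exp-queue-bound}.

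For the second claim, the $\mathcal{O}(V)$ bound on $\expect{\|\mathbf{Q}[n]\|}$, I would use Jensen's inequality applied to the convex function $x \mapsto e^{rx}$: from $e^{r\expect{\|\mathbf{Q}[n]\|}} \leq \expect{e^{r\|\mathbf{Q}[n]\|}} \leq D$ we get $\expect{\|\mathbf{Q}[n]\|} \leq \frac{1}{r}\log D = \frac{1}{r}\log\left(1 + \frac{B}{1-\rho} e^{r C_0 V}\right)$. To see that this is $\mathcal{O}(V)$, note that $\log(1 + a e^{bV}) \leq \log(2 \max\{1, a e^{bV}\}) \leq \log 2 + \max\{0, \log a + bV\}$, which grows linearly in $V$; here $b = r C_0$ and $r, C_0, \rho, B$ are all constants not depending on $V$ once $V$ is large enough (one should note $C_0$ itself has a $-\xi/(4V)$ term and a $\mathcal{O}(1/V)$ term, so $r C_0 V$ is affine in $V$, confirming the linear growth).

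I do not anticipate a genuine obstacle here — this corollary is a routine consequence of the two lemmas it cites. The only point requiring a little care is bookkeeping: verifying that the constant $D$ as stated matches what drops out of Lemma \ref{master-bound} after discarding the $\rho^n$ and $(1-\rho^n)$ factors, and confirming that the final logarithmic expression is honestly $\mathcal{O}(V)$ despite the slightly awkward form of $C_0$. All the real work — checking the negative-drift condition and the uniform one-step exponential moment bound for $\|\mathbf{Q}[n]\|$, which is where the $\xi$-slackness assumption and the boundedness of $\theta[n]$ enter — has already been done in Lemma \ref{geometric-bound}, so the corollary's proof is essentially two lines plus a Jensen step.
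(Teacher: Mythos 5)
Your proposal is correct and follows exactly the paper's route: the paper also obtains \eqref{eq:exp-queue-bound} by plugging the constants of Lemma \ref{geometric-bound} together with $\|\mathbf{Q}[0]\|=0$ into Lemma \ref{master-bound} and dropping the $\rho^n$ and $(1-\rho^n)$ factors, and then gets the $\mathcal{O}(V)$ bound on $\expect{\|\mathbf{Q}[n]\|}$ from \eqref{eq:exp-queue-bound} via Jensen's inequality. No gaps; your extra remark on the affine-in-$V$ growth of $rC_0V$ is a harmless elaboration of what the paper leaves implicit.
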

The bound on $\expect{\|Q[n]\|}$ follows readily from \eqref{eq:exp-queue-bound} via Jensen's inequality.
With Corollary \ref{exponential-queue-bound} in hand, we can prove the following theorem:
\begin{theorem}[Feasibility]\label{feasibility}
All constraints in \eqref{prob-1}-\eqref{prob-3} are satisfied under the proposed algorithm with probability 1.
\end{theorem}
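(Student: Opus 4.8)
The plan is to derive feasibility of each constraint $\limsup_{N\to\infty}\overline{z}_l[N]/\overline{T}[N]\le c_l$ from the exponential queue bound in Corollary \ref{exponential-queue-bound}. First I would fix a coordinate $l\in\{1,\dots,L\}$ and unroll the virtual queue recursion \eqref{queue-update}. Since $Q_l[n+1]=\max\{Q_l[n]+z_l[n]-c_lT[n],0\}\ge Q_l[n]+z_l[n]-c_lT[n]$ and $Q_l[0]=0$, telescoping over $n=0,\dots,N-1$ gives
\[
Q_l[N]\;\ge\;\sum_{n=0}^{N-1}\bigl(z_l[n]-c_lT[n]\bigr)\;=\;N\bigl(\overline{z}_l[N]-c_l\overline{T}[N]\bigr).
\]
Dividing by $N\overline{T}[N]$ (recall $T[n]\ge1$ so $\overline{T}[N]\ge1>0$) yields
\[
\frac{\overline{z}_l[N]}{\overline{T}[N]}-c_l\;\le\;\frac{Q_l[N]}{N\,\overline{T}[N]}\;\le\;\frac{Q_l[N]}{N}\;\le\;\frac{\|\mathbf{Q}[N]\|}{N}.
\]
Hence it suffices to show $\|\mathbf{Q}[N]\|/N\to0$ with probability 1.

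The second step is to upgrade the expectation bound $\expect{e^{r\|\mathbf{Q}[n]\|}}\le D$ (with $r,D>0$ fixed constants independent of $n$) into an almost-sure statement via Markov's inequality and Borel--Cantelli. For any $\varepsilon>0$,
\[
\Pr\bigl(\|\mathbf{Q}[n]\|>\varepsilon n\bigr)\;=\;\Pr\bigl(e^{r\|\mathbf{Q}[n]\|}>e^{r\varepsilon n}\bigr)\;\le\;D\,e^{-r\varepsilon n},
\]
which is summable in $n$. By the Borel--Cantelli lemma, with probability 1 the event $\{\|\mathbf{Q}[n]\|>\varepsilon n\}$ occurs only finitely often, so $\limsup_{n\to\infty}\|\mathbf{Q}[n]\|/n\le\varepsilon$ almost surely. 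Intersecting over a countable sequence $\varepsilon=1/k\downarrow0$ gives $\lim_{n\to\infty}\|\mathbf{Q}[n]\|/n=0$ with probability 1. Combining with the inequality from the first step, $\limsup_{N\to\infty}\bigl(\overline{z}_l[N]/\overline{T}[N]-c_l\bigr)\le 0$ almost surely for each $l$, and a union bound over the finitely many $l$ completes the proof.

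I do not expect a serious obstacle here: the heavy lifting—establishing the negative-drift / exponential-type conditions and invoking Hajek's bound—has already been done in Lemma \ref{geometric-bound} and Corollary \ref{exponential-queue-bound}. The only mild subtlety worth stating carefully is that the constants $r$, $\rho$, $C_0$, $D$ are genuinely uniform in $n$, so that the tail bound $D e^{-r\varepsilon n}$ really is summable; this is exactly what Corollary \ref{exponential-queue-bound} guarantees. One should also note explicitly that $\overline{T}[N]\ge1$ (from $T[n]\ge1$ in Assumption \ref{bounded-assumption}) to justify dividing by $\overline{T}[N]$ and to drop it from the denominator when passing to the cruder bound $Q_l[N]/N$.
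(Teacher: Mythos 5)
Your proposal is correct and follows essentially the same route as the paper's own proof: telescoping the queue update to get $\overline{z}_l[N]/\overline{T}[N]-c_l\le Q_l[N]/N$, then using the uniform exponential moment bound of Corollary \ref{exponential-queue-bound} with Markov's inequality and Borel--Cantelli to conclude $Q_l[N]/N\to 0$ almost surely. The only (immaterial) difference is that you pass to $\|\mathbf{Q}[N]\|$ before applying Markov, whereas the paper bounds $Q_l[N]$ by $\|\mathbf{Q}[N]\|$ inside the probability.
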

\begin{proof}[Proof of Theorem \ref{feasibility}]
By queue updating rule \eqref{queue-update}, for any $n$ and any $l\in\{1,2,\cdots,L\}$, one has
  \[Q_l[n+1]\geq Q_l[n]+z_l[n]-c_lT[n].\]
Fix $N$ as a positive integer. Then, summing over all $n\in\{0,1,2,\cdots,N-1\}$,
  \[Q_l[N]\geq Q_l[0]+\sum_{n=0}^{N-1}(z_l[n]-c_lT[n]).\]
  Since $Q_l[0]=0,~\forall l$ and $T[n]\geq1,~\forall n$,
  \begin{equation}\label{inter-constraint-violation}
  \frac{\sum_{n=0}^{N-1}z_l[n]}{\sum_{n=0}^{N-1}T[n]}-c_l\leq\frac{Q_l[N]}{\sum_{n=0}^{N-1}T[n]}\leq\frac{Q_l[N]}{N}.
  \end{equation}
Define the event
\[A_{N}^{(\varepsilon)}=\{Q_l[N]>\varepsilon N\}.\]
By the Markov inequality and Corollary \ref{exponential-queue-bound}, for any $\varepsilon>0$, we have
\begin{align*}
Pr(Q_l[N]>\varepsilon N)
\leq&Pr\left(r\|\mathbf Q[N]\|> r\varepsilon N\right)\\
=&Pr\left(e^{r\|\mathbf Q[N]\|}> e^{r\varepsilon N}\right)\\
\leq&\frac{\expect{e^{r\|\mathbf Q[N]\|}}}{e^{r\varepsilon N}}\leq De^{-r\varepsilon N},
\end{align*}
where $r$ is defined in Corollary \ref{exponential-queue-bound}.
Thus, we have
\begin{align*}
\sum_{N=0}^{\infty}Pr(Q_l[N]>\varepsilon N)\leq D\sum_{N=0}^{\infty}e^{-r\varepsilon N}<+\infty.
\end{align*}
Thus, by the Borel-Cantelli lemma \cite{Durrett},
\[Pr\left(A_{N}^{(\varepsilon)}~\textrm{occurs infinitely often}\right)=0.\]
Since $\varepsilon>0$ is arbitrary, letting $\varepsilon\rightarrow0$ gives
\[Pr\left(\lim_{N\rightarrow\infty}\frac{Q_l[N]}{N}=0\right)=1.\]
Finally, taking the $\limsup_{N\rightarrow\infty}$ from both sides of \eqref{inter-constraint-violation} and substituting in the  above equation gives the claim.
\end{proof}

\section{Optimality Analysis}
In this section, we show that the proposed algorithm achieves time average penalty within $\mathcal{O}(1/V)$ of the optimal objective $\theta^*$. Since the algorithm meets all the constraints, it follows,
\[\limsup_{n\rightarrow\infty}\frac{\sum_{i=0}^{n-1}y[i]}{\sum_{i=0}^{n-1}T[i]}\geq\theta^*,~~w.p.1.\]
Thus, it is enough to prove the following theorem:
\begin{theorem}[Near optimality]\label{theorem_average_converge}
For any $\delta\in(1/3,1)$ and $V\geq1$, the objective value produced by the proposed algorithm is near optimal with
\[\limsup_{n\rightarrow\infty}\frac{\sum_{i=0}^{n-1}y[i]}{\sum_{i=0}^{n-1}T[i]}\leq\theta^*+\frac{B^2}{\eta^2V},~w.p.1,\]
i.e. the algorithm achieves $\mathcal{O}(1/V)$ near optimality.
\end{theorem}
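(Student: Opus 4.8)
The plan is to analyze the auxiliary process $\theta[n]$ and show that its running update forces the empirical penalty ratio down to $\theta^*+\mathcal{O}(1/V)$. First I would observe that the key-feature inequality for Algorithm \ref{online-algorithm}, obtained exactly as in the single-renewal analysis of Section \ref{sec:simple-analysis} (via Lemma \ref{key-feature}-type reasoning applied to the deterministic reformulation of \eqref{DPP}), states that for the chosen action $\alpha[n]$ and for any randomized stationary policy $\alpha^*[n]$,
\begin{align*}
&\expect{\left.V(y[n]-\theta[n]T[n]) + \textstyle\sum_{l=1}^L Q_l[n](z_l[n]-c_lT[n])\right| \mathcal{H}_n}\\
&\qquad\leq \expect{\left.V(\hat y(\omega[n],\alpha^*[n])-\theta[n]\hat T(\omega[n],\alpha^*[n])) + \textstyle\sum_{l=1}^L Q_l[n](\hat z_l(\omega[n],\alpha^*[n])-c_l\hat T(\omega[n],\alpha^*[n]))\right| \mathcal{H}_n}.
\end{align*}
In particular, choosing $\alpha^*[n]$ to be the optimal randomized stationary policy achieving $\theta^*$ (which exists and lies in the compact achievable region by Assumptions \ref{compact} and \ref{assumption-for-algorithm}), and using that it satisfies $\expect{\hat y}/\expect{\hat T}=\theta^*$ and $\expect{\hat z_l}/\expect{\hat T}\le c_l$, the right-hand side becomes at most $V(\theta^*-\theta[n])\expect{\hat T(\omega[n],\alpha^*[n])\mid\mathcal H_n}$, which is $\le 0$ whenever $\theta[n]\le\theta^*$ and, more usefully, is bounded by $V|\theta^*-\theta[n]|\cdot\expect{\hat T\mid\mathcal H_n}$ in general.

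Next I would exploit the $\theta[n]$ update. Write $\Psi[n] := y[n]-\theta[n]T[n] + \frac1V\sum_{l=1}^L Q_l[n](z_l[n]-c_lT[n])$, so that $\theta[n+1] = \big[\frac{1}{(n+1)^\delta}\sum_{i=0}^n \Psi[i]\big]_0^{\theta_{\max}}$. The drift-plus-penalty bound \eqref{dpp-upperbound} combined with the key-feature inequality gives $\expect{\Delta(\mathbf Q[n]) + V\Psi[n]\mid\mathcal H_n}\le B^2/\eta^2$ after dividing appropriately — more precisely $\expect{V\Psi[n]\mid\mathcal H_n}\le B^2/\eta^2 - \expect{\Delta(\mathbf Q[n])\mid\mathcal H_n} + V|\theta^*-\theta[n]|\expect{\hat T\mid\mathcal H_n}$, but the cleanest route is: from the definition of $\Psi[n]$ and the key-feature inequality with the optimal policy, $\expect{\Psi[n]\mid\mathcal H_n}\le \theta^* - \theta[n] + \frac{B^2}{\eta^2 V} + (\text{a drift term } \frac1V\expect{\Delta(\mathbf Q[n])\mid\mathcal H_n}\text{ that telescopes})$. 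Summing from $i=0$ to $n-1$, dividing by $n^\delta$, and using Corollary \ref{exponential-queue-bound} to control $\frac1{n^\delta}\expect{\|\mathbf Q[n]\|^2}$, I would deduce that $\frac{1}{n^\delta}\sum_{i=0}^{n-1}(\theta[i] - \theta^* - B^2/(\eta^2 V))$ is asymptotically nonpositive, hence (after dealing with the truncation, which only helps since $\theta^*+B^2/(\eta^2V)<\theta_{\max}$) the running average of $\theta[n]$ is eventually $\le\theta^*+B^2/(\eta^2V)$.

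The remaining bridge is to connect $\theta[n]$ back to the penalty ratio $\sum y[i]/\sum T[i]$. Here I would argue that the truncated running-average update for $\theta[n]$ is a stochastic-approximation-type recursion whose ``target'' is precisely $\frac1n\sum_{i=0}^{n-1}\Psi[i]$; by a martingale/Borel–Cantelli argument (using Proposition \ref{prop-1} for second-moment bounds and Corollary \ref{exponential-queue-bound} for the $Q_l$ moments, exactly in the style of the feasibility proof of Theorem \ref{feasibility}), one shows $\theta[n] - \frac1n\sum_{i=0}^{n-1}\Psi[i]\to 0$ w.p.1, and moreover $\frac1n\sum_{i=0}^{n-1}\frac{1}{V}\sum_l Q_l[i](z_l[i]-c_lT[i])\to 0$ w.p.1 (again from the $\mathcal O(V)$ queue bound and the queue telescoping identity $\sum_i Q_l[i](z_l[i]-c_lT[i])\le$ something like $\frac12\|\mathbf Q[n]\|^2$ divided by $n$, which vanishes). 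Consequently $\theta[n] - \big(\frac1n\sum y[i] - \theta[n]\frac1n\sum T[i]\big)\to 0$, and solving the resulting fixed-point relation $\theta[n](1+\frac1n\sum T[i]) \approx \frac1n\sum y[i]$ together with the established bound $\limsup\theta[n]\le\theta^*+B^2/(\eta^2V)$ and $T[n]\ge 1$ yields $\limsup_{n}\frac{\sum_{i=0}^{n-1}y[i]}{\sum_{i=0}^{n-1}T[i]}\le\theta^*+B^2/(\eta^2V)$.

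The main obstacle, I expect, is the last step: rigorously showing $\theta[n]$ tracks the true penalty ratio almost surely, because $\Psi[i]$ involves the products $\theta[i]T[i]$ and $Q_l[i]z_l[i]$ — these are not bounded and the $\theta[i]$ factor couples the recursion to its own history, so the standard drift machinery of \cite{neely2012stability} does not apply directly (as the paper itself warns). The fix is to use the exponential-type tail control from Assumption \ref{bounded-assumption} and the exponential queue bound of Corollary \ref{exponential-queue-bound} to get $\sum_n \expect{(\cdot)^2}/n^2<\infty$ for the relevant martingale-difference terms, invoke the strong law for martingale differences (Lemma \ref{prob-1-converge}), and handle the $\delta\in(1/3,1)$ exponent carefully so that the $n^{-\delta}$ normalization both kills the accumulated drift/variance and still pins down $\theta[n]$; the restriction $\delta>1/3$ is exactly what makes the variance sum converge after accounting for the $\mathcal O(V)$, i.e. polynomially growing, queue sizes.
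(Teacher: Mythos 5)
Your outer layer matches the paper: the identity obtained by dividing $\hat\theta[n]=\frac{1}{n^\delta}\sum_{i<n}\big(y[i]-\theta[i]T[i]+\frac1V\sum_lQ_l[i](z_l[i]-c_lT[i])\big)$ by $\frac{1}{n^\delta}\sum_{i<n}T[i]$, the one-sided queue telescoping from \eqref{dpp-relation}, and the martingale SLLN applied to $K[i]^2$ are exactly how the paper extracts the $B^2/(\eta^2V)$ term (note, though, that your claim $\frac1n\sum_i\frac1V\sum_lQ_l[i](z_l[i]-c_lT[i])\to0$ w.p.1 is not justified and not needed: the telescoping gives only a lower bound, and discarding $\|\mathbf{Q}[n]\|^2\ge0$ leaves the residual $-\frac{1}{2Vn}\sum_iK[i]^2$, which is precisely where the error constant comes from). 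The genuine gap is in the key input you take for granted: an almost-sure bound on $\limsup_n\theta[n]$. Your route sums conditional expectations of $\Psi[n]$ and divides by $n^\delta$, which (i) yields only an expectation bound, and on the \emph{time average} of $\theta[i]$ rather than on $\limsup\theta[n]$ --- you silently upgrade ``running average of $\theta[n]$ eventually $\le\theta^*+B^2/(\eta^2V)$'' to ``$\limsup\theta[n]\le\theta^*+B^2/(\eta^2V)$'', and even an a.s. Ces\`aro bound would not control the $T$-weighted average $\sum_i\theta[i]T[i]/\sum_iT[i]$, since $\theta[i]$ and $T[i]$ are correlated; and (ii) cannot be rescued by Lemma \ref{prob-1-converge}: the summands $\Psi[i]$ have a \emph{state-dependent} drift (nonpositive only when $\theta[i]\ge\theta^*+\varepsilon_0/V$, by the paper's Lemma \ref{key-feature}), so they are not supermartingale differences with a uniform negative conditional mean, and the SLLN normalizes by $n$ whereas you need control of $\frac{1}{n^\delta}\sum_i\Psi[i]$ with $\delta<1$, a strictly stronger statement that an $n$-normalized law of large numbers says nothing about.

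This is exactly why the paper's proof of Theorem \ref{theorem-asymptotic-upperbound} needs its heavier machinery: the term-wise truncation to $\tilde\theta[n]$ with $\log^2$-bounded increments (Lemma \ref{truncation-lemma}, via exponential tails and Borel--Cantelli), the construction of an exponential supermartingale with time-varying exponents $\lambda_n$ and weights $\rho_n$ (Lemma \ref{exp-supMG}), fourth-moment bounds on the hitting times $S_{n_k}$ of the set $\{\tilde\theta[n]<\theta^*+\varepsilon_0/V\}$ (Lemma \ref{time-moment-bound}), and the second Borel--Cantelli lemma to get $S_{n_k}/n_k^{1/3}\to0$ w.p.1, which is what forces every excursion of $\theta[n]$ above $\theta^*+\varepsilon_0/V$ to be asymptotically negligible relative to the $n^\delta$ normalization. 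Your stated role for $\delta\in(1/3,1)$ (``makes the variance sum converge'') is also off: $\delta>1/3$ is needed so that the excursion length $S_{n_k}=o(n_k^{1/3})$ times the $\log^2 n$ increment bound is $o(n^\delta)$, while $\delta<1$ keeps $\frac{1}{n^\delta}\sum T[i]\to\infty$ in the final ratio step. Without some substitute for this hitting-time/supermartingale analysis, your argument does not establish the probability-1 bound $\limsup_n\theta[n]\le\theta^*$ on which the whole theorem rests.
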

\begin{remark}
Combining Theorem \ref{theorem_average_converge} with Corollary \ref{exponential-queue-bound}, we see that the tuning parameter $V$ plays a trade-off between the sub-optimality and the virtual queue bound (i.e. the constraint violation). In particular, our result recovers the classical 
$[\mathcal{O}(1/V),~\mathcal{O}(V)]$ trade-off in the work of opportunistic scheduling \cite{Neely2010}. 
\end{remark}

In order to prove Theorem \ref{theorem_average_converge}, we introduce the following notation:
\begin{align*}
&\textrm{original pseudo average}:~~\hat{\theta}[n]\triangleq\frac{1}{(n+1)^\delta}\sum_{i=0}^{n}\left(y[i]-\theta[i]T[i]+\frac{1}{V}\sum_{l=1}^LQ_l[i](z_l[i]-c_lT[i])\right),\\
&\textrm{tamed pseudo average}:~~\theta[n]\triangleq\left[\frac{1}{(n+1)^\delta}\sum_{i=0}^{n}\left(y[i]-\theta[i]T[i]+\frac{1}{V}\sum_{l=1}^LQ_l[i](z_l[i]-c_lT[i])\right)\right]_0^{\theta_{\max}}.
\end{align*}

\subsection{Relation between $\hat\theta[n]$ and $\theta[n]$}
We start with a preliminary lemma illustrating that the original pseudo average $\hat\theta[n]$ behaves almost the same as the tamed pseudo average $\theta[n]$. Note that $\theta[n]$ can be written as:
$$\theta[n] = [ \hat{\theta}[n]]_0^{\theta_{max}}. $$
 
\begin{lemma}[Equivalence relation]\label{properties}
For any $x\in(0,\theta_{\max})$,
\begin{enumerate}
\item $\theta[n]\geq x$ if and only if $\hat{\theta}[n]\geq x$.
\item $\theta[n]\leq x$ if and only if $\hat{\theta}[n]\leq x$.
\item $\limsup_{n\rightarrow\infty}\theta[n]\leq x$ if and only if $\limsup_{n\rightarrow\infty}\hat\theta[n]\leq x$.
\item $\limsup_{n\rightarrow\infty}\theta[n]\geq x$ if and only if $\limsup_{n\rightarrow\infty}\hat\theta[n]\geq x$.
\end{enumerate}
\end{lemma}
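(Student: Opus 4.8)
The plan is to reduce everything to two elementary facts about the clipping map $\Pi(u) := [u]_0^{\theta_{\max}}$: it is nondecreasing in $u$, and it acts as the identity on $[0,\theta_{\max}]$. Since $\theta[n] = \Pi(\hat\theta[n])$ by definition, and since $x\in(0,\theta_{\max})$ forces $\Pi(x)=x$, both implications in parts (1) and (2) fall out of a short case analysis on which of the three branches of $\Pi$ contains $\hat\theta[n]$.

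For part (1): if $\hat\theta[n]\ge x$, then monotonicity of $\Pi$ gives $\theta[n]=\Pi(\hat\theta[n])\ge \Pi(x)=x$. Conversely, if $\theta[n]\ge x>0$, then $\theta[n]\ne 0$, so we are not in the branch $\hat\theta[n]<0$; hence either $\hat\theta[n]\in[0,\theta_{\max}]$, in which case $\hat\theta[n]=\theta[n]\ge x$, or $\hat\theta[n]>\theta_{\max}>x$. In both cases $\hat\theta[n]\ge x$. Part (2) is the mirror image: if $\hat\theta[n]\le x<\theta_{\max}$ then $\theta[n]=\Pi(\hat\theta[n])\le\Pi(x)=x$; conversely, if $\theta[n]\le x<\theta_{\max}$ we cannot have $\hat\theta[n]>\theta_{\max}$ (that would force $\theta[n]=\theta_{\max}>x$), so $\hat\theta[n]\le \Pi(\hat\theta[n])=\theta[n]\le x$, using that $\hat\theta[n]\le\Pi(\hat\theta[n])$ whenever $\hat\theta[n]\le\theta_{\max}$.

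For parts (3) and (4) I would upgrade (1) and (2) to $\limsup$ statements by a standard $\varepsilon$-sandwich, exploiting that $x\in(0,\theta_{\max})$ leaves room to perturb. For (3): if $\limsup_n\hat\theta[n]\le x$, pick any $\varepsilon\in(0,\theta_{\max}-x)$; then $\hat\theta[n]\le x+\varepsilon$ for all large $n$, so by part (2) applied with $x+\varepsilon\in(0,\theta_{\max})$ we get $\theta[n]\le x+\varepsilon$ for all large $n$, whence $\limsup_n\theta[n]\le x+\varepsilon$; letting $\varepsilon\downarrow 0$ finishes it. The reverse direction of (3), and both directions of (4) (using part (1) together with the fact that $\theta[n]\ge x-\varepsilon$ holds infinitely often iff $\hat\theta[n]\ge x-\varepsilon$ does), go through identically.

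The argument is entirely elementary; the only point needing genuine care is the treatment of the interval endpoints — which is precisely why the lemma is stated for $x\in(0,\theta_{\max})$ and not the closed interval — so I would make each case analysis explicitly separate the three branches of $\Pi$ and verify that every chosen perturbation keeps $x\pm\varepsilon$ inside $(0,\theta_{\max})$.
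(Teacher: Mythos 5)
Your proposal is correct and follows essentially the same route as the paper: a three-branch case analysis on the clipping map for parts (1)–(2), followed by an $\varepsilon$-perturbation (kept inside $(0,\theta_{\max})$) to lift these to the $\limsup$ statements in (3)–(4). The only cosmetic difference is that you organize the case analysis around monotonicity of the projection, while the paper enumerates the three cases directly; your write-up of part (4) via the ``infinitely often'' characterization is if anything slightly cleaner than the paper's.
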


This lemma is intuitive and the proof is shown in the Appendix \ref{sec:proof}. We will prove results on $\hat{\theta}[n]$ which extend naturally to $\theta[n]$ via Lemma \ref{properties}.

The key idea of proving Theorem \ref{theorem_average_converge} is to bound the original pseudo average process $\hat{\theta}[n]$ asymptotically from above by $\theta^*$, which is Theorem \ref{theorem-asymptotic-upperbound} below. We then prove Theorem \ref{theorem-asymptotic-upperbound} through the following three steps:
\begin{itemize}
\item We construct a truncated version of $\hat\theta[n]$, namely $\tilde\theta[n]$, which has the same limit as $\hat{\theta}[n]$ (Lemma \ref{truncation-lemma} below), so that it is enough to show $\tilde\theta[n]\leq \theta^*$ asymptotically.
\item  For the process $\tilde\theta[n]$, we bound the moments of the hitting time, namely, the time interval between two consecutive visits to the region $\{\tilde\theta[n]\leq\theta^*\}$, by constructing a dominating exponential supermartingale and bounding its size.
 (Lemma \ref{exp-supMG} and \ref{time-moment-bound} below).

\item
We show that $\tilde\theta[n]>\theta^*$ only finitely often asymptotically (with probability 1) using the bounded moments of the hitting time.  
\end{itemize}

\subsection{Towards near optimality (I): Truncation}
The following lemma states that  the optimality of \eqref{prob-1}-\eqref{prob-3} is achievable within the closure of the set of all one-shot averages specified in Assumption \ref{compact}:
\begin{lemma}[Stationary optimality]  \label{optimal-stationary-lemma}
Let $\theta^*$ be the optimal objective of \eqref{prob-1}-\eqref{prob-3}. Then, there exists a tuple $(y^*,~T^*,~\mathbf{z}^*)\in\overline{\mathcal{R}}$, the closure of $\mathcal{R}$, such that the following hold:
\begin{align}
&y^*/T^*=\theta^*   \label{iid1}\\
&z_l^*/T^*\leq c_l ,~\forall l\in\{1,2,\cdots,L\}, \label{iid2}
\end{align}
i.e. the optimality is achievable within $\overline{\mathcal{R}}$.
\end{lemma}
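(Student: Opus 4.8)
The plan is to identify $\theta^*$ with the optimal value of the deterministic linear--fractional program ``minimize $\bar y/\bar T$ over $\overline{\mathcal R}$ subject to $\bar z_l\le c_l\bar T$ for all $l$'', by proving the two matching inequalities. First I would record the needed structure: $\mathcal R$ is convex, since mixing the conditional laws $\pi_1(\cdot\mid\omega),\pi_2(\cdot\mid\omega)$ of two randomized stationary policies (Definition \ref{RSP}) is again such a conditional law whose one-shot average is the corresponding convex combination; together with Assumption \ref{compact} this makes $\overline{\mathcal R}$ compact and convex, and every $\bar T$-coordinate is $\ge1$ because $T[n]\ge1$. For the ``$\ge\theta^*$'' (achievability) direction, if $(\bar y,\bar T,\bar{\mathbf z})\in\mathcal R$ is realized by a randomized stationary policy then the per-frame tuples $(y[n],T[n],\mathbf z[n])$ are i.i.d.\ with finite means (Proposition \ref{prop-1}); the strong law of large numbers gives $\overline y[N]\to\bar y$, $\overline T[N]\to\bar T\ge1$, $\overline{\mathbf z}[N]\to\bar{\mathbf z}$ a.s., hence the ratios converge, so the policy is feasible whenever $\bar z_l/\bar T\le c_l$ and therefore $\theta^*\le\bar y/\bar T$. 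I would then extend this to any $p\in\overline{\mathcal R}$ with $\bar z_l\le c_l\bar T$ by taking $p_k\to p$ in $\mathcal R$ and replacing $p_k$ with $(1-\gamma_k)p_k+\gamma_k p_\xi\in\mathcal R$ (convexity), where $p_\xi\in\mathcal R$ is the strictly slack point from Assumption \ref{slack}; for a suitable $\gamma_k\downarrow0$ this perturbed point is strictly slack, hence achievable, and still converges to $p$, giving $\theta^*\le\bar y/\bar T$ in the limit.

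For the reverse direction I would show that, for an \emph{arbitrary} policy, every almost-sure limit point of the running averages $s_N:=(\overline y[N],\overline T[N],\overline{\mathbf z}[N])$ lies in $\overline{\mathcal R}$. Writing $\hat v(\omega,\alpha)=(\hat y,\hat T,\hat{\mathbf z})(\omega,\alpha)$ and $v[n]=(y[n],T[n],\mathbf z[n])$, the increments $v[n]-\hat v(\omega[n],\alpha[n])$ form a martingale difference sequence (conditioning on $\omega[n],\alpha[n]$ and the history $\mathcal H_n$) with conditionally bounded second moments (Proposition \ref{prop-1}, using $K[n]\ge|z_l[n]-c_lT[n]|$), so Lemma \ref{prob-1-converge}, applied coordinatewise to this difference and its negative, gives $s_N-\frac1N\sum_{n=0}^{N-1}\hat v(\omega[n],\alpha[n])\to0$ a.s. Next, since $\omega[n]$ is independent of $\mathcal H_n$, the conditional expectation $p_n:=\mathbb E[\hat v(\omega[n],\alpha[n])\mid\mathcal H_n]$ is exactly the one-shot average of the randomized stationary policy obtained by freezing the frame-$n$ conditional law of $\alpha$ given $\omega$; hence $p_n\in\mathcal R$ a.s., and $\hat v(\omega[n],\alpha[n])-p_n$ is a bounded martingale difference, so Lemma \ref{prob-1-converge} again gives $\frac1N\sum_{n=0}^{N-1}\hat v(\omega[n],\alpha[n])-\frac1N\sum_{n=0}^{N-1}p_n\to0$ a.s. By convexity $\bar R_N:=\frac1N\sum_{n=0}^{N-1}p_n\in\mathcal R\subseteq\overline{\mathcal R}$, which is compact; therefore $s_N$ is a.s.\ bounded and all of its a.s.\ limit points belong to $\overline{\mathcal R}$.

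Combining the two ingredients, I would fix $\epsilon>0$ and a feasible policy with $\limsup_N\overline y[N]/\overline T[N]\le\theta^*+\epsilon$ a.s.\ (such exists by the definition of $\theta^*$). On an almost sure event, choosing a subsequence along which the ratio approaches its $\limsup$ and then a further subsequence along which $s_N\to p^*=(y^*,T^*,\mathbf z^*)\in\overline{\mathcal R}$ (possible since $s_N$ is bounded), one gets $T^*\ge1$, $y^*/T^*\le\theta^*+\epsilon$, and $z_l^*/T^*=\lim\overline z_l[N]/\overline T[N]\le\limsup_N\overline z_l[N]/\overline T[N]\le c_l$. Since $\overline{\mathcal R}$ and the constraint set $\mathcal C:=\{p\in\overline{\mathcal R}:\bar z_l\le c_l\bar T\ \forall l\}$ are deterministic, this produces, for every $\epsilon>0$, a point of $\mathcal C$ with $\bar y/\bar T\le\theta^*+\epsilon$; moreover $\mathcal C$ is nonempty (it contains $p_\xi$), compact, and $\bar y/\bar T$ is continuous on it because $\bar T\ge1$. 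Hence $\min_{\mathcal C}\bar y/\bar T\le\theta^*$, while the achievability bound of the first paragraph gives $\min_{\mathcal C}\bar y/\bar T\ge\theta^*$; the minimizer $(y^*,T^*,\mathbf z^*)\in\overline{\mathcal R}$ then satisfies $y^*/T^*=\theta^*$ and $z_l^*/T^*\le c_l$, which is \eqref{iid1}--\eqref{iid2}.

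The main obstacle is precisely the gap between the problem's formulation --- almost-sure $\limsup$s of \emph{ratios} of time averages --- and the set $\overline{\mathcal R}$, which consists of \emph{expected} per-frame triples; the martingale and running-average step of the second paragraph, powered by the exponential-type moment control of Assumption \ref{bounded-assumption} (via Proposition \ref{prop-1}) and by Lemma \ref{prob-1-converge}, is what closes this gap and is the technically delicate part. A smaller nuisance, already handled above, is that a constraint may be tight at the relevant boundary point of $\overline{\mathcal R}$, which is why the $\xi$-slack policy of Assumption \ref{slack} is used to perturb into the interior.
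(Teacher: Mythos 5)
Your proof is correct, and it is worth noting that the thesis itself does not prove this lemma at all: the text simply states that the argument is ``similar to the proof of Theorem 4.5 as well as Lemma 7.1 of \cite{Neely2010}'' and omits the details. What you have written is a self-contained version of exactly that standard argument, adapted to the almost-sure formulation of Chapter 5, and it is structurally the same as the expected-value analogue proved in the thesis for the multi-system case (Lemma \ref{stationary-lemma} and its use of Lemma \ref{bound-lemma-1}): there, sample quantities are compared to the frame-wise conditional means lying in the convex performance region and limit points are extracted by compactness; you do the same, but replace the expectation bookkeeping by two martingale-difference applications of the supermartingale law of large numbers (Lemma \ref{prob-1-converge}) so that the reduction from path averages $(\overline{y}[N],\overline{T}[N],\overline{\mathbf z}[N])$ to the convex combination of one-shot averages $p_n\in\mathcal{R}$ holds with probability $1$, which is what the almost-sure statement of $\theta^*$ requires. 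Your extra ingredient, perturbing approximating points of $\overline{\mathcal R}$ toward the $\xi$-slack point of Assumption \ref{slack} so that closure points of the feasible region are still dominated by $\theta^*$, is a clean way to handle the boundary and is consistent with how the slack policy is used elsewhere in the chapter.

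Two mild caveats, neither of which is a gap: (i) the identification of $p_n=\mathbb{E}[\hat v(\omega[n],\alpha[n])\mid\mathcal{H}_n]$ with the one-shot average of a ``frozen'' randomized stationary policy implicitly uses a regular conditional probability for $\alpha[n]$ given $(\mathcal{H}_n,\omega[n])$; this is precisely the measurability convention the chapter adopts in its remark on measurability, so you should cite that remark rather than leave it tacit. (ii) You read $\theta^*$ as an infimum that feasible policies approach within any $\varepsilon>0$ almost surely; this matches how $\theta^*$ is used throughout the chapter, but it deserves one explicit sentence since the problem statement phrases the objective as a pathwise $\limsup$.
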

The proof of this lemma is similar to the proof of Theorem 4.5 as well as Lemma 7.1 of \cite{Neely2010}. We omit the details for brevity.

We start the truncation by picking up an $\varepsilon_0>0$ small enough so that $\theta^*+\varepsilon_0/V<\theta_{\max}$.
We aim to show $\limsup_{n\rightarrow\infty}\theta[n]\leq\theta^*+\varepsilon_0/V$. By Lemma \ref{properties},
it is enough to show $\limsup_{n\rightarrow\infty}\hat{\theta}[n]\leq\theta^*+\varepsilon_0/V$. The following lemma tells us it is enough to prove it on a further term-wise truncated version of $\hat{\theta}[n]$.

\begin{lemma}[Truncation lemma]\label{truncation-lemma}
Consider the following alternative pseudo average $\{\tilde{\theta}[n]\}_{n=0}^{\infty}$ obtained by truncating each 
summand such that $\tilde{\theta}[0]=0$ and
\begin{align*}
\tilde{\theta}[n+1]=\frac{1}{(n+1)^{\delta}}\sum_{i=0}^n\left[\left(y[i]-\theta[i]T[i]+\frac1V\sum_{l=1}^LQ_l[i](z_l[i]-c_lT[i])\right)
\wedge \left(\left(\frac{2}{\eta}+\frac{4\sqrt{L}}{\eta rV}\right)\log^2(i+1)\right)\right],
\end{align*}
where $a\wedge b\triangleq\min\{a,b\}$, $\eta$ is defined in Assumption \ref{bounded-assumption}
and $r$ is defined in Lemma \ref{geometric-bound}. Then, we have
\[\limsup_{n\rightarrow\infty}\hat{\theta}[n]=\limsup_{n\rightarrow\infty}\tilde{\theta}[n].\]
\end{lemma}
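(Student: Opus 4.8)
\textbf{Proof proposal for Lemma~\ref{truncation-lemma}.}
The plan is to show that the two pseudo-averages $\hat\theta[n]$ and $\tilde\theta[n]$ differ by a quantity that vanishes as $n\to\infty$, so that their $\limsup$s coincide. Write the per-frame summand as
$S[i]:=y[i]-\theta[i]T[i]+\tfrac1V\sum_{l=1}^LQ_l[i](z_l[i]-c_lT[i])$ and let $\tilde S[i]:=S[i]\wedge \big((\tfrac2\eta+\tfrac{4\sqrt L}{\eta rV})\log^2(i+1)\big)$. Then $\hat\theta[n+1]-\tilde\theta[n+1]=\tfrac1{(n+1)^\delta}\sum_{i=0}^n (S[i]-\tilde S[i])$, and since truncation only removes mass when $S[i]$ exceeds the threshold, $0\le S[i]-\tilde S[i]\le S[i]\cdot\mathbf 1\{S[i]>\kappa\log^2(i+1)\}$ with $\kappa:=\tfrac2\eta+\tfrac{4\sqrt L}{\eta rV}$. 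So it suffices to show $\tfrac1{(n+1)^\delta}\sum_{i=0}^n (S[i]-\tilde S[i])\to 0$ with probability~1; since $\delta>1/3>0$ and the terms are nonnegative, it is in fact enough to show $\sum_{i=0}^\infty (S[i]-\tilde S[i])<\infty$ almost surely, which by monotone convergence / Borel--Cantelli reduces to a summable tail bound on $\Pr(S[i]>\kappa\log^2(i+1))$.

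First I would control $S[i]$ in terms of the ``exponential type'' random variables. From Assumption~\ref{bounded-assumption}, $|y[i]|$, $T[i]$ and $K[i]=\sqrt{\sum_l(z_l[i]-c_lT[i])^2}$ each have conditional exponential moments with parameter $\eta$; since $0\le\theta[i]\le\theta_{\max}$ and $|\sum_l Q_l[i](z_l[i]-c_lT[i])|\le \|\mathbf Q[i]\|\,K[i]$ by Cauchy--Schwarz, we get a bound of the shape $|S[i]|\le |y[i]|+\theta_{\max}T[i]+\tfrac1V\|\mathbf Q[i]\|K[i]$. The idea is to split the event $\{S[i]>\kappa\log^2(i+1)\}$ according to whether each of $|y[i]|$, $T[i]$, $K[i]$, $\|\mathbf Q[i]\|$ is ``large''; specifically, one term being $\ge (\text{const})\log(i+1)$ on the linear pieces, or the product $\|\mathbf Q[i]\|K[i]$ being $\ge (\text{const})\log^2(i+1)$, which happens only if one of $\|\mathbf Q[i]\|$ or $K[i]$ is $\ge (\text{const})\log(i+1)$. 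For the $y,T,K$ pieces I would use a Chernoff bound with parameter $\eta$: $\Pr(X[i]>c\log(i+1))\le (B+1)\,e^{-\eta c\log(i+1)}=(B+1)(i+1)^{-\eta c}$, and for $\|\mathbf Q[i]\|$ I would use Corollary~\ref{exponential-queue-bound}: $\Pr(\|\mathbf Q[i]\|>c\log(i+1))\le D\,e^{-rc\log(i+1)}=D(i+1)^{-rc}$. Choosing the threshold constants ($\tfrac2\eta$ on the scalar pieces, $\tfrac{4\sqrt L}{\eta rV}$ on the product piece) exactly so that each resulting exponent exceeds~$1$ makes $\sum_i \Pr(S[i]>\kappa\log^2(i+1))<\infty$.

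Then I would convert this tail summability into the almost-sure statement: by Borel--Cantelli, with probability~1 there is a (random) $i_0$ with $S[i]\le\kappa\log^2(i+1)$, hence $S[i]=\tilde S[i]$, for all $i\ge i_0$; therefore $\sum_{i\ge 0}(S[i]-\tilde S[i])$ is a finite sum and in particular finite a.s., and $\tfrac1{(n+1)^\delta}\sum_{i=0}^n(S[i]-\tilde S[i])\to 0$ since the numerator is eventually constant while the denominator diverges. Consequently $\limsup_{n\to\infty}\hat\theta[n]=\limsup_{n\to\infty}\tilde\theta[n]$, as claimed. The main obstacle I anticipate is bookkeeping in the second step: getting the split of the event $\{S[i]>\kappa\log^2(i+1)\}$ clean enough that every piece receives a threshold whose Chernoff/queue-bound exponent is genuinely $>1$ (rather than merely $\ge$ something summable only after extra care), and making sure the product term $\tfrac1V\|\mathbf Q[i]\|K[i]$ is handled with the correct constants so that it is the $\log^2$ scale (not $\log$) that appears in the truncation threshold --- this is precisely why the threshold in the lemma has the two-part form $(\tfrac2\eta+\tfrac{4\sqrt L}{\eta rV})\log^2(i+1)$. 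A secondary point to be careful about is that $S[i]$ may be negative, but since $S[i]-\tilde S[i]\ge 0$ always and equals $0$ whenever $S[i]\le\kappa\log^2(i+1)$, negativity only helps and no lower tail bound is needed.
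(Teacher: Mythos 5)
Your proposal is correct and follows essentially the same route as the paper's proof: bound the discrepancy event $\{S[i]>(\tfrac{2}{\eta}+\tfrac{4\sqrt L}{\eta rV})\log^2(i+1)\}$ by the union of a large-$(y[i]-\theta[i]T[i])$ event, a large-$\|\mathbf{Q}[i]\|$ event, and a large-$K[i]$ event (splitting the product term exactly as you describe), apply Markov/Chernoff bounds via Assumption \ref{bounded-assumption} and Corollary \ref{exponential-queue-bound}, and conclude by Borel--Cantelli that discrepancies occur only finitely often, so the two $\limsup$s coincide. The only cosmetic difference is that the paper drops the $-\theta[i]T[i]$ term using $\theta[i]\geq 0$, $T[i]\geq 0$ rather than carrying a separate $\theta_{\max}T[i]$ piece, which slightly simplifies the bookkeeping you were worried about.
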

\begin{proof}[Proof of Lemma \ref{truncation-lemma}]
Consider any frame $i \in \{0, 1, 2, \ldots\}$ such that there is a discrepancy between the summand of $\hat{\theta}[n]$ and $\tilde{\theta}[n]$, i.e.
\begin{align}\label{discrepancy-condition}
y[i]-\theta[i]T[i]+\frac1V\sum_{l=1}^LQ_l[i](z_l[i]-c_lT[i])>\left(\frac{2}{\eta}+\frac{4\sqrt{L}}{\eta rV}\right)\log^2(i+1),
\end{align}
By the Cauchy-Schwartz inequality, this implies
\begin{align*}
y[i]-\theta[i]T[i]+\frac1V\sqrt{\sum_{l=1}^LQ_l[i]^2}\sqrt{\sum_{l=1}^L(z_l[i]-c_lT[i])^2}>\left(\frac{2}{\eta}+\frac{4\sqrt{L}}{\eta rV}\right)\log^2(i+1).
\end{align*}
Thus, at least one of the following three events happened:
\begin{enumerate}
\item $A_i\triangleq \left\{y[i]-\theta[i]T[i]>\frac{2}{\eta}\log^2(i+1)\right\}$.
\item $B_i\triangleq\left\{\sqrt{\sum_{l=1}^LQ_l[i]^2}>\frac{2\sqrt{L}}{r}\log(i+1)\right\}$.
\item $E_i\triangleq\left\{K[i]>\frac{2}{\eta}\log(i+1)\right\}$.
\end{enumerate}
where $K[i]$ is defined in \eqref{def-K}. Indeed, the occurence of one of the three events is necessary for \eqref{discrepancy-condition} to happen.
We then argue that these three events jointly occur only finitely many times. Thus, as $n\rightarrow\infty$, the discrepancies are negligible.

Assume the event $A_i$ occurs, then, since $y[i]-\theta[i]T[i]\leq y[i]$, it follows
$y[i]>\frac{2}{\eta}\log^2(i+1)$.
Then, we have
\begin{align*}
Pr(A_i)\leq&Pr\left(y[i]>\frac{2}{\eta}\log^2(i+1)\right)\\
=&Pr\left(e^{\eta y[i]}>e^{2\log^2(i+1)}\right)\\
\leq&\frac{\expect{e^{\eta y[i]}}}{(i+1)^{2\log(i+1)}}\leq\frac{B}{(i+1)^{2\log(i+1)}},
\end{align*}
where the second to last inequality follows from the Markov inequality  
and the last inequality follows from Assumption \ref{bounded-assumption}.

Assume the event $B_i$ occurs, then, we have
\begin{align*}
\|\mathbf{Q}[i]\|=\sqrt{\sum_{l=1}^LQ_l[i]^2}>\frac{2\sqrt{L}}{r}\log(i+1)\geq\frac{2}{r}\log(i+1).
\end{align*}
Thus, 
\begin{align*}
Pr(B_i)\leq& Pr\left(\|\mathbf{Q}[i]\|>\frac{2}{r}\log(i+1)\right)\\
=&Pr\left(e^{r\|\mathbf{Q}[i]\|}>e^{2\log (i+1)}\right)\\
\leq&\frac{\expect{e^{r\|\mathbf{Q}[i]\|}}}{(i+1)^2}\leq\frac{D}{(i+1)^2},
\end{align*}
where the second to last inequality follows from the 
Markov inequality and the last inequality follows from Corollary \ref{exponential-queue-bound}.

Assume the event $E_i$ occurs.
Again, by Assumption \ref{bounded-assumption} and the Markov inequality,
\begin{align*}
Pr(E_i)=&Pr\left(K[i]>\frac{2}{\eta}\log(i+1)\right)\\
=&Pr\left(e^{\eta K[i]}>e^{2\log(i+1)}\right)\\
\leq&\frac{\expect{e^{\eta K[i]}}}{(i+1)^2}\leq\frac{B}{(i+1)^2},
\end{align*}
where the last inequality follows from Assumption \ref{bounded-assumption} again.
Now, by a union bound,
$$Pr(A_i\cup B_i\cup E_i)\leq Pr(A_i)+Pr(B_i)+Pr(E_i)\leq\frac{B}{(i+1)^{2\log(i+1)}}+\frac{B+D}{(i+1)^2},$$
and thus,
$$\sum_{i=0}^{\infty}Pr(A_i\cup B_i\cup E_i)\leq\sum_{i=0}^{\infty}
\left(\frac{B}{(i+1)^{2\log(i+1)}}+\frac{B+D}{(i+1)^2}\right)<\infty$$
By the Borel-Cantelli lemma, we have the joint event $A_i\cup B_i\cup E_i$ occurs only finitely many times with probability 1, and our proof is finished.
\end{proof}

Lemma \ref{truncation-lemma} is crucial for the rest of the proof. Specifically, it creates an alternative sequence $\tilde{\theta}[n]$ which has the following two properties:
\begin{enumerate}
\item We know exactly what the upper bound of each of the summands is, whereas in $\hat{\theta}[n]$, there is no exact bound for the summand due to $Q_l[i]$ and other exponential type random variables.
\item For any $n\in\mathbb{N}$, we have $\tilde{\theta}[n]\leq\hat{\theta}[n]$. Thus, if $\tilde{\theta}[n]\geq\theta^*+\varepsilon_0/V$ for some $n$, then, $\hat{\theta}[n]\geq\theta^*+\varepsilon_0/V$.
\end{enumerate}

\subsection{Towards near optimality (II): Exponential supermartingale}
The following preliminary lemma demonstrates a negative drift property for each of the summands in $\tilde{\theta}[n]$.
\begin{lemma}[Key feature inequality]\label{key-feature}
For any $\varepsilon_0>0$, if $\theta[i]\geq\theta^*+\varepsilon_0/V$, then, we have
\begin{align*}
\expect{\left.\left(y[i]-\theta[i]T[i]+\frac1V\sum_{l=1}^LQ_l[i](z_l[i]-c_lT[i])\right)
\wedge\left(\left(\frac{2}{\eta}+\frac{4\sqrt{L}}{\eta rV}\right)\log^2(i+1)\right)\right|\mathcal{H}_i}\leq-\varepsilon_0/V,
\end{align*}
\end{lemma}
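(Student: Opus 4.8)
\textbf{Proof plan for Lemma \ref{key-feature}.}
The plan is to exploit the fact that Algorithm \ref{online-algorithm} minimizes the expression \eqref{DPP} over all actions in $\mathcal A$, and compare its value against that of the stationary optimal tuple $(y^*,T^*,\mathbf z^*)\in\overline{\mathcal R}$ produced by Lemma \ref{optimal-stationary-lemma}. First I would drop the truncation: since $a\wedge b\le a$, it suffices to prove the stronger-looking bound
\[
\expect{\left. y[i]-\theta[i]T[i]+\tfrac1V\sum_{l=1}^LQ_l[i](z_l[i]-c_lT[i])\ \right|\ \mathcal H_i}\le -\varepsilon_0/V,
\]
because conditioning on $\mathcal H_i$ fixes $\theta[i]$ and $\mathbf Q[i]$, and $\min\{X,c\}$ has conditional expectation no larger than $\expect{X\mid\mathcal H_i}$ when $c$ is $\mathcal H_i$-measurable. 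Multiplying through by $V>0$, the target becomes
\[
\expect{\left. V(y[i]-\theta[i]T[i])+\sum_{l=1}^LQ_l[i](z_l[i]-c_lT[i])\ \right|\ \mathcal H_i}\le -\varepsilon_0.
\]

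Next I would invoke the greedy property of the algorithm: at frame $i$, conditioned on $\mathcal H_i$ (which contains $\mathbf Q[i]$ and $\theta[i]$) and on $\omega[i]$, the chosen $\alpha[i]$ minimizes \eqref{DPP}; hence its value is at most the value attained by any randomized stationary policy $\alpha^*[i]$ that reacts only to $\omega[i]$. Taking the randomized stationary policy achieving the closure point $(y^*,T^*,\mathbf z^*)$ of Lemma \ref{optimal-stationary-lemma} (strictly speaking, a policy whose one-shot averages are within an arbitrarily small tolerance of $(y^*,T^*,\mathbf z^*)$, then letting the tolerance vanish), this policy is independent of $\mathcal H_i$, so taking expectations over $\omega[i]$ gives
\[
\expect{\left. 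V(y[i]-\theta[i]T[i])+\sum_{l=1}^LQ_l[i](z_l[i]-c_lT[i])\ \right|\ \mathcal H_i}\le V(y^*-\theta[i]T^*)+\sum_{l=1}^LQ_l[i](z_l^*-c_lT^*).
\]
Now use the feasibility \eqref{iid2}, i.e. $z_l^*-c_lT^*\le0$, together with $Q_l[i]\ge0$, to kill the queue terms; what remains is $V(y^*-\theta[i]T^*)$. Substituting $y^*=\theta^*T^*$ from \eqref{iid1} and the hypothesis $\theta[i]\ge\theta^*+\varepsilon_0/V$ yields $V(\theta^*-\theta[i])T^*\le V\cdot(-\varepsilon_0/V)\cdot T^*=-\varepsilon_0 T^*\le-\varepsilon_0$, where the last step uses $T^*\ge1$ (inherited from $T[n]\ge1$, Assumption \ref{bounded-assumption}). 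This closes the argument.

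The main obstacle I anticipate is the technical care needed in the ``closure'' step: Lemma \ref{optimal-stationary-lemma} only guarantees $(y^*,T^*,\mathbf z^*)\in\overline{\mathcal R}$, not in $\mathcal R$ itself, so there need not be a single randomized stationary policy realizing it exactly. The fix is to pick, for each $k$, a policy with one-shot averages within $1/k$ of $(y^*,T^*,\mathbf z^*)$, run the comparison inequality with that policy, and pass to the limit $k\to\infty$; one must check the error terms $\expect{Q_l[i]\mid\mathcal H_i}\cdot(1/k)=Q_l[i]/k$ and $V\theta[i]/k\le V\theta_{\max}/k$ are finite $\mathcal H_i$-measurable quantities that vanish as $k\to\infty$, which is immediate since $\mathbf Q[i]$ and $\theta[i]$ are fixed given $\mathcal H_i$. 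A secondary, minor point is to confirm that $\min\{X,c\}$ with $c\ge0$ indeed has conditional mean $\le\expect{X\mid\mathcal H_i}$ — trivially true pointwise — so the truncation only helps. No concentration or martingale machinery is needed here; the lemma is a one-shot drift estimate, and all the heavy lifting (exponential supermartingale, hitting-time moments) is deferred to the lemmas that consume this one.
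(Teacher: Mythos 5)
Your proposal is correct and follows essentially the same route as the paper's own proof: compare the algorithm's minimized ratio-free expression \eqref{DPP} against a randomized stationary policy realizing (or approximating, via the closure of $\mathcal{R}$) the tuple $(y^*,T^*,\mathbf{z}^*)$ from Lemma \ref{optimal-stationary-lemma}, drop the queue terms using \eqref{iid2} and $Q_l[i]\geq 0$, conclude via \eqref{iid1}, $\theta[i]\geq\theta^*+\varepsilon_0/V$ and $T^*\geq1$, and finally observe that the truncation only helps since $a\wedge b\leq a$. The only cosmetic differences are that you handle the truncation first rather than last and spell out the closure-approximation limit slightly more explicitly than the paper does.
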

\begin{proof}[Proof of Lemma \ref{key-feature}]
Since the proposed algorithm minimizes \eqref{DPP} over all possible decisions in $\mathcal{A}$, it must achieve value less than or equal to that of any randomized stationary algorithm $\alpha^*[i]$. This in turn implies,
\begin{align*}
&\expect{\left.\left(y[i]-\theta[i]T[i]+\frac1V\sum_{l=1}^LQ_l[i](z_l[i]-c_lT[i])\right)\right|\mathcal{H}_i,\omega[i]}\\
\leq&\expect{\left.\left(\hat{y}(\omega[i],\alpha^*[i])-\theta[i]\hat T(\omega[i],\alpha^*[i])
+\frac1V\sum_{l=1}^LQ_l[i](\hat z_l(\omega[i],\alpha^*[i])-c_l\hat T(\omega[i],\alpha^*[i]))\right)\right|\mathcal{H}_i,\omega[i]}.
\end{align*}
Taking expectation from both sides with respect to $\omega[i]$ and using the fact that randomized stationary algorithms are i.i.d. over frames and independent of $\mathcal{H}_i$, we have
\begin{align*}
&\expect{\left.\left(y[i]-\theta[i]T[i]+\frac1V\sum_{l=1}^LQ_l[i](z_l[i]-c_lT[i])\right)\right|\mathcal{H}_i,\omega[i]}
\leq\overline{y}-\theta[i]\overline T
+\frac1V\sum_{l=1}^LQ_l[i](\overline z_l-c_l\overline T),
\end{align*}
for any $(\overline{y},\overline{T},\overline{\mathbf{z}})\in\mathcal{R}$. Since $(y^*,T^*,\mathbf{z}^*)$  specified in Lemma \ref{optimal-stationary-lemma} is in the closure of $\mathcal{R}$, we can replace $(\overline{y},\overline{T},\overline{\mathbf{z}})$ by the tuple $(y^*,T^*,\mathbf{z}^*)$ and the inequality still holds. This gives
\begin{align*}
&\expect{\left.\left(y[i]-\theta[i]T[i]+\frac1V\sum_{l=1}^LQ_l[i](z_l[i]-c_lT[i])\right)\right|\mathcal{H}_i,\omega[i]}\\
\leq& y^*-\theta[i]T^*+\frac1V\sum_{l=1}^LQ_l[i](z_l^*-c_l T^*),\\
=& T^*\left(y^*/T^*-\theta[i]+\frac1V\sum_{l=1}^LQ_l[i](z_l^*/T^*-c_l)\right)\\
\leq&T^*(\theta^*-\theta[i])\leq-\varepsilon_0/V,
\end{align*}
where the second to last inequality follows from \eqref{iid1} and \eqref{iid2}, and the last inequality follows from $\theta[i]\geq\theta^*+\varepsilon_0/V$ and $T[i]\geq1$.
Finally, since $a\wedge b\leq a$ for any real numbers $a,b$, it follows,
\begin{align*}
&\expect{\left.\left(y[i]-\theta[i]T[i]+\frac1V\sum_{l=1}^LQ_l[i](z_l[i]-c_lT[i])\right)
\wedge\left(\left(\frac{2}{\eta}+\frac{4\sqrt{L}}{\eta rV}\right)\log^2(i+1)\right)\right|\mathcal{H}_i}\\
\leq&
\expect{\left.\left(y[i]-\theta[i]T[i]+\frac1V\sum_{l=1}^LQ_l[i](z_l[i]-c_lT[i])\right)\right|\mathcal{H}_i}
\leq-\varepsilon_0/V,
\end{align*}
and the claim follows.
\end{proof}

Define $n_k$ as the frame where $\tilde{\theta}[n]$ visits the set $(-\infty,~\theta^*+\varepsilon_0/V)$ for the $k$-th time with the following conventions: 1. If $\tilde{\theta}[n]\in(-\infty,~\theta^*+\varepsilon_0/V)$ and $\tilde{\theta}[n+1]\in(-\infty,~\theta^*+\varepsilon_0/V)$, then we count them as two times. 2. When $k=1$, $n_1$ is equal to 0. Define the \textit{hitting time} $S_{n_k}$ as
\[S_{n_k}=n_{k+1}-n_k.\]
The goal is to obtain a moment bound on this quantity when $\tilde\theta[n_k+1]\geq\theta^*+\varepsilon_0/V$ (otherwise, this quantity is 1). In order to do so, we introduce a new process as follows. For any $n_k$, define 
\begin{align}\label{Fn-construction}
F[n]\triangleq\sum_{i=n_k}^{n-1}\left(y[i]-\theta[i]T[i]+\frac1V\sum_{l=1}^LQ_l[i](z_l[i]-c_lT[i])\right)
\wedge\left(\left(\frac{2}{\eta}+\frac{4\sqrt{L}}{\eta rV}\right)\log^2(i+1)\right),~\forall n>n_k,
\end{align}

The following lemma shows that indeed this $F[n]$ is closely related to $\tilde\theta[n]$. It plays an important role in proving Lemma \ref{time-moment-bound}:
\begin{lemma}\label{comparison-lemma}
~~For any $n>n_k$, if $\tilde{\theta}[n]\geq\theta^*+\varepsilon_0/V$, then, $F[n]\geq0$.
\end{lemma}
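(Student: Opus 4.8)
\textbf{Proof proposal for Lemma \ref{comparison-lemma}.}

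The plan is to relate the truncated pseudo-average $\tilde\theta[n]$ directly to the partial sum $F[n]$ by unwinding the recursive definition of $\tilde\theta[n]$ from frame $n_k+1$ up to frame $n$. Recall that $\tilde\theta[0]=0$ and, for $m\geq0$,
\[
\tilde\theta[m+1]=\frac{1}{(m+1)^\delta}\sum_{i=0}^{m}\Bigl[\bigl(y[i]-\theta[i]T[i]+\tfrac1V\textstyle\sum_{l=1}^LQ_l[i](z_l[i]-c_lT[i])\bigr)\wedge\Bigl(\bigl(\tfrac2\eta+\tfrac{4\sqrt L}{\eta rV}\bigr)\log^2(i+1)\Bigr)\Bigr].
\]
Denote the $i$-th truncated summand by $W[i]$, so $\tilde\theta[m+1]=(m+1)^{-\delta}\sum_{i=0}^{m}W[i]$ and $F[n]=\sum_{i=n_k}^{n-1}W[i]$. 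First I would split the sum defining $\tilde\theta[n]$ (for $n>n_k$) as
\[
n^\delta\,\tilde\theta[n]=\sum_{i=0}^{n_k-1}W[i]+\sum_{i=n_k}^{n-1}W[i]=n_k^\delta\,\tilde\theta[n_k]+F[n].
\]

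Next I would use the defining property of the hitting-time index $n_k$: it is the frame where $\tilde\theta[\cdot]$ visits the set $(-\infty,\theta^*+\varepsilon_0/V)$ for the $k$-th time, so in particular $\tilde\theta[n_k]<\theta^*+\varepsilon_0/V$. (If $n_k=0$ then $\tilde\theta[n_k]=0\le\theta^*+\varepsilon_0/V$ by Assumption \ref{optimal-assumption} and the choice of $\varepsilon_0$, so the bound still holds.) Therefore
\[
F[n]=n^\delta\,\tilde\theta[n]-n_k^\delta\,\tilde\theta[n_k]\ \geq\ n^\delta\,\tilde\theta[n]-n_k^\delta\bigl(\theta^*+\varepsilon_0/V\bigr),
\]
where I also need $\tilde\theta[n_k]\geq0$ (true since the summand $W[0]$ is $0$ when $n_k=0$, and more generally because $\tilde\theta$ stays in $[0,\theta_{\max}]$-ish range — here I should be slightly careful and instead just use the sign of $\tilde\theta[n_k]$ directly rather than an unneeded lower bound; the clean inequality is simply $F[n]=n^\delta\tilde\theta[n]-n_k^\delta\tilde\theta[n_k]> n^\delta\tilde\theta[n]-n_k^\delta(\theta^*+\varepsilon_0/V)$ using $\tilde\theta[n_k]<\theta^*+\varepsilon_0/V$). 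Now invoke the hypothesis $\tilde\theta[n]\geq\theta^*+\varepsilon_0/V$ together with $n>n_k$ (so $n^\delta>n_k^\delta$ since $\delta>0$ and $n_k\ge0$): this gives
\[
F[n]\ >\ n^\delta\bigl(\theta^*+\varepsilon_0/V\bigr)-n_k^\delta\bigl(\theta^*+\varepsilon_0/V\bigr)=\bigl(n^\delta-n_k^\delta\bigr)\bigl(\theta^*+\varepsilon_0/V\bigr)\ \geq\ 0,
\]
since $\theta^*+\varepsilon_0/V\geq0$. This proves $F[n]\geq0$, as claimed.

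The only subtlety — and the step I would be most careful about — is the boundary case $k=1$, $n_k=0$, where I cannot quite say $\tilde\theta[n_k]<\theta^*+\varepsilon_0/V$ from ``visiting the set'' (by convention $n_1=0$ regardless). Here $\tilde\theta[0]=0$, so I just use $\tilde\theta[0]=0\le\theta^*+\varepsilon_0/V$ directly in the display above; the argument then goes through verbatim. No heavy machinery is needed; the content is purely the telescoping/algebraic identity $n^\delta\tilde\theta[n]=n_k^\delta\tilde\theta[n_k]+F[n]$ combined with the defining inequalities at $n_k$ and at $n$.
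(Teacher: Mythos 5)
Your proof is correct and follows essentially the same route as the paper's: the identity $n^\delta\tilde\theta[n]=n_k^\delta\tilde\theta[n_k]+F[n]$, the defining inequality $\tilde\theta[n_k]<\theta^*+\varepsilon_0/V$ at the hitting frame, the hypothesis at frame $n$, and the sign condition $\theta^*+\varepsilon_0/V\geq0$. Your extra care about the boundary case $n_k=0$ is harmless but unnecessary, since $\tilde\theta[0]=0<\theta^*+\varepsilon_0/V$ anyway (and $n_k^\delta=0$ there), so the argument matches the paper's verbatim.
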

\begin{proof}[Proof of Lemma \ref{comparison-lemma}]
Suppose $\tilde{\theta}[n]\geq\theta^*+\varepsilon_0/V$, then, the following holds
\[\theta^*+\varepsilon_0/V\leq\tilde{\theta}[n]=\frac{n_k^\delta}{n^\delta}\tilde\theta[n_k]+\frac{1}{n^\delta}F[n].\]
Thus,
$$F[n]\geq n^\delta(\theta^*+\varepsilon_0/V)-n_k^\delta\tilde\theta[n_k].$$
Since at the frame $n_k$, $\tilde\theta[n_k]<\theta^*+\varepsilon_0/V$, it follows,
\[F[n]\geq \left(n^\delta-n_k^\delta\right)(\theta^*+\varepsilon_0/V).\]
Since $\theta^*+\varepsilon_0/V\geq0$, it follows $F[n]\geq0$ and the claim follows.
\end{proof}
%
%
 Recall our goal is to bound the hitting time $S_{n_k}$ of the process $\tilde\theta[n]$ when $\{\tilde\theta[n_k+1]\geq\theta^*+\varepsilon_0/V\}$, with a strictly negative drift property as Lemma \ref{key-feature}. A classical approach analyzing the hitting time of a stochastic process came from Wald's construction of martingale for sequential analysis (see, for example,  \cite{wald1944cumulative} for details). Later,
  \cite{hajek1982hitting} extended this idea to analyze the stability of a queueing system with drift condition by a supermartingale construnction. Here, we take one step further by considering the following supermartingale construction based on $F[n]$:
\begin{lemma}[Exponential Supermartingale]\label{exp-supMG}
Fix $\varepsilon_0>0$ and $V\geq\max\left\{\frac{\varepsilon_0\eta}{4\log^22}-\frac{2\sqrt{L}}{r},~1\right\}$ such that $\theta^*+\varepsilon_0/V<\theta_{\max}$.
Define a new random process $G[n]$ starting from $n_k+1$ with
\[G[n]\triangleq\frac{\exp\left(\lambda_nF[n\wedge(n_k+S_{n_k})]\right)}{\prod_{i=n_k+1}^{n\wedge (n_k+S_{n_k})}\rho_i}\mathbf{1}_{\{\tilde\theta[n_k+1]\geq\theta^*+\varepsilon_0/V\}},\]
where for any set $A$, $\mathbf{1}_A$ is the indicator function which takes value 1 if $A$ is true and 0 otherwise.
For any $n\geq n_k+1$, $\lambda_n$ and $\rho_n$ are defined as follows:
\begin{align*}
\lambda_n=&\frac{\varepsilon_0}{2Ve\left(\frac{2}{\eta}+\frac{4\sqrt{L}}{\eta rV}\right)^2\log^4(n+1)},\\
\rho_n=&1-\frac{\varepsilon_0^2}{4V^2e\left(\frac{2}{\eta}+\frac{4\sqrt{L}}{\eta rV}\right)^2\log^4(n+1)}.
\end{align*}
Then, the process $G[n]$ is measurable with respect to $\mathcal{H}_n$, $\forall n\geq n_k+1$, and furthermore, it is a supermartingale with respect to the filtration $\{\mathcal{H}_n\}_{n\geq n_k+1}$.
\end{lemma}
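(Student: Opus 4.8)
\textbf{Proof proposal for Lemma \ref{exp-supMG} (exponential supermartingale).}

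The plan is to verify the two claims separately: measurability of $G[n]$, and the supermartingale inequality $\expect{G[n+1]\mid\mathcal{H}_n}\le G[n]$ for all $n\ge n_k+1$. Measurability is routine: $F[n\wedge(n_k+S_{n_k})]$ is a deterministic function of the truncated summands through frame $n-1$, the stopping index $n_k+S_{n_k}$ is an $\{\mathcal{H}_n\}$-stopping time, $\lambda_n$ and $\rho_n$ are deterministic, and the event $\{\tilde\theta[n_k+1]\ge\theta^*+\varepsilon_0/V\}$ lies in $\mathcal{H}_{n_k+2}\subseteq\mathcal{H}_n$ for $n\ge n_k+1$ once we note $n_k+1\ge 1$ in the relevant range (the case $n=n_k+1$ is degenerate since the empty product is $1$ and $F[n_k+1\wedge\cdots]=0$). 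So the heart of the argument is the drift estimate.

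For the supermartingale step, I would first reduce to the event $\{\tilde\theta[n_k+1]\ge\theta^*+\varepsilon_0/V\}$ and to $n<n_k+S_{n_k}$ (on the complement, $G$ is frozen and the inequality is trivial since $\rho_{n+1}<1$). On $\{n<n_k+S_{n_k}\}$ we have, by definition of $S_{n_k}$, that $\tilde\theta[j]\ge\theta^*+\varepsilon_0/V$ for all $n_k<j\le n$, and in particular $\theta[j]\ge\theta^*+\varepsilon_0/V$ by Lemma \ref{properties} applied to the relation $\tilde\theta\le\hat\theta$ (here I would spell out carefully why $\tilde\theta[j]\ge\theta^*+\varepsilon_0/V$ forces $\theta[j]\ge\theta^*+\varepsilon_0/V$, using $\tilde\theta[j]\le\hat\theta[j]$ and part (1) of Lemma \ref{properties}). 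Write $D[n]:=F[n+1]-F[n]$, which is exactly the single truncated summand at index $n$, so that $D[n]\le(\tfrac{2}{\eta}+\tfrac{4\sqrt L}{\eta rV})\log^2(n+1)=:c_n$. By Lemma \ref{key-feature}, $\expect{D[n]\mid\mathcal{H}_n}\le-\varepsilon_0/V$. Then
\[
\expect{G[n+1]\mid\mathcal{H}_n}
= G[n]\cdot\frac{1}{\rho_{n+1}}\cdot\frac{\expect{e^{\lambda_{n+1}F[n+1]}\mid\mathcal{H}_n}}{e^{\lambda_n F[n]}},
\]
and since $\lambda_n$ is decreasing and $F[n]\ge0$ on the relevant event (by Lemma \ref{comparison-lemma}), $e^{\lambda_{n+1}F[n+1]}\le e^{\lambda_n F[n]}e^{\lambda_{n+1}D[n]}$, reducing the task to showing $\expect{e^{\lambda_{n+1}D[n]}\mid\mathcal{H}_n}\le\rho_{n+1}$.

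The remaining scalar inequality is where the bookkeeping lives. Using $D[n]\le c_{n+1}$ and $\lambda_{n+1}c_{n+1}\le 1$ (this is exactly where the hypothesis $V\ge\frac{\varepsilon_0\eta}{4\log^2 2}-\frac{2\sqrt L}{r}$ enters: it guarantees $\lambda_{n+1}c_{n+1}\le1$ for all $n\ge n_k+1$, since $\log^2(n+1)/\log^4(n+1)$ is maximized at the smallest admissible argument), I would invoke the elementary bound $e^x\le 1+x+x^2$ valid for $x\le1$, so $e^{\lambda_{n+1}D[n]}\le 1+\lambda_{n+1}D[n]+\lambda_{n+1}^2D[n]^2$. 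Taking conditional expectations, using $\expect{D[n]\mid\mathcal{H}_n}\le-\varepsilon_0/V$ and $D[n]^2\le c_{n+1}^2$, gives $\expect{e^{\lambda_{n+1}D[n]}\mid\mathcal{H}_n}\le 1-\lambda_{n+1}\varepsilon_0/V+\lambda_{n+1}^2c_{n+1}^2$; plugging in the definitions of $\lambda_{n+1}$ (which carries the factor $1/e$) and of $\rho_{n+1}$ and simplifying should yield exactly $\le\rho_{n+1}$. I expect the main obstacle to be precisely this final constant-chasing: making sure the factor $e$ in $\lambda_n$, the truncation level $c_n$, and the $(\tfrac{2}{\eta}+\tfrac{4\sqrt L}{\eta rV})^2$ term all line up so that $-\lambda_{n+1}\varepsilon_0/V+\lambda_{n+1}^2c_{n+1}^2\le -\varepsilon_0^2/(4V^2e(\cdots)^2\log^4(n+1))$, and verifying the $V$-threshold hypothesis is exactly what is needed for $\lambda_{n+1}c_{n+1}\le1$ (and hence for the $e^x\le1+x+x^2$ step) uniformly in $n\ge n_k+1$. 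A secondary subtlety worth stating explicitly is that $F[n]\ge0$ on $\{\tilde\theta[n]\ge\theta^*+\varepsilon_0/V,\ n<n_k+S_{n_k}\}$ via Lemma \ref{comparison-lemma}, which is what lets me drop the $F[n]$-dependence cleanly when comparing $\lambda_{n+1}$ and $\lambda_n$.
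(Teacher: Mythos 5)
Your proof takes essentially the same route as the paper's: off the stopped event you use $F[n]\geq 0$ (Lemma \ref{comparison-lemma}) together with the monotonicity of $\lambda_n$ to reduce the claim to $\mathbb{E}[e^{\lambda_{n+1}(F[n+1]-F[n])}~|~\mathcal{H}_n]\leq\rho_{n+1}$, and then combine the negative drift from Lemma \ref{key-feature} with the truncation level and the $V$-threshold; your elementary bound $e^x\leq 1+x+x^2$ for $x\leq 1$ simply replaces the paper's full Taylor-series estimate (the origin of the factor $e$ in $\lambda_n$), and your constant check closes exactly because $e^2>e$. The only slips are cosmetic: the indicator event already lies in $\mathcal{H}_{n_k+1}$ (as $\tilde\theta[n_k+1]$ is determined by frames $0,\dots,n_k$), so $n=n_k+1$ needs no ``degenerate'' treatment (there the denominator is $\rho_{n_k+1}$, not an empty product, and $F[n_k+1]$ is the first truncated summand rather than $0$), whereas the paper devotes the first half of its proof to verifying that $\{\mathcal{H}_n\}_{n\geq n_k+1}$ is a legitimate filtration from the random time $n_k$ and that the stopping-related events are measurable.
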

The proof of Lemma \ref{exp-supMG} is shown in Appendix \ref{sec:proof}.
\begin{remark}
~~If the increments $F[n+1]-F[n]$ were to be bounded, then, we could adopt the similar construction as that of \cite{hajek1982hitting}. However, in our scenario $F[n+1]-F[n]$ is of the order $\log^2(n+1)$, which is increasing and unbounded. Thus, we need decreasing exponents $\lambda_n$ and increasing weights $\rho_n$ to account for that. Furthermore, the indicator function indicates that we are only interested in the scenario $\{\tilde\theta[n_k+1]\geq\theta^*+\varepsilon_0/V\}$.
\end{remark}

The following lemma uses the previous result to bound the conditional fourth moment of the hitting time $S_{n_k}$. 
\begin{lemma}\label{time-moment-bound}
~~Given
$V\geq\max\left\{\frac{\varepsilon_0\eta}{4\log^22}-\frac{2\sqrt{L}}{r},~1\right\}$ as in Lemma \ref{exp-supMG}, for any $\beta\in(0,1/5)$ and any $\varepsilon_0>0$ such that $\theta^*+\varepsilon_0/V<
\theta_{\max}$,
there exists a positive constant $C_{\beta,V,\varepsilon_0}\simeq\mathcal{O}\left(V^{10}\beta^{-20}\varepsilon_0^{-10}\right)$, such that
\[\expect{S_{n_k}^4|\mathcal{H}_{n_k}}\leq C_{\beta,V,\varepsilon_0}(n_k+2)^{4\beta},~~\forall k\geq1.\]
\end{lemma}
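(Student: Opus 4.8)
## Plan for the proof of Lemma \ref{time-moment-bound}

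\textbf{Overall strategy.} The plan is to control the tail of the hitting time $S_{n_k}$ using the exponential supermartingale $G[n]$ from Lemma \ref{exp-supMG}, and then convert a tail bound into a fourth-moment bound. The key conceptual point is this: on the event $\{\tilde\theta[n_k+1]\geq\theta^*+\varepsilon_0/V\}$, the stopped process $F[n\wedge(n_k+S_{n_k})]$ stays nonnegative by Lemma \ref{comparison-lemma} (since every frame $n$ strictly before $n_k+S_{n_k}$ has $\tilde\theta[n]\geq\theta^*+\varepsilon_0/V$). So if $S_{n_k}$ is large, $G[n]$ is at least $1/\prod_{i}\rho_i$, and since each $\rho_i=1-\Theta(1/\log^4 i)$, the product $\prod_{i=n_k+1}^{n_k+s}\rho_i$ decays like $\exp(-\Theta(s/\log^4(n_k+s)))$. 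Combining this with the supermartingale property $\expect{G[n_k+s]\,|\,\mathcal H_{n_k+1}}\le G[n_k+1]$ (which is bounded by a constant of order $\exp(\lambda_{n_k+1}\cdot O(\log^2(n_k+1)))=O(1)$ after taking $F[n_k+1]=O(\log^2(n_k+1))$ from the truncation in \eqref{Fn-construction}) should yield $Pr(S_{n_k}>s\,|\,\mathcal H_{n_k})\lesssim \exp(-\Theta(s/\log^4(n_k+s)))$ on that event; off the event $S_{n_k}=1$ trivially.

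\textbf{Main steps, in order.} First I would dispose of the trivial case: if $\tilde\theta[n_k+1]<\theta^*+\varepsilon_0/V$, then $n_{k+1}=n_k+1$ and $S_{n_k}=1$, so the claimed bound holds with room to spare. Second, on the complementary event I would apply optional stopping: for each fixed $m\ge n_k+1$, $\expect{G[m]\,|\,\mathcal H_{n_k+1}}\le \expect{G[n_k+1]\,|\,\mathcal H_{n_k+1}}=G[n_k+1]$, and I would bound $G[n_k+1]=\exp(\lambda_{n_k+1}F[n_k+1])$ using $F[n_k+1]\le (\tfrac2\eta+\tfrac{4\sqrt L}{\eta rV})\log^2(n_k+2)$ and the definition of $\lambda_{n_k+1}$, getting $G[n_k+1]\le \exp\!\big(\tfrac{\varepsilon_0}{2Ve(\,\cdot\,)\log^2(n_k+2)}\big)\le e^{1/(2e)}\le 2$. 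Third, on $\{S_{n_k}=s\}$ with $s\ge 2$ (so $m=n_k+s$ lands at the stopped value), $G[n_k+s]\ge \big(\prod_{i=n_k+1}^{n_k+s}\rho_i\big)^{-1}\mathbf 1_{\{\tilde\theta[n_k+1]\ge\cdots\}}$ by nonnegativity of $F$ at the stopped index; taking expectations gives $Pr(S_{n_k}=s,\,\tilde\theta[n_k+1]\ge\cdots\,|\,\mathcal H_{n_k+1})\le 2\prod_{i=n_k+1}^{n_k+s}\rho_i$. Fourth, I would estimate the product: $-\log\prod_{i=n_k+1}^{n_k+s}\rho_i\ge \sum_{i=n_k+1}^{n_k+s}\tfrac{\varepsilon_0^2}{4V^2 e(\,\cdot\,)^2\log^4(i+1)}\ge c\,\tfrac{s}{\log^4(n_k+s+1)}$ for an explicit $c$ of order $\varepsilon_0^2 V^{-2}$, so $Pr(S_{n_k}\ge s\,|\,\mathcal H_{n_k})\le 2\exp(-c s/\log^4(n_k+s+1))$. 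Fifth, I would sum by parts: $\expect{S_{n_k}^4\,|\,\mathcal H_{n_k}}=\sum_{s\ge1}(s^4-(s-1)^4)Pr(S_{n_k}\ge s\,|\,\mathcal H_{n_k})\le \sum_{s\ge1} 4s^3\cdot 2\exp(-cs/\log^4(n_k+s+1))$. The cleanest way to finish is to split the sum at $s_0 = A\log^5(n_k+2)$ for a suitable constant $A$: for $s\le s_0$ we have $\log^4(n_k+s+1)\le \log^4((n_k+2)^{1+\text{const}}) = O(\log^4(n_k+2))$, so the exponent is $\le -c's/\log^4(n_k+2)$ and the geometric-type sum contributes $O((\log^4(n_k+2))^4)\cdot\text{const} = O(\log^{16}(n_k+2))$; and the tail $s>s_0$ contributes a negligible amount since the exponential still beats $s^3$ once $s$ exceeds $\log^5$. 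Since $\log^{16}(n_k+2) = o((n_k+2)^{4\beta})$ for every $\beta>0$, I absorb it into $C_{\beta,V,\varepsilon_0}(n_k+2)^{4\beta}$, and tracking the powers of $V,\varepsilon_0,\beta$ through $c\sim\varepsilon_0^2/V^2$ gives the stated order $\mathcal O(V^{10}\beta^{-20}\varepsilon_0^{-10})$.

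\textbf{Expected main obstacle.} The delicate point is the nonuniformity in $n_k$: the supermartingale $G[n]$ has $n$-dependent exponents $\lambda_n$ and weights $\rho_n$ decaying like $1/\log^4 n$, so the drift per step weakens as $n$ grows, and I must make sure the resulting tail bound still produces a moment bound that grows only like $(n_k+2)^{4\beta}$ rather than polynomially in $n_k$. This forces the careful split of the summation at $s_0\sim\log^5(n_k+2)$ and the observation that $\log^4(n_k+s+1)$ does not really grow once $s$ is polylogarithmic in $n_k$ — if instead one crudely bounded $\log^4(n_k+s+1)$ by something growing in $s$ the exponential decay would be destroyed. A secondary nuisance is verifying the constant $c$ in $-\log\prod\rho_i \ge cs/\log^4(n_k+s+1)$ is genuinely uniform (using $-\log(1-x)\ge x$ and monotonicity of $\log^{-4}$), and then honestly propagating the dependence on $V$, $\varepsilon_0$ through every inequality to land on the advertised exponents; this is bookkeeping but is where the $V^{10}\beta^{-20}\varepsilon_0^{-10}$ scaling must be checked, not merely asserted.
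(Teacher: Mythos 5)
Your overall strategy is the same as the paper's: bound $G[n_k+1]$ by an absolute constant, use the supermartingale property of $G$ together with nonnegativity of $F$ (Lemma \ref{comparison-lemma}) to get a tail bound of the form $Pr\left(S_{n_k}\geq s,\ \tilde\theta[n_k+1]\geq\theta^*+\varepsilon_0/V\ \middle|\ \mathcal{H}_{n_k+1}\right)\lesssim \prod_i\rho_i$, and then sum $s^3$ against this tail. Where you genuinely diverge is in the last step: the paper first converts $\rho_i\leq 1-\frac{1}{C_1(i+1)^{\beta}}$ (using $\frac{\beta^4}{24}\log^4(i+1)<(i+1)^{\beta}$) and then bounds $\sum_m (m+1)^3\prod\rho_i$ by repeated integration by parts of $\int (x+2)^3 e^{-\frac{1}{C_1(1-\beta)}\left((x+n_k+2)^{1-\beta}-(n_k+2)^{1-\beta}\right)}dx$, landing directly on $C(n_k+2)^{4\beta}$ with $C=\mathcal{O}(C_1^5)$; you instead keep the $\log^{-4}$ drift, obtain $Pr(S_{n_k}\geq s)\lesssim e^{-cs/\log^4(n_k+s+1)}$, split the sum at $s_0\sim\log^5(n_k+2)$, and get a polylogarithmic bound $\mathcal{O}(\log^{16}(n_k+2))$ that you absorb into $(n_k+2)^{4\beta}$ at the cost of a $\beta$-dependent constant. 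Your route is viable (and in fact yields a sharper-than-needed moment bound, which is still consistent with the stated $\mathcal{O}(V^{10}\beta^{-20}\varepsilon_0^{-10})$ constant), provided you make the tail estimate for $s>s_0$ precise, e.g.\ by taking $s_0=A\log^5(n_k+2)$ with $A$ of order $V^2/\varepsilon_0^2$ so that the middle range $s_0<s\leq n_k$ contributes $n_k^4(n_k+2)^{-c'A}=\mathcal{O}(1)$.

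There is, however, one genuine flaw in your step (3): you invoke ``nonnegativity of $F$ at the stopped index'' $n_k+S_{n_k}$. Lemma \ref{comparison-lemma} gives $F[n]\geq0$ only when $\tilde\theta[n]\geq\theta^*+\varepsilon_0/V$, and at the hitting frame $n_{k+1}=n_k+S_{n_k}$ we have by definition $\tilde\theta[n_{k+1}]<\theta^*+\varepsilon_0/V$; moreover the last summand of $F$ is truncated only from above, so $F[n_{k+1}]$ can indeed be negative and the inequality $G[n_k+s]\geq\left(\prod_{i=n_k+1}^{n_k+s}\rho_i\right)^{-1}\mathbf{1}_{\{\cdot\}}$ on $\{S_{n_k}=s\}$ is not justified as written. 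The fix is exactly what the paper does: evaluate the supermartingale at $n=n_k+s-1$ on the event $\{S_{n_k}\geq s\}$ (so that $n<n_{k+1}$, hence $\tilde\theta[n]\geq\theta^*+\varepsilon_0/V$ and $F[n]\geq0$), which yields $Pr\left(S_{n_k}\geq s,\ \tilde\theta[n_k+1]\geq\theta^*+\varepsilon_0/V\mid\mathcal{H}_{n_k+1}\right)\leq \text{const}\cdot\prod_{i=n_k+1}^{n_k+s-1}\rho_i$; the one-index shift changes nothing downstream. A second, minor point: your bounds are naturally conditioned on $\mathcal{H}_{n_k+1}$, while the lemma is stated with conditioning on $\mathcal{H}_{n_k}$; you should close with the tower property $\expect{S_{n_k}^4|\mathcal{H}_{n_k}}=\expect{\expect{S_{n_k}^4|\mathcal{H}_{n_k+1}}\,\middle|\,\mathcal{H}_{n_k}}$, using that $n_k$ is $\mathcal{H}_{n_k}$-measurable, as the paper does.
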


\begin{proof}[Proof of Lemma \ref{time-moment-bound}]
First of all, from Lemma \ref{exp-supMG} gives that $G[n]$ is a supermartingale starting from $n_k+1$, thus, we have the following chains of inequalities for any $n\geq n_k+1$:
\begin{align*}
G[n_k+1]=&\expect{G[n_k+1]~|~\mathcal{H}_{n_k+1}}\\
\geq&\expect{G[n]~|~\mathcal{H}_{n_k+1}}\\
=&\expect{\left.\frac{e^{\lambda_n F[n\wedge(n_k+S_{n_k})]}}
{\prod_{i=n_k+1}^{n}\rho_i}\mathbf{1}_{\{\tilde\theta[n_k+1]\geq\theta^*+\varepsilon_0/V\}}~\right|~\mathcal{H}_{n_k+1}}\\
\geq&\expect{\left.\frac{e^{\lambda_n F[n\wedge(n_k+S_{n_k})]}}
{\prod_{i=n_k+1}^{n}\rho_i}\mathbf{1}_{\{S_{n_k}\geq n-n_k+1\}}\mathbf{1}_{\{\tilde\theta[n_k+1]\geq\theta^*+\varepsilon_0/V\}}~\right|~\mathcal{H}_{n_k+1}}\\
\geq&\frac{1}
{\prod_{i=n_k+1}^{n}\rho_i}Pr\left[\left.S_{n_k}\geq n-n_k+1,~\tilde\theta[n_k+1]\geq\theta^*+\varepsilon_0/V~\right|~\mathcal{H}_{n_k+1}\right],
\end{align*}
where the first inequality uses the supermartingale property and the
 last inequality uses Lemma \ref{comparison-lemma} that on the set $\{S_{n_k}\geq n-n_k+1\}$, $n\wedge(n_k+S_{n_k})=n$ and $F[n]\geq0$. By definition of $G[n_k+1]$,
\begin{align*}
G[n_k+1]=\frac{e^{\lambda_{n_k+1} F[n_k+1]}}
{\rho_{n_k+1}}
\leq\frac{e^{\lambda_{n_k+1}\left(\frac{2}{\eta}+\frac{4\sqrt{L}}{\eta rV}\right)\log^2(n_k+2)}}{\rho_{n_k+1}}
\leq\frac43e,
\end{align*}
where the first inequality follows from the definition of $F[n]$, and the second inequality follows from the assumption that $V\geq\frac{\varepsilon_0\eta}{4\log^22}-\frac{2\sqrt{L}}{r}$, thus,
$\lambda_{n_k+1}\leq\frac{1}{\left(\frac{2}{\eta}+\frac{4\sqrt{L}}{\eta rV}\right)\log^2(n_k+2)}$ and $\rho_{n_k+1}\geq1-\frac{\log^22}{2e}>\frac34$. Thus,
it follows,
\[Pr\left[\left.S_{n_k}\geq n-n_k+1,~\tilde\theta[n_k+1]\geq\theta^*+\varepsilon_0/V~\right|~\mathcal{H}_{n_k+1}\right]\leq \left(\prod_{i=n_k+1}^{n}\rho_i\right)\cdot \frac{4}{3}e.\]
Now, we bound the fourth moment of hitting time:
\begin{align*}
&\expect{\left. S_{n_k}^4~\right|~\mathcal{H}_{n_k+1}}\\
=&\sum_{m=1}^{\infty}m^4Pr\left[\left.S_{n_k}= m~\right|~\mathcal{H}_{n_k+1}\right]\\
\leq&\sum_{m=1}^{\infty}\left((m+1)^4-m^4\right)Pr\left[\left.S_{n_k}\geq m+1,~\tilde\theta[n_k+1]\geq\theta^*+\varepsilon_0/V~\right|~\mathcal{H}_{n_k+1}\right]+1\\
\leq&4\sum_{m=1}^{\infty}(m+1)^3Pr\left[\left.S_{n_k}\geq m+1,~\tilde\theta[n_k+1]\geq\theta^*+\varepsilon_0/V~\right|~\mathcal{H}_{n_k+1}\right]+1\\
\leq&1+\frac{16}{3}e\sum_{m=1}^\infty(m+1)^3\prod_{i=n_k+1}^{n_k+m}\rho_i.
\end{align*}
Thus, it remains to
show there exists a constant $C$ on the order $\mathcal{O}\left(V^{10}\beta^{-20}\varepsilon_0^{-10}\right)$ such that
\[\sum_{m=1}^\infty(m+1)^3\prod_{i=n_k+1}^{n_k+m}\rho_i\leq C(n_k+2)^{4\beta},\]
which is given is Appendix \ref{computation}. This implies there exists a $C_{\beta,V,\varepsilon_0}$ so that
\[\expect{\left. S_{n_k}^4\right|\mathcal{H}_{n_k+1}}\leq C_{\beta, V,\varepsilon_0}(n_k+2)^{4\beta}.\]
Thus,
\begin{align*}
\expect{\left. S_{n_k}^4\right|\mathcal{H}_{n_k}}&=\expect{\expect{\left. S_{n_k}^4\right|\mathcal{H}_{n_k+1}}|\mathcal{H}_{n_k}}
\leq\expect{C_{\beta, V,\varepsilon_0}(n_k+2)^{4\beta}|\mathcal{H}_{n_k}}
=C_{\beta, V,\varepsilon_0}(n_k+2)^{4\beta},
\end{align*}
where the last equality follows from the fact that $n_k\in\mathcal{H}_{n_k}$.
This finishes the proof. 
\end{proof}

\subsection{An asymptotic upper bound on $\theta[n]$}\label{sec:finish-proof}
So far, we have proved that if we pick any $\varepsilon_0>0$ such that $\theta^*+\varepsilon_0/V<\theta_{\max}$, then,  the inter-visiting time has bounded conditional fourth moment. We aim to show that $\limsup_{n\rightarrow\infty}\hat\theta[n]\leq\theta^*$ with probability 1. By Lemma \ref{truncation-lemma}, it is enough to show $\limsup_{n\rightarrow\infty}\tilde\theta[n]\leq\theta^*$. To do so, we need the following Second Borel-Cantelli lemma:
\begin{lemma}[Theorem 5.3.2. of \cite{Durrett}]\label{second-borel}
Let $\mathcal{F}_k,~k\geq1$ be a filtration with $\mathcal{F}_1=\{\emptyset,\Omega\}$, and $A_k,~k\geq1$ be a sequence of events with $A_k\in\mathcal{F}_{k+1}$, then
\[\{A_k~\textrm{occurs infinitely often}\}=\left\{\sum_{k=1}^{\infty}Pr(A_k|\mathcal{F}_{k})=\infty\right\}\]
\end{lemma}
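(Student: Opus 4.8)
The plan is to reduce the claim to the standard dichotomy for martingales with bounded increments. Set $X_k := \mathbf{1}_{A_k}$ and $Y_k := Pr(A_k \mid \mathcal{F}_k) = \mathbb{E}[X_k \mid \mathcal{F}_k]$. Since $A_k \in \mathcal{F}_{k+1}$ the variable $X_k$ is $\mathcal{F}_{k+1}$-measurable while $Y_k$ is $\mathcal{F}_k$-measurable, and $X_k, Y_k \in [0,1]$. The point is that $\{A_k~\textrm{i.o.}\} = \{\sum_{k\ge 1} X_k = \infty\}$ holds exactly (not merely up to null sets), while the event on the right-hand side of the lemma is $\{\sum_{k\ge 1} Y_k = \infty\}$, so it suffices to compare the growth of these two monotone partial-sum sequences.

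First I would introduce $M_n := \sum_{k=1}^{n}(X_k - Y_k)$ and note that $\{M_n\}_{n\ge 1}$ is a martingale with respect to $\{\mathcal{F}_{n+1}\}_{n\ge 1}$, since $\mathbb{E}[X_k - Y_k \mid \mathcal{F}_k] = 0$, and that it has increments bounded by $1$ in absolute value. Then I would invoke the classical fact (e.g. the bounded-increment martingale dichotomy in \cite{Durrett}) that almost surely either $M_n$ converges to a finite limit, or $\limsup_n M_n = +\infty$ and $\liminf_n M_n = -\infty$. If a self-contained argument is preferred, this follows from an upcrossing inequality applied to the stopped martingale $M_{n\wedge \tau_c}$ with $\tau_c = \inf\{n : M_n \le -c\}$, which shows $\{\inf_n M_n > -\infty\} \subseteq \{M_n~\textrm{converges}\}$ a.s., together with the symmetric statement.

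Next I would use the identity $\sum_{k=1}^{n} X_k = M_n + \sum_{k=1}^{n} Y_k$ and the monotonicity of $n \mapsto \sum_{k=1}^n X_k$ and $n\mapsto \sum_{k=1}^n Y_k$ (both summands being nonnegative). On $\{\sum_k Y_k = \infty\}$: in the convergence branch $M_n$ is bounded, hence $\sum_{k=1}^n X_k \to \infty$; in the oscillation branch $\limsup_n \sum_{k=1}^n X_k \ge \limsup_n M_n = \infty$, and monotonicity again forces $\sum_{k=1}^n X_k \to \infty$. Either way $A_k$ occurs infinitely often, giving $\{\sum_k Y_k = \infty\} \subseteq \{A_k~\textrm{i.o.}\}$ a.s. Conversely, on $\{\sum_k Y_k < \infty\}$ the quantity $M_n = \sum_{k=1}^n X_k - \sum_{k=1}^n Y_k$ is bounded below (the first term is $\ge 0$, the second converges), so $\liminf_n M_n > -\infty$, which excludes the oscillation branch; therefore $M_n$ converges and hence $\sum_{k=1}^n X_k = M_n + \sum_{k=1}^n Y_k$ converges to a finite limit, so $A_k$ occurs only finitely often. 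Combining the two inclusions yields the asserted equality modulo null sets.

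The argument itself is short, so the main care is in the bookkeeping: getting the filtration indexing right (that $M_n$ is adapted to and a martingale for $\{\mathcal{F}_{n+1}\}$ under the convention $\mathcal{F}_1 = \{\emptyset,\Omega\}$), verifying measurability of $X_k$ and $Y_k$, and either citing cleanly or re-deriving the bounded-increment martingale dichotomy. Everything else is elementary manipulation of monotone partial sums, so I expect no genuine technical obstacle beyond stating the underlying martingale fact precisely.
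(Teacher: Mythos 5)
Your proof is correct: the martingale $M_n=\sum_{k\le n}\bigl(\mathbf{1}_{A_k}-Pr(A_k\mid\mathcal{F}_k)\bigr)$ with increments bounded by $1$, the convergence/oscillation dichotomy, and the comparison of the two monotone partial sums give exactly the a.s.\ equality claimed, with the filtration indexing handled properly. The paper gives no proof of its own (it simply cites Theorem 5.3.2 of \cite{Durrett}), and your argument is essentially the standard proof from that reference, so there is nothing further to reconcile.
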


\begin{theorem}[Asymptotic upper bound]\label{theorem-asymptotic-upperbound}
For any 
$\delta\in(1/3,1)$ and $V\geq1$, the following hold,
\[\limsup_{n\rightarrow\infty}\hat\theta[n]\leq\theta^*,~~w.p.1,\]
and
\[\limsup_{n\rightarrow\infty}\theta[n]\leq\theta^*,~~w.p.1.\]
\end{theorem}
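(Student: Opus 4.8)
The plan is to prove the first inequality, $\limsup_{n\to\infty}\hat\theta[n]\le\theta^*$ almost surely; the second inequality then follows immediately from part 3 of Lemma \ref{properties} applied with $x=\theta^*+\varepsilon_0/V$ and then letting $\varepsilon_0\downarrow 0$. By Lemma \ref{truncation-lemma} it suffices to show $\limsup_{n\to\infty}\tilde\theta[n]\le\theta^*$ a.s., since $\hat\theta[n]$ and $\tilde\theta[n]$ have the same $\limsup$. Fix an arbitrary $\varepsilon_0>0$ small enough that $\theta^*+\varepsilon_0/V<\theta_{\max}$; it is then enough to show that, with probability 1, the event $\{\tilde\theta[n]\ge\theta^*+\varepsilon_0/V\}$ occurs only finitely often, because $\varepsilon_0$ is arbitrary.

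First I would set up the excursion structure: recall $n_k$ is the frame of the $k$-th visit of $\tilde\theta[n]$ to $(-\infty,\theta^*+\varepsilon_0/V)$ and $S_{n_k}=n_{k+1}-n_k$ is the hitting time. Whenever $\tilde\theta[n]\ge\theta^*+\varepsilon_0/V$ for some $n>n_k$, that $n$ lies in the excursion $(n_k,n_{k+1}]$, so the total number of ``bad'' frames up to $n_K$ is at most $\sum_{k=1}^{K}S_{n_k}$. The key input is Lemma \ref{time-moment-bound}: $\expect{S_{n_k}^4\mid\mathcal H_{n_k}}\le C_{\beta,V,\varepsilon_0}(n_k+2)^{4\beta}$ for any $\beta\in(0,1/5)$. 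Now I would argue that the bad event happens finitely often via a Borel–Cantelli argument. One clean route: consider the event $A_k=\{S_{n_k}\ge (n_k+2)^{\beta+\gamma}\}$ for a small $\gamma>0$ with $\beta+\gamma<1/4$ (and $\beta$ chosen with $\delta>1/3$ forcing the relevant exponent balance — this is where the hypothesis $\delta\in(1/3,1)$ enters, controlling how fast $n_k$ grows relative to the moment bound). By the conditional Markov inequality, $Pr(A_k\mid\mathcal H_{n_k})\le C_{\beta,V,\varepsilon_0}(n_k+2)^{4\beta}/(n_k+2)^{4(\beta+\gamma)}=C_{\beta,V,\varepsilon_0}(n_k+2)^{-4\gamma}$. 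Since the visit times $n_k$ are strictly increasing integers, $\sum_k (n_k+2)^{-4\gamma}<\infty$ as long as $4\gamma>1$; to get that I instead take $\beta$ close to $1/5$ and correspondingly a larger power, or iterate the argument — the point is that the fourth moment gives enough room to make the series summable after choosing the exponents correctly. By the ordinary Borel–Cantelli lemma (using $Pr(A_k)=\expect{Pr(A_k\mid\mathcal H_{n_k})}$), $A_k$ occurs only finitely often a.s., so eventually $S_{n_k}\le (n_k+2)^{\beta+\gamma}$.

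With $S_{n_k}$ eventually polynomially small compared to $n_k$, I would finish as follows. On an excursion starting at $n_k$ with $\tilde\theta[n_k]<\theta^*+\varepsilon_0/V$, for any $n$ in $(n_k,n_{k+1}]$ write $\tilde\theta[n]=\frac{n_k^\delta}{n^\delta}\tilde\theta[n_k]+\frac{1}{n^\delta}F[n]$ as in Lemma \ref{comparison-lemma}, where each summand of $F[n]$ is bounded above by $\left(\frac2\eta+\frac{4\sqrt L}{\eta rV}\right)\log^2(i+1)$. Hence $F[n]\le (n-n_k)\cdot O(\log^2 n)\le S_{n_k}\cdot O(\log^2 n)$, so $\tilde\theta[n]\le \frac{n_k^\delta}{n^\delta}(\theta^*+\varepsilon_0/V)+\frac{S_{n_k}O(\log^2 n)}{n^\delta}$. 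Since $n\ge n_k$ and eventually $S_{n_k}\le(n_k+2)^{\beta+\gamma}$ with $\beta+\gamma<\delta$ (here again $\delta>1/3$ and the choice of $\beta$ matter), the second term tends to $0$ and the first is at most $\theta^*+\varepsilon_0/V$; therefore $\limsup_{n\to\infty}\tilde\theta[n]\le\theta^*+\varepsilon_0/V$ a.s. Letting $\varepsilon_0\to0$ through a countable sequence gives $\limsup_{n\to\infty}\tilde\theta[n]\le\theta^*$ a.s., hence $\limsup_{n\to\infty}\hat\theta[n]\le\theta^*$ and $\limsup_{n\to\infty}\theta[n]\le\theta^*$ a.s.

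The main obstacle I anticipate is the exponent bookkeeping: one must choose $\beta\in(0,1/5)$ and the auxiliary margin $\gamma$ so that simultaneously (i) the Borel–Cantelli series $\sum_k (n_k+2)^{-\text{(power)}}$ converges — which needs the power to exceed $1$ and uses that the $n_k$ are distinct increasing integers — and (ii) $\beta+\gamma<\delta$ so that the excursion contributes negligibly to $\tilde\theta[n]/n^\delta$. The constraint $\delta>1/3$ is exactly what makes both feasible (roughly, $3\beta<1$ is available from $\beta<1/5$, and pairing the fourth-moment bound with $\delta>1/3$ closes the gap); verifying this compatibility carefully, and confirming that $n_k$ does not grow too slowly (which is automatic since $n_{k+1}>n_k$), is the delicate part. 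Everything else — the conditional Markov step, the decomposition of $\tilde\theta[n]$, and the passage to the limit in $\varepsilon_0$ — is routine given the lemmas already established.
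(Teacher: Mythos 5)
Your argument is correct and follows essentially the same route as the paper: reduce to $\tilde\theta[n]$ via Lemma \ref{truncation-lemma}, combine the fourth-moment hitting-time bound of Lemma \ref{time-moment-bound} with a Markov-inequality/Borel--Cantelli step to force $S_{n_k}$ to grow slower than $n^{\delta}$ (the paper uses the threshold $\epsilon n_k^{1/3}$ with $\beta<1/12$ and the conditional second Borel--Cantelli lemma, while you use $(n_k+2)^{\beta+\gamma}$ with $\gamma>1/4$, $\beta+\gamma<\delta$ and the ordinary lemma, which works since $n_k\geq k$ makes the bound deterministic and summable), and then kill the excursion term in the decomposition $\tilde\theta[n]=(n_k/n)^{\delta}\tilde\theta[n_k]+F[n]/n^{\delta}$ before sending $\varepsilon_0\to0$. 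The only point worth adding is the paper's one-line observation that the finite conditional moments of $S_{n_k}$ guarantee infinitely many returns to $(-\infty,\theta^*+\varepsilon_0/V)$, so the excursion decomposition indeed covers all sufficiently large $n$.
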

\begin{proof}[Proof of Theorem \ref{theorem-asymptotic-upperbound}]
First of all, since the inter-hitting time $S_{n_k}$ has finite fourth moment, each inter-hitting time is finite with probability 1, and thus the process $\{\tilde{\theta}[n]\}_{n=0}^{\infty}$ will visit $(-\infty,\theta^*+\varepsilon_0/V)$ infinitely many times with probability 1.
Then, we pick any $\epsilon>0$ and define the following sequence of events:
\begin{equation}\label{def-A}
A_k\triangleq\left\{\frac{S_{n_k}}{n_k^{1/3}}>\epsilon\right\},~k=1,2,\cdots.
\end{equation}
For any fixed $k$, by Conditional Markov inequality, the following holds with probability 1:
\begin{align*}
Pr(A_k|\mathcal{H}_{n_k})=&Pr\left(\left.S_{n_k}^4>\epsilon^4 n_k^{4/3}\right|\mathcal{H}_{n_k}\right)\\
\leq&\frac{\expect{S_{n_k}^4|\mathcal{H}_{n_k}}}{\epsilon^4 n_k^{4/3}}\\
\leq&\frac{C_{\beta,V,\varepsilon_0}(n_k+2)^{4\beta}}{\epsilon^4 n_k^{4/3}}\\
\leq&\frac{C_{\beta,V,\varepsilon_0}}{\epsilon^4}n_k^{-4/3+4\beta}+\frac{C_{\beta,V,\varepsilon_0}2^{4\beta}}{\epsilon^4n_k^{4/3}}\\
\leq&\frac{C_{\beta,V,\varepsilon_0}}{\epsilon^4}k^{-4/3+4\beta}+\frac{C_{\beta,V,\varepsilon_0}2^{4\beta}}{\epsilon^4}k^{-4/3},
\end{align*}
where the second inequality follows from Lemma \ref{time-moment-bound} with $\beta\in(0,1/5)$, the third inequality follows from the fact that $(a+b)^x\leq a^x+b^x,~\forall a,b\geq0$ and $x\in(0,1)$. The last inequality follows from the fact that the inter-hitting time takes at least one frame and thus $n_k\geq k$.

Choose $\mathcal{F}_k=\mathcal{H}_{n_k}$ and $A_k$ as is defined in \eqref{def-A}. Then, for any $\beta\in(0,1/12)$, we have with probability 1,
\begin{align*}
\sum_{k=1}^{\infty}Pr(A_k|\mathcal{H}_{n_k})\leq
\sum_{k=1}^{\infty}\left(\frac{C_{\beta,V,\varepsilon_0}}{\epsilon^4}k^{-4/3+4\beta}+\frac{C_{\beta,V,\varepsilon_0}2^{4\beta}}{\epsilon^4}k^{-4/3}
\right)<\infty.
\end{align*}
Now by Lemma \ref{second-borel},
\[Pr\left(A_k~\textrm{occurs infinitely often}\right)=0.\]
Since the process $\{\tilde{\theta}[n]\}_{n=0}^{\infty}$ visits $(-\infty,\theta^*+\varepsilon_0/V)$ infinitely many times with probability 1, 
\[\limsup_{n\rightarrow\infty}\frac{S_{n_k}}{n_k^{1/3}}
=\limsup_{k\rightarrow\infty}\frac{S_{n_k}}{n_k^{1/3}}\leq\epsilon,~w.p.1,\]
Since $\epsilon>0$ is arbitrary, let $\epsilon\rightarrow0$ gives
\begin{equation}\label{bound-on-returning}
\lim_{n\rightarrow\infty}\frac{S_{n_k}}{n_k^{1/3}}=0,~w.p.1.
\end{equation}
Finally, we show how this convergence result leads to the bound of $\tilde{\theta}[n]$. According to the updating rule of $\tilde{\theta}[n]$, for any frame $n$ such that $n_k<n\leq n_{k+1}$,
\begin{align*}
\tilde{\theta}[n]=&(\frac{n_k}{n})^{\delta}\tilde{\theta}[n_k]
               +\frac{1}{n^{\delta}}\sum_{i=n_k}^{n-1}\left(y[i]-\theta[i]T[i]+\frac{1}{V}Q[i](z[i]-cT[i])
      \right)\wedge\left(\left(\frac{2}{\eta}+\frac{4\sqrt{L}}{\eta rV}\right)\log^2(i+1)\right)\\
\leq&(\frac{n_k}{n})^{\delta}\left(\theta^*+\frac{\varepsilon_0}{V}\right)
      +\frac{1}{n^{\delta}}\sum_{i=n_k}^{n-1}\left(\left(\frac{2}{\eta}+\frac{4\sqrt{L}}{\eta rV}\right)\log^2(i+1)\right)\\
\leq&(\frac{n_k}{n})^{\delta}\left(\theta^*+\frac{\varepsilon_0}{V}\right)
+\frac{1}{n^{\delta}}S_{n_k}\left(\frac{2}{\eta}+\frac{4\sqrt{L}}{\eta rV}\right)\log^2n,
\end{align*}
where the first inequality follows from the fact that $\tilde\theta[n_k]<\theta^*+\varepsilon_0/V$.
Now, we take the $\limsup_{n\rightarrow\infty}$ from both sides and analyze each single term on the right hand side:
\begin{align*}
&1\geq\limsup_{n\rightarrow\infty}(\frac{n_k}{n})^{\delta}
 \geq\limsup_{k\rightarrow\infty}(\frac{n_k}{n_k+S_{n_k}})^{\delta}
 =\limsup_{k\rightarrow\infty}(\frac{1}{1+\frac{S_{n_k}}{n_k}})^{\delta}=1,~w.p.1,\\
&\limsup_{n\rightarrow\infty}\frac{S_{n_k}}{n^{\delta}}\left(\frac{2}{\eta}+\frac{4\sqrt{L}}{\eta rV}\right)\log^2n
\leq\limsup_{n\rightarrow\infty}\frac{S_{n_k}}{n_k^{1/3}}\cdot
\limsup_{n\rightarrow\infty}\frac{\left(\frac{2}{\eta}+\frac{4\sqrt{L}}{\eta rV}\right)\log^2n}{n^{\delta-1/3}}=0,~w.p.1,
\end{align*}
where we apply the convergence result \eqref{bound-on-returning} in the second line. Thus,
\[\limsup_{n\rightarrow\infty}\tilde{\theta}[n]\leq\theta^*+\frac{\varepsilon_0}{V},~w.p.1.\]
By Lemma \ref{truncation-lemma} we have $\limsup_{n\rightarrow\infty}\hat{\theta}[n]\leq\theta^*+\varepsilon_0/V$. Finally, by Lemma \ref{properties},  and the fact that $\theta^*+\varepsilon_0/V\in(0,\theta_{\max})$, we have  $\limsup_{n\rightarrow\infty}\theta[n]\leq\theta^*+\varepsilon_0/V$. Since this holds for any $\varepsilon_0>0$ small enough, let $\varepsilon_0\rightarrow0$ finishes the proof.
\end{proof}

\subsection{Finishing the proof of near optimality}
With the help of previous analysis on $\theta[n]$, we are ready to prove our main theorem, with the following lemma on strong law of large numbers for martingale difference sequences:
\begin{lemma}[Corollary 4.2 of \cite{neely2012stability}]\label{SLLN}
Let $\{\mathcal{F}_i\}_{i=0}^{\infty}$ be a filtration and let $\{X(i)\}_{i=0}^{\infty}$ be a real-valued random process such that $X(i)\in\mathcal{F}_{i+1},~\forall i$. Suppose there is a finite constant $C$ such that $\expect{X(i)|\mathcal{F}_i}\leq C,~\forall i$, and 
\[\sum_{i=1}^{\infty}\frac{\expect{X(i)^2}}{i^2}<\infty.\]
Then,
\[\limsup_{n\rightarrow\infty}\frac{1}{n}\sum_{i=0}^{n-1}X(i)\leq C,~~w.p.1.\]
\end{lemma}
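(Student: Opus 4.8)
The plan is to reduce the claim to two classical facts: the almost-sure convergence of $L^2$-bounded martingales and Kronecker's lemma. First I would center the process. Set $Y(i):=X(i)-\mathbb{E}[X(i)\mid\mathcal{F}_i]$ for $i\ge 0$. Convergence of $\sum_{i\ge1}\mathbb{E}[X(i)^2]/i^2$ forces $\mathbb{E}[X(i)^2]<\infty$ for every $i$, so each $X(i)\in L^2\subseteq L^1$, the conditional expectations are well defined, and $\mathbb{E}[X(i)\mid\mathcal{F}_i]\le C$ by hypothesis. Since $X(i)\in\mathcal{F}_{i+1}$ we have $Y(i)\in\mathcal{F}_{i+1}$, and $\mathbb{E}[Y(i)\mid\mathcal{F}_i]=\mathbb{E}[X(i)\mid\mathcal{F}_i]-\mathbb{E}[X(i)\mid\mathcal{F}_i]=0$ because $\mathbb{E}[X(i)\mid\mathcal{F}_i]$ is $\mathcal{F}_i$-measurable, so $\{Y(i)\}$ is a martingale difference sequence adapted to $\{\mathcal{F}_{i+1}\}$. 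Moreover, since conditioning does not increase the second moment,
\[
\mathbb{E}[Y(i)^2]=\mathbb{E}[X(i)^2]-\mathbb{E}\big[(\mathbb{E}[X(i)\mid\mathcal{F}_i])^2\big]\le\mathbb{E}[X(i)^2],
\]
hence $\sum_{i\ge1}\mathbb{E}[Y(i)^2]/i^2<\infty$ as well; this transfer of the summability condition from $X(i)$ to $Y(i)$ is the one point that needs a little care.

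Next I would build a weighted martingale from the $Y(i)$. Define $S_1:=0$ and $S_n:=\sum_{i=1}^{n-1}Y(i)/i$ for $n\ge 2$. Each $Y(i)/i$ is a martingale difference scaled by a deterministic weight, so $\{S_n\}$ is a martingale with respect to $\{\mathcal{F}_n\}$, and by orthogonality of martingale differences
\[
\mathbb{E}[S_n^2]=\sum_{i=1}^{n-1}\frac{\mathbb{E}[Y(i)^2]}{i^2}\le\sum_{i=1}^{\infty}\frac{\mathbb{E}[Y(i)^2]}{i^2}<\infty .
\]
Thus $\{S_n\}$ is $L^2$-bounded, so by the ($L^2$) martingale convergence theorem it converges almost surely to a finite limit; equivalently, the series $\sum_{i\ge1}Y(i)/i$ converges almost surely. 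Applying Kronecker's lemma with the increasing unbounded normalization $b_m=m$ and terms $a_i=Y(i)/i$ then yields $\frac{1}{m}\sum_{i=1}^{m}Y(i)\to 0$ almost surely, and hence $\frac{1}{n}\sum_{i=1}^{n-1}Y(i)=\frac{n-1}{n}\cdot\frac{1}{n-1}\sum_{i=1}^{n-1}Y(i)\to 0$ almost surely.

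Finally I would reassemble the pieces. Writing
\[
\frac1n\sum_{i=0}^{n-1}X(i)=\frac{Y(0)}{n}+\frac1n\sum_{i=1}^{n-1}Y(i)+\frac1n\sum_{i=0}^{n-1}\mathbb{E}[X(i)\mid\mathcal{F}_i],
\]
the first term tends to $0$ because $Y(0)$ is a fixed integrable random variable, the second tends to $0$ almost surely by the previous step, and the third is at most $C$ for every $n$ since $\mathbb{E}[X(i)\mid\mathcal{F}_i]\le C$ pointwise. Taking $\limsup_{n\to\infty}$ gives $\limsup_{n\to\infty}\frac1n\sum_{i=0}^{n-1}X(i)\le C$ with probability $1$, as claimed. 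The main (and essentially only) obstacle is bookkeeping: ensuring the $L^2$-summability hypothesis survives centering, which is exactly the variance-reduction inequality above, and checking that the index offset between $\sum_{i=0}^{n-1}$ and $\sum_{i=1}^{n-1}$ contributes only vanishing boundary terms; no estimate deeper than the cited convergence theorem and Kronecker's lemma is required.
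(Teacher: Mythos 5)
Your proof is correct. Note, however, that the thesis does not prove this statement at all: it is invoked as a black-box citation (Corollary 4.2 of the Neely reference), so there is no internal proof to compare against. Your argument is the standard route for strong laws of martingale-difference type and fills in exactly what the citation hides: center via $Y(i)=X(i)-\mathbb{E}[X(i)\mid\mathcal{F}_i]$, check $\mathbb{E}[Y(i)^2]\leq\mathbb{E}[X(i)^2]$ so the weighted sums $S_n=\sum_{i=1}^{n-1}Y(i)/i$ form an $L^2$-bounded martingale, apply Doob's convergence theorem and then Kronecker's lemma to get $\frac1n\sum_{i=1}^{n-1}Y(i)\to0$ a.s., and finally bound the drift term by $C$ pointwise. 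All the delicate steps are handled properly: the variance-reduction inequality that transfers the summability hypothesis from $X(i)$ to $Y(i)$, the orthogonality of martingale increments giving $\mathbb{E}[S_n^2]=\sum_{i<n}\mathbb{E}[Y(i)^2]/i^2$, and the index offsets. The only cosmetic remark is the $i=0$ term: the summability hypothesis starts at $i=1$ and so does not give $X(0)\in L^2$, but integrability (indeed a.s. finiteness) of $X(0)$ is implicit in the statement's use of $\mathbb{E}[X(0)\mid\mathcal{F}_0]$, and that is all you need for $Y(0)/n\to0$, as you in effect observe.
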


\begin{proof}[Proof of Theorem \ref{theorem_average_converge}]
Recall for any $n$, the empirical accumulation without ceil and floor function is
\[\hat{\theta}[n]=\frac{1}{n^{\delta}}\sum_{i=0}^{n-1}\left(y[i]-\theta[i]T[i]+\frac{1}{V}\sum_{l=1}^LQ_l[i](z_l[i]-c_lT[i])\right).\]
Dividing both sides by $\sum_{i=0}^{n-1}T[i]/n^{\delta}$ yields
\begin{align*}
\frac{\hat{\theta}[n]}{\frac{1}{n^{\delta}}\sum_{i=0}^{n-1}T[i]}
=&\frac{\sum_{i=0}^{n-1}\left(y[i]-\theta[i]T[i]+\frac{1}{V}\sum_{l=1}^LQ_l[i](z_l[i]-c_lT[i])\right)}{\sum_{i=0}^{n-1}T[i]}\\
=&\frac{\sum_{i=0}^{n-1}\left(y[i]+\frac{1}{V}\sum_{l=1}^LQ_l[i](z_l[i]-c_lT[i])\right)}{\sum_{i=0}^{n-1}T[i]}
    -\frac{\sum_{i=0}^{n-1}\theta[i]T[i]}{\sum_{i=0}^{n-1}T[i]}.
\end{align*}
Moving the last term to the left hand side and taking the $\limsup_{n\rightarrow\infty}$ from both sides gives
\begin{align*}
\limsup_{n\rightarrow\infty}\left(\frac{\hat{\theta}[n]}{\frac{1}{n^{\delta}}\sum_{i=0}^{n-1}T[i]}
+\frac{\sum_{i=0}^{n-1}\theta[i]T[i]}{\sum_{i=0}^{n-1}T[i]}\right)
\geq&\limsup_{n\rightarrow\infty}\frac{\sum_{i=0}^{n-1}y[i]}{\sum_{i=0}^{n-1}T[i]}
    +\frac{\sum_{i=0}^{n-1}\frac{1}{V}\sum_{l=1}^LQ_l[i](z_l[i]-c_lT[i])}{\sum_{i=0}^{n-1}T[i]}\\
\geq&\limsup_{n\rightarrow\infty}\frac{\sum_{i=0}^{n-1}y[i]}{\sum_{i=0}^{n-1}T[i]}
+\frac{1}{2}\frac{\|\mathbf{Q}[n]\|^2-\sum_{i=0}^{n-1}\sum_{l=1}^L(z_l[i]-c_lT[i])^2}{V\sum_{i=0}^{n-1}T[i]}\\
\geq&\limsup_{n\rightarrow\infty}\frac{\sum_{i=0}^{n-1}y[i]}{\sum_{i=0}^{n-1}T[i]}
-\frac{1}{2V}\limsup_{n\rightarrow\infty}\frac{1}{n}\sum_{i=0}^{n-1}K[i]^2,
\end{align*}
where the second inequality follows from inequality \eqref{dpp-relation} and telescoping sums, and the last inequality follows from $T[n]\geq1$, $\|\mathbf{Q}[n]\|^2\geq0$ and $K[i]=\sqrt{\sum_{l=1}^L(z_l[i]-c_lT[i])^2}$. Now we use Lemma \ref{SLLN} with $X(i)=K[i]^2$ to bound the second term. Since $K[i]$ is of exponential type by Assumption \ref{bounded-assumption}, we know that $\expect{K[i]^2|\mathcal{H}_n}\leq 2B^2/\eta^2$. Furthermore, $\expect{K[i]^4}\leq24B^4/\eta^4$. Thus,
\[\sum_{i=1}^{\infty}\frac{\expect{K[i]^4}}{i^2}<\infty.\]
Thus, all assumptions in Lemma \ref{SLLN} are satisfied and we conclude that
\[\limsup_{n\rightarrow\infty}\frac{1}{n}\sum_{i=0}^{n-1}K[i]^2\leq \frac{2B^2}{\eta^2},~w.p.1.\]
This implies,
\[\limsup_{n\rightarrow\infty}\left(\frac{\hat{\theta}[n]}{\frac{1}{n^{\delta}}\sum_{i=0}^{n-1}T[i]}
+\frac{\sum_{i=0}^{n-1}\theta[i]T[i]}{\sum_{i=0}^{n-1}T[i]}\right)
\geq\limsup_{n\rightarrow\infty}\frac{\sum_{i=0}^{n-1}y[i]}{\sum_{i=0}^{n-1}T[i]}-\frac{B^2}{\eta^2V}.\]
By Theorem \ref{theorem-asymptotic-upperbound}, $\hat{\theta}[n]$ is asymptotically upper bounded. Since $\delta<1$ and $T[n]\geq1$, it follows $\frac{1}{n^{\delta}}\sum_{i=0}^{n-1}T[i]=\mathcal{O}(n^{1-\delta})$, which goes to infinity as $n\rightarrow\infty$. Thus,
\[\limsup_{n\rightarrow\infty}\frac{\hat{\theta}[n]}{\frac{1}{n^{\delta}}\sum_{i=0}^{n-1}T[i]}\leq0,\]
and thus,
\[\limsup_{n\rightarrow\infty}\frac{\sum_{i=0}^{n-1}\theta[i]T[i]}{\sum_{i=0}^{n-1}T[i]}
\geq\limsup_{n\rightarrow\infty}\frac{\sum_{i=0}^{n-1}y[i]}{\sum_{i=0}^{n-1}T[i]}
-\frac{B^2}{\eta^2V}.\]
By Theorem \ref{theorem-asymptotic-upperbound} again, $\theta[n]$ is asymptotically upper bounded by $\theta^*$. Based on this result, it is easy to show the following
\[\limsup_{n\rightarrow\infty}\frac{\sum_{i=0}^{n-1}\theta[i]T[i]}{\sum_{i=0}^{n-1}T[i]}\leq\theta^*.\]
Thus, we finally get
\[\limsup_{n\rightarrow\infty}\frac{\sum_{i=0}^{n-1}y[i]}{\sum_{i=0}^{n-1}T[i]}\leq\theta^*+\frac{B^2}{\eta^2V},\]
finishing the proof.
\end{proof}

\section{Simulation experiments}\label{simulation}
In this section, we demonstrate the performance of our proposed algorithm through an application scenario on single user file downloading. We show that this problem can be formulated as a two state constrained online MDP and solved using our proposed algorithm.

Consider a slotted time system where $t\in\{0,1,2,\cdots\}$, and one user is repeatedly downloading files.
We use $F(t)\in\{0,1\}$ to denote the system file state at time slot $t$.  State ``1'' indicates there is an active file in the system for downloading and state ``0'' means there is no file and the system is idle.
Suppose the user can only download 1 file at each time, and the user cannot observe the file length. Each file contains an integer number of packets which is independent and geometrically distributed with expected length equal to 1.

During each time slot where there is an active file for downloading (i.e. $F(t)=1$), the user first observes the channel state
$\omega(t)$, which is the i.i.d. random variable taking values in $\Omega=\{0.2, 0.5, 0.8\}$ with equal probabilities, and delay penalty $s(t)$,  which is also an i.i.d. random variable taking values in $\{1,3,5\}$ with equal probability. Then, the user
makes a service action $\alpha(t)\in\mathcal{A}=\{0, 0.3, 0.6, 0.9\}$. The pair $(\omega(t),\alpha(t))$ affects the following quantities:
\begin{itemize}
\item The success probability of downloading a file at time $t$: $\phi(\alpha(t),\omega(t))\triangleq\alpha(t)\cdot\omega(t)$.
\item The resource consumption $p(\alpha(t))$ at time $t$. We assume $p(0)=0$, $p(0.3)=1$, $p(0.6)=2$ and $p(0.9)=4$.
\end{itemize}
After a file is downloaded, the system goes idle (i.e. $F(t)=0$) and stays there for a random amount of time that is independent and geometrically
distributed with mean equal to 2. The goal is to minimize the time average delay penalty subject to a resource constraint that the time average resource consumption cannot exceed 1.

In \cite{wei2015power}, a similar optimization problem is considered but without random events $\omega(t)$ and $s(t)$, which can be formulated as a two state constrained MDP. Here, using the same logic, we can formulate our optimization problem as a two state constrained online MDP.
Given $F(t)=1$, the file will finish its download at the end of this time slot with probability $\phi(\alpha(t),\omega(t))$. Thus, the transition probabilities out of state 1 are:
\begin{align*}
&Pr[F(t+1)=0|F(t)=1]=\phi(\alpha(t),\omega(t))\\
&Pr[F(t+1)=1|F(t)=1]=1-\phi(\alpha(t),\omega(t)),
\end{align*}
On the other hand, given $F(t)=0$, the system is idle and will transition to the active state in the next slot with probability $\lambda$:
\begin{align*}
&Pr[F(t+1)=1|F(t)=0]=\lambda\\
&Pr[F(t+1)=0|F(t)=0]=1-\lambda,
\end{align*}

Now, we characterize this online MDP through renewal frames and show that it can be solved using the proposed algorithm in Section \ref{formulation}. First, notice that the state ``1'' is recurrent under any action $\alpha(t)$. We denote $t_n$ as the $n$-th time slot when the system returns to state ``1''. Define the renewal frame as the time period between $t_n$ and $t_{n+1}$ with frame size
\[T[n]=t_{n+1}-t_n.\]
Furthermore, since the system does not have any control options in state ``0'', the controller makes exactly one decision during each frame and this decision is made at the beginning of each frame. Thus, we can write out the optimization problem as follows:
\begin{align*}
\min~~&\limsup_{N\rightarrow\infty}\frac{\sum_{n=0}^{N-1}\alpha(t_n)s(t_n)}{\sum_{n=0}^{N-1}T[n]}\\
s.t.~~&\limsup_{N\rightarrow\infty}\frac{\sum_{n=0}^{N-1}p(\alpha(t_n))}{\sum_{n=0}^{N-1}T[n]}\leq1,
~\alpha(t_n)\in\mathcal{A}.
\end{align*}
Subsequently, in order to apply our algorithm, we can define the virtual queue $Q[n]$ as $Q[0]=0$ with updating rule
\[Q[n+1]=\max\{Q[n]+p(\alpha(t_n))-T[n],0\}.\]
Notice that for any particular action $\alpha(t_n)\in\mathcal{A}$ and random event $\omega(t_n)\in\Omega$, we can always compute $\expect{T[n]}$ as
\begin{align*}
\expect{T[n]}&=1-\phi(\alpha(t_n),\omega(t_n))+\phi(\alpha(t_n),\omega(t_n))\left(1+\frac{1}{\lambda}\right)\\
&=1+2\alpha(t_n)\omega(t_n),
\end{align*}
where the second equality follows by substituting $\lambda=0.5$ and $\phi(\alpha(t_n),\omega(t_n))=\alpha(t_n)\omega(t_n)$. Thus, for each $\alpha(t_n)\in\mathcal{A}$, the expression \eqref{DPP} can be computed.

In each of the simulations, each data point is the time average of 2 million slots. We compare the performance of the proposed algorithm with the optimal randomized policy. The optimal policy is computed by formulating the MDP into a linear program with the knowledge of the distribution on $\omega(t)$ and $s(t)$. See \cite{Fo66} for details of this linear program formulation.

In Fig. \ref{fig:Stupendous1}, we plot the performance of our algorithm verses $V$ parameter for different $\delta$ value.
We see from the plots that as $V$ gets larger, the time averages approaches the optimal value and achieves a near optimal performance for $\delta$ roughly between $0.4$ and $1$. A more obvious relation between performance and $\delta$ value is shown in Fig. \ref{fig:Stupendous2}, where we fix $V=300$ and plot the performance of the algorithm verses $\delta$ value. It is clear from the plots that the algorithm fails whenever $\delta$ is too small ($\delta< 0.3$) or too big ($\delta>1$). This meets the statement of Theorem \ref{theorem_average_converge} that the algorithm works for $\delta\in(1/3,1)$.

\begin{figure}[htbp]
   \centering
   \includegraphics[height=3.5in]{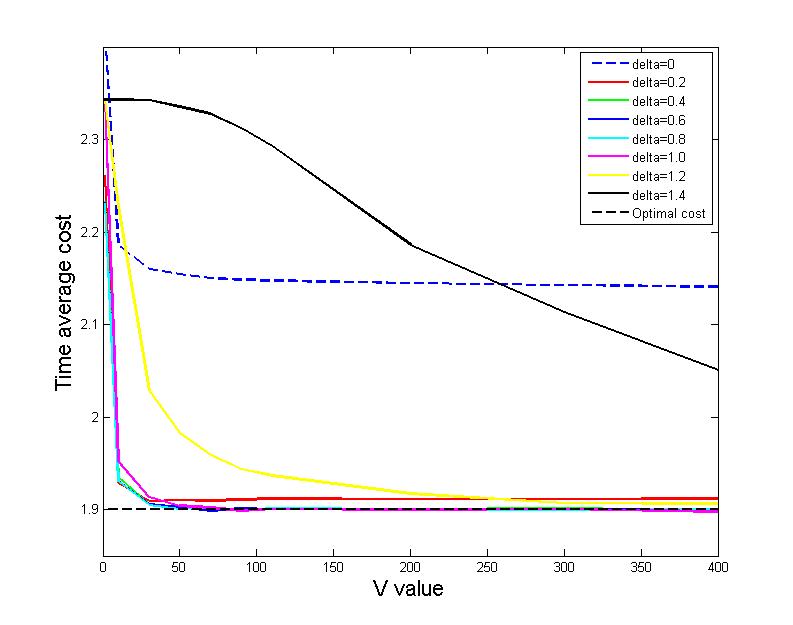} 
   \caption{Time average penalty versus tradeoff parameter V}
   \label{fig:Stupendous1}
\end{figure}

\begin{figure}[htbp]
   \centering
   \includegraphics[height=3.5in]{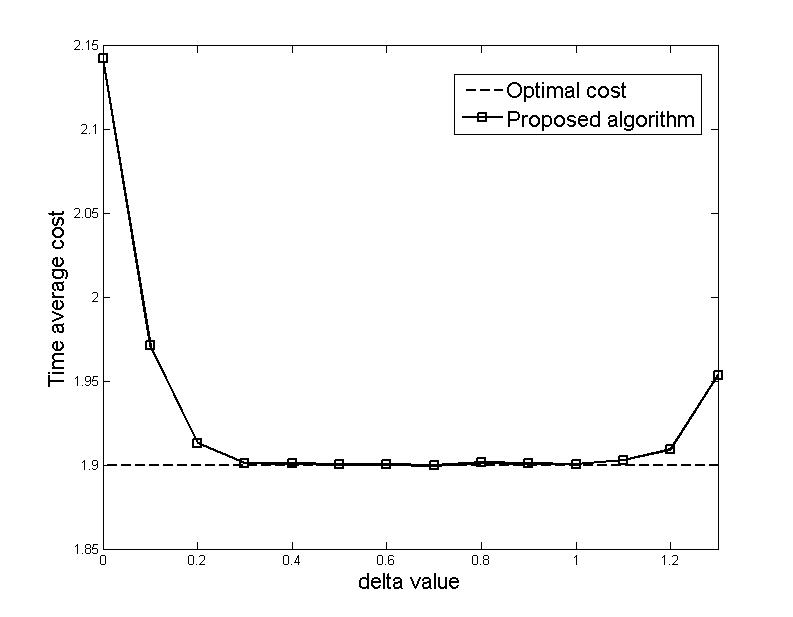} 
   \caption{Time average penalty versus $\delta$ parameter with fixed $V=300$.}
   \label{fig:Stupendous2}
\end{figure}

In Fig. \ref{fig:Stupendous3}, we plot the time average resource consumption verses $V$ value. We see from the plots that the algorithm is always feasible for different $V$'s and $\delta$'s, which meets the statement of Theorem \ref{feasibility}. Also, as $V$ gets larger, the constraint gap tends to be smaller.
In Fig. \ref{fig:Stupendous4}, we plot the average virtual queue size verses $V$ value. It shows that the average queue size gets larger as $V$ get larger. To see the implications,
recall from the proof of Theorem \ref{feasibility}, the inequality \eqref{inter-constraint-violation} implies that the virtual queue size $Q_l[N]$ affects the rate that the algorithm converges down to the feasible region. Thus, if the average virtual queue size is large, then, it takes longer for the algorithm to converge. This demonstrates that $V$ is indeed a trade-off parameter which trades the sub-optimality gap for the convergence rate.

\begin{figure}[htbp]
   \centering
   \includegraphics[height=3.5in]{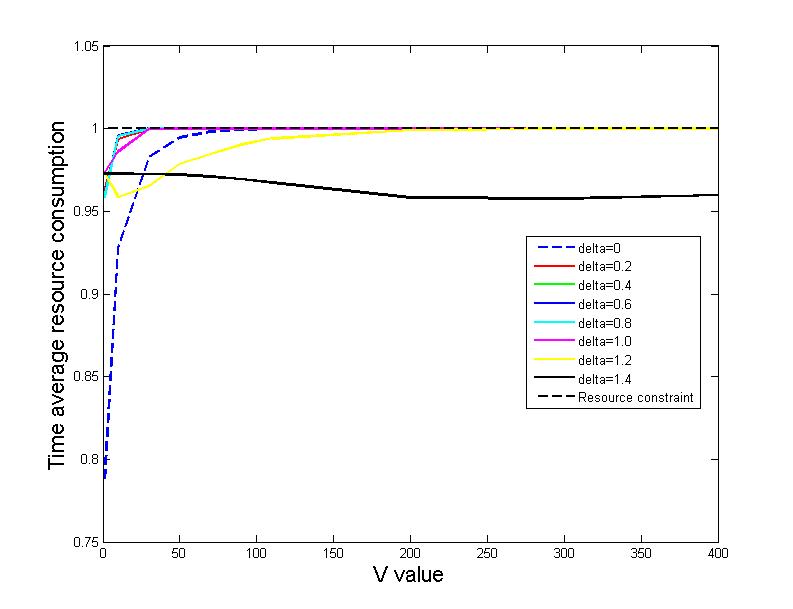} 
   \caption{Time average resource consumption versus tradeoff parameter $V$.}
   \label{fig:Stupendous3}
\end{figure}

\begin{figure}[htbp]
   \centering
   \includegraphics[height=3.5in]{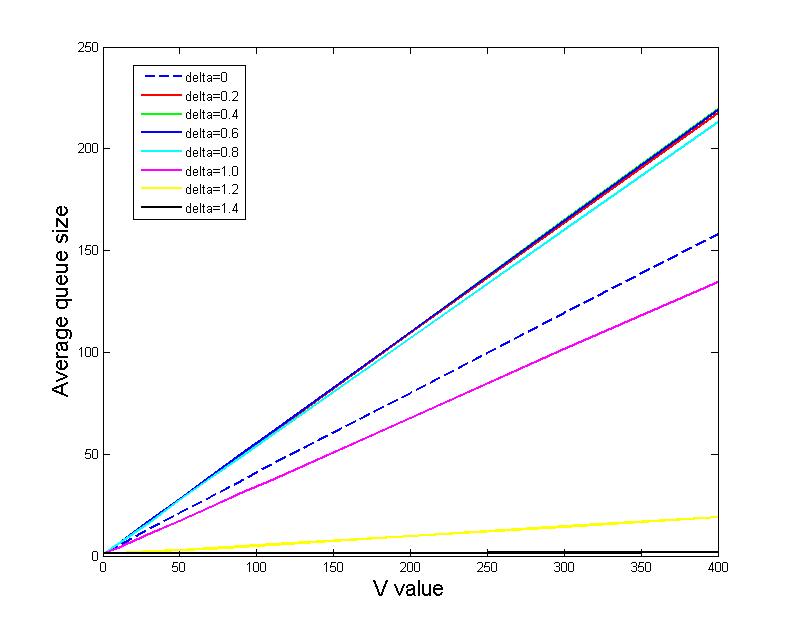} 
   \caption{Time average virtual queue size versus tradeoff parameter $V$.}
   \label{fig:Stupendous4}
\end{figure}

\section{Additional proofs}\label{sec:proof}

\begin{proof}[Proof of Lemma \ref{geometric-bound}]
We begin by bounding the difference $\left|\|\mathbf{Q}[n+1]\|-\|\mathbf{Q}[n]\|\right|$ for any $n$:
\begin{align*}
\big|\|\mathbf{Q}[n+1]\|-\|\mathbf{Q}[n]\|\big|
\leq&\|\mathbf{Q}[n+1]- \mathbf{Q}[n]\|\\
=&\sqrt{\sum_{l=1}^L\big( \max\{Q_l[n] + z_l[n]-c_lT[n],~0\} - Q_l[n]\big)^2}\\
\leq&\sqrt{\sum_{l=1}^L(z_l[n]-c_lT[n])^2}=K[n],
\end{align*}
where the first inequality follows from triangle inequality and
the last inequality follows from the fact that for any $a,b\in\mathbb{R}$, $|\max\{a+b,0\}-a|\leq |b|$. 
Thus, it follows,
\begin{align*}
\left|\expect{\left.\|\mathbf{Q}[n+1]\|-\|\mathbf{Q}[n]\|\right|\mathcal{H}_n}\right|
\leq \expect{\left.K[n]\right|\mathcal{H}_n}\leq \frac{B}{\eta},
\end{align*}
which follows from Proposition \ref{prop-1}. Also, we have
\begin{align*}
\expect{\left.e^{r(\|\mathbf{Q}[n+1]\|-\|\mathbf{Q}[n]\|)}\right|\mathcal{H}_n}
\leq&\expect{\left.\exp\left(rK[n]\right)\right|\mathcal{H}_n}\\
\leq&\expect{\left.\exp\left(\eta K[n]\right)\right|\mathcal{H}_n}
\leq B\triangleq \Gamma
\end{align*}
where the second to last inequality follows by substituting the definition $r=\min\left\{\eta,\frac{\xi\eta^2}{4B}\right\}\leq\eta$ and the last inequality follows from Assumption \ref{bounded-assumption}.

Next, suppose $\|\mathbf{Q}[n]\|> \sigma\triangleq C_0V$. Then, since the proposed algorithm minimizes the term on the right hand side of \eqref{dpp-upperbound} over all possible decisions at frame $n$, it must achieve smaller value on that term compared to that of $\xi$-slackness policy $\alpha^{(\xi)}[n]$ specified in Assumption \ref{slack}. Formally, this is
\begin{align*}
&\expect{\left.\sum_{l=1}^LQ_l[n](z_l[n]-c_lT[n])+V(y[n]-\theta[n]T[n])~\right|~\mathcal{H}_n,\omega[n]}\\
\leq&\expect{\left.\sum_{l=1}^LQ_l[n](z_l^{(\xi)}[n]-c_lT^{(\xi)}[n])+V(y^{(\xi)}[n]-\theta[n]T^{(\xi)}[n])~\right|~\mathcal{H}_n,\omega[n]}.
\end{align*}
where we used the fact that $\theta[n]$ and $\mathbf{Q}[n]$ are in $\mathcal{H}_n$.
Substitute this bound into the right hand side of \eqref{dpp-upperbound}  and take expectation from both sides regarding $\omega[n]$ gives
\begin{align*}
&\expect{\Delta[n]+V(y[n]-\theta[n]T[n])~|~\mathcal{H}_n}\\
\leq&\expect{\left.\sum_{l=1}^LQ_l[n](z_l^{(\xi)}[n]-c_lT^{(\xi)}[n])+V(y^{(\xi)}[n]-\theta[n]T^{(\xi)}[n])~\right|~\mathcal{H}_n}
+B^2/\eta^2.
\end{align*}
Since $\Delta[n]=\frac12(\|\mathbf{Q}[n+1]\|^2-\|\mathbf{Q}[n]\|^2)$,
This implies
\begin{align*}
&\expect{\|\mathbf{Q}[n+1]\|^2-\|\mathbf{Q}[n]\|^2~|~\mathcal{H}_n}\\
\leq&2B^2/\eta^2+2\expect{\left.\sum_{l=1}^LQ_l[n](z_l^{(\xi)}[n]-c_lT^{(\xi)}[n])+V(y^{(\xi)}[n]-\theta[n]T^{(\xi)}[n])
-V(y[n]-\theta[n]T[n])
\right|\mathcal{H}_n}\\
\leq&2B^2/\eta^2+2\sum_{l=1}^LQ_l[n]\expect{\left.z_l^{(\xi)}[n]-c_lT^{(\xi)}[n]
\right|\mathcal{H}_n}+2V\frac{B+\theta_{\max}B}{\eta}\\
\leq&2B^2/\eta^2+2V\frac{B+\theta_{\max}B}{\eta}-2\xi\sum_{l=1}^LQ_l[n]\\
\leq&2B^2/\eta^2+2V\frac{B+\theta_{\max}B}{\eta}-2\xi\|\mathbf{Q}[n]\|,
\end{align*}
where the second inequality follows from applying Proposition \ref{prop-1} to bound $\expect{T[n]|\mathcal{H}_n}$ as well as the fact that $0<\theta[n]< \theta_{\max}$, and the third inequality
follows from the $\xi$-slackness property as well as the assumption that $z_l^{(\xi)}[n]$ is i.i.d. over slots and hence independent of $Q_l[n]$. This further implies
\begin{align*}
&\expect{\|\mathbf{Q}[n+1]\|^2~|~\mathcal{H}_n}\\
\leq&\|\mathbf{Q}[n]\|^2-2\xi\|\mathbf{Q}[n]\|+2B^2/\eta^2+2V\frac{B+\theta_{\max}B}{\eta}\\
=&\|\mathbf{Q}[n]\|^2-2\xi\|\mathbf{Q}[n]\|+2B^2/\eta^2+2V\frac{B+\theta_{\max}B}{\eta}-\frac{\xi^2}{4}+\frac{\xi^2}{4}\\
=&\|\mathbf{Q}[n]\|^2-2\xi\|\mathbf{Q}[n]\|+\frac{2B^2/\eta^2+2V\frac{B+\theta_{\max}B}{\eta}-\frac{\xi^2}{4}}{\xi}\cdot\xi+\frac{\xi^2}{4}\\
=&\|\mathbf{Q}[n]\|^2-2\xi\|\mathbf{Q}[n]\|+C_0V\cdot\xi+\frac{\xi^2}{4}\\
\leq&\|\mathbf{Q}[n]\|^2-\xi\|\mathbf{Q}[n]\|+\frac{\xi^2}{4}=\left(\|\mathbf{Q}[n]\|-\frac\xi2\right)^2,
\end{align*}
where we use the fact that $C_0=\frac{2B^2}{V\xi\eta^2}+\frac{2}{\xi}\frac{B+\theta_{\max}B}{\eta}-\frac{\xi}{4V}$ and also the assumption that $\|\mathbf{Q}[n]\|\geq C_0V$.
Now take the square root from both sides gives
\[\sqrt{\expect{\|\mathbf{Q}[n+1]\|^2~|~\mathcal{H}_n}}\leq\|\mathbf{Q}[n]\|-\frac\xi2.\]
By concavity of $\sqrt{x}$ function, we have $\expect{\left.\|\mathbf{Q}[n+1]\|~\right|~\mathcal{H}_n}\leq\sqrt{\expect{\|\mathbf{Q}[n+1]\|^2~|~\mathcal{H}_n}}$, thus,
\begin{equation}\label{pre-conclusion}
\expect{\left.\|\mathbf{Q}[n+1]\|~\right|~\mathcal{H}_n}\leq\|\mathbf{Q}[n]\|-\frac\xi2.
\end{equation}
Finally, we claim that this gives that under the condition $\|\mathbf{Q}[n]\|> \sigma\triangleq C_0V$,
\begin{equation}\label{conclusion}
\expect{\left.e^{r(\|\mathbf{Q}[n+1]\|-\|\mathbf{Q}[n]\|)}\right|\mathcal{H}_n}\leq\rho\triangleq
1-\frac{r\xi}{2}+\frac{2B}{\eta^2}r^2<1.
\end{equation}
To see this, we expand $\expect{\left.e^{r(\|\mathbf{Q}[n+1]\|-\|\mathbf{Q}[n]\|)}\right|\mathcal{H}_n}$ using Taylor series as follows:
\begin{align*}
&\expect{\left.e^{r(\|\mathbf{Q}[n+1]\|-\|\mathbf{Q}[n]\|)}\right|\mathcal{H}_n}\\
=&1+r\expect{\left.\|\mathbf{Q}[n+1]\|-\|\mathbf{Q}[n]\|\right|\mathcal{H}_n}
+r^2\sum_{k=2}^{\infty}\frac{r^{k-2}\expect{\left.(\|\mathbf{Q}[n+1]\|-\|\mathbf{Q}[n]\|)^k\right|\mathcal{H}_n}}{k!}\\
\leq&1- \frac{r\xi}{2}
+r^2\sum_{k=2}^{\infty}\frac{r^{k-2}\expect{\left.(\|\mathbf{Q}[n+1]\|-\|\mathbf{Q}[n]\|)^k\right|\mathcal{H}_n}}{k!}\\
\leq&1- \frac{r\xi}{2}
+r^2\sum_{k=2}^{\infty}\frac{\eta^{k-2}\expect{\left.(\|\mathbf{Q}[n+1]\|-\|\mathbf{Q}[n]\|)^k\right|\mathcal{H}_n}}{k!}\\
=&1- \frac{r\xi}{2}+r^2\frac{\left(\expect{\left.e^{\eta(\|\mathbf{Q}[n+1]\|-\|\mathbf{Q}[n]\|)}\right|\mathcal{H}_n}
-\eta\expect{\left.\|\mathbf{Q}[n+1]\|-\|\mathbf{Q}[n]\|\right|\mathcal{H}_n}-1\right)}{\eta^2}\\
\leq&1- \frac{r\xi}{2}+\frac{B+\eta\cdot\frac{B}{\eta}}{\eta^2}r^2\\
\leq&1-\frac{r\xi}{2}+\frac{2B}{\eta^2}r^2=\rho,
\end{align*}
where the first inequality follows from \eqref{pre-conclusion}, the second inequality follows from $r\leq\eta$, and the second
to last inequality follows from Proposition \ref{prop-1}.

Finally, notice that the above quadratic function on $r$ attains the minimum at the point
$r=\frac{\xi\eta^2}{4B}$ with value $1-\frac{\xi^2\eta^2}{8B}<1$, and this function is strictly decreasing when
$$r\in\left(0, \frac{\xi\eta^2}{4B}\right).$$
Thus, our choice of
$$r=\min\left\{\eta,\frac{\xi\eta^2}{4B}\right\}
\leq\frac{\xi\eta^2}{4B}$$
ensures that $\rho$ is strictly less than 1 and the proof is finished.
\end{proof}

\begin{proof}[Proof of Lemma \ref{properties}]
If $\theta[n]=y$ for some $y\in[0,\theta_{\max}]$, then, $\hat\theta[n]$ falls into one of the following three cases:
\begin{itemize}
\item $\hat\theta[n]=y$.
\item $y=\theta_{\max}$ and $\hat\theta[n]>\theta_{\max}$.
\item $y=0$ and $\hat\theta[n]<0$.
\end{itemize}
Then, we prove the above four properties based on these three cases.

1) If $\theta[n]=y\geq x$ for some $y$, then, the first two cases immediately imply $\hat\theta[n]\geq x$. If $y=0$, then, we have $x\leq0$, which violates the assumption that $x\in(0,\theta_{\max})$. Thus, the third case is ruled out. On the other hand, if $\hat\theta[n]\geq x$, then, obviously, $\theta[n]\geq x$.

2) If $\theta[n]=y\leq x$ for some $y$, then the last two cases immediately imply $\hat\theta[n]\leq x$. If $y=\theta_{\max}$, then, we have $x\geq y_{\max}$, which violates the assumption that $x\in(0,\theta_{\max})$. Thus, the first case is ruled out. On the other hand, if $\hat\theta[n]\leq x$, then, obviously, $\theta[n]\leq x$.

3) If $\limsup_{n\rightarrow\infty}\theta[n]\leq x$, then, for any $\epsilon>0$ such that $x+\epsilon<y_{\max}$, there exists an $N$ large enough so that $\theta[n]\leq x+\epsilon,~\forall n\geq N$. Then, by property 2), $\hat\theta[n]\leq x+\epsilon,~\forall n\geq N$, which implies $\limsup_{n\rightarrow\infty}\hat\theta[n]\leq x+\epsilon$. Let $\epsilon\rightarrow0$ gives $\limsup_{n\rightarrow\infty}\hat\theta[n]\leq x$.  One the other hand, if $\limsup_{n\rightarrow\infty}\hat\theta[n]\leq x$, then, obviously, $\limsup_{n\rightarrow\infty}\theta[n]\leq x$.

4) If $\liminf_{n\rightarrow\infty}\theta[n]\geq x$, then, for any $\epsilon>0$ such that $x-\epsilon>0$ there exists an $N$ large enough so that $\theta[n]\geq x-\epsilon,~\forall n\geq N$. Then, by property 1), $\hat\theta[n]\geq x-\epsilon,~\forall n\geq N$, which implies $\limsup_{n\rightarrow\infty}\hat\theta[n]\leq x-\epsilon$. Let $\epsilon\rightarrow0$ gives $\limsup_{n\rightarrow\infty}\hat\theta[n]\geq x$. One the other hand, if $\limsup_{n\rightarrow\infty}\hat\theta[n]\leq x$, then, obviously, $\limsup_{n\rightarrow\infty}\theta[n]\leq x$.
\end{proof}

\begin{proof}[Proof of Lemma \ref{exp-supMG}]
The proof is divided into two parts. The first part contains some technical preliminaries showing $G[n]$ is measurable respect to $\mathcal{H}_n,~\forall n\geq n_k+1$, and the second part contains computations to prove the supermartingale claim.
\begin{itemize}
\item \textit{Technical preliminaries:}
First of all, for any fixed $k$, since $n_k$ is a random variable on the integers, we need to justify that $\{\mathcal{H}_n\}_{n\geq n_k+1}$ is indeed a filtration.  
First, it is obvious that $n_k$ a valid stopping time, i.e. 
$$\{n_k\leq t\}\in\mathcal{H}_t,~\forall t\in\mathbb{N}.$$
Then, any $n=n_k+s$ with some constant $s\in\mathbb{N}^+$ is also a valid stopping time because
$$\{n\leq t\}=\{n_k\leq t-s\}\in\mathcal{H}_{(t-s)\vee0}\subseteq\mathcal{H}_t,~\forall t\in\mathbb{N},$$
where $a\vee b\triangleq\max\{a,b\}$. Thus, by definition of stopping time $\sigma$-algebra from \cite{Durrett}, we know that for any $n\geq n_k+1$, $\mathcal{H}_n$ can be written as the collection of all sets $A$ that have $A\cap\{n\leq t\}\in\mathcal{H}_t,~\forall t\in\mathbb{N}$\footnote{An intuitive interpretation is that when $n\leq t$, the set $A$ is contained in the information known until $t$.}. Now, pick $1\leq s_1\leq s_2$ as constants, and if a set $A\in\mathcal{H}_{n_k+s_1}$, then, 
$$A\cap\{n_k+s_2\leq t\}=A\cap\{n_k+s_1\leq t-(s_2-s_1)\}\in\mathcal{H}_{(t-(s_2-s_1))\vee 0}\subseteq\mathcal{H}_t.$$
Thus, $\mathcal{H}_{n_k+s_1}\subseteq\mathcal{H}_{n_k+s_2}$ and $\{\mathcal{H}_n\}_{n\geq n_k+1}$ is indeed a filtration.

Since $\tilde\theta[n_k+1]$ is determined by the realization up to frame $n_k$, it follows, for any $t\in\mathbb{N}^+$,
$$\{\tilde\theta[n_k+1]\geq\theta^*+\varepsilon_0/V\}\cap\{n_k+1\leq t\}
=\cup_{s=1}^t\{\tilde\theta[s]\geq\theta^*+\varepsilon_0/V\}\in\mathcal{H}_t,$$
which implies that $\{\tilde\theta[n_k+1]\geq\theta^*+\varepsilon_0/V\}\in\mathcal{H}_{n_k+1}$. Since 
$\{\mathcal{H}_n\}_{n\geq n_k+1}$ is a filtration, it follows $\{\tilde\theta[n_k+1]\geq\theta^*+\varepsilon_0/V\}\in\mathcal{H}_n$ for any $n\geq n_k+1$. By the same methodology, we can show that
$\{\tilde\theta[n]<\theta^*+\varepsilon_0/V\}\in\mathcal{H}_n,~\forall n\geq n_k+1$, which in turn implies, $\{S_{n_k}+n_k\leq n\}\in\mathcal{H}_n$ and $\{S_{n_k}\geq n-n_k+1\}\in\mathcal{H}_n$. 
Overall, the function $G[n]$ is measurable respect to $\mathcal{H}_n,~\forall n\geq n_k+1$.

\item \textit{Proof of supermartingale claim:} It is obvious that $|G[n]|<\infty$, thus, in order to prove $G[n]$ is a supermartingale, it is enough to show that
\begin{equation}\label{sup_MG_condition}
\expect{\left.G[n+1]-G[n]\right|\mathcal{H}_n}\leq0,~\forall n\geq n_k+1.
\end{equation}
First, on the set $\{S_{n_k}\leq n-n_k\}$, we have
\[\expect{\left.(G[n+1]-G[n])\mathbf{1}_{\{S_{n_k}+n_k\leq n\}}\right|\mathcal{H}_n}=\expect{\left.(G[n]-G[n])\mathbf{1}_{\{S_{n_k}+n_k\leq n\}}\right|\mathcal{H}_n}=0.\]
It is then sufficient to show the inequality \eqref{sup_MG_condition} holds on the set $\{S_{n_k}\geq n-n_k+1\}$. Since
\begin{align*}
&\expect{G[n+1]\mathbf{1}_{\{S_{n_k}\geq n-n_k+1\}}|\mathcal{H}_n}\\
=&\expect{\left.\frac{e^{\lambda_{n+1} F[(n+1)\wedge(n_k+S_{n_k})]}}{\prod_{i=n_k+1}^{(n+1)\wedge (n_k+S_{n_k})}\rho_i}~\right|~\mathcal{H}_{n}}\mathbf{1}_{\{\tilde\theta[n_k+1]\geq\theta^*+\varepsilon_0/V\}}\mathbf{1}_{\{S_{n_k}\geq n-n_k+1\}}\\
=&\expect{\left.\frac{e^{\lambda_{n+1} F[n+1]}}{\prod_{i=n_k+1}^{n+1}\rho_i}~\right|~\mathcal{H}_{n}}
 \mathbf{1}_{\{\tilde\theta[n_k+1]\geq\theta^*+\varepsilon_0/V\}}\mathbf{1}_{\{S_{n_k}\geq n-n_k+1\}}\\
=&\frac{e^{\lambda_{n+1} F[n]}}{\prod_{i=n_k+1}^{n}\rho_i}
  \expect{\left.\frac{e^{\lambda_{n+1} (F[n+1]-F[n])}}{\rho_{n+1}}~\right|~\mathcal{H}_{n}}
  \mathbf{1}_{\{\tilde\theta[n_k+1]\geq\theta^*+\varepsilon_0/V\}}\mathbf{1}_{\{S_{n_k}\geq n-n_k+1\}}\\
\leq&\frac{e^{\lambda_{n} F[n]}}{\prod_{i=n_k+1}^{n}\rho_i}
  \expect{\left.\frac{e^{\lambda_{n+1} (F[n+1]-F[n])}}{\rho_{n+1}}~\right|~\mathcal{H}_{n}}
  \mathbf{1}_{\{\tilde\theta[n_k+1]\geq\theta^*+\varepsilon_0/V\}}\mathbf{1}_{\{S_{n_k}\geq n-n_k+1\}}\\
=&G[n]\expect{\left.\frac{e^{\lambda_{n+1} (F[n+1]-F[n])}}{\rho_{n+1}}~\right|~\mathcal{H}_{n}}
\mathbf{1}_{\{\tilde\theta[n_k+1]\geq\theta^*+\varepsilon_0/V\}}\mathbf{1}_{\{S_{n_k}\geq n-n_k+1\}},
\end{align*}
where $\mathbf{1}_{\{\tilde\theta[n_k+1]\geq\theta^*+\varepsilon_0/V\}}$ and $\mathbf{1}_{\{S_{n_k}\geq n-n_k+1\}}$ can be moved out of the expectation because $\{\tilde\theta[n_k+1]\geq\theta^*+\varepsilon_0/V\}\in\mathcal{H}_{n}$ and
$\{S_{n_k}\geq n-n_k+1\}\in\mathcal{H}_{n}$,
and the only inequality follows from the following argument: On the set $\{S_{n_k}\geq n-n_k+1\}$, $\{\tilde\theta[n]\geq\theta^*+\varepsilon_0/V\}$, thus, by Lemma \ref{comparison-lemma}, $F[n]\geq0$ and
using the fact $\lambda_n>\lambda_{n+1}$, we have $\lambda_{n+1} F[n]\leq\lambda_{n} F[n]$.
Thus, it is sufficient to show that on the set $\{S_{n_k}\geq n-n_k+1\}\cap\{\tilde\theta[n_k+1]\geq\theta^*+\varepsilon_0/V\}$, we have
\[\expect{\left.\frac{e^{\lambda_{n+1} (F[n+1]-F[n])}}{\rho_{n+1}}~\right|~\mathcal{H}_{n}}\leq1.\]
By Taylor expansion, we have
\begin{align*}
&\expect{\left.e^{\lambda_{n+1} (F[n+1]-F[n])}~\right|~\mathcal{H}_{n}}\\
=&1+\lambda_{n+1}\expect{F[n+1]-F[n]~|~\mathcal{H}_{n}}+\sum_{k=2}^{\infty}\frac{\lambda_{n+1}^k}{k!}\expect{(F[n+1]-F[n])^k~|~\mathcal{H}_{n}}\\
=&1+\lambda_{n+1}\expect{F[n+1]-F[n]~|~\mathcal{H}_{n}}
+\lambda_{n+1}^2\sum_{k=2}^{\infty}\frac{\lambda_{n+1}^{k-2}}{k!}\expect{(F[n+1]-F[n])^k~|~\mathcal{H}_{n}}\\
\leq&1-\frac{\lambda_{n+1}\varepsilon_0}{V}
+\lambda_{n+1}^2\sum_{k=2}^{\infty}\frac{\lambda_{n+1}^{k-2}}{k!}\expect{(F[n+1]-F[n])^k~|~\mathcal{H}_{n}},
\end{align*}
where the last inequality comes from the following argument: On the set $\{S_{n_k}\geq n-n_k+1\}$, $\tilde{\theta}[n_k+1]\geq\theta^*+\varepsilon_0/V$, thus,
by the definition of $\tilde\theta[n]$, we have
$\hat{\theta}[n]\geq\tilde\theta[n]\geq\theta^*+\varepsilon_0/V$, and Lemma \ref{properties} gives
$\theta[n]\geq\theta^*+\varepsilon_0/V$, then, by Lemma \ref{key-feature}, we have
\[\expect{F[n+1]-F[n]~|~\mathcal{H}_{n}}\leq-\frac{\varepsilon_0}{V}.\]

Now, by the assumption that $V\geq\frac{\varepsilon_0\eta}{4\log^22}-\frac{2\sqrt{L}}{r}$, we have
$\lambda_{n+1}\leq\frac{1}{\left(\frac{2}{\eta}+\frac{4\sqrt{L}}{\eta rV}\right)\log^2(n+1)}$, which follows from simple algebraic manipulations. Using the fact that $|F[n+1]-F[n]|\leq \left(\frac{2}{\eta}+\frac{4\sqrt{L}}{\eta rV}\right)\log^2(n+1)$, we have
\begin{align*}
&\expect{\left.e^{\lambda_{n+1} (F[n+1]-F[n])}~\right|~\mathcal{H}_{n}}\\
\leq&1-\frac{\lambda_{n+1}\epsilon_0}{V}+\lambda_{n+1}^2\sum_{k=2}^{\infty}
\frac{\left(\frac{1}{\left(\frac{2}{\eta}+\frac{4\sqrt{L}}{\eta rV}\right)\log^2(n+1)}\right)^{k-2}}{k!}
\expect{\left.\left(\left(\frac{2}{\eta}+\frac{4\sqrt{L}}{\eta rV}\right)\log^2(n+1)\right)^k~\right|~\mathcal{H}_{n}}\\
=&1-\frac{\lambda_{n+1}\epsilon_0}{V}+\lambda_{n+1}^2\sum_{k=2}^{\infty}\frac{1}{k!}
\left(\left(\frac{2}{\eta}+\frac{4\sqrt{L}}{\eta rV}\right)\log^2(n+1)\right)^2\\
\leq&1-\frac{\lambda_{n+1}\epsilon_0}{V}+\lambda_{n+1}^2e\left(\frac{2}{\eta}+\frac{4\sqrt{L}}{\eta rV}\right)^2\log^4(n+1)=\rho_{n+1},
\end{align*}
where the final inequality follows by completing the third term back to Taylor series which is equal to $e$.
Overall, the inequality \eqref{sup_MG_condition} holds and $G[n]$ is a supermartingale.
\end{itemize}
\end{proof}

\section{Computation of Asymptotics}\label{computation}
In this appendix, we show that there exists a constant $C$ such that
\[\sum_{m=1}^\infty(m+1)^3\prod_{i=n_k+1}^{n_k+m}\rho_i\leq C(n_k+2)^{4\beta}.\]
We first bound $\rho_i$. Let $C_1=\frac{96V^2e\left(\frac{2}{\eta}+\frac{4\sqrt{L}}{\eta rV}\right)^2}{\varepsilon_0^2\beta^4}$, then,
\begin{align*}
\rho_i = &1-\frac{\varepsilon_0^2}{4V^2e\left(\frac{2}{\eta}+\frac{4\sqrt{L}}{\eta rV}\right)^2\log^4(i+1)} \\
&= 1 - \frac{1}{C_1\frac{\beta^4}{24}\log^4(i+1)}\\
&< 1 - \frac{1}{C_1(i + 1)^{\beta}},
\end{align*}
where we used the fact that $\frac{\beta^4}{24}\log^4(i+1)<(i+1)^\beta,~\forall \beta>0, i\geq0$. Next, to bound $\prod_{i=n_k+1}^{n_k+m}\rho_i$, we take the logarithm:
\begin{align*}
\log\left(\prod_{i=n_k+1}^{n_k+m}\rho_i\right)=&\sum_{i=n_k+1}^{n_k+m}\log\rho_i\\
=&\sum_{i=n_k+1}^{n_k+m}\log\left(1 - \frac{1}{C_1(i + 1)^{\beta}}\right)\\
\leq&-\sum_{i=n_k+1}^{n_k+m}\frac{1}{C_1(i + 1)^{\beta}}\\
\leq&-\frac{1}{C_1}\int_{n_k+2}^{n_k+m+1}\frac{1}{x^{\beta}}dx.
\end{align*}
where the first inequality follows from the first order Taylor expansion. Since $\beta<1$, we compute the integral, which gives
\[-\frac{1}{C_1}\int_{n_k+2}^{n_k+m+1}\frac{1}{x^{\beta}}dx
=-\frac{1}{C_1(1-2\beta)}\left((n_k+m+1)^{1-\beta}-(n_k+2)^{1-\beta}\right).\]
Thus,
\begin{align*}
&\sum_{m=1}^\infty(m+1)^3\prod_{i=n_k+1}^{n_k+m}\rho_i\\
\leq&\sum_{m=1}^\infty(m+1)^3e^{-\frac{1}{C_1(1-\beta)}\left((n_k+m+1)^{1-\beta}-(n_k+2)^{1-\beta}\right)}\\
\leq&\int_0^{\infty}(x+2)^3e^{-\frac{1}{C_1(1-\beta)}\left((x+n_k+2)^{1-\beta}-(n_k+2)^{1-\beta}\right)}dx
+(3C_1(1-\beta))^4,
\end{align*}
where the last inequality follows from the fact that the integrand is monotonically decreasing when $x>3C_1(1-\beta)$, thus, the integral dominates the sum on the tail $x>3C_1(1-\beta)$. For the part where $x\leq 3C_1(1-\beta)$, the maximum of the integrand is bounded by $(3C_1(1-\beta))^3$. Thus, the total difference of such approximation is bounded by $(3C_1(1-\beta))^4$.
Then, we try to estimate the integral. Notice that
\[\frac{d}{dx}e^{-\frac{1}{C_1(1-\beta)}(x+n_k+2)^{1-\beta}}
=-\frac{1}{C_1}e^{-\frac{1}{C_1(1-\beta)}(x+n_k+2)^{1-\beta}}(x+n_k+2)^{-\beta},\]
we do integration-by-parts, which gives
\begin{align*}
&\int_0^{\infty}(x+2)^3e^{-\frac{1}{C_1(1-\beta)}\left((x+n_k+2)^{1-\beta}-(n_k+2)^{1-\beta}\right)}dx\\
=&\int_0^{\infty}(x+2)^3(x+n_k+2)^{\beta}(x+n_k+2)^{-\beta}e^{-\frac{1}{C_1(1-\beta)}(x+n_k+2)^{1-\beta}}dx
\cdot e^{\frac{1}{C_1(1-\beta)}(n_k+2)^{1-\beta}}\\
=&8C_1(n_k+2)^{\beta}+\int_{0}^{\infty}C_1\left(3(x+2)^2(x+n_k+2)^{\beta}+\beta(x+2)^3(x+n_k+2)^{\beta-1}\right)
e^{-\frac{1}{C_1(1-\beta)}\left((x+n_k+2)^{1-\beta}-(n_k+2)^{1-\beta}\right)}dx.
\end{align*}
Since $5\beta\leq1$ and $n_k\geq1$, we have $x+n_k+2\geq x+2$, which implies $(x+2)^3(x+n_k+2)^{\beta-1}\leq(x+2)^2(x+n_k+2)^{\beta}$, thus,
\begin{align*}
&\int_0^{\infty}(x+2)^3e^{-\frac{1}{C_1(1-\beta)}\left((x+n_k+2)^{1-\beta}-(n_k+2)^{1-\beta}\right)}dx\\
\leq&8C_1(n_k+2)^{\beta}+\int_{0}^{\infty}4C_1(x+2)^2(x+n_k+2)^{\beta}e^{-\frac{1}{C_1(1-\beta)}\left((x+n_k+2)^{1-\beta}-(n_k+2)^{1-\beta}\right)}dx.
\end{align*}
Repeat above procedure 3 more times, we have
\begin{align*}
&\int_0^{\infty}(x+2)^3e^{-\frac{1}{C_1(1-\beta)}\left((x+n_k+2)^{1-\beta}-(n_k+2)^{1-\beta}\right)}dx\\
\leq&8C_1(n_k+2)^{\beta}+16C_1^2(n_k+2)^{2\beta}+24C_1^3(n_k+2)^{3\beta}+24C_1^4(n_k+2)^{4\beta}\\
&+\int_0^{\infty}24C_1^4(x+n_k+2)^{4\beta-1}e^{-\frac{1}{C_1(1-\beta)}\left((x+n_k+2)^{1-\beta}-(n_k+2)^{1-\beta}\right)}dx\\
\leq&8C_1(n_k+2)^{\beta}+16C_1^2(n_k+2)^{2\beta}+24C_1^3(n_k+2)^{3\beta}+24C_1^4(n_k+2)^{4\beta}
+24C_1^5\leq C(n_k+2)^{4\beta},
\end{align*}
for some $C$ on the order of $C_1^5$ (which is $\mathcal{O}\left(V^{10}\beta^{-20}\varepsilon_0^{-10}\right)$),
where the second to last inequality follows from $4\beta-1\leq-\beta$ and thus, we replace $(x+n_k+2)^{4\beta-1}$ with $(x+n_k+2)^{-\beta}$ and do a direct integration. Overall, we proved the claim.


\chapter{Online Learning in Weakly Coupled Markov Decision Processes}
In this chapter, we consider online learning over weakly coupled Markov decision processes.
We develop a new distributed online algorithm where each
MDP makes its own decision each slot after observing a multiplier computed from past
information. While the scenario is significantly more challenging than the classical online
learning context, the algorithm is shown to have a tight $\mathcal{O}(\sqrt{T})$ regret and constraint
violations simultaneously over a time horizon $T$.

\section{Problem formulation and related works}

This chapter considers online constrained Markov decision processes (OCMDP) where both the objective and constraint functions can vary each time slot after the decision is made.  We assume a slotted time scenario with time slots $t \in \{0, 1, 2, \ldots\}$.  The OCMDP consists of $K$ parallel Markov decision processes with indices $k \in \{1, 2, \ldots, K\}$.  The $k$-th MDP has state space $\mathcal{S}^{(k)}$, action space $\mathcal{A}^{(k)}$, and transition probability matrix $P_a^{(k)}$ which depends on the chosen action $a \in \mathcal{A}^{(k)}$.  Specifically, $P_a^{(k)} = (P_a^{(k)}(s,s'))$ where 
\[
P_a^{(k)}(s,s')  = Pr\l(s_{t+1}^{(k)}=s'~\l|~s_t^{(k)} = s,~a_t^{(k)}=a\r.\r),
\]
where $s_t^{(k)}$ and $a_t^{(k)}$ are the state and action for system $k$ on slot $t$. 
We assume that both the state space and the action space are finite for all $k\in\{1,2,\cdots,K\}$. 
 
 After each MDP $k \in \{1, \ldots, K\}$ makes the decision at time $t$ (and assuming the current state is $s_t^{(k)} = s$ and the action is $a_t^{(k)}=a)$, the following information is revealed: 
 \begin{enumerate} 
 \item The next state $s_{t+1}^{(k)}$. 
 
 \item A penalty function $f_t^{(k)}(s,a)$ that depends on the current state $s$ and the current action $a$. 
 
 \item A collection of $m$ constraint functions $g_{1,t}^{(k)}(s,a), \ldots, g_{m,t}^{(k)}(s,a)$ that depend on $s$ and $a$. 
 \end{enumerate} 
 The functions $f_t^{(k)}$ and $g_{i,t}^{(k)}$ are all bounded mappings from $\mathcal{S}^{(k)} \times \mathcal{A}^{(k)}$ to $\mathbb{R}$ and represent different types of costs incurred by system $k$ on slot $t$ (depending on the current state and action).  
For example, in a multi-server data center, the different systems $k \in \{1, \ldots, K\}$ can represent different servers, the cost function for a particular server $k$ might represent energy or monetary expenditure for that server, and the constraint costs for server $k$ 
can represent negative rewards such as service rates or qualities.  Coupling between the server systems comes from using all of them to collectively support a common stream of arriving jobs. 

 A key aspect of this general problem is that the functions $f_t^{(k)}$ and $g_{i,t}^{(k)}$ are unknown until after the slot $t$ decision is made. Thus, the precise costs incurred by each system are only known at the end of the slot.  
 For a fixed time horizon of $T$ slots, the overall penalty and constraint accumulation resulting from a policy $\mathscr{P}$ is: 
\begin{equation}\label{main-regret}
F_T(d_0,\mathscr{P}) := \expect{\left.\sum_{t=1}^T\sum_{k=1}^Kf_t^{(k)}\l(a_t^{(k)},s_t^{(k)}\r)\right|~d_0,\mathscr{P}},
\end{equation}
and
\begin{equation*}
G_{i,T}(d_0,\mathscr{P}) := \expect{\left.\sum_{t=1}^T\sum_{k=1}^Kg_{i,t}^{(k)}\l(a_t^{(k)},s_t^{(k)}\r)\right|~d_0,\mathscr{P}}, 
\end{equation*}
where $d_0$ represents a given distribution on the initial joint state vector $(s_0^{(1)}, \cdots, s_0^{(K)})$. Note that $(a_t^{(k)}, s_t^{(k)})$ denotes the state-action pair of the $k$th MDP, which is a pair of random variables determined by $d_0$ and $\mathscr{P}$. Define a constraint set 
\begin{equation}\label{main-constraint}
\mathcal{G}:= \{(\mathscr{P},d_0):~G_{i,T}(d_0,\mathscr{P})\leq 0,~i=1,2,\cdots,m\}.
\end{equation}
Define the regret of a policy $\mathscr{P}$ with respect to a particular joint randomized stationary policy $\Pi$ along with an arbitrary starting state distribution $d_0$ as: 
\[
F_T(d_0,\mathscr{P}) - F_T(d_0,\Pi),
\]
The goal of OCMDP is to choose a policy $\mathscr{P}$  so that both the regret and constraint violations grow sublinearly with respect to $T$, where regret is measured against all feasible joint randomized stationary policies $\Pi$.

Here we give a brief review of the works related to online optimization and online MDPs.
\begin{itemize}
\item \textbf{Online convex optimization (OCO)}: This concerns multi-round cost minimization with arbitrarily-varying convex loss functions.  Specifically, on each slot $t$ the decision maker chooses decisions $x(t)$ within a convex set $\mathcal{X}$ (before observing the loss function $f^t(x)$) in order to minimize the total \emph{regret} compared to the best fixed decision in hindsight, expressed as: 
\begin{align*}
\text{regret}(T) = \sum_{t=1}^{T} f^t(\mathbf{x}(t))  - \min_{\mathbf{x}\in \mathcal{X}} \sum_{t=1}^T f^t(\mathbf{x}).
\end{align*}
See \cite{hazan2016introduction} for an introduction to OCO.  Zinkevich introduced OCO in \cite{zinkevich2003online}
and shows that an online projection gradient descent (OGD) algorithm achieves $O(\sqrt{T})$ regret. This $O(\sqrt{T})$ regret is proven to be the best in \cite{hazan07ML}, although improved performance is possible if all convex loss functions are \emph{strongly} convex.  The OGD decision requires to compute a projection of a vector onto a set $\mathcal{X}$.  For complicated sets $\mathcal{X}$ with functional equality constraints, e.g., $\mathcal{X} = \{x\in \mathcal{X}_0: g_k (\mathbf{x})\leq 0, k\in\{1,2,\ldots,m\}\}$, the projection can have high complexity. To circumvent the projection, work in \cite{mahdavi2012trading,jenatton2016adaptive,yu2016low,chen2017online}
proposes alternative algorithms with simpler per-slot complexity and that satisfy the inequality constraints in the long term (rather than on every slot).  Recently, new primal-dual type algorithms with low complexity are proposed in \cite{neely2017online,hao2017onlinestochastic} to solve more challenging OCO with time-varying functional inequality constraints.

 \item \textbf{Online Markov decision processes}: 
 This extends OCO to allow systems with a more complex Markov structure. This is similar to the setup of the current paper of minimizing the expression  \eqref{main-regret}, but does not have the constraint set \eqref{main-constraint}.  Unlike traditional OCO, the current penalty depends not only on the current action and the current (unknown) penalty function, but on the current system state (which depends on the history of previous actions).  Further, the number of policies can grow exponentially with the sizes of the state and action spaces, so that solutions can be computationally intensive. The work \cite{even2009online} develops an algorithm in this context with 
  $\mathcal{O}(\sqrt{T})$ regret. Extended algorithms and regularization methods are developed 
  in \cite{yu2009markov}\cite{guan2014online}\cite{dick2014online} to reduce complexity and improve dependencies on the number of states and actions.  Online MDP under bandit feedback (where the decision maker can only observe the penalty corresponding to the chosen action) is considered in \cite{yu2009markov}\cite{neu2010online}.

\item \textbf{Constrained MDPs}: 
This aims to solve classical MDP problems with \emph{known} cost functions but subject to additional constraints on the budget or resources. Linear programming methods for MDPs are found, for example, in  \cite{altman1999constrained}, and algorithms beyond LP are found in \cite{neely2011online} \cite{caramanis2014efficient}.  Formulations closest to our setup appear in recent work on weakly coupled MDPs in \cite{boutilier2016budget}\cite{wei2016theory} that have known cost and resource functions.

\item \textbf{Reinforcement Learning (RL)}: This concerns MDPs with some unknown parameters (such as unknown functions and transition probabilities).  Typically, RL makes stronger assumptions than the online setting, such as an environment that is unknown but fixed, whereas the unknown environment in the online context can change over time. Methods for RL are developed in \cite{bertsekas1995dynamic}\cite{sutton1998reinforcement}\cite{lattimore2013sample}\cite{chen2016stochastic}.
\end{itemize}

\section{Preliminaries}\label{sec:assumption}
\subsection{Basic Definitions}
Throughout this paper, given an MDP with state space $\mathcal{S}$ and action space $\mathcal{A}$,
 a \textit{policy} $\mathscr{P}$ defines a (possibly probabilistic) 
 method of choosing actions $a\in\mathcal{A}$ at state $s\in\mathcal{S}$ based on the past information. 
We start with some basic definitions of important classes of policies: 
\begin{definition}
For an MDP, a \textbf{randomized stationary policy} $\pi$ defines an algorithm which, whenever the system is in state $s \in \mathcal{S}$, chooses an action $a \in \mathcal{A}$ according to a fixed conditional probability function $\pi(a|s)$, defined for all $a\in\mathcal{A}$ and $s\in\mathcal{S}$. 
\end{definition}

\begin{definition}\label{def:pp}
For an MDP, a \textbf{pure policy} $\pi$ is a randomized stationary policy with all probabilities equal to either 0 or 1.  That is, 
a pure policy is defined by a  deterministic mapping between states $s \in \mathcal{S}$ and actions $a \in \mathcal{A}$. 
Whenever
the system is in a state $s \in \mathcal{S}$, it always chooses a particular action $a_s \in\mathcal{A}$ (with probability 1).
\end{definition}

Note that if an MDP has a finite state and action space, the set of all pure policies is also finite. 
Consider the MDP associated with a particular system $k \in \{1, \ldots, K\}$. For any randomized stationary policy $\pi$, it holds that $\sum_{a\in\mathcal{A}^{(k)}}\pi(a|s) = 1$ for all  $s\in\mathcal{S}^{(k)}$. Define the transition probability matrix $\mathbf{P}_{\pi}^{(k)}$ under policy $\pi$ to have components as follows:
\begin{equation}\label{transition-matrix}
P_{\pi}^{(k)}(s,s') = \sum_{a\in\mathcal{A}^{(k)}}\pi(a|s)P_{a}^{(k)}(s,s'),~~s,s'\in\mathcal{S}^{(k)}.
\end{equation}
It is easy to verify that $\mathbf{P}_{\pi}^{(k)}$ is indeed a \emph{stochastic matrix}, that is, it 
has rows with nonnegative components that 
sum to 1. Let  $d_0^{(k)}\in[0,1]^{|\mathcal{S}^{(k)}|}$ be an (arbitrary) initial distribution for the $k$-th MDP.
Define the state distribution 
at time $t$ under $\pi$ as $d_{\pi,t}^{(k)}$. By the Markov property of the system, 
we have $d_{\pi,t}^{(k)} = d_{0}^{(k)}\l(\mathbf{P}_{\pi}^{(k)}\r)^t$.    A transition probability matrix $\mathbf{P}_{\pi}^{(k)}$ is 
\emph{ergodic} if it gives rise to a Markov chain that is irreducible and aperiodic.  Since the state space is finite, an ergodic 
matrix $\mathbf{P}_{\pi}^{(k)}$ has a unique stationary distribution denoted $d_{\pi}^{(k)}$, so that  
$d_{\pi}^{(k)}$ is the unique probability vector solving $d=d \mathbf{P}_{\pi}^{(k)}$.

\begin{assumption}[Unichain model]\label{assumption-1}
There exists a universal integer $\widehat{r}\geq1$ such that for any integer $r\geq \widehat{r}$ and every $k \in \{1, \ldots, K\}$, 
 we have the product $\mathbf{P}_{\pi_1}^{(k)}\mathbf{P}_{\pi_2}^{(k)}\cdots \mathbf{P}_{\pi_r}^{(k)}$ is \xcolor{a transition matrix with strictly positive entries} for any sequence of pure policies 
$\pi_1,\pi_2,\cdots,\pi_r$ associated with the $k$th MDP. 
\end{assumption}

\begin{remark}
Assumption \ref{assumption-1}  implies that each MDP $k\in\{1, \ldots, K\}$  is ergodic under any pure policy.  This follows
by taking $\pi_1,\pi_2,\cdots,\pi_r$ all the same in Assumption \ref{assumption-1}. \xcolor{Since the transition matrix of any randomized stationary policy can be formed as a convex combination of those of pure policies, any randomized stationary policy results in an ergodic MDP  for which there is a unique stationary distribution.} Assumption \ref{assumption-1} is easy to check via the following simple sufficient condition.
\end{remark}

\begin{proposition}
Assumption \ref{assumption-1} holds if, for every $k \in \{1, \ldots, K\}$,  
there is a fixed ergodic matrix $\mathbf{P}^{(k)}$ (i.e., a transition probability matrix that defines an irreducible and aperiodic
Markov chain) such that for any pure policy $\pi$ on MDP $k$ we have the decomposition 
$$\mathbf{P}_{\pi}^{(k)} = \delta_{\pi}\mathbf{P}^{(k)} + (1-\delta_{\pi})\mathbf{Q}_{\pi}^{(k)},$$
where $\delta_{\pi}\in(0,1]$ depends on the pure policy $\pi$ and $\mathbf{Q}_{\pi}^{(k)}$ is a stochastic matrix depending on $\pi$.
\end{proposition}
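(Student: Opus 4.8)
The plan is to derive Assumption~\ref{assumption-1} from the stated decomposition by exploiting \emph{primitivity} of the fixed ergodic matrices $\mathbf{P}^{(k)}$. Since Assumption~\ref{assumption-1} is a statement about each MDP separately, it suffices to fix $k\in\{1,\dots,K\}$, produce an integer $r_k$ such that every product $\mathbf{P}_{\pi_1}^{(k)}\cdots \mathbf{P}_{\pi_r}^{(k)}$ of transition matrices of pure policies has strictly positive entries whenever $r\ge r_k$, and then take $\widehat r=\max_k r_k$.

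First I would recall the standard Perron--Frobenius fact that a finite irreducible aperiodic stochastic matrix is primitive: there is an integer $r_k\ge 1$ with $(\mathbf{P}^{(k)})^{r_k}>0$ entrywise. Moreover $(\mathbf{P}^{(k)})^{r}>0$ for \emph{all} $r\ge r_k$; this follows by induction, since $(\mathbf{P}^{(k)})^{r+1}(s,s')=\sum_{u}(\mathbf{P}^{(k)})^{r}(s,u)\,\mathbf{P}^{(k)}(u,s')$ is a sum of nonnegative terms in which at least one summand is strictly positive, because every column of the irreducible matrix $\mathbf{P}^{(k)}$ has a positive entry and $(\mathbf{P}^{(k)})^r$ is strictly positive.

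Next I would expand the product using the hypothesis $\mathbf{P}_{\pi_j}^{(k)}=\delta_{\pi_j}\mathbf{P}^{(k)}+(1-\delta_{\pi_j})\mathbf{Q}_{\pi_j}^{(k)}$. Multiplying out the $r$ factors gives a sum of $2^{r}$ matrix products, each of the form $c\,\mathbf{M}_1\mathbf{M}_2\cdots\mathbf{M}_r$ where $c\ge 0$ is a product of the scalars $\delta_{\pi_j}$ and $1-\delta_{\pi_j}$, and each $\mathbf{M}_j$ is either $\mathbf{P}^{(k)}$ or $\mathbf{Q}_{\pi_j}^{(k)}$. Every such product is entrywise nonnegative, since each factor (a transition or stochastic matrix) is nonnegative. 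The single term in which all $\mathbf{M}_j=\mathbf{P}^{(k)}$ equals $\bigl(\prod_{j=1}^{r}\delta_{\pi_j}\bigr)(\mathbf{P}^{(k)})^{r}$; since each $\delta_{\pi_j}\in(0,1]$ the prefactor is strictly positive, and for $r\ge r_k$ we have $(\mathbf{P}^{(k)})^{r}>0$ entrywise. Hence this term is strictly positive entrywise, and adding the remaining nonnegative terms preserves strict positivity. Taking $\widehat r=\max_{k}r_k$ then yields Assumption~\ref{assumption-1}.

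The main obstacle is essentially only the bookkeeping in the $2^r$-term expansion and the invocation of the (standard) primitivity and irreducibility facts; neither step is deep, and the argument is otherwise routine. One minor point to be careful about is that $r_k$ must work uniformly over \emph{all} sequences of pure policies of length $r$, which is exactly why isolating the all-$\mathbf{P}^{(k)}$ term --- a term that does not depend on the particular policies chosen --- is the right move.
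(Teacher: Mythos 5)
Your proof is correct and follows essentially the same route as the paper's: both rely on primitivity of $\mathbf{P}^{(k)}$ to get $(\mathbf{P}^{(k)})^r>0$ for all large $r$, then observe that the decomposition forces the product $\mathbf{P}_{\pi_1}^{(k)}\cdots\mathbf{P}_{\pi_r}^{(k)}$ to dominate, entrywise, a strictly positive multiple of $(\mathbf{P}^{(k)})^r$, and finally take the maximum of the $r_k$ over $k$. The only cosmetic difference is that you isolate the all-$\mathbf{P}^{(k)}$ term in the $2^r$-term expansion with coefficient $\prod_j\delta_{\pi_j}>0$, whereas the paper lower-bounds each factor by $\delta_{\min}\mathbf{P}^{(k)}$ using finiteness of the set of pure policies; your version even avoids needing that uniform $\delta_{\min}$.
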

\begin{proof}
Fix $k \in \{1, \ldots, K\}$ and assume every pure policy on MDP $k$ has the above decomposition. 
Since there are only finitely many pure policies, there exists a lower bound $\delta_{\min}>0$ such that $\delta_{\pi}\geq\delta_{\min}$ for every pure policy $\pi$. 
Since $\mathbf{P}^{(k)}$ is an ergodic matrix, there exists an integer $r^{(k)}>0$ large enough such that $(\mathbf{P}^{(k)})^r$ has strictly positive components for all $r \geq r^{(k)}$.  Fix $r \geq r^{(k)}$ and 
let $\pi_1, \ldots, \pi_r$ be any sequence of $r$ pure policies on MDP $k$. Then
\[\mathbf{P}_{\pi_1}^{(k)}\cdots\mathbf{P}_{\pi_r}^{(k)}\geq \delta_{\min}\l(\mathbf{P}^{(k)}\r)^r > 0,\]
where inequality is treated entrywise.
The universal integer $r$ can be taken as the maximum integer $r^{(k)}$ over all $k \in \{1, \ldots, K\}$. 
\end{proof}

\begin{definition}\label{def:jrsp}
A \textbf{joint randomized stationary policy} $\Pi$ on $K$ parallel MDPs defines an algorithm which chooses a joint action $\mathbf{a}:=\l(a^{(1)},~a^{(2)},~\cdots,~a^{(K)}\r)
\in\mathcal{A}^{(1)}\times\mathcal{A}^{(2)}\cdots\times\mathcal{A}^{(K)}$ given the joint state $\mathbf{s}:=\l(s^{(1)},~s^{(2)},~,\cdots,s^{(K)}\r)
\in\mathcal{S}^{(1)}\times\mathcal{S}^{(2)}\cdots\times\mathcal{S}^{(K)}$ according to a fixed conditional probability 
$\Pi\l(\mathbf{a} \l| \mathbf{s} \r.\r)$.
\end{definition}

The following special class of \emph{separable}  policies can be implemented separately over each of the $K$ MDPs and plays a role in both algorithm design and performance analysis.

\begin{definition}\label{def:rsp}
A joint randomized stationary policy $\pi$ is \textbf{separable} if the conditional probabilities 
 $\pi:=\l(\pi^{(1)},~\pi^{(2)},~\cdots,~\pi^{(K)}\r)$ decompose as a product 
 $$\pi\l(\mathbf{a} \l| \mathbf{s} \r.\r) = \prod_{k=1}^K \pi^{(k)}\l(a^{(k)}|s^{(k)}\r)$$
 for all $\mathbf{a} \in \mathcal{A}^{(1)}\times \cdots\times\mathcal{A}^{(K)}$, $\mathbf{s} \in \mathcal{S}^{(1)}\cdots\times\mathcal{S}^{(K)}$.
 \end{definition}

\subsection{Technical assumptions}

The functions $f_{t}^{(k)}$ and $g_{i,t}^{(k)}$ are determined by random processes defined over $t= 0,1,2,\cdots$. Specifically, 
let $\Omega$ be a finite dimensional vector space. Let $\{\omega_t\}_{t=0}^{\infty}$ and $\{\mu_t\}_{t=0}^{\infty}$ be two sequences of random vectors in $\Omega$. Then for all $a\in\mathcal{A}^{(k)}$, $s\in\mathcal{S}^{(k)}$, $i\in\{1,2,\cdots,m\}$ we have
 \begin{align*}
 &g_{i,t}^{(k)}(a,s) = \hat{g}_i^{(k)}\l(a,s,\omega_t\r),\\
 &f_{t}^{(k)}(a,s) =  \hat{f}^{(k)}\l(a,s,\mu_t\r)
 \end{align*}
 where $\hat{g}_i^{(k)}$ and $\hat{f}^{(k)}$ formally define the time-varying functions in terms of the random processes $\omega_t$ and $\mu_t$. 
 It is assumed that the processes $\{\omega_t\}_{t=0}^{\infty}$ and $\{\mu_t\}_{t=0}^{\infty}$ are generated at the start of slot $0$ (before any control actions are taken),
 and 
revealed gradually over time, 
 so that functions $g_{i,t}^{(k)}$ and $f_t^{(k)}$ are only revealed at the end of slot $t$.
 \begin{remark}
 The functions generated at time 0 in this way are also called \textit{oblivious functions} because they are not influenced by control actions. Such an assumption is commonly adopted in previous unconstrained online MDP works (e.g. \cite{even2009online}, \cite{yu2009markov} and \cite{dick2014online}). Further, it is also shown in \cite{yu2009markov} that without this assumption, one can choose a sequence of objective functions against the decision maker in a specifically designed MDP scenario so that one never achieves the sublinear regret. 
 \end{remark}

 The functions are also assumed to be bounded by a universal constant $\Psi$, so that: 
 \begin{multline}\label{function-bounds}
  |\hat{g}_i^{(k)}(a,s,\omega)| \leq \Psi, |\hat{f}^{(k)}(a,s,\mu)| \leq \Psi \quad, \forall k \in \{1, \ldots, K\},    
  \forall a \in \mathcal{A}^{(k)},~s \in \mathcal{S}^{(k)}, ~\forall \omega, \mu \in \Omega.  
 \end{multline}
 It is assumed that $\{\omega_t\}_{t=0}^{\infty}$ is independent, identically distributed (i.i.d.) and independent of $\{\mu_t\}_{t=0}^{\infty}$. Hence, 
 the constraint functions can be arbitrarily correlated on the same slot, but appear i.i.d. over different slots. 
On the other hand, no specific model is imposed on $\{\mu_t\}_{t=0}^{\infty}$. Thus,  
the functions $f_{t}^{(k)}$ can be arbitrarily time varying. 
Let 
$\mathcal{H}_t$ be the system information up to time $t$, then, for any $t\in\{0,1,2,\cdots\}$, $\mathcal{H}_t$ contains
state and action information up to time $t$, i.e.
$\mathbf{s}_0,\cdots,\mathbf{s}_t$, $\mathbf{a}_0,\cdots,\mathbf{a}_t$, and $\{\omega_t\}_{t=0}^{\infty}$ and $\{\mu_t\}_{t=0}^{\infty}$.
Throughout this paper, we make the following assumptions.

\begin{assumption}[Independent transition]\label{assumption:indep-trans}
For each MDP, given the state $s^{(k)}_t\in\mathcal{S}^{(k)}$ and action $a^{(k)}_t\in\mathcal{A}^{(k)}$, the next state $s^{(k)}_{t+1}$ is independent of all other past information up to time $t$ as well as the state transition $s_{t+1}^{(j)},~\forall j\neq k$, i.e., for all
$s \in \mathcal{S}^{(k)}$ it holds that 
$$Pr\l( s_{t+1}^{(k)}=s | \mathcal{H}_t, s_{t+1}^{(j)},~\forall j\neq k \r)=Pr\l( s_{t+1}^{(k)}=s | s_t^{(k)},a_t^{(k)}\r)$$
where 
$\mathcal{H}_t$ contains all past information up to time $t$.
\end{assumption}

Intuitively, this assumption means that all MDPs are running independently in the joint probability space and thus the only coupling among them comes from the constraints, which reflects the notion of \textit{weakly coupled MDPs} in our title. Furthermore, by definition of $\mathcal{H}_t$, given $s_t^{(k)},a_t^{(k)}$, the next transition $s_{t+1}^{(k)}$ is also independent of function paths $\{\omega_t\}_{t=0}^{\infty}$ and $\{\mu_t\}_{t=0}^{\infty}$. 

The following assumption states the constraint set is strictly feasible.

\begin{assumption}[Slater's condition]\label{assumption:slater}
There exists a real value $\eta>0$ and a fixed separable randomized stationary policy $\widetilde{\pi}$ such that
\[
\mathbb{E}\left[\sum_{k=1}^Kg_{i,t}^{(k)}\l(a^{(k)}_t,s^{(k)}_t\r)\Big|~ d_{\widetilde{\pi}},\widetilde{\pi}\right]\leq-\eta,~\forall i\in\{1,2,\cdots,m\},
\]
\xcolor{
where the initial state is $d_{\widetilde{\pi}}$ and is the unique stationary distribution of policy $\widetilde{\pi}$, and the expectation is taken with respect to the random initial state and the stochastic function $g_{i,t}^{(k)}(a,s)$ (i.e., $\omega_t$).  }
\end{assumption}

Slater's condition is a common assumption in convergence time analysis of constrained convex optimization (e.g. \cite{nedic2009approximate}, \cite{bertsekas2009convex}). 
Note that this assumption readily implies the constraint set $\mathcal{G}$ can be achieved by the above randomized stationary policy. Specifically, take $d_0^{(k)}=d_{\widetilde\pi^{(k)}}$ and $\mathscr{P}=\widetilde{\pi}$, then, we have
\[G_{i,T}(d_0,{\tilde\pi}) = \sum_{t=0}^{T-1}\mathbb{E}\left[\sum_{k=1}^Kg_{i,t}^{(k)}\l(a^{(k)}_t,s^{(k)}_t\r)\Big|~ d_{\widetilde{\pi}},\widetilde{\pi}\right]\leq -\eta T<0.
\]

\subsection{The state-action polyhedron}\label{sec:sapoly}
In this section, we recall the well-known linear program formulation of an MDP (see, for example, \cite{altman1999constrained} and \cite{fox1966markov}).
Consider an MDP with a state space $\mathcal{S}$ and an action space $\mathcal{A}$.
Let $\Delta\subseteq \mathbb{R}^{|\mathcal S| |\mathcal{A}|}$ be a probability simplex, i.e.
\[\Delta=\l\{\theta\in\mathbb{R}^{|\mathcal S| |\mathcal{A}|}:~\sum_{(s,a)\in\mathcal{S}\times\mathcal{A}}\theta(s,a)=1,~\theta(s,a)\geq0\r\}.\]
Given a randomized stationary policy $\pi$ with stationary state distribution $d_\pi$, the MDP is a Markov chain with transition matrix $\mathbf{P}_\pi$ given by \eqref{transition-matrix}. Thus, it must satisfy the following balance equation:
\[
\sum_{s\in\mathcal{S}}d_\pi(s)P_\pi(s,s') =  d_\pi(s'),~\forall s'\in\mathcal{S}.
\]
Defining $\theta(a,s) = \pi(a|s)d_\pi(s)$ and substituting the definition of transition probability \eqref{transition-matrix} into the above equation gives
\[
\sum_{s\in\mathcal{S}}\sum_{a\in\mathcal{A}}\theta(s,a)P_a(s,s') = \sum_{a\in\mathcal{A}}\theta(s',a),
~~\forall s'\in\mathcal{S}.
\]
The variable $\theta(a,s)$ is often interpreted as a stationary probability of being at state $s\in\mathcal{S}$ and taking action $a\in\mathcal{A}$ under some randomized stationary policy. 
The state action polyhedron $\Theta$ is then defined as 
\[
\Theta := \l\{\theta\in\Delta:~\sum_{s\in\mathcal{S}}\sum_{a\in\mathcal{A}}\theta(s,a)P_a(s,s') = \sum_{a\in\mathcal{A}}\theta(s',a),
~~\forall s'\in\mathcal{S}  \r\}.
\]
Given any $\theta\in\Theta$, one can recover a randomized stationary policy $\pi$ at any state $s\in\mathcal{S}$ as 
\begin{equation}\label{solution-to-LP}
\pi(a|s) =
\begin{cases}
 \frac{\theta(a,s)}{\sum_{a\in\mathcal{A}}\theta(a,s)},~~&\textrm{if}~\sum_{a\in\mathcal{A}}\theta(a,s)\neq0,\\
 0,~~&\textrm{otherwise}.
\end{cases}
\end{equation}

Given any fixed penalty function $f(a,s)$, the best policy minimizing the penalty (without constraint) is a randomized stationary policy given by the solution to the following linear program (LP):
\begin{align}\label{LP1}
\min~~\langle\mathbf{f},\theta\rangle,~~s.t.~~\theta\in\Theta.
\end{align}
where $\mathbf{f}:=[f(a,s)]_{a\in\mathcal{A},~s\in\mathcal{S}}$. Note that for any policy $\pi$ given by the state-action pair $\theta$ according to \eqref{solution-to-LP},
\begin{align*}
\l\langle \mathbf{f},\theta \r\rangle = \mathbb{E}_{s\sim d_\pi, a\sim \pi(\cdot | s)}\l[f(a,s)\r],
\end{align*}
Thus, $\l\langle \mathbf{f},\theta \r\rangle$ is often referred to as the stationary state penalty of policy $\pi$.

It can also be shown that any state-action pair in the set $\Theta$ can be achieved by a convex combination of state-action vectors of pure policies, and thus all corner points of the polyhedron $\Theta$ are from pure policies. As a consequence, the best randomized stationary policy solving \eqref{LP1} is always a pure policy.

\subsection{Preliminary results on MDPs}\label{sec:prelim-thm}
In this section, we give preliminary results regarding the properties of our weakly coupled MDPs under randomized stationary policies. The proofs can be found in Appendix \ref{proof:section2}. We start with a lemma on the uniform mixing of MDPs.

\begin{lemma}\label{lemma:mixing1}
\xcolor{Suppose Assumption \ref{assumption-1} and \ref{assumption:indep-trans} hold. }
There exists a positive integer $r$ and a constant $\tau\geq1$ such that for any two state distributions $d_1$ and $d_2$,
\begin{multline*}
\sup_{\pi_1^{(k)},\cdots,\pi_r^{(k)}}\l\|  \l(d_1^{(k)}-d_2^{(k)}\r)\mathbf{P}_{\pi_1^{(k)}}^{(k)}\mathbf{P}_{\pi_2^{(k)}}^{(k)}\cdots \mathbf{P}_{\pi_r^{(k)}}^{(k)}  \r\|_1
\leq 
e^{-1/\tau}\l\| d_1^{(k)}-d_2^{(k)}\r\|_1,~\forall k\in\{1,2,\cdots,K\}
\end{multline*}
where the supremum is taken with respect to \textbf{any} sequence of $r$ randomized stationary policies $\l\{\pi_1^{(k)},\cdots,\pi_r^{(k)}\r\}$.
\end{lemma}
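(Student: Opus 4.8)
\textbf{Proof proposal for Lemma \ref{lemma:mixing1}.}

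The plan is to reduce the general statement to a statement about products of \emph{pure-policy} transition matrices, then invoke Assumption \ref{assumption-1} to obtain a uniform strict-positivity lower bound on such products, and finally convert that lower bound into a contraction in the $\ell_1$ (equivalently, total-variation) metric via a Dobrushin-type coefficient argument. First I would observe that by Assumption \ref{assumption:indep-trans} the $K$ MDPs evolve independently under any \emph{separable} sequence of policies, and in any case the claimed inequality is stated coordinatewise in $k$, so it suffices to fix a single index $k \in \{1,\dots,K\}$ and prove the contraction for that MDP in isolation. From now on drop the superscript $k$ and work with a single finite-state MDP satisfying Assumption \ref{assumption-1}.

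Next I would reduce from randomized stationary policies to pure policies. For any randomized stationary policy $\pi$, the transition matrix $\mathbf{P}_\pi$ defined in \eqref{transition-matrix} is a convex combination of the transition matrices $\mathbf{P}_{\pi'}$ of pure policies $\pi'$ (this is exactly the observation used in Section \ref{sec:sapoly}, that corner points of $\Theta$ come from pure policies; equivalently, $\pi(a|s)$ is a convex combination of $0/1$ conditional distributions). Hence a product $\mathbf{P}_{\pi_1}\cdots\mathbf{P}_{\pi_r}$ of randomized-stationary transition matrices expands, by multilinearity, into a convex combination of products $\mathbf{P}_{\pi'_1}\cdots\mathbf{P}_{\pi'_r}$ of pure-policy transition matrices. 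Since the map $M \mapsto \|(d_1-d_2) M\|_1$ is convex in the stochastic matrix $M$, any contraction bound that holds uniformly over all pure-policy products of length $r$ automatically passes to the convex combination, hence to arbitrary randomized-stationary products. So it is enough to bound $\|(d_1-d_2)\mathbf{P}_{\pi'_1}\cdots\mathbf{P}_{\pi'_r}\|_1$ over pure policies $\pi'_1,\dots,\pi'_r$.

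Now I would take $r := \widehat r$ from Assumption \ref{assumption-1}. By that assumption, for every sequence of pure policies the product $\mathbf{P} := \mathbf{P}_{\pi'_1}\cdots\mathbf{P}_{\pi'_r}$ has all entries strictly positive. Because there are only finitely many pure policies (finite state and action spaces) and finitely many length-$r$ sequences, there is a \emph{uniform} constant $\delta > 0$ with $\mathbf{P}(s,s') \ge \delta$ for every state pair $(s,s')$ and every such product $\mathbf{P}$; note $\delta \le 1/|\mathcal S|$. A standard fact (Markov/Dobrushin contraction) is that for any stochastic matrix $\mathbf{P}$ all of whose entries are at least $\delta$, and any two probability vectors $d_1,d_2$, one has $\|(d_1-d_2)\mathbf{P}\|_1 \le (1 - |\mathcal S|\,\delta)\,\|d_1-d_2\|_1$; more simply, one can write $d_1\mathbf P - d_2\mathbf P$ in terms of the signed measure $d_1-d_2$ (which has zero total mass) and use that the rows of $\mathbf P$ all dominate the same positive vector $\delta\mathbf 1$, which cancels in the difference and yields a strict contraction factor $\rho := 1 - |\mathcal S|\,\delta < 1$. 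Setting $\tau := -1/\log\rho \ge 1$ (increasing $\tau$ if necessary so that $\tau \ge 1$, which only weakens the bound) gives $\rho = e^{-1/\tau}$, and taking the maximum of the resulting $\tau$'s and $r$'s over $k \in \{1,\dots,K\}$ produces the universal $r$ and $\tau$ claimed in the lemma.

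The main obstacle I anticipate is making the reduction from randomized to pure policies fully rigorous together with the \emph{uniformity} of $\delta$: one must be careful that the convexity argument is applied to $M \mapsto \|(d_1-d_2)M\|_1$ (which is genuinely convex, being a norm composed with an affine map), and that the finiteness used to extract a single $\delta > 0$ really is finiteness of the set of length-$r$ \emph{pure}-policy products, not of all randomized products. The Dobrushin contraction step itself is classical and routine; if one wants to avoid quoting it, the cleanest self-contained route is to lower-bound each row of the product matrix by $\delta \mathbf 1$, write $\mathbf P = \delta \mathbf 1 v^\top + (1-|\mathcal S|\delta)\mathbf Q$ for a stochastic matrix $\mathbf Q$ and the uniform vector $v = \mathbf 1/|\mathcal S|$, and note $(d_1-d_2)\mathbf 1 v^\top = 0$, leaving $\|(d_1-d_2)\mathbf P\|_1 = (1-|\mathcal S|\delta)\|(d_1-d_2)\mathbf Q\|_1 \le (1-|\mathcal S|\delta)\|d_1-d_2\|_1$.
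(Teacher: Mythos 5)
Your proof is correct and takes essentially the same route as the paper's: expand any length-$r$ product of randomized-stationary transition matrices as a convex combination of pure-policy products, use Assumption \ref{assumption-1} plus finiteness of the pure policies to get a uniform entrywise lower bound $\delta>0$ at $r=\widehat r$, split off a uniform-row matrix to get an $\ell_1$ contraction, and set $\tau$ via $-1/\log(\cdot)$; the paper merely transfers the bound $\delta$ to the randomized product and contracts there with factor $1-\delta$, whereas you contract each pure product and pass through convexity of $M\mapsto\|(d_1-d_2)M\|_1$, which is equally valid (and gives the slightly sharper factor $1-|\mathcal{S}|\delta$). One small fix in your self-contained Dobrushin step: with $v=\mathbf{1}/|\mathcal{S}|$ the correct splitting is $\mathbf{P}=|\mathcal{S}|\delta\,\mathbf{1}v^{\top}+(1-|\mathcal{S}|\delta)\mathbf{Q}$ (equivalently $\delta\,\mathbf{1}\mathbf{1}^{\top}+(1-|\mathcal{S}|\delta)\mathbf{Q}$), not $\delta\,\mathbf{1}v^{\top}+(1-|\mathcal{S}|\delta)\mathbf{Q}$, so that $\mathbf{Q}$ is stochastic; the cancellation $(d_1-d_2)\mathbf{1}v^{\top}=0$ and the stated contraction then go through unchanged.
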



For the $k$-th MDP, let $\Theta^{(k)}$ be its state-action polyhedron according to the definition in Section \ref{sec:sapoly}. For any joint randomized stationary policy, let  $\theta^{(k)}$ be the marginal state-action probability vector on the $k$-th MDP, i.e. for any joint state-action distribution $\Phi(\mathbf{a},\mathbf{s})$ where $\mathbf{a}\in \mathcal{A}^{(1)}\times\cdots\times\mathcal{A}^{(K)}$ and $\mathbf{s}\in\mathcal{S}^{(1)}\times\cdots\times\mathcal{S}^{(K)}$, we have
 $\theta^{(k)}(a^{(k)}, s^{(k)}) = \sum_{a^{(j)}, s^{(j)},~j\neq k}\Phi(\mathbf{a},\mathbf{s})$.

We have the following lemma:

\begin{lemma}\label{lemma:prod-chain}
\xcolor{Suppose Assumption \ref{assumption-1} and \ref{assumption:indep-trans} hold.}
Consider the product MDP with product state space $\mathcal{S}^{(1)}\times\cdots\times \mathcal{S}^{(K)}$ and action space $\mathcal{A}^{(1)}\times\cdots\times \mathcal{A}^{(K)}$. Then, for any joint randomized stationary policy, the following hold:
\begin{enumerate}
\item  The product MDP is irreducible and aperiodic.
\item  The marginal stationary state-action probability vector $\theta^{(k)}\in\Theta^{(k)},~\forall k\in\{1,2,\cdots,K\}$.
\end{enumerate}
\end{lemma}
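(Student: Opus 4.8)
The plan is to prove Lemma \ref{lemma:prod-chain} in two parts, treating the irreducibility/aperiodicity of the product chain first and the membership $\theta^{(k)}\in\Theta^{(k)}$ second.

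\textbf{Part 1: Irreducibility and aperiodicity of the product MDP.} Fix a joint randomized stationary policy $\Pi$. First I would observe that under Assumption \ref{assumption:indep-trans} the one-step transition of the product chain factorizes: given the joint state $\mathbf{s}$, the component transitions $s_{t+1}^{(k)}$ are conditionally independent, and marginally each component evolves according to \emph{some} transition matrix $\mathbf{P}^{(k)}_{\pi_t^{(k)}}$ where $\pi_t^{(k)}$ is the (state-dependent) marginal policy induced by $\Pi$ — formally $\pi^{(k)}(a^{(k)}\mid s^{(k)}) = \sum_{a^{(j)},j\ne k}\Pi(\mathbf{a}\mid\mathbf{s})$, but the subtlety is that this marginal may depend on the \emph{joint} state. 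The key trick is that a marginal policy at a fixed joint state is still a convex combination of pure policies on MDP $k$, so any $r$-step product of marginal transition matrices is dominated entrywise by a convex combination of products of the form $\mathbf{P}^{(k)}_{\pi_1}\cdots\mathbf{P}^{(k)}_{\pi_r}$ with $\pi_i$ pure. By Assumption \ref{assumption-1}, for $r\ge\widehat r$ every such product is strictly positive, so the $r$-step marginal transition matrix on each component has all entries strictly positive regardless of the joint-state path. Using conditional independence of components, the $r$-step transition probability of the product chain from any joint state $\mathbf{s}$ to any joint state $\mathbf{s}'$ is the product over $k$ of strictly positive component probabilities, hence strictly positive. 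A Markov chain whose $r$-step transition matrix is strictly positive is irreducible and aperiodic; this gives Part 1.

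\textbf{Part 2: Marginal state-action vector lies in $\Theta^{(k)}$.} Since the product MDP is irreducible and aperiodic on a finite state space, it has a unique stationary distribution over joint state-action pairs; call its state-action probability vector $\Phi$, and let $\theta^{(k)}$ be its marginal as defined before the lemma. I would verify directly that $\theta^{(k)}$ satisfies the two defining constraints of $\Theta^{(k)}$ from Section \ref{sec:sapoly}: nonnegativity and summation to one are immediate from marginalization; the balance equation $\sum_{s}\sum_{a}\theta^{(k)}(s,a)P_a^{(k)}(s,s') = \sum_a\theta^{(k)}(s',a)$ follows by summing the joint balance equation over all coordinates $j\ne k$ and invoking Assumption \ref{assumption:indep-trans}, which lets the joint transition kernel factor so that the $j\ne k$ coordinates marginalize out cleanly, leaving exactly the component-$k$ balance equation. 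Hence $\theta^{(k)}\in\Theta^{(k)}$.

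\textbf{Main obstacle.} The routine-looking but genuinely delicate point is the first step of Part 1: the induced marginal policy on MDP $k$ is \emph{not} a fixed randomized stationary policy — it can depend on the states of the other MDPs, so the marginal component process is not itself a time-homogeneous Markov chain. The argument must therefore be phrased in terms of products of \emph{arbitrary} (state-path-dependent) stochastic matrices each of which is a convex combination of pure-policy transition matrices, and one must use that Assumption \ref{assumption-1} is stated uniformly over \emph{all} sequences of pure policies (which is exactly why the assumption is phrased that way, and why Lemma \ref{lemma:mixing1} holds with a supremum over arbitrary policy sequences). Once this uniformity is correctly exploited — effectively a Doeblin-type minorization that is uniform in the other MDPs' behavior — the rest is bookkeeping with marginalization and the conditional-independence structure.
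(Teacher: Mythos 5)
Your Part 2 is correct and is essentially the paper's argument: take the stationary state-action distribution $\Phi$ of the ergodic product chain (existence granted by Part 1), marginalize to get $\theta^{(k)}$, and verify the balance equation defining $\Theta^{(k)}$ by summing the joint balance equation over the coordinates $j\neq k$ and using Assumption \ref{assumption:indep-trans} to collapse $\sum_{\tilde{s}^{(j)},\,j\neq k}P_{\mathbf{a}}(\mathbf{s},\tilde{\mathbf{s}})$ into $P_{a^{(k)}}(s^{(k)},\tilde{s}^{(k)})$. The gap is in Part 1. You assert that, ``using conditional independence of components,'' the $r$-step transition probability of the product chain from $\mathbf{s}$ to $\mathbf{s}'$ is the product over $k$ of strictly positive component probabilities. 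Under a joint randomized stationary policy this is false: Assumption \ref{assumption:indep-trans} gives conditional independence of the component transitions only given the joint state \emph{and the joint action}, while $\Pi(\mathbf{a}|\mathbf{s})$ may correlate the actions across components, so the one-step joint kernel $\sum_{\mathbf{a}}\Pi(\mathbf{a}|\mathbf{s})\prod_k P_{a^{(k)}}(s^{(k)},\cdot)$ is in general not the product of its marginals (with perfectly correlated actions the joint chain can be confined to a ``diagonal'' subset of joint states even though every marginal transition looks well spread, so positivity of marginals does not yield positivity of joint transitions). Relatedly, your preparatory claim that, conditionally on the other MDPs' state path, component $k$ evolves through a product of stochastic matrices each of which is a convex combination of pure-policy matrices is also not accurate under a randomized $\Pi$: conditioning on the others' realized transitions produces path-dependent, \emph{unnormalized} weights on the actions $a^{(k)}$ (a Doob-type conditioning), not a per-step randomized stationary policy on MDP $k$.

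The repair is exactly the reduction the paper performs before decoupling. Since $\mathbf{P}_{\Pi}$ is a convex combination of pure-joint-policy matrices, $\mathbf{P}_{\Pi}^r\geq \alpha^r\,\mathbf{P}_{\Pi_0}^r$ entrywise for some pure joint policy $\Pi_0$ and some $\alpha>0$, so it suffices to prove strict positivity of $r$-step transitions under an arbitrary \emph{pure} joint policy. For a pure joint policy the action is a deterministic function of the joint state, the probability of a joint path factorizes exactly across components, and, after fixing the paths of the other components, the induced action sequence on MDP $k$ is precisely a sequence of pure policies on $\mathcal{S}^{(k)}$; the path sum is then bounded below by $\prod_{k=1}^K\inf_{\pi_1^{(k)},\cdots,\pi_r^{(k)}}P_{\pi_1^{(k)},\cdots,\pi_r^{(k)}}(s^{(k)},\tilde{s}^{(k)})$, which is strictly positive for all $r\geq\widehat{r}$ by Assumption \ref{assumption-1}. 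This is exactly where the uniformity over arbitrary pure-policy sequences that you flag in your ``main obstacle'' paragraph gets used --- but it must be invoked after the reduction to pure joint policies (or, equivalently, after expanding $\mathbf{P}_{\Pi}^r$ as a mixture of products of pure-joint-policy matrices), not for a marginal or conditioned component chain under the randomized joint policy. With that step inserted, your outline coincides with the paper's proof.
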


An immediate conclusion we can draw from this lemma is that given any penalty and constraint functions $\mathbf{f}^{(k)}$ and $\mathbf{g}_{i}^{(k)}$, $k=1,2,\cdots,K$, the stationary penalty and constraint value of any joint randomized stationary policy can be expressed as 
$$
\sum_{k=1}^K\l\langle\mathbf{f}^{(k)},\theta^{(k)} \r\rangle,~\sum_{k=1}^K\l\langle\mathbf{g}_i^{(k)},\theta^{(k)} \r\rangle,~~i=1,2,\cdots,m,
$$
with $\theta^{(k)}\in\Theta^{(k)}$. 
This in turn implies such stationary state-action probabilities $\{\theta^{(k)}\}_{k=1}^{K}$ can also be realized via a separable randomized stationary policy
$\pi$ with 
\begin{equation}\label{eq:relation}
\pi^{(k)}(a|s) = \frac{\theta^{(k)}(a,s)}{\sum_{a\in\mathcal{A}^{(k)}}\theta^{(k)}(a,s)},~a\in\mathcal{A}^{(k)},~s\in\mathcal{S}^{(k)},
\end{equation}
and the corresponding stationary penalty and constraint value can also be achieved via this policy. This fact implies that when considering the stationary state performance only, the class of separable randomized stationary policies is large enough to cover all possible stationary penalty and constraint values.

\xcolor{
In particular, 
let $\tilde\pi=\l( \tilde\pi^{(1)},\cdots,\tilde\pi^{(K)}\r)$ be the separable randomized stationary policy associated with the Slater condition (Assumption \ref{assumption:slater}).  Using the fact that the constraint functions $\mathbf{g}_{i,t}^{(k)},k=1,2,\cdots,K$ (i.e. $w_t$) are i.i.d.and Assumption \ref{assumption:indep-trans} on independence of probability transitions, we have the constraint functions $g_{i,t}^{(k)}$ and the state-action pairs at any time $t$ are mutuallly independent.  Thus, }
\[
\mathbb{E}\left[\sum_{k=1}^Kg_{i,t}^{(k)}\l(a^{(k)}_t,s^{(k)}_t\r)\Big|~ d_{\widetilde{\pi}},\widetilde{\pi}\right]
= \sum_{k=1}^K\l\langle\expect{\mathbf{g}_{i,t}^{(k)}}, \tilde\theta^{(k)} \r\rangle,
\]
where $\tilde\theta^{(k)}$ corresponds to $\tilde\pi$ according to \eqref{eq:relation}.

Then, Slater's condition
can be translated to the following: There exists a sequence of state-action probabilities $\{\tilde\theta^{(k)}\}_{k=1}^{K}$ from a separable randomized stationary policy such that $\tilde\theta^{(k)}\in\Theta^{(k)},~\forall k$, and
\begin{equation}\label{slater-2}
\sum_{k=1}^K\l\langle\expect{\mathbf{g}_{i,t}^{(k)}}, \tilde\theta^{(k)} \r\rangle\leq-\eta,~~i=1,2,\cdots,m,
\end{equation}
The assumption on separability does not lose generality in the sense that if there is no separable randomized stationary policy that satisfies \eqref{slater-2}, then, there is no \textit{joint} randomized stationary policy that satisfies \eqref{slater-2} either.

\subsection{The blessing of slow-update property in online MDPs}\label{sec:dis-slow-change}
\xcolor{
 The current state of an MDP depends on previous states and actions. 
As a consequence, the slot $t$ penalty not only depends on the current penalty function and current action,  but also on the system history.  This complication does not arise in classical online convex optimization (\cite{hazan2016introduction},\cite{zinkevich2003online}) as there is no notion of ``state'' and the slot $t$ penalty depends only on the slot $t$ penalty function and action. 
}

\xcolor{
Now imagine a virtual system where, on each slot $t$, a policy $\pi_t$ is chosen (rather than an action).  Further imagine the MDP immediately reaching its corresponding stationary distribution $d_{\pi_t}$. Then the states and actions on previous slots do not matter and the slot $t$ performance depends only on the chosen policy $\pi_t$ and on the current penalty and constraint functions.  This imaginary system now has a structure similar to classical online convex optimization as in the Zinkevich scenario \cite{zinkevich2003online}. 
}

\xcolor{
A key feature of online convex optimization algorithms as in \cite{zinkevich2003online} is that they update their decision variables slowly.  For a fixed time scale $T$ over which 
$\mathcal{O}(\sqrt{T})$ regret is desired, the decision variables are typically changed no more than a distance $\mathcal{O}(1/\sqrt{T})$ from one slot to the next.  An important insight in prior (unconstrained) MDP works(e.g. \cite{dick2014online}, \cite{even2009online}, and \cite{yu2009markov})
 is that such slow updates also guarantee the ``approximate'' convergence of an MDP to its stationary distribution.  As a consequence, one can design the decision policies under the imaginary assumption that the system instantly reaches its stationary distribution, and later bound the error between the true system and the imaginary system.  If the error is on the same order as the desired $\mathcal{O}(\sqrt{T})$ regret, then this approach works.  This idea serves as a cornerstone of our algorithm design of the next section, which treats the case of multiple weakly coupled systems with both objective functions and constraint functions. 
}

\section{OCMDP algorithm}\label{sec:algorithm}
\xcolor{
Our proposed algorithm is distributed in the sense that each time slot, each MDP solves its own subproblem and the constraint violations are controlled by a simple update of global multipliers called ``virtual queues'' at the end of each slot.}
Let $\Theta^{(1)},~\Theta^{(2)},~\cdots,~\Theta^{(K)}$ be the state-action polyhedra of $K$ MDPs, respectively.
Let $\theta_t^{(k)}\in\Theta^{(k)}$ be a state-action vector at time slot $t$. 
At $t = 0$, each MDP chooses its initial state-action vector $\theta_0^{(k)}$ resulting from any \textit{separable} randomized stationary policy $\pi_0^{(k)}$.
For example, one could choose a uniform policy $\pi^{(k)}(a|s) = 1/\l|\mathcal{A}^{(k)}\r|,~\forall s\in\mathcal{S}^{(k)}$, solve the equation $d_{\pi_0^{(k)}} = d_{\pi_0^{(k)}}\mathbf{P}_{\pi_0^{(k)}}^{(k)}$ to get a probability vector $d_{\pi_0^{(k)}}$, and obtain $\theta_0^{(k)}(a,s) = d_{\pi_0^{(k)}}(s)/\l|\mathcal{A}^{(k)}\r|$. For each constraint $i\in\{1,2,\cdots,m\}$, let $Q_i(t)$ be a \textit{virtual queue} defined over slots $t=0,1,2,\cdots$ with the initial condition $Q_i(0) = Q_i(1) = 0$, and update equation:
\begin{equation}\label{Q-update}
Q_i(t+1) = \max\left\{ Q_i(t) + \sum_{k=1}^K \l\langle\mathbf{g}_{i,t-1}^{(k)},\theta_t\r\rangle,~0\right\},~\forall t\in\{1,2,3,\cdots\}.
\end{equation}
Our algorithm uses two parameters $V>0$ and $\alpha>0$ and makes decisions as follows: At the start of each slot $t\in\{1,2,3,\cdots\}$, 
\begin{itemize}
\item The $k$-th MDP observes $Q_i(t),~i=1,2,\cdots,m$ and chooses $\theta_t^{(k)}$ to solve the following subproblem:
\begin{equation}\label{optimize}
\theta_t^{(k)} = \textrm{argmin}_{\theta\in\Theta^{(k)}}\left\langle 
V\mathbf{f}_{t-1}^{(k)}+\sum_{i=1}^mQ_i(t)\mathbf{g}_{i,t-1}^{(k)},\theta\right\rangle 
+\alpha\l\|\theta-\theta_{t-1}^{(k)}\r\|_2^2.
\end{equation}
\item Construct the randomized stationary policy $\pi_t^{(k)}$ according to \eqref{solution-to-LP} with 
$\theta=\theta_t^{(k)}$, and choose the action $a_t^{(k)}$ at $k$-th MDP according to the conditional distribution $\pi_t^{(k)}\l(\cdot|s^{(k)}_t\r)$. 
\item Update the virtual queue $Q_i(t)$ according to \eqref{Q-update} for all $i=1,2,\cdots,m$.
\end{itemize}

\begin{remark}
Note that for any slot $t\geq1$, this algorithm gives a \textit{separable} randomized stationary policy, so that each MDP chooses its own policy based on its own function 
$\mathbf{f}_{t-1}^{(k)}$, $\mathbf{g}_{i,t-1}^{(k)},i\in\{1,2,\cdots,m\}$, and a common multiplier 
$\mathbf{Q}(t) := \l(Q_1(t),\cdots,Q_m(t)\r)$. \lcolor{Furthermore, note that \eqref{optimize} is a convex quadratic program (QP). Standard theory of QP (e.g. \cite{ye1989extension}) shows that the computation complexity solving \eqref{optimize} is $poly\l(\l|\mathcal{S}^{(k)}\r|\l|\mathcal{A}^{(k)}\r|\r)$ for each $k$. Thus, the total computation complexity over all MDPs during each round is $poly\l(K\l|\mathcal{S}^{(k)}\r|\l|\mathcal{A}^{(k)}\r|\r)$.}
\end{remark}
\lcolor{
\begin{remark}
The quadratic term $\alpha\l\|\theta-\theta_{t-1}^{(k)}\r\|_2^2$ in \eqref{optimize} penalizes the deviation of $\theta$ from the previous decision variable $\theta_{t-1}^{(k)}$. Thus, under proper choice of $\alpha$, the distance between $\theta_{t}^{(k)}$ and $\theta_{t-1}^{(k)}$ would be very small, which is the slow update condition we need according to Section \ref{sec:dis-slow-change}.
\end{remark}
}

The next lemma shows that solving \eqref{optimize} is in fact a projection onto the state-action polyhedron. For any set $\mathcal{X}\in\mathbb{R}^n$ and a vector $\mathbf{y}\in\mathbb{R}^n$, define the projection operator $\mathcal{P}_\mathcal{X}(\mathbf{y})$ as 
\[
\mathcal{P}_\mathcal{X}(\mathbf{y}) = \textrm{arginf}_{\mathbf{x}\in\mathcal{X}}\|\mathbf{x} - \mathbf{y}\|_2.
\]
\begin{lemma}
Fix an $\alpha>0$ and $t\in\{1,2,3,\cdots\}$. The $\theta_t$ that solves \eqref{optimize} is
\[
\theta_t^{(k)} = \mathcal{P}_{\Theta^{(k)}}\left( \theta_{t-1}^{(k)} - \frac{\mathbf{w}_t^{(k)}}{2\alpha}\right),
\]
where 
$
\mathbf{w}_t^{(k)} = V\mathbf{f}_{t-1}^{(k)}+\sum_{i=1}^mQ_i(t)\mathbf{g}_{i,t-1}^{(k)}\in\mathbb{R}^{|\mathcal{A}^{(k)}||\mathcal{S}^{(k)}|}.
$
\end{lemma}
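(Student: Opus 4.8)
The plan is a direct completing-the-square argument, combined with the standard fact that a strictly convex quadratic attains a unique minimizer over a nonempty compact convex set. I would first record the structural facts I need: $\Theta^{(k)}$ is a nonempty compact convex polytope (it is carved out of the probability simplex $\Delta$ by finitely many linear balance equations, and it is nonempty since the initial separable randomized stationary policy $\pi_0^{(k)}$ supplies a point of it), and the objective $\theta \mapsto \langle \mathbf{w}_t^{(k)},\theta\rangle + \alpha\|\theta - \theta_{t-1}^{(k)}\|_2^2$ is continuous and strictly convex because its Hessian is $2\alpha I \succ 0$ when $\alpha>0$. Hence the minimizer over $\Theta^{(k)}$ exists and is unique, so the notation $\theta_t^{(k)}$ in \eqref{optimize} is unambiguous.

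Next I would complete the square. Expanding,
\[
\langle \mathbf{w}_t^{(k)},\theta\rangle + \alpha\|\theta - \theta_{t-1}^{(k)}\|_2^2
= \alpha\|\theta\|_2^2 - 2\alpha\Big\langle \theta,\ \theta_{t-1}^{(k)} - \frac{1}{2\alpha}\mathbf{w}_t^{(k)}\Big\rangle + \alpha\|\theta_{t-1}^{(k)}\|_2^2,
\]
and adding and subtracting the $\theta$-independent term $\alpha\big\|\theta_{t-1}^{(k)} - \frac{1}{2\alpha}\mathbf{w}_t^{(k)}\big\|_2^2$ rewrites this as
\[
\alpha\Big\|\theta - \Big(\theta_{t-1}^{(k)} - \frac{1}{2\alpha}\mathbf{w}_t^{(k)}\Big)\Big\|_2^2 + c_t,
\qquad c_t := \alpha\|\theta_{t-1}^{(k)}\|_2^2 - \alpha\Big\|\theta_{t-1}^{(k)} - \frac{1}{2\alpha}\mathbf{w}_t^{(k)}\Big\|_2^2 .
\]

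Finally, since $\alpha>0$ and $c_t$ does not depend on $\theta$, minimizing the objective of \eqref{optimize} over $\theta\in\Theta^{(k)}$ is equivalent to minimizing $\big\|\theta - (\theta_{t-1}^{(k)} - \mathbf{w}_t^{(k)}/(2\alpha))\big\|_2^2$ over $\theta\in\Theta^{(k)}$, and, by monotonicity of $x\mapsto\sqrt{x}$ on $[0,\infty)$, equivalent to minimizing $\big\|\theta - (\theta_{t-1}^{(k)} - \mathbf{w}_t^{(k)}/(2\alpha))\big\|_2$. By the definition of the projection operator $\mathcal{P}_{\Theta^{(k)}}$, this minimizer is exactly $\mathcal{P}_{\Theta^{(k)}}\big(\theta_{t-1}^{(k)} - \mathbf{w}_t^{(k)}/(2\alpha)\big)$, which is the claimed identity. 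I do not anticipate any genuine obstacle here; the only point requiring a little care is justifying uniqueness of the minimizer (so that ``the $\theta_t$ that solves \eqref{optimize}'' and the projection — itself the unique closest point to a convex set — refer to the same object), and this is immediate from strict convexity together with compactness of $\Theta^{(k)}$.
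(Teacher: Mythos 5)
Your proposal is correct and follows essentially the same route as the paper's own proof: both complete the square to rewrite the objective of \eqref{optimize} as $\alpha\bigl\|\theta - \bigl(\theta_{t-1}^{(k)} - \mathbf{w}_t^{(k)}/(2\alpha)\bigr)\bigr\|_2^2$ plus a $\theta$-independent constant, and then identify the minimizer with the projection $\mathcal{P}_{\Theta^{(k)}}$. Your added remarks on compactness, strict convexity, and uniqueness of the minimizer are fine but not a different argument, just extra care the paper leaves implicit.
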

\begin{proof}
By definition, we have
\begin{align*}
\theta_t^{(k)} =& \textrm{argmin}_{\theta\in\Theta^{(k)}} \l\langle \mathbf{w}_t^{(k)} , \theta \r\rangle + \alpha \l\| \theta - \theta^{(k)}_{t-1} \r\|_2^2\\
=& \textrm{argmin}_{\theta\in\Theta^{(k)}}   \l\langle \mathbf{w}_t^{(k)} , \theta - \theta^{(k)}_{t-1} \r\rangle + \alpha \l\| \theta - \theta^{(k)}_{t-1} \r\|_2^2  \\
&+   \l\langle \mathbf{w}_t^{(k)}, \theta^{(k)}_{t-1} \r\rangle\\
=& \textrm{argmin}_{\theta\in\Theta^{(k)}}~  \alpha\cdot \l(\l\langle \l.\mathbf{w}_t^{(k)}\r/\alpha , \theta - \theta^{(k)}_{t-1} \r\rangle + \l\| \theta - \theta^{(k)}_{t-1} \r\|_2^2\r)\\
&+   \l\langle \mathbf{w}_t^{(k)}, \theta^{(k)}_{t-1} \r\rangle\\
=& \textrm{argmin}_{\theta\in\Theta^{(k)}}~ \alpha\cdot\l\| \theta -  \theta^{(k)}_{t-1} + \l.\mathbf{w}_t^{(k)}\r/2\alpha \r\|_2^2  \\
=&\mathcal{P}_{\Theta^{(k)}}\left( \theta_{t-1}^{(k)} - \left.\mathbf{w}_t^{(k)}\right/2\alpha\right),
\end{align*}
finishing the proof.
\end{proof}

\subsection{Intuition of the algorithm and roadmap of analysis}
The intuition of this algorithm follows from the discussion in Section \ref{sec:dis-slow-change}. Instead of the Markovian regret \eqref{main-regret} and constraint set \eqref{main-constraint}, we work on the imaginary system that after the decision maker chooses any joint policy $\Pi_t$ and the penalty/constraint functions are revealed, the $K$ parallel Markov chains reach stationary state distribution right away, with state-action probability vectors $\l\{\theta_t^{(k)}\r\}_{k=1}^K$ for $K$ parallel MDPs. Thus there is no Markov state in such a system anymore and the corresponding stationary
 penalty and constraint function value at time $t$ can be expressed as $\sum_{k=1}^K\l\langle\mathbf{f}_t^{(k)},\theta_t^{(k)}\r\rangle$ and 
$\sum_{k=1}^K\l\langle\mathbf{g}_{i,t}^{(k)},\theta_t^{(k)}\r\rangle,~i=1,2,\cdots,m$, respectively. As a consequence, we are now facing a relatively easier task 
of minimizing the following regret:
\begin{equation}\label{stat-regret}
\sum_{t=0}^{T-1}\sum_{k=1}^K\expect{\l\langle\mathbf{f}_t^{(k)},\theta_t^{(k)}\r\rangle} - \sum_{t=0}^{T-1}\sum_{k=1}^K\expect{\l\langle\mathbf{f}_t^{(k)},\theta^{(k)}_*\r\rangle},
\end{equation}
where $\l\{\theta^{(k)}_*\r\}_{k=1}^K$ are the state-action probabilities corresponding to the best fixed joint randomized stationary policy within the following stationary constraint set 
\begin{equation}\label{stat-constraint}
\overline{\mathcal{G}}:=\left\{ \theta^{(k)}\in\Theta^{(k)},~k\in\{1,2,\cdots,K\}: \right.
\left.\sum_{k=1}^K\l\langle\expect{\mathbf{g}_{i,t}^{(k)}},\theta^{(k)}\r\rangle \leq 0,~i=1,2,\cdots,m\right\},
\end{equation}
with the assumption that Slater's condition \eqref{slater-2} holds.

\xcolor{
To analyze the proposed algorithm, we need to tackle the following two major challenges:
\begin{itemize}
\item Whether or not the policy decision of the proposed algorithm would yield $\mathcal{O}(\sqrt{T})$ regret and constraint violation on the imaginary system that reaches steady state instantaneously on each slot.
\item Whether the error between the imaginary and true systems can be bounded by $\mathcal{O}(\sqrt{T})$. 
\end{itemize}
}

\xcolor{
In the next section, we answer these questions via a multi-stage analysis piecing together the results of MDPs from Section \ref{sec:prelim-thm} with
multiple ingredients from convex analysis and stochastic queue analysis. We first show the 
$\mathcal{O}(\sqrt{T})$ regret and constraint violation in the imaginary online linear program incorporating a new regret analysis procedure with a stochastic drift analysis for queue processes. 
Then, we show if the benchmark randomized stationary algorithm always starts from its stationary state, then,
the discrepancy of regrets between the imaginary and true systems can be controlled via the slow-update property of the proposed algorithm together with the properties of MDPs developed in Section \ref{sec:prelim-thm}. Finally, for the problem with arbitrary non-stationary starting state, we reformulate it as a perturbation on the aforementioned stationary state problem and 
analyze the perturbation via Farkas' Lemma.
}

\section{Convergence time analysis}\label{sec:convergence-analysis}

\subsection{Stationary state performance: An online linear program}\label{sec:stat-analysis}


Let $\mathbf{Q}(t):=[Q_1(t),~Q_2(t),~\cdots,~Q_m(t)]$ be the virtual queue vector and $L(t) = \frac12\|\mathbf{Q}(t)\|_2^2$. Define the drift
$\Delta(t) := L(t+1) - L(t)$.

\subsubsection{Sample-path analysis}
This section develops a couple of bounds given a sequence of penalty functions $f_{0}^{(k)},f_{1}^{(k)},\cdots,f_{T-1}^{(k)}$ and constraint functions $g_{i,0}^{(k)},g_{i,1}^{(k)},\cdots,g_{i,T-1}^{(k)}$.
The following lemma provides bounds for virtual queue processes:
\begin{lemma}\label{lemma:pre-Q-bound}
For any $i\in\{1,2,\cdots,m\}$ at $T\in\{1,2,\cdots\}$, the following holds under the virtual queue update \eqref{Q-update},
\[
\sum_{t=1}^{T}\sum_{k=1}^K\l\langle \mathbf{g}_{i,t-1}^{(k)}, \theta_{t-1}^{(k)} \r\rangle 
\leq Q_i(T+1) - Q_i(1)   
+  \Psi\sum_{t=1}^T\sum_{k=1}^K\sqrt{\l| \mathcal{A}^{(k)} \r|\l| \mathcal{S}^{(k)} \r|}\l\| \theta^{(k)}_t - \theta^{(k)}_{t-1} \r\|_2,
\]
\xcolor{where $\Psi>0$ is the constant defined in \eqref{function-bounds}.}
\end{lemma}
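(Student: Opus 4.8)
The plan is to telescope the virtual queue update and bound the resulting gap between the quantity we want and the one that actually appears in the queue dynamics. Recall the update \eqref{Q-update}: $Q_i(t+1) = \max\{Q_i(t) + \sum_{k=1}^K\langle \mathbf{g}_{i,t-1}^{(k)},\theta_t^{(k)}\rangle,~0\}$. Dropping the $\max$ gives the one-step inequality $Q_i(t+1) \geq Q_i(t) + \sum_{k=1}^K\langle \mathbf{g}_{i,t-1}^{(k)},\theta_t^{(k)}\rangle$, hence
\[
\sum_{k=1}^K\l\langle \mathbf{g}_{i,t-1}^{(k)},\theta_t^{(k)} \r\rangle \leq Q_i(t+1) - Q_i(t).
\]
Summing this over $t = 1, \ldots, T$ and telescoping yields $\sum_{t=1}^T \sum_{k=1}^K \langle \mathbf{g}_{i,t-1}^{(k)},\theta_t^{(k)}\rangle \leq Q_i(T+1) - Q_i(1)$. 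The statement, however, has $\theta_{t-1}^{(k)}$ in the inner products rather than $\theta_t^{(k)}$, so the remaining task is to account for the difference $\sum_{k=1}^K\langle \mathbf{g}_{i,t-1}^{(k)},\theta_{t-1}^{(k)} - \theta_t^{(k)}\rangle$ for each $t$.

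First I would bound each such term by Cauchy–Schwarz: $\langle \mathbf{g}_{i,t-1}^{(k)},\theta_{t-1}^{(k)} - \theta_t^{(k)}\rangle \leq \|\mathbf{g}_{i,t-1}^{(k)}\|_2 \cdot \|\theta_{t-1}^{(k)} - \theta_t^{(k)}\|_2$. Since each component of $\mathbf{g}_{i,t-1}^{(k)}$ is bounded in absolute value by $\Psi$ (the uniform bound \eqref{function-bounds}), and the vector $\mathbf{g}_{i,t-1}^{(k)}$ lives in $\mathbb{R}^{|\mathcal{A}^{(k)}||\mathcal{S}^{(k)}|}$, we have $\|\mathbf{g}_{i,t-1}^{(k)}\|_2 \leq \Psi\sqrt{|\mathcal{A}^{(k)}||\mathcal{S}^{(k)}|}$. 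Combining, $\langle \mathbf{g}_{i,t-1}^{(k)},\theta_{t-1}^{(k)} - \theta_t^{(k)}\rangle \leq \Psi\sqrt{|\mathcal{A}^{(k)}||\mathcal{S}^{(k)}|}\,\|\theta_t^{(k)} - \theta_{t-1}^{(k)}\|_2$. Summing over $k$ and over $t = 1,\ldots,T$ produces exactly the error term $\Psi \sum_{t=1}^T\sum_{k=1}^K \sqrt{|\mathcal{A}^{(k)}||\mathcal{S}^{(k)}|}\,\|\theta_t^{(k)} - \theta_{t-1}^{(k)}\|_2$ appearing in the claim.

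Putting the two pieces together: write $\sum_{k=1}^K\langle \mathbf{g}_{i,t-1}^{(k)},\theta_{t-1}^{(k)}\rangle = \sum_{k=1}^K\langle \mathbf{g}_{i,t-1}^{(k)},\theta_t^{(k)}\rangle + \sum_{k=1}^K\langle \mathbf{g}_{i,t-1}^{(k)},\theta_{t-1}^{(k)} - \theta_t^{(k)}\rangle$, sum over $t$, apply the telescoping bound to the first sum and the Cauchy–Schwarz bound to the second. This is a short and essentially mechanical argument; there is no real obstacle beyond being careful that the index shift between $\theta_t$ and $\theta_{t-1}$ is handled consistently and that the $\ell_2$-norm bound on $\mathbf{g}_{i,t-1}^{(k)}$ correctly uses the dimension $|\mathcal{A}^{(k)}||\mathcal{S}^{(k)}|$. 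The only mild subtlety worth stating explicitly is that the bound holds path-wise (for any fixed realization of the function sequences), which is why no expectation appears; this is consistent with the lemma being a "sample-path" result as flagged in the section heading.
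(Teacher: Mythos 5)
Your proof is correct and is essentially the paper's argument: both drop the $\max$ in the queue update, split $\langle \mathbf{g}_{i,t-1}^{(k)},\theta_{t-1}^{(k)}\rangle$ from $\langle \mathbf{g}_{i,t-1}^{(k)},\theta_{t}^{(k)}\rangle$ via Cauchy--Schwarz with the bound $\|\mathbf{g}_{i,t-1}^{(k)}\|_2\leq \Psi\sqrt{|\mathcal{A}^{(k)}||\mathcal{S}^{(k)}|}$, and telescope over $t$. The only (immaterial) difference is that the paper performs the decomposition slot-by-slot before summing, while you telescope first and then correct for the index shift.
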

\begin{proof}
By the queue updating rule \eqref{Q-update}, for any $t\in\mathbb{N}$,
\begin{align*}
&Q_i(t+1)  \\
=& \max\l\{ Q_i(t) + \sum_{k=1}^K\l\langle \mathbf{g}_{i,t-1}^{(k)}, \theta_{t}^{(k)} \r\rangle,0 \r\} \\
\geq& Q_i(t) + \sum_{k=1}^K\l\langle \mathbf{g}_{i,t-1}^{(k)}, \theta_{t}^{(k)} \r\rangle\\
=& Q_i(t) + \sum_{k=1}^K\l\langle \mathbf{g}_{i,t-1}^{(k)}, \theta_{t-1}^{(k)} \r\rangle + \sum_{k=1}^K\l\langle \mathbf{g}_{i,t-1}^{(k)}, \theta_t^{(k)}-\theta_{t-1}^{(k)} \r\rangle\\
\geq&  Q_i(t) + \sum_{k=1}^K\l\langle \mathbf{g}_{i,t-1}^{(k)}, \theta_{t-1}^{(k)} \r\rangle - \sum_{k=1}^K\l\|g_{i,t-1}^{(k)}\r\|_2\l\|\theta_t^{(k)}-\theta_{t-1}^{(k)}\r\|_2, 
\end{align*}
Note that the constraint functions are deterministically bounded,
\[
\l\|g_{i,t-1}^{(k)}\r\|_2^2\leq \l| \mathcal{A}^{(k)} \r|\l| \mathcal{S}^{(k)} \r|\Psi^2.
\]
Substituting this bound into the above queue bound and rearranging the terms finish the proof.
\end{proof}

The next lemma provides a bound for the drift $\Delta(t)$.
\begin{lemma}\label{D-bound}
For any slot $t\geq1$, we have 
\[
\Delta(t)\leq \frac12mK^2\Psi^2+\sum_{i=1}^mQ_i(t)\sum_{k=1}^K\l\langle \mathbf{g}_{i,t-1}^{(k)}, \theta_{t}^{(k)} \r\rangle .
\]
\end{lemma}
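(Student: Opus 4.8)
The plan is to use the standard Lyapunov drift bound for max-plus queue updates, applied to the virtual queue recursion \eqref{Q-update}. First I would recall that for any real number $x$ one has $(\max\{x,0\})^2 \leq x^2$. Applying this to the update $Q_i(t+1) = \max\{Q_i(t) + \sum_{k=1}^K \langle \mathbf{g}_{i,t-1}^{(k)},\theta_t^{(k)}\rangle, 0\}$ gives, for each $i$,
\[
Q_i(t+1)^2 \leq \left(Q_i(t) + \sum_{k=1}^K \langle \mathbf{g}_{i,t-1}^{(k)},\theta_t^{(k)}\rangle\right)^2 = Q_i(t)^2 + 2Q_i(t)\sum_{k=1}^K \langle \mathbf{g}_{i,t-1}^{(k)},\theta_t^{(k)}\rangle + \left(\sum_{k=1}^K \langle \mathbf{g}_{i,t-1}^{(k)},\theta_t^{(k)}\rangle\right)^2.
\]
Then I would subtract $Q_i(t)^2$, sum over $i \in \{1,\ldots,m\}$, and divide by $2$ to obtain $\Delta(t) = \frac12\sum_{i=1}^m (Q_i(t+1)^2 - Q_i(t)^2) \leq \frac12\sum_{i=1}^m \left(\sum_{k=1}^K \langle \mathbf{g}_{i,t-1}^{(k)},\theta_t^{(k)}\rangle\right)^2 + \sum_{i=1}^m Q_i(t)\sum_{k=1}^K \langle \mathbf{g}_{i,t-1}^{(k)},\theta_t^{(k)}\rangle$.

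Next I would bound the quadratic ``noise'' term $\frac12\sum_{i=1}^m \left(\sum_{k=1}^K \langle \mathbf{g}_{i,t-1}^{(k)},\theta_t^{(k)}\rangle\right)^2$ by $\frac12 m K^2 \Psi^2$. For this, fix $i$ and note that $\theta_t^{(k)} \in \Theta^{(k)} \subseteq \Delta$, so $\theta_t^{(k)}$ is a probability vector; hence $\langle \mathbf{g}_{i,t-1}^{(k)},\theta_t^{(k)}\rangle$ is a convex combination of entries of $\mathbf{g}_{i,t-1}^{(k)}$, each of which is bounded in absolute value by $\Psi$ by \eqref{function-bounds}. Therefore $|\langle \mathbf{g}_{i,t-1}^{(k)},\theta_t^{(k)}\rangle| \leq \Psi$, so $\left|\sum_{k=1}^K \langle \mathbf{g}_{i,t-1}^{(k)},\theta_t^{(k)}\rangle\right| \leq K\Psi$, and squaring and summing over the $m$ constraints gives the bound $\frac12 m K^2 \Psi^2$. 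Combining with the previous display yields exactly
\[
\Delta(t) \leq \frac12 m K^2 \Psi^2 + \sum_{i=1}^m Q_i(t)\sum_{k=1}^K \langle \mathbf{g}_{i,t-1}^{(k)},\theta_t^{(k)}\rangle,
\]
which is the claim.

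I do not anticipate a genuine obstacle here; this is a routine drift computation. The only point requiring minor care is the bookkeeping of constants — in particular confirming that the bound on $|\langle \mathbf{g}_{i,t-1}^{(k)},\theta_t^{(k)}\rangle|$ uses that $\theta_t^{(k)}$ lies in the probability simplex (a property guaranteed by the definition of $\Theta^{(k)}$ in Section \ref{sec:sapoly}, since the algorithm \eqref{optimize} projects onto $\Theta^{(k)}$), so that no dimension factor $\sqrt{|\mathcal{A}^{(k)}||\mathcal{S}^{(k)}|}$ enters the noise term and the stated constant $\frac12 m K^2 \Psi^2$ is correct.
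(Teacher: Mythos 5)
Your proposal is correct and matches the paper's argument essentially step for step: expand the drift using $(\max\{x,0\})^2 \leq x^2$ on the queue update, isolate the cross term, and bound the quadratic term by $\frac12 mK^2\Psi^2$ via $\l|\sum_{k=1}^K\l\langle \mathbf{g}_{i,t-1}^{(k)},\theta_t^{(k)}\r\rangle\r| \leq K\Psi$ (the paper phrases this as H\"older's inequality with $\|\theta_t^{(k)}\|_1=1$, which is the same convex-combination observation you make). No gaps.
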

\begin{proof}
By definition, we have
\begin{align*}
\Delta(t) =& \frac12\|\mathbf{Q}(t+1)\|_2^2 - \frac12\|\mathbf{Q}(t)\|_2^2\\
\leq&\frac12\sum_{i=1}^m\l( \l(Q_i(t) + \sum_{k=1}^K\l\langle \mathbf{g}_{i,t-1}^{(k)}, \theta_{t}^{(k)} \r\rangle \r)^2 - Q_i(t)^2 \r)\\
=& \sum_{i=1}^mQ_i(t)\sum_{k=1}^K\l\langle \mathbf{g}_{i,t-1}^{(k)}, \theta_{t}^{(k)} \r\rangle 
+ \frac12\sum_{i=1}^m\l(\sum_{k=1}^K\l\langle \mathbf{g}_{i,t-1}^{(k)}, \theta_{t}^{(k)} \r\rangle\r)^2.
\end{align*}
Note that by the queue update \eqref{Q-update}, we have
$$
\l| \sum_{k=1}^K \l\langle \mathbf{g}_{i,t-1}^{(k)}, \theta_{t}^{(k)} \r\rangle\r|
\leq K\l\| \mathbf{g}_{i,t-1}^{(k)} \r\|_\infty\l\| \theta_{t}^{(k)} \r\|_1\leq K\Psi.
$$
Substituting this bound into the drift bound finishes the proof.
\end{proof}

Consider a convex set $\mathcal{X}\subseteq\mathbb{R}^n$.
Recall that for a fixed real number $c>0$, a function $h:\mathcal{X}\rightarrow\mathbb{R}$ is said to be \textit{$c$-strongly convex}, if $h(x) - \frac{c}{2}\|x\|_2^2$ is convex over 
$x\in\mathcal{X}$. It is easy to see that if $q:\mathcal{X}\rightarrow\mathbb{R}$ is convex, $c>0$ and $b\in\mathbb{R}^n$, the function $q(x) + \frac{c}{2}\|x-b\|_2^2$ is $c$-strongly convex. Furthermore,  if the function $h$ is $c$-strongly convex that is minimized at a point $x_{\min}\in\mathcal{X}$, then (see, e.g., Corollary 1 in \cite{YuNeely17SIOPT}):
\begin{equation}\label{strongly-convex}
h(x_{\min})\leq h(y) - \frac{c}{2}\|y-x_{\min}\|_2^2, ~~\forall y\in\mathcal{X}.
\end{equation}
The following lemma is a direct consequence of the above strongly convex result. It also demonstrates the key property of our minimization subproblem \eqref{optimize}.

\begin{lemma}\label{strong-convex-bound}
The following bound holds for any $k\in\{1,2,\cdots,K\}$ and any fixed $\theta_*^{(k)}\in\Theta^{(k)}$:
\begin{multline}\label{DPP-bound}
V  \l\langle \mathbf{f}_{t-1}^{(k)}, \theta_t^{(k)} - \theta_{t-1}^{(k)} \r\rangle + \sum_{i=1}^mQ_i(t)
\l\langle\mathbf{g}_{i,t-1}^{(k)},\theta_t^{(k)}\r\rangle + \alpha\|\theta_t^{(k)}-\theta_{t-1}^{(k)}\|_2^2\\
\leq V\l\langle \mathbf{f}_{t-1}^{(k)}, \theta_*^{(k)} - \theta_{t-1}^{(k)} \r\rangle + \sum_{i=1}^mQ_i(t)
\l\langle\mathbf{g}_{i,t-1}^{(k)},\theta_*^{(k)}\r\rangle + \alpha\|\theta_*^{(k)} - \theta_{t-1}^{(k)}\|_2^2
-\alpha\|\theta_*^{(k)}-\theta_t^{(k)}\|_2^2.
\end{multline}
\end{lemma}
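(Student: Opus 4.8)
The plan is to recognize that the subproblem \eqref{optimize} is the minimization of a strongly convex function over the polyhedron $\Theta^{(k)}$, and then apply the strong-convexity inequality \eqref{strongly-convex}. Concretely, fix $k$ and $t\geq 1$, and define
\[
h(\theta) := \left\langle V\mathbf{f}_{t-1}^{(k)}+\sum_{i=1}^m Q_i(t)\mathbf{g}_{i,t-1}^{(k)},\,\theta\right\rangle + \alpha\left\|\theta-\theta_{t-1}^{(k)}\right\|_2^2
\]
on $\theta\in\Theta^{(k)}$. The first term is linear, hence convex, and the term $\alpha\|\theta-\theta_{t-1}^{(k)}\|_2^2$ is $2\alpha$-strongly convex; therefore $h$ is $2\alpha$-strongly convex on $\Theta^{(k)}$. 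By construction $\theta_t^{(k)}=\operatorname{argmin}_{\theta\in\Theta^{(k)}} h(\theta)$, so \eqref{strongly-convex} with $c=2\alpha$, $x_{\min}=\theta_t^{(k)}$, and $y=\theta_*^{(k)}\in\Theta^{(k)}$ gives
\[
h\big(\theta_t^{(k)}\big)\leq h\big(\theta_*^{(k)}\big) - \alpha\big\|\theta_*^{(k)}-\theta_t^{(k)}\big\|_2^2 .
\]

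Next I would expand both sides of this inequality using the definition of $h$. Writing it out,
\[
\left\langle V\mathbf{f}_{t-1}^{(k)}+\sum_{i=1}^m Q_i(t)\mathbf{g}_{i,t-1}^{(k)},\,\theta_t^{(k)}\right\rangle + \alpha\big\|\theta_t^{(k)}-\theta_{t-1}^{(k)}\big\|_2^2
\leq \left\langle V\mathbf{f}_{t-1}^{(k)}+\sum_{i=1}^m Q_i(t)\mathbf{g}_{i,t-1}^{(k)},\,\theta_*^{(k)}\right\rangle + \alpha\big\|\theta_*^{(k)}-\theta_{t-1}^{(k)}\big\|_2^2 - \alpha\big\|\theta_*^{(k)}-\theta_t^{(k)}\big\|_2^2 .
\]
Finally I would subtract the common scalar $\langle V\mathbf{f}_{t-1}^{(k)},\,\theta_{t-1}^{(k)}\rangle$ from both sides; on the left this turns $\langle V\mathbf{f}_{t-1}^{(k)},\theta_t^{(k)}\rangle$ into $V\langle \mathbf{f}_{t-1}^{(k)},\theta_t^{(k)}-\theta_{t-1}^{(k)}\rangle$, and on the right it turns $\langle V\mathbf{f}_{t-1}^{(k)},\theta_*^{(k)}\rangle$ into $V\langle \mathbf{f}_{t-1}^{(k)},\theta_*^{(k)}-\theta_{t-1}^{(k)}\rangle$, leaving the $\sum_i Q_i(t)\langle \mathbf{g}_{i,t-1}^{(k)},\cdot\rangle$ terms and the quadratic terms untouched. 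This is exactly \eqref{DPP-bound}.

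There is no genuine obstacle here: the only points requiring a sentence of care are (i) verifying that the minimand in \eqref{optimize} coincides with $h$ (immediate from the definition of $\mathbf{w}_t^{(k)}$) and (ii) confirming the strong-convexity modulus is $2\alpha$ rather than $\alpha$, which follows since the Hessian of $\alpha\|\theta-\theta_{t-1}^{(k)}\|_2^2$ is $2\alpha I$. Everything else is bookkeeping, so the write-up will be short.
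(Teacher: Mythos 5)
Your proposal is correct and follows essentially the same route as the paper: both identify the minimand of \eqref{optimize} as a $2\alpha$-strongly convex function minimized at $\theta_t^{(k)}$, apply \eqref{strongly-convex} with $c=2\alpha$ and $y=\theta_*^{(k)}$, and rearrange by a constant shift of $-V\langle \mathbf{f}_{t-1}^{(k)},\theta_{t-1}^{(k)}\rangle$ (which the paper simply absorbs into its definition of $h$). No gaps.
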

This lemma follows easily from the fact that the proposed algorithm \eqref{optimize} gives $\theta_t^{(k)}\in\Theta^{(k)}$ minimizing the left hand side, which is a strongly convex function, and then, applying \eqref{strongly-convex}, with 
\[
h\l(\theta^{(k)}_*\r) = V  \l\langle \mathbf{f}_{t-1}^{(k)}, \theta^{(k)}_* - \theta_{t-1}^{(k)} \r\rangle + \sum_{i=1}^mQ_i(t)
\l\langle\mathbf{g}_{i,t-1}^{(k)},\theta^{(k)}_*\r\rangle  
+ \alpha\l\|\theta^{(k)}_*-\theta_{t-1}^{(k)}\r\|_2^2
\]

Combining the previous two lemmas gives the following ``drift-plus-penalty'' bound.
\begin{lemma}
For any fixed $\{\theta_*^{(k)}\}_{k=1}^K$ such that $\theta_*^{(k)}\in\Theta^{(k)}$ and $t\in\mathbb{N}$, we have the following bound,
\begin{multline}\label{dpp-bound}
\Delta(t) + V\sum_{k=1}^{K}\l\langle \mathbf{f}_{t-1}^{(k)}, \theta_t^{(k)} - \theta_{t-1}^{(k)} \r\rangle + \alpha\sum_{k=1}^K\|\theta^{(k)}_t-\theta^{(k)}_{t-1}\|_2^2\\
\leq \frac32mK^2\Psi^2 + V\sum_{k=1}^K\l\langle \mathbf{f}_{t-1}^{(k)}, \theta_*^{(k)} - \theta_{t-1}^{(k)} \r\rangle + \sum_{i=1}^mQ_i(t-1)\\
\cdot\sum_{k=1}^K
\l\langle\mathbf{g}_{i,t-1}^{(k)},\theta_*^{(k)}\r\rangle 
+ \alpha\sum_{k=1}^K\|\theta_*^{(k)} - \theta_{t-1}^{(k)}\|_2^2
-\alpha\sum_{k=1}^K\|\theta_*^{(k)}-\theta_t^{(k)}\|_2^2
\end{multline}
\end{lemma}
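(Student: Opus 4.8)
The plan is to obtain \eqref{dpp-bound} by assembling the drift estimate of Lemma~\ref{D-bound} with the strong-convexity estimate of Lemma~\ref{strong-convex-bound} summed over the $K$ MDPs, and then reconciling a one-slot index mismatch in the queue multiplier. First I would sum the inequality \eqref{DPP-bound} over $k\in\{1,\dots,K\}$ to get
\begin{equation*}
V\sum_{k=1}^K\l\langle \mathbf{f}_{t-1}^{(k)}, \theta_t^{(k)} - \theta_{t-1}^{(k)} \r\rangle + \sum_{i=1}^mQ_i(t)\sum_{k=1}^K\l\langle\mathbf{g}_{i,t-1}^{(k)},\theta_t^{(k)}\r\rangle + \alpha\sum_{k=1}^K\|\theta_t^{(k)}-\theta_{t-1}^{(k)}\|_2^2
\end{equation*}
is bounded above by the right-hand side of \eqref{DPP-bound} with $\theta_*^{(k)}$, also summed over $k$. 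Adding $\Delta(t)$ to both sides and moving the term $\sum_i Q_i(t)\sum_k\langle\mathbf{g}_{i,t-1}^{(k)},\theta_t^{(k)}\rangle$ over to the right, the left-hand side becomes exactly the left-hand side of \eqref{dpp-bound}, while the right-hand side acquires the quantity $\Delta(t) - \sum_i Q_i(t)\sum_k\langle\mathbf{g}_{i,t-1}^{(k)},\theta_t^{(k)}\rangle$, which by Lemma~\ref{D-bound} is at most $\tfrac12 mK^2\Psi^2$.

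Second, the only remaining discrepancy with \eqref{dpp-bound} is that the queue argument in $\sum_i Q_i(t)\sum_k\langle\mathbf{g}_{i,t-1}^{(k)},\theta_*^{(k)}\rangle$ should read $Q_i(t-1)$. I would close this gap using the queue recursion \eqref{Q-update} together with the uniform function bound \eqref{function-bounds}: since $\|\theta_{t-1}^{(k)}\|_1=1$ and $\|\mathbf{g}_{i,t-2}^{(k)}\|_\infty\le\Psi$, the recursion gives $|Q_i(t)-Q_i(t-1)|\le |\sum_k\langle\mathbf{g}_{i,t-2}^{(k)},\theta_{t-1}^{(k)}\rangle|\le K\Psi$, and similarly $|\sum_k\langle\mathbf{g}_{i,t-1}^{(k)},\theta_*^{(k)}\rangle|\le K\Psi$. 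Hence
\begin{equation*}
\sum_{i=1}^mQ_i(t)\sum_{k=1}^K\l\langle\mathbf{g}_{i,t-1}^{(k)},\theta_*^{(k)}\r\rangle \le \sum_{i=1}^mQ_i(t-1)\sum_{k=1}^K\l\langle\mathbf{g}_{i,t-1}^{(k)},\theta_*^{(k)}\r\rangle + mK^2\Psi^2,
\end{equation*}
and combining this extra $mK^2\Psi^2$ with the $\tfrac12 mK^2\Psi^2$ from the drift estimate produces the constant $\tfrac32 mK^2\Psi^2$ in \eqref{dpp-bound}.

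This is essentially an assembly of two previously established lemmas plus one elementary queue-increment estimate, so I do not expect a genuine obstacle; the only delicate point is the bookkeeping of the slot indices $t$ versus $t-1$ — in particular making sure the $\theta_t^{(k)}$-dependent constraint term produced by the strong-convexity bound is exactly the one cancelled against the drift term, and that the queue is shifted from $Q_i(t)$ to $Q_i(t-1)$ only after that cancellation. I would present the argument as a short chain of inequalities with these two substitutions made explicit.
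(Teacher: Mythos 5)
Your proposal is correct and follows essentially the same route as the paper: combine Lemma \ref{D-bound} with the summed strong-convexity inequality \eqref{DPP-bound}, then shift the multiplier from $Q_i(t)$ to $Q_i(t-1)$ using $|Q_i(t)-Q_i(t-1)|\le K\Psi$ and $\bigl|\sum_k\langle\mathbf{g}_{i,t-1}^{(k)},\theta_*^{(k)}\rangle\bigr|\le K\Psi$, yielding the same $\tfrac12 mK^2\Psi^2 + mK^2\Psi^2 = \tfrac32 mK^2\Psi^2$ constant. The only bookkeeping point the paper makes explicit that you should too is the slot $t=1$, where the increment bound via $\mathbf{g}_{i,t-2}^{(k)}$ is not available but $Q_i(1)-Q_i(0)=0$ holds directly from the initialization $Q_i(0)=Q_i(1)=0$.
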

\begin{proof}
Using Lemma \ref{D-bound} and then Lemma \ref{strong-convex-bound}, we obtain
\begin{align}
&\Delta(t) + V\sum_{k=1}^{K}\l\langle \mathbf{f}_{t-1}^{(k)}, \theta_t^{(k)} - \theta_{t-1}^{(k)} \r\rangle + \alpha\sum_{k=1}^K\|\theta^{(k)}_t-\theta^{(k)}_{t-1}\|_2^2\nonumber\\
\leq& \frac12mK^2\Psi^2+\sum_{i=1}^mQ_i(t)\sum_{k=1}^K\l\langle \mathbf{g}_{i,t-1}^{(k)}, \theta_{t}^{(k)} \r\rangle 
+ V\sum_{k=1}^{K}\l\langle \mathbf{f}_{t-1}^{(k)}, \theta_t^{(k)} - \theta_{t-1}^{(k)} \r\rangle 
+ \alpha\sum_{k=1}^K\|\theta^{(k)}_t-\theta^{(k)}_{t-1}\|_2^2\nonumber\\
\leq& \frac12mK^2\Psi^2+ \sum_{k=1}^{K}\l\langle \mathbf{f}_{t-1}^{(k)}, \theta_*^{(k)} - \theta_{t-1}^{(k)} \r\rangle + \sum_{i=1}^mQ_i(t)
\sum_{k=1}^K\l\langle\mathbf{g}_{i,t-1}^{(k)},\theta_*^{(k)}\r\rangle 
+ \alpha\sum_{k=1}^K\|\theta_*^{(k)} - \theta_{t-1}^{(k)}\|_2^2   \nonumber \\
&-\alpha\sum_{k=1}^K\|\theta_*^{(k)}-\theta_t^{(k)}\|_2^2. \label{inter-2}
\end{align}
Note that by the queue updating rule \eqref{Q-update}, we have for any $t\geq2$,
\[
|Q_i(t)-Q_i(t-1)|\leq \l| \sum_{k=1}^K \l\langle \mathbf{g}_{i,t-2}^{(k)}, \theta_{t-1}^{(k)} \r\rangle\r|
\leq K\l\| \mathbf{g}_{i,t-2}^{(k)} \r\|_\infty\l\| \theta_{t-1}^{(k)} \r\|_1 
\leq K\Psi,
\]
and for $t=1$, $Q_i(t)-Q_i(t-1)=0$ by the initial condition of the algorithm. Also, we have for any $\theta_*^{(k)}\in\Theta^{(k)}$,
\[
\l|\sum_{k=1}^K\l\langle\mathbf{g}_{i,t-1}^{(k)},\theta_*^{(k)}\r\rangle   \r|  \leq K\l\| \mathbf{g}_{i,t-2}^{(k)} \r\|_\infty\l\| \theta_*^{(k)} \r\|_1\leq K\Psi.
\]
Thus, we have 
\[
\sum_{i=1}^mQ_i(t)
\sum_{k=1}^K\l\langle\mathbf{g}_{i,t-1}^{(k)},\theta_*^{(k)}\r\rangle
\leq \sum_{i=1}^mQ_i(t-1)
\sum_{k=1}^K\l\langle\mathbf{g}_{i,t-1}^{(k)},\theta_*^{(k)}\r\rangle  
+ mK^2\Psi^2.
\]
Substituting this bound into \eqref{inter-2} finishes the proof.
\end{proof}

\subsubsection{Objective bound}

\begin{theorem}\label{thm:stationary-regret}
For any $\{\theta_*^{(k)}\}_{k=1}^K$ in the constraint set \eqref{stat-constraint} and any $T\in\{1,2,3,\cdots\}$,
the proposed algorithm has the following stationary state performance bound:
\begin{multline*}
\frac1T\sum_{t=0}^{T-1}\expect{\sum_{k=1}^K\l\langle \mathbf{f}_t^{(k)},\theta_t^{(k)} \r\rangle} \leq \frac1T\sum_{t=0}^{T-1}\expect{\sum_{k=1}^K\l\langle \mathbf{f}_t^{(k)},\theta_*^{(k)} \r\rangle}\\
+\frac{2\alpha K}{TV}+\frac{mK^2\Psi^2}{T} + \frac{V\Psi^2}{2\alpha}\sum_{k=1}^K\l|\mathcal{S}^{(k)}\r|\l|\mathcal{A}^{(k)}\r|+ \frac32\frac{mK^2\Psi^2}{V},
\end{multline*}
In particular, choosing $\alpha=T$ and $V=\sqrt{T}$ gives the $\mathcal{O}(\sqrt{T})$ regret
\begin{multline*}
\frac1T\sum_{t=0}^{T-1}\expect{\sum_{k=1}^K\l\langle \mathbf{f}_t^{(k)},\theta_t^{(k)} \r\rangle} \leq \frac1T\sum_{t=0}^{T-1}\expect{\sum_{k=1}^K\l\langle \mathbf{f}_t^{(k)},\theta_*^{(k)} \r\rangle}\\
+\l(2K + \frac{\Psi^2}{2}\sum_{k=1}^K\l|\mathcal{S}^{(k)}\r|\l|\mathcal{A}^{(k)}\r|+ \frac52mK^2\Psi^2\r)\frac{1}{\sqrt{T}}.
\end{multline*}
\end{theorem}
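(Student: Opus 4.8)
The plan is to sum the per-slot drift-plus-penalty inequality \eqref{dpp-bound} over $t=1,\ldots,T$, take expectations, telescope, and then reconcile a one-slot index offset. Fix a benchmark $\{\theta_*^{(k)}\}_{k=1}^K$ in the stationary constraint set \eqref{stat-constraint}. The first task after taking expectations in \eqref{dpp-bound} is to discard the term $\sum_{i=1}^m Q_i(t-1)\sum_k\l\langle\mathbf{g}_{i,t-1}^{(k)},\theta_*^{(k)}\r\rangle$: here I would use that $Q_i(t-1)$ is a deterministic function of $\theta_1,\dots,\theta_{t-2}$ and of $\omega_0,\dots,\omega_{t-3}$ only, hence independent of $\omega_{t-1}$ since $\{\omega_t\}$ is i.i.d.\ and oblivious and the algorithm carries a one-slot processing delay; together with $Q_i(t-1)\ge 0$ and the definition of \eqref{stat-constraint} (equivalently \eqref{slater-2}) this gives $\expect{Q_i(t-1)\sum_k\l\langle\mathbf{g}_{i,t-1}^{(k)},\theta_*^{(k)}\r\rangle}=\expect{Q_i(t-1)}\sum_k\l\langle\expect{\mathbf{g}_{i,t-1}^{(k)}},\theta_*^{(k)}\r\rangle\le 0$. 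The paired terms $V\l\langle\mathbf{f}_{t-1}^{(k)},\theta_t^{(k)}-\theta_{t-1}^{(k)}\r\rangle$ and $V\l\langle\mathbf{f}_{t-1}^{(k)},\theta_*^{(k)}-\theta_{t-1}^{(k)}\r\rangle$ differ by $V\l\langle\mathbf{f}_{t-1}^{(k)},\theta_t^{(k)}-\theta_*^{(k)}\r\rangle$, eliminating $\theta_{t-1}^{(k)}$.

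Next I would telescope. The drift terms give $\expect{L(T+1)}-\expect{L(1)}=\expect{L(T+1)}\ge 0$ using $Q_i(1)=0$, so they are dropped; the proximal differences $\alpha\sum_k\l(\|\theta_*^{(k)}-\theta_{t-1}^{(k)}\|_2^2-\|\theta_*^{(k)}-\theta_t^{(k)}\|_2^2\r)$ telescope to at most $\alpha\sum_k\|\theta_*^{(k)}-\theta_0^{(k)}\|_2^2\le 2\alpha K$, since every $\theta^{(k)}$ lies on a probability simplex so $\|x-y\|_2^2\le 2$. The point is to \emph{retain} the term $\alpha\sum_{t=1}^{T}\sum_k\expect{\|\theta_t^{(k)}-\theta_{t-1}^{(k)}\|_2^2}$ on the left rather than discarding it. This produces
\[
V\sum_{t=1}^{T}\sum_k\expect{\l\langle\mathbf{f}_{t-1}^{(k)},\theta_t^{(k)}-\theta_*^{(k)}\r\rangle}+\alpha\sum_{t=1}^{T}\sum_k\expect{\|\theta_t^{(k)}-\theta_{t-1}^{(k)}\|_2^2}\le \tfrac32 mK^2\Psi^2\,T+2\alpha K.
\]
Then I reconcile the index shift: writing $\sum_{t=0}^{T-1}\l\langle\mathbf{f}_t^{(k)},\theta_t^{(k)}-\theta_*^{(k)}\r\rangle=\sum_{t=1}^{T}\l\langle\mathbf{f}_{t-1}^{(k)},\theta_t^{(k)}-\theta_*^{(k)}\r\rangle+\sum_{t=0}^{T-1}\l\langle\mathbf{f}_t^{(k)},\theta_t^{(k)}-\theta_{t+1}^{(k)}\r\rangle$, and bounding the correction by Cauchy--Schwarz and Young's inequality, $V\l\langle\mathbf{f}_t^{(k)},\theta_t^{(k)}-\theta_{t+1}^{(k)}\r\rangle\le \tfrac{V^2}{4\alpha}\|\mathbf{f}_t^{(k)}\|_2^2+\alpha\|\theta_t^{(k)}-\theta_{t+1}^{(k)}\|_2^2\le \tfrac{V^2\Psi^2}{4\alpha}|\mathcal{S}^{(k)}||\mathcal{A}^{(k)}|+\alpha\|\theta_t^{(k)}-\theta_{t+1}^{(k)}\|_2^2$, where $\|\mathbf{f}_t^{(k)}\|_2^2\le|\mathcal{S}^{(k)}||\mathcal{A}^{(k)}|\Psi^2$ by \eqref{function-bounds}. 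After a re-index $t\mapsto t+1$, the residual quadratics $\alpha\sum_{t}\|\theta_t^{(k)}-\theta_{t+1}^{(k)}\|_2^2$ are exactly the proximal terms kept on the left, so they cancel. Dividing by $VT$ yields the general bound, and $\alpha=T$, $V=\sqrt{T}$ gives the stated $\mathcal{O}(1/\sqrt{T})$ rate.

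The main obstacle is the bookkeeping around the one-slot offset between the objective one wants to control, $\l\langle\mathbf{f}_t^{(k)},\theta_t^{(k)}\r\rangle$, and the one the subproblem \eqref{optimize} actually handles, $\l\langle\mathbf{f}_{t-1}^{(k)},\theta_t^{(k)}\r\rangle$. Absorbing the residual $\l\langle\mathbf{f}_t^{(k)},\theta_t^{(k)}-\theta_{t+1}^{(k)}\r\rangle$ hinges on not throwing away the proximal quadratic in the telescoped inequality and on picking the Young weight so the absorption is exact; a careless choice leaves an uncancelled $\|\theta_t^{(k)}-\theta_{t+1}^{(k)}\|_2^2$ that is a priori only bounded through the (possibly $\Theta(\sqrt{T})$) virtual-queue magnitude, which would ruin the rate. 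A secondary delicate point is the independence argument used to kill the $Q_i(t-1)$ term, which genuinely needs the i.i.d./oblivious structure of $\{\omega_t\}$ combined with the algorithm's delay.
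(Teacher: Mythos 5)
Your proposal is correct, and it follows the paper's skeleton — the per-slot drift-plus-penalty bound \eqref{dpp-bound} obtained from strong convexity, the independence argument killing the $\sum_i Q_i(t-1)\sum_k\langle\mathbf{g}_{i,t-1}^{(k)},\theta_*^{(k)}\rangle$ term (exactly the paper's \eqref{neg-drift-2}, using that $Q_i(t-1)$ is built from $\omega$'s up to slot $t-3$ while $\mathbf{g}_{i,t-1}$ is driven by the i.i.d.\ $\omega_{t-1}$), and the same telescoping of the drift and of the proximal differences. Where you diverge is the handling of the cross term: the paper completes the square on $V\langle\mathbf{f}_{t-1}^{(k)},\theta_t^{(k)}-\theta_{t-1}^{(k)}\rangle+\alpha\|\theta_t^{(k)}-\theta_{t-1}^{(k)}\|_2^2\geq -\frac{V^2\Psi^2|\mathcal{S}^{(k)}||\mathcal{A}^{(k)}|}{2\alpha}$, which turns \eqref{dpp-bound} directly into a per-slot bound on the ``lazy'' pairing $\langle\mathbf{f}_{t-1}^{(k)},\theta_{t-1}^{(k)}\rangle$; summing over $t=1,\dots,T$ then gives $\sum_{t=0}^{T-1}\langle\mathbf{f}_t^{(k)},\theta_t^{(k)}\rangle$ with no index reconciliation at all, so the ``main obstacle'' you describe simply does not arise in the paper's route. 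You instead retain the proximal quadratic, bound the one-step-lookahead pairing $\langle\mathbf{f}_{t-1}^{(k)},\theta_t^{(k)}\rangle$, and recover the desired pairing by an index shift plus Young's inequality whose quadratic residue cancels exactly against the retained $\alpha\sum_t\|\theta_t^{(k)}-\theta_{t-1}^{(k)}\|_2^2$; this is sound (your accounting of the shift, the $2\alpha K$ from the simplex diameter, and the exact cancellation all check out) and in fact yields a marginally sharper constant, $\frac{V\Psi^2}{4\alpha}\sum_k|\mathcal{S}^{(k)}||\mathcal{A}^{(k)}|$ in place of $\frac{V\Psi^2}{2\alpha}\sum_k|\mathcal{S}^{(k)}||\mathcal{A}^{(k)}|$ and without the $\frac{mK^2\Psi^2}{T}$ term, so the stated bound follows a fortiori. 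The trade-off is that your argument needs the careful bookkeeping you flag (keeping the quadratic and tuning the Young weight so the absorption is exact), whereas the paper's completing-the-square sidesteps it at the cost of a factor-two-looser constant; both choices give the same $\mathcal{O}(\sqrt{T})$ rate under $\alpha=T$, $V=\sqrt{T}$.
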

\begin{proof}
\xcolor{First of all, note that $\{\mathbf{g}_{i,t-1}^{(k)}\}_{k=1}^K$ is i.i.d. and independent of all system history up to $t-1$, and thus independent of $Q_i(t-1),~i=1,2,\cdots,m$.} We have
\begin{equation}\label{neg-drift-2}
\expect{Q_i(t-1)\l\langle\mathbf{g}_{i,t-1}^{(k)},\theta^{(k)}_*\r\rangle}  
= \expect{Q_i(t-1)}\expect{\sum_{k=1}^K\l\langle\mathbf{g}_{i,t-1}^{(k)},\theta^{(k)}_*\r\rangle}\leq 0
\end{equation}
where the last inequality follows from the assumption that $\{\theta^{(k)}_*\}_{k=1}^K$ is in the constraint set \eqref{stat-constraint}. Substituting $\theta^{(k)}_*$ into \eqref{dpp-bound}, taking expectation with respect to both sides and using \eqref{neg-drift-2} give
\begin{multline*}
\expect{\Delta(t)} + V\expect{\sum_{k=1}^{K}\l\langle \mathbf{f}_{t-1}^{(k)}, \theta_t^{(k)} - \theta_{t-1}^{(k)} \r\rangle} + \alpha\expect{\sum_{k=1}^K\|\theta^{(k)}_t-\theta^{(k)}_{t-1}\|_2^2}\\
\leq \frac32mK^2\Psi^2 + V\expect{\sum_{k=1}^K\l\langle \mathbf{f}_{t-1}^{(k)}, \theta^{(k)}_* - \theta_{t-1}^{(k)} \r\rangle} 
+ \alpha\expect{\sum_{k=1}^K\|\theta^{(k)}_* - \theta_{t-1}^{(k)}\|_2^2}
-\alpha\expect{\sum_{k=1}^K\|\theta^{(k)}_*-\theta_t^{(k)}\|_2^2},
\end{multline*}
where the second inequality follows from \eqref{neg-drift-2}.
Note that for any $k$, completing the squares gives
\begin{multline*}
V\l\langle \mathbf{f}_{t-1}^{(k)}, \theta_t^{(k)} - \theta_{t-1}^{(k)} \r\rangle+\alpha\|\theta^{(k)}_t-\theta^{(k)}_{t-1}\|_2^2\\
\geq \l\| \sqrt{\frac\alpha2}\l(\theta_t^{(k)}-\theta_{t-1}^{(k)}\r) + \frac{V}{2\sqrt{\alpha/2}}\mathbf{f}_{t-1}^{(k)} \r\|_2^2 - \frac{V^2\Psi^2\l|\mathcal{S}^{(k)}\r|\l|\mathcal{A}^{(k)}\r|}{2\alpha}.
\end{multline*}
Substituting this inequality into the previous bound and rearranging the terms give
\begin{multline*}
V\expect{\sum_{k=1}^K\l\langle \mathbf{f}_{t-1}^{(k)}, \theta_{t-1}^{(k)} \r\rangle}\leq V\expect{\sum_{k=1}^K\l\langle \mathbf{f}_{t-1}^{(k)}, \theta_*^{(k)} \r\rangle}-\expect{\Delta(t)}  
+  \frac{V^2\sum_{k=1}^K\Psi^2\l|\mathcal{S}^{(k)}\r|\l|\mathcal{A}^{(k)}\r|}{2\alpha}+ \frac32mK^2\Psi^2 \\
+\alpha\expect{\sum_{k=1}^K\|\theta^{(k)}_* - \theta_{t-1}^{(k)}\|_2^2}
-\alpha\expect{\sum_{k=1}^K\|\theta^{(k)}_* - \theta_t^{(k)}\|_2^2}.
\end{multline*}
Taking telescoping sums from 1 to $T$ and dividing both sides by $TV$ gives,
\begin{align*}
\frac1T\sum_{t=1}^T\expect{\sum_{k=1}^K\l\langle \mathbf{f}_{t-1}^{(k)}, \theta_{t-1}^{(k)} \r\rangle}
\leq& \expect{\sum_{k=1}^K\l\langle \mathbf{f}_{t-1}^{(k)}, \theta^{(k)}_* \r\rangle} 
+ \frac{L(0)-L(T+1)}{VT} + \frac{V\sum_{k=1}^K\Psi^2\l|\mathcal{S}^{(k)}\r|\l|\mathcal{A}^{(k)}\r|}{2\alpha} \\
&+ \frac32\frac{mK^2\Psi^2}{V}+\frac{\alpha\expect{\sum_{k=1}^K\|\theta^{(k)}_* - \theta_{T-1}^{(k)}\|_2^2}
-\alpha\expect{\sum_{k=1}^K\|\theta^{(k)}_*-\theta_{T}^{(k)}\|_2^2}}{VT}\\
\leq&  \expect{\sum_{k=1}^K\l\langle \mathbf{f}_{t-1}^{(k)}, \theta^{(k)}_* \r\rangle} + \frac{V\sum_{k=1}^K\Psi^2\l|\mathcal{S}^{(k)}\r|\l|\mathcal{A}^{(k)}\r|}{2\alpha}  
+ \frac32\frac{mK^2\Psi^2}{V}
+\frac{2\alpha K}{VT},
\end{align*}
where we use the fact that $L(0)=0$ and $ \|\theta_*^{(k)} - \theta_{T-1}^{(k)}\|_2^2\leq  \|\theta_*^{(k)} - \theta_{T-1}^{(k)}\|_1\leq2$.
\end{proof}

\subsubsection{A drift lemma and its implications}
From Lemma \ref{lemma:pre-Q-bound}, we know that in order to get the constraint violation bound, we need to look at the size of the virtual queue $Q_i(T+1),~i=1,2,\cdots,m$.
The following drift lemma serves as a cornerstone for our goal.
\begin{lemma}[Lemma 5 of \cite{hao2017onlinestochastic}] \label{lemma:drift-bound}
Let $\{\Omega, \mathcal{F}, P\}$ be a probability space.
Let $\{Z(t), t\geq 1\}$ be a discrete time stochastic process adapted to a filtration $\{\mathcal{F}_{t-1}, t\geq 1\}$ with $Z(1) = 0$ and \lcolor{$\mathcal{F}_{0}=\{\emptyset,\Omega\}$}.  Suppose there exist integer $t_{0}>0$, real constants $\lambda\in \mathbb{R}$, $\delta_{\max} > 0$ and $0< \zeta \leq \delta_{\max}$ such that 
\begin{align} 
\vert Z(t+1) - Z(t) \vert \leq& \delta_{\max}, \\
\mathbb{E}[ Z(t+t_{0}) - Z(t) | \mathcal{F}_{t-1}] \leq & \left\{ \begin{array}{cc} t_{0}\delta_{\max}, &\text{if}~ Z(t) < \lambda \\  
-t_{0}\zeta , &\text{if}~ Z(t) \geq \lambda \end{array}\right.. \label{eq:stochastic-process-drift-condition}
\end{align}
hold for all $t\in \{1,2,\ldots\}$. Then, the following holds:
$$\mathbb{E}[Z(t)] \leq  \lambda + t_{0}\delta_{\max}+ t_{0}\frac{4\delta_{\max}^{2}}{\zeta} \log\big[\frac{8\delta_{\max}^{2}}{\zeta^{2}}\big], \forall t\in\{1,2,\ldots\}.$$
\end{lemma}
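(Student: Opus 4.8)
The statement to prove is Lemma~\ref{lemma:drift-bound}, a drift lemma asserting that a stochastic process with bounded increments and a multi-step negative drift outside a region $\{Z(t) < \lambda\}$ has expectation bounded by $\lambda$ plus an additive constant. Since the excerpt cites this as ``Lemma 5 of \cite{hao2017onlinestochastic}'', the proof is a self-contained drift argument. The plan is to analyze the moment generating function $\mathbb{E}[e^{r Z(t)}]$ for a suitably small parameter $r>0$, show it satisfies a geometric-type recursion, and then extract the bound on $\mathbb{E}[Z(t)]$ via Jensen's inequality.

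First I would reduce the $t_0$-step drift condition to a one-step statement about a sampled/auxiliary process, or work directly with the $t_0$-step conditional expectation. The cleanest route is to fix the parameter $r = \zeta/(2\delta_{\max}^2 t_0)$ (or a similar explicit choice) and study $\phi(t) := \mathbb{E}[e^{r(Z(t)-\lambda)}]$. Using $|Z(t+t_0) - Z(t)| \leq t_0 \delta_{\max}$ together with the standard inequality $e^x \leq 1 + x + \tfrac{x^2}{2}e^{|x|}$ (or $e^x \le 1+x+x^2$ for $|x|$ bounded), I would bound $\mathbb{E}[e^{r(Z(t+t_0)-Z(t))} \mid \mathcal{F}_{t-1}]$ by something like $1 - r t_0 \zeta + r^2 t_0^2 \delta_{\max}^2 C$ on the event $\{Z(t) \geq \lambda\}$, which is $\leq \rho < 1$ for the chosen $r$; and by a finite constant $\Gamma = e^{r t_0 \delta_{\max}}$ on the complementary event. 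This is exactly the two-regime structure needed to invoke a Hajek-type hitting/stability bound (indeed the same machinery as Lemma~\ref{master-bound} used earlier in the thesis).

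The key steps in order: (i) pick the explicit $r$; (ii) establish the conditional MGF recursion $\mathbb{E}[e^{r(Z(t+t_0)-\lambda)} \mid \mathcal{F}_{t-1}] \leq \rho\, e^{r(Z(t)-\lambda)} + \Gamma\, \mathbf{1}\{Z(t) < \lambda\} \leq \rho\, e^{r(Z(t)-\lambda)} + \Gamma$; (iii) iterate this recursion along the subsequences $t, t+t_0, t+2t_0,\ldots$ to get $\mathbb{E}[e^{r(Z(t)-\lambda)}] \leq \Gamma/(1-\rho) + (\text{transient term vanishing from } Z(1)=0)$, uniformly in $t$; (iv) handle the ``within a block of length $t_0$'' slack using $|Z(t+1)-Z(t)|\leq\delta_{\max}$ so that the bound at non-multiples of $t_0$ only costs a factor $e^{r t_0 \delta_{\max}}$; (v) apply Jensen: $\mathbb{E}[Z(t)] - \lambda \leq \tfrac{1}{r}\log \mathbb{E}[e^{r(Z(t)-\lambda)}] \leq \tfrac{1}{r}\log\big(\tfrac{\Gamma'}{1-\rho}\big)$, and finally substitute the explicit values of $r$, $\rho$, $\Gamma'$ and simplify to match the claimed expression $\lambda + t_0\delta_{\max} + t_0 \tfrac{4\delta_{\max}^2}{\zeta}\log\!\big[\tfrac{8\delta_{\max}^2}{\zeta^2}\big]$.

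The main obstacle is step (ii)--(iii): getting the constants in the MGF recursion sharp enough that, after taking $\tfrac{1}{r}\log$, one recovers precisely the stated bound rather than a loosely larger one. This requires care in choosing $r$ so that $1-\rho \approx r t_0 \zeta/2$ (using $\zeta \leq \delta_{\max}$ to control the quadratic remainder) and in bounding the geometric series cleanly; the logarithmic term $\log[8\delta_{\max}^2/\zeta^2]$ comes out exactly from $\tfrac{1}{r}\log(1/(1-\rho))$ with the prescribed $r$. A secondary technical point is justifying the iteration rigorously when $t$ is not a multiple of $t_0$, which is handled by the bounded-increment assumption but needs to be stated carefully. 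Since this lemma is quoted verbatim from prior published work, for the thesis it would also be acceptable to simply cite \cite{hao2017onlinestochastic} and omit the reproduction; but the above is the route I would take to prove it from scratch.
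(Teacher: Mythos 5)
The paper does not actually prove this lemma: it is imported verbatim, with proof deferred to the cited source (Lemma 5 of \cite{hao2017onlinestochastic}), so there is no in-paper argument to compare against. Your sketch is the standard route and is essentially the one used in that source: subsample the process at spacing $t_0$ (writing $t=j+nt_0$, with the initial offset controlled deterministically by $|Z(t+1)-Z(t)|\leq\delta_{\max}$ and $Z(1)=0$, which is where the additive $t_0\delta_{\max}$ term comes from), apply a one-step Hajek-type exponential drift bound to the sampled process --- the same machinery as Lemma \ref{master-bound} with $\Gamma=e^{rt_0\delta_{\max}}$ and $\rho=1-rt_0\zeta+O(r^2t_0^2\delta_{\max}^2)$ --- and then convert the MGF bound to an expectation bound via Jensen. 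The outline is correct; the only remaining work is the constant bookkeeping in your steps (ii)--(v), i.e.\ fixing $r$ on the order of $\zeta/(t_0\delta_{\max}^2)$ and using $\zeta\leq\delta_{\max}$ to control the quadratic remainder so that $\frac{1}{r}\log\bigl(\Gamma/(1-\rho)\bigr)$ simplifies to the stated $t_0\frac{4\delta_{\max}^2}{\zeta}\log\bigl[\frac{8\delta_{\max}^2}{\zeta^2}\bigr]$, exactly as the cited proof does.
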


Note that a special case of above drift lemma for $t_0=1$ dates back to the seminal paper of Hajek (\cite{hajek1982hitting}) bounding the size of a random process with strongly negative drift. Since then, its power has been demonstrated in various scenarios ranging from steady state queue bound (\cite{eryilmaz2012asymptotically}) to feasibility analysis of stochastic optimization (\cite{wei2016online}). The current generalization to a multi-step drift is first considered in \cite{hao2017onlinestochastic}. 

This lemma is useful in the current context due to the following lemma, whose proof can be found in Appendix \ref{proof:section4}.

\begin{lemma}\label{lemma:queue-drift-bound}
Let $\mathcal{F}_t,~t\geq1$ be the system history functions up to time $t$,  including $f^{(k)}_0,\cdots,f^{(k)}_{t-1}$, $g^{(k)}_{0,i},\cdots,g^{(k)}_{t-1,i}$, $i=1,2,\cdots,m,~k=1,2,\cdots,K$, and 
$\mathcal{F}_0$ is a null set. Let $t_0$ be an arbitrary positive integer, then, we have
\begin{align*}
\big \vert \Vert \mathbf{Q}(t+1)\Vert_2 - \Vert \mathbf{Q}(t)\Vert_2 \big\vert \leq& \sqrt{m}K\Psi, 
\end{align*}
\[
\mathbb{E}[\Vert \mathbf{Q}(t+t_0)\Vert_2 - \Vert \mathbf{Q}(t)\Vert_2 \big|  \mathcal{F}(t-1)] 
\leq  \left\{ \begin{array}{cc} t_0 \sqrt{m}K\Psi, &\text{if}~ \Vert \mathbf{Q}(t)\Vert <  \lambda \\  - t_0 \frac{\eta}{2}, &\text{if}~ \Vert \mathbf{Q}(t)\Vert \geq  \lambda
\end{array}\right.
\]
where 
$\lambda =\frac{8VK\Psi  +  3mK^2\Psi^2 + 4K\alpha + t_0(t_0-1)m\Psi 
 + 2mK\Psi\eta t_0 + \eta^2 t_0^2}{\eta t_0}$.
\end{lemma}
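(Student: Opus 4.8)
<br>

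The statement to prove is Lemma~\ref{lemma:queue-drift-bound}, which establishes a one-step bounded-difference property and a $t_0$-step negative drift property for the process $Z(t) = \|\mathbf{Q}(t)\|_2$. My thinking:

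First, the one-step difference bound. $|\|\mathbf{Q}(t+1)\|_2 - \|\mathbf{Q}(t)\|_2| \le \|\mathbf{Q}(t+1) - \mathbf{Q}(t)\|_2$ by the triangle inequality, and from the queue update \eqref{Q-update}, $|Q_i(t+1) - Q_i(t)| \le |\sum_k \langle \mathbf{g}_{i,t-1}^{(k)}, \theta_t^{(k)}\rangle| \le K\Psi$ since $\|\mathbf{g}_{i,t-1}^{(k)}\|_\infty \le \Psi$ and $\theta_t^{(k)}$ is a probability vector. Summing over the $m$ coordinates gives $\|\mathbf{Q}(t+1)-\mathbf{Q}(t)\|_2 \le \sqrt{m}K\Psi$. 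This is the easy half.

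For the drift bound, I would argue as follows. When $\|\mathbf{Q}(t)\| < \lambda$, the drift over $t_0$ steps is trivially bounded by $t_0\sqrt{m}K\Psi$ by telescoping the one-step bound. The substantive case is $\|\mathbf{Q}(t)\| \ge \lambda$. Here I want to show the $t_0$-step expected increment is at most $-t_0\eta/2$. The plan is: (i) telescope $\|\mathbf{Q}(t+t_0)\|_2^2 - \|\mathbf{Q}(t)\|_2^2$ and, using Lemma~\ref{D-bound} shifted to each slot in the window, bound it by $t_0 \cdot mK^2\Psi^2 + \sum_{j=0}^{t_0-1}\sum_i Q_i(t+j)\sum_k\langle\mathbf{g}_{i,t+j-1}^{(k)},\theta_{t+j}^{(k)}\rangle$ plus cross terms; (ii) approximate each $Q_i(t+j)$ by $Q_i(t)$ at a cost $O(j \cdot K\Psi)$ per coordinate, and each $\theta_{t+j}^{(k)}$ by a comparison against the Slater policy's state-action vector $\tilde\theta^{(k)}$; (iii) invoke the key-feature inequality \eqref{DPP-bound} (Lemma~\ref{strong-convex-bound}) with $\theta_*^{(k)} = \tilde\theta^{(k)}$ to control $\sum_k\langle V\mathbf{f}_{t+j-1}^{(k)} + \sum_i Q_i(t+j)\mathbf{g}_{i,t+j-1}^{(k)}, \theta_{t+j}^{(k)} - \tilde\theta^{(k)}\rangle \le \alpha(\cdots)$, turning the quadratic queue term into something involving $\sum_k\langle \mathbf{g}_{i,t+j-1}^{(k)},\tilde\theta^{(k)}\rangle$; (iv) take conditional expectation given $\mathcal{F}_{t-1}$, use that the constraint functions $\{\mathbf{g}_{i,s}^{(k)}\}$ are i.i.d.\ and independent of $\mathcal{F}_{t-1}$ (Assumption~\ref{assumption:indep-trans}), together with the Slater slackness \eqref{slater-2}, to get $\mathbb{E}[\sum_k\langle \mathbf{g}_{i,t+j-1}^{(k)},\tilde\theta^{(k)}\rangle] \le -\eta$, so the dominant term becomes $-\eta\sum_i Q_i(t)$; (v) collect all the $O(t_0^2 m\Psi)$, $O(V K\Psi)$, $O(K\alpha)$, $O(mK^2\Psi^2)$ error terms, and finally convert the bound on $\mathbb{E}[\|\mathbf{Q}(t+t_0)\|_2^2 - \|\mathbf{Q}(t)\|_2^2 \mid \mathcal{F}_{t-1}]$ into a bound on $\mathbb{E}[\|\mathbf{Q}(t+t_0)\|_2 - \|\mathbf{Q}(t)\|_2 \mid \mathcal{F}_{t-1}]$ using $a^2 - b^2 \ge 0 \Rightarrow$ (for $b = \|\mathbf{Q}(t)\|$ large) $a - b \le (a^2-b^2)/(2b) \le (a^2-b^2)/(2\lambda)$, which is where the precise value of $\lambda$ is forced: $\lambda$ must be chosen exactly so that $(\text{error terms} - \eta\sum_i Q_i(t))/(2\lambda) \le -t_0\eta/2$ whenever $\|\mathbf{Q}(t)\| \ge \lambda$ (noting $\sum_i Q_i(t) \ge \|\mathbf{Q}(t)\|_2 \ge \lambda$ up to a $\sqrt{m}$ factor, which I must track carefully).

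The main obstacle I anticipate is the bookkeeping in step (ii)--(v): the cross terms from approximating $Q_i(t+j) \approx Q_i(t)$ accumulate quadratically in $t_0$ (each step can move $Q_i$ by $K\Psi$, and these multiply against $g$-values of size $\Psi$), the key-feature inequality \eqref{DPP-bound} introduces telescoping $\alpha\|\tilde\theta^{(k)} - \theta^{(k)}_{\cdot}\|_2^2$ terms bounded by $4K\alpha$, and I need the final constant to match the stated $\lambda$ exactly. Getting the independence argument clean also requires care: I condition on $\mathcal{F}_{t-1}$, but $\theta_{t+j}^{(k)}$ for $j \ge 1$ depends on $\mathbf{g}$'s revealed after slot $t-1$, so I should use the key-feature inequality to \emph{remove} the dependence of the decision variable before invoking independence of the remaining $\mathbf{g}$ term against $\tilde\theta^{(k)}$, which is deterministic. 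I expect the verification that all error terms sum to exactly $\eta t_0 \lambda - \text{(something nonpositive)}$ to be the most delicate algebraic step, but it is routine once the structure is set up.
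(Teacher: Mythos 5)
Your proposal follows essentially the same route as the paper's proof: apply the drift-plus-penalty bound \eqref{dpp-bound} at each slot with the Slater vector $\tilde\theta^{(k)}$ from \eqref{slater-2} (so the key-feature inequality removes the dependence of the decision variable before independence is invoked), telescope the squared-norm drift over $t_0$ slots (the $\alpha$-terms telescope to at most $4K\alpha$), shift $Q_i(\tau)$ back to $Q_i(t)$ at a per-slot cost of order $K\Psi$, use $Q_i(t)\in\mathcal{F}_{t-1}$ together with the i.i.d.\ property of the $\mathbf{g}$'s and Slater slackness to extract the $-\eta t_0\sum_i Q_i(t)$ term, and finally note $\sum_i Q_i(t)\ge\|\mathbf{Q}(t)\|_2$ (no $\sqrt{m}$ factor is lost here, since the $\ell_1$ norm of a nonnegative vector dominates its $\ell_2$ norm, so the caveat you flag is unnecessary).

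The one step that would fail as literally written is the final conversion from the squared-norm drift to the norm drift: the chain $(a^2-b^2)/(2b)\le(a^2-b^2)/(2\lambda)$ reverses direction exactly in the regime of interest, where the numerator is negative. The repair is immediate and gives the same $\lambda$: writing $\mathbb{E}[a^2-b^2\,|\,\mathcal{F}_{t-1}]\le C-2\eta t_0 b$ with $C$ the collected error constant, you should bound $\mathbb{E}[a\,|\,\mathcal{F}_{t-1}]-b\le\bigl(\mathbb{E}[a^2\,|\,\mathcal{F}_{t-1}]-b^2\bigr)/(2b)\le C/(2b)-\eta t_0$ and only then use $b\ge\lambda\ge C/(\eta t_0)$ on the positive part $C/(2b)$, which yields $-\eta t_0/2$. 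The paper instead absorbs $b\ge\lambda$ into $C-2\eta t_0 b\le-\eta t_0 b$, completes the square to get $\mathbb{E}[a^2\,|\,\mathcal{F}_{t-1}]\le(b-\eta t_0/2)^2$, and concludes via Jensen's inequality; the extra $\eta^2t_0^2$ in the numerator of $\lambda$ is there precisely to make either version of this last step go through. With that correction your argument matches the paper's.
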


Combining the previous two lemmas gives the virtual queue bound as 
\lcolor{
\begin{multline*}
\expect{\|\mathbf{Q}(t)\|_2}  
\leq \frac{8VK\Psi  +  3mK^2\Psi^2 + 4K\alpha + t_0(t_0-1)m\Psi 
 + 2mK\Psi\eta t_0 + \eta^2 t_0^2}{\eta t_0} +  t_0\sqrt{m}K\Psi\\
 +\frac{4t_{0} mK^2\Psi^2}{\eta} \log\big[\frac{8mK^2\Psi^2}{\eta^2}\big].
\end{multline*}
}
We then choose $t_0=\sqrt{T}$, $V=\sqrt{T}$ and $\alpha=T$, which implies that
\begin{equation}\label{expected-Q-bound}
\expect{\|\mathbf{Q}(t)\|_2}\leq C(m,K,\Psi,\eta)\sqrt{T},
\end{equation}
where 
\lcolor{
$C(m,K,\Psi,\eta) = \frac{8K\Psi}{\eta} + \frac{3mK^2\Psi^2}{\eta^2} + \frac{4K+m\Psi}{\eta} + 2mK\Psi + \eta + \sqrt{m}K\Psi+\frac{4mK^2\Psi^2}{\eta} \log\big[\frac{8mK^2\Psi^2}{\eta^2}\big].$}

\subsubsection{The slow-update condition and constraint violation}
In this section, we prove the slow-update property of the proposed algorithm, which not only implies the the $\mathcal{O}(\sqrt{T})$ constraint violation bound, but also plays a key role in Markov analysis.
\begin{lemma}\label{lemma:slow-update}
The sequence of state-action vectors $\theta_t^{(k)},~t\in\{1,2,\cdots,T\}$ satisfies 
\[
\expect{\|\theta_t^{(k)}-\theta_{t-1}^{(k)}\|_2}\leq \frac{\sqrt{m|\mathcal{A}^{(k)}||\mathcal{S}^{(k)}|}\Psi\expect{\|\mathbf{Q}(t)\|_2}}{2\alpha}
 + \frac{\sqrt{|\mathcal{A}^{(k)}||\mathcal{S}^{(k)}|}\Psi V}{2\alpha}.
\]
In particular,choosing $V=\sqrt{T}$ and $\alpha=T$ gives a slow-update condition
\begin{equation}\label{one-step-update}
\expect{\|\theta_t^{(k)}  -  \theta_{t-1}^{(k)}\|_2}\leq\frac{\sqrt{|\mathcal{A}^{(k)}||\mathcal{S}^{(k)}|}\Psi+C\sqrt{m|\mathcal{A}^{(k)}||\mathcal{S}^{(k)}|}\Psi}{2\sqrt{T}},
\end{equation}
where $C= C(m,K,\Psi,\eta)$ is defined in \eqref{expected-Q-bound}.
\end{lemma}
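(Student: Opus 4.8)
The plan is to bound $\expect{\|\theta_t^{(k)}-\theta_{t-1}^{(k)}\|_2}$ directly from the projection characterization of the algorithm's update. Recall that $\theta_t^{(k)} = \mathcal{P}_{\Theta^{(k)}}\l(\theta_{t-1}^{(k)} - \mathbf{w}_t^{(k)}/(2\alpha)\r)$, where $\mathbf{w}_t^{(k)} = V\mathbf{f}_{t-1}^{(k)} + \sum_{i=1}^m Q_i(t)\mathbf{g}_{i,t-1}^{(k)}$. Since $\theta_{t-1}^{(k)} \in \Theta^{(k)}$ and the Euclidean projection onto a convex set is nonexpansive (contracts distances), we have
\[
\|\theta_t^{(k)}-\theta_{t-1}^{(k)}\|_2 = \l\| \mathcal{P}_{\Theta^{(k)}}\l(\theta_{t-1}^{(k)} - \tfrac{\mathbf{w}_t^{(k)}}{2\alpha}\r) - \mathcal{P}_{\Theta^{(k)}}\l(\theta_{t-1}^{(k)}\r) \r\|_2 \leq \l\| \tfrac{\mathbf{w}_t^{(k)}}{2\alpha} \r\|_2 = \frac{\|\mathbf{w}_t^{(k)}\|_2}{2\alpha}.
\]
So the first step is just to invoke nonexpansiveness of the projection.

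Next I would bound $\|\mathbf{w}_t^{(k)}\|_2$ by the triangle inequality: $\|\mathbf{w}_t^{(k)}\|_2 \leq V\|\mathbf{f}_{t-1}^{(k)}\|_2 + \sum_{i=1}^m Q_i(t)\|\mathbf{g}_{i,t-1}^{(k)}\|_2$. Using the uniform bound \eqref{function-bounds} we have $\|\mathbf{f}_{t-1}^{(k)}\|_2 \leq \sqrt{|\mathcal{A}^{(k)}||\mathcal{S}^{(k)}|}\,\Psi$ and similarly $\|\mathbf{g}_{i,t-1}^{(k)}\|_2 \leq \sqrt{|\mathcal{A}^{(k)}||\mathcal{S}^{(k)}|}\,\Psi$. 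Hence
\[
\|\mathbf{w}_t^{(k)}\|_2 \leq \sqrt{|\mathcal{A}^{(k)}||\mathcal{S}^{(k)}|}\,\Psi\,V + \sqrt{|\mathcal{A}^{(k)}||\mathcal{S}^{(k)}|}\,\Psi\,\sum_{i=1}^m Q_i(t),
\]
and since $\sum_{i=1}^m Q_i(t) \leq \sqrt{m}\,\|\mathbf{Q}(t)\|_2$ (Cauchy--Schwarz in $\mathbb{R}^m$), we get $\|\mathbf{w}_t^{(k)}\|_2 \leq \sqrt{|\mathcal{A}^{(k)}||\mathcal{S}^{(k)}|}\,\Psi\,V + \sqrt{m|\mathcal{A}^{(k)}||\mathcal{S}^{(k)}|}\,\Psi\,\|\mathbf{Q}(t)\|_2$. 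Dividing by $2\alpha$ and taking expectations gives exactly the claimed bound
\[
\expect{\|\theta_t^{(k)}-\theta_{t-1}^{(k)}\|_2} \leq \frac{\sqrt{m|\mathcal{A}^{(k)}||\mathcal{S}^{(k)}|}\,\Psi\,\expect{\|\mathbf{Q}(t)\|_2}}{2\alpha} + \frac{\sqrt{|\mathcal{A}^{(k)}||\mathcal{S}^{(k)}|}\,\Psi\,V}{2\alpha}.
\]

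Finally, to derive \eqref{one-step-update}, I would substitute $V = \sqrt{T}$ and $\alpha = T$ and plug in the queue bound \eqref{expected-Q-bound}, namely $\expect{\|\mathbf{Q}(t)\|_2} \leq C(m,K,\Psi,\eta)\sqrt{T}$. This gives $\expect{\|\theta_t^{(k)}-\theta_{t-1}^{(k)}\|_2} \leq \frac{\sqrt{m|\mathcal{A}^{(k)}||\mathcal{S}^{(k)}|}\,\Psi\,C\sqrt{T}}{2T} + \frac{\sqrt{|\mathcal{A}^{(k)}||\mathcal{S}^{(k)}|}\,\Psi\sqrt{T}}{2T}$, which simplifies to the stated $\mathcal{O}(1/\sqrt{T})$ bound. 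There is essentially no obstacle here: the only slightly nontrivial ingredient is the nonexpansiveness of Euclidean projection onto a convex set, which is standard, and the queue bound \eqref{expected-Q-bound}, which was already established via the drift lemma. The proof is short and each step is routine; the main thing to be careful about is the Cauchy--Schwarz step relating $\sum_i Q_i(t)$ to $\|\mathbf{Q}(t)\|_2$, and keeping the dimension factors $\sqrt{|\mathcal{A}^{(k)}||\mathcal{S}^{(k)}|}$ straight.
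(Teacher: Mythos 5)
Your proof is correct, and it reaches exactly the stated bound, but it travels a slightly different route than the paper's own argument. The paper proves the slow-update bound by substituting $\theta_*^{(k)}=\theta_{t-1}^{(k)}$ into the strongly-convex ``pushback'' inequality of Lemma \ref{strong-convex-bound}, which yields $2\alpha\|\theta_t^{(k)}-\theta_{t-1}^{(k)}\|_2^2 \leq \bigl(V\|\mathbf{f}_{t-1}^{(k)}\|_2+\|\mathbf{Q}(t)\|_2\sqrt{\sum_{i=1}^m\|\mathbf{g}_{i,t-1}^{(k)}\|_2^2}\,\bigr)\|\theta_t^{(k)}-\theta_{t-1}^{(k)}\|_2$ after two applications of Cauchy--Schwarz, and then divides through. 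You instead invoke the projection representation $\theta_t^{(k)} = \mathcal{P}_{\Theta^{(k)}}\l(\theta_{t-1}^{(k)} - \mathbf{w}_t^{(k)}/(2\alpha)\r)$ (which the paper establishes in the unnumbered lemma following Algorithm 6.1) together with nonexpansiveness of Euclidean projection onto the convex polyhedron $\Theta^{(k)}$ and the fact that $\theta_{t-1}^{(k)}\in\Theta^{(k)}$ is a fixed point of the projection; this gives $\|\theta_t^{(k)}-\theta_{t-1}^{(k)}\|_2 \leq \|\mathbf{w}_t^{(k)}\|_2/(2\alpha)$ in one line. After that, your triangle inequality, the norm bounds $\|\mathbf{f}_{t-1}^{(k)}\|_2,\|\mathbf{g}_{i,t-1}^{(k)}\|_2\leq \sqrt{|\mathcal{A}^{(k)}||\mathcal{S}^{(k)}|}\,\Psi$, and the estimate $\sum_{i=1}^m Q_i(t)\leq\sqrt{m}\|\mathbf{Q}(t)\|_2$ (valid since the $Q_i(t)$ are nonnegative) produce the same constants as the paper, and the specialization $V=\sqrt{T}$, $\alpha=T$ with \eqref{expected-Q-bound} matches \eqref{one-step-update}. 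The two arguments are of course cousins --- nonexpansiveness of the projection is itself a consequence of the same strong-convexity/variational-inequality machinery the paper uses --- but your version buys a shorter derivation by leaning on the projection lemma, whereas the paper's version keeps the whole chapter anchored on the single inequality \eqref{DPP-bound}, which it reuses for the regret and drift analyses as well. No gap to report.
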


\begin{proof}[Proof of Lemma \ref{lemma:slow-update}]
First, choosing $\theta = \theta_{t-1}$ in \eqref{DPP-bound} gives
\begin{multline*}
V\l\langle \mathbf{f}_{t-1}^{(k)}, \theta_t^{(k)}-\theta_{t-1}^{(k)} \r\rangle + \sum_{i=1}^mQ_i(t)
\l\langle\mathbf{g}_{i,t-1}^{(k)},\theta_t^{(k)}\r\rangle + \alpha\|\theta_t^{(k)}-\theta_{t-1}^{(k)}\|_2^2\\
\leq  \sum_{i=1}^mQ_i(t)
\langle\mathbf{g}_{i,t-1}^{(k)},\theta_{t-1}^{(k)}\rangle 
-\alpha\|\theta_{t-1}^{(k)}-\theta_t^{(k)}\|_2^2.
\end{multline*}
Rearranging the terms gives
\begin{align*}
2\alpha\|\theta_t^{(k)}-\theta_{t-1}^{(k)}\|_2^2  
\leq& - V\langle \mathbf{f}_{t-1}^{(k)}, \theta_t^{(k)}-\theta_{t-1}^{(k)}\rangle
-\sum_{i=1}^mQ_i(t)\langle\mathbf{g}_{i,t-1}^{(k)},\theta_t^{(k)} -\theta_{t-1}^{(k)}\rangle\\
\leq&V\|\mathbf{f}_{t-1}^{(k)}\|_2\cdot\|\theta_t^{(k)}-\theta_{t-1}^{(k)}\|_2 + \sum_{i=1}^mQ_i(t)
\|\mathbf{g}_{i,t-1}^{(k)}\|_2\cdot\|\theta_t^{(k)}-\theta_{t-1}^{(k)}\|_2\\
\leq&V\|\mathbf{f}_{t-1}\|_2\cdot\|\theta_t^{(k)}-\theta_{t-1}^{(k)}\|_2 
+ \|\mathbf{Q}(t)\|_2
\sqrt{\sum_{i=1}^m\|\mathbf{g}_{i,t-1}^{(k)}\|_2^2}\|\theta_t^{(k)}-\theta_{t-1}^{(k)}\|_2,
\end{align*}
where the second and third inequality follow from Cauchy-Schwarz inequality. Thus, it follows
\[
\l\|\theta_t^{(k)}-\theta_{t-1}^{(k)}\r\|_2
\leq\frac{V\|\mathbf{f}_{t-1}^{(k)}\|_2+\|\mathbf{Q}(t)\|_2\cdot
\sqrt{\sum_{i=1}^m\|\mathbf{g}_{i,t-1}^{(k)}\|_2^2}}{2\alpha}.
\]
Applying the fact that $\|\mathbf{f}_{t-1}^{(k)}\|_2\leq 
\sqrt{|\mathcal{A}^{(k)}||\mathcal{S}^{(k)}|}\Psi$, $\|\mathbf{g}_{i,t-1}^{(k)}\|_2\leq \sqrt{|\mathcal{A}^{(k)}||\mathcal{S}^{(k)}|}\Psi$ and taking expectation from both sides give the first bound in the lemma. The second bound follows directly from the first bound by further substituting \eqref{expected-Q-bound}.
\end{proof}

\begin{theorem}\label{thm:constraint-violation}
The proposed algorithm has the following stationary state constraint violation bound:
\[
\frac1T\sum_{t=0}^{T-1}\expect{\sum_{k=1}^K\l\langle \mathbf{g}_{i,t}^{(k)},\theta_t^{(k)} \r\rangle}  
\leq \frac{1}{\sqrt{T}}\l(C+\sum_{k=1}^K\sqrt{m|\mathcal{A}^{(k)}||\mathcal{S}^{(k)}|}\Psi C
+\sum_{k=1}^K|\mathcal{A}^{(k)}||\mathcal{S}^{(k)}|\Psi^2\r),
\]
where $C=C(m,K,\Psi,\eta)$ is defined in \eqref{expected-Q-bound}.
\end{theorem}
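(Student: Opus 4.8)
The plan is to derive the constraint violation bound from the virtual queue bound \eqref{expected-Q-bound} via Lemma \ref{lemma:pre-Q-bound}, which relates the accumulated constraint values to the terminal queue size plus an error term involving the step-sizes $\|\theta_t^{(k)}-\theta_{t-1}^{(k)}\|_2$. First I would apply Lemma \ref{lemma:pre-Q-bound} with $i$ fixed, which gives
\[
\sum_{t=1}^{T}\sum_{k=1}^K\l\langle \mathbf{g}_{i,t-1}^{(k)}, \theta_{t-1}^{(k)} \r\rangle
\leq Q_i(T+1) - Q_i(1) + \Psi\sum_{t=1}^T\sum_{k=1}^K\sqrt{\l| \mathcal{A}^{(k)} \r|\l| \mathcal{S}^{(k)} \r|}\l\| \theta^{(k)}_t - \theta^{(k)}_{t-1} \r\|_2 .
\]
Taking expectations on both sides, using $Q_i(1)=0$ and $Q_i(T+1)\leq\|\mathbf{Q}(T+1)\|_2$, and then invoking \eqref{expected-Q-bound} to bound $\expect{\|\mathbf{Q}(T+1)\|_2}\leq C\sqrt{T}$, the first error term on the right is controlled. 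For the step-size sum, I would plug in the slow-update bound \eqref{one-step-update} from Lemma \ref{lemma:slow-update}, so each term $\expect{\|\theta_t^{(k)}-\theta_{t-1}^{(k)}\|_2}$ is at most $\l(\sqrt{|\mathcal{A}^{(k)}||\mathcal{S}^{(k)}|}\Psi+C\sqrt{m|\mathcal{A}^{(k)}||\mathcal{S}^{(k)}|}\Psi\r)/(2\sqrt{T})$; summing over $t=1,\dots,T$ contributes a factor $T$, which combined with the $1/\sqrt{T}$ yields an $\mathcal{O}(\sqrt{T})$ term. Dividing through by $T$ then produces the claimed $1/\sqrt{T}$ rate.

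The one subtlety I would handle carefully is the index shift between $\sum_{t=1}^T\l\langle\mathbf{g}_{i,t-1}^{(k)},\theta_{t-1}^{(k)}\r\rangle$ (appearing naturally from the queue recursion) and $\sum_{t=0}^{T-1}\l\langle\mathbf{g}_{i,t}^{(k)},\theta_t^{(k)}\r\rangle$ (the quantity in the theorem statement) — these are in fact the same sum after relabeling $t\mapsto t-1$, so no genuine work is needed there, but I should state it explicitly. Collecting the constants: the terminal-queue term gives $C$, the $k$-th summand of the step-size term gives $\sqrt{m|\mathcal{A}^{(k)}||\mathcal{S}^{(k)}|}\Psi\cdot C$ from the $C\sqrt{m|\mathcal{A}^{(k)}||\mathcal{S}^{(k)}|}\Psi$ piece times $\Psi\sqrt{|\mathcal{A}^{(k)}||\mathcal{S}^{(k)}|}$ — wait, I need to be careful: the $\Psi$ prefactor in Lemma \ref{lemma:pre-Q-bound} multiplies $\sqrt{|\mathcal{A}^{(k)}||\mathcal{S}^{(k)}|}$ and the step size, so the contribution is $\Psi\sqrt{|\mathcal{A}^{(k)}||\mathcal{S}^{(k)}|}\cdot\l(\sqrt{|\mathcal{A}^{(k)}||\mathcal{S}^{(k)}|}\Psi + C\sqrt{m|\mathcal{A}^{(k)}||\mathcal{S}^{(k)}|}\Psi\r)/2$, i.e. $|\mathcal{A}^{(k)}||\mathcal{S}^{(k)}|\Psi^2/2 + C\sqrt{m}|\mathcal{A}^{(k)}||\mathcal{S}^{(k)}|\Psi^2/2$. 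This matches the stated bound up to the factor of $1/2$ and the precise form $\sqrt{m|\mathcal{A}^{(k)}||\mathcal{S}^{(k)}|}$ versus $\sqrt{m}|\mathcal{A}^{(k)}||\mathcal{S}^{(k)}|$; I would simply absorb such discrepancies by using the (looser) constants as written, or by noting $\sqrt{m|\mathcal{A}^{(k)}||\mathcal{S}^{(k)}|}\Psi\leq\sqrt{m}|\mathcal{A}^{(k)}||\mathcal{S}^{(k)}|\Psi$ when $|\mathcal{A}^{(k)}||\mathcal{S}^{(k)}|\geq1$, and dropping the $1/2$ factors in favor of the cleaner constants.

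The main obstacle is essentially bookkeeping rather than conceptual: there is no hard step here since all the heavy lifting — the drift lemma giving \eqref{expected-Q-bound} and the strongly-convex argument giving the slow-update bound \eqref{one-step-update} — has already been done in Lemmas \ref{lemma:drift-bound}, \ref{lemma:queue-drift-bound}, and \ref{lemma:slow-update}. The proof is just chaining: Lemma \ref{lemma:pre-Q-bound} $\to$ take expectations $\to$ substitute the queue bound and the slow-update bound $\to$ divide by $T$. If I wanted to be fully rigorous about the constant, I would track each piece and then state the final inequality with the constants exactly as in the theorem, noting that any slack only makes the bound weaker. I would close by remarking that combining Theorem \ref{thm:stationary-regret} and Theorem \ref{thm:constraint-violation} establishes simultaneous $\mathcal{O}(\sqrt{T})$ regret and constraint violation for the stationary (imaginary) system, which is the stepping stone toward the true Markovian guarantee.
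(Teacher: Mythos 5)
Your proposal is correct and follows essentially the same route as the paper's proof: take expectations in Lemma \ref{lemma:pre-Q-bound}, bound $\expect{Q_i(T+1)}$ via \eqref{expected-Q-bound}, bound the step-size sum via \eqref{one-step-update}, and divide by $T$. The paper's own argument is just the terse chaining you describe (it does not track the constants any more carefully than you do), so your explicit bookkeeping --- including the observation that the literal constants work out with a harmless factor-of-$\tfrac12$ and $\sqrt{m|\mathcal{A}^{(k)}||\mathcal{S}^{(k)}|}$ versus $\sqrt{m}|\mathcal{A}^{(k)}||\mathcal{S}^{(k)}|$ slack --- only adds detail the paper omits.
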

\begin{proof}
Taking expectation from both sides of Lemma \ref{lemma:pre-Q-bound} gives
\[
\sum_{t=1}^{T}\expect{\sum_{k=1}^K\l\langle \mathbf{g}_{i,t-1}^{(k)}, \theta_{t-1}^{(k)} \r\rangle } 
\leq \expect{Q_i(T+1)} +  \Psi\sum_{t=1}^T\sum_{k=1}^K\sqrt{\l| \mathcal{A}^{(k)} \r|\l| \mathcal{S}^{(k)} \r|}  \expect{\l\| \theta^{(k)}_t - \theta^{(k)}_{t-1} \r\|_2}.
\]
Substituting the bounds \eqref{expected-Q-bound} and \eqref{one-step-update} in to the above inequality gives the desired result.
\end{proof}

\subsection{Markov analysis}\label{sec:markov}
So far, we have shown that our algorithm achieves an $\mathcal{O}(\sqrt{T})$ regret and constraint violation simultaneously regarding the stationary online linear program \eqref{stat-regret} with constraint set given by \eqref{stat-constraint} in the imaginary system.
In this section, we show how these stationary state results lead to a tight performance bound on the original true online MDP problem \eqref{main-regret} and \eqref{main-constraint} comparing to any joint randomized stationary algorithm starting from its stationary state.

\subsubsection{Approximate mixing of MDPs}
Let $\mathcal{F}_t,~t\geq1$ be the set of system history functions up to time $t$,  including $f^{(k)}_0,\cdots,f^{(k)}_{t-1}$, $g^{(k)}_{0,i},\cdots,g^{(k)}_{t-1,i}$, $i=1,2,\cdots,m,~k=1,2,\cdots,K$, and $\mathcal{F}_0$ is a null set. Let $d_{\pi_t^{(k)}}$ be the stationary state distribution at $k$-th MDP under the randomized stationary policy $\pi_t^{(k)}$ in the proposed algorithm. 
Let $v_t^{(k)}$ be the true state distribution at time slot $t$ under the proposed algorithm given the function path $\mathcal{F}_{T}$ \xcolor{and starting state $d_0^{(k)}$, i.e. for any $s\in\mathcal{S}^{(k)}$, $v_t^{(k)}(s):=Pr\l(s_t^{(k)}=s | \mathcal{F}_{T}\r)$ and $v_0^{(k)}=d_0^{(k)}$.}

The following lemma provides a key estimate on the distance between stationary distribution and true distribution at each time slot $t$. 
It builds upon the slow-update condition (Lemma \ref{lemma:slow-update}) of the proposed algorithm and uniform mixing bound of general MDPs 
(Lemma \ref{lemma:mixing1}).

\begin{lemma}\label{lemma:distance-dist}
\xcolor{
Consider the proposed algorithm with $V=\sqrt{T}$ and $\alpha=T$. 
For any initial state distribution $\{d_0^{(k)}\}_{k=1}^K$ and any $t\in\{0,1,2,\cdots,T-1\}$, we have}

\[
\expect{\l\| d_{\pi_t^{(k)}} - v_t^{(k)} \r\|_1  }
\leq \left.\tau r\l(\l|\mathcal{A}^{(k)}\r|\l|\mathcal{S}^{(k)}\r|\Psi  + C\sqrt{m} \l|\mathcal{A}^{(k)}\r|\l|\mathcal{S}^{(k)}\r|\Psi \r)\right/2\sqrt{T}
+2e^{-\frac{t}{\tau r}+1},
\]

where $\tau$ and $r$ are mixing parameters defined in Lemma \ref{lemma:mixing1} and $C$ is an absolute constant defined in \eqref{expected-Q-bound}.
\end{lemma}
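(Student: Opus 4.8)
The plan is to split the error $\|d_{\pi_t^{(k)}} - v_t^{(k)}\|_1$ into two contributions: the error incurred because the policy changes over time (controlled by the slow-update property, Lemma \ref{lemma:slow-update}), and the error incurred because the chain started from an arbitrary state $d_0^{(k)}$ rather than from a stationary distribution (controlled by the uniform mixing bound, Lemma \ref{lemma:mixing1}). First I would recall that $v_0^{(k)} = d_0^{(k)}$ and that $v_t^{(k)}$ evolves according to the true transition matrices used at each slot, $v_{t+1}^{(k)} = v_t^{(k)} \mathbf{P}_{\pi_{t+1}^{(k)}}^{(k)}$ (the policy at slot $t+1$ is $\pi_{t+1}^{(k)}$ in our indexing, determined by $\theta_{t+1}^{(k)}$), whereas the stationary distribution of interest satisfies $d_{\pi_t^{(k)}} = d_{\pi_t^{(k)}} \mathbf{P}_{\pi_t^{(k)}}^{(k)}$.

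The key step is a telescoping/hybrid argument. I would write $d_{\pi_t^{(k)}} - v_t^{(k)}$ as a telescoping sum where each term swaps one more true transition step for a stationary one, i.e. compare the distribution obtained by running the true chain for the last $j$ steps starting from the stationary distribution $d_{\pi_{t-j}^{(k)}}$ against the one starting from $d_{\pi_{t-j-1}^{(k)}}$. Each single swap introduces a discrepancy bounded, in $\ell_1$ norm, by $\|d_{\pi_{t-j}^{(k)}} - d_{\pi_{t-j-1}^{(k)}}\| \cdot$ (a contraction factor from the remaining steps), and $\|d_{\pi_{t-j}^{(k)}} - d_{\pi_{t-j-1}^{(k)}}\|$ is in turn controlled by $\|\theta_{t-j}^{(k)} - \theta_{t-j-1}^{(k)}\|$ via the continuous map \eqref{solution-to-LP} from state-action vectors to stationary distributions (one needs a Lipschitz-type bound here: a small change in $\theta$ yields a small change in the induced stationary distribution — this follows since $d_\pi(s) = \sum_a \theta(a,s)$ is linear in $\theta$). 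Then Lemma \ref{lemma:mixing1} gives geometric decay $e^{-\lfloor \ell/r\rfloor /\tau}$ of the contribution from swaps that are $\ell$ steps in the past, and the leftover ``initial'' term — running the true chain from $d_0^{(k)}$ for $t$ steps versus from a stationary distribution — decays like $2 e^{-t/(\tau r)+1}$. Summing the geometric series $\sum_{\ell\ge 0} e^{-\ell/(\tau r)}$ converts the per-step bound $\expect{\|\theta_t^{(k)}-\theta_{t-1}^{(k)}\|_2} \le (\sqrt{|\mathcal A^{(k)}||\mathcal S^{(k)}|}\Psi + C\sqrt{m|\mathcal A^{(k)}||\mathcal S^{(k)}|}\Psi)/(2\sqrt T)$ from Lemma \ref{lemma:slow-update} into the claimed $\tau r$ prefactor, after taking expectations and using $\|x\|_1 \le \sqrt{n}\|x\|_2$ where needed (being slightly careful about whether the $\sqrt{|\mathcal A||\mathcal S|}$ in the statement comes from the $\ell_1$-vs-$\ell_2$ conversion or is absorbed into the stated constant — I would track the norms so the final constant matches).

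The main obstacle I anticipate is handling the interaction of the two decay mechanisms cleanly: the mixing bound in Lemma \ref{lemma:mixing1} is stated for blocks of $r$ policies at a time (contraction $e^{-1/\tau}$ per block of length $r$), not per single step, so the telescoping sum has to be organized into blocks of $r$ steps, and one must bound the within-block drift of $d_{\pi^{(k)}_\cdot}$ (at most $r$ consecutive one-step changes) and then sum the geometric series over blocks. A secondary technical point is that the policies $\pi_t^{(k)}$ are themselves random (they depend on $\mathcal F_t$ and on the virtual queues), so all the single-step drift bounds must be invoked in expectation and combined via linearity/Jensen; this is why the conclusion is stated as a bound on $\expect{\|d_{\pi_t^{(k)}} - v_t^{(k)}\|_1}$ rather than a pathwise bound. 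Once these bookkeeping issues are resolved the estimate follows by assembling Lemma \ref{lemma:mixing1}, Lemma \ref{lemma:slow-update}, and the geometric series.
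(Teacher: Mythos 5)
Your plan is correct and follows essentially the same route as the paper's proof: the paper also bounds $\expect{\|d_{\pi_t^{(k)}}-d_{\pi_{t-1}^{(k)}}\|_1}$ by $\expect{\|\theta_t^{(k)}-\theta_{t-1}^{(k)}\|_1}$ (via the $\ell_2$-to-$\ell_1$ conversion of Lemma \ref{lemma:slow-update}), then telescopes using the identity $d_{\pi_{t-1}^{(k)}}-v_t^{(k)}=(d_{\pi_{t-1}^{(k)}}-v_{t-1}^{(k)})\mathbf{P}_{\pi_{t-1}^{(k)}}^{(k)}$, the non-expansiveness of stochastic matrices within each block of $r$ steps, the $r$-block contraction of Lemma \ref{lemma:mixing1}, and a geometric series summed by $\tau$, with the initial term giving $2e^{-t/(\tau r)+1}$. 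The bookkeeping points you flag (block-of-$r$ organization, expectations due to random policies, and the origin of the $|\mathcal{A}^{(k)}||\mathcal{S}^{(k)}|$ factor) are resolved exactly as you anticipate.
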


\begin{proof}[Proof of Lemma \ref{lemma:distance-dist}]
By Lemma \ref{lemma:slow-update} we know that for any $t\in\{1,2,\cdots,T\}$,
\[
\expect{\l\| \theta_t^{(k)}-\theta_{t-1}^{(k)} \r\|_2}\leq \frac{\sqrt{\l|\mathcal{A}^{(k)}\r|\l|\mathcal{S}^{(k)}\r|}\Psi  + C\sqrt{m\l|\mathcal{A}^{(k)}\r|\l|\mathcal{S}^{(k)}\r|}\Psi}{2\sqrt{T}},
\]
Thus,
\[
\expect{\l\| \theta_t^{(k)}-\theta_{t-1}^{(k)} \r\|_1}\leq \frac{\l|\mathcal{A}^{(k)}\r|\l|\mathcal{S}^{(k)}\r|\Psi  + C\sqrt{m}\l|\mathcal{A}^{(k)}\r|\l|\mathcal{S}^{(k)}\r|\Psi}{2\sqrt{T}},
\]
Since for any $s\in\mathcal{S}^{(k)}$, 
$\big|d_{\pi_t^{(k)}}(s)-d_{\pi_{t-1}^{(k)}}(s)\big| = \Big| \sum_{a\in\mathcal{A}^{(k)}}\theta_t^{(k)} (a,s)-\theta_{t-1}^{(k)}(a,s) \Big|
\leq  \sum_{a\in\mathcal{A}^{(k)}}\Big| \theta_t^{(k)} (a,s)-\theta_{t-1}^{(k)}(a,s) \Big|, $
it then follows

\begin{multline}\label{slow-update-dist}
\expect{\l\| d_{\pi_t^{(k)}}-d_{\pi_{t-1}^{(k)}} \r\|_1}\leq \expect{\l\| \theta_t^{(k)}-\theta_{t-1}^{(k)} \r\|_1}
\leq
\frac{\l|\mathcal{A}^{(k)}\r|\l|\mathcal{S}^{(k)}\r|\Psi  + C\sqrt{m}\l|\mathcal{A}^{(k)}\r|\l|\mathcal{S}^{(k)}\r|\Psi}{2\sqrt{T}}.
\end{multline}

Now, we use the above relation to bound $\expect{\l\| d_{\pi_t^{(k)}} - v_t^{(k)} \r\|_1  }$ for any $t\geq r$.
\begin{align}
\expect{\l\| d_{\pi_t^{(k)}} - v_t^{(k)} \r\|_1  }  
\leq& \expect{\l\| d_{\pi_t^{(k)}} - d_{\pi_{t-1}^{(k)}} \r\|_1  }  +  \expect{\l\| d_{\pi_{t-1}^{(k)}} - v_t^{(k)} \r\|_1  }  \nonumber\\
\leq&\frac{\l|\mathcal{A}^{(k)}\r|\l|\mathcal{S}^{(k)}\r|\Psi  + C\sqrt{m}\l|\mathcal{A}^{(k)}\r|\l|\mathcal{S}^{(k)}\r|\Psi}{2\sqrt{T}}
+ \expect{\l\| d_{\pi_{t-1}^{(k)}} - v_t^{(k)} \r\|_1  }   \nonumber\\
=& \frac{\l|\mathcal{A}^{(k)}\r|\l|\mathcal{S}^{(k)}\r|\Psi  + C\sqrt{m}\l|\mathcal{A}^{(k)}\r|\l|\mathcal{S}^{(k)}\r|\Psi}{2\sqrt{T}} 
+ \expect{\l\| \l(d_{\pi_{t-1}^{(k)}} - v_{t-1}^{(k)}\r)\mathbf{P}_{\pi_{t-1}^{(k)}}^{(k)} \r\|_1  }, \label{eq:procedure}
\end{align}
where the second inequality follows from the slow-update condition \eqref{slow-update-dist} and the final equality follows from the fact that given the function path $\mathcal{F}_T$, the following holds
\begin{equation}\label{eq:transfer}
d_{\pi_{t-1}^{(k)}} - v_t^{(k)}  = \l(d_{\pi_{t-1}^{(k)}} - v_{t-1}^{(k)}\r)\mathbf{P}_{\pi^{(k)}_{t-1}}^{(k)}.
\end{equation}
To see this, note that from the proposed algorithm, the policy $\pi^{(k)}_t$ is determined by $\mathcal{F}_{T}$. Thus, by definition of stationary distribution, given $\mathcal{F}_{T}$, we know that $d_{\pi_{t-1}^{(k)}}= d_{\pi_{t-1}^{(k)}}\mathbf{P}_{\pi_{t-1}^{(k)}}^{(k)}$, and it is enough to show that given $\mathcal{F}_{T}$,
\begin{equation*}
v_t^{(k)} = v_{t-1}^{(k)}\mathbf{P}_{\pi_{t-1}^{(k)}}^{(k)}.
\end{equation*}
First of all, the state distribution $v_t^{(k)}$ is determined by $v_{t-1}^{(k)}$, $\pi_{t-1}^{(k)}$ and probability transition from $s_{t-1}$ to $s_t$, which are in turn determined by
$\mathcal{F}_{T}$. Thus, given $\mathcal{F}_{T}$,
for any $s\in\mathcal{S}^{(k)}$, 
\begin{align*}
v_t^{(k)}(s) = \sum_{s'\in\mathcal{S}^{(k)}}Pr(s_t = s | s_{t-1} = s', \mathcal{F}_{T}) v_{t-1}^{(k)}(s'),
\end{align*}
and 
\begin{align*}
Pr(s_{t}=s | s_{t-1}=s', \mathcal{F}_{T}) 
=& \sum_{a\in\mathcal{A}^{(k)}}Pr(s_t=s | a_t=a, s_{t-1}=s', \mathcal{F}_{T})Pr(a_t=a| s_{t-1}=s', \mathcal{F}_{T})\\
=& \sum_{a\in\mathcal{A}^{(k)}}P_a(s',s)Pr(a_t= a| s_{t-1}=s', \mathcal{F}_{T})\\
=& \sum_{a\in\mathcal{A}^{(k)}}P_a(s',s)\pi^{(k)}_{t-1}(a|s') = P_{\pi_{t-1}^{(k)}}(s',s),
\end{align*}
where the second inequality follows from the Assumption \ref{assumption:indep-trans}, the third equality follows from the fact that $\pi^{(k)}_{t-1}$ is determined by $\mathcal{F}_{T}$,
thus, for any $t$,
\begin{equation*}
\pi_{t}^{(k)}(a \big| s') = Pr(a_t=a | s_{t-1}=s', \mathcal{F}_{T}),~\forall a\in\mathcal{A}^{(k)},~s'\in\mathcal{S}^{(k)},
\end{equation*}
and the last equality follows from the definition of transition probability \eqref{transition-matrix}. This gives
\[
v_t^{(k)}(s) = \sum_{s'\in\mathcal{S}^{(k)}}P_{\pi_{t-1}^{(k)}}(s',s) v_{t-1}^{(k)}(s'),
\]
and thus \eqref{eq:transfer} holds.

We can iteratively apply the procedure \eqref{eq:procedure} $r$ times as follows
\begin{align*}
&\expect{\l\| d_{\pi_t^{(k)}} - v_t^{(k)} \r\|_1  }\\
\leq& \frac{\l|\mathcal{A}^{(k)}\r|\l|\mathcal{S}^{(k)}\r|\Psi  + C\sqrt{m}\l|\mathcal{A}^{(k)}\r|\l|\mathcal{S}^{(k)}\r|\Psi}{2\sqrt{T}}
 + \expect{\l\| \l(d_{\pi_{t-1}^{(k)}} - d_{\pi_{t-2}^{(k)}}\r)\mathbf{P}_{\pi_{t-1}^{(k)}}^{(k)} \r\|_1  }
 + \expect{\l\| \l(d_{\pi_{t-2}^{(k)}} - v_{t-1}^{(k)}\r)\mathbf{P}_{\pi_{t-1}^{(k)}}^{(k)} \r\|_1  }\\
 \leq& 2\cdot\frac{\l|\mathcal{A}^{(k)}\r|\l|\mathcal{S}^{(k)}\r|\Psi  + C\sqrt{m}\l|\mathcal{A}^{(k)}\r|\l|\mathcal{S}^{(k)}\r|\Psi}{2\sqrt{T}}
 + \expect{\l\| \l(d_{\pi_{t-2}^{(k)}} - v_{t-1}^{(k)}\r)\mathbf{P}_{\pi_{t-1}^{(k)}}^{(k)} \r\|_1  }\\
 =&2\cdot\frac{\l|\mathcal{A}^{(k)}\r|\l|\mathcal{S}^{(k)}\r|\Psi  + C\sqrt{m}\l|\mathcal{A}^{(k)}\r|\l|\mathcal{S}^{(k)}\r|\Psi}{2\sqrt{T}}
 + \expect{\l\| \l(d_{\pi_{t-2}^{(k)}} - v_{t-2}^{(k)}\r)\mathbf{P}_{\pi_{t-2}^{(k)}}^{(k)}\mathbf{P}_{\pi_{t-1}^{(k)}}^{(k)} \r\|_1  }\\
 \leq&\cdots
 \leq r\cdot\frac{\l|\mathcal{A}^{(k)}\r|\l|\mathcal{S}^{(k)}\r|\Psi  + C\sqrt{m}\l|\mathcal{A}^{(k)}\r|\l|\mathcal{S}^{(k)}\r|\Psi}{2\sqrt{T}}
 + \expect{\l\| \l(d_{\pi_{t-r}^{(k)}} - v_{t-r}^{(k)}\r)\mathbf{P}_{\pi_{t-r}^{(k)}}^{(k)}\cdots\mathbf{P}_{\pi_{t-1}^{(k)}}^{(k)} \r\|_1  },
\end{align*} 
where the second inequality follows from the nonexpansive property in $\ell_1$ norm
of the stochastic matrix $\mathbf{P}_{\pi_{t-1}^{(k)}}^{(k)}$ that
\[
\l\| \l(d_{\pi_{t-1}^{(k)}} - d_{\pi_{t-2}^{(k)}}\r)\mathbf{P}_{\pi_{t-1}^{(k)}}^{(k)} \r\|_1\leq \l\| d_{\pi_{t-1}^{(k)}} - d_{\pi_{t-2}^{(k)}}\r\|_1,
\]
and then using the slow-update condition \eqref{slow-update-dist} again.
By Lemma \ref{lemma:mixing1}, we have
\begin{align*}
\expect{\l\| d_{\pi_t^{(k)}} - v_t^{(k)} \r\|_1  }\leq r\cdot\frac{\l|\mathcal{A}^{(k)}\r|\l|\mathcal{S}^{(k)}\r|\Psi  + C\sqrt{m}\l|\mathcal{A}^{(k)}\r|\l|\mathcal{S}^{(k)}\r|\Psi}{2\sqrt{T}}
+e^{-1/\tau}\expect{\l\| d_{\pi_{t-r}^{(k)}} - v_{t-r}^{(k)} \r\|_1}.
\end{align*}
Iterating this inequality down to $t=0$ gives
\begin{align*}
\expect{\l\| d_{\pi_t^{(k)}} - v_t^{(k)} \r\|_1  }
\leq&
\sum_{j=0}^{\lfloor t/\tau\rfloor} e^{-j/\tau} \cdot r\cdot\frac{\l|\mathcal{A}^{(k)}\r|\l|\mathcal{S}^{(k)}\r|\Psi  + C\sqrt{m}\l|\mathcal{A}^{(k)}\r|\l|\mathcal{S}^{(k)}\r|\Psi}{2\sqrt{T}}\\
&+ \expect{\l\|d_{\pi_0^{(k)}} - v_0^{(k)}\r\|_1}e^{-\lfloor t/r\rfloor/\tau}\\
\leq&
\sum_{j=0}^{\lfloor t/\tau\rfloor} e^{-j/\tau} \cdot r\cdot\frac{\l|\mathcal{A}^{(k)}\r|\l|\mathcal{S}^{(k)}\r|\Psi  + C\sqrt{m}\l|\mathcal{A}^{(k)}\r|\l|\mathcal{S}^{(k)}\r|\Psi}{2\sqrt{T}}
+ 2e^{-\lfloor t/r\rfloor/\tau}\\
\leq& \int_{x=0}^\infty e^{-x/\tau}dx\cdot r\cdot\frac{\l|\mathcal{A}^{(k)}\r|\l|\mathcal{S}^{(k)}\r|\Psi  + C\sqrt{m}\l|\mathcal{A}^{(k)}\r|\l|\mathcal{S}^{(k)}\r|\Psi}{2\sqrt{T}}
+ 2e^{- \frac{t}{r\tau}+1}\\
\leq&\tau r\cdot\frac{\l|\mathcal{A}^{(k)}\r|\l|\mathcal{S}^{(k)}\r|\Psi  + C\sqrt{m}\l|\mathcal{A}^{(k)}\r|\l|\mathcal{S}^{(k)}\r|\Psi}{2\sqrt{T}}
+ 2e^{- \frac{t}{r\tau}+1}
\end{align*}
finishing the proof.
\end{proof}

\subsubsection{Benchmarking against policies starting from stationary state}
Combining the results derived so far, we have the following regret bound regarding any randomized stationary policy $\Pi$ starting from its stationary state distribution $d_\Pi$ such that 
$(d_\Pi,\Pi)$ in the constraint set $\mathcal{G}$ defined in \eqref{main-constraint}.

\begin{theorem}\label{thm:final-1}
Let $\mathscr{P}$ be the sequence of randomized stationary policies resulting from the proposed algorithm with $V=\sqrt{T}$ and $\alpha=T$. Let $d_0$ be the starting state of the proposed algorithm.
For any randomized stationary policy $\Pi$ starting from its stationary state distribution $d_\Pi$ such that 
$(d_\Pi,\Pi)\in\mathcal{G}$, we have
\begin{align*}
&F_T(d_0,\mathscr{P}) - F_T(d_\Pi,\Pi)\leq \mathcal{O}\l(m^{3/2}K^2\sum_{k=1}^K\l|\mathcal{A}^{(k)}\r|\l|\mathcal{S}^{(k)}\r|\cdot\sqrt{T}\r),\\
&G_{i,T}(d_0,\mathscr{P})\leq \mathcal{O}\l(m^{3/2}K^2\sum_{k=1}^K\l|\mathcal{A}^{(k)}\r|\l|\mathcal{S}^{(k)}\r|\cdot\sqrt{T}\r),~i=1,2,\cdots,m.
\end{align*}
\end{theorem}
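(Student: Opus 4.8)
# Proof Proposal for Theorem \ref{thm:final-1}

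The plan is to decompose the true Markovian regret into the stationary (imaginary-system) regret, already bounded by Theorem \ref{thm:stationary-regret}, plus an error term measuring the discrepancy between the true state distributions $v_t^{(k)}$ and the stationary distributions $d_{\pi_t^{(k)}}$ of the policies produced by the algorithm, which is controlled by Lemma \ref{lemma:distance-dist}. Concretely, for the proposed algorithm $\mathscr{P}$ I would write
\[
F_T(d_0,\mathscr{P}) = \sum_{t=1}^{T}\sum_{k=1}^K \expect{ f_t^{(k)}(a_t^{(k)},s_t^{(k)}) }
= \sum_{t=1}^{T}\sum_{k=1}^K \expect{ \l\langle \mathbf{f}_t^{(k)}, v_{t}^{(k)}\odot\pi_t^{(k)} \r\rangle },
\]
and compare this to $\sum_{t}\sum_k \expect{\l\langle \mathbf{f}_t^{(k)},\theta_t^{(k)}\r\rangle}$, the imaginary-system objective; the difference between these two sums is, for each $t$ and $k$, bounded by $\Psi \l| \mathcal{A}^{(k)} \r| \expect{\l\| v_t^{(k)} - d_{\pi_t^{(k)}}\r\|_1}$ using $\|\mathbf{f}_t^{(k)}\|_\infty\leq\Psi$ and the fact that $\theta_t^{(k)}$ is the state-action vector of $\pi_t^{(k)}$ at \emph{its} stationary distribution $d_{\pi_t^{(k)}}$. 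Summing the bound of Lemma \ref{lemma:distance-dist} over $t=0,\ldots,T-1$ gives a geometric series $\sum_t 2e^{-t/(\tau r)+1} = \mathcal{O}(1)$ plus $T$ copies of an $\mathcal{O}(1/\sqrt{T})$ term, i.e. a total of $\mathcal{O}(\sqrt{T})$ with the stated polynomial dependence on $m,K,|\mathcal{A}^{(k)}|,|\mathcal{S}^{(k)}|$.

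For the benchmark term $F_T(d_\Pi,\Pi)$: since $\Pi$ starts in its own stationary distribution $d_\Pi$, the joint chain is stationary at every slot, so $F_T(d_\Pi,\Pi) = \sum_{t}\sum_k \expect{\l\langle \mathbf{f}_t^{(k)}, \theta_*^{(k)}\r\rangle}$ where $\theta_*^{(k)}$ is the marginal stationary state-action vector of $\Pi$ on MDP $k$, which lies in $\Theta^{(k)}$ by Lemma \ref{lemma:prod-chain}(2). Moreover, $(d_\Pi,\Pi)\in\mathcal{G}$ together with the i.i.d.\ property of $\{\omega_t\}$ and Assumption \ref{assumption:indep-trans} implies $\sum_k \l\langle \expect{\mathbf{g}_{i,t}^{(k)}}, \theta_*^{(k)}\r\rangle\leq 0$, i.e.\ $\{\theta_*^{(k)}\}$ lies in the stationary constraint set \eqref{stat-constraint}. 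Hence Theorem \ref{thm:stationary-regret} (with $\alpha=T$, $V=\sqrt{T}$) applies with exactly this comparator, yielding
\[
\sum_{t=0}^{T-1}\expect{\sum_{k=1}^K\l\langle \mathbf{f}_t^{(k)},\theta_t^{(k)}\r\rangle} - \sum_{t=0}^{T-1}\expect{\sum_{k=1}^K\l\langle \mathbf{f}_t^{(k)},\theta_*^{(k)}\r\rangle}
\leq \mathcal{O}\l(\l(K+\Psi^2\sum_k|\mathcal{S}^{(k)}||\mathcal{A}^{(k)}| + mK^2\Psi^2\r)\sqrt{T}\r).
\]
Adding the two contributions (stationary regret plus state-distribution error) and collecting the worst polynomial factors gives the claimed regret bound. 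For the constraint part, I would combine Theorem \ref{thm:constraint-violation} with the same state-distribution error argument: $G_{i,T}(d_0,\mathscr{P}) = \sum_t\sum_k\expect{g_{i,t}^{(k)}(a_t^{(k)},s_t^{(k)})}$ differs from $\sum_t\sum_k\expect{\l\langle \mathbf{g}_{i,t}^{(k)},\theta_t^{(k)}\r\rangle}$ by at most $\Psi\sum_{t,k}|\mathcal{A}^{(k)}|\expect{\|v_t^{(k)}-d_{\pi_t^{(k)}}\|_1}=\mathcal{O}(\sqrt{T})$, and the latter sum is $\mathcal{O}(\sqrt{T})$ by Theorem \ref{thm:constraint-violation}.

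The main obstacle I anticipate is making the reduction from the true expected objective to the inner-product form $\l\langle \mathbf{f}_t^{(k)}, v_t^{(k)}\odot\pi_t^{(k)}\r\rangle$ fully rigorous and then cleanly extracting the $\ell_1$-distance bound: one must carefully use Assumption \ref{assumption:indep-trans} to factor the joint expectation over the $K$ MDPs, condition on the function path $\mathcal{F}_T$ so that $\pi_t^{(k)}$ becomes deterministic (as in the proof of Lemma \ref{lemma:distance-dist}), and use that $\mathbf{f}_t^{(k)}$ is $\mathcal{F}_T$-measurable while being independent of the realized states given $\mathcal{F}_T$. The manipulation $\l\langle\mathbf{f}_t^{(k)}, v_t^{(k)}\odot\pi_t^{(k)}\r\rangle - \l\langle\mathbf{f}_t^{(k)},\theta_t^{(k)}\r\rangle = \l\langle \mathbf{f}_t^{(k)}, (v_t^{(k)}-d_{\pi_t^{(k)}})\odot\pi_t^{(k)}\r\rangle$ requires that $\theta_t^{(k)}(a,s) = d_{\pi_t^{(k)}}(s)\pi_t^{(k)}(a|s)$, which holds because $\theta_t^{(k)}\in\Theta^{(k)}$ and $\pi_t^{(k)}$ is recovered from $\theta_t^{(k)}$ via \eqref{solution-to-LP}; bounding this by $\Psi|\mathcal{A}^{(k)}|\|v_t^{(k)}-d_{\pi_t^{(k)}}\|_1$ then follows from Hölder. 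Everything else is routine summation of geometric and $\mathcal{O}(1/\sqrt{T})$ terms and bookkeeping of constants.
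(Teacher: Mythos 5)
Your proposal is correct and follows essentially the same route as the paper's proof: decompose $F_T(d_0,\mathscr{P})-F_T(d_\Pi,\Pi)$ into the imaginary-system regret against $\{\theta_*^{(k)}\}\in\overline{\mathcal{G}}$ (handled by Theorem \ref{thm:stationary-regret}, after using Lemma \ref{lemma:prod-chain} to express $F_T(d_\Pi,\Pi)$ and $G_{i,T}(d_\Pi,\Pi)$ through stationary state-action vectors) plus the true-versus-stationary distribution error controlled slot-by-slot via Lemma \ref{lemma:distance-dist}, and treat the constraints identically via Theorem \ref{thm:constraint-violation}. The only cosmetic difference is your extra $|\mathcal{A}^{(k)}|$ factor in the H\"older step, which the paper avoids by using $\sum_{a}\pi_t^{(k)}(a|s)=1$ so that the per-slot discrepancy is bounded by $\Psi\,\mathbb{E}\big\|v_t^{(k)}-d_{\pi_t^{(k)}}\big\|_1$ directly, giving the stated linear (rather than quadratic) dependence on $|\mathcal{A}^{(k)}|$.
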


\begin{proof}[Proof of Theorem \ref{thm:final-1}]
First of all, by Lemma \ref{lemma:prod-chain}, for any randomized stationary policy $\Pi$, there exists some stationary state-action probability vectors $\{\theta_*^{(k)}\}_{k=1}^K$ such that $\theta_*^{(k)}\in\Theta^{(k)}$, 
$$F_T(d_\Pi,\Pi) = \sum_{t=0}^{T-1}\sum_{k=1}^K\l\langle\expect{\mathbf{f}_t},\theta^{(k)}_*\r\rangle,$$ 
and $G_{i,T}(d_\Pi,\Pi) = \sum_{t=0}^{T-1}\sum_{k=1}^K\l\langle\expect{\mathbf{g}_{i,t}},\theta^{(k)}_*\r\rangle$. As a consequence, $(d_\Pi,\Pi)\in\mathcal{G}$ implies $G_{i,T}(d_\Pi,\Pi) = \sum_{t=0}^{T-1}\sum_{k=1}^K\l\langle\expect{\mathbf{g}_{i,t}},\theta^{(k)}_*\r\rangle\leq0,~\forall i\in\{1,2,\cdots,m\}$ and it follows $\{\theta_*^{(k)}\}_{k=1}^K$ is in the imaginary constraint set $\overline{\mathcal{G}}$ defined in \eqref{stat-constraint}. Thus, we are in a good shape applying Theorem \ref{thm:stationary-regret} from imaginary systems.

We then split $F_T(d_0,\mathscr{P}) - F_T(d_\Pi,\Pi)$ into two terms:
\begin{align*}
F_T(d_0,\mathscr{P}) - F_T(d_0,\Pi)
\leq&
\underbrace{\left|\expect{\left.\sum_{t=0}^{T-1}\sum_{k=1}^Kf^{(k)}_t(a_t^{(k)},s_t^{(k)})\right|~d_0,\mathscr{P}} 
- \sum_{t=0}^{T-1}\sum_{k=1}^K\expect{\l\langle\mathbf{f}_t^{(k)},\theta_t^{(k)}\r\rangle}\right|}_{\text{(I)}}\\
&+ \underbrace{ \sum_{t=0}^{T-1}\sum_{k=1}^K\l(\expect{\l\langle\mathbf{f}_t^{(k)},\theta_t^{(k)}\r\rangle} - \l\langle\expect{\mathbf{f}_t},\theta^{(k)}_*\r\rangle\r) }_{\text{(II)}}.
\end{align*}
By Theorem \ref{thm:stationary-regret}, we get 
\begin{equation}\label{final-1}
(\text{II})\leq \l(2K + \frac{\Psi^2}{2}\sum_{k=1}^K\l|\mathcal{S}^{(k)}\r|\l|\mathcal{A}^{(k)}\r|+ \frac52mK^2\Psi^2\r)\sqrt{T}.
\end{equation}

We then bound (I). Consider each time slot $t\in\{0,1,\cdots,T-1\}$.  We have
\begin{align*}
\expect{\l\langle\mathbf{f}_t^{(k)},\theta_t^{(k)}\r\rangle }
=\sum_{s\in\mathcal{S}^{(k)}}\sum_{a\in\mathcal{A}^{(k)}}\expect{ d_{\pi^{(k)}_t}(s) \pi^{(k)}_t(a|s)f_t^{(k)}(a,s)}
\end{align*}

\[
\expect{\left.f_t^{(k)}(a_t^{(k)},s_t^{(k)})\right|~d_0,\mathscr{P}}
=\sum_{s\in\mathcal{S}^{(k)}} \sum_{a\in\mathcal{A}^{(k)}} \expect{v^{(k)}_t(s)\pi^{(k)}_t(a|s)f_t^{(k)}(a,s)},
\]
where the first equality follows from the definition of $\theta_t^{(k)}$ and the second equality follows from the following: Given a specific function path $\mathcal{F}_T$, the policy 
$\pi_t^{(k)}$ and the true state distribution $v_t^{(k)}$ are fixed. Thus, we have,
\[
\expect{\left.f_t^{(k)}(a_t^{(k)},s_t^{(k)})\right|~d_0,\mathscr{P},\mathcal{F}_T} 
= \sum_{s\in\mathcal{S}^{(k)}} \sum_{a\in\mathcal{A}^{(k)}}v^{(k)}_t(s)\pi^{(k)}_t(a|s)f_t^{(k)}(a,s).
\]
Taking the full expectation regarding the function path gives the result.
Thus,
\begin{align*}
&\left|\expect{\left.f_t^{(k)}(a_t^{(k)},s_t^{(k)})\right|~d_0,\mathscr{P}}-\expect{\l\langle\mathbf{f}_t^{(k)},\theta_t^{(k)}\r\rangle}\right|\\
\leq&\left| \sum_{s\in\mathcal{S}^{(k)}}\sum_{a\in\mathcal{A}^{(k)}}\expect{ \l(v_t^{(k)}(s) - d_{\pi_t^{(k)}}(s)\r)\pi^{(k)}_t(a|s) } \right| \Psi\\
\leq& \expect{\left\|  v_t^{(k)} - d_{\pi_t^{(k)}} \right\|_1} \Psi\\
\leq& \frac{\tau r\l(1 + C\sqrt{m}\r)\l|\mathcal{A}^{(k)}\r|\l|\mathcal{S}^{(k)}\r|\Psi^2 }{2\sqrt{T}}
+2e^{-\frac{t}{\tau r}+1}\Psi
\end{align*}
where the last inequality follows from Lemma \ref{lemma:distance-dist}.
Thus, it follows,

\begin{align}
\text{(I)}\leq& \sum_{t=0}^{T-1}\sum_{k=1}^K\l(\frac{\tau r\l(1 + C\sqrt{m}\r)\l|\mathcal{A}^{(k)}\r|\l|\mathcal{S}^{(k)}\r|\Psi^2 }{2\sqrt{T}}
+2e^{-\frac{t}{\tau r}+1}\Psi\r)   \nonumber\\
\leq& \sum_{k=1}^K\l(\tau r\l(1 + C\sqrt{m}\r)\l|\mathcal{A}^{(k)}\r|\l|\mathcal{S}^{(k)}\r|\Psi^2 \r) \sqrt{T}
+ 2\Psi K\int_{t=0}^{T-1}e^{-\frac{x}{\tau r}+1}dx    \nonumber\\
\leq& \tau r\Psi^2\l(1 + C\sqrt{m}\r)\sum_{k=1}^K\l|\mathcal{A}^{(k)}\r|\l|\mathcal{S}^{(k)}\r| \cdot \sqrt{T}
+2e\Psi K\tau r. \label{final-2}
\end{align}

Overall, combining \eqref{final-1},\eqref{final-2} and substituting the constant $C = C(m,K,\Psi,\eta)$ defined in \eqref{expected-Q-bound} 
gives the objective regret bound.

For the constraint violation, we have
\[
G_{i,T}(d_0,\mathscr{P}) = \underbrace{\expect{\left.\sum_{t=0}^{T-1}\sum_{k=1}^Kg_{i,t}^{(k)}(a_t,s_t)\right|~d_0,\mathscr{P}} 
- \sum_{t=1}^T\sum_{k=1}^K\l\langle\expect{\mathbf{g}_{i,t}^{(k)}},\theta_t\r\rangle}_{\text{(IV)}} 
+ \underbrace{\sum_{t=1}^T\sum_{k=1}^K\l\langle\expect{\mathbf{g}_{i,t}^{(k)}},\theta_t\r\rangle}_{\text{(V)}}.
\]
The term (V) can be readily bounded using Theorem \ref{thm:constraint-violation} as 
\[
\sum_{t=0}^{T-1}\expect{\sum_{k=1}^K\l\langle \mathbf{g}_{i,t}^{(k)},\theta_t^{(k)} \r\rangle}   
\leq \l(C+\sum_{k=1}^K\sqrt{m|\mathcal{A}^{(k)}||\mathcal{S}^{(k)}|}\Psi C
+\sum_{k=1}^K|\mathcal{A}^{(k)}||\mathcal{S}^{(k)}|\Psi^2\r)\sqrt{T}.
\]
For the term (IV), we have
\begin{align*}
\expect{\l\langle\mathbf{g}_{i,t}^{(k)},\theta_t^{(k)}\r\rangle }
=\sum_{s\in\mathcal{S}^{(k)}}\sum_{a\in\mathcal{A}^{(k)}}\expect{ d_{\pi^{(k)}_t}(s) \pi^{(k)}_t(a|s)g_{i,t}^{(k)}(a,s)}
\end{align*}

\[
\expect{\left.g_{i,t}^{(k)}(a_t^{(k)},s_t^{(k)})\right|~d_0,\mathscr{P}}
=\sum_{s\in\mathcal{S}^{(k)}} \sum_{a\in\mathcal{A}^{(k)}} \expect{v^{(k)}_t(s)\pi^{(k)}_t(a|s)g_{i,t}^{(k)}(a,s)},
\]
where the first equality follows from the definition of $\theta_t^{(k)}$ and the second equality follows from the following:
Given a specific function path $\mathcal{F}_T$, the policy $\pi_t^{(k)}$ and the true state distribution $v_t^{(k)}$ are fixed. Thus, we have,
\[
\expect{\left.g_t^{(k)}(a_t^{(k)},s_t^{(k)})\right|~d_0,\mathscr{P},\mathcal{F}_T} 
= \sum_{s\in\mathcal{S}^{(k)}} \sum_{a\in\mathcal{A}^{(k)}}v^{(k)}_t(s)\pi^{(k)}_t(a|s)g_t^{(k)}(a,s).
\]
Taking the full expectation regarding the function path gives the result.
Then, repeat the same proof as that of \eqref{final-2} gives
\[
\text{(IV)}\leq \tau r\Psi^2\l(1 + C\sqrt{m}\r)\sum_{k=1}^K\l|\mathcal{A}^{(k)}\r|\l|\mathcal{S}^{(k)}\r| \cdot \sqrt{T}
+2e\Psi K\tau r. 
\]
This finishes the proof of constraint violation.
\end{proof}

\section{A more general regret bound against policies with arbitrary starting state}\label{sec:perturb}
Recall that Theorem \ref{thm:final-1} compares the proposed algorithm with any randomized stationary policy $\Pi$ starting from its stationary state distribution $d_\Pi$, so that $(d_\Pi,\Pi)\in\mathcal{G}$. In this section, we generalize Theorem \ref{thm:final-1} and obtain a bound of the regret against all $(d_0,\Pi)\in\mathcal{G}$ where $d_0$ is an arbitrary starting state distribution (not necessarily the stationary state distribution). The main technical difficulty doing such a generalization is as follows: For any randomized stationary policy $\Pi$ such that $(d_0,\Pi)\in\mathcal{G}$, let  
$\{\theta_*^{(k)}\}_{k=1}^K$ be the stationary state-action probabilities such that $\theta_*^{(k)}\in\Theta^{(k)}$ and $G_{i,T}(d_\Pi,\Pi) = \sum_{t=0}^{T-1}\sum_{k=1}^K\l\langle\expect{\mathbf{g}_{i,t}},\theta^{(k)}_*\r\rangle$. For some finite horizon $T$, there might exist some ``low-cost" starting state distribution $d_0$ such that 
$G_{i,T}(d_0,\Pi) < G_{i,T}(d_\Pi,\Pi)$ for some $i\in\{1,2,\cdots,m\}$. As a consequence, one coud have 
\[
G_{i,T}(d_0,\Pi) \leq 0,~\text{and}~\sum_{t=0}^{T-1}\sum_{k=1}^K\l\langle\expect{\mathbf{g}_{i,t}},\theta^{(k)}_*\r\rangle>0.
\]
This implies although $(d_0,\Pi)$ is feasible for our true system, its stationary state-action probabilities $\{\theta_*^{(k)}\}_{k=1}^K$ can be \textit{infeasible} with respect to the imaginary constraint set \eqref{stat-constraint}, and all our analysis so far fails to cover such randomized stationary policies.

To resolve this issue, we have to ``enlarge'' the imaginary constraint set \eqref{stat-constraint} so as to cover all state-action probabilities $\{\theta_*^{(k)}\}_{k=1}^K$ arising from any randomized stationary policy $\Pi$ such that $(d_0,\Pi)\in\mathcal{G}$. But a perturbation of constraint set would result in a perturbation of objective in the imaginary system also. Our main goal in this section is to bound such a perturbation and show that the perturbation bound leads to the final $\mathcal{O}(\sqrt{T})$ regret bound.

\subsubsection{A relaxed constraint set}
We begin with a supporting lemma on the uniform mixing time bound over all joint randomized stationary policies. The proof is given in Appendix \ref{proof:section5}.
\begin{lemma}\label{lemma:small-bound}
Consider any randomized stationary policy $\Pi$ in \eqref{main-constraint} with arbitrary starting state distribution $d_0\in\mathcal{S}^{(1)}\times\cdots\times\mathcal{S}^{(K)}$. Let $\mathbf{P}_\Pi$ be the corresponding transition matrix on the product state space. Then, the following holds
\begin{equation}\label{joint-mixing}
\l\| (d_0-d_\Pi)\l(\mathbf{P}_{\Pi}\r)^t \r\|_1 \leq 2e^{(r_1-t)/r_1}, \forall t\in\{0,1,2,\cdots\},
\end{equation}
where $r_1$ is fixed positive constant independent of $\Pi$.
\end{lemma}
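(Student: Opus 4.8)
The plan is to reduce the claim to a \emph{uniform Doeblin minorization} for a fixed power of $\mathbf{P}_\Pi$ whose exponent does not depend on $\Pi$, and then obtain the geometric decay from the standard Dobrushin contraction argument together with an interpolation over $t$ modulo that exponent. Concretely, the first step is to produce an integer $r_0\geq 1$ and a constant $\delta>0$, both independent of $\Pi$, such that $\l(\mathbf{P}_\Pi\r)^{r_0}(\mathbf{s},\mathbf{s}')\geq\delta$ for all joint states $\mathbf{s},\mathbf{s}'$ on the product space $\mathcal{S}^{(1)}\times\cdots\times\mathcal{S}^{(K)}$ and all joint randomized stationary policies $\Pi$.

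To establish this uniform minorization I would argue by compactness. The set of joint randomized stationary policies is a product of probability simplices, hence compact, and the map $\Pi\mapsto\mathbf{P}_\Pi$ is continuous (indeed linear in the entries of $\Pi$, using Assumption \ref{assumption:indep-trans} to write $P_\Pi(\mathbf{s},\mathbf{s}')=\sum_{\mathbf{a}}\Pi(\mathbf{a}\,|\,\mathbf{s})\prod_k P_{a^{(k)}}^{(k)}(s^{(k)},s'^{(k)})$). By Lemma \ref{lemma:prod-chain} every $\mathbf{P}_\Pi$ is irreducible and aperiodic on the product space, so for each $\Pi$ there is $r(\Pi)$ with $\l(\mathbf{P}_\Pi\r)^{r(\Pi)}$ strictly positive; since ``all entries strictly positive'' is an open condition, $\l(\mathbf{P}_{\Pi'}\r)^{r(\Pi)}$ remains strictly positive for $\Pi'$ in a neighborhood of $\Pi$, and a finite subcover yields $r_0:=\max_i r(\Pi_i)$. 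Because irreducibility forbids an identically zero column, strict positivity of one power propagates to every larger power, so $\l(\mathbf{P}_\Pi\r)^{r_0}$ is strictly positive for \emph{every} $\Pi$; then $\Pi\mapsto\min_{\mathbf{s},\mathbf{s}'}\l(\mathbf{P}_\Pi\r)^{r_0}(\mathbf{s},\mathbf{s}')$ is continuous and strictly positive on a compact set, giving $\delta>0$.

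Given the minorization, let $N=\prod_{k=1}^K|\mathcal{S}^{(k)}|$. Since each row of the stochastic matrix $\l(\mathbf{P}_\Pi\r)^{r_0}$ has minimum entry at most $1/N$, we have $N\delta\le 1$, and the Dobrushin coefficient of $\l(\mathbf{P}_\Pi\r)^{r_0}$ is at most $\rho_0:=1-N\delta<1$; hence $\|u\l(\mathbf{P}_\Pi\r)^{r_0}\|_1\leq\rho_0\|u\|_1$ for every zero-sum signed vector $u$. Pick the smallest $m$ with $\rho_0^m\leq e^{-1}$ and set $r_1:=mr_0$, so $\|u\l(\mathbf{P}_\Pi\r)^{r_1}\|_1\leq e^{-1}\|u\|_1$. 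By Lemma \ref{lemma:prod-chain}, $d_\Pi$ is the unique stationary distribution, so $d_\Pi\l(\mathbf{P}_\Pi\r)^t=d_\Pi$ and $u:=d_0-d_\Pi$ is zero-sum with $\|u\|_1\leq2$. Writing $t=nr_1+s$ with $0\leq s<r_1$ and using that stochastic matrices are $\ell_1$-nonexpansive, $\|(d_0-d_\Pi)\l(\mathbf{P}_\Pi\r)^t\|_1\leq e^{-n}\,\|(d_0-d_\Pi)\l(\mathbf{P}_\Pi\r)^s\|_1\leq 2e^{-n}$; since $n=\lfloor t/r_1\rfloor>t/r_1-1$ this is at most $2e^{\,1-t/r_1}=2e^{(r_1-t)/r_1}$, which is the stated bound with $r_1$ depending only on the MDP data and not on $\Pi$.

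The main obstacle is precisely the uniformity in $\Pi$: a single fixed policy gives geometric mixing at once from Lemma \ref{lemma:prod-chain}, but the policy set is a continuum, so one cannot merely take a maximum of finitely many mixing times; the compactness/continuity route above is what resolves this. A more hands-on alternative would try to derive the product-chain minorization directly from Assumption \ref{assumption-1}, but this runs into the difficulty that a non-separable $\Pi$ does not factor into per-MDP pure policies (the effective action law on MDP $k$ depends on the full joint state), so the per-MDP positivity in Assumption \ref{assumption-1} does not transfer to the product chain without substantial extra bookkeeping.
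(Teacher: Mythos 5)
Your proof is correct, but the way you establish the uniform minorization differs from the paper's. You obtain the policy-independent power $r_0$ and entrywise lower bound $\delta$ by a compactness argument: the set of joint randomized stationary policies is a compact product of simplices, $\Pi\mapsto\mathbf{P}_\Pi$ is continuous, strict positivity of a fixed power is an open condition, a finite subcover plus the upward-propagation of positivity (no zero column under irreducibility, via Lemma \ref{lemma:prod-chain}) gives a uniform $r_0$, and continuity of the minimum entry on a compact set gives $\delta>0$; then a Dobrushin-coefficient contraction and interpolation over $t\bmod r_1$ finish the bound. The paper instead exploits that there are only \emph{finitely many joint pure policies}: by Lemma \ref{lemma:prod-chain} each joint pure policy is ergodic on the product space, so a maximum $\tau_1$ over this finite set and a uniform least entry $\delta$ exist; any randomized stationary $\Pi$ has $\mathbf{P}_\Pi=\sum_i\alpha_i\mathbf{P}_{\Pi_i}$ over joint pure policies, and the inequality $(\sum_i\alpha_i\mathbf{P}_{\Pi_i})^{\tau_1}\geq\sum_i\alpha_i^{\tau_1}(\mathbf{P}_{\Pi_i})^{\tau_1}$ together with a power-mean bound yields an \emph{explicit} uniform lower bound $\delta/N^{\tau_1-1}$ ($N$ here being the number of pure policies), after which an explicit Doeblin decomposition $(\mathbf{P}_\Pi)^{\tau_1}=\varepsilon\mathbf{D}_\Pi+(1-\varepsilon)\mathbf{Q}$ is iterated by induction. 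The trade-off: your route is shorter and non-constructive (no explicit $r_0,\delta$, which is all the lemma needs), while the paper's route produces explicit constants from the pure-policy data. One side remark: your stated objection to the ``hands-on alternative'' slightly misreads it --- the paper never factors a joint policy into per-MDP pure policies at this point (that bookkeeping was already absorbed into the proof of Lemma \ref{lemma:prod-chain}); it only decomposes $\mathbf{P}_\Pi$ as a convex combination of \emph{joint} pure policies on the product chain, which is immediate and sidesteps the separability issue you raise.
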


The following lemma shows a relaxation of $\mathcal{O}(1/T)$ on the imaginary constraint set \eqref{stat-constraint} is enough to cover all the $\{\theta_*^{(k)}\}_{k=1}^K$ discussed at the beginning of this section. The proof is given in Appendix \ref{proof:section5}.

\begin{lemma}\label{lemma:perturbed}
For any $T\in\{1,2,\cdots\}$ and any randomized stationary policies $\Pi$ in \eqref{main-constraint}, with arbitrary starting state distribution $d_0\in\mathcal{S}^{(1)}\times\cdots\times\mathcal{S}^{(K)}$ and stationary state-action probability $\{\theta_*^{(k)}\}_{k=1}^K$,
\begin{align}
&\sum_{t=0}^{T-1}\l| \expect{\sum_{k=1}^K f_{t}^{(k)}(a^{(k)}_t,s^{(k)}_t) \Big| d_0,\Pi }  -  
\sum_{k=1}^K\l\langle \expect{\mathbf{f}^{(k)}_{t}}, \theta_*^{(k)} \r\rangle  \r| \leq  C_1K\Psi  \label{diff-1}\\
& \sum_{t=0}^{T-1}\l| \expect{\sum_{k=1}^K g_{i,t}^{(k)}(a^{(k)}_t,s^{(k)}_t) \Big| d_0,\Pi }  -  
\sum_{k=1}^K\l\langle \expect{\mathbf{g}^{(k)}_{i,t}}, \theta_*^{(k)} \r\rangle  \r| \leq C_1K\Psi \label{diff-2}
\end{align}
where $C_1$ is an absolute constant. In particular, $\{\theta_*^{(k)}\}_{k=1}^K$ is contained in the following 
relaxed constraint set 
\[
\overline{\mathcal{G}}^+:=\l\{ \theta^{(k)}\in\Theta^{(k)},~k=1,2,\cdots,K:~\sum_{k=1}^K\l\langle \expect{\mathbf{g}^{(k)}_{i,t}}, \theta^{(k)} \r\rangle
\r.
\l.\leq\frac{C_1K\Psi}{T}
,i=1,2,\cdots,m  \r\}.
\]
\end{lemma}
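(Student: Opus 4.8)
The plan is to write the gap between the finite-horizon expected cost of $\Pi$ started at the arbitrary distribution $d_0$ and the stationary cost $\sum_k \langle \mathbb{E}[\mathbf{f}_t^{(k)}],\theta_*^{(k)}\rangle$ as a sum over $t$ of total-variation distances between the transient joint state distribution and the stationary one, and then to sum these distances to a constant independent of $T$ by appealing to the uniform geometric mixing bound in Lemma \ref{lemma:small-bound}.

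First I would note that a fixed randomized stationary policy $\Pi$ never consults the functions $f_t^{(k)},g_{i,t}^{(k)}$, and by Assumption \ref{assumption:indep-trans} the transition probabilities are likewise function-independent; hence the joint state-action trajectory $\{(\mathbf{a}_t,\mathbf{s}_t)\}$ produced by $\Pi$ from $d_0$ is independent of the paths $\{\mu_t\},\{\omega_t\}$. Writing $d_t = d_0 (\mathbf{P}_\Pi)^t$ for the transient joint state distribution, letting $\Phi_t(\mathbf{a},\mathbf{s}) = d_t(\mathbf{s})\Pi(\mathbf{a}\mid\mathbf{s})$ and $\Phi_*(\mathbf{a},\mathbf{s}) = d_\Pi(\mathbf{s})\Pi(\mathbf{a}\mid\mathbf{s})$, and denoting by $\bar\theta_t^{(k)}$ the marginal of $\Phi_t$ on the $k$-th MDP (so that $\theta_*^{(k)}$ is the marginal of $\Phi_*$, as in Lemma \ref{lemma:prod-chain}), the independence gives $\mathbb{E}[f_t^{(k)}(a_t^{(k)},s_t^{(k)})\mid d_0,\Pi] = \langle \mathbb{E}[\mathbf{f}_t^{(k)}],\bar\theta_t^{(k)}\rangle$ and the analogous identity for $g_{i,t}^{(k)}$. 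Consequently the left-hand side of \eqref{diff-1} equals $\sum_{t=0}^{T-1}\big|\sum_k \langle \mathbb{E}[\mathbf{f}_t^{(k)}],\bar\theta_t^{(k)}-\theta_*^{(k)}\rangle\big|$ and similarly for \eqref{diff-2}.

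Next I would bound each inner product by $\|\mathbb{E}[\mathbf{f}_t^{(k)}]\|_\infty\,\|\bar\theta_t^{(k)}-\theta_*^{(k)}\|_1 \le \Psi\,\|\bar\theta_t^{(k)}-\theta_*^{(k)}\|_1$ using \eqref{function-bounds}, and observe that marginalization is non-expansive in $\ell_1$ while $\|\Phi_t-\Phi_*\|_1 = \sum_{\mathbf{s}}|d_t(\mathbf{s})-d_\Pi(\mathbf{s})|\sum_{\mathbf{a}}\Pi(\mathbf{a}\mid\mathbf{s}) = \|d_t - d_\Pi\|_1$; hence $\|\bar\theta_t^{(k)}-\theta_*^{(k)}\|_1 \le \|(d_0-d_\Pi)(\mathbf{P}_\Pi)^t\|_1$. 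Plugging in Lemma \ref{lemma:small-bound} and summing the geometric series, $\sum_{t=0}^{T-1} 2e^{(r_1-t)/r_1} \le 2e\sum_{t\ge 0}e^{-t/r_1} \le 2e(1+r_1) =: C_1$, which is independent of $T$ and of $\Pi$; multiplying by $K$ and $\Psi$ yields \eqref{diff-1} and \eqref{diff-2}. For the relaxed constraint set, since $(d_0,\Pi)\in\mathcal{G}$ we have $G_{i,T}(d_0,\Pi)\le 0$; combining this with \eqref{diff-2} (and folding the $\mathcal{O}(K\Psi)$ discrepancy between the index ranges $\sum_{t=1}^{T}$ and $\sum_{t=0}^{T-1}$ into $C_1$), and using that $\mathbb{E}[\mathbf{g}_{i,t}^{(k)}]$ is the same for every $t$ because $\{\omega_t\}$ is i.i.d., gives $T\sum_k\langle \mathbb{E}[\mathbf{g}_i^{(k)}],\theta_*^{(k)}\rangle = \sum_{t=0}^{T-1}\sum_k\langle \mathbb{E}[\mathbf{g}_{i,t}^{(k)}],\theta_*^{(k)}\rangle \le C_1 K\Psi$; dividing by $T$ places $\{\theta_*^{(k)}\}_{k=1}^K$ in $\overline{\mathcal{G}}^+$. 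The only delicate point is the bookkeeping in the first step — confirming the state-action trajectory is independent of the function paths and that passing to marginals does not enlarge the $\ell_1$ distance — after which the argument reduces to summing the geometric mixing bound; the genuine work is carried by Lemma \ref{lemma:small-bound}, which is already in hand.
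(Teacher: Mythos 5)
Your proposal is correct and follows essentially the same route as the paper's proof: express both expectations as integrals of the bounded functions against the transient joint state-action distribution $v_t(\mathbf{s})\Pi(\mathbf{a}\mid\mathbf{s})$ versus the stationary one, reduce the per-slot gap to $K\Psi\,\|v_t-d_\Pi\|_1$, and sum the geometric mixing bound of Lemma \ref{lemma:small-bound} to get a $T$-independent constant, then combine with $G_{i,T}(d_0,\Pi)\leq 0$ for membership in $\overline{\mathcal{G}}^+$. Your detour through the per-MDP marginals $\bar\theta_t^{(k)}$ and the $\ell_1$ non-expansiveness of marginalization is just slightly different bookkeeping for the same bound, and your explicit handling of the index-range mismatch is a harmless refinement the paper glosses over.
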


\subsubsection{Best stationary performance over the relaxed constraint set}
Recall that the best stationary performance in hindsight over all randomized stationary policies in the constraint set $\overline{\mathcal{G}}$ can be obtained as the minimum achieved by the following linear program.

\begin{align}
\min&~~\frac1T\sum_{t=0}^{T-1}\sum_{k=1}^K\l\langle \expect{\mathbf{f}_t^{(k)}}, \theta^{(k)}  \r\rangle   \label{ori-lp1}\\
s.t.&~~ \sum_{k=1}^K\l\langle \expect{\mathbf{g}_{i,t}^{(k)}}, \theta^{(k)}  \r\rangle\leq 0,~~i=1,2,\cdots,m.  \label{ori-lp2}
\end{align}

On the other hand, if we consider all the randomized stationary policies contained in the original constraint set \eqref{main-constraint}, then, By Lemma \ref{lemma:perturbed}, the relaxed constraint set $\overline{\mathcal{G}}$ contains all such policies and 
the best stationary performance over this relaxed set comes from the minimum achieved by the following perturbed linear program:
\begin{align}
\min&~~\frac1T\sum_{t=0}^{T-1}\sum_{k=1}^K\l\langle \expect{\mathbf{f}_t^{(k)}}, \theta^{(k)}  \r\rangle  \label{relax-lp1}\\
s.t.&~~ \sum_{k=1}^K\l\langle \expect{\mathbf{g}_{i,t}^{(k)}}, \theta^{(k)}  \r\rangle\leq \frac{C_1K\Psi}{T},~~i=1,2,\cdots,m.  \label{relax-lp2}
\end{align}

We aim to show that the minimum achieved by \eqref{relax-lp1}-\eqref{relax-lp2} is not far away from that of \eqref{ori-lp1}-\eqref{ori-lp2}. In general, such a conclusion is not true due to the unboundedness of Lagrange multipliers in constrained optimization. However, since Slater's condition holds in our case, the perturbation can be bounded via the following well-known Farkas' lemma (\cite{bertsekas2009convex}):

\begin{lemma}[Farkas' Lemma]\label{lemma:Farkas}
Consider a convex program with objective $f(x)$ and constraint function $g_i(x),~i=1,2,\cdots,m$:
\begin{align}
\min&~~f(x), \label{cp:1}   \\
s.t.&~~g_i(x)\leq b_i,~~i=1,2,\cdots,m,\\
&~~x\in\mathcal{X},  \label{cp:3}
\end{align}
for some convex set $\mathcal{X}\subseteq\mathbb{R}^n$. Let $x^*$ be one of the solutions to the above convex program. Suppose there exists $\widetilde{x}\in\mathcal{X}$ such that $g_i\l(\widetilde{x}\r)<0,~\forall i\in\{1,2,\cdots,m\}$. Then, there exists a separation hyperplane parametrized by $(1,\mu_1,\mu_2,\cdots,\mu_m)$ such that $\mu_i\geq0$ and
\[
f(x) + \sum_{i=1}^m\mu_ig_i(x)\geq f(x^*) + \sum_{i=1}^m\mu_ib_i,~~\forall x\in\mathcal{X}.
\]
\end{lemma}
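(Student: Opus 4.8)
The plan is to derive this (classical) existence-of-Lagrange-multipliers statement from the supporting hyperplane theorem, using Slater's strict feasibility only once, at the very end, to guarantee that the supporting hyperplane is non-vertical. First I would form the \emph{value set}
\[
\mathcal{V} := \left\{ (w_0, w_1, \dots, w_m) \in \mathbb{R}^{m+1} : \ \exists\, x \in \mathcal{X} \text{ with } f(x) \le w_0 \text{ and } g_i(x) \le w_i,\ i = 1, \dots, m \right\}.
\]
Convexity of $\mathcal{X}$ and of $f, g_1, \dots, g_m$ makes $\mathcal{V}$ a convex set, and $\mathcal{V}$ is clearly \emph{comprehensive} (if $w \in \mathcal{V}$ and $w' \ge w$ coordinatewise, then $w' \in \mathcal{V}$ with the same witness $x$), hence $\mathcal{V}$ is nonempty with nonempty interior. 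Put $p^* := (f(x^*), b_1, \dots, b_m)$. Feasibility of $x^*$ gives $p^* \in \mathcal{V}$, while optimality of $x^*$ gives $p^* \notin \mathrm{int}(\mathcal{V})$: if $(f(x^*) - \varepsilon, b_1, \dots, b_m) \in \mathcal{V}$ for some $\varepsilon > 0$, its witness would be a feasible point of \eqref{cp:1}--\eqref{cp:3} with objective strictly less than $f(x^*)$, a contradiction.

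Next I would apply the supporting hyperplane theorem at the boundary point $p^*$: there is a nonzero $\mu = (\mu_0, \mu_1, \dots, \mu_m)$ such that
\[
\mu_0 w_0 + \sum_{i=1}^m \mu_i w_i \ \ge\ \mu_0 f(x^*) + \sum_{i=1}^m \mu_i b_i \qquad \text{for all } (w_0, \dots, w_m) \in \mathcal{V}.
\]
Comprehensiveness forces $\mu_i \ge 0$ for all $i \in \{0, 1, \dots, m\}$, since sending one coordinate $w_i \to +\infty$ keeps the point in $\mathcal{V}$ and a negative $\mu_i$ would send the left side to $-\infty$. The only remaining point is to exclude $\mu_0 = 0$. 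If $\mu_0 = 0$, then $(\mu_1, \dots, \mu_m) \ne 0$; evaluating the inequality at $(f(\widetilde{x}), g_1(\widetilde{x}), \dots, g_m(\widetilde{x})) \in \mathcal{V}$ yields $\sum_{i=1}^m \mu_i g_i(\widetilde{x}) \ge \sum_{i=1}^m \mu_i b_i \ge 0$ (here $b_i \ge 0$, which holds in the intended application where $b_i = C_1 K \Psi / T$), whereas $g_i(\widetilde{x}) < 0$ for every $i$ and some $\mu_i > 0$ force $\sum_{i=1}^m \mu_i g_i(\widetilde{x}) < 0$ --- a contradiction. Hence $\mu_0 > 0$.

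Dividing through by $\mu_0$ and relabeling $\mu_i / \mu_0$ as $\mu_i$, the supporting inequality specialized to points $(f(x), g_1(x), \dots, g_m(x))$ with $x \in \mathcal{X}$ (all of which lie in $\mathcal{V}$) gives exactly $f(x) + \sum_{i=1}^m \mu_i g_i(x) \ge f(x^*) + \sum_{i=1}^m \mu_i b_i$ for all $x \in \mathcal{X}$, as claimed. Since this is a textbook result there is no deep obstacle; the only steps needing care are the verification that $p^*$ is a boundary (not interior) point of the convex value set $\mathcal{V}$, and the non-verticality argument $\mu_0 > 0$, which is the unique place where Slater's condition is used.
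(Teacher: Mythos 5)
Your proof is correct and is exactly the standard supporting-hyperplane argument behind this classical result: the paper itself offers no proof (it cites Bertsekas), and the geometric remark immediately following the lemma — a non-vertical hyperplane supported at $(f(x^*),b_1,\cdots,b_m)$ containing the value set on one side — is precisely the construction you carry out, with Slater's condition used only to rule out $\mu_0=0$. Your aside that the non-verticality step needs $b_i\geq 0$ (so that $g_i(\widetilde{x})<0$ actually gives strict feasibility $g_i(\widetilde{x})<b_i$) is a fair catch — as literally stated the lemma's hypothesis is slightly too weak when some $b_i<0$ — but it is harmless here since the paper applies it only with $b_i=0$ or $b_i=C_1K\Psi/T>0$.
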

The parameter $\mu=(\mu_1,\mu_2,\cdots,\mu_m)$ is usually referred to as a Lagrange multiplier.
From the geometric perspective, Farkas' Lemma states that if Slater's condition holds, then, there exists a non-vertical separation hyperplane supported at $\Big(f(x^*),b_1,\cdots,b_m\Big)$ and contains the set 
$\l\{\Big(f(x),g_1(x),\cdots,g_m(x)\Big),~x\in\mathcal{X}\r\}$ on one side. Thus, in order to bound the perturbation of objective with respect to the perturbation of constraint level, we need to bound the slope of the supporting hyperplane from above, which boils down to controlling the magnitude of the Lagrange multiplier.
This is summarized in the following lemma:

\begin{lemma}[Lemma 1 of \cite{nedic2009approximate}]\label{lemma:bound-lagrange}
Consider the convex program \eqref{cp:1}-\eqref{cp:3}, and define the Lagrange dual function 
$$q(\mu)=\inf_{x\in\mathcal{X}}\l\{ f(x) + \sum_{i=1}^m\mu_i(g_i(x)-b_i) \r\}.$$ 
Suppose there exists $\widetilde{x}\in\mathcal{X}$ such that \xcolor{ $g_i\l(\widetilde{x}\r)-b_i\leq-\eta,~\forall i\in\{1,2,\cdots,m\}$} for some positive constant $\eta>0$. Then, the level set 
$\mathcal{V}_{\bar\mu}=\l\{ \mu_1,\mu_2,\cdots,\mu_m\geq0,~ q(\mu)\geq q(\bar\mu)\r\}$ is bounded for any nonnegative $\bar\mu$. Furthermore, we have
$$\max_{\mu\in\mathcal{V}_{\bar\mu}}  \|\mu\|_2\leq\frac{1}{\min_{1\leq i \leq m}\l\{-g_i(\widetilde{x})+b_i\r\}}\l(  f(\widetilde{x})-q(\bar\mu)   \r).$$
\end{lemma}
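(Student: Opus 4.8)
The plan is to prove the bound directly from the definition of the dual function by evaluating it at the Slater point $\widetilde{x}$. Set $\eta_0 := \min_{1\le i\le m}\{-g_i(\widetilde{x})+b_i\}$, which is strictly positive by the Slater assumption (indeed $\eta_0 \ge \eta > 0$). The whole argument rests on the fact that $\widetilde{x}$ is \emph{strictly} feasible, so plugging it into the infimum defining $q(\mu)$ produces a term that is strongly negative in $\mu$.

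First I would fix an arbitrary $\mu = (\mu_1,\ldots,\mu_m) \in \mathcal{V}_{\bar\mu}$, so that $\mu_i \ge 0$ for all $i$ and $q(\mu) \ge q(\bar\mu)$. Using the definition of $q$ as an infimum over $x \in \mathcal{X}$ together with $\widetilde{x} \in \mathcal{X}$, one obtains the chain
\[
q(\bar\mu) \;\le\; q(\mu) \;\le\; f(\widetilde{x}) + \sum_{i=1}^m \mu_i\bigl(g_i(\widetilde{x}) - b_i\bigr).
\]
Since $\mu_i \ge 0$ and $g_i(\widetilde{x}) - b_i \le -\eta_0$ for every $i$, the sum is bounded above by $-\eta_0\sum_{i=1}^m \mu_i = -\eta_0\|\mu\|_1$, and $\|\mu\|_1 \ge \|\mu\|_2$ then gives $-\eta_0\|\mu\|_1 \le -\eta_0\|\mu\|_2$. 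Combining, $q(\bar\mu) \le f(\widetilde{x}) - \eta_0\|\mu\|_2$, and rearranging yields $\|\mu\|_2 \le \bigl(f(\widetilde{x}) - q(\bar\mu)\bigr)/\eta_0$. As $\mu$ was an arbitrary element of $\mathcal{V}_{\bar\mu}$, this is exactly the claimed uniform bound, and in particular $\mathcal{V}_{\bar\mu}$ is bounded.

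There is no serious obstacle here; the only points needing minor care are: (i) $q(\bar\mu)$ should be a finite number (which holds in the intended application because $f$ and the $g_i$ are bounded, so $q(\bar\mu)$ is finite; if $q(\bar\mu)=-\infty$ the stated inequality is vacuously true and $\mathcal{V}_{\bar\mu}=\mathbb{R}^m_{\ge 0}$), and (ii) the elementary inequality $\|\mu\|_1 \ge \|\mu\|_2$. Conceptually, the statement is just the quantitative form of the geometric content of Farkas' Lemma (Lemma \ref{lemma:Farkas}): Slater's condition forces the non-vertical supporting hyperplane to have bounded slope, and the computation above makes that slope bound explicit in terms of the Slater margin $\eta_0$ and the duality gap at $\bar\mu$.
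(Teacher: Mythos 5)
Your argument is correct: evaluating the dual function at the Slater point $\widetilde{x}$ and using $\mu_i\ge 0$, $g_i(\widetilde{x})-b_i\le-\eta_0$, together with $\|\mu\|_2\le\|\mu\|_1$, gives exactly the stated bound on $\mathcal{V}_{\bar\mu}$. The paper itself states this lemma without proof, citing \cite{nedic2009approximate}, and your derivation is precisely the standard argument used there, so there is nothing further to reconcile.
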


The technical importance of these two lemmas in the current context is contained in the following corollary.
\begin{corollary}\label{coro:comp}
Let $\l\{\theta^{(k)}_*\r\}_{k=1}^K$ and $\l\{\overline\theta^{(k)}_*\r\}_{k=1}^K$ be solutions to \eqref{ori-lp1}-\eqref{ori-lp2} and \eqref{relax-lp1}-\eqref{relax-lp2}, respectively. Then, the following holds
\[
\frac1T\sum_{t=0}^{T-1}\sum_{k=1}^K\l\langle \expect{\mathbf{f}_t^{(k)}}, \overline\theta_*^{(k)}  \r\rangle
\geq \frac1T\sum_{t=0}^{T-1}\sum_{k=1}^K\l\langle \expect{\mathbf{f}^{(k)}}, \theta_*^{(k)}  \r\rangle - \frac{C_1K^2\sqrt{m}\Psi^2}{\eta T}
\]
where $\eta$ is the constant defined in Assumption \ref{assumption:slater}.
\end{corollary}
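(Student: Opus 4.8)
The plan is to view \eqref{ori-lp1}--\eqref{ori-lp2} and \eqref{relax-lp1}--\eqref{relax-lp2} as two linear programs over the \emph{same} fixed polyhedron $\mathcal{X}:=\Theta^{(1)}\times\cdots\times\Theta^{(K)}$ that differ only in the right-hand side of the $m$ linear inequality constraints, and to apply a standard Lagrangian sensitivity argument bounding how far the optimal value can drop when the constraint level is loosened from $0$ to $C_1K\Psi/T$. First I would record the reduction: since $\{\omega_t\}$ is i.i.d.\ each $\expect{\mathbf{g}_{i,t}^{(k)}}$ is a fixed vector, and the time-averaged coefficient $\tfrac1T\sum_{t=0}^{T-1}\expect{\mathbf{f}_t^{(k)}}$ is likewise a fixed vector, so both problems are genuine LPs over the bounded set $\mathcal{X}$; in particular $\overline\theta_*$ exists, and since the relaxed feasible region contains the original one the relaxed optimum is already $\le$ the original optimum, so only the reverse one-sided estimate needs proof.

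Next I would verify Slater's condition for the original LP. By Assumption \ref{assumption:slater}, its translation \eqref{slater-2}, and Lemma \ref{lemma:prod-chain} (which guarantees the marginal state-action vectors $\widetilde\theta^{(k)}$ of the Slater policy lie in $\Theta^{(k)}$), there is a point $\widetilde\theta=(\widetilde\theta^{(1)},\dots,\widetilde\theta^{(K)})\in\mathcal{X}$ with $\sum_{k=1}^K\langle\expect{\mathbf{g}_{i,t}^{(k)}},\widetilde\theta^{(k)}\rangle\le-\eta$ for every $i\in\{1,\dots,m\}$. Then Farkas' Lemma (Lemma \ref{lemma:Farkas}), applied with $b_i=0$, produces a multiplier $\mu=(\mu_1,\dots,\mu_m)$, $\mu_i\ge0$, with
\[
\sum_{k=1}^K\Big\langle\tfrac1T\sum_{t=0}^{T-1}\expect{\mathbf{f}_t^{(k)}},\theta^{(k)}\Big\rangle+\sum_{i=1}^m\mu_i\sum_{k=1}^K\langle\expect{\mathbf{g}_{i,t}^{(k)}},\theta^{(k)}\rangle\ \ge\ f_*\qquad\forall\,\theta\in\mathcal{X},
\]
where $f_*=\tfrac1T\sum_{t=0}^{T-1}\sum_{k=1}^K\langle\expect{\mathbf{f}^{(k)}},\theta_*^{(k)}\rangle$ is the optimal value of \eqref{ori-lp1}--\eqref{ori-lp2}. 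Evaluating this at $\theta=\overline\theta_*$, and using the relaxed feasibility $\sum_{k=1}^K\langle\expect{\mathbf{g}_{i,t}^{(k)}},\overline\theta_*^{(k)}\rangle\le C_1K\Psi/T$ together with $\mu_i\ge0$, gives
\[
\frac1T\sum_{t=0}^{T-1}\sum_{k=1}^K\langle\expect{\mathbf{f}_t^{(k)}},\overline\theta_*^{(k)}\rangle\ \ge\ f_*-\frac{C_1K\Psi}{T}\sum_{i=1}^m\mu_i\ \ge\ f_*-\frac{C_1K\Psi\sqrt{m}}{T}\,\|\mu\|_2,
\]
the last step by Cauchy--Schwarz.

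It then remains to bound $\|\mu\|_2$, for which I would invoke Lemma \ref{lemma:bound-lagrange} with $\bar\mu=0$. The dual function $q(\cdot)$ satisfies $q\le f_*$ everywhere and, by strong LP duality, the Farkas multiplier attains $q(\mu)=f_*\ge q(0)$, so $\mu$ lies in the level set $\mathcal{V}_0$. Since $\|\theta^{(k)}\|_1=1$ and the cost/constraint functions are bounded by $\Psi$ in \eqref{function-bounds}, one has $f(\widetilde\theta)\le K\Psi$ and $q(0)=\inf_{\theta\in\mathcal{X}}\sum_k\langle\tfrac1T\sum_t\expect{\mathbf{f}_t^{(k)}},\theta^{(k)}\rangle\ge-K\Psi$, while $\min_{1\le i\le m}\{-g_i(\widetilde\theta)\}\ge\eta$ from the Slater bound; Lemma \ref{lemma:bound-lagrange} then yields $\|\mu\|_2\le(f(\widetilde\theta)-q(0))/\eta\le 2K\Psi/\eta$. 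Substituting back gives $\tfrac1T\sum_{t}\sum_k\langle\expect{\mathbf{f}_t^{(k)}},\overline\theta_*^{(k)}\rangle\ge f_*-2C_1K^2\sqrt{m}\Psi^2/(\eta T)$, which is the claim up to the harmless factor $2$ (absorb it into $C_1$, or tighten the crude bounds $|f_*|,|q(0)|\le K\Psi$). The only genuinely delicate part — and hence the main obstacle — is the bookkeeping that legitimizes the Lagrangian machinery: confirming the time-averaged problem really is a fixed LP over the bounded polyhedron $\mathcal{X}$, that the Slater policy of Assumption \ref{assumption:slater} is feasible for \emph{this} LP with margin exactly $\eta$ (which is precisely where \eqref{slater-2} and Lemma \ref{lemma:prod-chain} enter), and that the optimal multiplier sits inside the bounded level set of Lemma \ref{lemma:bound-lagrange}; once these are in place, the estimate is a short chain of inequalities.
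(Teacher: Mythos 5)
Your proposal is correct and follows essentially the same route as the paper's own proof: apply Farkas' Lemma (Lemma \ref{lemma:Farkas}) with $b_i=0$ over $\mathcal{X}=\Theta^{(1)}\times\cdots\times\Theta^{(K)}$, evaluate the resulting supporting inequality at $\overline\theta_*$ using the relaxed feasibility level $C_1K\Psi/T$, and bound $\sum_i\mu_i\le\sqrt{m}\|\mu\|_2\le 2\sqrt{m}K\Psi/\eta$ via Lemma \ref{lemma:bound-lagrange} with $\bar\mu=0$, Slater's margin $\eta$, and the bound $\Psi K$ on the objective. The factor-of-$2$ you flag also appears in the paper's own computation, so your write-up matches the paper's argument in both structure and constants.
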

\begin{proof}[Proof of Corollary \ref{coro:comp}]
Take 
\begin{align*}
&f\l( \theta^{(1)},\cdots,\theta^{(K)} \r) = \frac1T\sum_{t=0}^{T-1}\sum_{k=1}^K\l\langle \expect{\mathbf{f}^{(k)}}, \theta^{(k)}  \r\rangle,  \\
&g_i\l( \theta^{(1)},\cdots,\theta^{(K)} \r) = \sum_{k=1}^K\l\langle \expect{\mathbf{g}_{i,t}^{(k)}}, \theta^{(k)}  \r\rangle,\\
&\mathcal{X} = \Theta^{(1)}\times\Theta^{(2)}\times\cdots\times\Theta^{(K)},
\end{align*}
and $b_i=0$
in Farkas' Lemma and we have the following display
\[
\frac1T\sum_{t=0}^{T-1}\sum_{k=1}^K\l\langle \expect{\mathbf{f}^{(k)}}, \theta^{(k)}  \r\rangle
+ \sum_{i=1}^m\mu_i\sum_{k=1}^K\l\langle \expect{\mathbf{g}_{i,t}^{(k)}}, \theta^{(k)}  \r\rangle 
\geq \frac1T\sum_{t=0}^{T-1}\sum_{k=1}^K\l\langle \expect{\mathbf{f}^{(k)}}, \theta_*^{(k)}  \r\rangle,
\]
for any $\l( \theta^{(1)},\cdots,\theta^{(K)} \r)\in\mathcal{X}$ and some $\mu_1,\mu_2,\cdots,\mu_m\geq0$. In particular, substituting $\l( \overline\theta^{(1)}_*,\cdots,\overline\theta^{(K)}_* \r)$ into the above display gives
\begin{align}
\frac1T\sum_{t=0}^{T-1}\sum_{k=1}^K\l\langle \expect{\mathbf{f}^{(k)}}, \overline\theta^{(k)}_*  \r\rangle  
&\geq \frac1T\sum_{t=0}^{T-1}\sum_{k=1}^K\l\langle \expect{\mathbf{f}^{(k)}}, \theta_*^{(k)}  \r\rangle
-  \sum_{i=1}^m\mu_i\sum_{k=1}^K\l\langle \expect{\mathbf{g}_{i,t}^{(k)}}, \overline\theta_*^{(k)}  \r\rangle   \nonumber\\
&\geq \frac1T\sum_{t=0}^{T-1}\sum_{k=1}^K\l\langle \expect{\mathbf{f}^{(k)}}, \theta_*^{(k)}  \r\rangle
-  \frac{C_1K\Psi}{T}\sum_{i=1}^m\mu_i,  \label{inter-bound}
\end{align}
where the final inequality follows from the fact that $\l( \overline\theta^{(1)}_*,\cdots,\overline\theta^{(K)}_* \r)$ satisfies the relaxed constraint 
$\sum_{k=1}^K\l\langle \expect{\mathbf{g}_{i,t}^{(k)}}, \overline\theta_*^{(k)}  \r\rangle\leq  \frac{C_1K\Psi}{T}$ and $\mu_i\geq0,~\forall i\in\{1,2,\cdots,m\}$. Now we need to bound the magnitude of Lagrange multiplier $\l(\mu_1,\cdots,\mu_m\r)$. Note that in our scenario, 
\[
\Big|f\l( \theta^{(1)},\cdots,\theta^{(K)} \r) \Big|=\l| \frac1T\sum_{t=0}^{T-1}\sum_{k=1}^K\l\langle \expect{\mathbf{f}^{(k)}}, \theta^{(k)}  \r\rangle  \r|\leq \Psi K,
\]
and the Lagrange multiplier $\mu$ is the solution to the maximization problem 
$$\max_{\mu_i\geq0,i\in\{1,2,\cdots,m\}}q(\mu),$$ 
where $q(\mu)$ is the dual function defined in Lemma \ref{lemma:bound-lagrange}.
thus, it must be in any super level set $\mathcal{V}_{\bar\mu}=\l\{ \mu_1,\mu_2,\cdots,\mu_m\geq0,~ q(\mu)\geq q(\bar\mu)\r\}$.
In particular, taking $\bar\mu = 0$ in Lemma \ref{lemma:bound-lagrange} and using Slater's condition \eqref{slater-2}, we have
there exists $\widetilde{\theta}^{(1)},\cdots,\widetilde{\theta}^{(K)}$ such that
\begin{multline*}
\sum_{i=1}^m\mu_i \leq \sqrt{m} \|\mu\|_2\leq 
\frac{\sqrt{m}}{\eta}\l(f\l(\widetilde{\theta}^{(1)},\cdots,\widetilde{\theta}^{(K)}\r) \r. 
\l.-\inf_{\l(\theta^{(1)},\cdots,\theta^{(K)}\r)\in\mathcal{X}}f\l( \theta^{(1)},\cdots,\theta^{(K)} \r)\r)
\leq \frac{2\sqrt{m}\Psi K}{\eta},
\end{multline*}
where the final inequality follows from the deterministic bound of $|f(\theta^{(1)},\cdots,\theta^{(K)})|$ by $\Psi K$.
Substituting this bound into \eqref{inter-bound} gives the desired result.
\end{proof}

As a simple consequence of the above corollary, we have our final bound on the regret and constraint violation regarding any $(d_0,\Pi)\in\mathcal{G}$.
\begin{theorem}\label{thm:final-regret}
Let $\mathscr{P}$ be the sequence of randomized stationary policies resulting from the proposed algorithm with $V=\sqrt{T}$ and $\alpha=T$. Let $d_0$ be the starting state of the proposed algorithm.
For any randomized stationary policy $\Pi$ starting from the state $d_0$ such that 
$(d_0,\Pi)\in\mathcal{G}$, we have
\begin{align*}
&F_T(d_0,\mathscr{P}) - F_T(d_0,\Pi)\leq \mathcal{O}\l(m^{3/2}K^2\sum_{k=1}^K\l|\mathcal{A}^{(k)}\r|\l|\mathcal{S}^{(k)}\r|\cdot\sqrt{T}\r),\\
&G_{i,T}(d_0,\mathscr{P})\leq \mathcal{O}\l(m^{3/2}K^2\sum_{k=1}^K\l|\mathcal{A}^{(k)}\r|\l|\mathcal{S}^{(k)}\r|\cdot\sqrt{T}\r),~i=1,2,\cdots,m.
\end{align*}
\end{theorem}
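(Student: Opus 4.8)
The plan is to obtain Theorem \ref{thm:final-regret} by combining Theorem \ref{thm:final-1} with the perturbation analysis of Section \ref{sec:perturb}. Recall that Theorem \ref{thm:final-1} already handles the case where the competitor policy $\Pi$ starts from its own stationary distribution $d_\Pi$; the only new difficulty here is that $\Pi$ may start from an arbitrary $d_0$, so its realized cost $F_T(d_0,\Pi)$ and constraint values $G_{i,T}(d_0,\Pi)$ need not coincide with the ``stationary values'' $\sum_t\sum_k\langle\expect{\mathbf{f}_t^{(k)}},\theta_*^{(k)}\rangle$ and $\sum_t\sum_k\langle\expect{\mathbf{g}_{i,t}^{(k)}},\theta_*^{(k)}\rangle$ of its stationary state-action vectors $\{\theta_*^{(k)}\}_{k=1}^K$. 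The whole point of Section \ref{sec:perturb} is to control exactly this gap.

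First I would invoke Lemma \ref{lemma:perturbed}: for the given $(d_0,\Pi)\in\mathcal{G}$ with stationary state-action probabilities $\{\theta_*^{(k)}\}$, the two quantities \eqref{diff-1} and \eqref{diff-2} differ by at most $C_1K\Psi$ (an $O(1)$ constant, hence $O(\sqrt{T})$), and moreover $\{\theta_*^{(k)}\}$ lies in the relaxed constraint set $\overline{\mathcal{G}}^+$ defined by right-hand sides $C_1K\Psi/T$ rather than $0$. So the first step reduces the regret-against-$\Pi$ question to a regret-against-$\{\theta_*^{(k)}\}$ question where $\{\theta_*^{(k)}\}$ is only \emph{approximately} feasible for the imaginary LP \eqref{stat-constraint}. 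Second, I would compare the best achievable stationary cost over $\overline{\mathcal{G}}^+$ (the perturbed LP \eqref{relax-lp1}--\eqref{relax-lp2}) with the best over $\overline{\mathcal{G}}$ (the unperturbed LP \eqref{ori-lp1}--\eqref{ori-lp2}); by Corollary \ref{coro:comp} these differ by at most $C_1K^2\sqrt{m}\Psi^2/(\eta T)$, again $O(1)$ and thus negligible on the $\sqrt{T}$ scale. The key to Corollary \ref{coro:comp} is that Slater's condition (Assumption \ref{assumption:slater}, in the form \eqref{slater-2}) bounds the Lagrange multiplier via Farkas' Lemma (Lemma \ref{lemma:Farkas}) and Lemma \ref{lemma:bound-lagrange}, so the objective perturbation scales linearly with the constraint perturbation $C_1K\Psi/T$.

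Third, I would chain these bounds together with Theorem \ref{thm:final-1}'s core estimate. Concretely: $F_T(d_0,\mathscr{P})$ is within $O(m^{3/2}K^2\sum_k|\mathcal{A}^{(k)}||\mathcal{S}^{(k)}|\sqrt{T})$ of $\sum_{t}\sum_k\langle\expect{\mathbf{f}_t^{(k)}},\theta_t^{(k)}\rangle$ (the imaginary-system term, via Lemma \ref{lemma:distance-dist}), which by Theorem \ref{thm:stationary-regret} applied against the minimizer $\{\overline\theta_*^{(k)}\}$ of the relaxed LP is within $O(\sqrt{T})$ of $\sum_t\sum_k\langle\expect{\mathbf{f}_t^{(k)}},\overline\theta_*^{(k)}\rangle$; by Corollary \ref{coro:comp} this is within $O(1/T)$ of $\sum_t\sum_k\langle\expect{\mathbf{f}_t^{(k)}},\theta_*^{(k)}\rangle$; and finally by \eqref{diff-1} this is within $C_1K\Psi$ of $F_T(d_0,\Pi)$. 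Summing the error terms gives the claimed $O(m^{3/2}K^2\sum_k|\mathcal{A}^{(k)}||\mathcal{S}^{(k)}|\sqrt{T})$ regret. The constraint-violation bound is even simpler: $G_{i,T}(d_0,\mathscr{P})$ equals the imaginary constraint accumulation plus a Lemma \ref{lemma:distance-dist}-type mixing error, and Theorem \ref{thm:constraint-violation} already shows the imaginary accumulation is $O(\sqrt{T})$, with no competitor policy involved at all — so this part is essentially a restatement of the constraint-violation half of Theorem \ref{thm:final-1}.

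One subtlety I would be careful about — and what I expect to be the main obstacle — is making sure that Theorem \ref{thm:stationary-regret} can legitimately be applied with $\{\overline\theta_*^{(k)}\}$ as the benchmark. Theorem \ref{thm:stationary-regret} as stated requires the benchmark to lie in the \emph{un}relaxed set \eqref{stat-constraint}, because its proof uses the inequality $\expect{\sum_k\langle\mathbf{g}_{i,t-1}^{(k)},\theta_*^{(k)}\rangle}\leq0$ in \eqref{neg-drift-2} to kill the drift cross-term. If the benchmark only satisfies the relaxed constraint with slack $C_1K\Psi/T$, that cross-term contributes an extra $\expect{Q_i(t-1)}\cdot C_1K\Psi/T$ per slot; summed over $T$ slots and divided by $TV=T\sqrt{T}$ this is $O(\sum_t\expect{\|\mathbf{Q}(t)\|_2}\cdot(1/T)/(T\sqrt T))$, and using the $O(\sqrt T)$ queue bound \eqref{expected-Q-bound} this is $O(1/\sqrt T)\cdot O(1)$, still $O(\sqrt T)$ overall. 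So the cleanest route is either to re-run the telescoping argument of Theorem \ref{thm:stationary-regret} tracking this extra term explicitly, or — more elegantly — to benchmark directly against $\{\theta_*^{(k)}\}$ using a variant of Theorem \ref{thm:stationary-regret} that allows a relaxed benchmark; I would state and prove that variant first as a short lemma, then the rest of Theorem \ref{thm:final-regret} is bookkeeping. Everything else is summing finitely many $O(\sqrt{T})$ and $O(1)$ error terms and invoking Lemma \ref{lemma:small-bound} to guarantee the uniform mixing constant $r_1$ used in Lemma \ref{lemma:perturbed} is policy-independent.
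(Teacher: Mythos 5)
Your proposal is correct in substance, but it takes a genuinely different route from the paper's proof on the objective-regret half. The paper never touches Theorem \ref{thm:stationary-regret}: it introduces the policy $\Pi_*$ realizing the solution $\{\theta_*^{(k)}\}$ of the unperturbed program \eqref{ori-lp1}--\eqref{ori-lp2}, observes that $(d_{\Pi_*},\Pi_*)\in\mathcal{G}$ so Theorem \ref{thm:final-1} applies verbatim against it, and then shows $F_T(d_{\Pi_*},\Pi_*)-F_T(d_0,\Pi)=O(1)$ by splitting it as $(F_T(d_{\Pi_*},\Pi_*)-F_T(d_\Pi,\Pi))+(F_T(d_\Pi,\Pi)-F_T(d_0,\Pi))$: the second term is at most $C_1K\Psi$ by \eqref{diff-1}, and the first is at most $C_1K^2\sqrt{m}\Psi^2/\eta$ because $\Pi$'s stationary vector is feasible for the relaxed program \eqref{relax-lp1}--\eqref{relax-lp2}, so $F_T(d_\Pi,\Pi)$ dominates the relaxed optimum, which by Corollary \ref{coro:comp} dominates the unperturbed optimum minus $C_1K^2\sqrt{m}\Psi^2/\eta$. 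Your preferred route instead re-opens the drift telescoping: prove a relaxed-benchmark variant of Theorem \ref{thm:stationary-regret}, benchmark directly against $\Pi$'s own stationary vector in $\overline{\mathcal{G}}^+$, and absorb the extra cross term $\sum_i\expect{Q_i(t-1)}\cdot C_1K\Psi/T$ coming from \eqref{neg-drift-2} via the queue bound \eqref{expected-Q-bound}. That bypasses Corollary \ref{coro:comp} and Farkas' lemma entirely for the regret bound, at the price of redoing the stationary-regret argument; and your quantitative check is safe -- in fact the extra term contributes only an additive constant (independent of $T$) to the total regret, even better than your loose ``still $O(\sqrt{T})$'' accounting. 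The constraint-violation half is, as you say, unchanged from Theorem \ref{thm:final-1}.

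One caution about the chain in your third paragraph: there you pass from the relaxed-LP optimum to $\sum_t\sum_k\langle\expect{\mathbf{f}_t^{(k)}},\theta_*^{(k)}\rangle$ via Corollary \ref{coro:comp}, where $\theta_*^{(k)}$ denotes the unperturbed LP solution, and then invoke \eqref{diff-1} -- but \eqref{diff-1} relates $F_T(d_0,\Pi)$ to $\Pi$'s \emph{own} stationary state-action vector, not to the LP solution, so as written that last step conflates the two objects. The missing link is the one-line feasibility inequality (relaxed optimum $\leq$ $\Pi$'s stationary value, since $\Pi$'s vector lies in $\overline{\mathcal{G}}^+$ by Lemma \ref{lemma:perturbed}), which is precisely how the paper bounds its term (I). Your ``benchmark directly against $\Pi$'s $\theta$'' version in the final paragraph avoids this issue altogether, so the proposal stands once you commit to that route and actually state and prove the relaxed-benchmark variant as the lemma you describe.
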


\begin{proof}
Let $\Pi_*$ be the randomized stationary policy corresponding to the solution $\{\theta_*^{(k)}\}_{k=1}^K$ to \eqref{ori-lp1}-\eqref{ori-lp2} and let $\Pi$ be any randomized stationary policy such that $(d_0,\Pi)\in\mathcal{G}$. Since $G_{i,T}(d_{\Pi_*},\Pi_*) =  \sum_{t=0}^{T-1}\sum_{k=1}^K\l\langle\expect{\mathbf{g}_{i,t}},\theta^{(k)}_*\r\rangle\leq0$, it follows 
$(d_{\Pi_*},\Pi_*)\in\mathcal{G}$.
By Theorem \ref{thm:final-1}, we know that
\[
F_T(d_0,\mathscr{P}) - F_T(d_{\Pi_*},\Pi_*)\leq \mathcal{O}\l(m^{3/2}K^2\sum_{k=1}^K\l|\mathcal{A}^{(k)}\r|\l|\mathcal{S}^{(k)}\r|\cdot\sqrt{T}\r),
\]
and $G_{i,T}(d_0,\mathscr{P})$ satisfies the bound in the statement.
It is then enough to bound $F_T(d_{\Pi_*},\Pi_*) - F_T(d_{0},\Pi)$. We split it in to two terms:
\[
F_T(d_{\Pi_*},\Pi_*) - F_T(d_{0},\Pi)\leq \underbrace{F_T(d_{\Pi_*},\Pi_*) - F_T(d_{\Pi},\Pi)}_{\text{(I)}}   
+ \underbrace{F_T(d_{\Pi},\Pi) - F_T(d_{0},\Pi)}_{\text{(II)}}.
\]
By \eqref{diff-1} in Lemma \ref{lemma:perturbed}, the term (II) is bounded by $C_1K\Psi$. It remains to bound the first term. Since $(d_0,\Pi)\in\mathcal{G}$, by Lemma \ref{lemma:perturbed}, the corresponding state-action probabilities $\{\theta^{(k)}\}_{k=1}^K$ of $\Pi$ satisfies $\sum_{k=1}^K\l\langle\expect{\mathbf{g}_{i,t}},\theta^{(k)}\r\rangle\leq C_1K\Psi/T$ and $\{\theta^{(k)}\}_{k=1}^K$ is feasible for \eqref{relax-lp1}-\eqref{relax-lp2}. Since $\{\overline{\theta}^{(k)}_*\}_{k=1}^K$ is the solution to \eqref{relax-lp1}-\eqref{relax-lp2}, we must have
\[ F_T(d_{\Pi},\Pi) = \sum_{t=0}^{T-1}\sum_{k=1}^K\l\langle \expect{\mathbf{f}_t^{(k)}}, \theta^{(k)}  \r\rangle 
\geq \sum_{t=0}^{T-1}\sum_{k=1}^K\l\langle \expect{\mathbf{f}_t^{(k)}}, \overline\theta_*^{(k)}  \r\rangle \]
On the other hand, by Corollary \ref{coro:comp},
\begin{align*}
\sum_{t=0}^{T-1}\sum_{k=1}^K\l\langle \expect{\mathbf{f}_t^{(k)}}, \overline\theta_*^{(k)}  \r\rangle
\geq& \sum_{t=0}^{T-1}\sum_{k=1}^K\l\langle \expect{\mathbf{f}^{(k)}}, \theta_*^{(k)}  \r\rangle - \frac{C_1K^2\sqrt{m}\Psi^2}{\eta}
= F_T(d_{\Pi_*},\Pi_*) -  \frac{C_1K^2\sqrt{m}\Psi^2}{\eta}.
\end{align*}
Combining the above two displays gives $\text{(I)}\leq \frac{C_1K^2\sqrt{m}\Psi^2}{\eta}$ and the proof is finished.
\end{proof}

\section{Additional lemmas and proofs}

\subsection{Missing proofs in Section \ref{sec:prelim-thm}}\label{proof:section2}
We prove Lemma \ref{lemma:mixing1} and \ref{lemma:prod-chain} in this section.

\begin{proof}[Proof of Lemma \ref{lemma:mixing1}]
For simplicity of notations, we drop the dependencies on $k$ throughout this proof.
We first show that for any $r\geq \widehat{r}$, where  $\widehat{r}$ is specified in Assumption \ref{assumption-1},
$
\mathbf{P}_{\pi_1}\mathbf{P}_{\pi_2}\cdots \mathbf{P}_{\pi_r}
$
is a strictly positive stochastic matrix.

Since the MDP is finite state with a finite action set, the set of all pure policies (Definition \ref{def:pp}) is finite. Let $\mathbf{P}_1,~\mathbf{P}_2,\cdots,~\mathbf{P}_N$ be probability transition matrices corresponding to these pure policies. Consider any sequence of randomized stationary policies $\pi_1,\cdots,\pi_r$. Then, it follows 
their transition matrices can be expressed as convex combinations of pure policies, i.e.
\[
\mathbf{P}_{\pi_1} = \sum_{i=1}^N\alpha^{(1)}_i\mathbf{P}_i,~~
\mathbf{P}_{\pi_2} = \sum_{i=1}^N\alpha^{(2)}_i\mathbf{P}_i,
~~\cdots,
\mathbf{P}_{\pi_r} = \sum_{i=1}^N\alpha^{(r)}_i\mathbf{P}_i,
\]
where $\sum_{i=1}^N\alpha^{(j)}_i = 1,~\forall j\in\{1,2,\cdots,r\}$ and $\alpha^{(j)}_i\geq0$.
Thus, we have the following display
\begin{align}
\mathbf{P}_{\pi_1}\mathbf{P}_{\pi_2}\cdots \mathbf{P}_{\pi_r}
=&\l( \sum_{i=1}^N\alpha^{(1)}_i\mathbf{P}_i\r)
\l( \sum_{i=1}^N\alpha^{(2)}_i\mathbf{P}_i\r)\cdots\l( \sum_{i=1}^N\alpha^{(r)}_i\mathbf{P}_i\r) \nonumber\\
=&\sum_{(i_1,\cdots,i_r)\in\mathcal{G}_r}
\alpha_{i_1}^{(1)}\cdots \alpha_{i_r}^{(r)}
\cdot     \mathbf{P}_{i_1}\mathbf{P}_{i_2}\cdots\mathbf{P}_{i_r},  \label{convex-comb}
\end{align}
where $\mathcal{G}_r$ ranges over all $N^r$ configurations.

Since $\l(\sum_{i=1}^N\alpha^{(1)}_i\r)\cdots\l(\sum_{i=1}^N\alpha^{(r)}_i\r) = 1$, it follows \eqref{convex-comb} is a convex combination of all possible sequences
$ \mathbf{P}_{i_1}\mathbf{P}_{i_2}\cdots\mathbf{P}_{i_r}$. By assumption \ref{assumption-1}, we have  $\mathbf{P}_{i_1}\mathbf{P}_{i_2}\cdots\mathbf{P}_{i_r}$ is strictly positive for any $(i_1,\cdots,i_r)\in\mathcal{G}_r$, and there exists a universal lower bound $\delta>0$ of all entries of $\mathbf{P}_{i_1}\mathbf{P}_{i_2}\cdots \mathbf{P}_{i_r}$ ranging over all configurations in $(i_1,\cdots,i_r)\in\mathcal{G}_r$.
This implies $\mathbf{P}_{\pi_1}\mathbf{P}_{\pi_2}\cdots \mathbf{P}_{\pi_r}$ is also strictly positive with the same lower bound $\delta>0$ for any sequences of randomized stationary policies $\pi_1,\cdots,\pi_r$.

Now, we proceed to prove the mixing bound. Choose $r=\widehat{r}$ and  we can
decompose any $\mathbf{P}_{\pi_1}\mathbf{P}_{\pi_2}\cdots \mathbf{P}_{\pi_r}$ as follows:
$$
\mathbf{P}_{\pi_1}\cdots \mathbf{P}_{\pi_r} = \delta\mathbf{\Pi} + (1-\delta)\mathbf{Q},
$$
where $\mathbf{\Pi}$ has each entry equal to $1/\l|\mathcal{S}\r|$ (recall that $\l|\mathcal{S}\r|$ is the number of states which equals the size of the matrix) and $\mathbf{Q}$ depends on $\pi_1,\cdots,\pi_r$. Then, $\mathbf{Q}$ is also a stochastic matrix (nonnegative and row sum up to 1) because both $\mathbf{P}_{\pi_1}\cdots \mathbf{P}_{\pi_r}$ and 
 $\mathbf{\Pi}$ are stochastic matrices. Thus, for any two distribution vectors $d_1$ and $d_2$, we have
\[
 \l(d_1-d_2\r)\mathbf{P}_{\pi_1}\cdots \mathbf{P}_{\pi_r} 
 =  \delta\l(d_1-d_2\r)\mathbf{\Pi} + (1-\delta)\l(d_1-d_2\r)\mathbf{Q}
 =(1-\delta)\l(d_1-d_2\r)\mathbf{Q},
\]
 where we use the fact that for distribution vectors
 $$\l(d_1-d_2\r)\mathbf{\Pi} = \frac{1}{\l| \mathcal{S} \r|}\mathbf{1} - \frac{1}{\l| \mathcal{S}\r|}\mathbf{1} = 0.$$
Since $\mathbf{Q}$ is a stochastic matrix, it is non-expansive on $\ell_1$-norm, namely, for any vector $x$, $\|x\mathbf{Q}\|_1\leq \|x\|_1$. To see this, simply compute
\begin{equation}\label{non-expansive}
\|x\mathbf{Q}\|_1=\sum_{j=1}^{\l| \mathcal{S} \r|}\l| \sum_{i=1}^{\l| \mathcal{S} \r|}x_iQ_{ij} \r|
\leq\sum_{j=1}^{\l| \mathcal{S}\r|}  \sum_{i=1}^{\l| \mathcal{S}\r|}   \l|x_iQ_{ij} \r| 
=\sum_{j=1}^{\l| \mathcal{S}\r|}  \sum_{i=1}^{\l| \mathcal{S}\r|}   \l|x_i \r|Q_{ij} = \sum_{i=1}^{\l| \mathcal{S}\r|}   \l|x_i \r| = \|x\|_1.
\end{equation}
Overall, we obtain,
\[
 \l\|\l(d_1-d_2\r)\mathbf{P}_{\pi_1}\cdots \mathbf{P}_{\pi_r} \r\|_1=  (1-\delta)\l\|\l(d_1-d_2\r)\mathbf{Q}\r\|_1
 \leq (1- \delta)\l\|d_1-d_2\r\|_1.
\]
We can then take $\tau = -\frac{1}{\log\l(1 - \delta\r)}$ to finish the proof. 
\end{proof}

\begin{proof}[Proof of Lemma \ref{lemma:prod-chain}]
Since the probability transition matrix of any randomized stationary policy is a convex combination of those of pure policies,
it is enough to show that the product MDP is irreducible and aperiodic under any joint pure policy.
For simplicity, let $\mathbf{s}_t = \l(s^{(1)},\cdots,s^{(K)}\r)$ and $\mathbf{a}_t = \l(a^{(1)},\cdots,a^{(K)}\r)$. Consider any joint pure policy $\Pi$ which select a fixed joint action 
$\mathbf{a}\in\mathcal{A}^{(1)}\times\cdots\times\mathcal{A}^{(K)}$ given a joint state $\mathbf{s}\in\mathcal{S}^{(1)}\times\cdots\times\mathcal{S}^{(K)}$, with probability 1.
By Assumption \ref{assumption:indep-trans}, we have
\begin{align}
&Pr\l(  s^{(1)}_{t+1},\cdots,s^{(K)}_{t+1}\l| s^{(1)}_{t},\cdots,s^{(K)}_{t},a^{(1)}_t,\cdots, a^{(K)}_t  \r.\r) \nonumber\\
=&Pr\l(  s^{(1)}_{t+1}\l| s^{(1)}_{t},\cdots,s^{(K)}_{t},a^{(1)}_t,\cdots, a^{(K)}_t, s^{(2)}_{t+1},\cdots,s^{(K)}_{t+1}  \r.\r) \nonumber\\
 &\cdot Pr\l(  s^{(2)}_{t+1},\cdots,s^{(K)}_{t+1}\l| s^{(1)}_{t},\cdots,s^{(K)}_{t},a^{(1)}_t,\cdots, a^{(K)}_t  \r.\r)  \nonumber\\
=&Pr\l(  s^{(1)}_{t+1}\l| s^{(1)}_{t},a^{(1)}_t  \r.\r)Pr\l(  s^{(2)}_{t+1},\cdots,s^{(K)}_{t+1}\l| s^{(1)}_{t},\cdots,s^{(K)}_{t},a^{(1)}_t,\cdots, a^{(K)}_t  \r.\r) \nonumber\\
=&\cdots=  \prod_{k=1}^{K-1}Pr\l(  s^{(k)}_{t+1}\l| s^{(k)}_{t},a^{(k)}_t  \r.\r) 
\cdot Pr\l(  s^{(K)}_{t+1}\l| s^{(1)}_{t},\cdots,s^{(K)}_{t},a^{(1)}_t,\cdots, a^{(K)}_t  \r.\r)\nonumber\\
=& \prod_{k=1}^{K}Pr\l(  s^{(k)}_{t+1}\l| s^{(k)}_{t},a^{(k)}_t  \r.\r), \label{iter-expectation}
\end{align}
where the second equality follows from the independence relation in Assumption \ref{assumption:indep-trans}. Thus, we obtain the equality,
\[
Pr(\mathbf{s}_{t+1}=\mathbf{s}' \big| \mathbf{s}_t=\mathbf{s},\mathbf{a}_t=\mathbf{a}) 
= \prod_{k=1}^{K}Pr\l(  s^{(k)}_{t+1} = \tilde{s}^{(k)}\l| s^{(k)}_{t} = s^{(k)},a^{(k)}_t = a^{(k)}  \r.\r),
\]
Then, the one step transition probability between any two states
$\mathbf{s},\tilde{\mathbf{s}}\in\mathcal{S}^{(1)}\times\cdots\times\mathcal{S}^{(K)}$
can be computed as
\begin{align*}
Pr(\mathbf{s}_{t+1}=\tilde{\mathbf{s}} \big| \mathbf{s}_t = \mathbf{s}) 
=& \sum_{\mathbf{a}}Pr(\mathbf{s}_{t+1}=\tilde{\mathbf{s}} \big| \mathbf{s}_t=\mathbf{s},\mathbf{a}_t=\mathbf{a})
\cdot Pr(\mathbf{a}_t=\mathbf{a} \big|  \mathbf{s}_t=\mathbf{s})\\
=& \sum_{\mathbf{a}}\prod_{k=1}^{K}Pr\l(  s^{(k)}_{t+1} = \tilde{s}^{(k)}\l| s^{(k)}_{t} = s^{(k)},a^{(k)}_t = a^{(k)}  \r.\r)
\cdot Pr(\mathbf{a}_t=\mathbf{a} \big|  \mathbf{s}_t=\mathbf{s})\\
=&\prod_{k=1}^{K} P_{a^{(k)}(\mathbf{s})}\l(s^{(k)},\tilde{s}^{(k)}\r),
\end{align*}
where we can remove the summation on $\mathbf a$ due to the fact that $\mathbf{a}_t$ is a pure policy. The notation $a^{(k)}(\mathbf{s})$ denotes a fixed mapping from product state space $\mathcal{S}^{(1)}\times\cdots\times\mathcal{S}^{(K)}$ to an individual action space 
$\mathcal{A}^{(k)}$ resulting from the pure policy, and
$P_{a^{(k)}(\mathbf{s})}\l(s^{(k)},\tilde{s}^{(k)}\r)$ is the Markov transition probability from state $s^{(k)}$ to $\tilde{s}^{(k)}$ under the action $a^{(k)}(\mathbf{s})$. One can then further compute the $r$ ($r\geq2$) step transition probability from between any two states
$\mathbf{s},\tilde{\mathbf{s}}\in\mathcal{S}^{(1)}\times\cdots\times\mathcal{S}^{(K)}$ as
\begin{align}
Pr(\mathbf{s}_{t+r}=\tilde{\mathbf{s}} \big| \mathbf{s}_t = \mathbf{s}) 
=&\sum_{\mathbf{s}_{t+r-1}}\cdots\sum_{\mathbf{s}_{t+1}}\prod_{k=1}^{K} P_{a^{(k)}(\mathbf{s})}\l(s^{(k)},s_{t+1}^{(k)}\r)\cdot
\prod_{k=1}^{K} P_{a^{(k)}(\mathbf{s}_{t+1})}\l(s_{t+1}^{(k)},s_{t+2}^{(k)}\r) \nonumber\\
&\cdots\prod_{k=1}^{K} P_{a^{(k)}(\mathbf{s}_{t+r-1})}\l(s_{t+r-1}^{(k)},\tilde{s}^{(k)}\r)     \nonumber\\
=&\sum_{\mathbf{s}_{t+r-1}}\cdots\sum_{\mathbf{s}_{t+1}}\prod_{k=1}^{K} P_{a^{(k)}(\mathbf{s})}\l(s^{(k)},s_{t+1}^{(k)}\r) 
\cdot P_{a^{(k)}(\mathbf{s}_{t+1})}\l(s_{t+1}^{(k)},s_{t+2}^{(k)}\r)   \nonumber\\
&\cdots P_{a^{(k)}(\mathbf{s}_{t+r-1})}\l(s_{t+r-1}^{(k)},\tilde{s}^{(k)}\r). \label{lump-sum}
\end{align}

For any $k\in\{1,2,\cdots,K\}$, the term 
$$P_{a^{(k)}(\mathbf{s})}\l(s^{(k)},s_{t+1}^{(k)}\r)\cdot
P_{a^{(k)}(\mathbf{s}_{t+1})}\l(s_{t+1}^{(k)},s_{t+2}^{(k)}\r)\cdots P_{a^{(k)}(\mathbf{s}_{t+r-1})}\l(s_{t+r-1}^{(k)},\tilde{s}^{(k)}\r)$$
denotes the probability of moving from $s^{(k)}$ to $\tilde{s}^{(k)}$ along a certain path under a certain sequence of fixed decisions 
$a^{(k)}(\mathbf{s}),~a^{(k)}(\mathbf{s}_{t+1})$, $\cdots,~a^{(k)}(\mathbf{s}_{t+r-1})$. 
Let 
$$\mathbf{s}^{(k)} = \l( s_{t+1}^{(k)},s_{t+2}^{(k)},\cdots,s_{t+r-1}^{(k)} \r)\in\mathcal{S}^{(k)}\times\cdots\times\mathcal{S}^{(k)},~k\in\{1,2,\cdots,K\}$$ be the state path of k-th MDP.
One can then change the order of summation in \eqref{lump-sum} and sum over state paths of each MDP as follows:

\[
\eqref{lump-sum} = \sum_{\mathbf{s}^{(K)}}\cdots\sum_{\mathbf{s}^{(1)}} 
\prod_{k=1}^{K} P_{a^{(k)}(\mathbf{s})}\l(s^{(k)},s_{t+1}^{(k)}\r)\cdot
P_{a^{(k)}(\mathbf{s}_{t+1})}\l(s_{t+1}^{(k)},s_{t+2}^{(k)}\r) 
\cdots P_{a^{(k)}(\mathbf{s}_{t+r-1})}\l(s_{t+r-1}^{(k)},\tilde{s}^{(k)}\r)
\]
We would like to exchange the order of the product and the sums so that we can take the path sum over each individual MDP respectively. However, the problem is that the transition probabilities are coupled through the actions. 
The idea to proceed is to first apply a ``hard'' decoupling by taking the infimum of transition probabilities of each MDP over all pure policies, and use Assumption \ref{assumption-1}, to bound the transition probability from below uniformly. We have
\begin{align*}
\eqref{lump-sum}\geq 
&\inf_{\mathbf{s}^{(1)}}\sum_{\mathbf{s}^{(K)}}\cdots\sum_{\mathbf{s}^{(2)}}\prod_{k=2}^{K}P_{a^{(k)}(\mathbf{s})}\l(s^{(k)},s_{t+1}^{(k)}\r)\cdots P_{a^{(k)}(\mathbf{s}_{t+r-1})}\l(s_{t+r-1}^{(k)},\tilde{s}^{(k)}\r)\\
&\cdot\inf_{\mathbf{s}^{(j)},~j\neq1}\sum_{\mathbf{s}^{(1)}} P_{a^{(1)}(\mathbf{s})}\l(s^{(1)},s_{t+1}^{(1)}\r)\cdots P_{a^{(1)}(\mathbf{s}_{t+r-1})}\l(s_{t+r-1}^{(1)},\tilde{s}^{(1)}\r)\\
\geq&\inf_{\mathbf{s}^{(1)}}\sum_{\mathbf{s}^{(K)}}\cdots\sum_{\mathbf{s}^{(2)}}\prod_{k=2}^{K}P_{a^{(k)}(\mathbf{s})}\l(s^{(k)},s_{t+1}^{(k)}\r)\cdots P_{a^{(k)}(\mathbf{s}_{t+r-1})}\l(s_{t+r-1}^{(k)},\tilde{s}^{(k)}\r)\\
&\cdot\inf_{\pi_1^{(1)},\cdots,\pi_r^{(1)}}\sum_{\mathbf{s}^{(1)}} P_{\pi_1^{(1)}}\l(s^{(1)},s_{t+1}^{(1)}\r)\cdots P_{\pi_r^{(1)}}\l(s_{t+r-1}^{(1)},\tilde{s}^{(1)}\r),
\end{align*}
where $\pi_1^{(1)},\cdots,\pi_r^{(1)}$ range over all pure policies, and the second inequality follows from the fact that \textit{fix any path of other MDPs}  (i.e. $\mathbf{s}^{(j)},~j\neq1$), the term 
$$\sum_{\mathbf{s}^{(1)}} P_{a^{(1)}(\mathbf{s})}\l(s^{(1)},s_{t+1}^{(1)}\r)\cdots P_{a^{(1)}(\mathbf{s}_{t+r-1})}\l(s_{t+r-1}^{(k)},\tilde{s}^{(1)}\r)$$
is the probability of reaching $\tilde{s}^{(1)}$ from $s^{(1)}$ in $r$ steps using a sequence of actions
$a^{(1)}(\mathbf{s}^{(1)}),\cdots,a^{(1)}(\mathbf{s}^{(1)}_{t+r-1})$, where each action is a deterministic function of the previous state at the 1-st MDP only. Thus, it dominates the infimum over all sequences of pure policies $\pi_1^{(1)},\cdots,\pi_r^{(1)}$ on this MDP. Similarly, we can decouple the rest of the sums and obtain the follow display:
\begin{align*}
\eqref{lump-sum}\geq&
\prod_{k=1}^K\inf_{\pi_1^{(k)},\cdots,\pi_r^{(k)}}\sum_{\mathbf{s}^{(k)}} P_{\pi_1^{(k)}}\l(s^{(k)},s_{t+1}^{(k)}\r)\cdots P_{\pi_r^{(k)}}\l(s_{t+r-1}^{(k)},\tilde{s}^{(k)}\r)\\
=&\prod_{k=1}^K\inf_{\pi_1^{(k)},\cdots,\pi_r^{(k)}}P_{\pi_1^{(k)},\cdots,\pi_r^{(k)}}\l(s^{(k)}, \tilde{s}^{(k)}\r),
\end{align*}
where $P_{\pi_1^{(k)},\cdots,\pi_r^{(k)}}\l(s^{(k)}, \tilde{s}^{(k)}\r)$ denotes the $\l(s^{(k)}, \tilde{s}^{(k)}\r)$-th entry of the product matrix 
$\mathbf{P}_{\pi_1^{(k)}}^{(k)}\cdots \mathbf{P}_{\pi_r^{(k)}}^{(k)}$.
Now, by Assumption \ref{assumption-1}, there exists a large enough integer $\widehat{r}$ such that $\mathbf{P}_{\pi_1^{(k)}}^{(k)}\cdots \mathbf{P}_{\pi_r^{(k)}}^{(k)}$ is a strictly positive matrix for any sequence of $r\geq \widehat{r}$ randomized stationary policy. As a consequence, the above probability is strictly positive and \eqref{lump-sum} is also strictly positive.

This implies, if we choose $\tilde{\mathbf{s}} = \mathbf{s}$, then, starting from any arbitrary product state 
$\mathbf{s}\in\mathcal{S}^{(1)}\times\cdots\times\mathcal{S}^{(K)}$, there is a positive probability of returning to this state after $r$ steps for all $r\geq\widehat{r}$, which gives the aperiodicity. Similarly, there is a positive probability of reaching any other composite state after $r$ steps for all $r\geq\widehat{r}$, which gives the irreducibility. This implies the product state MDP is irreducible and aperiodic under any joint pure policy, and thus, any joint randomized stationary policy.

For the second part of the claim, we consider any randomized stationary policy $\Pi$ and the corresponding joint transition probability matrix $\mathbf{P}_\Pi$, there exists a stationary state-action probability vector $\Phi(\mathbf{a},\mathbf{s}),~\mathbf{a}\in\mathcal{A}^{(1)}\times\cdots\times\mathcal{A}^{(K)},~\mathbf{s}\in\mathcal{S}^{(1)}\times\cdots\times\mathcal{S}^{(K)}$, such that
\begin{equation}\label{inter-1}
\sum_{\mathbf{a}}\Phi(\mathbf{a},\tilde{\mathbf{s}}) = \sum_{\mathbf{s}}\sum_{\mathbf{a}}\Phi(\mathbf{a},\mathbf{s})P_\mathbf{a}(\mathbf{s},\tilde{\mathbf{s}}),
~\forall \tilde{\mathbf{s}}\in \mathcal{S}^{(1)}\times\cdots\times\mathcal{S}^{(K)}.
\end{equation}
Then, the state-action probability of the k-th MDP is $\theta^{(k)}(a^{(k)},\tilde{s}^{(k)}) =\sum_{\tilde{s}^{(j)},a^{(j)},~j\neq k}\Phi(\mathbf{a},\tilde{\mathbf{s}})$. Thus,
\begin{align*}
\sum_{a^{(k)}}\theta^{(k)}(a^{(k)},\tilde{s}^{(k)})  
=&\sum_{\tilde{s}^{(j)},~j\neq k}\sum_{\mathbf{a}}\Phi(\mathbf{a},\tilde{\mathbf{s}})
= \sum_{\mathbf{s}}\sum_{\mathbf{a}}\Phi(\mathbf{a},\mathbf{s})\sum_{\tilde{s}^{(j)},~j\neq k}P_\mathbf{a}(\mathbf{s},\tilde{\mathbf{s}})\\
=& \sum_{\mathbf{s}}\sum_{\mathbf{a}}\Phi(\mathbf{a},\mathbf{s})\cdot Pr\l(\tilde{s}^{(k)}|\mathbf{a},\mathbf{s}\r)
= \sum_{\mathbf{s}}\sum_{\mathbf{a}}\Phi(\mathbf{a},\mathbf{s})\cdot Pr\l(\tilde{s}^{(k)}| a^{(k)}, s^{(k)}\r)\\
=&\sum_{a^{(k)}}\sum_{s^{(k)}}\theta^{(k)}(a^{(k)},\tilde{s}^{(k)})\cdot Pr\l(\tilde{s}^{(k)}| a^{(k)}, s^{(k)}\r)\\
=&\sum_{a^{(k)}}\sum_{s^{(k)}}\theta^{(k)}(a^{(k)},\tilde{s}^{(k)})\cdot P_{a^{(k)}}\l( s^{(k)},\tilde{s}^{(k)}\r)
\end{align*}
where the third from the last inequality follows from Assumption \ref{assumption:indep-trans}. This finishes the proof.
\end{proof}

\subsection{Missing proofs in Section \ref{sec:stat-analysis}}\label{proof:section4}
\begin{proof}[Proof of Lemma \ref{lemma:queue-drift-bound}]
Consider the state-action probabilities $\{\tilde\theta^{(k)}\}_{k=1}^K$ which achieves the Slater's condition in \eqref{slater-2}. First of all, note that $Q_i(t)\in\mathcal{F}_{t-1},~\forall t\geq1$. 
Then, using the assumption that $\{\mathbf{g}_{i,t-1}^{(k)}\}_{k=1}^K$ is i.i.d. and independent of all system information up to $t-1$, we have 
\begin{equation}\label{neg-drift}
\expect{Q_i(t-1)\sum_{k=1}^K\l\langle \mathbf{g}_{i,t-1}^{(k)}, \tilde\theta \r\rangle \Big|~\mathcal{F}_{t-1}} 
=\expect{\sum_{k=1}^K\l\langle \mathbf{g}_{i,t-1}^{(k)}, \tilde\theta \r\rangle}Q_i(t-1)\leq -\eta Q_i(t-1).
\end{equation}
Now, by the drift-plus-penalty bound \eqref{dpp-bound}, with $\theta^{(k)}=\tilde\theta^{(k)}$,
\begin{align*}
\Delta(t)\leq& - V\sum_{k=1}^{K}\l\langle \mathbf{f}_{t-1}^{(k)}, \theta_t^{(k)} - \theta_{t-1}^{(k)} \r\rangle - \alpha\sum_{k=1}^K\|\theta^{(k)}_t-\theta^{(k)}_{t-1}\|_2^2  
+ \frac32mK^2\Psi^2
+ V\sum_{k=1}^K\l\langle \mathbf{f}_{t-1}^{(k)}, \tilde\theta^{(k)} - \theta_{t-1}^{(k)} \r\rangle \\
 &+ \sum_{i=1}^mQ_i(t-1)\sum_{k=1}^K
\l\langle\mathbf{g}_{i,t-1}^{(k)}, \tilde\theta^{(k)}\r\rangle + \alpha\sum_{k=1}^K\|\tilde\theta^{(k)} - \theta_{t-1}^{(k)}\|_2^2 
-\alpha\sum_{k=1}^K\|  \tilde\theta^{(k)}-\theta_t^{(k)}\|_2^2\\
\leq& 4VK\Psi  +  \frac32mK^2\Psi^2  +  \sum_{i=1}^mQ_i(t-1)\sum_{k=1}^K
\l\langle\mathbf{g}_{i,t-1}^{(k)}, \tilde\theta^{(k)}\r\rangle   \\
&+ \alpha\sum_{k=1}^K\|\tilde\theta^{(k)} - \theta_{t-1}^{(k)}\|_2^2
-\alpha\sum_{k=1}^K\|  \tilde\theta^{(k)}-\theta_t^{(k)}\|_2^2
\end{align*}
where the second inequality follows from Holder's inequality that 
$$\l|\l\langle \mathbf{f}_{t-1}^{(k)}, \theta_t^{(k)} - \theta_{t-1}^{(k)} \r\rangle\r|\leq \|\mathbf{f}_{t-1}^{(k)}\|_{\infty}\l\| \theta_t^{(k)} - \theta_{t-1}^{(k)}\r\|_1\leq 2\Psi.$$
Summing up the drift from $t$ to $t+t_0-1$ and taking a conditional expectation $\expect{\cdot|\mathcal{F}_{t-1}}$ give
\begin{align*}
&\expect{\|\mathbf{Q}(t+t_0)\|_2^2-\|\mathbf{Q}(t)\|_2^2 \Big| \mathcal{F}_{t-1}}  \\
\leq& 8VK\Psi  +  3mK^2\Psi^2 
 + 2\sum_{i=1}^m\expect{\sum_{\tau=t}^{t+t_0-1}Q_i(\tau-1)\sum_{k=1}^K
\l\langle\mathbf{g}_{i,\tau-1}^{(k)}, \tilde\theta^{(k)}\r\rangle \Big| \mathcal{F}_{t-1}}\\
&+2\alpha\expect{\sum_{k=1}^K\l(\|\tilde\theta^{(k)} - \theta_{t-1}^{(k)}\|_2^2  -  \|  \tilde\theta^{(k)}-\theta_{t+t_0}^{(k)}\|_2^2\r) \Big|\mathcal{F}_{t-1}}\\
\leq& 8VK\Psi  +  3mK^2\Psi^2 + 4K\alpha  
+ 2\sum_{i=1}^m\expect{\sum_{\tau=t}^{t+t_0-1}Q_i(\tau-1)\sum_{k=1}^K 
\l\langle\mathbf{g}_{i,\tau-1}^{(k)}, \tilde\theta^{(k)}\r\rangle \Big| \mathcal{F}_{t-1}}.
\end{align*}
Using the tower property of conditional expectations (further taking conditional expectations 
$\expect{\cdot\Big|\mathcal{F}_{t+t_0-1}\cdots\Big|\mathcal{F}_t}$ inside the conditional expectation) and the bound \eqref{neg-drift}, we have 
\begin{align*}
&\expect{\sum_{\tau=t}^{t+t_0-1}Q_i(\tau-1)\sum_{k=1}^K
\l\langle\mathbf{g}_{i,\tau-1}^{(k)}, \tilde\theta^{(k)}\r\rangle \Big| \mathcal{F}_{t-1}} \\
&\leq-\eta\expect{\sum_{\tau=t}^{t+t_0-1}Q_i(\tau-1)\Big| \mathcal{F}_{t-1}}\\
&\leq -\eta t_0 Q_i(t-1) + \frac{t_0(t_0-1)}{2}\Psi\leq -\eta t_0 Q_i(t) + \frac{t_0(t_0-1)}{2}\Psi + \eta t_0K\Psi,
\end{align*}
where the last inequality follows from the queue updating rule \eqref{Q-update} that 
$$|Q_i(t-1)-Q_i(t)|\leq \l| \sum_{k=1}^K\l\langle \mathbf{g}_{i,t-2}^{(k)},\theta_{t-1}^{(k)} \r\rangle \r|
 \leq K\|\mathbf{g}_{i,t-2}^{(k)}\|_\infty \|\theta_{t-1}^{(k)}\|_1 \leq K\Psi.$$
Thus, we have 
\begin{multline*}
\expect{\|\mathbf{Q}(t+t_0)\|_2^2-\|\mathbf{Q}(t)\|_2^2 \Big| \mathcal{F}_{t-1}} \\
\leq  8VK\Psi  +  3mK^2\Psi^2 + 4K\alpha + t_0(t_0-1)m\Psi 
 + 2mK\Psi\eta t_0 - 2 \eta t_0 \sum_{i=1}^mQ_i(t)\\
 \leq8VK\Psi  +  3mK^2\Psi^2 + 4K\alpha + t_0(t_0-1)m\Psi 
 + 2mK\Psi\eta t_0 - 2 \eta t_0 \|Q_i(t)\|_2.
\end{multline*}
Suppose $\|Q_i(t)\|_2\geq\frac{8VK\Psi  +  3mK^2\Psi^2 + 4K\alpha + t_0(t_0-1)m\Psi 
 + 2mK\Psi\eta t_0 + \eta^2 t_0^2}{\eta t_0}$, then, it follows,
 \[
 \expect{\|\mathbf{Q}(t+t_0)\|_2^2-\|\mathbf{Q}(t)\|_2^2 \Big| \mathcal{F}_{t-1}} \leq -  \eta t_0 \|Q_i(t)\|_2  ,
 \]
which implies 
\[
 \expect{\|\mathbf{Q}(t+t_0)\|_2^2 \Big| \mathcal{F}_{t-1}} \leq \l(\|Q_i(t)\|_2-\frac{\eta t_0}{2}\r)^2
\]
Since $\|Q_i(t)\|_2\geq \frac{\eta t_0}{2}$, taking square root from both sides using Jensen' inequality gives
\[
 \expect{\|\mathbf{Q}(t+t_0)\|_2 \Big| \mathcal{F}_{t-1}}\leq \|Q_i(t)\|_2 - \frac{\eta t_0}{2}.
\]
On the other hand, we always have
\begin{multline*}
\Big|\|\mathbf{Q}(t+1)\|_2-\|\mathbf{Q}(t)\|_2\Big|  
=\l|\sqrt{\sum_{i=1}^m\max\l\{Q_i(t)+\sum_{k=1}^{K}\l\langle\mathbf{g}_{i,t-1}^{(k)},\theta_{t}^{(k)} \r\rangle,0\r\}^2}
- \sqrt{\sum_{i=1}^mQ_i(t)^2}\r|\\
\leq \l(\sum_{i=1}^m\l(\sum_{k=1}^{K}\l\langle\mathbf{g}_{i,t-1}^{(k)},\theta_{t}^{(k)} \r\rangle\r)^2\r)^{1/2}\leq\sqrt{m}K\Psi.
\end{multline*}
Overall, we finish the proof.
\end{proof}

\subsection{Missing proofs in Section \ref{sec:perturb}}\label{proof:section5}
\begin{proof}[Proof of Lemma \ref{lemma:small-bound}]
Consider any joint randomized stationary policy $\Pi$ and a starting state probability $d_0$ on the product state space $\mathcal{S}^{(1)}\times\mathcal{S}^{(2)}\times\cdots\times\mathcal{S}^{(K)}$. Let $\mathbf{P}_\Pi$ be the corresponding transition matrix on the product state space. Let $d_t$ be the state distribution at time $t$ under $\Pi$ and $d_\Pi$ be the stationary state distribution. 
By Lemma \ref{lemma:prod-chain}, we know that this product state MDP is irreducible and aperiodic (ergodic) under any randomized stationary policy. In particular, it is ergodic under any pure policy. Since there are only finitely many pure policies, let $\mathbf{P}_{\Pi_1},\cdots,\mathbf{P}_{\Pi_N}$ be probability transition matrices corresponding to these pure policies. By Proposition 1.7 of \cite{LevinPeresWilmer2006}
, for any $\Pi_i,~i\in\{1,2,\cdots,N\}$, there exists integer $\tau_i>0$ such that 
$\l(\mathbf{P}_{\Pi_i}\r)^t$ is strictly positive for any $t\geq\tau_i$. Let 
$$\tau_1=\max_i\tau_i,$$ 
then, it follows $\l(\mathbf{P}_{\Pi_i}\r)^{\tau_1}$ is strictly positive uniformly for all $\Pi_i$'s.
Let $\delta>0$ be the least entry of $\l(\mathbf{P}_{\Pi_i}\r)^{\tau_1}$ over all $\Pi_i$'s.
Following from the fact that the probability transition matrix $\mathbf{P}_{\Pi}$ is a convex combination of those of pure policies, i.e. $\mathbf{P}_{\Pi}=\sum_{i=1}^N\alpha_i \mathbf{P}_{\Pi_i},~\alpha_i\geq0,~\sum_{i=1}^N\alpha_i=1$, we have $\l(\mathbf{P}_{\Pi}\r)^{\tau_1}$ is also strictly positive. To see this, note that
\[
\l(\mathbf{P}_{\Pi}\r)^{\tau_1} = \l(\sum_{i=1}^N\alpha_i \mathbf{P}_{\Pi_i}\r)^{\tau_1}\geq \sum_{i=1}^N\alpha_i^{\tau_1} \l(\mathbf{P}_{\Pi_i}\r)^{\tau_1}>0,
\]
where the inequality is taken to be entry-wise. Furthermore, the least entry of $\l(\mathbf{P}_{\Pi}\r)^{\tau_1}$ is lower bounded by $\delta/N^{\tau_1-1}$ uniformly over all joint randomized stationary policies $\Pi$, which follows from the fact that the least entry of $\frac{1}{N}\l(\mathbf{P}_{\Pi}\r)^{\tau_1}$ is bounded as
\[
\frac{1}{N}\sum_{i=1}^N\alpha_i^{\tau_1}\delta \geq \l(\frac{1}{N}\sum_{i=1}^N\alpha_i\r)^{\tau_1}\delta= \frac{\delta}{N^{\tau_1}}.
\]
The rest is a standard bookkeeping argument following from the Markov chain mixing time theory (Theorem 4.9 of \cite{LevinPeresWilmer2006}). Let $\mathbf{D}_\Pi$ be a matrix of the same size as $\mathbf{P}_{\Pi}$ and each row equal to the stationary distribution $d_\Pi$. Let 
$\varepsilon = \delta/N^{\tau_1-1}$. We claim that for any 
integer $n>0$, and any $\Pi$,
\begin{equation}\label{small-fact-1}
\mathbf{P}_{\Pi}^{\tau_1n} = (1-(1-\varepsilon)^{n})\mathbf{D}_\Pi + (1-\varepsilon)^{n}\mathbf{Q}^n,
\end{equation}
for some stochastic matrix $\mathbf{Q}$. We use induction to prove this claim. First of all, for $n=1$, from the fact that $\l(\mathbf{P}_{\Pi}\r)^{\tau_1}$ is a positive matrix and the least entry is uniformly lower bounded by $\varepsilon$ over all policies $\Pi$, we can write $\l(\mathbf{P}_{\Pi}\r)^{\tau_1}$ as
\[
 \l(\mathbf{P}_{\Pi}\r)^{\tau_1} = \varepsilon\mathbf{D}_\Pi + (1-\varepsilon)\mathbf{Q},
\]
for some stochastic matrix $\mathbf{Q}$, where we use the fact that $\varepsilon\in(0,1]$. Suppose \eqref{small-fact-1} holds for $n=1,2,\cdots,\ell$, we show that it also holds for $n=\ell+1$. Using the fact that $\mathbf{D}_\Pi\mathbf{P}_\Pi = \mathbf{D}_\Pi$ and $\mathbf{Q}\mathbf{D}_\Pi = \mathbf{D}_\Pi$ for any stochastic matrix $\mathbf{Q}$,  we can write out $\mathbf{P}_{\Pi}^{\tau_1(\ell+1)}$:
\begin{align*}
\mathbf{P}_{\Pi}^{\tau_1(\ell+1)} =& \mathbf{P}_{\Pi}^{\tau_1\ell}\mathbf{P}_{\Pi}^{\tau_1} = \l( \l(1 - (1-\varepsilon)^\ell\r)\mathbf{D}_{\Pi} + (1-\varepsilon)^\ell Q^{\ell} \r)\mathbf{P}_{\Pi}^{\tau_1}\\
=&  \l(1 - (1-\varepsilon)^\ell\r)\mathbf{D}_{\Pi}\mathbf{P}_{\Pi}^{\tau_1} + (1-\varepsilon)^\ell \mathbf{Q}^{\ell}\mathbf{P}_{\Pi}^{\tau_1}\\
=&  \l(1 - (1-\varepsilon)^\ell\r)\mathbf{D}_{\Pi} + (1-\varepsilon)^\ell \mathbf{Q}^{\ell}(\varepsilon\mathbf{D}_\Pi + (1-\varepsilon)\mathbf{Q})\\
=&  \l(1 - (1-\varepsilon)^\ell\r)\mathbf{D}_{\Pi} + (1-\varepsilon)^\ell \mathbf{Q}^{\ell}((1-(1-\varepsilon))\mathbf{D}_\Pi + (1-\varepsilon)\mathbf{Q})\\
= & (1-(1-\varepsilon)^{\ell+1})\mathbf{D}_\Pi + (1-\varepsilon)^{\ell+1}\mathbf{Q}^{\ell+1}.
\end{align*}
Thus, \eqref{small-fact-1} holds. For any integer $t>0$, we write $t=\tau_1n+j$ for some integer $j\in[0,\tau_1)$ and $n\geq0$. Then,
\begin{equation*}
\l(\mathbf{P}_{\Pi}\r)^{t}-\mathbf{D}_\Pi  = \l(\mathbf{P}_{\Pi}\r)^{t}-\mathbf{D}_\Pi = (1-\varepsilon)^n\l(Q^n\mathbf{P}_{\Pi}^j-\mathbf{D}_\Pi\r).
\end{equation*} 
Let $\mathbf{P}_{\Pi}^{t}(i,\cdot)$ be the $i$-th row of $\mathbf{P}_{\Pi}^{t}$, then, we obtain
\[
\max_{i}\|  \mathbf{P}_{\Pi}^{t}(i,\cdot)  -  d_\Pi  \|_1\leq 2(1-\varepsilon)^n, 
\]
where we use the fact that the $\ell_1$-norm of the row difference is bounded by 2. Finally, for any starting state distribution $d_0$, we have
\begin{multline*}
\l\|d_0\mathbf{P}_{\Pi}^t - d_{\Pi}  \r\|_1 =  \l\| \sum_{i} d_0(i)\l( \mathbf{P}_{\Pi}^t(i,\cdot) - d_\Pi \r) \r\|_1\\
=  \sum_{i} d_0(i)\l\| \mathbf{P}_{\Pi}^t(i,\cdot) - d_\Pi \r\|_1\leq \max_{i}\|  \mathbf{P}_{\Pi}^{t}(i,\cdot)  -  d_\Pi  \|_1\leq 2(1-\varepsilon)^n.
\end{multline*}
Take $r_1 = \log\frac{1}{1-\varepsilon}$
finishes the proof.
\end{proof}

\begin{proof}[Proof of Lemma \ref{lemma:perturbed}]
Let $v_t\in\mathcal{S}^{(1)}\times\cdots\times\mathcal{S}^{(K)}$ be the joint state distribution at time $t$ under policy $\Pi$.
Using the fact that $\Pi$ is a fixed policy independent of  $\mathbf{g}^{(k)}_{i,t}$ and Assumption \ref{assumption:indep-trans} that the probability transition is also independent of function path given any state and action,
 the function $\mathbf{g}^{(k)}_{i,t}$ and state-action pair 
$(a^{(k)}_t,s^{(k)}_t)$ are mutually independent.
Thus, for any $t\in\{0,1,2,\cdots,T-1\}$
\begin{multline*}
 \expect{\sum_{k=1}^K g_{i,t}^{(k)}(a^{(k)}_t,s^{(k)}_t) \Big| d_0,\Pi }
 =\sum_{\mathbf{s}\in\mathcal{S}^{(1)}\times\cdots\times\mathcal{S}^{(K)}}
 \sum_{\mathbf{a}\in\mathcal{A}^{(1)}\times\cdots\times\mathcal{A}^{(K)}}v_t(\mathbf{s})\Pi(\mathbf{a}|\mathbf{s})\sum_{k=1}^K\expect{g_{i,t}^{(k)}(a^{(k)},s^{(k)})},
\end{multline*}
where $\mathbf{s}=[s^{(1)},\cdots,s^{(K)}]$ and $\mathbf{a}=[a^{(1)},\cdots,a^{(K)}]$ and the latter expectation is taken with respect to $\mathbf{g}_{i,t}^{(k)}$ (i.e. the random variable $w_{t}$).
On the other hand, by Lemma \ref{lemma:prod-chain}, we know that for any randomized stationary policy $\Pi$, the corresponding stationary state-action probability can be expressed as 
$\{\theta_*^{(k)}\}_{k=1}^K$ with $\theta_*^{(k)}\in\Theta^{(k)}$. Thus,
\begin{multline*}
\sum_{k=1}^K\l\langle \expect{\mathbf{g}^{(k)}_{i,t}}, \theta^{(k)} \r\rangle 
= \sum_{\mathbf{s}\in\mathcal{S}^{(1)}\times\cdots\times\mathcal{S}^{(K)}}\sum_{\mathbf{a}\in\mathcal{A}^{(1)}\times\cdots\times\mathcal{A}^{(K)}}d_\Pi(\mathbf{s})\Pi(\mathbf{a}|\mathbf{s})\sum_{k=1}^K\expect{g_{i,t}^{(k)}(a^{(k)},s^{(k)})}.
\end{multline*}
Hence,
we can control the difference:
\begin{align*}
&   \sum_{t=0}^{T-1}\l| \expect{\sum_{k=1}^K g_{i,t}^{(k)}(a^{(k)}_t,s^{(k)}_t) \Big| d_0,\Pi }  -  
\sum_{k=1}^K\l\langle \expect{\mathbf{g}^{(k)}_{i,t}}, \theta_*^{(k)} \r\rangle  \r|   \\
\leq& \sum_{t=0}^{T-1}\l| \sum_{\mathbf{s}\in\mathcal{S}^{(1)}\times\cdots\times\mathcal{S}^{(K)}} \sum_{\mathbf{a}\in\mathcal{A}^{(1)}\times\cdots\times\mathcal{A}^{(K)}} 
\l( v_t(\mathbf{s}) - d_\Pi(\mathbf{s}) \r)\Pi(\mathbf{a}|\mathbf{s})
 \r| K\Psi    \\
\leq& K\Psi\sum_{t=0}^{T-1}\|v_t-d_\Pi\|_1\leq    2K\Psi\sum_{t=0}^{T-1}e^{(r_1-t)/r_1}
\leq 2eK\Psi\int_0^{T-1}e^{-t/r_1}dt
=2er_1K\Psi,
\end{align*}
where the third inequality follows from Lemma \ref{lemma:small-bound}.
Taking $C_1 = 2er_1$ finishes the proof of \eqref{diff-2} and \eqref{diff-1} can be proved in a similar way.

In particular, we have for any randomized stationary policy $\Pi$ that satisfies the constraint \eqref{main-constraint}, we have
\begin{align*}
  T\cdot\sum_{k=1}^K\l\langle \expect{\mathbf{g}^{(k)}_{i,t}}, \theta_*^{(k)} \r\rangle
\leq&\sum_{t=0}^{T-1}\l| \expect{\sum_{k=1}^K g_{i,t}^{(k)}(a^{(k)}_t,s^{(k)}_t) \Big| d_0,\Pi }  -  
\sum_{k=1}^K\l\langle \expect{\mathbf{g}^{(k)}_{i,t}}, \theta_*^{(k)} \r\rangle  \r|  \\
&+  \sum_{t=0}^{T-1}\expect{\sum_{k=1}^K g_{i,t}^{(k)}(a^{(k)}_t,s^{(k)}_t) \Big| d_0,\Pi }
\leq2er_1K\Psi + 0 = 2er_1K\Psi,
\end{align*}
finishing the proof.
\end{proof}


\bibliographystyle{alphaabbr}
\bibliography{asyn-theory,constrained-online}

\newcommand{\etalchar}[1]{$^{#1}$}
\begin{thebibliography}{GDHBSW13}

\bibitem[Alt99a]{Al99}
E.~Altman.
\newblock {\em Constrained Markov decision processes}.
\newblock Chapman and Hall/CRC Press, 1999.

\bibitem[Alt99b]{altman1999constrained}
E.~Altman.
\newblock {\em Constrained Markov decision processes}, volume~7.
\newblock CRC Press, 1999.

\bibitem[BAM10]{benson2010network}
T.~Benson, A.~Akella, and D.~A. Maltz.
\newblock Network traffic characteristics of data centers in the wild.
\newblock In {\em Proceedings of the 10th ACM SIGCOMM conference on Internet
  measurement}, pages 267--280. ACM, 2010.

\bibitem[Ber95]{bertsekas1995dynamic}
D.~P. Bertsekas.
\newblock {\em Dynamic programming and optimal control}, volume~1.
\newblock Athena scientific Belmont, MA, 1995.

\bibitem[Ber01]{Be01}
D.~P. Bertsekas.
\newblock {\em Dynamic Programming and Optimal Control, 2nd edition, Vol. I}.
\newblock Athena Scientific, Nashua, NH, 2001.

\bibitem[Ber09a]{Be09}
D.~Bertsekas.
\newblock {\em Convex Optimization Theory}.
\newblock Athena Scientific, 2009.

\bibitem[Ber09b]{bertsekas2009convex}
D.~P. Bertsekas.
\newblock {\em Convex optimization theory}.
\newblock Athena Scientific Belmont, 2009.

\bibitem[BGPS06]{BGPS06}
S.~Byod, A.~Ghosh, B.~Prabhakar, and D.~Shah.
\newblock Randomized gossip algorithms.
\newblock {\em IEEE/ACM Transactions on Networking}, 14,:2508--2530, 2006.

\bibitem[BL16]{boutilier2016budget}
C.~Boutilier and T.~Lu.
\newblock Budget allocation using weakly coupled, constrained markov decision
  processes.
\newblock In {\em UAI}, 2016.

\bibitem[BT97]{BT97}
D.~P. Bertsekas and J.~N. Tsitsiklis.
\newblock {\em Parallel and Distributed Computation: Numerical Methods}.
\newblock Athena Scientific, Nashua, NH, 1997.

\bibitem[BV04]{BV04}
S.~Boyd and L.~Vandenberghe.
\newblock {\em Convex Optimization}.
\newblock Cambridge University Press, 2004.

\bibitem[CDM14]{caramanis2014efficient}
C.~Caramanis, N.~B. Dimitrov, and D.~P. Morton.
\newblock Efficient algorithms for budget-constrained markov decision
  processes.
\newblock {\em IEEE Transactions on Automatic Control}, 59(10):2813--2817,
  2014.

\bibitem[CFMS03]{chang2003multitime}
H.~S. Chang, P.~J. Fard, S.~I. Marcus, and M.~Shayman.
\newblock Multitime scale markov decision processes.
\newblock {\em IEEE Transactions on Automatic Control}, 48(6):976--987, 2003.

\bibitem[CLG17]{chen2017online}
T.~Chen, Q.~Ling, and G.~B. Giannakis.
\newblock An online convex optimization approach to dynamic network resource
  allocation.
\newblock {\em arXiv preprint arXiv:1701.03974}, 2017.

\bibitem[CW16]{chen2016stochastic}
Y.~Chen and M.~Wang.
\newblock Stochastic primal-dual methods and sample complexity of reinforcement
  learning.
\newblock {\em arXiv preprint arXiv:1612.02516}, 2016.

\bibitem[DGS14]{dick2014online}
T.~Dick, A.~Gyorgy, and C.~Szepesvari.
\newblock Online learning in markov decision processes with changing cost
  sequences.
\newblock In {\em Proceedings of the 31st International Conference on Machine
  Learning (ICML-14)}, pages 512--520, 2014.

\bibitem[Dur13]{Durrett}
R.~Durrett.
\newblock {\em Probability: Theory and Examples, 4th edition}.
\newblock Cambridge University Press, 2013.

\bibitem[EDKM05]{even2005experts}
E.~Even-Dar, S.~M. Kakade, and Y.~Mansour.
\newblock Experts in a markov decision process.
\newblock In {\em Advances in neural information processing systems}, pages
  401--408, 2005.

\bibitem[EDKM09]{even2009online}
E.~Even-Dar, S.~M. Kakade, and Y.~Mansour.
\newblock Online markov decision processes.
\newblock {\em Mathematics of Operations Research}, 34(3):726--736, 2009.

\bibitem[ES06]{eryilmaz2006joint}
A.~Eryilmaz and R.~Srikant.
\newblock Joint congestion control, routing, and mac for stability and fairness
  in wireless networks.
\newblock {\em IEEE Journal on Selected Areas in Communications},
  24(8):1514--1524, 2006.

\bibitem[ES07]{eryilmaz2007fair}
A.~Eryilmaz and R.~Srikant.
\newblock Fair resource allocation in wireless networks using
  queue-length-based scheduling and congestion control.
\newblock {\em IEEE/ACM Transactions on Networking (TON)}, 15(6):1333--1344,
  2007.

\bibitem[ES12]{eryilmaz2012asymptotically}
A.~Eryilmaz and R.~Srikant.
\newblock Asymptotically tight steady-state queue length bounds implied by
  drift conditions.
\newblock {\em Queueing Systems}, 72(3-4):311--359, 2012.

\bibitem[Fox66a]{fox1966markov}
B.~Fox.
\newblock Markov renewal programming by linear fractional programming.
\newblock {\em SIAM Journal on Applied Mathematics}, 14(6):1418--1432, 1966.

\bibitem[Fox66b]{Fo66}
B.~Fox.
\newblock Markov renewal programming by linear fractional programming.
\newblock {\em SIAM Journal on Applied Mathematics}, 14,(6):1418--1432, 1966.

\bibitem[FS99]{freund1999adaptive}
Y.~Freund and R.~E. Schapire.
\newblock Adaptive game playing using multiplicative weights.
\newblock {\em Games and Economic Behavior}, 29(1-2):79--103, 1999.

\bibitem[Gan13]{gandhi2013dynamic}
A.~Gandhi.
\newblock {\em Dynamic server provisioning for data center power management}.
\newblock PhD thesis, Carnegie Mellon University, 2013.

\bibitem[GDHBSW13]{GDHS13}
A.~Gandhi, S.~Doroudi, M.~Harchol-Balter, and A.~Scheller-Wolf.
\newblock Exact analysis of the m/m/k/setup class of markov chains via
  recursive renewal reward.
\newblock {\em Proc. ACM Sigmetrics}, pages 153--166, 2013.

\bibitem[GHBK12]{gandhi2012sleep}
A.~Gandhi, M.~Harchol-Balter, and M.~A. Kozuch.
\newblock Are sleep states effective in data centers?
\newblock In {\em Green Computing Conference (IGCC), 2012 International}, pages
  1--10. IEEE, 2012.

\bibitem[GNT{\etalchar{+}}06]{georgiadis2006resource}
L.~Georgiadis, M.~J. Neely, L.~Tassiulas, et~al.
\newblock Resource allocation and cross-layer control in wireless networks.
\newblock {\em Foundations and Trends{\textregistered} in Networking},
  1(1):1--144, 2006.

\bibitem[GRW14]{guan2014online}
P.~Guan, M.~Raginsky, and R.~M. Willett.
\newblock Online markov decision processes with kullback--leibler control cost.
\newblock {\em IEEE Transactions on Automatic Control}, 59(6):1423--1438, 2014.

\bibitem[H{\etalchar{+}}16]{hazan2016introduction}
E.~Hazan et~al.
\newblock Introduction to online convex optimization.
\newblock {\em Foundations and Trends{\textregistered} in Optimization},
  2(3-4):157--325, 2016.

\bibitem[Haj82]{hajek1982hitting}
B.~Hajek.
\newblock Hitting-time and occupation-time bounds implied by drift analysis
  with applications.
\newblock {\em Advances in Applied probability}, 14(3):502--525, 1982.

\bibitem[HAK07]{hazan07ML}
E.~Hazan, A.~Agarwal, and S.~Kale.
\newblock Logarithmic regret algorithms for online convex optimization.
\newblock {\em Machine Learning}, 69:169--192, 2007.

\bibitem[HK14]{hazan2014beyond}
E.~Hazan and S.~Kale.
\newblock Beyond the regret minimization barrier: optimal algorithms for
  stochastic strongly-convex optimization.
\newblock {\em The Journal of Machine Learning Research}, 15(1):2489--2512,
  2014.

\bibitem[HP05]{hutter2005adaptive}
M.~Hutter and J.~Poland.
\newblock Adaptive online prediction by following the perturbed leader.
\newblock {\em Journal of Machine Learning Research}, 6(Apr):639--660, 2005.

\bibitem[HS08]{horvath2008multi}
T.~Horvath and K.~Skadron.
\newblock Multi-mode energy management for multi-tier server clusters.
\newblock In {\em Proceedings of the 17th international conference on Parallel
  architectures and compilation techniques}, pages 270--279. ACM, 2008.

\bibitem[JHA16]{jenatton2016adaptive}
R.~Jenatton, J.~Huang, and C.~Archambeau.
\newblock Adaptive algorithms for online convex optimization with long-term
  constraints.
\newblock In {\em International Conference on Machine Learning}, pages
  402--411, 2016.

\bibitem[LHS{\etalchar{+}}13]{lattimore2013sample}
T.~Lattimore, M.~Hutter, P.~Sunehag, et~al.
\newblock The sample-complexity of general reinforcement learning.
\newblock In {\em Proceedings of the 30th International Conference on Machine
  Learning}. Journal of Machine Learning Research, 2013.

\bibitem[Li11]{li2011stochastic}
C.-p. Li.
\newblock {\em Stochastic optimization over parallel queues: Channel-blind
  scheduling, restless bandit, and optimal delay}.
\newblock Citeseer, 2011.

\bibitem[LN14]{LN14}
C.~Li and M.~J. Neely.
\newblock Solving convex optimization with side constraints in a multi-class
  queue by adaptive $c\mu$ rule.
\newblock {\em Queueing System}, 77,(3):331--372, 2014.

\bibitem[LPW06]{LevinPeresWilmer2006}
D.~A. Levin, Y.~Peres, and E.~L. Wilmer.
\newblock {\em {Markov chains and mixing times}}.
\newblock American Mathematical Society, 2006.

\bibitem[LS04]{lin2004joint}
X.~Lin and N.~B. Shroff.
\newblock Joint rate control and scheduling in multihop wireless networks.
\newblock In {\em 2004 43rd IEEE Conference on Decision and Control (CDC)(IEEE
  Cat. No. 04CH37601)}, volume~2, pages 1484--1489. IEEE, 2004.

\bibitem[LW94]{littlestone1994weighted}
N.~Littlestone and M.~K. Warmuth.
\newblock The weighted majority algorithm.
\newblock {\em Information and computation}, 108(2):212--261, 1994.

\bibitem[LWAT13]{lin2013dynamic}
M.~Lin, A.~Wierman, L.~L. Andrew, and E.~Thereska.
\newblock Dynamic right-sizing for power-proportional data centers.
\newblock {\em IEEE/ACM Transactions on Networking}, 21(5):1378--1391, 2013.

\bibitem[MGW09]{meisner2009powernap}
D.~Meisner, B.~T. Gold, and T.~F. Wenisch.
\newblock Powernap: eliminating server idle power.
\newblock In {\em ACM Sigplan Notices}, volume~44, pages 205--216. ACM, 2009.

\bibitem[MJY12]{mahdavi2012trading}
M.~Mahdavi, R.~Jin, and T.~Yang.
\newblock Trading regret for efficiency: online convex optimization with long
  term constraints.
\newblock {\em Journal of Machine Learning Research}, 13(Sep):2503--2528, 2012.

\bibitem[MSB{\etalchar{+}}11]{meisner2011power}
D.~Meisner, C.~M. Sadler, L.~A. Barroso, W.-D. Weber, and T.~F. Wenisch.
\newblock Power management of online data-intensive services.
\newblock In {\em ACM SIGARCH Computer Architecture News}, volume~39, pages
  319--330. ACM, 2011.

\bibitem[NAGS10]{neu2010online}
G.~Neu, A.~Antos, A.~Gy{\"o}rgy, and C.~Szepesv{\'a}ri.
\newblock Online markov decision processes under bandit feedback.
\newblock In {\em Advances in Neural Information Processing Systems}, pages
  1804--1812, 2010.

\bibitem[Nee10a]{neely2010stochastic}
M.~J. Neely.
\newblock Stochastic network optimization with application to communication and
  queueing systems.
\newblock {\em Synthesis Lectures on Communication Networks}, 3(1):1--211,
  2010.

\bibitem[Nee10b]{Neely2010}
M.~J. Neely.
\newblock {\em Stochastic Network Optimization with Application to
  Communication and Queueing Systems}.
\newblock Morgan \& Claypool, 2010.

\bibitem[Nee11]{neely2011online}
M.~J. Neely.
\newblock Online fractional programming for markov decision systems.
\newblock In {\em Communication, Control, and Computing (Allerton), 2011 49th
  Annual Allerton Conference on}, pages 353--360. IEEE, 2011.

\bibitem[Nee12a]{Ne12}
M.~J. Neely.
\newblock Asynchronous control for coupled markov decision systems.
\newblock {\em Information Theory Workshop (ITW)}, 2012.

\bibitem[Nee12b]{Neely12}
M.~J. Neely.
\newblock Asynchronous scheduling for energy optimality in systems with
  multiple servers.
\newblock {\em Proceedings of 46th Annual Conference on Information Sciences
  and Systems (CISS)}, 2012.

\bibitem[Nee12c]{neely2012stability}
M.~J. Neely.
\newblock Stability and probability 1 convergence for queueing networks via
  lyapunov optimization.
\newblock {\em Journal of Applied Mathematics}, 2012, 2012.

\bibitem[Nee13a]{neely2013dynamic}
M.~J. Neely.
\newblock Dynamic optimization and learning for renewal systems.
\newblock {\em IEEE Transactions on Automatic Control}, 58(1):32--46, 2013.

\bibitem[Nee13b]{Ne09}
M.~J. Neely.
\newblock Dynamic optimization and learning for renewal systems.
\newblock {\em IEEE Transactions on Automatic Control}, 58,(1):32--46, 2013.

\bibitem[New05]{newman2005power}
M.~E. Newman.
\newblock Power laws, pareto distributions and zipf's law.
\newblock {\em Contemporary physics}, 46(5):323--351, 2005.

\bibitem[NML08]{neely2008fairness}
M.~J. Neely, E.~Modiano, and C.-P. Li.
\newblock Fairness and optimal stochastic control for heterogeneous networks.
\newblock {\em IEEE/ACM Transactions On Networking}, 16(2):396--409, 2008.

\bibitem[NO09]{nedic2009approximate}
A.~Nedi{\'c} and A.~Ozdaglar.
\newblock Approximate primal solutions and rate analysis for dual subgradient
  methods.
\newblock {\em SIAM Journal on Optimization}, 19(4):1757--1780, 2009.

\bibitem[NY17]{neely2017online}
M.~J. Neely and H.~Yu.
\newblock Online convex optimization with time-varying constraints.
\newblock {\em arXiv preprint arXiv:1702.04783}, 2017.

\bibitem[PT99]{papadimitriou1999complexity}
C.~H. Papadimitriou and J.~N. Tsitsiklis.
\newblock The complexity of optimal queuing network control.
\newblock {\em Mathematics of Operations Research}, 24(2):293--305, 1999.

\bibitem[Put14]{puterman2014markov}
M.~L. Puterman.
\newblock {\em Markov decision processes: discrete stochastic dynamic
  programming}.
\newblock John Wiley \& Sons, 2014.

\bibitem[PXYY16]{PXYY15}
Z.~Peng, Y.~Xu, M.~Yan, and W.~Yin.
\newblock Arock: an algorithmic framework for asynchronous parallel coordinate
  updates.
\newblock {\em To appear in SIAM Journal on Scientific Computing}, 2016.

\bibitem[Rib10]{ribeiro2010ergodic}
A.~Ribeiro.
\newblock Ergodic stochastic optimization algorithms for wireless communication
  and networking.
\newblock {\em IEEE Transactions on Signal Processing}, 58(12):6369--6386,
  2010.

\bibitem[Roc15]{rockafellar2015convex}
R.~T. Rockafellar.
\newblock {\em Convex analysis}.
\newblock Princeton university press, 2015.

\bibitem[Ros02]{Ro02}
S.~Ross.
\newblock {\em Introduction to Probability Models, 8th edition}.
\newblock Academic Press, 2002.

\bibitem[SB98]{sutton1998reinforcement}
R.~S. Sutton and A.~G. Barto.
\newblock {\em Reinforcement learning: An introduction}, volume~1.
\newblock MIT press Cambridge, 1998.

\bibitem[Sch83]{Sc83}
S.~Schaible.
\newblock Fractional programming.
\newblock {\em Zeitschrift fur Operations Research}, 27,(1):39--54, 1983.

\bibitem[SN11]{SN11}
K.~Srivastava and A.~Nedic.
\newblock Distributed asynchronous constrained stochastic optimization.
\newblock {\em IEEE Journal of Selected Topics in Signal Processing},
  5,(4):772--790, 2011.

\bibitem[Sto05]{stolyar2005maximizing}
A.~L. Stolyar.
\newblock Maximizing queueing network utility subject to stability: Greedy
  primal-dual algorithm.
\newblock {\em Queueing Systems}, 50(4):401--457, 2005.

\bibitem[TE90]{tassiulas1990stability}
L.~Tassiulas and A.~Ephremides.
\newblock Stability properties of constrained queueing systems and scheduling
  policies for maximum throughput in multihop radio networks.
\newblock In {\em 29th IEEE Conference on Decision and Control}, pages
  2130--2132. IEEE, 1990.

\bibitem[TE93]{tassiulas1993dynamic}
L.~Tassiulas and A.~Ephremides.
\newblock Dynamic server allocation to parallel queues with randomly varying
  connectivity.
\newblock {\em IEEE Transactions on Information Theory}, 39(2):466--478, 1993.

\bibitem[UKIN10]{urgaonkar2010dynamic}
R.~Urgaonkar, U.~C. Kozat, K.~Igarashi, and M.~J. Neely.
\newblock Dynamic resource allocation and power management in virtualized data
  centers.
\newblock In {\em Network Operations and Management Symposium (NOMS), 2010
  IEEE}, pages 479--486. IEEE, 2010.

\bibitem[UWH{\etalchar{+}}15]{urgaonkar2015dynamic}
R.~Urgaonkar, S.~Wang, T.~He, M.~Zafer, K.~Chan, and K.~K. Leung.
\newblock Dynamic service migration and workload scheduling in edge-clouds.
\newblock {\em Performance Evaluation}, 91:205--228, 2015.

\bibitem[Wal44]{wald1944cumulative}
A.~Wald.
\newblock On cumulative sums of random variables.
\newblock {\em The Annals of Mathematical Statistics}, 15(3):283--296, 1944.

\bibitem[Wer13]{wernz2013multi}
C.~Wernz.
\newblock Multi-time-scale markov decision processes for organizational
  decision-making.
\newblock {\em EURO Journal on Decision Processes}, 1(3-4):299--324, 2013.

\bibitem[Whi88]{whittle1988restless}
P.~Whittle.
\newblock Restless bandits: Activity allocation in a changing world.
\newblock {\em Journal of applied probability}, 25(A):287--298, 1988.

\bibitem[WN15]{wei2015power}
X.~Wei and M.~J. Neely.
\newblock Power-aware wireless file downloading: A lyapunov indexing approach
  to a constrained restless bandit problem.
\newblock {\em IEEE/ACM Transactions on Networking}, 24(4):2264--2277, 2015.

\bibitem[WN16]{wei2016theory}
X.~Wei and M.~J. Neely.
\newblock On the theory and application of distributed asynchronous
  optimization over weakly coupled renewal systems.
\newblock {\em arXiv preprint arXiv:1608.00195}, 2016.

\bibitem[WN17]{wei2017data}
X.~Wei and M.~J. Neely.
\newblock Data center server provision: Distributed asynchronous control for
  coupled renewal systems.
\newblock {\em IEEE/ACM Transactions on Networking (TON)}, 25(4):2180--2194,
  2017.

\bibitem[WN18]{wei2018asynchronous}
X.~Wei and M.~J. Neely.
\newblock Asynchronous optimization over weakly coupled renewal systems.
\newblock {\em Stochastic Systems}, 8(3):167--191, 2018.

\bibitem[WN19]{wei2016online}
X.~Wei and M.~J. Neely.
\newblock Opportunistic scheduling over time varying renewal systems: An
  empirical method.
\newblock {\em arXiv preprint arXiv:1606.03463}, 2019.

\bibitem[WSLJ15]{wu2015algorithms}
H.~Wu, R.~Srikant, X.~Liu, and C.~Jiang.
\newblock Algorithms with logarithmic or sublinear regret for constrained
  contextual bandits.
\newblock In {\em Advances in Neural Information Processing Systems}, pages
  433--441, 2015.

\bibitem[WUZ{\etalchar{+}}15]{wang2015dynamic}
S.~Wang, R.~Urgaonkar, M.~Zafer, T.~He, K.~Chan, and K.~K. Leung.
\newblock Dynamic service migration in mobile edge-clouds.
\newblock In {\em 2015 IFIP Networking Conference (IFIP Networking)}, pages
  1--9. IEEE, 2015.

\bibitem[WYN15]{wei2015probabilistic}
X.~Wei, H.~Yu, and M.~J. Neely.
\newblock A probabilistic sample path convergence time analysis of
  drift-plus-penalty algorithm for stochastic optimization.
\newblock {\em arXiv preprint arXiv:1510.02973}, 2015.

\bibitem[WYN18]{wei2018online}
X.~Wei, H.~Yu, and M.~J. Neely.
\newblock Online learning in weakly coupled markov decision processes: A
  convergence time study.
\newblock {\em Proceedings of the ACM on Measurement and Analysis of Computing
  Systems}, 2(1):12, 2018.

\bibitem[Yao02]{Yao02}
D.~D. Yao.
\newblock Dynamic scheduling via polymatroid optimization.
\newblock {\em Proceeding Performance Evaluation of Complex Systems: Techniques
  and Tools}, pages 89--113, 2002.

\bibitem[YHS{\etalchar{+}}12]{yao2012data}
Y.~Yao, L.~Huang, A.~Sharma, L.~Golubchik, and M.~Neely.
\newblock Data centers power reduction: A two time scale approach for delay
  tolerant workloads.
\newblock In {\em INFOCOM, 2012 Proceedings IEEE}, pages 1431--1439. IEEE,
  2012.

\bibitem[YMS09]{yu2009markov}
J.~Y. Yu, S.~Mannor, and N.~Shimkin.
\newblock Markov decision processes with arbitrary reward processes.
\newblock {\em Mathematics of Operations Research}, 34(3):737--757, 2009.

\bibitem[YN16]{yu2016low}
H.~Yu and M.~J. Neely.
\newblock A low complexity algorithm with $ o (\sqrt{T})$ regret and finite
  constraint violations for online convex optimization with long term
  constraints.
\newblock {\em arXiv preprint arXiv:1604.02218}, 2016.

\bibitem[YN17]{YuNeely17SIOPT}
H.~Yu and M.~J. Neely.
\newblock A simple parallel algorithm with an ${O}(1/t)$ convergence rate for
  general convex programs.
\newblock {\em SIAM Journal on Optimization}, 27(2):759--783, 2017.

\bibitem[YNW17]{hao2017onlinestochastic}
H.~Yu, M.~Neely, and X.~Wei.
\newblock Online convex optimization with stochastic constraints.
\newblock {\em arXiv preprint arXiv:1708.03741}, 2017.

\bibitem[YT89]{ye1989extension}
Y.~Ye and E.~Tse.
\newblock An extension of karmarkar's projective algorithm for convex quadratic
  programming.
\newblock {\em Mathematical programming}, 44(1):157--179, 1989.

\bibitem[Zin03]{zinkevich2003online}
M.~Zinkevich.
\newblock Online convex programming and generalized infinitesimal gradient
  ascent.
\newblock In {\em Proceedings of the 20th International Conference on Machine
  Learning (ICML-03)}, pages 928--936, 2003.

\end{thebibliography}

\end{document}